\newtheorem{theorem}{Theorem}[section]
\newtheorem{lemma}[theorem]{Lemma}
\newtheorem{corollary}[theorem]{Corollary}
\newtheorem{proposition}[theorem]{Proposition}
\theoremstyle{definition}
\newtheorem{definition}[theorem]{Definition}
\newtheorem{problem}[theorem]{Problem}
\newtheorem{conjecture}{Conjecture}[section]
\theoremstyle{remark}
\newtheorem{example}[theorem]{Example}
\newtheorem{remark}[theorem]{Remark}
\numberwithin{equation}{section}
\renewcommand{\subsection}{\@startsection{subsection}{2}%
  {0pt}{.7\linespacing\@plus\linespacing}{.5\linespacing}%
  {\normalfont\bfseries}}
\renewcommand{\theenumi}{\ensuremath{(\roman{enumi})}}
\renewcommand{\p@enumii}{}
\newcommand{\bfa}{\mathbf{a}}
\newcommand{\bfx}{\mathbf{x}}
\newcommand{\bfy}{\mathbf{y}}
\newcommand{\BB}{\mathbf{B}}
\newcommand{\RR}{\mathbb{R}}
\newcommand{\levdist}{1.2cm}
\DeclareMathOperator{\card}{card}
\DeclareMathOperator{\diam}{diam}
\DeclareMathOperator{\dist}{dist}
\DeclareMathOperator{\lev}{lev}
\def\<#1>{\langle #1 \rangle}
\newcommand{\acr}{\newline\indent}
\author{Oleksiy Dovgoshey}
\address{\textbf{Oleksiy Dovgoshey}\acr
Institute of Applied Mathematics and Mechanics of the NAS of Ukraine,\acr
Sloviansk, Ukraine and\acr
Department of Mathematics and Statistics, University of Turku, \acr
Turku, Finland}
\email{oleksiy.dovgoshey@gmail.com, oleksiy.dovgoshey@utu.fi}
\title{Totally bounded ultrametric spaces and locally finite trees}
\subjclass[2020]{Primary 54E35, 05C05, Secondary 06A06}
\keywords{Totally bounded ultrametric space, ultrametric ball, isomorphism of trees, Hausdorff distance, complete multipartite graph, diameter of set, covering relation in poset}
\begin{document}

\begin{abstract}
We investigate the interrelations between the metric properties, order properties and combinatorial properties of the set of balls in totally bounded ultrametric space. In particular, the Gurvich---Vyalyi representation of finite, ultrametric spaces by monotone rooted trees is generalized to the case of totally bounded ultrametric spaces. It is shown that such spaces have isometric completions if and only if their labeled representing trees are isomorphic. We characterize up to isomorphism the representing trees of these spaces and, up to order isomorphism, the posets of open balls in such spaces.
\end{abstract}

\maketitle

\tableofcontents

\section{Introduction}

In 2001 at the Workshop on General Algebra the attention of experts on the theory of lattices was paid to the following problem of I.~M.~Gelfand: Using graph theory describe up to isometry all finite ultrametric spaces~\cite{Lem2001}. An appropriate representation of finite, ultrametric spaces by monotone rooted trees was proposed by V.~Gurvich and M.~Vyalyi in~\cite{GV2012DAM}. A simple geometric description of Gurvich---Vyalyi representing trees was found in \cite{PD2014JMS}. This description allows us effectively use the Gurvich---Vyalyi representation in various problems associated with finite ultrametric spaces. In particular, this leads to a graph-theoretic interpretation of the Gomory---Hu inequality \cite{DPT2015}. A characterization of finite ultrametric spaces which are as rigid as possible also was obtained \cite{DPT2017FPTA} on the basis of the Gurvich---Vyalyi representation. Some other extremal properties of finite ultrametric spaces and related them properties of monotone rooted trees have been found in~\cite{DP2020pNUAA}. The interconnections between the Gurvich---Vyalyi representation and the space of balls endowed with the Hausdorff metric are discussed in~\cite{Dov2019pNUAA} (see also \cite{Qiu2009pNUAA, Qiu2014pNUAA, DP2018pNUAA, Pet2018pNUAA, Pet2013PoI}).

It is well-known that the sets of leaves of phylogenetic equidistant trees with additive metric are ultrametric. The finite equidistant trees can be considered as finite subtrees of the so-called \(R\)-trees (see~\cite{Ber2019SMJ} for some interesting results related to \(R\)-trees and ultrametrics). A description of interrelations between finite subtrees of \(R\)-trees and finite, monotone rooted trees can be found in~\cite{Dov2020TaAoG}. The categorical equivalence of trees and ultrametric spaces was investigated in \cite{H04} and \cite{Lem2003AU}.

The paper is organized as follows.

In Sections~\ref{sec2} and \ref{sec3} we collect together some properties of ultrametric balls and, respectively, of totally bounded metric spaces.

In Proposition~\ref{p2.3} of Section~\ref{sec4.1} we consider a generalization of strong triangle inequality for arbitrary cycle of points of ultrametric space and give some applications of this generalization in Propositions~\ref{p4.4} and \ref{p4.5}.

New interrelations between complete multipartite graphs and diametrical graphs of ultrametric spaces are found in Section~\ref{sec4.2}. In particular, using complete multipartite graphs we found new necessary and sufficient conditions under which metric spaces are ultrametric, Theorem~\ref{t5.18}. In Theorems~\ref{t2.25} and \ref{t2.35} of Section~5.3 we characterize bounded ultrametric spaces which are weakly similar to unbounded ones via diametrical graphs. Characteristic properties of functions preserving totally bounded ultrametrics are proved in Theorem~\ref{t5.15} of Section~5.3.

The distance set of totally bounded ultrametric spaces and separable metric spaces are characterized in Theorem~\ref{t5.10} and Proposition~\ref{p5.16} of Section~5.2.

In section~\ref{sec6}, we prove that the ballean (the set of all open balls) of totally bounded ultrametric space is a subset of all closed balls of this space, Corollary~\ref{c2.41}, and find conditions under which the converse inclusion holds, Corollary~\ref{c6.7}. Proposition~\ref{p3.10} and Theorem~\ref{t7.12} of the section show that the balleans of any two dense subsets of arbitrary ultrametric space are order isomorphic with respect to the partial order generated by set inclusion.

The balleans endowed with the Hausdorff metric are described in Theorem~\ref{t7.8} of Section~\ref{sec8} for compact, ultrametric space. Using this theorem we show that every totally bounded ultrametric space admits an isometric embedding in a compact, ultrametric space with the dense set of isolated points, Theorem~\ref{p6.11}.

The new results related to the representing trees of ultrametric spaces are mainly concentrated in Sections~\ref{sec9}, \ref{sec10} and \ref{sec11}. In particular, in Section~\ref{sec11} we consider the labeled trees which represent the totally bounded ultrametric spaces that generalizes the Gurvich---Vyalyi representation of finite ultrametric spaces. A constructive characterization of such trees is given in Theorem~\ref{t7.3}. Moreover, the corresponding characterizations for rooted trees without labeling and for free trees are given in Proposition~\ref{p10.17} and Theorem~\ref{t9.10}, respectively.

Theorem~\ref{t9.20} from Section~\ref{sec10} presents also a description of partial orders generated by choice of a root in arbitrary tree. That allows us to characterize in Theorem~\ref{t10.17} all posets which are order isomorphic to posets of the open balls from totally bounded ultrametric spaces.

\section{Ultrametric balls}
\label{sec2}

In the paper the ultrametric balls are used as a starting point in the construction of representing trees corresponding to totally bounded ultrametric spaces. The main goal of this section is to recall some known properties of balls in ultrametric spaces.

Let us start from basic concepts.

A \textit{metric} on a set $X$ is a function $d\colon X\times X\rightarrow \RR^+$ such that for all \(x\), \(y\), \(z \in X\)
\begin{enumerate}
\item $d(x,y)=d(y,x)$,
\item $(d(x,y)=0)\Leftrightarrow (x=y)$,
\item \(d(x,y)\leq d(x,z) + d(z,y)\).
\end{enumerate}

A metric space \((X, d)\) is \emph{ultrametric} if the \emph{strong triangle inequality}
\[
d(x,y)\leq \max \{d(x,z),d(z,y)\}.
\]
holds for all \(x\), \(y\), \(z \in X\). In this case the function \(d\) is called \emph{an ultrametric} on \(X\).

\begin{definition}\label{d2.2}
Let \((X, d)\) and \((Y, \rho)\) be metric spaces. A mapping \(\Phi \colon X \to Y\) is called to be an \emph{isometric embedding} of \((X, d)\) in \((Y, \rho)\) if
\[
d(x,y) = \rho(\Phi(x), \Phi(y))
\]
holds for all \(x\), \(y \in X\). In the case when \(\Phi\) is bijective, we say that it is an \emph{isometry} of \((X, d)\) and \((Y, \rho)\). The metric spaces are \emph{isometric} if there is an isometry of these spaces.
\end{definition}

Let \(A\) be a subset of a metric space \((X, d)\), \(A \subseteq X\). The quantity
\begin{equation}\label{e2.1}
\diam A = \diam (A, d) =\sup\{d(x,y) \colon x, y\in A\}
\end{equation}
is the \emph{diameter} of \(A\). If the inequality \(\diam A < \infty\) holds, then we say that \(A\) is a \emph{bounded} subset of \(X\).

We define the \emph{distance set} \(D(X)\) of a metric space \((X,d)\) as
\[
D(X) = D(X, d) = \{d(x, y) \colon x, y \in X\}.
\]
It is clear that \(\diam X = \sup D(X)\).

If \((X, d)\) is an ultrametric space and \(A\) is a nonempty subset of \(X\), then, using the strong triangle inequality, we can easily prove the equality
\begin{equation}\label{e2.3}
\diam A = \sup\{d(x,a) \colon x \in A\}
\end{equation}
for every \(a \in A\) \cite{Dov2019pNUAA}.

Let \((X, d)\) be a metric space. An \emph{open ball} with a \emph{radius} \(r > 0\) and a \emph{center} \(c \in X\) is the set
\[
B_r(c) = \{x \in X \colon d(c, x) < r\}.
\]

Let \(c\) be a point of a metric space \((X, d)\) and let \(r \geqslant 0\). Similarly to open balls we define the \emph{closed ball} \(\overline{B}_r(c)\) with a radius \(r\) and a center \(c\) as
\begin{equation}\label{e2.4}
\overline{B}_r(c) = \{x \in X \colon d(c, x) \leqslant r\}.
\end{equation}
Thus, the equality \(\overline{B}_r(c) = \{c\}\) holds if \(r = 0\).

Now we are ready to introduce the first, basic for us, definition.

\begin{definition}\label{d2.1}
The ballean of a metric space \((X, d)\) is the set \(\BB_{X} = \BB_{(X, d)}\) of all open balls in \((X, d)\).
\end{definition}

In what follows, we will also denote by \(\overline{\mathbf{B}}_X = \overline{\mathbf{B}}_{(X, d)}\) the set of all closed balls in \((X, d)\). An arbitrary \(B \in \mathbf{B}_X \cup \overline{\mathbf{B}}_X\) is a \emph{ball} in \((X, d)\).

\begin{remark}\label{r2.2}
The term ballean, which was introduced in Definition~\ref{d2.1}, is also used to mean a set \(X\) endowed with a special family of subsets of \(X \times X\). See, for example, \cite{PP2020MS, PB2003, PZ2007}.
\end{remark}

\begin{remark}\label{r2.5}
Usually, when defining closed balls, only strictly positive values of the radius \(r\) are used (see, for example, Definition~5.1.1 in \cite{Sea2007}). Breaking this tradition in present paper can be partially justified by following technical advantages:
\begin{enumerate}
\item \label{r2.5:s1} Every chain of closed balls in spherically complete ultrametric space has a meet and, in particular, every maximal chain of such balls contains the smallest ball.
\item \label{r2.5:s2} For each metric space \((X, d)\), the mapping
\[
X \ni x \mapsto \{x\} \in \overline{\BB}_{X}
\]
is an isometric embedding of \((X, d)\) in \(\overline{\BB}_{X}\) if we endowed \(\overline{\BB}_{X}\) with the Hausdorff metric.
\end{enumerate}
\end{remark}

Let \((X, d)\) be a metric space. A subset \(A\) of \(X\) is \emph{open} if for every \(a \in A\) there is \(r > 0\) such that \(B_r(a) \subseteq A\). A set \(C \subseteq X\) is said to be \emph{closed} if its complement \(X \setminus C\) is open. Moreover, we say that a set \(Y \subseteq X\) is \emph{clopen} if it is closed and open simultaneously.

For every metric space \((X, d)\), the closed balls are closed subsets of \(X\) and the open balls are open subsets of \(X\). A much stronger result is true if \((X, d)\) is an ultrametric space.

Recall that a point \(p\) in a metric space \((X, d)\) is \emph{isolated} if there is \(\varepsilon > 0\) such that \(d(p, x) > \varepsilon\) for every \(x \in X \setminus \{p\}\). If \(p\) is not an isolated point of \(X\), then \(p\) is called an \emph{accumulation point} of \(X\).

\begin{proposition}\label{p2.2}
Let \((X, d)\) be ultrametric. Then the following statements hold:
\begin{enumerate}
\item\label{p2.2:s1} For every \(r \in (0, \infty)\) and every \(c \in X\) the balls \(B_r(c)\) and \(\overline{B}_r(c)\) are clopen subsets of \(X\).
\item\label{p2.2:s2} If \(r = 0\), then the ball \(\overline{B}_r(c)\) is clopen if and only if \(c\) is an isolated point of \(X\).
\end{enumerate}
\end{proposition}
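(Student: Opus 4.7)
The plan is to exploit the standard ultrametric feature that every point of a ball can be taken as its center with the same radius, together with direct applications of the strong triangle inequality.

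For statement~\ref{p2.2:s1}, the openness of \(B_r(c)\) is valid in any metric space, so only three things remain: openness of \(\overline{B}_r(c)\), closedness of \(B_r(c)\), and closedness of \(\overline{B}_r(c)\). To prove openness of \(\overline{B}_r(c)\) with \(r>0\), I would take \(y\in\overline{B}_r(c)\) and show \(B_r(y)\subseteq\overline{B}_r(c)\): for \(z\in B_r(y)\), the strong triangle inequality gives \(d(z,c)\leq\max\{d(z,y),d(y,c)\}\leq r\). For the closedness of \(B_r(c)\), I would take \(y\in X\setminus B_r(c)\), so \(d(y,c)\geq r\), and show \(B_r(y)\) is disjoint from \(B_r(c)\); otherwise a common point \(z\) would yield \(d(y,c)\leq\max\{d(y,z),d(z,c)\}<r\), a contradiction. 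For the closedness of \(\overline{B}_r(c)\), the same argument works with \(y\) satisfying \(d(y,c)>r\): any \(z\in B_r(y)\cap\overline{B}_r(c)\) would force \(d(y,c)\leq\max\{d(y,z),d(z,c)\}\leq r\), again a contradiction.

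For statement~\ref{p2.2:s2}, note that by~\eqref{e2.4} we have \(\overline{B}_0(c)=\{c\}\), which is closed in any metric space. Hence \(\overline{B}_0(c)\) is clopen precisely when \(\{c\}\) is open, i.e., when there exists \(r>0\) with \(B_r(c)\subseteq\{c\}\); but this is exactly the condition that \(d(c,x)\geq r\) for every \(x\in X\setminus\{c\}\), which is the definition of \(c\) being an isolated point (modulo replacing the strict inequality in the definition by \(r/2\) if desired).

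There is no genuine obstacle: the argument is a four-fold application of the strong triangle inequality. The only point that requires mild attention is keeping the strict versus non-strict inequalities straight in each of the four sub-cases, and separating the case \(r>0\) in~\ref{p2.2:s1} from the degenerate case \(r=0\) handled in~\ref{p2.2:s2}.
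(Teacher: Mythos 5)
Your proof is correct. The paper itself gives no argument: it disposes of statement \ref{p2.2:s1} by citing Proposition~18.4 of Schikhof's book and declares statement \ref{p2.2:s2} easy for arbitrary metric spaces, so your write-up simply supplies the standard strong-triangle-inequality argument that underlies that citation. All four sub-cases (openness and closedness of each ball) are handled with the correct strict versus non-strict inequalities, and your treatment of the degenerate case \(r=0\), including the remark reconciling \(d(c,x)\geqslant r\) with the paper's strict-inequality definition of an isolated point, is sound.
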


\begin{proof}
Statement~\ref{p2.2:s1} follows directly from Proposition~18.4 \cite{Sch1985}. Statement~\ref{p2.2:s2} is easy to prove for arbitrary metric space.
\end{proof}

The following proposition claims that every point of an arbitrary ultrametric ball is a center of that ball.

\begin{proposition}\label{p2.4}
Let \((X, d)\) be an ultrametric space. Then for every ball \(B_r(c)\) (\(\overline{B}_r(c)\)) and every \(a \in B_r(c)\) (\(a \in \overline{B}_r(c)\)) we have
\[
B_r(c) = B_r(a) \quad (\overline{B}_r(c) = \overline{B}_r(a)).
\]
\end{proposition}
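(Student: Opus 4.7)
The plan is to prove both equalities by double inclusion, with the key observation that the strong triangle inequality allows us to bound a distance between two ``peripheral'' points by the maximum of their distances to any third point, so membership in a ball is preserved when we swap the center for any other member.

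First, for the open-ball case, I would fix \(a \in B_r(c)\) (so that \(d(c,a) < r\)) and take an arbitrary \(x \in B_r(c)\), hence \(d(c,x) < r\). Applying the strong triangle inequality to the triple \(a, c, x\) gives
\[
d(a,x) \leq \max\{d(a,c), d(c,x)\} < r,
\]
so \(x \in B_r(a)\), which proves \(B_r(c) \subseteq B_r(a)\). For the reverse inclusion, I would note that \(d(a,c) = d(c,a) < r\), so \(c \in B_r(a)\); running the same argument with the roles of \(c\) and \(a\) interchanged yields \(B_r(a) \subseteq B_r(c)\).

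For the closed-ball case, the argument is essentially identical: given \(a \in \overline{B}_r(c)\), we have \(d(c,a) \leq r\), and for any \(x \in \overline{B}_r(c)\) the strong triangle inequality gives \(d(a,x) \leq \max\{d(a,c), d(c,x)\} \leq r\); the reverse inclusion follows by symmetry. The only small point worth noting is the boundary case \(r = 0\), where \(\overline{B}_0(c) = \{c\}\) by definition, so \(a = c\) is forced and the conclusion is trivial.

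No real obstacle is anticipated here; the statement is a direct consequence of the strong triangle inequality, and I expect the proof to be two or three lines once the inclusion-inclusion structure is set up. The only thing requiring a bit of care is making sure the strict versus non-strict inequality is handled consistently for the open and closed cases and that the degenerate radius \(r = 0\) in the closed-ball case is acknowledged.
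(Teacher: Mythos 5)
Your proof is correct and complete; the double-inclusion argument via the strong triangle inequality, together with the observation that the roles of \(a\) and \(c\) are symmetric, is exactly the standard argument. The paper itself gives no proof here but simply defers to Proposition~18.4 of \cite{Sch1985}, and your write-up supplies precisely that elementary argument, including the harmless degenerate case \(r=0\) for closed balls.
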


\begin{proof}
It is a simple modification of Proposition~18.4~\cite{Sch1985}.
\end{proof}

Using Proposition~\ref{p2.4} it is easy to prove that if \(B\) is an open ball in ultrametric space \((X, d)\) and if \(B_1\) is an open ball in \((B, d|_{B \times B})\), then \(B_1\) is also an open ball in \((X, d)\).

\begin{corollary}\label{c2.4}
Let \((X, d)\) be an ultrametric space. Then the inclusion
\begin{equation}\label{c2.4:e1}
\BB_{B} \subseteq \BB_{X}
\end{equation}
holds for every \(B \in \BB_{X}\)
\end{corollary}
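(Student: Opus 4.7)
The plan is to unwind the definitions and reduce everything to an application of Proposition~\ref{p2.4}. Fix \(B \in \BB_X\), so \(B = B_r(c)\) for some \(c \in X\) and some \(r > 0\), and fix an arbitrary \(B_1 \in \BB_B\). By definition of \(\BB_B\), we can write \(B_1 = \{x \in B \colon d(c_1, x) < r_1\}\) for some \(c_1 \in B\) and \(r_1 > 0\). Our task is to exhibit \(B_1\) as an open ball in the whole space \((X, d)\).

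Since \(c_1 \in B = B_r(c)\), Proposition~\ref{p2.4} immediately yields \(B_r(c) = B_r(c_1)\); this is the key structural fact that allows us to re-center \(B\) at \(c_1\). I would then split into two cases according to the relative sizes of \(r_1\) and \(r\). If \(r_1 \leq r\), then the \(X\)-open ball \(B_{r_1}(c_1) = \{x \in X \colon d(c_1, x) < r_1\}\) is contained in \(B_r(c_1) = B\), so intersecting with \(B\) is redundant and therefore \(B_1 = B_{r_1}(c_1) \in \BB_X\). If \(r_1 > r\), then every \(x \in B = B_r(c_1)\) satisfies \(d(c_1, x) < r < r_1\), so the condition defining \(B_1\) is vacuous on \(B\) and we simply have \(B_1 = B\), which already lies in \(\BB_X\) by assumption.

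In both cases \(B_1 \in \BB_X\), which establishes the inclusion \(\BB_B \subseteq \BB_X\) claimed in \eqref{c2.4:e1}.

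There is no real obstacle here; the entire argument is a short verification, and the only ingredient beyond set-theoretic bookkeeping is the ``every point is a center'' property supplied by Proposition~\ref{p2.4}. The mild subtlety is remembering to handle the case \(r_1 > r\) separately, since then the putative candidate ball \(B_{r_1}(c_1)\) in \(X\) can be strictly larger than \(B_1\); in that situation one must identify \(B_1\) with \(B\) itself rather than with a new open ball centered at \(c_1\).
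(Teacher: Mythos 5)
Your argument is correct and follows exactly the route the paper intends: the paper gives no written proof of Corollary~\ref{c2.4} beyond the remark that it follows from Proposition~\ref{p2.4}, and your re-centering of \(B\) at \(c_1\) via \(B_r(c) = B_r(c_1)\), followed by the two-case comparison of \(r_1\) with \(r\), is precisely the verification being left to the reader. Nothing to add.
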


\begin{remark}\label{r2.4}
It is interesting to note that Proposition~\ref{p2.4} gives us a characteristic property of the ultrametric spaces (see Proposition~1.7 \cite{Dov2019pNUAA}). On the other hand, even if inclusion~\eqref{c2.4:e1} holds for every \(B \in \BB_{X}\), then, in general, it does not imply the ultrametricity of \((X, d)\). Indeed, if \(X = \RR\) and \(d\) is the standard Euclidean metric on \(\RR\), then we evidently have \eqref{c2.4:e1} for every \(B \in \BB_{X}\), but \((X, d)\) is not ultrametric.
\end{remark}

\begin{lemma}\label{l2.6}
Let \(\overline{B}_r(c)\) be a closed ball in an ultrametric space \((X, d)\) and let
\[
r_1 = \diam \overline{B}_r(c).
\]
Then the equality \(\overline{B}_{r_1}(c) = \overline{B}_r(c)\) holds.
\end{lemma}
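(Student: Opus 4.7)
The plan is to establish the two inclusions separately, each following almost immediately from identity~\eqref{e2.3} together with the observation that, in an ultrametric space, the diameter of a closed ball centered at $c$ never exceeds its radius.

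First I would verify that $r_1 \leq r$. For any $x, y \in \overline{B}_r(c)$, the strong triangle inequality gives $d(x,y) \leq \max\{d(x,c), d(c,y)\} \leq r$, and taking the supremum over such pairs yields $r_1 = \diam \overline{B}_r(c) \leq r$. This immediately gives the inclusion $\overline{B}_{r_1}(c) \subseteq \overline{B}_r(c)$, since any point at distance at most $r_1$ from $c$ is automatically at distance at most $r$ from $c$.

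For the reverse inclusion, I would invoke~\eqref{e2.3} with $A = \overline{B}_r(c)$ and $a = c$; this is legitimate because $c \in \overline{B}_r(c)$ (recall that $r \geq 0$ and $d(c,c) = 0$). The identity then reads
\[
r_1 = \sup\{d(x,c) : x \in \overline{B}_r(c)\},
\]
so every $x \in \overline{B}_r(c)$ satisfies $d(x,c) \leq r_1$, i.e.\ $x \in \overline{B}_{r_1}(c)$. This shows $\overline{B}_r(c) \subseteq \overline{B}_{r_1}(c)$, completing the proof.

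There is no serious obstacle here; the statement is essentially a two-line corollary of~\eqref{e2.3}. The only conceptual point worth emphasizing is the ultrametric estimate $\diam \overline{B}_r(c) \leq r$ (contrast with the factor-of-two gap $\diam \overline{B}_r(c) \leq 2r$ available in an arbitrary metric space), since this is what makes $r_1$ an admissible radius whose ball still coincides with the original $\overline{B}_r(c)$.
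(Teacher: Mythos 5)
Your proof is correct and follows essentially the same route as the paper's: both inclusions are obtained from the elementary bound $r_1 \leqslant r$ (which the paper also extracts via~\eqref{e2.3}) and from the fact that every point of $\overline{B}_r(c)$ lies within distance $\diam \overline{B}_r(c) = r_1$ of the center $c$. The only cosmetic difference is that you derive $r_1 \leqslant r$ from the strong triangle inequality directly rather than by citing~\eqref{e2.3}, which changes nothing of substance.
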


\begin{proof}
Using the definition of \(\overline{B}_r(c)\) and equality \eqref{e2.3} with \(A = \overline{B}_r(c)\) and \(a = c\), we obtain the inequality \(r_1 \leqslant r\). Consequently, \(\overline{B}_{r_1}(c) \subseteq \overline{B}_r(c)\) holds. To prove the converse inclusion it suffices to note that \(d(x, c) \leqslant \diam \overline{B}_r(c) = r_1\) holds for every \(x \in \overline{B}_r(c)\), that implies \(\overline{B}_r(c) \subseteq \overline{B}_{r_1}(c)\).
\end{proof}

Lemma~\ref{l2.6} and Proposition~\ref{p2.4} give us the next statement.

\begin{proposition}\label{p2.7}
Let \((X, d)\) be an ultrametric space and let \(B_1\), \(B_2 \in \overline{\mathbf{B}}_X\). Then $B_1 \subseteq B_2$ holds if and only if $B_1 \cap B_2 \neq \varnothing$ and
\[
\diam B_1 \leqslant \diam B_2.
\]
\end{proposition}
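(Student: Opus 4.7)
The plan is to treat the two directions separately, with the backward implication being the only one that uses any of the preceding machinery.

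For the forward direction, if $B_1 \subseteq B_2$ then, since $B_1$ is nonempty (a closed ball of radius $r\geqslant 0$ always contains its center), we immediately have $B_1\cap B_2\neq\varnothing$. The diameter inequality $\diam B_1 \leqslant \diam B_2$ follows from monotonicity of the diameter under set inclusion, which is a direct consequence of the defining supremum formula \eqref{e2.1}. No ultrametric hypothesis is needed here.

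For the backward direction I would combine Proposition~\ref{p2.4} with Lemma~\ref{l2.6}. Fix any point $a \in B_1 \cap B_2$. By Proposition~\ref{p2.4}, if $B_1 = \overline{B}_{r_1}(c_1)$ and $B_2 = \overline{B}_{r_2}(c_2)$, then
\[
B_1 = \overline{B}_{r_1}(a) \quad\text{and}\quad B_2 = \overline{B}_{r_2}(a),
\]
so both balls share the common center $a$. Applying Lemma~\ref{l2.6} to each ball, with $r_1$ replaced by $\diam B_1$ and $r_2$ replaced by $\diam B_2$, gives
\[
B_1 = \overline{B}_{\diam B_1}(a) \quad\text{and}\quad B_2 = \overline{B}_{\diam B_2}(a).
\]
From the assumption $\diam B_1 \leqslant \diam B_2$ and the definition \eqref{e2.4} of closed balls, the inclusion $B_1 \subseteq B_2$ then follows at once: every $x\in B_1$ satisfies $d(x,a)\leqslant \diam B_1 \leqslant \diam B_2$.

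There is no real obstacle; the only subtle point is recognising that Proposition~\ref{p2.4} preserves the \emph{given} radius when changing centers, so that one still needs Lemma~\ref{l2.6} to reduce both radii to their canonical values (the diameters), after which the inclusion becomes a trivial comparison.
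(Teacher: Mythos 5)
Your proof is correct and follows exactly the route the paper intends: the paper gives no written proof but states that Proposition~\ref{p2.7} is a consequence of Lemma~\ref{l2.6} and Proposition~\ref{p2.4}, which is precisely the combination you use for the backward direction, with the forward direction being the same trivial observation. Nothing to add.
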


\begin{remark}\label{r2.6}
In Proposition~\ref{p2.7} we cannot replace the condition
\begin{itemize}
\item \(B_1\), \(B_2 \in \overline{\mathbf{B}}_X\)
\end{itemize}
for the condition
\begin{itemize}
\item \(B_1\), \(B_2 \in \mathbf{B}_X\)
\end{itemize}
as the following example shows.
\end{remark}

\begin{example}\label{ex2.6}
Let \(X\) be the intersection of the set \(\mathbb{Q}\) of all rational numbers with the interval \([0,1]\) of the real line \(\RR\), \(X = \{x \in \mathbb{Q} \colon 0 \leqslant x \leqslant 1\}\). Following Delhomm\'{e}, Laflamme, Pouzet and Sauer \cite{DLPS2008TaiA}, we define an ultrametric \(d \colon X \times X \to \RR^{+}\) as
\begin{equation}\label{ex2.6:e1}
d(x, y) = \begin{cases}
0 & \text{if } x = y,\\
\max\{x, y\} & \text{if } x \neq y.
\end{cases}
\end{equation}
Let us consider in \((X, d)\) the open balls \(B_{r_1}(0)\) and \(B_{r_2}(0)\) having the common center \(0\) and the radii \(r_1 = 1\) and \(r_2 = 2\). Then we evidently have \(\diam B_{r_1}(0) = \diam B_{r_2}(0) = 1\), but
\[
B_{r_1}(0) = \mathbb{Q} \cap [0, 1) \neq B_{r_2}(0) = \mathbb{Q} \cap [0, 1].
\]
\end{example}

The next proposition is a modification of Proposition~18.5 \cite{Sch1985}.

\begin{proposition}\label{p2.5}
Let \((X, d)\) be an ultrametric space and let \(B_1\) and \(B_2\) be different balls in \((X, d)\). Then we have either \(B_1 \subset B_2\) or \(B_2 \subset B_1\) whenever \(B_1 \cap B_2 \neq \varnothing\). If \(B_1\) and \(B_2\) are disjoint, \(B_1 \cap B_2 = \varnothing\), then the equality
\begin{equation}\label{p2.5:e1}
d(x_1, x_2) = \diam (B_1 \cup B_2)
\end{equation}
holds for every \(x_1 \in B_1\) and every \(x_2 \in B_2\).
\end{proposition}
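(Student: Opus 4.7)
The plan is to split the proof along the two alternatives in the hypothesis. Recall that in this paper ``ball'' means an open or a closed ball (the paragraph after Definition~\ref{d2.1}), so each of $B_1$, $B_2$ is one of the four types $B_r(c)$ or $\overline{B}_r(c)$.

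For the intersecting case, I would pick $a \in B_1 \cap B_2$ and apply Proposition~\ref{p2.4} to rewrite both balls with the common center $a$: $B_i = B_{r_i}(a)$ or $B_i = \overline{B}_{r_i}(a)$ for $i = 1, 2$. A direct comparison in each of the four type combinations (open/open, closed/closed, and the two mixed cases) shows $B_1 \subseteq B_2$ or $B_2 \subseteq B_1$; since $B_1 \neq B_2$ as sets by hypothesis, the inclusion must be strict. This takes care of the first assertion.

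For the disjoint case, I would first establish the intermediate claim that for arbitrary $y_1 \in B_1$ and $y_2 \in B_2$,
\[
d(y_1, y_2) = d(x_1, x_2).
\]
To prove it, fix $y_1 \in B_1$ and treat $B_1$ as being centered at $x_1$ via Proposition~\ref{p2.4}. Then $y_1 \in B_1$ while $x_2 \notin B_1$ yield $d(x_1, y_1) < d(x_1, x_2)$ strictly in both subcases: in the open case, $d(x_1, y_1) < r_1 \leq d(x_1, x_2)$, and in the closed case, $d(x_1, y_1) \leq r_1 < d(x_1, x_2)$. An immediate consequence of the strong triangle inequality is that in any ultrametric triangle with two unequal sides the third side equals the larger of those two; hence $d(y_1, x_2) = d(x_1, x_2)$. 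Swapping the roles of $B_1$ and $B_2$, with $y_1$ now playing the role of a point outside $B_2$, an identical argument gives $d(y_1, y_2) = d(y_1, x_2)$. Combining the two equalities yields the claim.

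To finish, the diameter computation is routine: all cross pairs in $B_1 \cup B_2$ contribute exactly $d(x_1, x_2)$ by the claim, and for an intra-$B_1$ pair $u, v$ the strong triangle inequality together with the claim gives $d(u, v) \leq \max\{d(u, x_2), d(x_2, v)\} = d(x_1, x_2)$, with the analogous bound inside $B_2$. Thus $\diam(B_1 \cup B_2) = d(x_1, x_2)$. The main obstacle I anticipate is bookkeeping: handling all four open/closed combinations cleanly in Part~1, and uniformly deriving the \emph{strict} inequality $d(x_1, y_1) < d(x_1, x_2)$ in Part~2 regardless of which type of ball $B_1$ happens to be.
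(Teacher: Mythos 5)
Your proof is correct. The overall architecture matches the paper's, but the two halves are handled somewhat differently: the paper simply cites Schikhof's Proposition~18.5 both for the comparability of intersecting balls and for the constancy of cross-distances \(d(y_1,y_2)=d(x_1,x_2)\), whereas you prove both from first principles (common recentering via Proposition~\ref{p2.4} with a four-case type check, and the isosceles property of ultrametric triangles applied twice). For the final diameter identity the routes genuinely diverge: the paper writes \(\diam(B_1\cup B_2)=\max\{\diam B_1,\diam B_2,\Delta(B_1,B_2)\}\) and establishes \(\diam B_i\leqslant\Delta(B_1,B_2)\) by contradiction --- a pair \(z_1,y_1\in B_1\) with \(d(z_1,y_1)>\Delta(B_1,B_2)\) would, after recentering \(B_1\) at \(z_1\), swallow \(B_2\) and contradict disjointness --- while you bound each intra-ball distance directly by routing through \(x_2\): \(d(u,v)\leqslant\max\{d(u,x_2),d(x_2,v)\}=d(x_1,x_2)\). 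Your direct estimate is arguably cleaner and avoids the contradiction; the paper's version buys brevity by leaning on the external reference. One small point worth making explicit in your constancy argument: the strictness \(d(x_1,y_1)<d(x_1,x_2)\) in the open case uses \(r_1\leqslant d(x_1,x_2)\), which holds precisely because \(x_2\notin B_1=B_{r_1}(x_1)\); you state this correctly, and the analogous care in the closed case is also right.
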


\begin{proof}
By Proposition~18.5 \cite{Sch1985}, we have
\[
(B_1 \subset B_2 \text{ or } B_2 \subset B_1) \Leftrightarrow (B_1 \cap B_2 \neq \varnothing)
\]
and, moreover, the same proposition claims the equality
\begin{equation}\label{p2.5:e5}
d(x_1, x_2) = d(y_1, y_2)
\end{equation}
for all \(x_1\), \(y_1 \in B_1\) and all \(x_2\), \(y_2 \in B_2\) whenever \(B_1 \cap B_2 = \varnothing\).

Write
\begin{equation}\label{p2.5:e6}
\Delta(B_1, B_2) = \sup\{d(x_1, x_2) \colon x_1 \in B_1 \text{ and } x_2 \in B_2\}.
\end{equation}
It follows directly from~\eqref{e2.1} that
\begin{equation}\label{p2.5:e2}
\diam (B_1 \cup B_2) = \max \bigl\{\diam B_1, \diam B_2, \Delta(B_1, B_2)\bigr\}.
\end{equation}
Now, using \eqref{p2.5:e5} and \eqref{p2.5:e2} we see that \eqref{p2.5:e1} holds if
\begin{equation}\label{p2.5:e3}
\diam B_i \leqslant \Delta(B_1, B_2), \quad i = 1, 2.
\end{equation}
Without loss of generality, we may assume \(\diam B_1 \geqslant \diam B_2\). Now, if \eqref{p2.5:e3} does not hold, then there are \(z_1\), \(y_1 \in B_1\) such that
\begin{equation}\label{p2.5:e4}
d(z_1, y_1) > \Delta(B_1, B_2).
\end{equation}
Let \(r_1\) be the radius of \(B_1\). Using Proposition~\ref{p2.4} we can consider \(z_1\) as a center of the ball \(B_1\). By inequality~\eqref{p2.5:e4},
\[
r_1 \geqslant d(z_1, y_1) > d(z_1, x_2)
\]
holds for every \(x_2 \in B_2\). Thus, \(B_2 \subseteq B_1\), contrary to \(B_1 \cap B_2 = \varnothing\).
\end{proof}

\begin{remark}\label{r2.10}
Let \((X, d)\) be a metric space. For any nonempty subsets \(A\) and \(B\) of \(X\), the \emph{distance} between \(A\) and \(B\) is defined as
\begin{equation}\label{r2.10:e1}
\dist (A, B) = \inf \{d(x, y) \colon x \in A, y \in B\}.
\end{equation}
It was shown in above mentioned Proposition~18.5 \cite{Sch1985} that \(d(x_1, x_2) = \dist (B_1, B_2)\) holds for all \(x_1 \in B_1\) and \(x_2 \in B_2\) if \(B_1\) and \(B_2\) are disjoint balls and \(d\) is an ultrametric. Thus, from \eqref{p2.5:e1} it follows that
\begin{equation}\label{r2.10:e2}
\dist (B_1, B_2) = \diam (B_1 \cup B_2) = \Delta(B_1, B_2)
\end{equation}
whenever \(B_1\) and \(B_2\) are disjoint, ultrametric balls and \(\Delta(B_1, B_2)\) is defined by~\eqref{p2.5:e6}.
\end{remark}

\begin{remark}\label{r2.13}
A partial generalization of Propositions~\ref{p2.4}, \ref{p2.7} and \ref{p2.5}, for ``balls'' whose points are finite, nonempty subsets of \(X\), is given in Proposition~2.1~\cite{DDP2011pNUAA}.
\end{remark}

\begin{definition}\label{d2.9}
Let \((X, d)\) be a metric space and let \(A\) be a nonempty, bounded subset of \(X\). A ball \(B^* = B^*(A) \in \overline{\mathbf{B}}_X\) is the smallest ball containing \(A\) if \(B^* \supseteq A\) and the implication
\begin{equation}\label{d2.9:e1}
(B \supseteq A) \Rightarrow (B \supseteq B^*)
\end{equation}
is valid for every \(B \in \overline{\mathbf{B}}_X\).
\end{definition}

It is clear that, for every nonempty, bounded \(A \subseteq X\), the smallest ball \(B^*(A)\) is unique if it exists. The next proposition implies, in particular, the existence of such balls in every nonempty, ultrametric space \((X, d)\).

\begin{proposition}[{\cite{Dov2019pNUAA}}]\label{p2.12}
Let \((X, d)\) be an ultrametric space, let \(A\) be a bounded subset of \(X\) and let \(a \in A\). Then the equality \(B^*(A) = \overline{B}_r(a)\) holds with \(r = \diam A\).
\end{proposition}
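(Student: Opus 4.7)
The plan is to verify both clauses of Definition~\ref{d2.9} for the candidate ball \(\overline{B}_r(a)\) with \(r = \diam A\), using equation~\eqref{e2.3}, Proposition~\ref{p2.4}, and (optionally) Lemma~\ref{l2.6}.

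First I would check containment \(A \subseteq \overline{B}_r(a)\). Since \(A\) is nonempty and \(a \in A\), formula~\eqref{e2.3} yields \(\diam A = \sup\{d(x,a) \colon x \in A\}\). Hence \(d(x,a) \leqslant r\) for every \(x \in A\), which by definition~\eqref{e2.4} means \(x \in \overline{B}_r(a)\).

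Next I would verify the implication~\eqref{d2.9:e1}. Let \(B = \overline{B}_s(c)\) be any closed ball with \(B \supseteq A\). Because \(a \in A \subseteq B\), Proposition~\ref{p2.4} lets us recenter \(B\) at \(a\), giving \(B = \overline{B}_s(a)\). Applying~\eqref{e2.3} to \(\overline{B}_s(a)\) with the center \(a\) yields \(\diam B = \sup\{d(x,a) \colon x \in \overline{B}_s(a)\} \leqslant s\). Since \(A \subseteq B\) forces \(r = \diam A \leqslant \diam B \leqslant s\), the inclusion \(\overline{B}_r(a) \subseteq \overline{B}_s(a) = B\) follows directly from~\eqref{e2.4}.

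There is no real obstacle: the statement is essentially a bookkeeping consequence of the ultrametric identity~\eqref{e2.3} and the ``every point is a center'' property of Proposition~\ref{p2.4}. The only mild subtlety is remembering to recenter \(B\) at the distinguished point \(a\) before comparing radii, so that~\eqref{e2.3} can be used to bound \(\diam B\) by \(s\) rather than \(2s\); Lemma~\ref{l2.6} could equivalently be invoked here by replacing \(B\) with \(\overline{B}_{\diam B}(a)\), but the direct argument above seems shorter.
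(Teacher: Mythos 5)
Your proof is correct. The paper states Proposition~\ref{p2.12} as a cited result from the earlier work and gives no proof here, so there is nothing to compare against; your argument --- containment of \(A\) in \(\overline{B}_r(a)\) via \eqref{e2.3}, then recentering an arbitrary enclosing closed ball at \(a\) via Proposition~\ref{p2.4} so that its radius dominates \(r = \diam A\) --- is the standard one and handles the degenerate case \(\diam A = 0\) correctly under the paper's convention \eqref{e2.4}.
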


The following corollary of Proposition~\ref{p2.12} shows that condition~\eqref{d2.9:e1} is equivalent to some weaker condition, if \((X, d)\) is ultrametric.

\begin{corollary}[{\cite{Dov2019pNUAA}}]\label{c2.12}
Let \((X, d)\) be an ultrametric space, \(B_1 \in \overline{\BB}_{X}\) and let \(A\) be a nonempty bounded subset of \(B_1\). Then \(B_{1} = B^{*}(A)\) holds if and only if
\[
(B \supseteq A) \Rightarrow (\diam B \geqslant \diam B_{1})
\]
is valid for every \(B \in \overline{\mathbf{B}}_X\).
\end{corollary}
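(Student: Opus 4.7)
The plan is to prove both implications directly, leaning on Proposition~\ref{p2.7}, which is the natural tool for comparing two closed ultrametric balls via their diameters and a common point.

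For the forward direction ($\Rightarrow$), assume $B_1 = B^{*}(A)$. Given any $B \in \overline{\BB}_{X}$ with $B \supseteq A$, the defining implication \eqref{d2.9:e1} in Definition~\ref{d2.9} applied to $B^{*} = B_1$ yields $B \supseteq B_1$. Since diameter is monotone with respect to set inclusion (from the $\sup$ in \eqref{e2.1}), this immediately gives $\diam B \geqslant \diam B_{1}$.

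For the reverse direction ($\Leftarrow$), suppose that $(B \supseteq A) \Rightarrow (\diam B \geqslant \diam B_{1})$ holds for every $B \in \overline{\mathbf{B}}_{X}$. The goal is to verify the two clauses of Definition~\ref{d2.9} for $B_1$ in place of $B^{*}$. The inclusion $B_1 \supseteq A$ is part of the hypothesis. So fix an arbitrary $B \in \overline{\BB}_{X}$ with $B \supseteq A$ and aim for $B \supseteq B_1$. Since $A$ is nonempty and $A \subseteq B \cap B_1$, we have $B \cap B_1 \neq \varnothing$; the standing hypothesis supplies $\diam B_1 \leqslant \diam B$. Proposition~\ref{p2.7}, applied to the two closed balls $B_1$ and $B$, yields $B_1 \subseteq B$, as desired. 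Uniqueness of the smallest ball then forces $B_1 = B^{*}(A)$.

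I do not anticipate any real obstacle: the argument is essentially a one-line application of Proposition~\ref{p2.7} in each direction. The only point that must be mentioned explicitly is that $B$ and $B_1$ are both \emph{closed} balls, which is what makes Proposition~\ref{p2.7} available; Example~\ref{ex2.6} already warns that the analogous reasoning would fail for open balls, so restricting to $\overline{\BB}_{X}$ in the hypothesis is essential. Existence of $B^{*}(A)$ in the background is ensured by Proposition~\ref{p2.12}, but the proof above does not need to invoke it.
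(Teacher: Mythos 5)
Your proof is correct. The paper does not print a proof of this corollary (it cites \cite{Dov2019pNUAA} and frames it as a consequence of Proposition~\ref{p2.12}), but your argument is exactly the intended one: the forward direction is monotonicity of \(\diam\) under the inclusion \(B \supseteq B^{*} = B_1\) from \eqref{d2.9:e1}, and the reverse direction is a one-line application of Proposition~\ref{p2.7} to the closed balls \(B_1\) and \(B\), which share the nonempty set \(A\), followed by uniqueness of the smallest ball. Your remark that the restriction to \(\overline{\BB}_{X}\) is what makes Proposition~\ref{p2.7} available is also apt.
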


Proposition~\ref{p2.12} can be also reformulated as follows.

\begin{corollary}\label{c2.18}
Let \((X, d)\) be an ultrametric space, let \(A\) be a nonempty bounded subset of \(X\) and let \(B_1 \in \overline{\BB}_{X}\). Then \(B_{1} = B^{*}(A)\) holds if and only if \(A \cap B_1 \neq \varnothing\) and \(\diam B_1 = \diam A\).
\end{corollary}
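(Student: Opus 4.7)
The plan is to reduce both directions of the equivalence to Proposition~\ref{p2.12}, using Proposition~\ref{p2.4} to recenter closed balls and Lemma~\ref{l2.6} together with the ultrametric identity~\eqref{e2.3} to identify radii with diameters. The underlying slogan is that \emph{in an ultrametric space, a closed ball is completely determined by any one of its points together with its diameter}: for any $a \in B_1$ one has $B_1 = \overline{B}_{\diam B_1}(a)$, which combines the fact that any point of a ball is a center (Proposition~\ref{p2.4}) with the fact that the diameter may serve as the radius (Lemma~\ref{l2.6}).

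For the forward direction, I would suppose $B_1 = B^{*}(A)$. Then $A \subseteq B_1$ by the definition of $B^{*}(A)$, so $A \cap B_1 = A \neq \varnothing$. To get $\diam B_1 = \diam A$ I would pick any $a \in A$ and apply Proposition~\ref{p2.12} to write $B_1 = \overline{B}_{\diam A}(a)$; the inequality $\diam B_1 \leqslant \diam A$ then follows from the ultrametric formula~\eqref{e2.3} with $a = c$, while the converse $\diam B_1 \geqslant \diam A$ is simply monotonicity of the diameter under $A \subseteq B_1$.

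For the reverse direction, I would assume $A \cap B_1 \neq \varnothing$ and $\diam B_1 = \diam A$ and choose some $a \in A \cap B_1$. Using the slogan above, $B_1 = \overline{B}_{\diam B_1}(a) = \overline{B}_{\diam A}(a)$, and Proposition~\ref{p2.12} applied to this $a$ identifies the right-hand side with $B^{*}(A)$. Hence $B_1 = B^{*}(A)$.

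I do not expect any serious obstacle, since every ingredient is already established in the excerpt. The only slightly delicate point is the recentering step $B_1 = \overline{B}_{\diam B_1}(a)$ for an arbitrary point $a$ of $B_1$: one must invoke Proposition~\ref{p2.4} to obtain a presentation of $B_1$ as a closed ball centered at $a$, and then invoke Lemma~\ref{l2.6} to swap the original radius for $\diam B_1$. Once this identity is in hand, both implications become one-line consequences of Proposition~\ref{p2.12}.
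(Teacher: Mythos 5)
Your proof is correct and follows exactly the route the paper intends: the paper presents Corollary~\ref{c2.18} as a direct reformulation of Proposition~\ref{p2.12}, and your argument fills in precisely the standard details (recentering via Proposition~\ref{p2.4}, radius--diameter swap via Lemma~\ref{l2.6}, and equality~\eqref{e2.3}) needed to make that reduction explicit in both directions. No gaps.
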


\section{Totally bounded metric spaces}
\label{sec3}

The object of our studies is totally bounded ultrametric spaces. In the present section we mainly collect together some known properties of totally bounded metric spaces.

An important subclass of totally bounded metric spaces is the class of compact metric spaces.

\begin{definition}[Borel---Lebesgue property]\label{d2.3}
Let \((X, d)\) be a metric space. A set \(A\) of \(X\) is \emph{compact} if every family \(\mathcal{F} \subseteq \mathbf{B}_X\) satisfying the inclusion
\[
A \subseteq \bigcup_{B \in \mathcal{F}} B
\]
contains a finite subfamily \(\mathcal{F}_0 \subseteq \mathcal{F}\), \(|\mathcal{F}_0| < \infty\), such that
\[
A \subseteq \bigcup_{B \in \mathcal{F}_0} B.
\]
\end{definition}

A standard definition of compactness usually formulated as: Every open cover of \(A\) in \(X\) has a finite subcover.

There exists a simple interdependence between the compactness of a set \(A \subseteq X\) and the so-called convergent sequences in \(A\). A sequence \((x_n)_{n \in \mathbb{N}}\) of points in a metric space \((X, d)\) is said to converge to a point \(a \in X\),
\begin{equation}\label{e2.8}
\lim_{n \to \infty} x_n = a,
\end{equation}
if, for every open ball \(B \ni a\), it is possible to find an integer \(n_0 \geqslant 1\) such that \(x_n \in B\) for every \(n \geqslant n_0\). Thus, \eqref{e2.8} holds if and only if
\[
\lim_{n \to \infty} d(x_n, a) = 0.
\]

The following classical theorem was proved by Frechet.

\begin{proposition}[Bolzano---Weierstrass property]\label{p2.9}
A subset \(A\) of a metric space is compact if and only if every infinite sequence of points of \(A\) contains a subsequence which converges to a point of \(A\).
\end{proposition}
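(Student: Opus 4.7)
The plan is to prove the two implications separately, relying on Definition~\ref{d2.3} (the Borel--Lebesgue formulation of compactness via covers by open balls) and the characterisation of convergence \(\lim_{n\to\infty} x_n = a \Leftrightarrow \lim_{n\to\infty} d(x_n, a) = 0\) given just before the proposition. The forward implication is a short contradiction argument; the reverse one requires first establishing total boundedness of \(A\) and then a Lebesgue-number lemma for coverings of \(A\) by elements of \(\mathbf{B}_X\).

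For the forward direction, assume \(A\) is compact and let \((x_n)_{n \in \mathbb{N}}\) be any sequence in \(A\). Suppose, for contradiction, that no point of \(A\) is the limit of any subsequence of \((x_n)\). Then for every \(a \in A\) there is a radius \(r_a > 0\) such that the index set \(\{n \colon x_n \in B_{r_a}(a)\}\) is finite; otherwise a diagonal selection from the balls \(B_{1/k}(a)\) would yield a subsequence converging to \(a\). The family \(\{B_{r_a}(a) \colon a \in A\} \subseteq \mathbf{B}_X\) covers \(A\), so by Definition~\ref{d2.3} it admits a finite subcover \(\{B_{r_{a_1}}(a_1), \dots, B_{r_{a_m}}(a_m)\}\). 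But then the infinite set \(\mathbb{N}\) equals the finite union \(\bigcup_{i=1}^{m} \{n \colon x_n \in B_{r_{a_i}}(a_i)\}\) of finite sets, which is impossible.

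For the reverse direction, suppose every sequence in \(A\) has a subsequence converging to a point of \(A\), and let \(\mathcal{F} \subseteq \mathbf{B}_X\) satisfy \(A \subseteq \bigcup_{B\in\mathcal{F}} B\). I would proceed in two steps. First, I prove that \(A\) is totally bounded: for every \(\varepsilon > 0\) there is a finite set \(\{a_1, \dots, a_k\} \subseteq A\) with \(A \subseteq \bigcup_{i=1}^{k} B_\varepsilon(a_i)\). If this failed for some \(\varepsilon\), a greedy inductive construction would produce a sequence \((y_n)\) in \(A\) with \(d(y_i, y_j) \geqslant \varepsilon\) for all \(i \neq j\); such a sequence has no Cauchy, hence no convergent, subsequence, contrary to the hypothesis. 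Second, I establish a Lebesgue-number property: there exists \(\delta > 0\) such that for every \(a \in A\) the ball \(B_\delta(a)\) is contained in some member of \(\mathcal{F}\). If not, one selects \(a_n \in A\) with \(B_{1/n}(a_n)\) not contained in any \(B \in \mathcal{F}\); passing to a subsequence \(a_{n_k} \to a \in A\) and choosing \(B_r(c) \in \mathcal{F}\) with \(a \in B_r(c)\), one sets \(\varepsilon = r - d(a, c) > 0\) and uses the triangle inequality to verify that, once \(d(a_{n_k}, a) < \varepsilon/2\) and \(1/n_k < \varepsilon/2\), the whole ball \(B_{1/n_k}(a_{n_k})\) is inside \(B_r(c)\), a contradiction. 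Combining the two steps, I cover \(A\) by finitely many balls \(B_\delta(c_1), \dots, B_\delta(c_N)\) (total boundedness), replace each \(B_\delta(c_i)\) by some \(B_i \in \mathcal{F}\) containing it (Lebesgue number), and conclude that \(\{B_1, \dots, B_N\}\) is the desired finite subcover of \(\mathcal{F}\).

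The main obstacle is the Lebesgue-number step, which is where convergence of a subsequence is used in a nontrivial geometric way: one must carefully exploit that open balls are open to swallow an entire small ball \(B_{1/n_k}(a_{n_k})\) inside a fixed member of \(\mathcal{F}\). The total-boundedness step and the forward implication are routine contradiction arguments using only the definitions.
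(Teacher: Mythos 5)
Your proof is correct. Note that the paper does not actually prove Proposition~\ref{p2.9}: it attributes the result to Frechet and refers the reader to \cite[p.~206]{Sea2007}, so there is no in-paper argument to compare against. What you have written is the standard textbook proof of the equivalence of cover-compactness and sequential compactness in metric spaces, and all three pieces check out: the forward contradiction argument (each point $a$ gets a ball meeting the sequence only finitely often, and a finite subcover forces $\mathbb{N}$ to be a finite union of finite sets), the total-boundedness step via a greedy $\varepsilon$-separated sequence, and the Lebesgue-number step, where the triangle-inequality estimate $d(y,c) \leqslant d(y,a_{n_k}) + d(a_{n_k},a) + d(a,c) < r$ is exactly what is needed. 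One small point worth making explicit if you write this up: in the forward direction the case where the sequence takes only finitely many values (so that a constant subsequence converges) is absorbed automatically by your dichotomy, since such a repeated value $a$ satisfies that $\{n \colon x_n \in B_r(a)\}$ is infinite for every $r > 0$; no separate case analysis is required.
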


This result and a long list of distinct criteria of compactness can be found, for example, in \cite[p.~206]{Sea2007}.

It is clear that a subset \(A\) of a metric space \((X, d)\) is \emph{bounded} if there is a ball \(B\) in \((X, d)\) such that \(A \subseteq B\).

Now we present a definition of \emph{total boundedness} in the spirit of Borel---Lebesgue property.

\begin{definition}\label{d2.10}
A subset \(A\) of a metric space \((X, d)\) is totally bounded if for every \(r > 0\) there is a finite set \(\{B_r(x_1), \ldots, B_r(x_n)\} \subseteq \mathbf{B}_X\) such that
\[
A \subseteq \bigcup_{i = 1}^{n} B_r(x_i).
\]
\end{definition}

It is easy to see that every totally bounded set is bounded and every compact set is totally bounded.

To formulate an analog of Bolzano---Weierstrass property for totally bounded metric spaces we recall that, for metric space \((X, d)\), a sequence \((x_n)_{n \in \mathbb{N}} \subseteq X\) is a \emph{Cauchy sequence} in \((X, d)\) if, for every \(r > 0\), there is an integer \(n_0 = n_0(r) \geqslant 1\) such that \(x_n \in B_r(x_{n_0})\) for every \(n \geqslant n_0\). It is easy to see that \((x_n)_{n \in \mathbb{N}}\) is a Cauchy sequence if and only if
\[
\lim_{\substack{m \to \infty\\ n \to \infty}} d(x_n, x_m) = 0
\]
holds.

\begin{remark}\label{r2.17}
Here and later for a set \(X\) and sequence \((x_n)_{n\in \mathbb{N}}\) we write \((x_n)_{n\in \mathbb{N}} \subseteq X\) if and only if \(x_n \in X\) holds for every \(n \in \mathbb{N}\).
\end{remark}

\begin{proposition}\label{p2.11}
A subset \(A\) of a metric space \((X, d)\) is totally bounded if and only if every infinite sequence of points of \(A\) contains a Cauchy subsequence.
\end{proposition}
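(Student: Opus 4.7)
The plan is to prove both implications of Proposition~\ref{p2.11} directly from Definition~\ref{d2.10}, using standard extraction and contrapositive constructions.

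For the forward direction, I would assume $A$ is totally bounded and extract a Cauchy subsequence from an arbitrary infinite sequence $(x_n)_{n \in \mathbb{N}} \subseteq A$ by a diagonal pigeonhole argument. Applying Definition~\ref{d2.10} with $r = 1/k$ yields, for each $k \geqslant 1$, a finite family of open balls of radius $1/k$ covering $A$; so by the pigeonhole principle at least one ball in the cover corresponding to $k = 1$ contains infinitely many terms of $(x_n)$, giving a subsequence $(x_n^{(1)})$. Inductively, one extracts from $(x_n^{(k)})$ a further subsequence $(x_n^{(k+1)})$ whose terms all lie in a single ball of radius $1/(k+1)$. The diagonal sequence $y_k := x_k^{(k)}$ then satisfies $d(y_m, y_n) < 2/k$ whenever $m, n \geqslant k$ (by the triangle inequality, since $y_m$ and $y_n$ both lie in a common ball of radius $1/k$), so it is Cauchy.

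For the backward direction, I would argue by contradiction. If $A$ is not totally bounded, then the negation of Definition~\ref{d2.10} furnishes some $r > 0$ such that no finite family of open balls of radius $r$ covers $A$. Starting from any $x_1 \in A$, one picks inductively
\[
x_{n+1} \in A \setminus \bigcup_{i=1}^{n} B_r(x_i),
\]
which is nonempty for every $n$ by the choice of $r$. The resulting infinite sequence $(x_n)_{n \in \mathbb{N}} \subseteq A$ satisfies $d(x_n, x_m) \geqslant r$ for all $m \neq n$, so no subsequence of it can be Cauchy, contradicting the hypothesis.

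Neither direction presents a genuine obstacle; since this is a classical fact, the proof reduces to writing these two standard constructions carefully. The only minor points to watch are that the diagonal extraction really produces a subsequence of the original sequence (so indices must be chosen strictly increasingly at each stage) and that the sequence built in the contrapositive is actually infinite, which is immediate from the defining property of the chosen $r$.
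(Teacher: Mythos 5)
Your proof is correct. The paper does not actually prove Proposition~\ref{p2.11}; it simply cites Theorem~7.8.2 of \cite{Sea2007}, and your two constructions (the nested pigeonhole extraction with a diagonal subsequence for the forward direction, and the $r$-separated sequence for the contrapositive of the backward direction) are precisely the standard textbook argument that the citation delegates to. The only point worth stating explicitly when writing it up is that the nesting of the extracted subsequences is what guarantees that $y_m$ and $y_n$ with $m, n \geqslant k$ are both terms of the level-$k$ subsequence and hence lie in a common ball of radius $1/k$; together with your remark about choosing indices strictly increasingly, this closes every gap.
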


See, for example, Theorem~7.8.2 \cite{Sea2007}.

\begin{corollary}\label{c2.17}
If \(A\) is a totally bounded subset of \((X, d)\) and \(C\) is a subset of \(A\), then \(C\) is also totally bounded.
\end{corollary}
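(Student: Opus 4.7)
The plan is to read total boundedness directly from Definition~\ref{d2.10}. Given any \(r > 0\), the total boundedness of \(A\) yields a finite family \(\{B_r(x_1), \ldots, B_r(x_n)\} \subseteq \mathbf{B}_X\) such that
\[
A \subseteq \bigcup_{i=1}^{n} B_r(x_i).
\]
Since \(C \subseteq A\) by hypothesis, the very same finite family covers \(C\), and therefore \(C\) satisfies Definition~\ref{d2.10} as well. Note that this argument works cleanly because Definition~\ref{d2.10} allows the centers \(x_i\) to be arbitrary points of \(X\), not necessarily members of \(A\); if one instead insisted on centers lying in \(C\), an additional routine argument (halving the radius and picking, for each \(i\) with \(B_r(x_i) \cap C \neq \varnothing\), a representative \(c_i \in B_r(x_i) \cap C\)) would be needed.

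As an alternative route, one could invoke Proposition~\ref{p2.11}: an infinite sequence \((x_n)_{n \in \mathbb{N}} \subseteq C\) is in particular an infinite sequence in \(A\), and therefore has a Cauchy subsequence by total boundedness of \(A\); applying Proposition~\ref{p2.11} again in the other direction shows that \(C\) is totally bounded. Both routes are immediate and there is no real obstacle; the statement is included mainly to record monotonicity of total boundedness under passing to subsets, which will be used freely in later sections.
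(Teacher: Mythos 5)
Your proposal is correct; the paper gives no explicit proof, but its placement of the statement immediately after Proposition~\ref{p2.11} indicates exactly your second route (an infinite sequence in \(C\) is one in \(A\), extract a Cauchy subsequence, apply Proposition~\ref{p2.11} in reverse), and your first, purely definitional route is an equally valid and even more elementary alternative since Definition~\ref{d2.10} does not require the centers to lie in the covered set. Nothing is missing.
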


Let \((X, d)\) be a metric space and let \(A \subseteq X\). A subset \(S\) of \(A\) is said to be \emph{dense} in \(A\) if for every \(a \in A\) there is a sequence \((s_n)_{n\in \mathbb{N}} \subseteq S\) such that
\[
a = \lim_{n \to \infty} s_n.
\]

By definition, a metric space \((X, d)\) is \emph{separable} if \(X\) contains an at most countable, dense (in \(X\)) subset. It is clear that every totally bounded metric space is separable.

\begin{definition}\label{d2.18}
A metric space \((X, d)\) is \emph{complete} if every Cauchy sequence in \((X, d)\) converges to a point of \(X\).
\end{definition}

It can be proved that for a given metric space \((X, d)\) there exists exactly one (up to isometry) complete metric space \((\widetilde{X}, \widetilde{d})\) such that \(\widetilde{X}\) contains a dense, isometric to \((X, d)\), subset (see, for example, Theorem~4.3.19 \cite{Eng1989}). The metric space \((\widetilde{X}, \widetilde{d})\) satisfying the above condition is called the \emph{completion of} \((X, d)\).

\begin{proposition}\label{p2.13}
The completion \((\widetilde{X}, \widetilde{d})\) of a metric space \((X, d)\) is compact if and only if \((X, d)\) is totally bounded.
\end{proposition}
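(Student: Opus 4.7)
The plan is to deduce the equivalence directly from the sequential characterizations already collected above: the Bolzano--Weierstrass property (Proposition~\ref{p2.9}) and the Cauchy-subsequence characterization of total boundedness (Proposition~\ref{p2.11}).

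For the \emph{only if} direction I would argue as follows. Assume $(\widetilde{X},\widetilde{d})$ is compact. By Proposition~\ref{p2.9} every infinite sequence in $\widetilde{X}$ has a convergent, and hence Cauchy, subsequence, so Proposition~\ref{p2.11} gives that $\widetilde{X}$ is totally bounded. Since $(X,d)$ is isometric to a subset of $(\widetilde{X},\widetilde{d})$ by the definition of completion, and since total boundedness is inherited by subsets (Corollary~\ref{c2.17}), the space $(X,d)$ is totally bounded.

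For the \emph{if} direction, the first step is to lift total boundedness from $(X,d)$ to $(\widetilde{X},\widetilde{d})$. Fix $r>0$. By Definition~\ref{d2.10} applied to $X$, choose points $x_{1},\dots,x_{n}\in X$ with $X\subseteq\bigcup_{i=1}^{n}B_{r/2}(x_{i})$. Using the density of (the isometric image of) $X$ in $\widetilde{X}$ together with the triangle inequality, I would show that every $\widetilde{y}\in\widetilde{X}$ lies within $\widetilde{d}$-distance $r$ of some $x_{i}$: approximate $\widetilde{y}$ by a point of $X$ to within $r/2$, then cover that point by one of the $B_{r/2}(x_{i})$. This yields a finite cover of $\widetilde{X}$ by open balls of radius $r$, so $\widetilde{X}$ is totally bounded. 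Now, given any infinite sequence in $\widetilde{X}$, Proposition~\ref{p2.11} produces a Cauchy subsequence, and completeness of $(\widetilde{X},\widetilde{d})$ (Definition~\ref{d2.18}) converts it into a convergent one. Proposition~\ref{p2.9} then yields compactness of $\widetilde{X}$.

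There is no genuinely hard step here; the result is simply an assembly of lemmas already stated. The only place where a small amount of care is needed is the density argument that transfers finite $r/2$-covers of $X$ to finite $r$-covers of $\widetilde{X}$, but this is an entirely routine triangle-inequality estimate.
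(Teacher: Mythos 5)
Your argument is correct. The paper itself gives no proof of Proposition~\ref{p2.13}; it simply refers the reader to Corollary~4.3.30 of \cite{Eng1989}, so there is nothing to compare against beyond the standard textbook argument, which is exactly what you reproduce. Both directions are sound: the \emph{only if} direction correctly chains Proposition~\ref{p2.9}, the fact that convergent sequences are Cauchy, Proposition~\ref{p2.11}, and Corollary~\ref{c2.17}; the \emph{if} direction correctly lifts a finite $r/2$-cover of $X$ to a finite $r$-cover of $\widetilde{X}$ via density and the triangle inequality, and then combines total boundedness with completeness through the sequential characterizations. The one point requiring care --- that the centers $x_i$ must be read as points of $\widetilde{X}$ via the isometric embedding before measuring distances with $\widetilde{d}$ --- is handled by your density estimate, so no gap remains.
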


For the proof see, for example, Corollary~4.3.30 in \cite{Eng1989}.

\begin{lemma}\label{l2.20}
The completion \((\widetilde{X}, \widetilde{d})\) is ultrametric for every ultrametric \((X, d)\).
\end{lemma}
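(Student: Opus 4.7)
The plan is to exploit the density of $X$ in $\widetilde{X}$ together with the continuity of the metric $\widetilde{d}$, and simply pass to the limit in the strong triangle inequality.

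Concretely, I would first recall (or verify directly from the general triangle inequality) that any metric is $1$-Lipschitz in each variable: for any sequences $(u_n)_{n \in \mathbb{N}}$, $(v_n)_{n \in \mathbb{N}} \subseteq \widetilde{X}$ with $u_n \to u$ and $v_n \to v$ in $(\widetilde{X}, \widetilde{d})$, one has
\[
|\widetilde{d}(u_n, v_n) - \widetilde{d}(u, v)| \leqslant \widetilde{d}(u_n, u) + \widetilde{d}(v_n, v) \to 0,
\]
so that $\widetilde{d}(u, v) = \lim_{n \to \infty} \widetilde{d}(u_n, v_n)$. This is the only nontrivial analytic input.

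Next, fix arbitrary $x$, $y$, $z \in \widetilde{X}$. Identifying $X$ with its dense, isometric image in $\widetilde{X}$, choose sequences $(x_n)$, $(y_n)$, $(z_n) \subseteq X$ with $x_n \to x$, $y_n \to y$, $z_n \to z$. Since $d$ is ultrametric on $X$ and $\widetilde{d}$ agrees with $d$ on $X$, we have
\[
\widetilde{d}(x_n, y_n) \leqslant \max\{\widetilde{d}(x_n, z_n),\, \widetilde{d}(z_n, y_n)\}
\]
for every $n \in \mathbb{N}$. Passing to the limit as $n \to \infty$, using the continuity observation above and the continuity of $\max$, yields
\[
\widetilde{d}(x, y) \leqslant \max\{\widetilde{d}(x, z),\, \widetilde{d}(z, y)\},
\]
as required.

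The argument has no real obstacle; the only point that needs a brief justification is the continuity of $\widetilde{d}$ (guaranteeing that strict/non-strict inequalities survive the limit), since the rest is just the standard density-plus-continuity template for extending an identity or inequality from a dense subset to the whole completion.
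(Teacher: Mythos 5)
Your proof is correct and follows essentially the same route as the paper's: identify $X$ with its dense isometric image in $\widetilde{X}$, approximate $x$, $y$, $z$ by sequences from $X$, and pass to the limit in the strong triangle inequality using the continuity of $\widetilde{d}$. Your explicit $1$-Lipschitz estimate for the metric is just a spelled-out version of the continuity fact the paper invokes without proof.
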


\begin{proof}
Since \(\widetilde{d}\) is a metric on \(\widetilde{X}\), to prove that \((\widetilde{X}, \widetilde{d})\) is ultrametric it suffices to justify
\begin{equation}\label{l2.20:e2}
\widetilde{d} (x, y) \leqslant \max\{\widetilde{d} (x, z), \widetilde{d} (z, y)\}
\end{equation}
for all \(x\), \(y\), \(z \in \widetilde{X}\). By definition, the metric space \((\widetilde{X}, \widetilde{d})\) contains a dense subspace \((X', \widetilde{d}|_{X' \times X'})\) which is isometric to \((X, d)\). For simplicity we may assume \(X = X'\) and \(\widetilde{d}|_{X' \times X'} = d\).

Let \((x_n)_{n \in \mathbb{N}}\), \((y_n)_{n \in \mathbb{N}}\) and \((z_n)_{n \in \mathbb{N}}\) be sequences of points of \(X\) such that
\begin{equation}\label{l2.20:e3}
\lim_{n \to \infty} x_n = x, \quad \lim_{n \to \infty} y_n = y \quad \text{and} \quad \lim_{n \to \infty} z_n = z.
\end{equation}
Since \(\widetilde{d} \colon \widetilde{X} \times \widetilde{X} \to \RR^{+}\) is a continuous mapping and \(d \colon X \times X \to \RR^{+}\) is an ultrametric, \eqref{l2.20:e3} implies
\begin{align*}
\widetilde{d} (x, y) = \lim_{n \to \infty} d(x_n, y_n) &\leqslant \max\left\{\lim_{n \to \infty} d(x_n, z_n), \lim_{n \to \infty} d(z_n, x_n)\right\} \\
&= \max\{\widetilde{d} (x, z), \widetilde{d} (z, y)\}.
\end{align*}
Inequality~\eqref{l2.20:e2} follows.
\end{proof}

The following result is a modification of Lemma~3.19 from~\cite{BDKP2017AASFM}.

\begin{proposition}\label{p3.9}
Let \((X, d)\) and \((Y, \rho)\) be compact metric spaces and let \(\Phi \colon X \to Y\) be an isometric embedding. If there is an isometric embedding \(\Psi \colon Y \to X\), then \(\Phi\) is an isometry.
\end{proposition}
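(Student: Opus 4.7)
The plan is to reduce the claim to the well-known fact that an isometric self-embedding of a compact metric space must be surjective, and then to apply it to the composition $\Phi \circ \Psi$. Since $\Phi$ is already an isometric embedding, injectivity and distance preservation are for free; the only thing to establish is that $\Phi(X) = Y$.

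First I would form $f = \Phi \circ \Psi \colon Y \to Y$, which is an isometric embedding because the composition of isometric embeddings is one. The image $f(Y) = \Phi(\Psi(Y))$ is contained in $\Phi(X)$, so proving that $f$ is surjective immediately gives $Y \subseteq \Phi(X)$, hence $\Phi$ is bijective and thus an isometry in the sense of Definition~\ref{d2.2}.

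The heart of the argument is therefore showing that $f$ is onto. Suppose for contradiction that there exists $y_{0} \in Y \setminus f(Y)$. Since $f$ is continuous and $Y$ is compact, $f(Y)$ is compact, hence closed, so $\varepsilon := \dist(y_{0}, f(Y)) > 0$ with $\dist$ as in~\eqref{r2.10:e1}. Form the iterates $y_{n} = f^{n}(y_{0})$. For any $m > n \geqslant 0$, isometricity of $f$ gives $\rho(y_{m}, y_{n}) = \rho(f^{m-n}(y_{0}), y_{0}) \geqslant \varepsilon$, because $f^{m-n}(y_{0}) \in f(Y)$. Thus $(y_{n})_{n \in \mathbb{N}}$ admits no Cauchy subsequence, contradicting the total boundedness of the compact space $Y$ via Proposition~\ref{p2.11}.

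The only mildly delicate point is checking that the iterated distances $\rho(y_{m}, y_{n})$ are all bounded below by the same $\varepsilon$; this is immediate once one uses the isometric property to shift the indices down by $n$. The rest is a packaging step invoking compactness of $Y$ and the containment $f(Y) \subseteq \Phi(X)$. No appeal to ultrametricity is needed, consistent with the fact that Proposition~\ref{p3.9} is stated for general compact metric spaces.
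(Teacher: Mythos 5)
Your proof is correct and follows essentially the same route as the paper: both reduce the claim to the surjectivity of an isometric self-embedding of a compact space, established by iterating the map on a point outside its (compact, hence positively distant) image to produce an $\varepsilon$-separated sequence with no Cauchy subsequence. The only cosmetic difference is that you iterate $\Phi \circ \Psi$ on $Y$ while the paper iterates $\Psi \circ \Phi$ on $X$; both compositions yield the conclusion $\Phi(X) = Y$ equally well.
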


\begin{proof}
Suppose that there is an isometric embedding \(\Psi \colon Y \to X\). Let \(F \colon X \to X\) be the isometric self-embedding for which the following diagram
\begin{equation*}
\ctdiagram{
\ctv 0,0:{X}
\ctv 100,0:{X}
\ctv 50,50:{Y}
\ctel 0,0,50,50:{\Phi}
\ctet 0,0,100,0:{F}
\cter 50,50,100,0:{\Psi}
}
\end{equation*}
is commutative. The mapping \(\Phi \colon X \to Y\) is an isometry if and only if \(\Phi(X) = Y\) holds. Assume that \(\Phi(X) \neq  Y\). Then the set \(F(X)\) is a proper subset of \(X\) and, consequently, we can find a point \(p \in X \setminus F(X)\). Let us define a sequence \((p_i)_{i \in \mathbb{N}} \subseteq X\) as
\begin{equation}\label{p3.9:e1}
p_1 = p, \quad p_2 = F(p_1), \quad p_3 = F(p_2), \ldots
\end{equation}
an so on. Since \(F\) is continuous, the set \(F(X)\) is a compact subset of \(X\) \cite[Theorem~3.1.10]{Eng1989}. Hence, the number
\[
\varepsilon_0 = \inf_{x \in F(X)} d (p_1, x)
\]
is strictly positive, \(\varepsilon_0 > 0\). It follows from~\eqref{p3.9:e1} that \(p_i\) belongs to \(F(X)\) for every \(i \geqslant 2\). Consequently, we have the inequality \(d (p_1, p_i) \geqslant \varepsilon_0\) for every \(i \geqslant 2\). The function \(F\) preserves the distances. It implies
\[
d(p_j, p_i) = d(F(p_{j-1}), F(p_{i-1})) = d(p_{j-1}, p_{i-1}) = \ldots = d(p_{1}, p_{i-(j-1)}) \geqslant \varepsilon_0
\]
whenever \(i > j\). Thus, \(d(p_j, p_i) \geqslant \varepsilon_0 > 0\) holds if \(i \neq j\). In particular, the sequence \((p_i)_{i \in \mathbb{N}}\) has not any convergent subsequence, contrary to Proposition~\ref{p2.9}. This contradiction implies \(F(X) = X\).
\end{proof}

The following lemma will be useful for description of distance sets of totally bounded ultrametric spaces.

\begin{lemma}\label{l6.4}
For every compact metric space \((X, d)\) the distance set \(D(X)\) is a compact subset of \((\RR^{+}, \rho)\) with respect to the standard Euclidean metric \(\rho\).
\end{lemma}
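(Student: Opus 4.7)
The plan is to prove Lemma~\ref{l6.4} by showing that every sequence in \(D(X)\) has a subsequence converging to a point of \(D(X)\), and then invoke the Bolzano---Weierstrass characterization of compactness (Proposition~\ref{p2.9}) applied to the Euclidean metric space \((\RR^{+}, \rho)\).

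First I would pick an arbitrary sequence \((t_n)_{n \in \mathbb{N}} \subseteq D(X)\) and write each \(t_n\) as \(t_n = d(x_n, y_n)\) for some \(x_n, y_n \in X\), which is possible by the very definition of \(D(X)\). Since \(X\) is compact, Proposition~\ref{p2.9} gives a subsequence \((x_{n_k})_{k \in \mathbb{N}}\) converging to some \(x \in X\). Applying Proposition~\ref{p2.9} again to the corresponding subsequence \((y_{n_k})_{k \in \mathbb{N}}\) yields a further subsequence \((y_{n_{k_j}})_{j \in \mathbb{N}}\) converging to some \(y \in X\); the first subsequence \((x_{n_{k_j}})_{j \in \mathbb{N}}\) of course still converges to \(x\).

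The next step is to use the standard fact that the metric \(d \colon X \times X \to \RR^{+}\) is continuous; this follows immediately from the triangle inequality via the estimate
\[
|d(x', y') - d(x, y)| \leqslant d(x', x) + d(y', y).
\]
Applying this to \((x_{n_{k_j}}, y_{n_{k_j}}) \to (x, y)\) gives
\[
\lim_{j \to \infty} t_{n_{k_j}} = \lim_{j \to \infty} d(x_{n_{k_j}}, y_{n_{k_j}}) = d(x, y) \in D(X),
\]
so the original sequence has a subsequence converging in \(D(X)\). By Proposition~\ref{p2.9}, \(D(X)\) is compact in \((\RR^{+}, \rho)\).

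There is no real obstacle here; the only minor technical point is the double extraction of subsequences to ensure both coordinate sequences converge simultaneously, which is handled by applying Bolzano---Weierstrass twice. Alternatively, one could phrase the argument slightly more abstractly: \(X \times X\) is compact as a product of compact metric spaces, \(d\) is continuous, and the continuous image of a compact set is compact, so \(D(X) = d(X \times X)\) is compact; I prefer the sequential version since it is self-contained within the framework already set up in Section~\ref{sec3}.
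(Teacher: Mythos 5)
Your proof is correct, but it follows a different route from the paper. The paper's proof is purely topological: it cites the Tychonoff theorem to get compactness of \(X \times X\), notes that \(d\) is continuous and that \(X \times X\) is Hausdorff, and concludes that \(D(X) = d(X \times X)\) is compact as a continuous image of a compact Hausdorff space --- essentially the ``abstract'' alternative you mention and set aside at the end of your proposal. Your version instead works sequentially: you realize an arbitrary sequence in \(D(X)\) as \(d(x_n, y_n)\), extract convergent subsequences of \((x_n)\) and then of \((y_n)\) via the Bolzano---Weierstrass property (Proposition~\ref{p2.9}), and pass to the limit using the Lipschitz-type estimate \(|d(x', y') - d(x, y)| \leqslant d(x', x) + d(y', y)\). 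The trade-off is clear: the paper's argument is shorter but leans on three general-topology facts imported from Engelking, while yours is longer but entirely self-contained within the metric-space machinery already developed in Section~\ref{sec3} and avoids any appeal to product topologies. Both are complete; the double subsequence extraction, which is the only delicate point in your version, is handled correctly.
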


\begin{proof}
Let \((X, d)\) be compact. The metric \(d \colon X \times X \to \RR^{+}\) is a continuous function on the Cartesian product \(X \times X\). Any Cartesian product of compact spaces is compact by Tychonoff theorem (\cite[Theorem~3.2.4]{Eng1989}). Furthermore, every continuous image of compact Hausdorff space is compact (\cite[Theorem~3.1.10]{Eng1989}). To complete the proof it suffices to note that \((X, d)\) evidently is a Hausdorff space and that Cartesian products of Hausdorff spaces are Hausdorff (\cite[Theorem~2.3.11]{Eng1989}).
\end{proof}

The following corollary of Lemma~\ref{l6.4} is a reformulation of Theorem~68 from \cite{Kap1977}.

\begin{corollary}\label{c3.10}
For every nonempty, compact metric space \((X, d)\) there are points \(x_1\), \(x_2 \in X\) such that \(d(x_1, x_2) = \diam X\).
\end{corollary}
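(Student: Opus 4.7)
The plan is to read this off directly from Lemma~\ref{l6.4} together with the observation, recorded right after equation~\eqref{e2.1}, that $\diam X = \sup D(X)$. Concretely, I would first note that since $X$ is nonempty, picking any $x \in X$ gives $0 = d(x,x) \in D(X)$, so the distance set is a nonempty subset of $\RR^{+}$. Lemma~\ref{l6.4} then tells me that $D(X)$ is compact in $(\RR^{+}, \rho)$, hence in particular closed and bounded.

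Next I would invoke the elementary real-analysis fact that a nonempty closed bounded subset of $\RR$ attains its supremum, to conclude that $\sup D(X) \in D(X)$. Unwinding the definition of $D(X)$, this means there exist $x_1, x_2 \in X$ with
\[
d(x_1, x_2) = \sup D(X) = \diam X,
\]
which is the desired conclusion.

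There is essentially no obstacle here beyond checking that $D(X)$ is nonempty (ensured by nonemptiness of $X$), since both the compactness of $D(X)$ and the identity $\diam X = \sup D(X)$ have already been established in the excerpt; the corollary is purely a consequence of the fact that a real-valued continuous function on a nonempty compact space attains its maximum, specialized to $d$ on $X \times X$.
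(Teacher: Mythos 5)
Your proof is correct and follows exactly the route the paper intends: Corollary~\ref{c3.10} is stated as a consequence of Lemma~\ref{l6.4}, and you spell out the derivation (nonempty compact $D(X)\subseteq\RR$ attains its supremum, which equals $\diam X$). Nothing is missing.
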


We conclude the section with an important example of compact ultrametric space.

\begin{example}[The Cantor Set as the unit ball in \(\mathbb{Q}_2\)]\label{ex3.11}
Recall that, for every prime number \(p \geqslant 2\), the \(p\)-adic valuation of \(t \in \mathbb{Q}\) is defined as
\begin{equation}\label{e3.2}
|t|_p = \begin{cases}
p^{-\gamma} & \text{if } t \neq 0,\\
0 & \text{if } t = 0,
\end{cases}
\end{equation}
where \(\gamma = \gamma(t)\) is the unique integer number such that
\[
t = p^{\gamma} \frac{m}{n}
\]
and \(m\), \(n\) are coprime to \(p\). The \(p\)-adic valuation \(|\cdot|_p\) satisfies the non-Archimedean property
\[
|t+w|_p \leqslant \max \{|t|_p, |w|_p\}
\]
which implies that the mapping
\begin{equation}\label{e3.3}
d_p \colon \mathbb{Q} \times \mathbb{Q} \to \RR^{+}, \quad d_p(t, w) = |t-w|_p,
\end{equation}
is an ultrametric on \(\mathbb{Q}\) (formula (1.10), Chapter~I, \cite{Bachman1964}). The completion of the field \(\mathbb{Q}\) of rational numbers with respect to ultrametric \(d_p \colon \mathbb{Q} \times \mathbb{Q} \to \RR^{+}\) is the field of \(p\)-adic numbers and it is denoted by \(\mathbb{Q}_p\) (see, for example, Chapter II in \cite{Bachman1964} for details). For simplicity, we will always assume that the field \(\mathbb{Q}\) is a subfield of the field \(\mathbb{Q}_p\) and will preserve the symbol \(d_p\) for the metric on the completion \(\mathbb{Q}_p\) of \((\mathbb{Q}, d_p)\). Every \(p\)-adic number \(x\) has the unique canonical form
\begin{equation}\label{ex3.11:e2}
x = \sum_{i \in \mathbb{Z}} d_i(x) p^{i}
\end{equation}
where \(d_i \in \{0, 1, 2,\ldots, p-1\}\) for all \(i \in \mathbb{Z}\) and there is \(i_0 = i_0(x) \in \mathbb{Z}\) such that \(d_i = 0\) whenever \(i \leqslant i_0\) (Theorem~2.1, Chapter~II, \cite{Bachman1964}). In particular, every member of \(\mathbb{Q}_2\) can be written in form~\eqref{ex3.11:e2} with \(d_k \in \{0, 1\}\).

Let \(D = \{0, 1\}\) be a discrete two-point topological space. Following Engelking \cite{Eng1989}, we denote by \(D^{\aleph_0}\) the Cartesian product
\[
\prod_{i \in \mathbb{N}} D_i,
\]
endowed with the Tychonoff topology, where \(D_i = D\) for every \(i \in \mathbb{N}\), so every point \(x \in D^{\aleph_0}\) is a sequence \((\beta_i(x))_{i \in \mathbb{N}} \subseteq D\). Now to \(x \in D^{\aleph_0}\), we can associate an unique formal power series
\begin{equation}\label{ex3.11:e1}
\sum_{i=1}^{\infty} \beta_i(x) 2^{i-1} = \beta_1(x) 2^{0} + \beta_2(x) 2^{1} + \ldots + \beta_i(x) 2^{i-1} + \ldots
\end{equation}
with \(\beta_i(x) \in \{0, 1\}\) and, conversely, for every \(\sum_{i=1}^{\infty} \beta_i 2^{i-1}\) with \(\beta_i \in \{0, 1\}\), there is an unique \(x \in D^{\aleph_0}\) such that
\[
\sum_{i=1}^{\infty} \beta_i(x) 2^{i-1} = \sum_{i=1}^{\infty} \beta_i 2^{i-1}.
\]
The inequality \(|\beta_i(x) 2^{i-1}|_2 \leqslant 2^{1-i}\) implies the convergence of series~\eqref{ex3.11:e1} in \(\mathbb{Q}_2\). It is easy to prove that the image of \(D^{\aleph_0}\) under mapping
\begin{equation}\label{ex3.11:e3}
D^{\aleph_0} \ni x \mapsto \sum_{i=1}^{\infty} \beta_i (x) 2^{i-1} \in \mathbb{Q}_2
\end{equation}
coincides with the unit closed ball
\[
\overline{B}_1(0) = \{x \in \mathbb{Q}_2 \colon d_2(0, x) \leqslant 1\}.
\]
The ball \(\overline{B}_1(0)\) is a compact, ultrametric space (Theorem~5.1 \cite{Sch1985}). It follows from the definitions of Tychonoff topology and of the metric \(d_2\) that bijection~\eqref{ex3.11:e3} is continuous. Since every continuous bijection of compact spaces is a homeomorphism (\cite[p.~125, Theorem~3.1.13]{Eng1989}), the spaces \(\overline{B}_1(0)\) and \(D^{\aleph_0}\) are homeomorphic.
\end{example}

\begin{remark}
The unit closed ball \(\overline{B}_1(0)\) in the field \(\mathbb{Q}_2\) has the rich algebraic structure. It is the (Hensel's) ring \(\mathbb{Z}_2\) of \(2\)-adic integer numbers and it is also an integral domain for which \(\mathbb{Q}_2\) is the ``smallest'' field containing this domain. We also note that the ring \(\mathbb{Z}\) of all usual integer numbers is a dense subset of \(\mathbb{Z}_2\).
\end{remark}

\begin{remark}
Hensel's ring \(\mathbb{Z}_2\) is not the only known model for \(D^{\aleph_0}\). Among the many spaces homeomorphic to \(D^{\aleph_0}\), the most popular and well-studied is apparently the Cantor ternary set. It is a perfect subset of \(\RR\) which was defined in Cantor's paper \cite{Can1884AM}. The interesting information related to Cantor's set and Cantor's function can be found in \cite{Val2013} and \cite{DMRV2006EM}. Section~\ref{sec9} of the present paper contains a new description of all homeomorphic to \(D^{\aleph_0}\), ultrametric spaces on the language of representing trees (see Proposition~\ref{p9.10}).
\end{remark}

\section{To ultrametricity via cycles and complete multipartite graphs}
\label{sec4}

At the beginning of this section, we recall the concepts of a (simple) graph and cycle. A characterization of cycles of points of ultrametric space is given in Proposition~\ref{p2.3}. The first part of the section also contains several applications of this proposition in the proofs of properties of ultrametric spaces.

Theorem~\ref{t5.18} from the second part of the section gives us a characterization of ultrametric spaces with the help of complete multipartite graphs. In Theorem~\ref{t2.24}, it is shown that the diametrical graph of ultrametric space is nonempty if and only if it is a complete multipartite graph. Using this result we prove that, for every totally bonded ultrametric space, the diametrical graph is complete \(k\)-partite with some \(k \in \mathbb{N}\) (see Proposition~\ref{p2.36}).

\subsection{Cycles in ultrametric spaces}
\label{sec4.1}

A \textit{simple graph} is a pair $(V,E)$ consisting of a nonempty set $V$ and a set $E$ whose elements are unordered pairs \(\{u, v\}\) of different points \(u\), \(v \in V\). For a graph $G=(V,E)$, the sets $V=V(G)$ and $E=E(G)$ are called \textit{the set of vertices (or nodes)} and \textit{the set of edges}, respectively. We say that \(G\) is \emph{empty} if \(E(G) = \varnothing\). A graph \(G\) is \emph{finite} if \(V(G)\) is a finite set, \(|V(G)| < \infty\). If $\{x,y\} \in E(G)$, then the vertices $x$ and $y$ are \emph{adjacent}, and we say that \(x\) and \(y\) are \emph{ends} of \(\{x,y\}\).

A finite graph $C$ is a \textit{cycle} if $|V(C)|\geq 3$ and there exists an enumeration \(v_1\), \(v_2\), \(\ldots\), \(v_n\) of its vertices such that
\begin{equation*}
(\{v_i,v_j\}\in E(C))\Leftrightarrow (|i-j|=1\quad \mbox{or}\quad |i-j|=n-1).
\end{equation*}

Using the concept of cycle we can reformulate the condition of ultrametricity as follows.

\begin{proposition}\label{p2.3}
Let \((X, d)\) be a metric space. Then the following two conditions are equivalent:
\begin{enumerate}
\item \label{p2.3:s1} \((X, d)\) is ultrametric.
\item \label{p2.3:s2} For every cycle \(C\) with \(V(C) \subseteq X\) there exist at least two different edges \(\{x_1, y_1\}\), \(\{x_2, y_2\} \in E(C)\) such that
\begin{equation}\label{p2.3:e1}
d(x_1, y_1) = d(x_2, y_2) = \max\{d(x, y) \colon \{x, y\} \in E(C)\}.
\end{equation}
\end{enumerate}
\end{proposition}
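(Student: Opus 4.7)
The plan is to prove the two implications separately, handling the harder direction $(\ref{p2.3:s1}) \Rightarrow (\ref{p2.3:s2})$ by an iterated application of the strong triangle inequality around the cycle, and the converse by specializing to $3$-cycles.

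For $(\ref{p2.3:s1}) \Rightarrow (\ref{p2.3:s2})$, let $C$ be a cycle with vertex enumeration $v_1, v_2, \ldots, v_n$ and edges $\{v_i, v_{i+1}\}$ (indices mod $n$). Write $M = \max\{d(x,y) : \{x,y\} \in E(C)\}$ and suppose, for contradiction, that exactly one edge, say $\{v_k, v_{k+1}\}$, realizes $M$. Then every other edge $e \in E(C) \setminus \{\{v_k, v_{k+1}\}\}$ satisfies $d(e) < M$. I would then consider the path in $C$ that goes from $v_k$ to $v_{k+1}$ the \emph{long} way around, namely through $v_{k-1}, v_{k-2}, \ldots, v_1, v_n, \ldots, v_{k+2}$. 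Iterating the strong triangle inequality along the $n-1$ edges of this path gives
\[
d(v_k, v_{k+1}) \leqslant \max\bigl\{d(e) : e \in E(C) \setminus \{\{v_k, v_{k+1}\}\}\bigr\} < M,
\]
contradicting $d(v_k, v_{k+1}) = M$. Hence the maximum must be attained at least twice.

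For the converse $(\ref{p2.3:s2}) \Rightarrow (\ref{p2.3:s1})$, I would fix three arbitrary points $x, y, z \in X$ and verify the strong triangle inequality $d(x,y) \leqslant \max\{d(x,z), d(z,y)\}$. If any two of $x, y, z$ coincide, the inequality is trivial. Otherwise $x, y, z$ are pairwise distinct and form the vertex set of a $3$-cycle $C$ with $E(C) = \bigl\{\{x,y\}, \{y,z\}, \{z,x\}\bigr\}$. Applying the hypothesis to $C$, at least two of the three edge-lengths $d(x,y)$, $d(y,z)$, $d(z,x)$ must equal their maximum, which immediately yields $d(x,y) \leqslant \max\{d(x,z), d(z,y)\}$.

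The only real obstacle is the forward direction, and it reduces to the well-known iterated form of the strong triangle inequality: along any finite chain $u_0, u_1, \ldots, u_m$ of points one has $d(u_0, u_m) \leqslant \max_{1 \leqslant i \leqslant m} d(u_{i-1}, u_i)$. This is a straightforward induction on $m$ from condition~(3) in the definition of an ultrametric, and I would either invoke it as a standard fact or give a one-line inductive justification before applying it to the long path of the cycle. No further technicalities are needed.
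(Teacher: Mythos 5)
Your proof is correct. The backward implication is handled exactly as in the paper (specializing to $3$-cycles), but your forward implication takes a cleaner, slightly different route. The paper argues by induction on $|E(C)|$: assuming a cycle with $n+1$ edges has a unique maximal edge $\{x,y\}$, it picks the neighbour $z$ of $y$ other than $x$, uses the isosceles property to get $d(x,z)=d(x,y)$, deletes $y$ and inserts the edge $\{x,z\}$ to obtain a smaller cycle that again has a unique maximal edge, contradicting the inductive hypothesis. You instead reduce directly to the chain form of the strong triangle inequality, $d(u_0,u_m)\leqslant \max_{1\leqslant i\leqslant m} d(u_{i-1},u_i)$, applied to the path complementary to the supposed unique maximal edge; this localizes the induction in a standard, reusable lemma rather than in a cycle-contraction argument. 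Both approaches are sound; yours is arguably more economical, while the paper's contraction step makes explicit the isosceles identity $d(x,z)=d(x,y)$ that it reuses elsewhere. The only thing you should make sure to include in a final write-up is the one-line induction establishing the chain inequality, as you indicate.
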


\begin{proof}
\(\ref{p2.3:s1} \Rightarrow \ref{p2.3:s2}\). Let \(|E(C)|\) be the number of edges in a cycle \(C\), \(V(C) \subseteq X\). If \(|E(C)| = 3\), then \eqref{p2.3:e1} follows from the strong triangle inequality. Assume that \eqref{p2.3:e1} is true for \(|E(C)| \leqslant n\), but that there exists a cycle with \(|E(C)| = n + 1\) which contains exactly one edge \(\{x, y\}\) such that
\[
d(x, y) = \max\{d(u, v) \colon \{u, v\} \in E(C)\}.
\]
Let \(z\) be the vertex of cycle \(C\) which is adjacent to \(y\) but distinct from \(x\). In view of the uniqueness of the edge of maximum length, we have
\[
d(y, z) < d(x, y).
\]
Hence, because of the strong triangle inequality, this implies \(d(x, z) = d(x, y)\). Let \(C_1\) be the cycle for which
\[
V(C_1) = V(C) \setminus \{y\}, \quad E(C_1) = \bigl(E(C) \setminus \bigl\{\{x, y\}, \{y, z\}\bigr\}\bigr) \cup \{x, z\}.
\]
Then \(|E(C_1)| = n\) and \(\{x, z\}\) is the unique edge of maximum length, which contradicts the inductive hypothesis.

\(\ref{p2.3:s2} \Rightarrow \ref{p2.3:s1}\). For the proof of validity of \(\ref{p2.3:s2} \Rightarrow \ref{p2.3:s1}\) we note that \ref{p2.3:s2} implies the strong triangle inequality if \(|V(C)| = 3\).
\end{proof}

\begin{remark}\label{r4.2}
Proposition~\ref{p2.3} is, in fact, a simple modification of Lemma~1 from~\cite{DP2013SM}.
\end{remark}

Let us consider some useful corollaries of Proposition~\ref{p2.3}. The following result can be found in~\cite{Comicheo2018} (see formula (12), Theorem~1.6).

\begin{corollary}\label{c2.15}
Let \((X, d)\) be a nonempty ultrametric space and let \(A\) be a dense subset of \(X\). Then the distance sets of \((X, d)\) and of \((A, d|_{A \times A})\) are the same,
\begin{equation}\label{c2.15:e1}
D(A) = D(X).
\end{equation}
\end{corollary}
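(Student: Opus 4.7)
The inclusion $D(A) \subseteq D(X)$ is immediate from $A \subseteq X$, so the real content is $D(X) \subseteq D(A)$. Pick an arbitrary $t \in D(X)$, say $t = d(x,y)$ with $x,y \in X$. The case $t = 0$ is trivial: since $A \neq \varnothing$, any $a \in A$ gives $d(a,a) = 0 \in D(A)$. So assume $t > 0$, in particular $x \neq y$.

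The plan is to use the density of $A$ to produce sequences $(a_n), (b_n) \subseteq A$ with $a_n \to x$ and $b_n \to y$, and then to upgrade the convergence $d(a_n, b_n) \to t$ to exact equality for all sufficiently large $n$ by means of the cycle criterion (Proposition~\ref{p2.3}) applied to small triangles. Choose $n$ large enough that $d(x, a_n) < t$ and $d(y, b_n) < t$ simultaneously.

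First apply Proposition~\ref{p2.3} to the triangle with vertices $x, y, b_n$: the edge lengths are $t = d(x,y)$, $d(y, b_n) < t$, and $d(x, b_n)$. The maximum length must be attained by at least two edges, and since the edge $\{y, b_n\}$ is strictly shorter than $\{x, y\}$, the only possibility is $d(x, b_n) = t$. Now apply the same reasoning to the triangle $x, a_n, b_n$: we have $d(x, b_n) = t$, $d(x, a_n) < t$, and Proposition~\ref{p2.3} forces $d(a_n, b_n) = t$. Since $a_n, b_n \in A$, this yields $t \in D(A)$, completing the reverse inclusion.

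There is no real obstacle; the only subtlety is to note that the strong triangle inequality gives exact equality of the two longer sides in any triangle, which is precisely the mechanism that makes the ultrametric distance insensitive to arbitrarily small perturbations of its endpoints. This is why density is already enough to recover every distance, whereas in an ordinary metric space one would only obtain $D(A)$ dense in $D(X)$.
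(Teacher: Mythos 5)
Your proof is correct and follows essentially the same route as the paper's: both use density to pick points $a,b \in A$ with $d(x,a) < d(x,y)$ and $d(y,b) < d(x,y)$ and then invoke Proposition~\ref{p2.3} to force $d(a,b) = d(x,y)$. The only cosmetic difference is that you apply the criterion twice to triangles (i.e., the plain strong triangle inequality) where the paper applies it once to the four-cycle on $a, x, y, b$; you should just add a word, as the paper does, on the degenerate cases where $a_n = x$ or $b_n = y$ and the ``triangle'' collapses.
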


\begin{proof}
It is clear that \(D(A) \subseteq D(X)\) and \(0 \in D(A)\). Consequently, \eqref{c2.15:e1} holds if and only if for each pair of distinct \(x_1\), \(x_2 \in X\) there are \(a_1\), \(a_2 \in A\) satisfying
\begin{equation}\label{c2.15:e2}
d(x_1, x_2) = d(a_1, a_2).
\end{equation}
Since \(x_1 \neq x_2\) and \(A\) is dense in \(X\), we can find \(a_1\), \(a_2 \in A\) such that
\begin{equation}\label{c2.15:e3}
d(x_1, x_2) > d(x_i, a_i), \quad i = 1, 2.
\end{equation}
If \(x_1 = a_1\) or \(x_2 = a_2\), then \eqref{c2.15:e2} follows from the strong triangle inequality. Suppose that \(x_1 \neq a_1\) and \(x_2 \neq a_2\). Using \eqref{c2.15:e3} we can easily prove that the points \(a_1\), \(x_1\), \(x_2\), \(a_2\) are pairwise distinct. Let us consider the cycle \(C\) with
\[
V(C) = \{a_1, x_1, x_2, a_2\} \quad \text{and} \quad E(C) = \bigl\{\{a_1, x_1\}, \{x_1, x_2\}, \{x_2, a_2\}, \{a_2, a_1\}\bigr\}.
\]
Equality~\eqref{c2.15:e2} follows from~\eqref{c2.15:e3} by Proposition~\ref{p2.3}.
\end{proof}

Corollary~\ref{c2.15} and Lemma~\ref{l2.20} imply the following well-known result: ``No new values of the ultrametric after completion'' (see, for example, \cite[p.~4]{PerezGarcia2010} or \cite[Theorem~1.6, Statement~(12)]{Comicheo2018}).

\begin{proposition}\label{p4.4}
Let \((X, d)\) be an ultrametric space and let \((\widetilde{X}, \widetilde{d})\) be the completion of \((X, d)\). Then the equality
\begin{equation}\label{p4.4:e1}
D(X) = D(\widetilde{X})
\end{equation}
holds.
\end{proposition}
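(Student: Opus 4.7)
The plan is to derive this directly by combining the two results cited immediately before the proposition, namely Lemma~\ref{l2.20} and Corollary~\ref{c2.15}. The strategy is simply to interpret $X$ as a dense subset of its completion and then invoke the ``no new distances in a dense subset'' principle, which in the ultrametric setting is exactly Corollary~\ref{c2.15}.

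More concretely, first I would recall that by the very definition of the completion (Definition~\ref{d2.18} and the paragraph following it) there is an isometric embedding $j \colon (X, d) \to (\widetilde{X}, \widetilde{d})$ whose image $j(X)$ is dense in $\widetilde{X}$. Since $j$ is an isometry onto its image, we have $D(X) = D(j(X))$, so it is enough to prove $D(j(X)) = D(\widetilde{X})$. Second, I would observe that by Lemma~\ref{l2.20} the completion $(\widetilde{X}, \widetilde{d})$ is itself an ultrametric space. Hence Corollary~\ref{c2.15}, applied with the ambient ultrametric space $(\widetilde{X}, \widetilde{d})$ and the dense subset $A := j(X)$, yields
\[
D(j(X)) = D(\widetilde{X}).
\]
Combining these two equalities gives \eqref{p4.4:e1}.

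The only step that is not entirely mechanical is making the identification of $X$ with $j(X)$ precise; everything else is a direct citation. I do not expect any real obstacle, since the substantive work has already been done in Lemma~\ref{l2.20} (which guarantees that we may indeed speak of a dense subset of an \emph{ultrametric} space when forming the completion) and in Corollary~\ref{c2.15} (which supplies the new-distance-free extension property from a dense subset). If one preferred a self-contained argument instead of quoting Corollary~\ref{c2.15}, the proof would proceed by taking arbitrary $x_1, x_2 \in \widetilde{X}$ with $x_1 \neq x_2$, choosing sequences $(a_n^{(1)}), (a_n^{(2)}) \subseteq j(X)$ with $\widetilde{d}(x_i, a_n^{(i)}) < \widetilde{d}(x_1,x_2)$ for large $n$, and applying the four-point cycle argument of Proposition~\ref{p2.3} to the cycle on $a_n^{(1)}, x_1, x_2, a_n^{(2)}$; however, quoting Corollary~\ref{c2.15} is cleaner and matches the authors' stated strategy.
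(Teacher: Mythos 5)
Your proof is correct and follows exactly the route the paper intends: the paper states Proposition~\ref{p4.4} as an immediate consequence of Lemma~\ref{l2.20} (the completion of an ultrametric space is ultrametric) and Corollary~\ref{c2.15} (a dense subset has the same distance set), which is precisely your argument. The identification of $X$ with its dense isometric copy $j(X)$ is handled the same way the paper does elsewhere (e.g.\ in the proof of Lemma~\ref{l2.20}), so there is no gap.
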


In the next known proposition (see, for example, \cite[p.~4]{PerezGarcia2010} or \cite[Theorem~1.6, Statement~(13)]{Comicheo2018}) we consider a comfortable ``ultrametric modification'' of the notion of Cauchy sequence.

\begin{proposition}\label{p4.5}
Let \((X, d)\) be an ultrametric space. A sequence \((x_n)_{n \in \mathbb{N}} \subseteq X\) is a Cauchy sequence if and only if the limit relation
\begin{equation}\label{p4.5:e1}
\lim_{n \to \infty} d(x_n, x_{n+1}) = 0
\end{equation}
holds.
\end{proposition}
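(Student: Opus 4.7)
The plan is to prove the two implications of Proposition~\ref{p4.5} separately, with the nontrivial content concentrated in the reverse direction where the strong triangle inequality enters in an essential way.

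For the forward implication, I would note that it is a general metric fact (no ultrametricity needed): if \((x_n)_{n \in \mathbb{N}}\) is Cauchy, then since both \(n\) and \(n+1\) tend to infinity, the definition of Cauchy sequence (or the equivalent formulation with \(\lim_{m, n \to \infty} d(x_n, x_m) = 0\) recalled just before Remark~\ref{r2.17}) immediately yields \(\lim_{n \to \infty} d(x_n, x_{n+1}) = 0\).

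For the reverse implication, assume \eqref{p4.5:e1} holds. Given \(r > 0\), choose \(n_0 \in \mathbb{N}\) so that \(d(x_n, x_{n+1}) < r\) for every \(n \geqslant n_0\). The key claim is that \(d(x_{n_0}, x_m) < r\) for every \(m \geqslant n_0\), which I would prove by induction on \(m - n_0\). The base case \(m = n_0\) is trivial, and the step uses the strong triangle inequality
\[
d(x_{n_0}, x_{m+1}) \leqslant \max\{d(x_{n_0}, x_m), d(x_m, x_{m+1})\} < r,
\]
where both arguments of the max are bounded by \(r\) by the inductive hypothesis and the choice of \(n_0\). This shows \(x_m \in B_r(x_{n_0})\) for every \(m \geqslant n_0\), which is exactly the definition of Cauchy sequence given before Remark~\ref{r2.17}.

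There is no real obstacle here; the only subtlety worth flagging is that the reverse implication genuinely requires ultrametricity — in a general metric space one can have \(d(x_n, x_{n+1}) \to 0\) without the sequence being Cauchy (the partial sums of the harmonic series in \(\RR\) are a standard example). The strong triangle inequality replaces the telescoping estimate \(d(x_n, x_m) \leqslant \sum_{i=n}^{m-1} d(x_i, x_{i+1})\), which can blow up, by a single maximum over the consecutive distances, which does not.
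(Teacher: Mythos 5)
Your proof is correct, but the reverse implication takes a genuinely different route from the paper. You prove \(d(x_{n_0}, x_m) < r\) for all \(m \geqslant n_0\) by a direct induction on \(m - n_0\), applying the strong triangle inequality once per step; this is the most elementary argument available and is entirely self-contained. The paper instead argues by contradiction: assuming \(d(x_n, x_{n+k}) > r\) while all consecutive distances are at most \(r\), it forms the cycle on \(\{x_n, \ldots, x_{n+k}\}\) and invokes Proposition~\ref{p2.3}, which asserts that the maximum edge length of any cycle in an ultrametric space is attained at least twice --- the closing edge would then be a unique edge of maximal length, which is impossible. The paper's choice is deliberate: this section is explicitly organized around applications of the cycle characterization of ultrametricity, and Proposition~\ref{p4.5} is presented as one such application. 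Your induction buys simplicity and avoids the machinery of Proposition~\ref{p2.3}; the paper's argument buys uniformity with Proposition~\ref{p4.4} and the rest of Section~\ref{sec4.1}. Your remarks on the forward direction and on the failure of the reverse implication in general metric spaces are both accurate.
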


\begin{proof}
Let \((x_n)_{n \in \mathbb{N}} \subseteq X\) be a Cauchy sequence. Then \eqref{p4.5:e1} follows directly from the definition.

Let \((x_n)_{n \in \mathbb{N}} \subseteq X\) and \eqref{p4.5:e1} hold. Then, for every \(r > 0\), there is \(n_0 = n_0(r)\in \mathbb{N}\) such that
\begin{equation}\label{p4.5:e2}
d(x_n, x_{n+1}) \leqslant r
\end{equation}
whenever \(n \geqslant n_0\). The sequence \((x_n)_{n \in \mathbb{N}}\) is a Cauchy sequence if \eqref{p4.5:e2} implies
\[
d(x_n, x_{n+k}) \leqslant r
\]
for every \(k \in \mathbb{N}\) and every \(n \geqslant n_0(r)\). Suppose contrary that there are \(n \geqslant n_0(r)\) and \(k \in \mathbb{N}\) such that
\begin{equation}\label{p4.5:e3}
d(x_n, x_{n+k}) > r.
\end{equation}
Since \eqref{p4.5:e2} holds for every \(n \geqslant n_0(r)\), we have the inequality \(k \geqslant 2\). Let us consider the cycle \(C\) with \(V(C) = \{x_n, x_{n+1}, \ldots, x_{n+k}\}\) and
\[
E(C) = \{\{x_n, x_{n+1}\}, \ldots, \{x_{n+k-1}, x_{n+k}\}, \{x_{n+k}, x_{n}\}\}.
\]
Using \eqref{p4.5:e3} and \eqref{p4.5:e2} we obtain
\[
d(x_n, x_{n+k}) > r \geqslant d(x, y)
\]
whenever \(\{x, y\} \in E(C)\) and \(\{x, y\} \neq \{x_{n+k}, x_{n}\}\). The last statement contradicts Proposition~\ref{p2.3}.
\end{proof}

\begin{example}\label{ex4.6}
The field \(\mathbb{Q}_p\) of \(p\)-adic numbers is a complete ultrametric space with respect to the \(p\)-adic norm \(d_p\). Consequently, the infinite series \(\sum_{k=1}^{\infty} x_k\) is convergent in \(\mathbb{Q}_p\) if the sequence \((S_n)_{n \in \mathbb{N}} \subseteq \mathbb{Q}_p\) of partial sums \(S_n = \sum_{k=1}^{n} x_k\) is a Cauchy sequence. Hence, by Proposition~\ref{p4.5}, \(\sum_{k=1}^{\infty} x_k\) is convergent if and only if
\[
\lim_{n \to \infty} d_p(S_{n}, S_{n-1}) = \lim_{n \to \infty} |S_n - S_{n-1}|_p = \lim_{n \to \infty} |x_{n}|_p = 0
\]
(see, for example, \cite[Chapter~II, Statement~(1.2)]{Monna1970}).
\end{example}

The above example shows that for every series \(\sum_{k=1}^{\infty} x_k\) in \(\mathbb{Q}_p\) and each sequence \((a_k)_{k \in \mathbb{N}} \subseteq \mathbb{Q}_p\) with
\[
0 < \liminf_{k \to \infty} |a_k|_p \leqslant \limsup_{k \to \infty} |a_k|_p < \infty,
\]
the convergence of \(\sum_{k=1}^{\infty} x_k\) and of \(\sum_{k=1}^{\infty} a_k x_k\) are equivalent. In particular, it makes no sense to talk about convergent but not absolutely convergent series in \(\mathbb{Q}_p\).

\subsection{Complete multipartite graphs and ultrametric spaces}
\label{sec4.2}

The notion of complete multipartite graph \(G\) is well-known when the vertex set \(V(G)\) is finite. Below we need this concept for graphs having the vertex sets of arbitrary cardinality.

\begin{definition}\label{d2.6}
Let \(G\) be a graph and let \(k \geqslant 2\) be a cardinal number. The graph \(G\) is complete \(k\)-partite if the vertex set \(V(G)\) can be partitioned into \(k\) nonvoind, disjoint subsets, or parts, in such a way that no edge has both ends in the same part and any two vertices in different parts are adjacent.
\end{definition}

We shall say that $G$ is a \emph{complete multipartite graph} if there is a cardinal number \(k\) such that $G$ is complete $k$-partite. It is easy to see \(G\) is complete multipartite if and only if the non-adjacency is an equivalence relation on \(V(G)\) \cite[p.~14]{Die2005}.

Our next definition is a modification of Definition~2.1 from~\cite{PD2014JMS}.

\begin{definition}\label{d5.2}
Let $(X,d)$ be a nonempty metric space. Denote by \(G_{X,d}\) a graph such that \(V(G_{X,d}) = X\) and, for \(u\), \(v \in V(G_{X,d})\),
\begin{equation}\label{d5.2:e1}
(\{u,v\}\in E(G_{X,d}))\Leftrightarrow (d(u,v)=\diam X \text{ and } u \neq v).
\end{equation}
We call $G_{X,d}$ the \emph{diametrical graph} of \((X, d)\).
\end{definition}

\begin{example}\label{ex2.24}
If \((X, d)\) is an unbounded metric space or \(|X| = 1\) holds, then the diametrical graph \(G_{X,d}\) is empty, \(E(G_{X,d}) = \varnothing\).
\end{example}

\begin{theorem}\label{t2.24}
Let \((X, d)\) be an ultrametric space with \(|X| \geqslant 2\). Then the following statements are equivalent:
\begin{enumerate}
\item\label{t2.24:s1} The diametrical graph \(G_{X,d}\) of \((X, d)\) is nonempty.
\item\label{t2.24:s2} The diametrical graph \(G_{X,d}\) is complete multipartite.
\item\label{t2.24:s3} There are some points \(x_1\), \(x_2 \in X\) such that \(d(x_1, x_2) = \diam X\).
\end{enumerate}
Furthermore, if \(G_{X, d}\) is complete multipartite, then every part of \(G_{X, d}\) is an open ball with a center in \(X\) and the radius equals \(\diam X\) and, conversely, every open ball \(B_r(c)\) with \(r = \diam X\) and \(c \in X\) is a part of \(G_{X, d}\).
\end{theorem}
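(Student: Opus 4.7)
The plan is to establish the cycle (ii) $\Rightarrow$ (i) $\Rightarrow$ (iii) $\Rightarrow$ (ii) and then read the description of the parts off the last implication. The equivalence (i) $\Leftrightarrow$ (iii) is essentially just the definition of $G_{X,d}$ in \eqref{d5.2:e1}: having at least one edge is the same as the existence of distinct points realizing the diameter. The implication (ii) $\Rightarrow$ (i) is immediate, since a complete $k$-partite graph with $k \geq 2$ on a vertex set of cardinality at least $2$ carries at least one edge between any two of its (nonempty) parts.

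The substantive step is (iii) $\Rightarrow$ (ii). Setting $r := \diam X$, which is strictly positive because $|X| \geq 2$, I would invoke the characterization noted just after Definition~\ref{d2.6}: a simple graph $G$ is complete multipartite if and only if non-adjacency is an equivalence relation on $V(G)$. For $G_{X,d}$, two vertices $u, v \in X$ fail to be adjacent precisely when $d(u, v) < r$; this uniform formulation handles the diagonal case $u = v$ as well, since $d(u, u) = 0 < r$. Reflexivity and symmetry are then trivial, and transitivity is exactly the strong triangle inequality:
\[
d(u, v) < r \text{ and } d(v, w) < r \ \Longrightarrow\ d(u, w) \leq \max\{d(u, v), d(v, w)\} < r.
\]

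For the \emph{furthermore} clause I would identify the parts of $G_{X,d}$ with the equivalence classes of this non-adjacency relation. The class of a point $c \in X$ is
\[
\{v \in X : d(c, v) < r\} = B_r(c),
\]
which simultaneously shows that every part is an open ball of radius $r = \diam X$ (with any of its points as a center, consistent with Proposition~\ref{p2.4}) and that conversely every such open ball is a part of $G_{X,d}$. I do not foresee a serious obstacle; the only subtlety is to record that the strict inequality $d(c,c) = 0 < r$, available because $|X| \geq 2$, ensures that the equivalence classes are the full open balls and not punctured ones.
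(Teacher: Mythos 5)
Your proof is correct, but it follows a genuinely different route from the paper's. The paper disposes of \(\ref{t2.24:s1} \Leftrightarrow \ref{t2.24:s2}\) by citing Theorems~3.1 and 3.2 of \cite{DDP2011pNUAA}, and then proves the \emph{furthermore} clause separately by a two-sided complement argument: for a part \(X_1 \ni x_1\) it shows \(X \setminus X_1 \subseteq X \setminus B_r(x_1)\) and the reverse inclusion, using that any \(x_2 \notin X_1\) is adjacent to \(x_1\) and hence satisfies \(d(x_1,x_2) = \diam X\). You instead give a single self-contained argument: under \ref{t2.24:s3} the relation ``\(d(u,v) < \diam X\)'' is exactly non-adjacency (the diagonal being absorbed because \(\diam X > 0\) when \(|X| \geqslant 2\)), its transitivity is precisely the strong triangle inequality, and the parts are then the equivalence classes \(B_r(c)\), so the \emph{furthermore} clause falls out of the same computation rather than needing a separate proof. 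Your version has the advantage of being self-contained and of making the role of ultrametricity completely transparent; the paper's is shorter on the page only by delegation. One small point worth recording explicitly: since Definition~\ref{d2.6} requires \(k \geqslant 2\) nonempty parts, the ``non-adjacency is an equivalence relation'' criterion alone does not suffice — you also need at least two classes, which is exactly what hypothesis \ref{t2.24:s3} supplies by producing an adjacent pair \(x_1 \neq x_2\). A sentence to that effect would close the only loose end; otherwise the argument is complete.
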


\begin{proof}
The validity of \(\ref{t2.24:s1} \Leftrightarrow \ref{t2.24:s2}\) follows from Theorems~3.1 and 3.2 of paper~\cite{DDP2011pNUAA}. The equivalence \(\ref{t2.24:s1} \Leftrightarrow \ref{t2.24:s3}\) is obviously true.

Let \(G_{X, d}\) be a complete multipartite graph, let \(X_{1}\) be a part of \(G_{X, d}\) and let \(x_{1}\) be a point of \(X_{1}\). We claim that the equality
\begin{equation}\label{t2.24:e1}
X_{1} = B_r(x_{1})
\end{equation}
holds with \(r = \diam X\). Using Example~\ref{ex2.24}, we see that the double inequality \(0 < \diam X < \infty\) holds. Hence, the open ball \(B_r(x_1)\) is correctly defined.

Let \(x_{2}\) be a point of the set \(X \setminus X_1\). Since \(G_{X, d}\) is complete multipartite and \(x_{2} \notin X_{1}\), the membership
\begin{equation}\label{t2.24:e3}
\{x_1, x_2\} \in E(G_{X, d})
\end{equation}
is valid. From \eqref{t2.24:e3} it follows that
\[
d(x_1, x_2) = \diam X = r.
\]
Hence, \(x_2 \in X \setminus B_r(x_1)\). Thus, the inclusion
\begin{equation}\label{t2.24:e2}
X \setminus X_1 \subseteq X \setminus B_r(x_1)
\end{equation}
holds.

Similarly, we can prove the inclusion \(X \setminus B_r(x_1) \subseteq X \setminus X_1\). The last inclusion and \eqref{t2.24:e2} imply the equality
\[
X \setminus B_r(x_1) = X \setminus X_1.
\]
This is equivalent to \eqref{t2.24:e1}.

Let us consider now an open ball \(B_r(c)\) with \(r = \diam X\) and arbitrary \(c \in X\). Then there is a part \(X_2\) of \(G_{X, d}\) such that \(c \in X_2\). Arguing as in the proof of equality \eqref{t2.24:e1}, we obtain the equality \(X_2 = B_r(c)\).
\end{proof}

\begin{proposition}\label{p2.36}
Let \((X, d)\) be a totally bounded ultrametric space with \(|X| \geqslant 2\). Then there is an integer \(k \geqslant 2\) such that the diametrical graph \(G_{X, d}\) is complete \(k\)-partite.
\end{proposition}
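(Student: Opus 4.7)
The plan is to combine Theorem~\ref{t2.24} with the total boundedness hypothesis in two stages: first I show that the diametrical graph is complete multipartite (i.e.\ has at least one edge), and then I bound the number of parts.

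For the first stage, I would pass to the completion \((\widetilde{X}, \widetilde{d})\). By Proposition~\ref{p2.13} this completion is compact, and by Lemma~\ref{l2.20} it is ultrametric. Since \(|X| \geqslant 2\), total boundedness gives \(0 < \diam X < \infty\), and Proposition~\ref{p4.4} yields \(D(X) = D(\widetilde{X})\), so \(\diam \widetilde{X} = \diam X\). Corollary~\ref{c3.10} applied to \((\widetilde{X}, \widetilde{d})\) provides points realizing the diameter, and because \(D(X) = D(\widetilde{X})\) this supremum is already attained inside \(D(X)\). Hence there exist \(x_1, x_2 \in X\) with \(d(x_1, x_2) = \diam X\), so condition~\ref{t2.24:s3} of Theorem~\ref{t2.24} holds and \(G_{X,d}\) is complete multipartite. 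The existence of such a pair also shows \(k \geqslant 2\), since \(x_1\) and \(x_2\) lie in distinct parts.

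For the second stage, I would use the characterization in the last sentence of Theorem~\ref{t2.24}: the parts of \(G_{X,d}\) are exactly the open balls \(B_r(c)\) with \(r = \diam X\) and \(c \in X\). Writing \(r = \diam X > 0\) and applying Definition~\ref{d2.10} to this value of \(r\), I obtain a finite cover
\[
X \subseteq \bigcup_{i=1}^{n} B_r(x_i).
\]
For an arbitrary \(c \in X\), pick \(i\) with \(c \in B_r(x_i)\); then Proposition~\ref{p2.4} forces \(B_r(c) = B_r(x_i)\). Consequently the collection of parts of \(G_{X,d}\) has cardinality at most \(n\), which shows \(G_{X,d}\) is complete \(k\)-partite for some integer \(k\) with \(2 \leqslant k \leqslant n\).

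I do not anticipate a serious obstacle here, since both halves are immediate consequences of results already established in the paper. The only point requiring slight care is the passage from ``the supremum \(\diam \widetilde{X}\) is attained in \(D(\widetilde{X})\)'' to ``it is attained in \(D(X)\)'', which is handled cleanly by the set equality \(D(X) = D(\widetilde{X})\) of Proposition~\ref{p4.4} rather than by any approximation argument inside \(X\) itself.
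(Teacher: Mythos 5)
Your proof is correct. The first stage (completion is compact and ultrametric, the diameter is attained there, and the equality \(D(X) = D(\widetilde{X})\) pulls the attainment back to \(X\), so Theorem~\ref{t2.24} applies) is essentially the paper's argument; the paper invokes Corollary~\ref{c2.15} where you invoke its consequence Proposition~\ref{p4.4}, which is the same content. The second stage is where you genuinely diverge: the paper rules out an infinite number of parts by contradiction, extracting from infinitely many parts a sequence of points pairwise at distance \(\diam X > 0\), which admits no Cauchy subsequence and so contradicts the Cauchy-subsequence characterization of total boundedness (Proposition~\ref{p2.11}). You instead argue directly from the covering definition (Definition~\ref{d2.10}) at radius \(r = \diam X\): since the parts are exactly the balls \(B_r(c)\) by the last assertion of Theorem~\ref{t2.24}, and every such ball coincides with one of the finitely many covering balls by Proposition~\ref{p2.4}, the number of parts is bounded by the covering number \(n\). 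Your version is constructive in the sense that it yields the explicit bound \(k \leqslant n\), at the price of leaning on the "Furthermore" clause of Theorem~\ref{t2.24}; the paper's version needs only the crude multipartite structure and the sequential criterion. Both are sound.
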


\begin{proof}
First of all we claim that \(G_{X, d}\) is nonempty.

Let \((\widetilde{X}, \widetilde{d})\) be the completion of \((X, d)\). Then \((\widetilde{X}, \widetilde{d})\) is a compact metric space. By Corollary~\ref{c3.10} and Theorem~\ref{t2.24}, the diametrical graph of compact metric space is nonempty if this space contains at least two points. Consequently, \(G_{\widetilde{X}, \widetilde{d}}\) is a nonempty graph. Now the last statement and Corollary~\ref{c2.15} imply that \(G_{X, d}\) is also nonempty.

It is shown that the graph \(G_{X, d}\) is nonempty. Hence, by Theorem~\ref{t2.24}, this graph is complete multipartite. Consequently, there is a cardinal number \(k\) such that \(G_{X, d}\) is complete \(k\)-partite. If \(k\) is an infinite cardinal, then using Definitions~\ref{d2.6} and \ref{d5.2} we can find a sequence \((p_n)_{n \in \mathbb{N}} \subseteq X\) such that
\[
d(p_n, p_m) = \diam(X, d) > 0
\]
for all distinct \(n\), \(m \in \mathbb{N}\). Hence, the sequence \((p_n)_{n \in \mathbb{N}}\) does not contain any Cauchy subsequence, that contradicts Proposition~\ref{p2.11}.
\end{proof}

\begin{corollary}\label{c2.39}
Let \((X, d)\) be a totally bounded ultrametric space and let \(A\) be a subset of \(X\) such that \(|A| \geqslant 2\). Then the diametrical graph \(G_{A, d|_{A \times A}}\) is complete \(k\)-partite for an integer \(k \geqslant 2\).
\end{corollary}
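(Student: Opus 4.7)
The plan is to reduce Corollary~\ref{c2.39} directly to Proposition~\ref{p2.36} applied to the subspace \((A, d|_{A \times A})\). The two hypotheses needed for Proposition~\ref{p2.36} are that this subspace is ultrametric and totally bounded with at least two points. The ultrametricity is inherited trivially, since the strong triangle inequality is a universally quantified condition preserved by restriction to any subset. The cardinality condition \(|A| \geqslant 2\) is given.

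The remaining point is total boundedness of \((A, d|_{A \times A})\). This is exactly Corollary~\ref{c2.17}: every subset of a totally bounded metric space is totally bounded. Once these three conditions are in place, Proposition~\ref{p2.36} gives an integer \(k \geqslant 2\) such that the diametrical graph \(G_{A, d|_{A \times A}}\) is complete \(k\)-partite, which is the desired conclusion.

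There is essentially no obstacle here; the corollary is a straightforward specialization of Proposition~\ref{p2.36}, and the only substantive input beyond that proposition is the hereditary nature of total boundedness, which has already been recorded as Corollary~\ref{c2.17}. So the proof is expected to be a short two-line argument citing Corollary~\ref{c2.17} together with Proposition~\ref{p2.36}, without needing to revisit diametrical graphs, completions, or the complete multipartite characterization of Theorem~\ref{t2.24}.
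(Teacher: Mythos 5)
Your proposal is correct and matches the paper's own proof exactly: the paper derives Corollary~\ref{c2.39} from Corollary~\ref{c2.17} together with Proposition~\ref{p2.36}, just as you do.
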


\begin{proof}
It follows from Corollary~\ref{c2.17} and Proposition~\ref{p2.36}.
\end{proof}

\begin{example}\label{ex5.15}
Let \(\mathbb{Z}\) be the ring of all integer numbers and let
\[
\overline{B}_1(0) = \{x \in \mathbb{Q}_p \colon d_p(x, 0) \leqslant 1\}
\]
be the unit closed ball in the ultrametric space \((\mathbb{Q}_p, d_p)\) of \(p\)-adic numbers. Then \(\overline{B}_1(0)\) is a compact infinite subset of \((\mathbb{Q}_p, d_p)\) (Theorem~5.1, \cite{Sch1985}). Hence, by Proposition~\ref{p2.36}, the diametrical graph \(G_{\overline{B}_1(0), d_p|_{\overline{B}_1(0) \times \overline{B}_1(0)}}\) is complete \(k\)-partite with some integer \(k \geqslant 2\). Since the ball \(\overline{B}_1(0)\) can be written as disjoint union of open balls,
\begin{equation}\label{ex5.15:e1}
\overline{B}_1(0) = B_1(0) \cup B_1(1) \cup \ldots \cup B_1(p-1),
\end{equation}
(see, for example, Problem~50 in \cite{Gou1993}) the diametrical graph of \(\overline{B}_1(0)\) is complete \(p\)-partite with the parts \(B_1(i) \in \BB_{\mathbb{Q}_p}\), \(i=0\), \(1\), \(\ldots\), \(p-1\). The set \(\mathbb{Z}\) is a dense subset of \(\overline{B}_1(0)\) (see, for example, statement~\((ii)\), Proposition~3.3.4 \cite{Gou1993}). Using \eqref{ex5.15:e1} we see that the diametrical graph \(G_{\mathbb{Z}, d_p|_{\mathbb{Z} \times \mathbb{Z}}}\) is also complete \(p\)-partite with the parts
\[
B^{k} = B_1(k) \cap \mathbb{Z}, \quad k=0, 1, \ldots, p-1
\]
(see the proof of Proposition~\ref{p2.36} for details). It is interesting to note that the parts \(B^{0}\), \(\ldots\), \(B^{p-1}\) are exactly the same as the residue classes (modulo \(p\)), i.e., the equivalence
\[
(x \in B^{k}) \Leftrightarrow (x \equiv k \pmod p)
\]
is valid for all \(x \in \mathbb{Z}\) and \(k \in \{0, 1, \ldots, p-1\}\).
\end{example}

Using Corollary~\ref{c2.39} we also obtain the following.

\begin{corollary}\label{c5.15}
Let \((X, d)\) be a totally bounded ultrametric space and let
\begin{equation}\label{c5.15:e2}
B = B_r(c) = \{x \in X \colon d(x, c) < r\}
\end{equation}
be an open ball in \((X, d)\). Then the inequality
\begin{equation}\label{c5.15:e1}
\diam B < r
\end{equation}
holds.
\end{corollary}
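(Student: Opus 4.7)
The plan is to reduce the inequality $\diam B < r$ to the case where the diameter of $B$ is actually attained at two points, and then to apply the strong triangle inequality together with the defining inequality $d(x,c) < r$ of the open ball.

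First I would dispose of the trivial case $|B| \leqslant 1$, where $\diam B = 0$ and the required strict inequality holds because open balls have strictly positive radius by definition. From now on, assume $|B| \geqslant 2$. Since $(X,d)$ is totally bounded and $B \subseteq X$, Corollary~\ref{c2.17} tells us that the subspace $(B, d|_{B \times B})$ is itself totally bounded. This brings the space $B$ into the scope of Corollary~\ref{c2.39}, which guarantees that the diametrical graph $G_{B, d|_{B \times B}}$ is complete $k$-partite for some integer $k \geqslant 2$; in particular, the graph is nonempty. By the equivalence \ref{t2.24:s1}$\Leftrightarrow$\ref{t2.24:s3} of Theorem~\ref{t2.24}, this nonemptiness means that there exist (distinct) points $x_1, x_2 \in B$ with
\[
d(x_1, x_2) = \diam B.
\]

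The remainder of the argument is a one-line application of the strong triangle inequality. Since $x_1, x_2 \in B = B_r(c)$, the defining inequality of the open ball yields $d(x_1, c) < r$ and $d(x_2, c) < r$, and therefore
\[
\diam B = d(x_1, x_2) \leqslant \max\{d(x_1, c), d(x_2, c)\} < r,
\]
which is the desired strict inequality.

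The conceptual obstacle worth highlighting is \emph{why} the inequality is strict: equality~\eqref{e2.3} by itself only delivers $\diam B = \sup\{d(x,c) : x \in B\} \leqslant r$, which permits $\diam B = r$ when the supremum is not attained (as in Example~\ref{ex2.6}). The essential use of total boundedness is thus to promote this supremum to a maximum via Corollary~\ref{c2.39}; once we know two points realize the diameter, the strict inequalities $d(x_i, c) < r$ can be combined through the ultrametric inequality to conclude. No additional case analysis appears necessary.
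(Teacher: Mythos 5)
Your proof is correct and follows essentially the same route as the paper: both arguments invoke Corollary~\ref{c2.39} to make the diametrical graph of \(B\) complete multipartite, thereby forcing the diameter to be attained, and then conclude \(\diam B < r\) from the strict inequalities defining the open ball. The only cosmetic difference is in the last step: the paper observes that the center \(c\) lies in one part and is therefore adjacent to a point \(p\) of another part, giving \(\diam B = d(c,p) < r\) directly, whereas you take two arbitrary diameter-realizing points and pass through \(c\) via the strong triangle inequality — both are equally valid.
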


\begin{proof}
Inequality~\eqref{c5.15:e1} evidently holds if \(|B| = 1\). If we have \(|B| \geqslant 2\), then the diametrical graph \(G_{B, d|_{B \times B}}\) is complete multipartite by Corollary~\ref{c2.39}. Let \(X_1\) and \(X_2\) be parts of \(G_{B, d|_{B \times B}}\) such that \(c \in X_1\) and \(X_2 \neq X_1\), and let \(p\) be an arbitrary point of \(X_2\). Then it follows directly from Definitions~\ref{d2.6} and \ref{d5.2} that
\begin{equation}\label{c5.15:e3}
\diam B = d(c, p).
\end{equation}
Now inequality \eqref{c5.15:e1} follows from \eqref{c5.15:e2} and \eqref{c5.15:e3}.
\end{proof}

\begin{remark}\label{r5.16}
Example~\ref{ex2.6} shows that strict inequality~\eqref{c5.15:e1} can be false for space \((X, d)\) even if \((X, d)\) is ultrametric and separable but not totally bounded.
\end{remark}

The notion of diametrical graph can be generalized by following way.

Let \((X, d)\) be a metric space with \(|X| \geqslant 2\) and let \(r \in (0, \infty]\). Denote by \(G_{X, d}^{r}\) a graph such that \(V(G_{X, d}^{r}) = X\) and, for \(u\), \(v \in V(G_{X, d}^{r})\),
\begin{equation}\label{e5.21}
\bigl(\{u, v\} \in E(G_{X, d}^{r})\bigr) \Leftrightarrow \bigl(d(u, v) \geqslant r\bigr).
\end{equation}

\begin{remark}\label{r5.18}
It is clear that \eqref{d5.2:e1} and \eqref{e5.21} are equivalent if \(r = \diam X\). Consequently, we have the equality \(G_{X, d}^{r} = G_{X, d}\) for \(r = \diam X\). In particular, the equality \(G_{X, d}^{\infty} = G_{X, d}\) holds if \((X, d)\) is unbounded.
\end{remark}

Now we can give a new characterization of ultrametric spaces.

\begin{theorem}\label{t5.18}
Let \((X, d)\) be a metric space with \(|X| \geqslant 2\). Then the following statements are equivalent:
\begin{enumerate}
\item \label{t5.18:s1} The metric space \((X, d)\) is ultrametric.
\item \label{t5.18:s2} \(G_{X, d}^{r}\) is either empty or complete multipartite for every \(r \in (0, \diam X]\).
\end{enumerate}
\end{theorem}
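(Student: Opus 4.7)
The plan is to exploit the characterization (noted in the excerpt just after Definition~\ref{d2.6}) that a graph $G$ is complete multipartite if and only if the non-adjacency relation on $V(G)$ is an equivalence relation. Under this reformulation, both implications of Theorem~\ref{t5.18} reduce to transparent statements about the relation $d(\cdot,\cdot) < r$.

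For \ref{t5.18:s1}~$\Rightarrow$~\ref{t5.18:s2}, I would fix $r \in (0, \diam X]$ and assume $G_{X, d}^{r}$ is nonempty. By \eqref{e5.21}, two distinct vertices $u, v$ are non-adjacent in $G_{X, d}^{r}$ iff $d(u, v) < r$. Reflexivity and symmetry of non-adjacency are immediate; transitivity follows at once from the strong triangle inequality, since $d(u, v) < r$ and $d(v, w) < r$ give $d(u, w) \leq \max\{d(u, v), d(v, w)\} < r$. Hence non-adjacency is an equivalence relation on $V(G_{X, d}^{r})$, and the cited characterization yields that $G_{X, d}^{r}$ is complete multipartite.

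For \ref{t5.18:s2}~$\Rightarrow$~\ref{t5.18:s1}, I would argue by contradiction. Suppose the strong triangle inequality fails: there exist $x, y, z \in X$ with
\[
d(x, y) > \max\{d(x, z), d(z, y)\}.
\]
Set $r := d(x, y)$, so that $0 < r \leq \diam X$. Since $d(x, y) \geq r$, the pair $\{x, y\}$ is an edge of $G_{X, d}^{r}$, so this graph is nonempty and, by hypothesis, complete multipartite. But $d(x, z) < r$ and $d(z, y) < r$ mean that $x$ is non-adjacent to $z$ and $z$ is non-adjacent to $y$; transitivity of non-adjacency (equivalently: $x, z$ share a part and $z, y$ share a part) then forces $x$ and $y$ into the same part of $G_{X, d}^{r}$, contradicting $\{x, y\} \in E(G_{X, d}^{r})$.

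I do not expect any serious obstacle here; the essential content is simply that the strong triangle inequality is the ``transitivity of being close,'' which is exactly the condition needed for the open-$r$-neighborhood relation to partition $X$. The only mild trick is the choice $r = d(x, y)$ in the reverse implication, which converts a putative violation of ultrametricity into a concrete edge of $G_{X, d}^{r}$ that is short-circuited through $z$.
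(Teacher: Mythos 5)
Your proof is correct. The reverse implication $\ref{t5.18:s2} \Rightarrow \ref{t5.18:s1}$ is essentially identical to the paper's: both set $r = d(x,y)$ for a violating triple, observe that $\{x,y\}$ is then an edge of the nonempty graph $G_{X,d}^{r}$, and derive a contradiction by forcing $x$ and $y$ into the same part via $z$. The forward implication, however, takes a genuinely different and more elementary route. The paper truncates the ultrametric via $\Psi_r(t) = \min\{r, t\}$, notes that $\diam(X, \rho_r) = r$ so that $G_{X, \rho_r} = G_{X,d}^{r}$ becomes the diametrical graph of the ultrametric space $(X, \rho_r)$, and then invokes Theorem~\ref{t2.24} (whose proof in turn rests on external results on diametrical graphs). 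You instead verify directly that non-adjacency in $G_{X,d}^{r}$ --- i.e.\ the relation $d(u,v) < r$ on distinct vertices --- is transitive, which is precisely the strong triangle inequality, and then apply the stated characterization of complete multipartite graphs. Your version is self-contained and avoids the detour through ultrametric preserving functions and Theorem~\ref{t2.24}; what the paper's detour buys is a uniform template that is reused later (the same truncation argument reappears in the proof of Theorem~\ref{t5.9}) and an explicit identification of the parts of $G_{X,d}^{r}$ as balls, which the paper exploits elsewhere. One point to keep explicit in your write-up: the characterization ``complete multipartite iff non-adjacency is an equivalence relation'' requires the graph to be nonempty so that there are at least two equivalence classes (Definition~\ref{d2.6} demands $k \geqslant 2$); you do assume nonemptiness, so this is handled, but it is the one place where care is needed.
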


\begin{proof}
\(\ref{t5.18:s1} \Rightarrow \ref{t5.18:s2}\). Let \((X, d)\) be ultrametric, let \(r \in (0, \diam X]\) and let \(\Psi_r \colon \RR^{+} \to \RR^{+}\) be defined as
\begin{equation}\label{t5.18:e1}
\Psi_r(t) = \min\{r, t\}, \quad t \in \RR^{+}.
\end{equation}
It is easy to prove directly (or see Theorem~\ref{t5.21} in the next section of the paper) that the mapping \(\rho_r = \Psi_r \circ d\) is an ultrametric on \(X\). From \eqref{t5.18:e1} and \(r \in (0, \diam X] = (0, \diam(X, d)]\) it follows that
\begin{equation*}
\diam (X, \rho_r) = r.
\end{equation*}
The last equality and \eqref{e5.21} imply
\begin{equation}\label{t5.18:e4}
G_{X, \rho_r} = G_{X, d}^{r}.
\end{equation}
By Theorem~\ref{t2.24}, the diametrical graph \(G_{X, \rho_r}\) is either empty or complete multipartite. The validity of \(\ref{t5.18:s1} \Rightarrow \ref{t5.18:s2}\) follows.

\(\ref{t5.18:s2} \Rightarrow \ref{t5.18:s1}\). Let \ref{t5.18:s2} hold. Suppose that there are \(x_1\), \(x_2\), \(x_3\) satisfying
\begin{equation}\label{t5.18:e2}
d(x_1, x_2) > \max\{d(x_1, x_3), d(x_3, x_2)\}.
\end{equation}
Let us consider the graph \(G_{X, d}^{r}\) with \(r = d(x_1, x_2)\). It is clear that \(G_{X, d}^{r}\) is a nonempty graph. Inequality \eqref{t5.18:e2} implies that the points \(x_1\), \(x_2\), \(x_3\) are pairwise distinct. In the correspondence with \ref{t5.18:s2}, \(G_{X, d}^{r}\) is a complete multipartite graph. Let \(X_i\) be a part of \(G_{X, d}^{r}\) such that \(x_i \in X_i\) holds, \(i = 1\), \(2\), \(3\). By \eqref{e5.21}, we have \(\{x_1, x_2\} \in E(G_{X, d}^{r})\). Hence, \(X_1\) are \(X_2\) are distinct, \(X_1 \neq X_2\). If \(X_1 = X_3\) holds, then from \eqref{e5.21} it follows that
\begin{equation}\label{t5.18:e3}
d(x_2, x_3) \geqslant r = d(x_1, x_2),
\end{equation}
contrary to \eqref{t5.18:e2}. Thus, we have \(X_1 \neq X_3\). Similarly, we obtain \(X_2 \neq X_3\). Hence, \(X_1\), \(X_2\), \(X_3\) are distinct parts of \(G_{X, d}^{r}\). The last statement also implies \eqref{t5.18:e3}, that contradicts \eqref{t5.18:e2}. It is shown that the strong triangle inequality holds for all \(x_1\), \(x_2\), \(x_3 \in X\). The validity of \(\ref{t5.18:s2} \Rightarrow \ref{t5.18:s1}\) follows.
\end{proof}

\section{The structure of distance sets and ultrametric preserving functions}
\label{sec5}

In the first part of the section, we mainly recall some basic facts related to the theory of ordered sets. Proposition~\ref{p5.7}, which contains a characteristic property of spherically complete ultrametric spaces, is the only new result of this part.

Using a method for constructing ultrametric spaces given by Delhomm\'{e}, Laflamme, Pouzet and Sauer, we completely specify the distance sets of infinite totally bounded ultrametric spaces and, respectively, of separable ultrametric spaces in Theorems~\ref{t5.10} and \ref{p5.16} from the second part of the section.

The main new results of the third part of the section are Theorem~\ref{t5.46}, Theorem~\ref{t5.51} and Theorem~\ref{t5.34} describing interrelations between weak similarities of ultrametric spaces and ultrametric (pseudoultrametric) preserving functions. Moreover, in Theorems~\ref{t2.25} and \ref{t2.35} with the help of complete multipartite graphs we characterize bounded ultrametric spaces which are weakly similar to unbounded ones. A similar characterization of totally bounded ultrametric spaces is given in Theorem~\ref{t5.9}. Some characteristic properties of functions preserving totally bounded ultrametrics are found in Theorem~\ref{t5.15}.

\subsection{Basic facts about posets and isotone mappings}

Let us start from the definition of order relation. A \emph{binary relation} on a set \(Y\) is a subset of the Cartesian square \(Y^{2} = Y \times Y\) of all ordered pairs \(\<x, y>\), \(x\), \(y \in Y\).

A binary relation \(R \subseteq Y^{2}\) is \emph{reflexive} if \(\<y, y> \in R\) holds for every \(y \in Y\). As usual, we say that \(R \subseteq Y^{2}\) is a \emph{transitive} binary relation if the \emph{transitive law}
\[
(\<x, y> \in R \text{ and } \<y, z> \in R) \Rightarrow (\<x, z> \in R)
\]
is valid for all \(x\), \(y\), \(z \in Y\).

Recall that a reflexive and transitive binary relation \(\preccurlyeq\) on \(Y\) is a \emph{partial order} on \(Y\) if \(\preccurlyeq\) has the \emph{antisymmetric property}, the implication
\[
\bigl(\<x, y> \in \preccurlyeq \text{ and } \<y, x> \in \preccurlyeq \bigr) \Rightarrow (x = y)
\]
is valid for all \(x\), \(y \in Y\). In what follows we use the formula \(x \preccurlyeq y\) instead of \(\<x, y> \in \preccurlyeq\) and, in addition, \(x\prec y\) and \(x\parallel y\) mean that
\[
x \preccurlyeq y \quad \text{and} \quad x \neq y
\]
and, respectively, that
\[
\neg (x \preccurlyeq y) \quad \text{and} \quad \neg (y \preccurlyeq x).
\]

Let \(\preccurlyeq_Y\) be a partial order on a set \(Y\). A pair \((Y, {\preccurlyeq}_Y)\) is called to be a \emph{poset} (a partially ordered set). If \(Z\) is a subset of \(Y\) and \({\preccurlyeq}_Z\) is a partial order on \(Z\) given by \({\preccurlyeq}_Z = Z^{2} \cap {\preccurlyeq}_Y\), then we say that \(Z = (Z, {\preccurlyeq}_Z)\) is a subposet of \((Y, {\preccurlyeq}_Y)\). A poset \((Y, {\preccurlyeq}_Y)\) is \emph{totally ordered} if, for all points \(y_1\), \(y_2 \in Y\), we have
\[
y_1 \preccurlyeq_Y y_2 \quad \text{or} \quad y_2 \preccurlyeq_Y y_1.
\]

Let \((P, {\preccurlyeq}_P)\) be a poset. The totally ordered subposets of \((P, {\preccurlyeq}_P)\) are usually called the \emph{chains} in \(P\). A chain \(C \subseteq P\) is said to be \emph{maximal} (in \(P\)) if the implication
\[
(C \subseteq C_1) \Rightarrow (C = C_1)
\]
is valid for every chain \(C_1 \subseteq P\).

The following proposition implies the existence of maximal chains in every nonempty poset.

\begin{proposition}\label{l11.18}
Let \((P, {\preccurlyeq}_P)\) be a nonempty poset and let \(C_0 \subseteq P\) be a chain. Then there is a maximal chain \(M \subseteq P\) such that \(C_0 \subseteq M\).
\end{proposition}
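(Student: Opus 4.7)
The plan is to deduce this from Zorn's Lemma applied to a suitable family of chains. Specifically, I would consider
\[
\mathcal{C} = \{C \subseteq P \colon C_0 \subseteq C \text{ and } C \text{ is a chain in } P\},
\]
partially ordered by set inclusion. The family $\mathcal{C}$ is nonempty because $C_0 \in \mathcal{C}$, and a maximal element of $(\mathcal{C}, \subseteq)$ is exactly a maximal chain of $P$ containing $C_0$: indeed, if $M \in \mathcal{C}$ is $\subseteq$-maximal and $C_1 \subseteq P$ is a chain with $M \subseteq C_1$, then $C_0 \subseteq M \subseteq C_1$ forces $C_1 \in \mathcal{C}$, and maximality gives $M = C_1$, which is precisely the condition from the definition of maximal chain stated just before the proposition.

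To invoke Zorn, I would verify the chain-union hypothesis: for every totally ordered (by $\subseteq$) subfamily $\mathcal{T} \subseteq \mathcal{C}$, the union $U := \bigcup_{C \in \mathcal{T}} C$ belongs to $\mathcal{C}$ and is an upper bound for $\mathcal{T}$. Clearly $C_0 \subseteq U$, so it remains to check that $U$ is a chain, i.e.\ that any two $x, y \in U$ are comparable with respect to $\preccurlyeq_P$. Pick $C_x, C_y \in \mathcal{T}$ with $x \in C_x$ and $y \in C_y$; since $\mathcal{T}$ is totally ordered by inclusion, one of $C_x \subseteq C_y$ or $C_y \subseteq C_x$ holds, so $x$ and $y$ lie in a common chain in $P$ and are therefore comparable. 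Thus $U \in \mathcal{C}$, and $C \subseteq U$ for every $C \in \mathcal{T}$.

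Zorn's Lemma now yields a maximal element $M \in \mathcal{C}$, which as observed above is a maximal chain in $P$ containing $C_0$. There is no real obstacle here — the statement is a textbook corollary of Zorn's Lemma, and the only content is the routine verification that the union of a $\subseteq$-chain of chains is again a chain. (Equivalently, one could cite the Hausdorff Maximality Principle applied to the subposet $\{C \in \mathcal{C}\}$, but unfolding it via Zorn as above is the most direct route and uses only the definitions of chain and maximal chain given in the paper.)
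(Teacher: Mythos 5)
Your proof is correct and is exactly the ``standard application of Zorn's Lemma'' that the paper invokes (it simply cites Proposition~2.55 of \cite{Sch2016} without writing out the details). The verification that a $\subseteq$-totally-ordered family of chains containing $C_0$ has its union as an upper bound in $\mathcal{C}$, and that a $\subseteq$-maximal element of $\mathcal{C}$ is a maximal chain of $P$, is complete and matches the intended argument.
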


The proof of Proposition~\ref{l11.18} can be obtained by standard application of Zorn's Lemma (see Proposition~2.55 in \cite{Sch2016}).

\begin{definition}\label{d2.8}
Let \((Q, {\preccurlyeq}_Q)\) and \((L, {\preccurlyeq}_L)\) be posets. A mapping \(f \colon Q \to L\) is \emph{isotone} if, for all \(q_1\), \(q_2 \in Q\), we have
\[
(q_1 \preccurlyeq_Q q_2) \Rightarrow (f(q_1) \preccurlyeq_L f(q_2)).
\]
If, in addition, the isotone mapping \(f \colon Q \to L\) is bijective and the inverse mapping \(f^{-1} \colon L \to Q\) is also isotone, then we say that \(f\) is an \emph{order isomorphism} and that \((Q, {\preccurlyeq}_Q)\) and \((L, {\preccurlyeq}_L)\) have the same \emph{order type}.

A mapping \(f \colon Q \to L\) is \emph{strictly isotone} if the implication
\begin{equation}\label{d2.8:e1}
(q_1 \prec_Q q_2) \Rightarrow (f(q_1) \prec_L f(q_2))
\end{equation}
is valid for all \(q_1\), \(q_2 \in Q\).
\end{definition}

It is clear that every strictly isotone mapping is isotone and every order isomorphism is strictly isotone.

The following lemma will be useful in the sequel.

\begin{lemma}\label{l6.2}
Let \((Q, {\preccurlyeq}_Q)\) and \((L, {\preccurlyeq}_L)\) be posets and let \(f \colon Q \to L\) be strictly isotone and surjective. If the implication
\[
(q_1 \parallel_Q q_2) \Rightarrow (f(q_1) \parallel_L f(q_2))
\]
is valid for all \(q_1\), \(q_2 \in Q\), then \(f\) is an order isomorphism.
\end{lemma}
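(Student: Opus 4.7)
The plan is to establish the two remaining ingredients of an order isomorphism: injectivity (so that $f^{-1}$ is well-defined) and isotonicity of $f^{-1}$. Isotonicity of $f$ itself is immediate since strict isotonicity trivially implies isotonicity together with reflexivity, and surjectivity is given by hypothesis.

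For injectivity, I would argue by contradiction. Suppose $q_1 \neq q_2$ yet $f(q_1) = f(q_2)$. Using the antisymmetry of $\preccurlyeq_Q$, the points $q_1, q_2$ must satisfy exactly one of: $q_1 \prec_Q q_2$, $q_2 \prec_Q q_1$, or $q_1 \parallel_Q q_2$. In the first two cases strict isotonicity forces $f(q_1) \prec_L f(q_2)$ or $f(q_2) \prec_L f(q_1)$, so in particular $f(q_1) \neq f(q_2)$. In the third case the hypothesis on incomparability gives $f(q_1) \parallel_L f(q_2)$, which again forces $f(q_1) \neq f(q_2)$. Either way we contradict $f(q_1) = f(q_2)$, and hence $f$ is injective.

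For isotonicity of $f^{-1}$, let $l_1, l_2 \in L$ with $l_1 \preccurlyeq_L l_2$, and put $q_i = f^{-1}(l_i)$. I want to show $q_1 \preccurlyeq_Q q_2$. Again trichotomise on the relationship between $q_1$ and $q_2$: if $q_2 \prec_Q q_1$, then strict isotonicity gives $l_2 = f(q_2) \prec_L f(q_1) = l_1$, and combined with $l_1 \preccurlyeq_L l_2$ the antisymmetry of $\preccurlyeq_L$ yields $l_1 = l_2$, contradicting strictness; if $q_1 \parallel_Q q_2$, then the hypothesis of the lemma produces $l_1 \parallel_L l_2$, directly contradicting $l_1 \preccurlyeq_L l_2$. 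The only remaining possibility is $q_1 \preccurlyeq_Q q_2$, which is what we needed.

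No part of the argument is genuinely difficult; the only subtlety is that one must explicitly exploit the antisymmetry of $\preccurlyeq_L$ in the $q_2 \prec_Q q_1$ case to get a contradiction, since strict isotonicity alone only produces $l_2 \prec_L l_1$, not an outright clash with $l_1 \preccurlyeq_L l_2$. The main role of the incomparability hypothesis is to rule out the third case in each trichotomy, which is where surjectivity and strict isotonicity would otherwise be insufficient.
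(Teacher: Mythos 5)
Your proof is correct. The paper itself does not prove Lemma~\ref{l6.2} but only cites Theorem~9.3 of Harzheim's \emph{Ordered Sets}; your self-contained argument is the standard one, and all three steps (isotonicity of \(f\), injectivity via the trichotomy \(q_1 \prec_Q q_2\), \(q_2 \prec_Q q_1\), \(q_1 \parallel_Q q_2\) for distinct points, and isotonicity of \(f^{-1}\) by eliminating the cases \(q_2 \prec_Q q_1\) and \(q_1 \parallel_Q q_2\)) check out against the paper's definitions of strictly isotone mappings and of \(\parallel\).
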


For the proof see, for example, Theorem~9.3 in~\cite[p.~35]{Har2005}.

\begin{corollary}\label{c5.4}
Let \((Q, {\preccurlyeq}_Q)\) be totally ordered. Then, for every poset \((L, {\preccurlyeq}_L)\), each strictly isotone surjection \(f \colon Q \to L\) is an order isomorphism.
\end{corollary}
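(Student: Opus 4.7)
The plan is to deduce this directly from Lemma~\ref{l6.2}. That lemma already asserts that a strictly isotone surjection between posets is an order isomorphism provided one extra hypothesis holds: incomparable pairs in the domain must be mapped to incomparable pairs in the codomain. So the entire task reduces to checking this incomparability hypothesis in the special case where the domain $(Q, {\preccurlyeq}_Q)$ is totally ordered.

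First I would recall what $q_1 \parallel_Q q_2$ means: neither $q_1 \preccurlyeq_Q q_2$ nor $q_2 \preccurlyeq_Q q_1$. By the definition of a totally ordered poset given just before Proposition~\ref{l11.18}, any two elements $q_1, q_2 \in Q$ satisfy $q_1 \preccurlyeq_Q q_2$ or $q_2 \preccurlyeq_Q q_1$. Hence there are no pairs $q_1, q_2 \in Q$ with $q_1 \parallel_Q q_2$, and the implication
\[
(q_1 \parallel_Q q_2) \Rightarrow (f(q_1) \parallel_L f(q_2))
\]
is vacuously true for all $q_1, q_2 \in Q$.

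Once that observation is recorded, the conclusion follows immediately by invoking Lemma~\ref{l6.2} applied to the given strictly isotone surjection $f \colon Q \to L$. There is no genuine obstacle here; the only thing to be careful about is to state the vacuous-truth step explicitly, since the reader should see that total ordering of $Q$ is exactly what eliminates the nontrivial hypothesis of Lemma~\ref{l6.2}.
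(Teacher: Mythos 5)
Your argument is correct and is exactly the route the paper intends: the corollary is stated immediately after Lemma~\ref{l6.2} precisely because the incomparability hypothesis of that lemma is vacuous when \((Q, {\preccurlyeq}_Q)\) is totally ordered. The paper leaves this one-line deduction implicit, and your explicit statement of the vacuous-truth step fills it in faithfully.
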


\begin{definition}
Let \((P, {\preccurlyeq}_P)\) be a poset and let \(S \subseteq P\). Then \(p \in P\) is called an \emph{upper bound} of \(S\) if the inequality \(s \preccurlyeq_P p\) holds for every \(s \in S\). The point \(l \in P\) is called the \emph{lowest upper bound} or \emph{join} of \(S\) if \(l\) is an upper bound of \(S\) and the inequality \(l \preccurlyeq_P p\) holds for every upper bound \(p\) of \(S\).
\end{definition}

The concepts of the \emph{lower bound} and the \emph{greatest lower bound} or \emph{meet} can be defined by duality.

For arbitrary subset \(A\) of arbitrary poset \((P, {\preccurlyeq}_P)\) we will denote the join and the meet of \(A\) by \(\vee A\) and by \(\wedge A\), respectively. Furthermore, we will write
\[
\sup A = \vee A \quad \text{and} \quad \inf A = \wedge A
\]
for the case when \(A\) is a subset of the poset \((\RR, \leqslant)\).

Let us recall now the concept of the largest element of poset.

\begin{definition}\label{d9.19}
Let \((P, \preccurlyeq_{P})\) be a partially ordered set. An element \(l \in P\) is called the \emph{largest element} of \(P\) if we have \(x \preccurlyeq_{P} l\) for every \(x \in P\). We will also say that \(m \in P\) is a \emph{maximal element} of \(P\) if, for every \(y \in P\), the inequality \(m \preccurlyeq_{P} y\) implies the equality \(m = y\).
\end{definition}

It is easy to see that if \(l\) is the largest element of \((P, \preccurlyeq_{P})\), then \(l\) is a maximal element of \((P, \preccurlyeq_{P})\), but not conversely in general.

\begin{example}\label{ex5.7}
Let \((X, d)\) be a nonempty metric space with a distance set \(D(X)\). Let us consider \((D(X), {\leqslant})\) as a subposet of \((\RR, {\leqslant})\). Then the join of \(D(X)\) exists if and only if \(X\) is bounded. For bounded \((X, d)\) the equality
\[
\diam X = \vee D(X)
\]
holds. Moreover, \(\diam X \in D(X)\) holds if and only if \((D(X), {\leqslant})\) contains a largest element \(l\) and in this case we have \(l = \diam X\).
\end{example}

\begin{example}\label{ex5.8}
Let \((P, {\preccurlyeq}_P)\) be a poset and let \(Ch = Ch_P\) be the set of all totally ordered subposets of \((P, {\preccurlyeq}_P)\). Let us define a partial order \({\preccurlyeq}_{Ch}\) on \(Ch\) as
\[
(C_1 \preccurlyeq_{Ch} C_2) \Leftrightarrow (C_1 \subseteq C_2).
\]
Then \((Ch, {\preccurlyeq}_{Ch})\) has the largest element if and only if \((P, {\preccurlyeq}_P)\) is a chain.
\end{example}

It is well-known that the completeness of the real line \(\RR\) with respect to the standard Euclidean metric is equivalent to the existence of \(\sup A\) and \(\inf A\) for every nonempty bounded \(A \subseteq \RR\). In the last section of the paper, we will use a similar interconnection to construct a compact ultrametric space corresponding to a poset of vertices of locally finite tree with monotone labeling.

\begin{example}\label{ex5.6}
Let \(X\) be a nonempty bounded ultrametric space, \(\overline{\BB}_X\) be the set of all closed balls of \(X\) and let \(\mathbf{A} \subseteq \overline{\BB}_X\) be nonempty. Let us define a partial order \(\preccurlyeq_X\) on \(\overline{\BB}_X\) as
\[
(B_1 \preccurlyeq_X B_2) \Leftrightarrow (B_1 \subseteq B_2).
\]
Definition~\ref{d2.9} and Proposition~\ref{p2.12} imply the existence of the join \(\vee \mathbf{A}\) and the equality
\[
\vee \mathbf{A} = B^* \left(\bigcup_{B \in \mathbf{A}} B\right),
\]
where \(B^* \left(\bigcup_{B \in \mathbf{A}} B\right)\) is the smallest ball containing \(\bigcup_{B \in \mathbf{A}} B\). It follows from Proposition~\ref{p2.12} that the existence of the meet \(\wedge \mathbf{A}\) is equivalent to the condition
\begin{equation}\label{ex5.6:e1}
\bigcap_{B \in \mathbf{A}} B \neq \varnothing.
\end{equation}
Moreover, if \eqref{ex5.6:e1} holds, then we have the equalities
\begin{equation}\label{ex5.6:e2}
\wedge \mathbf{A} = B^* \left(\bigcap_{B \in \mathbf{A}} B\right) = \bigcap_{B \in \mathbf{A}} B.
\end{equation}
For the case when \(\mathbf{A}\) is finite, Proposition~\ref{p2.5} implies that the meet \(\wedge \mathbf{A}\) does not exist if and only if \(B_1 \parallel_X B_2\) holds for some \(B_1\), \(B_2 \in \mathbf{A}\).
\end{example}

Recall that an ultrametric space \((X, d)\) is \emph{spherically complete} if every sequence \((B_n)_{n\in \mathbb{N}} \subseteq \overline{\mathbf{B}}_X\) with \(B_1 \supset B_2 \supset \ldots\) has a nonempty intersection (see, for example, Definition~20.1 \cite{Sch1985})

\begin{proposition}\label{p5.7}
Let \((X, d)\) be a nonempty ultrametric space. Then the following conditions are equivalent:
\begin{enumerate}
\item \label{p5.7:s1} \((X, d)\) is spherically complete.
\item \label{p5.7:s2} The intersection \(\bigcap_{B \in \mathbf{A}} B\) is nonempty for every nonempty chain \(\mathbf{A}\) in \((\overline{\BB}_X, {\preccurlyeq}_{X})\).
\item \label{p5.7:s3} The set \(\mathbf{A} \subseteq \overline{\BB}_X\) is a maximal chain of \((\overline{\BB}_X, {\preccurlyeq}_X)\) if and only if there is \(a \in X\) such that
\begin{equation}\label{p5.7:e1}
\mathbf{A} = \{B \in \overline{\BB}_X \colon B \ni a\}.
\end{equation}
\end{enumerate}
\end{proposition}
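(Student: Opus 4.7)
The plan is to prove the chain of implications $\ref{p5.7:s1} \Rightarrow \ref{p5.7:s2} \Rightarrow \ref{p5.7:s3} \Rightarrow \ref{p5.7:s1}$. The only implication requiring real work is $\ref{p5.7:s1} \Rightarrow \ref{p5.7:s2}$, since the definition of spherical completeness gives nonempty intersections only for countable decreasing sequences of closed balls, whereas \ref{p5.7:s2} concerns chains of arbitrary cardinality.

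For $\ref{p5.7:s1} \Rightarrow \ref{p5.7:s2}$, let $\mathbf{A}$ be a nonempty chain in $(\overline{\mathbf{B}}_X, \preccurlyeq_X)$ and set $r = \inf\{\diam B : B \in \mathbf{A}\}$. I would split into two cases. If the infimum is attained at some $B_0 \in \mathbf{A}$, then for each $B \in \mathbf{A}$ the balls $B_0$ and $B$ are comparable (chain), and since $\diam B_0 \leqslant \diam B$, Proposition~\ref{p2.7} forces $B_0 \subseteq B$; hence $\varnothing \neq B_0 \subseteq \bigcap_{B \in \mathbf{A}} B$. If the infimum is not attained, pick balls $B_n \in \mathbf{A}$ with $\diam B_n$ strictly decreasing to $r$. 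Because $\mathbf{A}$ is a chain, strict decrease of diameters implies strict reverse inclusions $B_1 \supsetneq B_2 \supsetneq \cdots$, so spherical completeness yields $\bigcap_{n} B_n \neq \varnothing$. To conclude, I would show $\bigcap_n B_n \subseteq \bigcap_{B \in \mathbf{A}} B$: given any $B \in \mathbf{A}$, the assumption $\diam B > r$ (the infimum being unattained) guarantees some $n$ with $\diam B_n < \diam B$, whence $B_n \subseteq B$ by Proposition~\ref{p2.7}, so $\bigcap_m B_m \subseteq B_n \subseteq B$.

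For $\ref{p5.7:s2} \Rightarrow \ref{p5.7:s3}$, I would argue both directions. Given a maximal chain $\mathbf{A}$, use \ref{p5.7:s2} to choose $a \in \bigcap_{B \in \mathbf{A}} B$ and set $\mathbf{A}_a = \{B \in \overline{\mathbf{B}}_X : B \ni a\}$. By Proposition~\ref{p2.5}, any two members of $\mathbf{A}_a$ meet at $a$, hence are comparable; so $\mathbf{A}_a$ is a chain containing $\mathbf{A}$, and maximality forces $\mathbf{A} = \mathbf{A}_a$. Conversely, for any $a \in X$ I would check that $\mathbf{A}_a$ itself is a maximal chain: it is a chain by the same Proposition~\ref{p2.5}; and since $\{a\} = \overline{B}_0(a)$ lies in $\mathbf{A}_a$, any chain $\mathbf{A}'$ extending $\mathbf{A}_a$ must have each of its members comparable with $\{a\}$, which (as balls are nonempty) forces each such member to contain $a$ and hence lie in $\mathbf{A}_a$.

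For $\ref{p5.7:s3} \Rightarrow \ref{p5.7:s1}$, take any decreasing sequence $B_1 \supset B_2 \supset \cdots$ in $\overline{\mathbf{B}}_X$. This is a chain, so by Proposition~\ref{l11.18} it extends to some maximal chain $\mathbf{A}$. Statement~\ref{p5.7:s3} provides $a \in X$ with $\mathbf{A} = \{B \in \overline{\mathbf{B}}_X : B \ni a\}$, and then $a \in B_n$ for every $n$, so $\bigcap_n B_n \neq \varnothing$. The principal conceptual obstacle is the passage from arbitrary chains to countable subchains in the first implication; everything else is bookkeeping combining Propositions~\ref{p2.5}, \ref{p2.7}, and \ref{l11.18} with the definition of $\mathbf{A}_a$.
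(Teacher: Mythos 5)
Your proof is correct and follows essentially the same route as the paper: the reduction of an arbitrary chain to a countable decreasing sequence via the infimum of diameters (with the same two cases), the identification of maximal chains with the families $\{B \in \overline{\BB}_X \colon B \ni a\}$ using the radius-zero ball $\{a\}$, and the closing of the cycle by extending a decreasing sequence to a maximal chain via Proposition~\ref{l11.18}. The only difference is cosmetic — you arrange the implications as a single cycle where the paper proves \ref{p5.7:s3} $\Rightarrow$ \ref{p5.7:s2} and notes \ref{p5.7:s2} $\Rightarrow$ \ref{p5.7:s1} is trivial — but the substance of each step is identical.
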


\begin{proof}
\(\ref{p5.7:s1} \Rightarrow \ref{p5.7:s2}\). Let \((X, d)\) be spherically complete and let \(\mathbf{A}\) be a nonempty chain of the poset \((\overline{\BB}_X, {\preccurlyeq}_{X})\). We must shown that \eqref{ex5.6:e1} holds. Write
\begin{equation}\label{p5.7:e3}
D^{*} = \inf \{\diam B \colon B \in \mathbf{A}\}.
\end{equation}
If there is \(B^{*} \in \mathbf{A}\) such that \(\diam B^{*} = D^{*}\), then Proposition~\ref{p2.7} implies
\[
\bigcap_{B \in \mathbf{A}} B = B^{*} \neq \varnothing.
\]
For the case when the inequality \(\diam B > D^{*}\) holds for every \(B \in \mathbf{A}\), we can find a sequence \((B_n)_{n\in \mathbb{N}} \subseteq \mathbf{A}\) such that \(\diam B_1 > \diam B_2 > \ldots\) and
\begin{equation}\label{p5.7:e4}
\lim_{n \to \infty} \diam B_n = D^{*}.
\end{equation}
Write \(\mathbf{A}_n = \{B \in \mathbf{A} \colon \diam B \geqslant \diam B_n\}\) for every \(n \in \mathbb{N}\). Using \eqref{p5.7:e3}, \eqref{p5.7:e4} and Proposition~\ref{p2.7} we obtain
\[
\bigcap_{B \in \mathbf{A}} B = \bigcap_{n \in \mathbb{N}} \bigcap_{B \in \mathbf{A}_n} B = \bigcap_{n \in \mathbb{N}} B_n.
\]
Since \((X, d)\) is spherically complete, \(\bigcap_{n \in \mathbb{N}} B_n \neq \varnothing\) holds. Thus, \eqref{ex5.6:e1} is true for anyway \(\mathbf{A}\).

\(\ref{p5.7:s2} \Rightarrow \ref{p5.7:s3}\). Let \ref{p5.7:s2} hold. Let us consider an arbitrary maximal chain \(\mathbf{A} \subseteq \overline{\BB}_X\). Then the intersection \(\bigcap_{B \in \mathbf{A}} B\) is nonempty and, consequently, there is \(a \in X\) such that \(a\) belongs to \(B\) for every \(B \in \mathbf{A}\). Write
\[
\mathbf{A}^* = \{B \in \overline{\BB}_X \colon B \ni a\}.
\]
Proposition~\ref{p2.4} implies that \(\mathbf{A}^*\) is a chain in \((\overline{\BB}_X, {\preccurlyeq}_X)\). Since \(\mathbf{A} \subseteq \mathbf{A}^*\) evidently holds and \(\mathbf{A}\) is a maximal chain in \((\overline{\BB}_X, {\preccurlyeq}_X)\), we obtain the equality \(\mathbf{A} = \mathbf{A}^*\), that implies \eqref{p5.7:e1}.

Now let \(\mathbf{A}\) be an arbitrary subset of \(\overline{\BB}_X\) satisfying \eqref{p5.7:e1} for some \(a \in X\). Then, as was noted above, \(\mathbf{A}\) is a chain in \((\overline{\BB}_X, {\preccurlyeq}_X)\). It is easy to see that \(\mathbf{A}\) is a maximal chain in \((\overline{\BB}_X, {\preccurlyeq}_X)\). Indeed, if \(B^{*} \in \overline{\BB}_X\) and, for every \(B \in \mathbf{A}\), the ball \(B^{*}\) is comparable with \(B\),
\begin{equation}\label{p5.7:e2}
(B \subseteq B^{*}) \quad \text{or} \quad (B^{*} \subseteq B),
\end{equation}
then, using \eqref{p5.7:e2} with \(B = \{a\}\) (it should be noted here that \(\{a\}\) belongs to \(\mathbf{A}\) by \eqref{p5.7:e1}), we obtain \(\{a\} \subseteq B^{*}\). That implies \(a \in B^{*}\) and, consequently, \(B^{*} \in \mathbf{A}\). Thus, \(\mathbf{A}\) is a maximal chain as required.

\(\ref{p5.7:s3} \Rightarrow \ref{p5.7:s2}\). Let \ref{p5.7:s3} hold and let \(\mathbf{C}\) be a nonempty chain in \((\overline{\BB}_X, {\preccurlyeq}_X)\). Then, by Proposition~\ref{l11.18}, there is a maximal (in \((\overline{\BB}_X, {\preccurlyeq}_X)\)) chain \(\mathbf{A}\) such that \(\mathbf{C} \subseteq \mathbf{A}\). By condition~\ref{p5.7:s3}, we can find \(a \in X\) for which \eqref{p5.7:e1} holds. Using \eqref{p5.7:e1} and the inclusion \(\mathbf{C} \subseteq \mathbf{A}\) we have
\[
\bigcap_{B \in \mathbf{C}} B \supseteq \bigcap_{B \in \mathbf{A}} B = \{a\} \neq \varnothing.
\]
Condition \ref{p5.7:s2} follows.

To complete the proof it suffices to note that \(\ref{p5.7:s2} \Rightarrow \ref{p5.7:s1}\) is trivially valid.
\end{proof}

\begin{remark}\label{r5.8}
For the ultrametric spaces, the equivalence of the spherical completeness to the fulfillment of condition~\ref{p5.7:s2} of Proposition~\ref{p5.7} is known, but usually this fact is formulated without using the language of the theory of ordered sets (see, for example, Theorem~1 \cite{KakAmBP1995}).
\end{remark}

Using Proposition~\ref{p5.7} and equalities~\eqref{ex5.6:e2} we also obtain the following.

\begin{corollary}\label{c5.9}
An ultrametric space \((X, d)\) is spherically complete if and only if the met \(\wedge \mathbf{A}\) exists for every nonempty chain \(\mathbf{A}\) of the poset \((\overline{\BB}_X, {\preccurlyeq}_X)\)
\end{corollary}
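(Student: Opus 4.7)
The plan is to derive both implications as essentially direct consequences of Proposition~\ref{p5.7} combined with the description of meets in $(\overline{\BB}_X, \preccurlyeq_X)$ recorded in Example~\ref{ex5.6}, in particular equalities~\eqref{ex5.6:e2}.

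For the forward implication I would assume that $(X, d)$ is spherically complete and fix an arbitrary nonempty chain $\mathbf{A}$ in the poset $(\overline{\BB}_X, \preccurlyeq_X)$. The equivalence $\ref{p5.7:s1} \Leftrightarrow \ref{p5.7:s2}$ of Proposition~\ref{p5.7} immediately yields $\bigcap_{B \in \mathbf{A}} B \neq \varnothing$, and the right-hand equality in~\eqref{ex5.6:e2} then identifies this intersection with $\wedge \mathbf{A}$, so the meet exists.

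For the converse I would assume that $\wedge \mathbf{A}$ exists for every nonempty chain $\mathbf{A} \subseteq \overline{\BB}_X$. Given such a chain, $\wedge \mathbf{A}$ is an element of $\overline{\BB}_X$, hence a nonempty subset of $X$ (a closed ball always contains its centre). Since $\wedge \mathbf{A}$ is a lower bound of $\mathbf{A}$ in $(\overline{\BB}_X, \preccurlyeq_X)$, it is contained in every $B \in \mathbf{A}$, so
\[
\varnothing \neq \wedge \mathbf{A} \subseteq \bigcap_{B \in \mathbf{A}} B.
\]
Condition~\ref{p5.7:s2} of Proposition~\ref{p5.7} is thereby verified, and the implication $\ref{p5.7:s2} \Rightarrow \ref{p5.7:s1}$ delivers the spherical completeness of $(X, d)$.

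I do not expect any genuine obstacle: the real content is already packaged in Proposition~\ref{p5.7} and Example~\ref{ex5.6}, and the corollary reduces to the reformulation ``nonemptiness of the intersection of a chain of closed ultrametric balls is the same as existence of a greatest lower bound in $\overline{\BB}_X$'', together with the trivial fact that closed balls are nonempty.
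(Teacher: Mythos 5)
Your proof is correct and follows exactly the route the paper intends: the paper's own justification is the one-line remark that the corollary follows from Proposition~\ref{p5.7} together with equalities~\eqref{ex5.6:e2}, and your write-up simply fills in those same two ingredients (the equivalence \ref{p5.7:s1}$\Leftrightarrow$\ref{p5.7:s2} plus the identification of $\wedge\mathbf{A}$ with the nonempty intersection, and conversely the observation that an existing meet is a nonempty closed ball contained in every member of the chain). No issues.
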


\begin{remark}\label{r5.9}
The spherical complete ultrametric spaces were first introduced by Ingleton \cite{IngPCPS1952} in order to obtain an analog of the Hanh---Banach theorem for non-Archimedean valued fields. This notion has numerous applications in studies of fixed point results for ultrametric spaces \cite{Hitzler2002, PRAMSUH1993, KSTaiA2012, BMNZJM2005}. It was shown by Bayod and Mart\'{\i}nez-Maurica \cite{BMPAMS1987} that an ultrametric space is spherically complete if and only if this space is ultrametrically injective. Recall that an ultrametric space \((Y, \rho)\) is ultrametrically injective if for each \(F \colon A \to X\), where \(A \subseteq Y\) and \(X\) is a space with an ultrametric \(d\), the condition
\[
d(F(x), F(y)) \leqslant \rho(x, y), \quad \forall x, y \in A
\]
implies the existence of an extension \(\widehat{F} \colon Y \to X\) of the mapping \(F\) such that
\[
d(\widehat{F}(x), \widehat{F}(y)) \leqslant \rho(x, y), \quad \forall x, y \in Y.
\]
Thus, an ultrametric space is ultrametrically injective if every contractive mapping from this space to arbitrary ultrametric space has a contractive extension. Some interesting results related to spherical completeness of ultrametric spaces can also be found in \cite{Sch1985} and \cite{KakAmBP1995}.
\end{remark}

\subsection{Distance sets of ultrametric spaces}

The following general fact was proved in Proposition~2 of paper~\cite{DLPS2008TaiA}.

\begin{theorem}\label{t5.6}
Let \(A\) be a subset of \(\RR^{+}\). Then \(A\) is the distance set of a nonempty ultrametric space if and only if \(0\) belongs to \(A\).
\end{theorem}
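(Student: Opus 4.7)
The plan is to prove the two directions separately. The forward direction is a one-liner: any nonempty metric space $(X,d)$ satisfies $d(x,x) = 0$ for any $x \in X$, so $0 \in D(X)$ automatically.

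For the reverse direction, given $A \subseteq \RR^{+}$ with $0 \in A$, I would construct an ultrametric space realizing $A$ as its distance set by taking the set $A$ itself as the carrier. Specifically, mimicking the Delhomm\'e--Laflamme--Pouzet--Sauer construction already recalled in Example~\ref{ex2.6}, I would define $d \colon A \times A \to \RR^{+}$ by
\[
d(x,y) = \begin{cases} 0 & \text{if } x = y, \\ \max\{x,y\} & \text{if } x \neq y. \end{cases}
\]
The verification then proceeds in three routine steps: (i) $d$ is a metric (symmetry is obvious; positivity for $x \neq y$ uses that at least one of $x,y$ is nonzero, so $\max\{x,y\} > 0$); (ii) $d$ satisfies the strong triangle inequality (for $x \neq y$, split into cases on whether $z$ equals $x$, $y$, or neither, and in each case one checks that $\max\{d(x,z), d(z,y)\} \geqslant \max\{x,y\} = d(x,y)$); (iii) $D(A) = A$, where the inclusion $D(A) \subseteq A$ is immediate from the definition, and the reverse inclusion uses $0 \in A$: for any $a \in A$ with $a > 0$, one has $d(a, 0) = \max\{a, 0\} = a$, while $0 = d(x,x)$ is trivially in $D(A)$.

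There is essentially no obstacle here; the only thing that could go subtly wrong is the case $A = \{0\}$, which is handled by the one-point space $X = \{0\}$ with $D(X) = \{0\} = A$. The whole argument hinges on the clever choice of letting the putative distances themselves serve as the points of the space, so that the formula $d(x,y) = \max\{x,y\}$ automatically keeps all distances inside $A$ while the hypothesis $0 \in A$ guarantees that every nonzero element of $A$ is actually realized as a distance (by pairing it with $0$).
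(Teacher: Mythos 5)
Your proof is correct and uses exactly the construction the paper relies on: the paper cites this result to Delhomm\'{e}--Laflamme--Pouzet--Sauer and the realization $X=A$ with $d(x,y)=\max\{x,y\}$ for $x\neq y$ is precisely the construction recalled in Example~\ref{ex2.6} and invoked again in the proof of Theorem~\ref{t5.10}. All three verification steps, including the degenerate case $A=\{0\}$, are handled correctly.
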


Below we refine this theorem for totally bounded ultrametric spaces and for separable ones.

\begin{lemma}\label{l6.3}
Let \((X, d)\) be a totally bounded infinite ultrametric space. Then \(D(X)\) is countably infinite, \(|D(X)| = \aleph_0\).
\end{lemma}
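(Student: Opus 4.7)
The plan is to establish the two bounds $|D(X)| \leqslant \aleph_0$ and $|D(X)| \geqslant \aleph_0$ separately.

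For the upper bound, I would invoke the fact noted in Section~\ref{sec3} that every totally bounded metric space is separable. Pick a countable dense subset $A \subseteq X$; then $|D(A)| \leqslant |A \times A| \leqslant \aleph_0$. Since $(X,d)$ is ultrametric and $A$ is dense in $X$, Corollary~\ref{c2.15} gives $D(X) = D(A)$, and hence $|D(X)| \leqslant \aleph_0$. This part is essentially immediate from material already assembled in the paper.

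For the lower bound, I would argue by contradiction: suppose $D(X)$ is finite. Then the set $D(X) \setminus \{0\}$ is a finite subset of $(0, \infty)$, so it has a strictly positive minimum $r = \min(D(X) \setminus \{0\})$ (the case $D(X) = \{0\}$ is excluded since $|X| \geqslant 2$, in fact $X$ is infinite). By the choice of $r$, for every $x \in X$ and every $y \in X \setminus \{x\}$ we have $d(x,y) \geqslant r$, which yields $B_{r/2}(x) = \{x\}$. Therefore any cover of $X$ by open balls of radius $r/2$ must contain one such ball for each point of $X$, and so cannot be finite. This contradicts the total boundedness of $(X,d)$ provided by Definition~\ref{d2.10} applied at $r/2$. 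Hence $D(X)$ is infinite, and combined with the upper bound we conclude $|D(X)| = \aleph_0$.

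I do not expect any genuine obstacle here; the main subtlety is simply to recognise that Corollary~\ref{c2.15} is exactly the tool that transfers the countable bound from a dense subset to the whole space, and that the ultrametric hypothesis plays no role in the lower bound beyond providing positivity of distances between distinct points. The argument is short enough that no case splitting or further machinery should be needed.
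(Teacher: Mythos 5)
Your proof is correct and follows essentially the same route as the paper: the upper bound via separability and Corollary~\ref{c2.15}, and the lower bound by taking the minimal positive distance $r$ and observing that no finite family of balls of radius $r/2$ (each a singleton) can cover the infinite set $X$, contradicting Definition~\ref{d2.10}. The paper phrases the last step as pairwise disjointness of the balls $B_{d_1/2}(x)$ rather than as $B_{r/2}(x)=\{x\}$, but this is the same observation.
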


\begin{proof}
Every totally bounded metric space is separable. Consequently, \(X\) contains an at most countable dense subset \(A\). Thus, we have the inequalities
\begin{equation}\label{l6.3:e2}
|D(A, d|_{A \times A})| \leqslant |A \times A| \leqslant \aleph_0.
\end{equation}
Corollary~\ref{c2.15} and \eqref{l6.3:e2} imply that
\[
|D(X)| = |D(A, d|_{A \times A})| \leqslant \aleph_0.
\]

Suppose \(|D(X)|\) is a finite number, \(|D(X)| < \aleph_0\). Since \(X\) is infinite, the inequality \(|D(X)| \geqslant 2\) holds. The double inequality \(2 \leqslant |D(X)| < \aleph_0\) implies that there is \(d_1 \in D(X)\) such that
\begin{equation}\label{l6.3:e1}
0 < d_1 \leqslant d^*
\end{equation}
for every \(d^* \in D(X) \setminus \{0\}\). Let us consider the family
\[
\mathcal{F} = \left\{B_{d_1/2}(x) \colon x \in X\right\}.
\]
Then \(\mathcal{F}\) is a cover of \(X\),
\[
X \subseteq \bigcup_{B \in \mathcal{F}} B.
\]
Using~\eqref{l6.3:e1} we obtain \(B_{d_1/2}(x) \cap B_{d_1/2}(y) = \varnothing\) for all distinct \(x\), \(y \in X\). Hence, the cover \(\mathcal{F}\) does not contain any finite subcover. This contradicts Definition~\ref{d2.10}. The equality \(|D(X)| = \aleph_0\) follows.
\end{proof}

\begin{lemma}\label{l6.5}
Let \((X, d)\) be a compact ultrametric space with the distance set \(D(X)\) and let \(p > 0\) belong to \(D(X)\). Then \(p\) is an isolated point of the metric space \((D(X), \rho)\) with the standard metric \(\rho(x, y) = |x-y|\).
\end{lemma}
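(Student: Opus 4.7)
The plan is to argue by contradiction, combining sequential compactness of $(X,d)$ with the isosceles triangle property of ultrametrics (which is the essence of Proposition~\ref{p2.3} applied to $3$-cycles).

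Suppose $p \in D(X)$ with $p > 0$ is not isolated in $(D(X), \rho)$. Then there exists a sequence of distances $d(x_n, y_n) \in D(X) \setminus \{p\}$ with
\[
\lim_{n \to \infty} d(x_n, y_n) = p.
\]
Since $(X, d)$ is compact, Proposition~\ref{p2.9} gives a subsequence $(x_{n_k})$ converging to some $x \in X$, and passing to a further subsequence we may assume $(y_{n_k})$ converges to some $y \in X$. By continuity of $d$ we get $d(x,y) = p > 0$, so in particular $x \neq y$.

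Next I would exploit the ultrametric isosceles triangle principle. Because $d(x_{n_k}, x) \to 0$ and $d(y_{n_k}, y) \to 0$ while $p > 0$, there is an index $K$ such that for all $k \geqslant K$ both
\[
d(x_{n_k}, x) < p \quad \text{and} \quad d(y_{n_k}, y) < p.
\]
Apply Proposition~\ref{p2.3} to the $3$-cycle on $\{x_{n_k}, x, y\}$: the edge $\{x, y\}$ has length $p$, strictly greater than the edge $\{x_{n_k}, x\}$; hence the maximum edge length must be attained by at least two edges, forcing $d(x_{n_k}, y) = p$. Applying the same reasoning to the $3$-cycle on $\{x_{n_k}, y, y_{n_k}\}$, where now $d(x_{n_k}, y) = p$ exceeds $d(y, y_{n_k})$, we conclude $d(x_{n_k}, y_{n_k}) = p$. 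This contradicts the choice of the sequence with $d(x_n, y_n) \neq p$.

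I do not expect any real obstacle. The only subtlety is ensuring that the subsequential limits $x$ and $y$ lie in $X$ (handled by compactness rather than mere total boundedness) and that $p > 0$ is used decisively so that the strict inequalities $d(x_{n_k}, x) < p$ and $d(y_{n_k}, y) < p$ hold eventually. The compactness hypothesis is essential here; for merely totally bounded spaces the limits may not exist in $X$, which is exactly why the statement of Lemma~\ref{l6.5} is confined to the compact setting.
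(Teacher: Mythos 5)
Your proof is correct and follows essentially the same route as the paper: extract convergent subsequences by sequential compactness, use continuity of $d$ to get $d(x,y)=p$, and then invoke the ultrametric cycle property (Proposition~\ref{p2.3}) to force $d(x_{n_k},y_{n_k})=p$ for large $k$, a contradiction. The only cosmetic differences are that you treat both sides of $p$ at once instead of splitting into the intervals $(p,p+\varepsilon_1)$ and $(p-\varepsilon_2,p)$, and you apply the isosceles principle to two triangles rather than to the single $4$-cycle used in the paper.
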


\begin{proof}
Suppose for every \(\varepsilon > 0\) there is \(s \in D(X)\) satisfying
\begin{equation}\label{l6.5:e1}
p < s < p+\varepsilon.
\end{equation}
Then there is \((s_n)_{n \in \mathbb{N}} \subseteq D(X)\) such that
\begin{equation}\label{l6.5:e7}
\lim_{n \to \infty} s_n = p \quad \text{and} \quad p < s_n
\end{equation}
for every \(n \in \mathbb{N}\). We can find two sequences \((x_n)_{n \in \mathbb{N}} \subseteq X\) and \((y_n)_{n \in \mathbb{N}} \subseteq X\) such that
\begin{equation}\label{l6.5:e2}
d(x_n, y_n) = s_n
\end{equation}
for every \(n \in \mathbb{N}\) and
\begin{equation}\label{l6.5:e3}
\lim_{n \to \infty} d(x_n, y_n) = p.
\end{equation}
By Bolzano---Weierstrass property (Proposition~\ref{p2.9}), there are points \(x\), \(y \in X\) and subsequences \((x_{n_k})_{k \in \mathbb{N}}\) of \((x_n)_{n \in \mathbb{N}}\) and \((y_{n_k})_{k \in \mathbb{N}}\) of \((y_n)_{n \in \mathbb{N}}\) such that
\begin{equation}\label{l6.5:e4}
\lim_{k \to \infty} d(x, x_{n_k}) = \lim_{k \to \infty} d(y, y_{n_k}) = 0.
\end{equation}
Since \(d \colon X \times X \to \RR^{+}\) is continuous, from~\eqref{l6.5:e7}, \eqref{l6.5:e2} and \eqref{l6.5:e4} it follows that \(d(x, y) = p\).

Let us consider the cycle \(C_k\) with
\[
V(C_k) = \{x, x_{n_k}, y_{n_k}, y\} \quad \text{ and } \quad E(C_k) = \bigl\{\{x, x_{n_k}\}, \{x_{n_k}, y_{n_k}\}, \{y_{n_k}, y\}, \{y, x\}\bigr\}.
\]
Equalities \(d(x, y) = p\), \eqref{l6.5:e4} and Proposition~\ref{p2.3} give us
\[
p = d(x_{n_k}, y_{n_k})
\]
for all sufficiently large \(k \in \mathbb{N}\), that contradicts \eqref{l6.5:e7}.

Thus, there is \(\varepsilon_1 > 0\) such that
\begin{equation}\label{l6.5:e5}
(p, p+\varepsilon_1) \cap D(X) = \varnothing.
\end{equation}

Arguing similarly we can find \(\varepsilon_2 > 0\) satisfying
\begin{equation}\label{l6.5:e6}
(p - \varepsilon_2, p) \cap D(X) = \varnothing.
\end{equation}
From~\eqref{l6.5:e5} and \eqref{l6.5:e6} it follows that \(p\) is an isolated point of \(D(X)\).
\end{proof}

\begin{definition}\label{d6.3}
A poset has the order type \(\mathbf{1} + \bm{\omega}^{*}\) if this poset is order isomorphic to the subposet
\[
\{0\} \cup \left\{\frac{1}{n} \colon n \in \mathbb{N}\right\}
\]
of the poset \((\RR, {\leqslant})\), where \({\leqslant}\) is the standard order on \(\RR\).
\end{definition}

\begin{remark}\label{r6.2}
The symbol \(\bm{\omega}\) is the usual designation of the order type of \((\mathbb{N}, {\leqslant})\) and \(\mathbf{1}\) denotes the order type of one-point posets. The symbol \(\bm{\omega}^{*}\) is used for the backwards order type of \(\bm{\omega}\). Moreover, by \(\mathbf{1} + \bm{\omega}^{*}\) we denote the sum of the order types \(\mathbf{1}\) and \(\bm{\omega}^{*}\) (see, for example, Section~1.4 in \cite{Ros1982} for details).
\end{remark}

It was noted in Theorem~\ref{t5.6} that a set \(A \subseteq \RR^{+}\) is the distance set of a nonempty ultrametric space if and only if \(0 \in A\). For totally bounded ultrametric spaces this result admits the following refinement.

\begin{theorem}\label{t5.10}
Let \(A\) be a subset of \(\RR^{+}\). Then \(A\) is the distance set of an infinite totally bounded ultrametric space if and only if the following conditions simultaneously hold:
\begin{enumerate}
\item \label{t5.10:s1} \(0 \in A\).
\item \label{t5.10:s2} The poset \((A, {\leqslant}_A)\) with \({\leqslant}_A = A^2 \cap {\leqslant}\) has the order type \(\mathbf{1} + \bm{\omega}^{*}\).
\item \label{t5.10:s3} The point \(0\) is an accumulation point of the set \(A\) with respect to the standard Euclidean metric on \(\RR\).
\end{enumerate}
\end{theorem}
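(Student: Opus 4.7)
The plan is to prove the equivalence by treating the two directions separately. For \textbf{necessity}, suppose $(X,d)$ is an infinite, totally bounded, ultrametric space with distance set $A = D(X)$. Condition~\ref{t5.10:s1} is immediate because $d(x,x)=0$. For conditions~\ref{t5.10:s2} and \ref{t5.10:s3} I would pass to the completion $(\widetilde{X},\widetilde{d})$: by Proposition~\ref{p2.13} this space is compact, by Lemma~\ref{l2.20} it is ultrametric, and by Proposition~\ref{p4.4} we have $D(\widetilde{X})=D(X)=A$. Now Lemma~\ref{l6.4} forces $A$ to be a compact subset of $\RR^{+}$, Lemma~\ref{l6.5} forces every positive element of $A$ to be isolated in $A$, and Lemma~\ref{l6.3} gives $|A|=\aleph_0$. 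Since an infinite compact subset of $\RR$ has an accumulation point, and all positive points of $A$ are isolated, that accumulation point must be $0$; this yields~\ref{t5.10:s3}.

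To deduce~\ref{t5.10:s2} I would apply Corollary~\ref{c3.10} to $(\widetilde{X},\widetilde{d})$ to obtain $\diam X \in A$, so $A$ has a largest element. The set $A\setminus\{0\}$ is thus a bounded, countably infinite set of isolated positive reals with a maximum, so iteratively extracting maxima produces a strictly decreasing enumeration $a_1>a_2>\cdots$ exhausting $A\setminus\{0\}$ and necessarily tending to $0$ (any positive cluster value would be a positive accumulation point of $A$, contradicting Lemma~\ref{l6.5}). This enumeration witnesses the order type $\bm{\omega}^{*}$ on $A\setminus\{0\}$, and prepending the minimum element $0$ gives the order type $\mathbf{1}+\bm{\omega}^{*}$ as in Definition~\ref{d6.3}.

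For \textbf{sufficiency}, suppose $A$ satisfies \ref{t5.10:s1}--\ref{t5.10:s3}. Using~\ref{t5.10:s2} I would write $A\setminus\{0\}$ as a strictly decreasing sequence $a_1>a_2>\cdots$, and condition~\ref{t5.10:s3} then forces $\lim_{n\to\infty}a_n=0$. I would now invoke the Delhomm\'{e}--Laflamme--Pouzet--Sauer construction from Example~\ref{ex2.6}: set $X=A\setminus\{0\}$ and define $d(x,x)=0$, $d(x,y)=\max\{x,y\}$ for $x\neq y$. The argument of Example~\ref{ex2.6} shows that $d$ is an ultrametric. The distance set is exactly $A$, since $0\in D(X)$ trivially, every $a_n=d(a_n,a_{n+1})$ lies in $D(X)$, and $\max\{a_i,a_j\}$ equals some $a_k$ by construction. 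For total boundedness, given $r>0$, I would pick $N$ so large that $a_n<r$ for all $n\geq N$; the definition of $d$ gives $B_r(a_N)\supseteq\{a_N,a_{N+1},\ldots\}$, while $B_r(a_i)\ni a_i$ for $i<N$, so $\{B_r(a_1),\ldots,B_r(a_N)\}$ is a finite cover of $X$. Since $A$ is countably infinite, $X$ is infinite.

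The main obstacle is the necessity part, where one has to combine Lemmas~\ref{l6.3}, \ref{l6.4}, \ref{l6.5} together with Corollary~\ref{c3.10} (all applied to the completion) to pin down the order type precisely as $\mathbf{1}+\bm{\omega}^{*}$, rather than merely showing that $A$ is countable with $0$ as accumulation point. The sufficiency direction is essentially free once Example~\ref{ex2.6} is available, since the tautological ultrametric $d(x,y)=\max\{x,y\}$ is tailor-made to realize any prescribed distance set of the required shape.
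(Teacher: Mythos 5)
Your proof is correct and follows essentially the same route as the paper: both directions pass through the completion and rest on Lemmas~\ref{l6.3}, \ref{l6.4} and \ref{l6.5} for necessity, and on the Delhomm\'{e}---Laflamme---Pouzet---Sauer construction of Example~\ref{ex2.6} for sufficiency. The only cosmetic differences are that you obtain the decreasing enumeration of \(A \setminus \{0\}\) by iterated extraction of maxima where the paper shows each upper tail \(D(X) \cap [d^{*}, \diam X]\) is finite via the cover by singletons, and that you spell out the total-boundedness check which the paper leaves as ``easy to prove''.
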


\begin{proof}
Let \(X\) be an infinite totally bounded ultrametric space with \(A = D(X)\). Then from \(X \neq \varnothing\) it follows that \(0 \in A\). Let us prove the validity of \ref{t5.10:s2}.

The completion \(\widetilde{X}\) of \(X\) is compact (see Proposition~\ref{p2.13}) and ultrametric (see Lemma~\ref{l2.20}). By Corollary~\ref{p4.4}, the equality \(D(X) = D(\widetilde{X})\) holds. Hence, without loss generality, we assume that \(X\) is compact.

By Lemma~\ref{l6.3}, we have \(|D(X)| = \aleph_0\). Consequently, it suffices to show that the cardinality of
\[
D_{d^*} = D(X) \cap [d^{*}, \diam X],
\]
where \([d^{*}, \diam X] = \{s \in \RR^{+} \colon d^{*} \leqslant s \leqslant \diam X\}\), is finite.

By Lemma~\ref{l6.4}, the distance set \(D(X)\) is a compact subset of \(\RR\). The classical Heine---Borel theorem (from Real Analysis) claims that a subset \(S\) of \(\RR\) is compact if and only if \(S\) is closed and bounded. Thus, \(D_{d^*}\) is compact. By Lemma~\ref{l6.5}, every \(p \in D_{d^*}\) is isolated. Hence, for every \(p \in D_{d^*}\), the one-point set \(\{p\}\) is an open ball in the metric subspace \(D_{d^*}\) of \(\RR\) (see Proposition~\ref{p2.2}). It is clear that the open cover
\[
\bigl\{\{p\} \colon p \in D_{d^*}\bigr\}
\]
contains a finite subcover if and only if the cardinal number \(|D_{d^*}|\) is finite. Now the inequality \(|D_{d^*}| < \infty\) follows from Borel---Lebesgue property (Definition~\ref{d2.3}). Condition~\ref{t5.10:s2} follows.

Let us prove condition \ref{t5.10:s3}. Using Proposition~\ref{p4.4} and Lemma~\ref{l6.4} we obtain that \(A\) is a compact subset of \(\RR^{+}\). If \ref{t5.10:s3} does not hold, then \(0\) is an isolated point of \(A\). Consequently, by Lemma~\ref{l6.5}, all points of \(A\) are isolated. Since \(A\) is compact, it implies the finiteness of \(A\), contrary to \ref{t5.10:s2}. Condition~\ref{t5.10:s3} follows.

Conversely, suppose that \(A\) satisfies conditions \ref{t5.10:s1}--\ref{t5.10:s3}. As in Example~\ref{ex2.6}, we write \(A = X\) and define the ultrametric \(d \colon X \times X \to \RR^{+}\) by~\eqref{ex2.6:e1}. Then the equality~\(A = D(X)\) follows from the definition of \(d\). Using \ref{t5.10:s1}--\ref{t5.10:s3}, it is easy to prove that \((X, d)\) is compact, ultrametric and infinite.
\end{proof}

\begin{remark}\label{r6.11}
It can be shown that characteristic properties of the distance sets of infinite totally bounded ultrametric spaces presented by conditions \ref{t5.10:s1}--\ref{t5.10:s3} of Theorem~\ref{t5.10} are independent of one another.
\end{remark}

\begin{example}\label{ex6.8}
Let \((x_{n})_{n \in \mathbb{N}} \subseteq \RR^{+}\) be a strictly decreasing sequence such that
\[
\lim_{n \to \infty} x_n = 0.
\]
Let us define a set \(A \subseteq \RR^{+}\) by the rule
\[
(x \in A) \Leftrightarrow (x=0 \text{ or } \exists n \in \mathbb{N} \colon x = x_n).
\]
Write \(X = A\) and let \(d \colon X \times X \to \RR^{+}\) be defined as in Example~\ref{ex2.6}. Then \((X, d)\) is a compact, infinite ultrametric space satisfying the equality \(D(X, d) = A\).
\end{example}

Example~\ref{ex6.8} gives us a ``constructive'' description of distance sets of all infinite totally bounded ultrametric spaces. For compact ultrametric spaces, the following reformulation of Theorem~\ref{t5.10} can be found in \cite{Sch1985} Proposition~19.2.

\begin{corollary}\label{c6.9}
The following statements are equivalent for every \(A \subseteq \RR^{+}\):
\begin{enumerate}
\item \label{c6.9:s1} There is an infinite compact ultrametric space \(X\) such that \(A = D(X)\).
\item \label{c6.9:s3} There is an infinite totally bounded ultrametric space \(X\) such that \(A = D(X)\).
\item \label{c6.9:s2} There is a strictly decreasing sequence \((x_n)_{n \in \mathbb{N}} \subseteq \RR^{+}\) such that
\[
\lim_{n \to \infty} x_n = 0
\]
holds and the equivalence
\[
(x \in A) \Leftrightarrow (x = 0 \text{ or } \exists n \in \mathbb{N} \colon x_n = x)
\]
is valid for every \(x \in \RR^{+}\).
\end{enumerate}
\end{corollary}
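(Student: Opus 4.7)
The plan is to deduce the three-way equivalence by chaining the implication $\ref{c6.9:s1} \Rightarrow \ref{c6.9:s3} \Rightarrow \ref{c6.9:s2} \Rightarrow \ref{c6.9:s1}$, leaning heavily on Theorem~\ref{t5.10} for the middle step and on the Delhomm\'e--Laflamme--Pouzet--Sauer construction from Example~\ref{ex6.8} for the last step.

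First, $\ref{c6.9:s1} \Rightarrow \ref{c6.9:s3}$ is immediate: every compact metric space is totally bounded (this is noted right after Definition~\ref{d2.10}), so an infinite compact ultrametric space $X$ with $D(X) = A$ also witnesses \ref{c6.9:s3}. Next, for $\ref{c6.9:s3} \Rightarrow \ref{c6.9:s2}$, I would invoke Theorem~\ref{t5.10}. If $A = D(X)$ for some infinite totally bounded ultrametric space $X$, then $A$ satisfies conditions \ref{t5.10:s1}--\ref{t5.10:s3} of that theorem. Condition~\ref{t5.10:s2} states exactly that $(A, {\leqslant}_A)$ has order type $\mathbf{1} + \bm{\omega}^*$, which by Definition~\ref{d6.3} means $A \setminus \{0\}$ is order isomorphic to $\{1/n : n \in \mathbb{N}\}$ under the reverse Euclidean order. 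This forces $A \setminus \{0\}$ to be listed as a strictly decreasing sequence $(x_n)_{n \in \mathbb{N}}$. Since condition~\ref{t5.10:s1} gives $0 \in A$ and condition~\ref{t5.10:s3} states that $0$ is an accumulation point of $A$, the monotone sequence $(x_n)$ must converge to $0$. The required equivalence $(x \in A) \Leftrightarrow (x = 0 \text{ or } \exists n \colon x_n = x)$ then follows from the exhaustive listing $A = \{0\} \cup \{x_n : n \in \mathbb{N}\}$.

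Finally, $\ref{c6.9:s2} \Rightarrow \ref{c6.9:s1}$ is the constructive direction. Given the sequence $(x_n)_{n \in \mathbb{N}}$ with $x_n \searrow 0$, I would set $X := A$ and define $d \colon X \times X \to \RR^{+}$ by the Delhomm\'e--Laflamme--Pouzet--Sauer formula from Example~\ref{ex2.6}, namely $d(x,y) = \max\{x,y\}$ for $x \neq y$ and $d(x,x) = 0$. The ultrametric axiom is verified precisely as in Example~\ref{ex2.6}, and the equality $D(X) = A$ is immediate from the definition of $d$. Infiniteness is clear since $(x_n)$ is strictly decreasing, hence $A$ is infinite. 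It remains to establish compactness, and I would use Bolzano---Weierstrass (Proposition~\ref{p2.9}): given any sequence $(y_k)_{k \in \mathbb{N}} \subseteq X$, either some value is attained infinitely often (yielding a constant, hence convergent, subsequence), or after passing to a subsequence the $y_k$ are all distinct elements of $A \setminus \{0\}$; in the latter case either $\inf_k y_k > 0$, which by condition~\ref{t5.10:s2} allows only finitely many possible values (contradiction), or $y_k \to 0$ in the Euclidean sense, in which case $d(y_k, 0) = y_k \to 0$ gives convergence to $0 \in X$.

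The only step requiring genuine care is verifying compactness in the final implication; the subtlety is that the chosen ultrametric $d(x,y) = \max\{x,y\}$ is not the restriction of the Euclidean metric, so one must translate Euclidean convergence $y_k \to 0$ into ultrametric convergence $d(y_k,0) \to 0$, which happens here because the two metrics coincide on pairs containing~$0$. The order-theoretic condition~\ref{t5.10:s2}, equivalently the fact that $A \setminus (0,\varepsilon)$ is finite for every $\varepsilon > 0$, is exactly what rules out the ``$\inf y_k > 0$ with infinitely many distinct values'' scenario. Everything else is bookkeeping based on Theorem~\ref{t5.10} and the examples already worked out in the paper.
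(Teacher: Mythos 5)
Your proof is correct and follows essentially the same route the paper intends: the corollary is presented there as a direct reformulation of Theorem~\ref{t5.10} combined with the Delhomm\'e--Laflamme--Pouzet--Sauer construction of Example~\ref{ex6.8}, which is exactly the chain you implement. The only difference is that you spell out the Bolzano---Weierstrass compactness check that the paper dismisses as ``easy to prove,'' and your argument for it is sound (modulo the harmless imprecision that $\inf_k y_k = 0$ yields a subsequence, not the whole sequence, tending to $0$).
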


We conclude the second part of the section by following modification of Theorem~\ref{t5.6} for separable ultrametric spaces.

\begin{proposition}\label{p5.16}
Let \(A\) be a subset of \(\RR^{+}\). Then \(A\) is the distance set of a nonempty separable ultrametric space if and only if \(0 \in A\) and \(|A| \leqslant \aleph_0\) hold.
\end{proposition}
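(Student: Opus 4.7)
The plan is to split the equivalence into necessity and sufficiency, handling sufficiency via the Delhomm\'{e}--Laflamme--Pouzet--Sauer construction already used in Example~\ref{ex2.6} and handling necessity via Corollary~\ref{c2.15}.

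For the necessity, suppose \((X, d)\) is a nonempty separable ultrametric space with \(A = D(X)\). Nonemptiness of \(X\) gives \(d(x, x) = 0 \in A\) for any \(x \in X\). For the cardinality bound, I would invoke separability to extract an at most countable dense subset \(S \subseteq X\). Then \(|D(S, d|_{S \times S})| \leqslant |S \times S| \leqslant \aleph_0\), and Corollary~\ref{c2.15} yields \(D(X) = D(S, d|_{S \times S})\), whence \(|A| = |D(X)| \leqslant \aleph_0\).

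For the sufficiency, assume \(0 \in A\) and \(|A| \leqslant \aleph_0\). Set \(X = A\) and define \(d \colon X \times X \to \RR^{+}\) by
\[
d(x, y) = \begin{cases} 0 & \text{if } x = y,\\ \max\{x, y\} & \text{if } x \neq y, \end{cases}
\]
exactly as in Example~\ref{ex2.6}. This function is symmetric, vanishes precisely on the diagonal, and satisfies the strong triangle inequality (the standard Delhomm\'{e}--Laflamme--Pouzet--Sauer verification). A direct inspection shows \(D(X) = A\): every nonzero \(a \in A\) is realized as \(d(0, a) = a\), and any distance \(d(x, y) = \max\{x,y\}\) lies in \(A\) since \(x, y \in A\). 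Finally, \(|X| = |A| \leqslant \aleph_0\) means \(X\) is at most countable, so \(X\) is dense in itself and hence separable.

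No step is technically delicate here; the only point worth flagging is ensuring that \(0 \in A\) is actually needed so that \(d\) takes the value \(0\) on the diagonal and \(X\) is nonempty, and that \(A\) realizes every nonzero value via the pair \((0, a)\). With \(0 \in A\) in hand and with Corollary~\ref{c2.15} providing the invariance of the distance set under passage to a dense subset, both directions reduce to routine verifications.
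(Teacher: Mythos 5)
Your proof is correct and follows essentially the same route as the paper: necessity via the countable dense subset together with Corollary~\ref{c2.15} (this is exactly inequality~\eqref{l6.3:e2} from the proof of Lemma~\ref{l6.3}, which the paper cites), and sufficiency via the Delhomm\'{e}--Laflamme--Pouzet--Sauer construction of Example~\ref{ex2.6}. The paper only sketches these two steps, and your write-up fills in the same details correctly.
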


For the proof it suffices to note that \(|D(X)| \leqslant \aleph_0\) holds for every separable ultrametric space \(X\) (see inequality~\eqref{l6.3:e2} in the proof of Lemma~\ref{l6.3}) and use the Delhomm\'{e}---Laflamme---Pouzet---Sauer construction which was described in Example~\ref{ex2.6}.

\begin{example}\label{ex5.20}
Let \(p \geqslant 2\) be a prime number. The ultrametric space \((\mathbb{Q}_p, d_p)\) is a completion of the countable ultrametric space \((\mathbb{Q}, d_p)\). Hence, \((\mathbb{Q}_p, d_p)\) is separable. The distance sets \(D(\mathbb{Q}, d_p)\) and \(D(\mathbb{Q}_p, d_p)\) are both equal to the set
\[
\{p^{n} \colon n \in \mathbb{Z}\} \cup \{0\}
\]
of all integer powers of \(p\) together with the added zero (see, for example, \cite[p.~58]{Gou1993}).
\end{example}

\begin{remark}\label{r6.6}
Theorem~\ref{t5.10}, Corollary~\ref{c6.9} and Proposition~\ref{p5.16} can be found in paper \cite{DS2022TRoUCaS} that contains also some results describing the distance sets of ultrametric spaces with the same topology. It was proved by A.~J.~Lemin and V.~Lemin~\cite{LL1996} that, for every infinite, ultrametric space \(X\), the cardinality of \(D(X)\) is no greater than the weight of \(X\),
\begin{equation}\label{r6.6:e2}
|D(X)| \leqslant w(X).
\end{equation}
In particular, \eqref{r6.6:e2} implies \(|D(X)| \leqslant \aleph_0\) for all separable ultrametric spaces \(X\). For the case of finite ultrametric space \(X\), we evidently have the equality \(w(X) = |X|\) and, consequently, inequality \eqref{r6.6:e2} turns into the Gomory---Hu inequality
\[
|D(X)| \leqslant |X|,
\]
which was deduced R.~Gomory and T.~Hu \cite{GH1961S} by studying the flows in networks. The necessary and sufficient conditions under which the group of self-isometries of ultrametric space \(X\) that can be represented by some labeled star graph \(S\) coincides with the group of self-isomorphisms of \(S\) are also determined by properties of \(D(X)\) \cite{DR2025USGbLSG}.
\end{remark}

\subsection{Ultrametric preserving functions and weak similarities}

A function \(f \colon \RR^{+} \to \RR^{+}\) is called \emph{ultrametric preserving} if
\[
X \times X \xrightarrow{d} \RR^{+} \xrightarrow{f} \RR^{+}
\]
is an ultrametric on \(X\) for every ultrametric space \((X, d)\).

The following elegant result was obtained by Pongsriiam and Termwuttipong in paper~\cite{PTAbAppAn2014}.

\begin{theorem}\label{t5.21}
The following conditions are equivalent for every \(f \colon \RR^{+} \to \RR^{+}\):
\begin{enumerate}
\item \label{t5.21:s1} \(f\) is ultrametric preserving.
\item \label{t5.21:s2} \(f\) is increasing and the equality \(f(x) = 0\) holds if and only if \(x = 0\).
\end{enumerate}
\end{theorem}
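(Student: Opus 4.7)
The plan is to prove the two implications separately. For the direction \(\ref{t5.21:s2} \Rightarrow \ref{t5.21:s1}\), I would let \((X, d)\) be an arbitrary ultrametric space and verify that \(f \circ d \colon X \times X \to \RR^{+}\) satisfies the three defining conditions of a metric together with the strong triangle inequality. Symmetry is immediate. The equivalence \(f(x) = 0 \Leftrightarrow x = 0\) together with \(d(x, y) = 0 \Leftrightarrow x = y\) gives \((f \circ d)(x, y) = 0 \Leftrightarrow x = y\). The key step is the strong triangle inequality: starting from \(d(x, y) \leqslant \max\{d(x, z), d(z, y)\}\), monotonicity of \(f\) yields
\[
f(d(x, y)) \leqslant f(\max\{d(x, z), d(z, y)\}) = \max\{f(d(x, z)), f(d(z, y))\},
\]
where the last equality uses the general fact that an increasing function commutes with \(\max\) on pairs. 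This direction is entirely routine.

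For the converse direction \(\ref{t5.21:s1} \Rightarrow \ref{t5.21:s2}\), I would first extract the condition \(f(x) = 0 \Leftrightarrow x = 0\) by applying the ultrametric preserving property to a two-point space \(X = \{a, b\}\) with \(d(a, b) = r\) for arbitrary \(r > 0\). Since \(f \circ d\) must be a metric, one obtains \(f(0) = 0\) and \(f(r) > 0\) for every \(r > 0\).

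The main obstacle is proving monotonicity, and I would proceed by contradiction: suppose there exist \(0 < a < b\) with \(f(a) > f(b)\). The idea is to find a three-point ultrametric space on which \(f \circ d\) fails the strong triangle inequality. Take \(X = \{x, y, z\}\) equipped with \(d(x, y) = d(y, z) = b\) and \(d(x, z) = a\). A direct check of the three instances of the strong triangle inequality (using \(a < b\)) confirms that \(d\) is an ultrametric on \(X\) (it realises the standard ``isosceles'' pattern of ultrametric triples). Applying \(f\) gives distances \(f(b)\), \(f(b)\), \(f(a)\) on \(X\), but then the strong triangle inequality for \(f \circ d\) requires
\[
f(a) = f(d(x, z)) \leqslant \max\{f(d(x, y)), f(d(y, z))\} = f(b),
\]
which contradicts our assumption \(f(a) > f(b)\). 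Hence \(f\) must be increasing, completing the argument.

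Beyond the combinatorial choice of the three-point configuration, there are no further difficulties; the entire weight of the theorem rests on the observation that increasing functions commute with binary \(\max\), together with the fact that on three points the ultrametric condition is equivalent to the isosceles condition, which is the source of the counterexample above.
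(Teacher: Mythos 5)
Your proof is correct and complete. The paper itself does not prove Theorem~\ref{t5.21}; it only cites Pongsriiam and Termwuttipong, and your argument is precisely the standard one from that source: the forward direction rests on the identity \(f(\max\{s,t\}) = \max\{f(s), f(t)\}\) for increasing \(f\), and the converse on the two-point space for the vanishing condition plus the isosceles three-point space \(d(x,y)=d(y,z)=b\), \(d(x,z)=a\) to force monotonicity. One small point worth making explicit (though it is implicit in your contradiction argument) is that the potential violation of monotonicity with \(a=0\) is already excluded by \(f(0)=0\) and \(f\geqslant 0\), so restricting to \(0<a<b\) is indeed without loss of generality.
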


Some facts related to ultrametric preserving functions can be found in \cite{Dov2020MS, SKP2020MS, VD2019a, PTAbAppAn2014, BDS2021UPFaWSoUS, Dov2024SUPF, BD2024OMoMPF, Dov2024UPFAME}.

\begin{example}\label{ex5.24}
Let \(f \colon \RR^{+} \to \RR^{+}\) be defined as
\begin{equation}\label{ex5.24:e1}
f(t) = \frac{d^{*} t}{1+t}
\end{equation}
with \(d^{*} \in (0, \infty)\). Then \(f\) is increasing and \(f(t) = 0\) holds if and only if \(t=0\). Hence, \(f\) is ultrametric preserving by Theorem~\ref{t5.21}. Moreover, \(f \circ d\) is a metric for every metric space \((X, d)\) (Example~2 in \cite{Dobos1998}). Thus, \(f\) is also \emph{metric preserving}. Using the function \(f\) it is easy to show that, for every unbounded metric space \((Y, \rho)\), the metric space \((Y, \delta)\) with \(\delta = f \circ \rho\) is bounded and has the same topology as \((Y, \rho)\).
\end{example}

In Theorem~\ref{t2.25} below we will use the transformation \(d \mapsto f \circ d\) with \(f \colon \RR^{+} \to \RR^{+}\) defined by equality~\eqref{ex5.24:e1} to characterize the bounded ultrametric spaces with empty diametrical graphs.

The above formulated Pongsriiam---Termwuttipong theorem can be partially generalized as follows.

\begin{lemma}\label{l5.22}
Let \((X, d)\) be a nonempty ultrametric space with the distance set \(D(X)\) and let \(f \colon D(X) \to \RR^{+}\) be an isotone mapping from the poset \((D(X), {\leqslant})\) to the poset \((\RR^{+}, {\leqslant})\). Then the mapping
\[
X \times X \ni \<x, y> \mapsto f(d(x, y)) \in \RR^{+}
\]
is an ultrametric on \(X\) if and only if the equation \(f(t) = 0\) has the unique solution \(t=0\).
\end{lemma}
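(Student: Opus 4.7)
My plan is to prove the two implications of the if-and-only-if separately, using the ultrametric axioms as a checklist.

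For necessity, I would argue as follows. Suppose $f \circ d$ is an ultrametric on $X$. Pick any $t \in D(X)$, so that $t = d(x,y)$ for some $x, y \in X$. By axiom (ii) of a metric applied to $f \circ d$, the equality $f(d(x,y)) = 0$ is equivalent to $x = y$, which in turn is equivalent to $d(x,y) = 0$, i.e., to $t = 0$. This gives the required uniqueness: $f(t) = 0$ iff $t = 0$.

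For sufficiency, assume $f$ is isotone on $(D(X), {\leqslant})$ and that $f(t) = 0 \Leftrightarrow t = 0$. Write $\rho(x, y) := f(d(x, y))$. Symmetry $\rho(x,y) = \rho(y,x)$ is inherited from the symmetry of $d$. The biconditional $\rho(x,y) = 0 \Leftrightarrow x = y$ follows by chaining the equivalences $x = y \Leftrightarrow d(x,y) = 0 \Leftrightarrow f(d(x,y)) = 0$, where the first uses that $d$ is a metric and the second uses the assumption on $f$. For the strong triangle inequality, fix $x, y, z \in X$. From the ultrametricity of $d$ we have $d(x,y) \leqslant \max\{d(x,z), d(z,y)\}$, and since $f$ is isotone, this implies
\[
f(d(x,y)) \leqslant f\bigl(\max\{d(x,z), d(z,y)\}\bigr) = \max\{f(d(x,z)), f(d(z,y))\},
\]
where the last equality holds because an isotone function commutes with taking the max of two elements in its domain. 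This yields $\rho(x,y) \leqslant \max\{\rho(x,z), \rho(z,y)\}$ as required.

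The main obstacle I anticipate is really only bookkeeping: ensuring that all arguments to $f$ actually lie in its domain $D(X)$. This matters for the strong triangle step, since one must know $\max\{d(x,z), d(z,y)\} \in D(X)$ before applying $f$ to it. This is immediate because $\max\{d(x,z), d(z,y)\}$ equals either $d(x,z)$ or $d(z,y)$, both of which belong to $D(X)$ by definition. With this minor point handled, the argument is straightforward and does not require any special features of totally bounded spaces.
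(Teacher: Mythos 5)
Your proof is correct. The paper itself gives no argument for this lemma; it simply cites Theorem~5.6 of the Vallin--Dovgoshey paper \cite{VD2019a}, so your contribution is a self-contained elementary verification rather than an alternative to an existing argument. Your two directions are both sound: necessity uses only axiom (ii) for $f \circ d$ (not even isotonicity or the strong triangle inequality), and sufficiency correctly reduces the strong triangle inequality to the identity $f(\max\{a,b\}) = \max\{f(a), f(b)\}$, which holds for an isotone $f$ on a totally ordered domain because $\max\{a,b\}$ coincides with one of $a$, $b$. Your remark on the domain bookkeeping --- that $\max\{d(x,z), d(z,y)\}$ lies in $D(X)$ precisely because it equals one of the two distances --- is exactly the point that makes the restriction of $f$ to $D(X)$ (rather than all of $\RR^{+}$) harmless, and it is the only place where care is genuinely needed. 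Nothing is missing.
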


\begin{proof}
It directly follows form Theorem~5.6 \cite{VD2019a}.
\end{proof}


\begin{theorem}\label{t2.25}
Let \((X, d)\) be an unbounded ultrametric space, let \(d^{*} \in (0, \infty)\) and \(\rho \colon X \times X \to \RR^{+}\) be defined as
\begin{equation}\label{t2.25:e1}
\rho(x, y) = \frac{d^{*} \cdot d(x, y)}{1 + d(x, y)}.
\end{equation}
Then \((X, \rho)\) is a bounded ultrametric space with empty diametrical graph \(G_{X, \rho}\).

Conversely, let \((X, \rho)\) be a bounded ultrametric space with \(|X| \geqslant 2\) and empty \(G_{X, \rho}\). Write \(d^{*} = \diam (X, \rho)\). Then there is an unbounded ultrametric space \((X, d)\) such that~\eqref{t2.25:e1} holds for all \(x\), \(y \in X\).
\end{theorem}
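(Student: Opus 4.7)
For the forward direction, the plan is straightforward: the function $f\colon\RR^{+}\to\RR^{+}$ defined by $f(t)=d^{*}t/(1+t)$ is strictly increasing with $f(0)=0$, so by Theorem~\ref{t5.21} (or Example~\ref{ex5.24}) the composition $\rho=f\circ d$ is an ultrametric on $X$. Since $f(t)<d^{*}$ for every $t\in\RR^{+}$, the space $(X,\rho)$ is bounded with $\diam(X,\rho)\leqslant d^{*}$. Unboundedness of $(X,d)$ gives a sequence of pairs $(x_n,y_n)$ with $d(x_n,y_n)\to\infty$, hence $\rho(x_n,y_n)\to d^{*}$, so in fact $\diam(X,\rho)=d^{*}$. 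Because $f$ never attains the value $d^{*}$, no pair satisfies $\rho(x,y)=\diam(X,\rho)$, and Definition~\ref{d5.2} gives $E(G_{X,\rho})=\varnothing$.

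For the converse, the plan is to invert the transformation. Define $g\colon [0,d^{*})\to\RR^{+}$ by $g(s)=s/(d^{*}-s)$; a direct computation shows $g=f^{-1}$ on $[0,d^{*})$ and $f=g^{-1}$ on $\RR^{+}$, and $g$ is strictly increasing with $g(s)=0\Leftrightarrow s=0$. The key point is to verify $\rho(x,y)<d^{*}$ for all $x,y\in X$, so that $g(\rho(x,y))$ is defined. For $x=y$ this is trivial, and for $x\neq y$ the hypothesis $E(G_{X,\rho})=\varnothing$ combined with $d^{*}=\diam(X,\rho)$ prevents $\rho(x,y)=d^{*}$ via \eqref{d5.2:e1}. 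Hence $D(X,\rho)\subseteq[0,d^{*})$ lies in the domain of $g$.

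Having secured this, I would define $d(x,y):=g(\rho(x,y))$. Since $g|_{D(X,\rho)}$ is isotone into $\RR^{+}$ and vanishes only at $0$, Lemma~\ref{l5.22} gives that $d$ is an ultrametric on $X$. To see that $(X,d)$ is unbounded, pick a sequence $(x_n,y_n)$ with $\rho(x_n,y_n)\to d^{*}$ (which exists since $d^{*}=\sup D(X,\rho)$); then $d(x_n,y_n)=g(\rho(x_n,y_n))\to\infty$. Finally, formula \eqref{t2.25:e1} holds because $f\circ g$ is the identity on $[0,d^{*})$.

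The only place where the hypothesis on the diametrical graph really bites is the pointwise inequality $\rho(x,y)<d^{*}$, which is the sole obstacle to inverting the substitution; once that is in hand, everything else is a mechanical application of Lemma~\ref{l5.22} and the algebraic identity $f\circ g=\mathrm{id}$. The boundedness hypothesis alone would not suffice, since without emptiness of $G_{X,\rho}$ the diameter might be attained and $g$ would fail to be defined at that point.
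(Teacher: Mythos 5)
Your proof is correct and follows essentially the same route as the paper: the forward direction via the strictly increasing ultrametric-preserving function $f(t)=d^{*}t/(1+t)$, and the converse via its inverse $g(s)=s/(d^{*}-s)$ together with Lemma~\ref{l5.22}, with the empty diametrical graph used exactly where the paper uses it, namely to ensure $D(X,\rho)\subseteq[0,d^{*})$. The only detail worth adding is the observation (present in the paper) that $|X|\geqslant 2$ and boundedness give $d^{*}\in(0,\infty)$, so that $g$ has a nondegenerate domain.
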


\begin{proof}
It was noted in Example~\ref{ex5.24} that the function \(f\) defined by~\eqref{ex5.24:e1} is ultrametric preserving. Hence, the mapping \(\rho \colon X \times X \to \RR^{+}\), defined by~\eqref{t2.25:e1}, is an ultrametric. Moreover, since \(f\) is strictly increasing and satisfies the equality
\begin{equation*}
\lim_{t \to \infty} f(t) = d^{*},
\end{equation*}
we have
\[
\rho(x, y) < \lim_{t \to \infty} f(t) = d^{*} = \diam (X, \rho)
\]
for all \(x\), \(y \in X\). Thus, the diametrical graph \(G_{X, \rho}\) is empty.

Conversely, let \((X, \rho)\) be a bonded ultrametric space with \(|X| \geqslant 2\) and empty diametrical graph \(G_{X, \rho}\). Write \(d^{*} = \diam (X, \rho)\). The inequality \(|X| \geqslant 2\) and boundedness of \((X, \rho)\) imply \(d^{*} \in (0, \infty)\). The function \(g \colon [0, d^{*}) \to \RR^{+}\),
\begin{equation}\label{t2.25:e7}
g(s) = \frac{s}{d^{*} - s},
\end{equation}
is strictly increasing and satisfies the equalities
\begin{equation}\label{t2.25:e5}
g(0) = 0 \quad \text{and} \quad \lim_{\substack{s \to d^{*} \\ s \in [0, d^{*})}} g(s) = +\infty.
\end{equation}
Since \(d^{*}\) equals \(\diam (X, \rho)\), there are sequences \((x_n)_{n \in \mathbb{N}} \subseteq X\) and \((y_n)_{n \in \mathbb{N}} \subseteq X\) such that
\begin{equation}\label{t2.25:e8}
\lim_{n \to \infty} \rho(x_n, y_n) = d^{*}.
\end{equation}
In addition, by Theorem~\ref{t2.24}, we have \(\rho(x, y) < d^{*}\) for all \(x\), \(y \in X\). Consequently, the inclusion \(D(X, \rho) \subseteq [0, d^{*})\) holds. Now Lemma~\ref{l5.22} implies that the mapping \(d \colon X \times X \to \RR^{+}\) satisfying the equality
\[
d(x, y) = g(\rho(x, y))
\]
for all \(x\), \(y \in X\) is an ultrametric on~\(X\). From the second equality in \eqref{t2.25:e5} and equality~\eqref{t2.25:e8} it follows that \((X, d)\) is unbounded. A direct calculation shows
\begin{equation}\label{t2.25:e6}
f(g(s)) = s \quad \text{and} \quad g(f(t)) = t
\end{equation}
for all \(s \in [0, d^{*})\) and \(t \in [0, +\infty)\), where \(f\) is defined by~\eqref{ex5.24:e1}. Now equality \eqref{t2.25:e1} follows from \eqref{t2.25:e6}.
\end{proof}

\begin{remark}\label{r2.33}
The condition \(|X| \geqslant 2\) cannot be dropped in the second part of Theorem~\ref{t2.25}. Indeed, if \(|X| = 1\), then, for every metric \(\rho\), the metric space \((X, \rho)\) is bounded and ultrametric with empty diametrical graph \(G_{X, \rho}\) and there are no ultrametrics \(d \colon X \times X \to \RR^{+}\) for which \(\diam (X, d) = +\infty\) holds.
\end{remark}

In the case of strictly increasing ultrametric preserving \(f \colon \RR^{+} \to  \RR^{+}\), the transition from \((X, d)\) to \((X, f \circ d)\) can also be described using the concept of weak similarity.

\begin{definition}\label{d2.34}
Let \((X, d)\) and \((Y, \rho)\) be nonempty metric spaces. A mapping \(\Phi \colon X \to Y\) is a \emph{weak similarity} of \((X, d)\) and \((Y, \rho)\) if \(\Phi\) is bijective and there is a strictly increasing function \(f \colon D(Y) \to D(X)\) such that the following diagram
\begin{equation}\label{d2.34:e2}
\ctdiagram{
\ctv 0,50:{X \times X}
\ctv 100,50:{Y \times Y}
\ctv 0,0:{D(X)}
\ctv 100,0:{D(Y)}
\ctet 0,50,100,50:{\Phi \otimes \Phi}
\ctel 0,50,0,0:{d}
\cter 100,50,100,0:{\rho}
\ctet 100,0,0,0:{f}
}
\end{equation}
is commutative, i.e., the equality
\begin{equation}\label{d2.34:e1}
d(x, y) = f\left(\rho\bigl(\Phi(x), \Phi(y)\bigr)\right)
\end{equation}
holds for all \(x\), \(y \in X\).
\end{definition}

\begin{remark}\label{r5.32}
In Diagram~\eqref{d2.34:e2} and other diagrams illustrating the application of weak similarities to metric spaces we write \(\Phi \otimes \Phi\) for the mapping
\[
X \times X \ni \<x, y> \mapsto \<\Phi(x), \Phi(y)> \in Y \times Y.
\]
Moreover, for every metric \(d\) on \(X\) we use the symbol \(d \colon X \times X \to D(X)\) for the surjection induced by restricting codomain \(\RR^{+}\) of \(d\) to its range \(D(X)\).
\end{remark}

If \(\Phi \colon X \to Y\) is a weak similarity, \(f \colon D(Y) \to D(X)\) is strictly increasing, and \eqref{d2.34:e1} is fulfilled for all \(x\), \(y \in X\), then we say that \((X, d)\) and \((Y, \rho)\) are \emph{weakly similar}, and \(f\) is the \emph{scaling function} of \(\Phi\).

\begin{remark}\label{r5.29}
It follows directly from Definition~\ref{d2.34} that the scaling function is uniquely determined by given weak similarity. Moreover, Corollary~\ref{c5.4} implies that every scaling function is an order isomorphism of corresponding distance sets.
\end{remark}

\begin{example}\label{ex5.29}
Let \((X, d)\) and \((Y, \rho)\) be nonempty metric spaces. A mapping \(\Phi \colon X \to Y\) is a \emph{similarity}, if \(\Phi\) is bijective and there is a strictly positive number \(r = r(\Phi)\), the \emph{ratio} of \(\Phi\), such that
\[
\rho\bigl(\Phi(x), \Phi(y)\bigr) = rd(x, y)
\]
for all \(x\), \(y \in X\) (see, for example, \cite[p.~45]{Edgar1992}). It is clear that every isometry is a similarity with the ratio \(r = 1\) and every similarity is a weak similarity. Furthermore, a weak similarity \(\Phi \colon X \to Y\) with the scaling function \(f \colon D(Y) \to D(X)\) is an isometry if and only if \(f(t) = t\) holds for every \(t \in D(Y)\).
\end{example}

\begin{example}\label{ex5.32}
Let \(\Phi \colon X \to Y\) be a bijection and let \(d \colon X \times X \to \RR^{+}\) and \(\rho \colon Y \times Y \to \RR^{+}\) be some metrics. The mapping \(\Phi\) is a weak similarity of \((X, d)\) and \((Y, \rho)\) if and only if the equivalence
\begin{equation}\label{ex5.32:e1}
(d(x, y) \leqslant d(w, z)) \Leftrightarrow (\rho(\Phi(x), \Phi(y)) \leqslant \rho(\Phi(w), \Phi(z)))
\end{equation}
is valid for all \(x\), \(y\), \(z\), \(w \in X\).
\end{example}

\begin{remark}\label{r5.36}
Equivalence~\eqref{ex5.32:e1} evidently implies the validity of
\begin{equation}\label{r5.36:e1}
(d(x, y) = d(w, z)) \Leftrightarrow (\rho(\Phi(x), \Phi(y)) = \rho(\Phi(w), \Phi(z))).
\end{equation}
The bijections \(\Phi \colon X \to Y\) satisfying~\eqref{r5.36:e1} for all \(x\), \(y\), \(z\), \(w \in X\) is said to be combinatorial similarities. Some questions connected with the weak similarities and combinatorial similarities were studied in \cite{DovBBMSSS2020, DLAMH2020, Dov2019IEJA}. The weak similarities of finite ultrametric and semimetric spaces were also considered by E.~Petrov in \cite{Pet2018pNUAA}.
\end{remark}

\begin{lemma}\label{l5.29}
Let \((X, d)\), \((Y, \rho)\) and \((Z, \delta)\) be nonempty metric spaces and mappings \(\Phi \colon X \to Y\) and \(\Psi \colon Y \to Z\) be weak similarities with the scaling functions \(f \colon D(Y) \to D(X)\) and \(g \colon D(Z) \to D(Y)\), respectively. Then the mapping
\[
X \xrightarrow{\Phi} Y \xrightarrow{\Psi} Z
\]
is a weak similarity of \((X, d)\) and \((Z, \delta)\), and the function
\[
D(Z) \xrightarrow{g} D(Y) \xrightarrow{f} D(X)
\]
is the scaling function of this weak similarity. Moreover, the inverse mapping \(\Phi^{-1} \colon Y \to X\) of \(\Phi\) is also a weak similarity, the scaling function \(f \colon D(Y) \to D(X)\) of \(\Phi\) is an order isomorphism of the posets \((D(Y), {\leqslant})\) and \((D(X), {\leqslant})\) such that the inverse isomorphism \(f^{-1} \colon D(X) \to D(Y)\) is the scaling function of the weak similarity \(\Phi^{-1}\).
\end{lemma}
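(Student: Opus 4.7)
The plan is to verify each of the three assertions directly from Definition~\ref{d2.34}, using the fact that compositions and inverses of strictly increasing bijections are again strictly increasing bijections, and that surjectivity of the scaling function lets us invoke Corollary~\ref{c5.4}.

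For the composition statement, I would first observe that $\Psi \circ \Phi \colon X \to Z$ is a bijection as a composition of bijections, and that $f \circ g \colon D(Z) \to D(X)$ is strictly increasing as a composition of strictly increasing maps. The defining identity is then obtained by chaining the two given commutative squares: for all $x, y \in X$,
\[
d(x,y) = f\bigl(\rho(\Phi(x), \Phi(y))\bigr) = f\bigl(g\bigl(\delta(\Psi(\Phi(x)), \Psi(\Phi(y)))\bigr)\bigr) = (f \circ g)\bigl(\delta((\Psi\circ\Phi)(x), (\Psi\circ\Phi)(y))\bigr),
\]
which is precisely the statement that $\Psi\circ\Phi$ is a weak similarity with scaling function $f\circ g$.

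For the order-isomorphism claim, the key observation is that the scaling function $f \colon D(Y) \to D(X)$ is automatically surjective. Indeed, given any $t \in D(X)$, pick $x, y \in X$ with $d(x,y) = t$; then $t = f(\rho(\Phi(x), \Phi(y)))$ and $\rho(\Phi(x), \Phi(y)) \in D(Y)$. Since $(D(Y), \leqslant)$ is totally ordered and $f$ is strictly isotone and surjective, Corollary~\ref{c5.4} implies that $f$ is an order isomorphism; in particular $f^{-1} \colon D(X) \to D(Y)$ is a well-defined strictly increasing bijection.

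For the inverse statement, it only remains to verify the commutative diagram of Definition~\ref{d2.34} for the pair $(\Phi^{-1}, f^{-1})$. Given $u, v \in Y$, set $x = \Phi^{-1}(u)$, $y = \Phi^{-1}(v)$; applying $f^{-1}$ to both sides of $d(x,y) = f(\rho(u,v))$ gives
\[
\rho(u,v) = f^{-1}\bigl(d(x,y)\bigr) = f^{-1}\bigl(d(\Phi^{-1}(u), \Phi^{-1}(v))\bigr),
\]
as required. The only step that is not purely a diagram chase is the verification that $f$ is surjective, which I expect to be the main (minor) obstacle; once that is in hand, the rest is bookkeeping.
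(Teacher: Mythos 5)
Your proposal is correct and follows essentially the same route as the paper: the composition claim is the same diagram chase, the order-isomorphism claim rests on Corollary~\ref{c5.4} exactly as the paper does (your explicit surjectivity argument just spells out what the paper leaves implicit in the commutative diagram, since \(d\) and \(\rho\circ(\Phi\otimes\Phi)\) are surjections onto \(D(X)\) and \(D(Y)\) respectively), and the inverse claim is obtained by the same inversion of the identity \(d = f \circ \rho \circ (\Phi \otimes \Phi)\). No gaps.
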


\begin{proof}
The first part of the lemma follows from the commutativity of the diagram
\[
\ctdiagram{
\ctv 0,50:{X \times X}
\ctv 100,50:{Y \times Y}
\ctv 200,50:{Z \times Z}
\ctv 0,0:{D(X)}
\ctv 100,0:{D(Y)}
\ctv 200,0:{D(Z)}
\ctet 0,50,100,50:{\Phi \otimes \Phi}
\ctet 100,50,200,50:{\Psi \otimes \Psi}
\ctet 100,0,0,0:{f}
\ctet 200,0,100,0:{g}
\ctel 0,50,0,0:{d}
\ctel 100,50,100,0:{\rho}
\ctel 200,50,200,0:{\delta}
}.
\]

To prove the second part of the lemma, it suffices to use Corollary~\ref{c5.4} and to note that that the equality
\[
d = f \circ \rho \circ (\Phi \otimes \Phi)
\]
holds if and only if we have \(f^{-1} \circ d \circ (\Phi^{-1} \otimes \Phi^{-1}) = \rho\).
\end{proof}

Lemma~\ref{l5.29} implies the following.

\begin{corollary}\label{c5.36}
Let \((X, d)\) and \((Y, \rho)\) be nonempty weakly similar ultrametric spaces. Then the diametrical graph \(G_{X, d}\) is empty if and only if the diametrical graph \(G_{Y, \rho}\) is empty.
\end{corollary}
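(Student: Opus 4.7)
The plan is to reduce the statement to the observation that, for a nonempty ultrametric space $(Z,\delta)$ with $|Z|\geq 2$, the diametrical graph $G_{Z,\delta}$ is nonempty precisely when the distance set $D(Z)$ possesses a largest element, and then to invoke the fact that the scaling function transports largest elements between $D(X)$ and $D(Y)$.

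First I would dispose of the trivial case. Since $\Phi\colon X\to Y$ is bijective, we have $|X|=|Y|$. If $|X|=1$, then $E(G_{X,d})=E(G_{Y,\rho})=\varnothing$ by Example~\ref{ex2.24}, so both graphs are empty simultaneously. From now on I may assume $|X|\geq 2$ (equivalently $|Y|\geq 2$).

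Next I would establish the key characterization: for any nonempty metric space $(Z,\delta)$ with $|Z|\geq 2$, the graph $G_{Z,\delta}$ is nonempty if and only if $(D(Z),\leqslant)$ admits a largest element. Indeed, $G_{Z,\delta}$ is nonempty exactly when there exist distinct $u,v\in Z$ with $\delta(u,v)=\diam Z$; since $|Z|\geqslant 2$ forces $\diam Z>0$, this is in turn equivalent to $\diam Z\in D(Z)$, and because $\diam Z=\sup D(Z)$ (Example~\ref{ex5.7}), this is precisely the existence of a largest element in $(D(Z),\leqslant)$.

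Finally I would apply Lemma~\ref{l5.29}, which guarantees that the scaling function $f\colon D(Y)\to D(X)$ of the weak similarity $\Phi$ is an order isomorphism between the posets $(D(Y),\leqslant)$ and $(D(X),\leqslant)$. Since any order isomorphism sends a largest element to a largest element and reflects this property via its inverse, $(D(X),\leqslant)$ has a largest element if and only if $(D(Y),\leqslant)$ does. Combining this with the characterization in the previous paragraph yields the desired equivalence
\[
E(G_{X,d})=\varnothing \;\Longleftrightarrow\; E(G_{Y,\rho})=\varnothing.
\]
The argument is entirely routine given the tools already in hand; the only mild subtlety is keeping track of the trivial cases (one-point spaces and unbounded spaces, where $\diam Z\notin D(Z)$), both of which fit seamlessly into the largest-element formulation.
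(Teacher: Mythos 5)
Your proof is correct and follows essentially the same route as the paper: both reduce the statement to the fact that the scaling function is an order isomorphism of the distance sets (Lemma~\ref{l5.29}), so the existence of a largest element — which by Example~\ref{ex5.7} is equivalent to the diameter being attained, i.e.\ to the diametrical graph being nonempty — is transported between \(D(X)\) and \(D(Y)\). Your explicit treatment of the one-point case is a minor extra care the paper leaves implicit, but the argument is the same.
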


\begin{proof}
By Lemma~\ref{l5.29}, the posets \((D(X), {\leqslant})\) and \((D(Y), {\leqslant})\) are order isomorphic. Hence, \((D(X), {\leqslant})\) has largest element if and only if \((D(Y), {\leqslant})\) has the largest element. To complete the proof it suffices to remember that the largest element of the distance set of metric space, if such an element exists, coincides with the diameter of the space (see Example~\ref{ex5.7}).
\end{proof}

\begin{remark}\label{r5.31}
For more detailed proof of Lemma~\ref{l5.29} in more general case of semimetric spaces see Proposition~1.2 in \cite{DP2013AMH}.
\end{remark}

\begin{proposition}\label{p5.32}
Let \(f \colon \RR^{+} \to \RR^{+}\) be an ultrametric preserving function. Then the following condition are equivalent:
\begin{enumerate}
\item \label{p5.32:s1} The function \(f\) is strictly increasing.
\item \label{p5.32:s2} For every nonempty ultrametric space \((X, d)\), the spaces \((X, f \circ d)\) and \((X, d)\) are weakly similar.
\end{enumerate}
\end{proposition}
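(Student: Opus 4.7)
The plan is to prove the two implications using the characterization of ultrametric preserving functions from Theorem~\ref{t5.21} together with the observation (from Lemma~\ref{l5.29}) that any scaling function is an order isomorphism of the underlying distance sets. In particular, recall that by Theorem~\ref{t5.21}, every ultrametric preserving \(f\) is already increasing with \(f^{-1}(0) = \{0\}\), so the only thing that can fail in \ref{p5.32:s1} is strictness.

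For \(\ref{p5.32:s1} \Rightarrow \ref{p5.32:s2}\), I take \(\Phi = \mathrm{id}_X \colon X \to X\), which is trivially bijective between the underlying sets of \((X, d)\) and \((X, f \circ d)\). Since \(f\) is strictly increasing, the restriction \(f|_{D(X, d)} \colon D(X, d) \to f(D(X, d)) = D(X, f \circ d)\) is a strictly increasing bijection, and its inverse \(g \colon D(X, f \circ d) \to D(X, d)\) is again strictly increasing. A direct check gives \(g\bigl((f \circ d)(x, y)\bigr) = g(f(d(x, y))) = d(x, y)\) for all \(x\), \(y \in X\), so Diagram~\eqref{d2.34:e2} commutes with scaling function \(g\). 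Thus \(\mathrm{id}_X\) is a weak similarity of \((X, d)\) and \((X, f \circ d)\).

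For the converse \(\ref{p5.32:s2} \Rightarrow \ref{p5.32:s1}\), I argue by contraposition. Assume \(f\) is not strictly increasing; since \(f\) is increasing, there exist \(0 \leqslant x_1 < x_2\) with \(f(x_1) = f(x_2)\). The condition \(f(x) = 0 \Leftrightarrow x = 0\) forces \(x_1 > 0\), since otherwise \(f(x_2) = f(0) = 0\) would force \(x_2 = 0\). Now take the three-point space \(X = \{a, b, c\}\) with
\[
d(a, b) = x_1, \quad d(a, c) = d(b, c) = x_2.
\]
This \(d\) is an ultrametric because the strong triangle inequality reduces to \(x_1 \leqslant \max\{x_2, x_2\}\), which holds. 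Then
\[
D(X, d) = \{0, x_1, x_2\}, \qquad D(X, f \circ d) = \{0, f(x_1), f(x_2)\} = \{0, f(x_1)\},
\]
so \(|D(X, d)| = 3\) while \(|D(X, f \circ d)| = 2\). By Lemma~\ref{l5.29} (see also Remark~\ref{r5.29}), any weak similarity between \((X, d)\) and \((X, f \circ d)\) would induce an order isomorphism of their distance sets and hence a bijection between them, which is impossible. This contradicts \ref{p5.32:s2}, completing the proof.

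Neither implication presents a serious obstacle; the only mild care needed is in the reverse direction, where one must ensure that the witnessing ultrametric space actually exists (i.e., that \(x_1 > 0\), so that \(a\) and \(b\) are genuinely distinct and the three-point space really contains three distances). This is handled by the observation that the hypothesis on \(f\) from Theorem~\ref{t5.21} prevents \(x_1 = 0\).
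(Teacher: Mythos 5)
Your proof is correct and follows essentially the same route as the paper: the identity map with the restriction of \(f\) (or its inverse) as scaling function for \(\ref{p5.32:s1} \Rightarrow \ref{p5.32:s2}\), and the same three-point counterexample comparing \(|D(X,d)|\) with \(|D(X, f\circ d)|\) for the converse. The only (harmless) difference is that you orient the weak similarity from \((X,d)\) to \((X, f\circ d)\) rather than the reverse, and you make explicit the observation that \(x_1 > 0\), which the paper asserts without comment.
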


\begin{proof}
\(\ref{p5.32:s1} \Rightarrow \ref{p5.32:s2}\). Let \((X, d)\) be a nonempty ultrametric space with the distance set \(D(X, d)\). Then \((X, f \circ d)\) is an ultrametric space with the distance set
\[
D(X, f \circ d) = f(D(X, d)).
\]
If \(f\) is strictly increasing, the the identical function \(\operatorname{Id} \colon X \to X\) is a weak similarity of \((X, f \circ d)\) and \((X, d)\), and the restriction \(f|_{D(X, d)}\) is the corresponding scaling function,
\[
\ctdiagram{
\ctv 0,50:{X \times X}
\ctv 100,50:{X \times X}
\ctv 0,0:{D(X, f \circ d)}
\ctv 100,0:{D(X, d)}
\ctet 0,50,100,50:{\operatorname{Id} \otimes \operatorname{Id}}
\ctel 0,50,0,0:{f \circ d}
\cter 100,50,100,0:{d}
\ctet 100,0,0,0:{f|_{D(X, d)}}
}.
\]

\(\ref{p5.32:s2} \Rightarrow \ref{p5.32:s1}\). Suppose we have \ref{p5.32:s2} but \ref{p5.32:s1} does not hold. Then there are points \(a\), \(b \in \RR^{+}\) such that \(0 < a < b < \infty\) and \(f(a) = f(b)\). Let \(X = \{x_1, x_2, x_3\}\) be a tree-point set and let an ultrametric \(d \colon X \times X \to \RR^{+}\) satisfy the equalities
\[
d(x_1, x_2) = a, \quad d(x_1, x_3) = d(x_2, x_3) = b.
\]
Then \(f \circ d\) is an ultrametric on \(X\) such that
\[
f(d(x_1, x_2)) = f(d(x_1, x_3)) = f(d(x_2, x_3)).
\]
Thus, we have
\begin{equation}\label{p5.32:e1}
|D(X, d)| = 3 \neq 2 = |D(X, f \circ d)|.
\end{equation}
It was noted in Remark~\ref{r5.29} that every scaling function is bijective. Hence, if \((X, d)\) and \((X, f \circ d)\) are weakly similar, then the equality
\[
|D(X, d)| = |D(X, f \circ d)|
\]
holds, contrary to \eqref{p5.32:e1}.
\end{proof}

Using the concept of weak similarity we can give a more compact variant of Theorem~\ref{t2.25}.

\begin{theorem}\label{t2.35}
Let \((X, d)\) be an ultrametric space with \(|X| \geqslant 2\). Then the following statements are equivalent:
\begin{enumerate}
\item\label{t2.35:s1} \((X, d)\) is weakly similar to an unbounded ultrametric space.
\item\label{t2.35:s2} The diametrical graph \(G_{X, d}\) is empty.
\end{enumerate}
\end{theorem}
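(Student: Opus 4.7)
The plan is to deduce Theorem~\ref{t2.35} by combining the more substantive Theorem~\ref{t2.25} with Proposition~\ref{p5.32} and Corollary~\ref{c5.36}, so essentially no new metric estimates are required.

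For the implication \ref{t2.35:s1}$\Rightarrow$\ref{t2.35:s2}, suppose $(X,d)$ is weakly similar to an unbounded ultrametric space $(Y,\rho)$. Example~\ref{ex2.24} gives $E(G_{Y,\rho})=\varnothing$, and Corollary~\ref{c5.36} says the emptiness of the diametrical graph is invariant under weak similarity, so $G_{X,d}$ is empty as well.

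For \ref{t2.35:s2}$\Rightarrow$\ref{t2.35:s1} I would split into two cases. If $(X,d)$ is itself unbounded, the identity is a (trivial) weak similarity to the unbounded space $(X,d)$. Otherwise $(X,d)$ is bounded with $|X|\geqslant 2$ and empty diametrical graph, so the second part of Theorem~\ref{t2.25}, applied with $d^{*}=\diam(X,d)\in(0,\infty)$, produces an unbounded ultrametric $d'$ on $X$ such that
\[
d(x,y)=\frac{d^{*}\cdot d'(x,y)}{1+d'(x,y)}=f(d'(x,y))
\]
for all $x,y\in X$, where $f(t)=d^{*}t/(1+t)$. The function $f$ is strictly increasing on $\RR^{+}$ and ultrametric preserving (Example~\ref{ex5.24}), hence by Proposition~\ref{p5.32} the spaces $(X,f\circ d')=(X,d)$ and $(X,d')$ are weakly similar. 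Since $(X,d')$ is unbounded, this yields \ref{t2.35:s1}.

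There is no real obstacle here: all the work has been done in Theorem~\ref{t2.25} (the construction of the unbounded auxiliary ultrametric via the inverse of $f$) and in Proposition~\ref{p5.32} (recognizing the transformation $d\mapsto f\circ d$ as a weak similarity when $f$ is strictly increasing). The only point where one has to be slightly careful is to verify that the case $|X|\geqslant 2$ is genuinely needed in the bounded case, so that Theorem~\ref{t2.25} applies; this matches the hypothesis of the present theorem.
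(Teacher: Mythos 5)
Your proposal is correct and follows essentially the same route as the paper's own proof: Corollary~\ref{c5.36} for \ref{t2.35:s1}$\Rightarrow$\ref{t2.35:s2}, and the case split (unbounded vs.\ bounded) with Theorem~\ref{t2.25} plus Proposition~\ref{p5.32} for the converse. No gaps.
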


\begin{proof}
\(\ref{t2.35:s1} \Rightarrow \ref{t2.35:s2}\). Let \((X, d)\) be a weakly similar to an unbounded ultrametric space \((Y, \rho)\). The diametrical graph \(G_{Y, \rho}\) is empty. Hence, \(G_{X, d}\) is also empty by Corollary~\ref{c5.36}.

\(\ref{t2.35:s2} \Rightarrow \ref{t2.35:s1}\). Suppose that the diametrical graph \(G_{X, d}\) is empty. If \((X, d)\) is unbounded, then \(\ref{t2.35:s2}\) is valid because \((X, d)\) is weakly similar to itself. If \((X, d)\) is bounded, then, by Theorem~\ref{t2.25}, there is an unbounded ultrametric space \((Y, \rho)\) such that
\[
d(x, y) = \diam X \frac{\rho(x, y)}{1 + \rho(x, y)}
\]
for all \(x\), \(y \in X\). The function \(f \colon \RR^{+} \to \RR^{+}\),
\[
f(t) = \diam X \frac{t}{1+t}
\]
is strictly increasing and ultrametric preserving. Hence, \((X, d)\) and \((Y, \rho)\) are weakly similar by Proposition~\ref{p5.32}.
\end{proof}

Recall that a topological space \((X, \tau)\) is \emph{connected} if it is not the union of two disjoint nonempty open subsets of \(X\). A subset \(S\) of \(X\) is said to be connected if \(S\) is connected as subspace of \((X, \tau)\). A \emph{component} (= connected component) of \((X, \tau)\) is, by definition, a connected set \(A \subseteq X\) for which the implication
\[
(S \supseteq A) \Rightarrow (S = A)
\]
is valid for every connected \(S \subseteq X\). The set of all components of nonempty \((X, \tau)\) forms a partition of \(X\). Thus, for every \(x \in X\), there exists a unique component \(C(x)\) of \((X, \tau)\) such that \(x \in C(x)\).

\begin{lemma}\label{l5.34}
A nonempty subset \(S\) of the real line \(\RR\) is connected if and only if \(S\) is an interval.
\end{lemma}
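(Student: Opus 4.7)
The plan is to prove both implications separately, using the least upper bound property of $\RR$ for the harder direction.

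For the forward implication I would argue by contrapositive. Suppose $S$ is not an interval. Then there exist $a, b \in S$ and $c \in \RR \setminus S$ with $a < c < b$. Put $U = S \cap (-\infty, c)$ and $V = S \cap (c, \infty)$. Since $(-\infty, c)$ and $(c, \infty)$ are open in $\RR$, both $U$ and $V$ are open in the subspace topology on $S$; they are disjoint and nonempty (as $a \in U$ and $b \in V$), and $S = U \cup V$. Hence $S$ is disconnected.

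For the reverse implication, let $S$ be an interval and assume for contradiction that $S = U \cup V$ where $U$, $V$ are nonempty, disjoint, and open in $S$. Pick $a \in U$ and $b \in V$; without loss of generality $a < b$. Since $S$ is an interval, $[a, b] \subseteq S$, so $[a,b] = (U \cap [a,b]) \cup (V \cap [a,b])$. Set
\[
c = \sup (U \cap [a, b]).
\]
The supremum exists in $\RR$ by the least upper bound property, and $c \in [a, b] \subseteq S$. Thus $c \in U$ or $c \in V$. If $c \in U$, then $c < b$ (because $b \in V$), and openness of $U$ in $S$ yields some $\varepsilon > 0$ with $[c, c + \varepsilon) \cap S \subseteq U$; but $[c, c+\varepsilon) \cap [a,b]$ contains points strictly larger than $c$, contradicting the definition of $c$. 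If $c \in V$, then $c > a$ (because $a \in U$), and openness of $V$ in $S$ yields some $\varepsilon > 0$ with $(c - \varepsilon, c] \cap S \subseteq V$; this forces $U \cap [a, b] \subseteq [a, c - \varepsilon]$, contradicting $c = \sup(U \cap [a, b])$. Either way we reach a contradiction, so $S$ is connected.

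The main subtlety will be the reverse direction: one has to be careful with the definition of the subspace topology (the sets $U$, $V$ are open in $S$, not necessarily in $\RR$) and with which endpoint of $[a,b]$ the point $c$ can equal. The forward direction is essentially immediate once the separating point $c$ outside $S$ is identified. No machinery beyond the completeness of $\RR$ and the definition of the subspace topology is needed, so the argument should be short.
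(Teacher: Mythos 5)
Your proof is correct. Note, though, that the paper does not actually prove Lemma~\ref{l5.34} at all: it simply cites Proposition~4 on p.~336 of Bourbaki's \emph{General Topology}. So there is no argument in the paper to compare against step by step; what you have written is the standard self-contained proof (essentially Theorem~2.47 of Rudin, which the paper itself invokes later in the proof of Lemma~\ref{l5.36}). Both halves of your argument check out: the contrapositive separation by a point \(c \notin S\) with \(a < c < b\) is the right move for the forward direction, and in the reverse direction the supremum \(c = \sup(U \cap [a,b])\) together with openness of \(U\) and \(V\) in the subspace topology yields the contradiction in both cases exactly as you describe. The only point worth making explicit is that your forward direction tacitly identifies ``\(S\) is an interval'' with ``\(S\) is convex,'' i.e.\ \(x, y \in S\) and \(x < y\) imply \([x,y] \subseteq S\); since the paper defines intervals by enumerating their possible forms in Remark~\ref{r5.35}, a one-line remark that every convex subset of \(\RR\) has one of those forms (take \(a = \inf S\), \(b = \sup S\) in \(\overline{\RR}\) and check which endpoints belong to \(S\)) would make the argument fully airtight relative to the paper's conventions. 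This is a cosmetic omission, not a gap.
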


For the proof see Proposition~4 in \cite[p.~336]{Bourbaki1995}.

\begin{remark}\label{r5.35}
By bounded interval in \(\RR\) we mean any subset of \(\RR\) which can be given in the one of the following forms:
\begin{align*}
[a, b] &= \{x \in \RR \colon a \leqslant x \leqslant b\}, &  (a, b] &= \{x \in \RR \colon a < x \leqslant b\}, \\
[a, b) &= \{x \in \RR \colon a \leqslant x < b\}, & (a, b) &= \{x \in \RR \colon a < x < b\},
\end{align*}
where \(a\), \(b \in \RR\) with \(a \leqslant b\) for \([a, b]\) and \(a < b\) for \((a, b]\), \([a, b)\) and \((a, b)\). Similarly, by definition, any unbounded interval in \(\RR\) has one of the forms:
\begin{align*}
(-\infty, \infty) &= \RR, &  [a, \infty) &= \{x \in \RR \colon x \geqslant a\}, \\
(a, \infty) &= \{x \in \RR \colon x > a\}, & (-\infty, a] &= \{x \in \RR \colon x \leqslant a\}
\end{align*}
or \((-\infty, a) = \{x \in \RR \colon x < a\}\) with arbitrary \(a \in \RR\).
\end{remark}

\begin{remark}\label{r5.42}
In all topological spaces every single-point set and the empty set are connected (\cite[p.~108]{Bourbaki1995}). Moreover, if \((X, \tau)\) is a connected topological space, then the set \(X\) is the unique component of \((X, \tau)\). In particular, the empty set \(\varnothing\) is a component of \((X, \tau)\) if and only if \(X = \varnothing\).
\end{remark}

\begin{lemma}\label{l5.36}
Let \(A\) be a nonempty subset of \(\RR\). Then all components of \(A\) are intervals in \(\RR\).
\end{lemma}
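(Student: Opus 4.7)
The plan is to reduce immediately to Lemma~\ref{l5.34}. Let $C$ be an arbitrary component of $A$. Since $A$ is nonempty, the partition property stated just before Lemma~\ref{l5.34} (together with Remark~\ref{r5.42}) ensures that $C$ is a nonempty subset of $A$, and in particular a nonempty subset of $\RR$.

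The only nontrivial point I would spell out is that $C$ is connected as a subspace of $\RR$, and not merely as a subspace of $A$. By definition of a component, $C$ is connected in the topology it inherits from $A$. I would then invoke the transitivity of the subspace topology: since $C \subseteq A \subseteq \RR$, the topology on $C$ induced from $A$ coincides with the topology on $C$ induced directly from $\RR$. Hence $C$ is connected as a subspace of $\RR$.

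Once this reduction is made, Lemma~\ref{l5.34} applies verbatim to $C$ and shows that $C$ is an interval, completing the proof. There is no serious obstacle here; the only point worth mentioning is the passage between the two subspace topologies, but this is a standard general-topology fact and could reasonably be stated without further justification.
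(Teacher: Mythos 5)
Your proof is correct and rests on the same reduction to Lemma~\ref{l5.34} as the paper's: both arguments hinge on the fact that connectedness of a component \(C\) is intrinsic to \(C\), so being connected in \(A\) is the same as being connected in \(\RR\). The paper merely phrases this contrapositively, assuming \(C(p)\) is not an interval and exhibiting an explicit disconnection \(((-\infty,z)\cap C(p))\cup(C(p)\cap(z,\infty))\), whereas you invoke transitivity of the subspace topology directly; the content is the same.
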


\begin{proof}
Let \(p\) be a point of \(A\). Then we denote by \(C(p)\) the connected component of \(A\) containing the point \(p\).

Suppose contrary that there is a point \(p \in A\) for which the component \(C(p)\) is not an interval in \(\RR\). Then, by Lemma~\ref{l5.34}, the set \(C(p)\) is not a connected subset of \(\RR\). A subset \(E\) of \(\RR\) is connected if and only if \([x, y] \subseteq E\) holds whenever \(x\), \(y \in E\) and \(x < y\) (see, for example, Theorem~2.47 \cite{Rudin1976}). Consequently, there are \(x\), \(y \in C(p)\) and \(z \in \RR \setminus C(p)\) such that \(x < z < y\). Then the sets \((-\infty, z) \cap C(p)\) and \(C(p) \cap (z, \infty)\) are disjoint nonempty open subsets of the topological space \(C(p)\) and we have the equality
\[
C(p) = ((-\infty, z) \cap C(p)) \cup (C(p) \cap (z, \infty)).
\]
Hence, \(C(p)\) is not a connected subset of \(A\), that contradicts the definition of connected components.
\end{proof}

The useful generalization of the concept of ultrametric is the concept of pseudoultrametric.

\begin{definition}\label{d5.43}
Let \(X\) be a set and let \(d \colon X \times X \to \RR^{+}\) be a symmetric function such that \(d(x, x) = 0\) holds for every \(x \in X\). The function \(d\) is a \emph{pseudoultrametric} on \(X\) if it satisfies the strong triangle inequality.
\end{definition}

If \(d\) is a pseudoultrametric on \(X\), then we will say that \((X, d)\) is a \emph{pseudoultrametric} \emph{space}.

Every ultrametric space is a pseudoultrametric space but not conversely. In contrast to ultrametric spaces, pseudoultrametric spaces can contain some distinct points with zero distance between them.

\begin{definition}\label{d5.44}
A function \(f \colon \RR^{+} \to \RR^{+}\) is \emph{pseudoultrametric preserving} if \(f \circ d\) is a pseudoultrametric for every pseudoultrametric space \((X, d)\).
\end{definition}

The following result is a continuation of the Pongsriiam---Termwuttipong theorem to the case of pseudoultrametric preserving functions.

\begin{proposition}\label{p5.45}
The following conditions are equivalent for every \(f \colon \RR^{+} \to \RR^{+}\):
\begin{enumerate}
\item \label{p5.45:s1} The function \(f\) is increasing and \(f(0) = 0\) holds.
\item \label{p5.45:s2} The function \(f\) is pseudoultrametric preserving.
\end{enumerate}
\end{proposition}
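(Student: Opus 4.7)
The plan is to mirror the structure of the Pongsriiam--Termwuttipong theorem (Theorem~\ref{t5.21}), relaxing only those places where the ultrametric axiom $\bigl(d(x,y)=0\bigr)\Leftrightarrow(x=y)$ intervenes.

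For $\ref{p5.45:s1}\Rightarrow\ref{p5.45:s2}$, I would take an arbitrary pseudoultrametric space $(X,d)$ and verify directly that $f\circ d$ is again a pseudoultrametric. Symmetry of $f\circ d$ is inherited from $d$, and $(f\circ d)(x,x)=f(0)=0$ for every $x\in X$ by hypothesis. For the strong triangle inequality the key observation is that monotonicity of $f$ yields $f(\max\{u,v\})=\max\{f(u),f(v)\}$ for all $u$, $v\in\RR^{+}$; applying this to $d(x,y)\leqslant\max\{d(x,z),d(z,y)\}$ gives $(f\circ d)(x,y)\leqslant\max\{(f\circ d)(x,z),(f\circ d)(z,y)\}$.

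For $\ref{p5.45:s2}\Rightarrow\ref{p5.45:s1}$ the equality $f(0)=0$ is obtained at once by applying the pseudoultrametric preserving property to the one-point space $(\{x\},d)$ with $d(x,x)=0$. To prove that $f$ is increasing, I would pick arbitrary $a$, $b\in\RR^{+}$ with $a\leqslant b$ and consider the three-point space $X=\{x_1,x_2,x_3\}$ equipped with the function $d\colon X\times X\to\RR^{+}$ determined by $d(x_i,x_i)=0$, $d(x_1,x_2)=d(x_2,x_3)=b$, $d(x_1,x_3)=a$. A direct case-check (using $a\leqslant b$) shows that $d$ satisfies the strong triangle inequality and is therefore a pseudoultrametric. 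Applying hypothesis~\ref{p5.45:s2}, the function $f\circ d$ is a pseudoultrametric as well, so in particular
\[
f(a)=(f\circ d)(x_1,x_3)\leqslant \max\bigl\{(f\circ d)(x_1,x_2),\,(f\circ d)(x_2,x_3)\bigr\}=f(b),
\]
which is precisely the monotonicity claim.

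There is no real obstacle: both implications are routine once the right three-point test space is fixed. The only point worth flagging is the contrast with Theorem~\ref{t5.21}, where the additional condition \emph{``\(f(x)=0\) iff \(x=0\)''} was needed to prevent $f\circ d$ from identifying distinct points in a nondegenerate ultrametric; in the pseudoultrametric setting such an identification is tolerated, so the single axiom $f(0)=0$ suffices.
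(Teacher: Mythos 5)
Your proof is correct and complete: the forward direction uses exactly the identity $f(\max\{u,v\})=\max\{f(u),f(v)\}$ valid for increasing $f$, and the converse is settled by the one-point space (giving $f(0)=0$) together with the three-point pseudoultrametric test space with distances $b,b,a$, which is legitimate here even when $a=0$ precisely because pseudoultrametrics tolerate zero distance between distinct points. The paper itself does not prove this proposition but only cites Proposition~2.4 of \cite{Dov2020MS}, so you have supplied a self-contained argument where the paper has none; it is the standard one and there is nothing to object to.
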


For the proof see Proposition~2.4 \cite{Dov2020MS}.

\begin{remark}\label{r5.47}
Theorem~\ref{t5.21} and Proposition~\ref{p5.45} imply that every ultrametric preserving function is pseudoultrametric preserving. Moreover, a pseudoultrametric preserving \(f \colon \RR^{+} \to \RR^{+}\) is ultrametric preserving if and only if \(f^{-1}(0) = \{0\}\) holds.
\end{remark}

Using pseudoultrametric preserving functions we can now obtain a new description of bounded ultrametric spaces with empty diametrical graphs.

In what follows we will use the symbol \(\RR^{+} \setminus D(X)\) to denote the relative complement of the distance set \(D(X)\) of an ultrametric space \(X\) with respect to \(\RR^{+}\).

\begin{theorem}\label{t5.46}
Let \((X, d)\) be an ultrametric space with \(|X| \geqslant 2\) and let \(D(X)\) be the distance set of \((X, d)\). Then the following conditions are equivalent.
\begin{enumerate}
\item \label{t5.46:s1} The space \((X, d)\) is bounded and the diametrical graph \(G_{X, d}\) is empty.
\item \label{t5.46:s2} There is \(a \in (0, \infty)\) such that the set \([a, \infty)\) is a component of \(\RR^{+} \setminus D(X)\).
\item \label{t5.46:s3} There is an ultrametric space \((Z, \delta)\) such that \((Z, \delta)\) and \((X, d)\) are weakly similar, but if \(\Phi \colon Z \to X\) is a weak similarity and \(\psi \colon D(X) \to D(Z)\) is the scaling function of \(\Phi\), then
\begin{equation}\label{t5.46:e1}
g|_{D(X)} \neq \psi
\end{equation}
holds for every pseudoultrametric preserving function \(g \colon \RR^{+} \to \RR^{+}\).
\end{enumerate}
\end{theorem}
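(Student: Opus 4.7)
My plan is to prove the two equivalences (i)$\,\Leftrightarrow\,$(ii) and (i)$\,\Leftrightarrow\,$(iii) separately. The core tools are Theorem~\ref{t2.24}, which says that $G_{X,d}$ is empty if and only if $\diam X \notin D(X)$, Theorem~\ref{t2.35}, which supplies (precisely when $G_{X,d}$ is empty) an unbounded ultrametric space weakly similar to $(X,d)$, and Lemma~\ref{l5.29}, which guarantees that every scaling function is an order isomorphism of distance sets.

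For (i)$\,\Leftrightarrow\,$(ii) I will take $a = \diam X$. Under (i), Theorem~\ref{t2.24} yields $\diam X \notin D(X)$, so $[a, \infty) \subseteq \RR^{+} \setminus D(X)$. The interval $[a, \infty)$ is connected by Lemma~\ref{l5.34}, and its maximality follows from $a = \sup D(X)$: any strictly larger interval must contain an open subinterval $(c, a)$, and the definition of supremum together with $a \notin D(X)$ then forces it to contain a point of $D(X)$. Conversely, from (ii) I read off $D(X) \subseteq [0, a)$, so $(X, d)$ is bounded with $\diam X \leqslant a$; a strict inequality would permit enlarging $[a, \infty)$ to $[b, \infty)$ for any $b \in (\diam X, a)$ inside $\RR^{+} \setminus D(X)$, contradicting maximality. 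Hence $\diam X = a \notin D(X)$, and Theorem~\ref{t2.24} gives~(i).

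For (i)$\,\Rightarrow\,$(iii), Theorem~\ref{t2.35} delivers an unbounded ultrametric space $(Z, \delta)$ weakly similar to $(X, d)$. For any weak similarity $\Phi \colon Z \to X$, Lemma~\ref{l5.29} makes the scaling function $\psi \colon D(X) \to D(Z)$ an order isomorphism, so $\psi(D(X)) = D(Z)$ is unbounded in $\RR^{+}$. If a pseudoultrametric preserving $g$ extended $\psi$, then $g(\diam X) \in \RR^{+}$ would be finite and, by monotonicity, an upper bound for all of $\psi(D(X)) = D(Z)$, which is impossible. For the reverse direction I will argue the contrapositive: assuming (i) fails, I produce, for any $(Z, \delta)$ weakly similar to $(X, d)$ and any scaling function $\psi$, the explicit extension
\[
g(t) = \sup\{\psi(s) \colon s \in D(X),\; s \leqslant t\}, \qquad \sup \varnothing = 0.
\]
This $g$ is increasing with $g(0) = 0$ and agrees with $\psi$ on $D(X)$, so by Proposition~\ref{p5.45} only the finiteness of $g(t)$ at each $t$ remains. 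If $(X, d)$ is unbounded, I pick $s' \in D(X)$ with $s' > t$ and use strict monotonicity of $\psi$ to bound $g(t) \leqslant \psi(s') < \infty$; if $G_{X, d}$ is nonempty, Theorem~\ref{t2.24} gives $\diam X \in D(X)$ and $\psi(\diam X) = \max D(Z) < \infty$ uniformly bounds $g$.

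The main obstacle, and in fact the whole content of (iii), is recognising that non-extendability of an order-isomorphism $\psi \colon D(X) \to D(Z)$ to a pseudoultrametric preserving function on $\RR^{+}$ amounts exactly to a bounded $D(X)$ without maximum being mapped order-isomorphically onto an unbounded $D(Z)$. Theorem~\ref{t2.24} identifies ``$D(X)$ bounded without maximum'' with ``$(X, d)$ bounded and $G_{X, d}$ empty'', and Theorem~\ref{t2.35} identifies the existence of such an unbounded $Z$ with the very same condition, so both obstructions collapse precisely to~(i).
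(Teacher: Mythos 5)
Your proposal is correct and follows essentially the same route as the paper: the same identification of $a$ with $\diam X$ via Lemma~\ref{l5.36} for (i)$\,\Leftrightarrow\,$(ii), the same use of Theorem~\ref{t2.35} plus boundedness of $g(D(X))$ for (i)$\,\Rightarrow\,$(iii), and the same supremum extension $g(t)=\sup\{\psi(s)\colon s\in[0,t]\cap D(X)\}$ for the contrapositive of (iii)$\,\Rightarrow\,$(i). The only (welcome) difference is that you verify explicitly the finiteness of that supremum in both failure cases of (i), a point the paper leaves implicit.
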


\begin{proof}
\(\ref{t5.46:s1} \Leftrightarrow \ref{t5.46:s2}\). Let \(\ref{t5.46:s1}\) hold. The inequality \(|X| \geqslant 2\) implies the strict inequality \(\diam X > 0\). Since \((X, d)\) is bounded and the diametrical graph \(G_{X, d}\) is empty, the double inequality
\[
0 < \diam X < \infty
\]
and the inclusion
\[
[\diam X, \infty) \subseteq \RR^{+} \setminus D(X)
\]
take place. It follows from the equality
\[
\diam X = \sup D(X)
\]
that \([\diam X, \infty)\) is the maximal interval in \(\RR^{+} \setminus D(X)\) which contains the point \(\diam X\). Hence, \([\diam X, \infty)\) is a component of \(\RR^{+} \setminus D(X)\) by Lemma~\ref{l5.36}. Thus, \(\ref{t5.46:s2}\) follows.

Let \(a\) be a point of \((0, \infty)\) and let \([a, \infty)\) be a component of \(\RR^{+} \setminus D(X)\). By Lemma~\ref{l5.36}, \([a, b)\) is the maximal interval in \(\RR^{+} \setminus D(X)\) contained the point \(a\). Hence, the intersection \((a-\varepsilon, a) \cap D(X)\) is nonempty for every \(\varepsilon \in (0, a)\). It implies the equalities
\[
a = \sup D(X) = \diam X.
\]

\(\ref{t5.46:s1} \Leftrightarrow \ref{t5.46:s3}\). Let \(\ref{t5.46:s1}\) hold. By Theorem~\ref{t2.35}, there is an unbounded ultrametric space \((Z, \delta)\) such that \((X, d)\) and \((Z, \delta)\) are weakly similar. Let \(\Phi \colon Z \to X\) be a weak similarity and let \(\psi \colon D(X) \to D(Z)\) be the scaling function of \(\Phi\). Since \((Z, \delta)\) is unbounded and \(\psi\) is surjective, the set \(\psi(D(X))\) is unbounded. Now if \(g \colon \RR^{+} \to \RR^{+}\) is an arbitrary pseudoultrametric preserving function, then we have the inequality
\[
g(t) \leqslant g(\diam X)
\]
for every \(t \in D(X)\), because \(g\) is increasing. Thus,
\[
g(D(X)) \subseteq [0, g(\diam X)]
\]
holds. The last statement and unboundedness of \(\psi(D(X))\) imply \eqref{t5.46:e1}.

Let condition \(\ref{t5.46:s3}\) hold. If \(\ref{t5.46:s1}\) does not hold, then either \((X, d)\) is unbounded or \((X, d)\) is bounded and has the nonempty diametrical graph \(G_{X, d}\).

Consider first the case when \((X, d)\) is unbounded. By condition~\(\ref{t5.46:s3}\), there is an ultrametric space \((Z, \delta)\) such that \((X, d)\) and \((Z, \delta)\) are weakly similar, and if \(\Phi \colon Z \to X\) is a weak similarity with a scaling function \(\psi\), then \eqref{t5.46:e1} holds for every pseudoultrametric preserving \(g\). Let us define a function \(f \colon \RR^{+} \to \RR^{+}\) as
\begin{equation}\label{t5.46:e2}
f(t) = \sup \{\psi(s) \colon s \in [0, t] \cap D(X)\}, \quad t \in \RR^{+}.
\end{equation}
Then \(f\) is increasing and satisfies the equality \(f(0) = 0\). By Proposition~\ref{p5.45}, the function \(f\) is pseudoultrametric preserving. Moreover, from~\eqref{t5.46:e2} it follows the equality
\begin{equation}\label{t5.46:e3}
f|_{D(X)} = \psi.
\end{equation}
The last equality contradicts~\eqref{t5.46:e1} for \(g = f\).

Reasoning in completely similar way, we can prove the validity of equality~\eqref{t5.46:e3} in the case when \((X, d)\) is bounded and has nonempty \(G_{X, d}\), which completes the proof of the theorem.
\end{proof}

\begin{theorem}\label{t5.51}
Let \((X, d)\) be a nonempty ultrametric space and let \(D(X) = D(X, d)\) be the distance set of \((X, d)\). Then the following conditions are equivalent:
\begin{enumerate}
\item \label{t5.51:s1} There is \(a \in (0, \infty)\) such that \((0, a]\) or \([a, \infty)\) is a component of \(\RR^{+} \setminus D(X)\).
\item \label{t5.51:s2} There is an ultrametric space \((Z, \delta)\) such that \((Z, \delta)\) and \((X, d)\) are weakly similar, but if \(\Phi \colon Z \to X\) is a weak similarity and \(\psi \colon D(X) \to D(Z)\) is the scaling function of \(\Phi\), then
\begin{equation}\label{t5.51:e1}
g|_{D(X)} \neq \psi
\end{equation}
holds for every ultrametric preserving function \(g \colon \RR^{+} \to \RR^{+}\).
\end{enumerate}
\end{theorem}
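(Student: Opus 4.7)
The plan is to mimic the two-step structure of the proof of Theorem~\ref{t5.46}: split \ref{t5.51:s1} into its two alternative component shapes, handle each shape constructively for the forward implication, and argue the converse by contrapositive via an explicit extension of the scaling function.

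For \(\ref{t5.51:s1} \Rightarrow \ref{t5.51:s2}\), if the component is \([a, \infty)\) for some \(a \in (0, \infty)\), then by Theorem~\ref{t5.46} the space \((X, d)\) is bounded with empty diametrical graph and there exists \((Z, \delta)\) weakly similar to \((X, d)\) such that no pseudoultrametric preserving function agrees with the scaling function \(\psi\) on \(D(X)\); since every ultrametric preserving function is pseudoultrametric preserving (Remark~\ref{r5.47}), condition~\ref{t5.51:s2} follows immediately. The genuinely new case is when \((0, a]\) is a component, and the idea there is to produce a scaling function whose image accumulates to \(0\). Define \(\psi \colon D(X) \to \RR^{+}\) by \(\psi(0) = 0\) and \(\psi(t) = t - a\) for \(t \in D(X) \setminus \{0\}\); because every positive element of \(D(X)\) exceeds \(a\), this \(\psi\) is strictly increasing with \(\psi^{-1}(0) = \{0\}\). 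Setting \(Z = X\) and \(\delta = \psi \circ d\) gives an ultrametric on \(X\) (the strong triangle inequality is preserved by the increasing composition), and the identity map becomes a weak similarity with scaling function \(\psi\). If an ultrametric preserving \(g\) satisfied \(g|_{D(X)} = \psi\), then by Theorem~\ref{t5.21} \(g\) would be increasing with \(g(a) > 0\); because \(a\) is an accumulation point of \(D(X)\) from above, one can pick \(t \in D(X)\) with \(a < t < a + g(a)\) and obtain the contradiction \(\psi(t) = t - a < g(a) \leqslant g(t) = \psi(t)\).

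For \(\ref{t5.51:s2} \Rightarrow \ref{t5.51:s1}\) I would argue by contrapositive. Assume \ref{t5.51:s1} fails, and let \((Z, \delta)\) be any ultrametric space weakly similar to \((X, d)\) with scaling function \(\psi \colon D(X) \to D(Z)\). Following the idea of the proof of Theorem~\ref{t5.46}, define \(f(t) = \sup\{\psi(s) \colon s \in D(X), s \leqslant t\}\); this \(f \colon \RR^{+} \to \RR^{+}\) is increasing, satisfies \(f(0) = 0\), and extends \(\psi\). The only obstruction to \(f\) being ultrametric preserving is that \(f\) may vanish on some interval \((0, c)\) inside \(\RR^{+} \setminus D(X)\), which happens exactly when \(0\) is isolated in \(D(X)\). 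In that case let \(c = \inf(D(X) \setminus \{0\})\); the failure of \ref{t5.51:s1} rules out the component shape \((0, c]\), forcing \(c \in D(X)\). I then set \(g(t) = \psi(c)\) for \(t \in (0, c)\) and \(g(t) = f(t)\) elsewhere; the resulting \(g\) is still increasing, satisfies \(g^{-1}(0) = \{0\}\), and coincides with \(\psi\) on \(D(X)\). By Theorem~\ref{t5.21}, \(g\) is ultrametric preserving, contradicting \ref{t5.51:s2}.

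The main obstacle is the behaviour of the extension at the left end of \(D(X)\): unlike the pseudoultrametric preserving setting of Theorem~\ref{t5.46}, ultrametric preservation now demands \(g(t) > 0\) for every \(t > 0\). The whole point of excluding components of the form \((0, a]\) in \ref{t5.51:s1} is to ensure the dichotomy ``either \(0\) is an accumulation point of \(D(X)\), or the infimum of positive distances is attained in \(D(X)\)'', and this dichotomy is precisely what allows the sup-extension to be patched to a positive value on the leftmost gap whenever one exists.
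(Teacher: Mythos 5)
Your proof is correct and, in both directions, follows essentially the same route as the paper: the \([a, \infty)\) case of \(\ref{t5.51:s1} \Rightarrow \ref{t5.51:s2}\) is delegated to Theorem~\ref{t5.46} via Remark~\ref{r5.47}, the \((0, a]\) case uses the shift \(\psi(t) = t - a\) together with Lemma~\ref{l5.22} and the fact that \(a\) is approached from above by points of \(D(X)\), and the converse is argued by contrapositive through the sup-extension of the scaling function and the Pongsriiam---Termwuttipong criterion.

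The one place where you genuinely diverge is in \(\ref{t5.51:s2} \Rightarrow \ref{t5.51:s1}\), and your version is the more careful one. The paper asserts that the failure of \ref{t5.51:s1} forces \(0\) to be an accumulation point of \(D(X)\) and then uses the bare sup-extension \(f(t) = \sup\{\psi(s) \colon s \in [0, t] \cap D(X)\}\). That assertion does not hold in general: the leftmost component of \(\RR^{+} \setminus D(X)\) can be an open interval \((0, c)\) with \(c \in D(X)\) (for instance \(D(X) = \{0\} \cup [1, 2]\), which is realizable by Theorem~\ref{t5.6}), in which case \ref{t5.51:s1} fails, \(0\) is isolated in \(D(X)\), and the sup-extension vanishes on \((0, c)\), so it is not ultrametric preserving. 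Your dichotomy --- either \(0\) is an accumulation point of \(D(X)\), or \(c = \inf(D(X) \setminus \{0\})\) must belong to \(D(X)\) because the component shape \((0, c]\) is excluded --- together with the constant patch \(g \equiv \psi(c)\) on \((0, c)\) is exactly what is needed to keep the extension increasing, strictly positive off \(0\), and equal to \(\psi\) on \(D(X)\); it closes the gap. Two points you leave implicit, at the same level of informality as the paper: the degenerate case \(D(X) = \{0\}\) should be dispatched separately (your \(c\) is undefined there, but \(g(t) = t\) extends the trivial scaling function, so both conditions fail), and the finiteness of the supremum defining \(f\) follows from the failure of the \([a, \infty)\) alternative, since then either \(X\) is unbounded or \(\diam X \in D(X)\), and in either case \(\psi\) is bounded on \([0, t] \cap D(X)\) for each \(t\).
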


\begin{proof}
The theorem is evidently valid for the case \(|X| = 1\). So in what follows we will suppose that \(|X| \geqslant 2\).

\(\ref{t5.51:s1} \Rightarrow \ref{t5.51:s2}\). Let condition~\ref{t5.51:s1} hold. If there is \(a > 0\) such that \([a, \infty)\) is a component of \(\RR^{+} \setminus D(X)\), then \ref{t5.51:s2} takes place by Theorem~\ref{t5.46}.

If \((0, a]\) is a component of \(\RR^{+} \setminus D(X)\) for some \(a > 0\), then we define a function \(\psi\) on \(D(X) = D(X, d)\) as
\begin{equation}\label{t5.51:e2}
\psi(t) = \begin{cases}
0 & \text{if } t = 0,\\
t-a & \text{if } t \in D(X) \cap [a, \infty).
\end{cases}
\end{equation}
By Lemma~\ref{l5.22}, the mapping \(\rho \colon X \times X \to \RR^{+}\) with
\begin{equation}\label{t5.51:e3}
\rho(x, y) = \psi (d(x, y))
\end{equation}
is an ultrametric on \(X\). From \eqref{t5.51:e2} and \eqref{t5.51:e3} it follows that the diagram
\[
\ctdiagram{
\ctv 0,50:{X \times X}
\ctv 100,50:{X \times X}
\ctv 0,0:{D(X, \rho)}
\ctv 100,0:{D(X, d)}
\ctet 0,50,100,50:{\operatorname{Id} \otimes \operatorname{Id}}
\ctel 0,50,0,0:{\rho}
\cter 100,50,100,0:{d}
\ctet 100,0,0,0:{\psi}
}
\]
is commutative and \(\psi \colon D(X, d) \to D(X, \rho)\) is strictly increasing and bijective. Hence, \(\operatorname{Id} \colon X \to X\) is a weak similarity of the ultrametric spaces \((X, \rho)\), \((X, d)\) and \(\psi\) is the scaling function of this weak similarity. Suppose that there is an ultrametric preserving function \(g \colon \RR^{+} \to \RR^{+}\) such that
\begin{equation}\label{t5.51:e4}
g|_{D(X, d)} = \psi.
\end{equation}
Since \((0, a]\) is a component of \(\RR^{+} \setminus D(X)\), there is a strictly decreasing sequence \((t_n)_{n \in \mathbb{N}} \subseteq D(X, d)\) for which
\begin{equation}\label{t5.51:e5}
\lim_{n \to \infty} t_n = a
\end{equation}
holds. Now using \eqref{t5.51:e2}, \eqref{t5.51:e4} and \eqref{t5.51:e5} we obtain the contradiction
\[
0 < g(a) \leqslant \lim_{n \to \infty} g(t_n) = \lim_{n \to \infty} (t_n - a) = 0.
\]
So, equality~\eqref{t5.51:e4} is not possible for any ultrametric preserving function \(g\).

\(\ref{t5.51:s2} \Rightarrow \ref{t5.51:s1}\). Let \ref{t5.51:s2} be valid. Suppose condition \ref{t5.51:s1} does not hold. Then the point \(0\) is an accumulation point of \(D(X)\) and either \(\diam X \in D(X)\) or \((X, d)\) is unbounded. By condition~\(\ref{t5.51:s2}\), there is an ultrametric space \((Z, \delta)\) such that \((X, d)\) and \((Z, \delta)\) are weakly similar, and if \(\Phi \colon Z \to X\) is a weak similarity with a scaling function \(\psi\), then \eqref{t5.51:e1} holds for every ultrametric preserving \(g\).

Let us define \(g \colon \RR^{+} \to \RR^{+}\) by equality~\eqref{t5.46:e2} with \(f = g\). Then, as in the proof of Theorem~\ref{t5.46}, we obtain that \(g\) is pseudoultrametric preserving and the equality
\begin{equation}\label{t5.51:e6}
g|_{D(X, d)} = \psi
\end{equation}
holds. To complete the proof it suffices to note that \(g\) is ultrametric preserving. Indeed, the inequality \(g(t) > 0\) holds for every \(t \in (0, \infty)\) because \(g\) is increasing, \(\psi(t) > 0\) takes place for every \(t \in D(X) \setminus \{0\}\) and the point \(0\) is an accumulation point of \(D(X)\). Hence, \(g\) is ultrametric preserving by Remark~\ref{r5.47}.
\end{proof}

\begin{example}\label{ex5.52}
Let \(X\) be a subset of an interval \((a, b)\), \(0 < a < b < \infty\), and let \(|X| \geqslant 2\) hold. As in Example~\ref{ex2.6}, we define an ultrametric \(d \colon X \times X \to \RR^{+}\) as
\[
d(x, y) = \begin{cases}
0 & \text{if } x = y,\\
\max\{x, y\} & \text{if } x \neq y.
\end{cases}
\]
Then \((X, d)\) is spherically complete and has a nonempty diametrical graph if and only if, for every ultrametric space \((Z, \delta)\), which is weakly similar to \((X, d)\), and every weak similarity \(\Phi \colon Z \to X\), there is an ultrametric preserving function \(g \colon \RR^{+} \to \RR^{+}\) such that \(g|_{D(X)} = \psi\), where \(\psi\) is the scaling function of \(\Phi\).
\end{example}

The following theorem is a reformulation of Theorem~2.14 from \cite{BDS2021UPFaWSoUS}.

\begin{theorem}\label{t5.34}
Let \((X, d)\) be a nonempty ultrametric space and let \(D(X) = D(X, d)\) be the distance set of \((X, d)\). Then the following conditions are equivalent:
\begin{enumerate}
\item \label{t5.34:s1} Every component of \(\RR^{+} \setminus D(X, d)\) is either open in \(\RR\) or it is a single-point set.
\item \label{t5.34:s2} If \(\Phi \colon Z \to X\) is a weak similarity of an ultrametric space \((Z, \delta)\) and the ultrametric space \((X, d)\), then there is a strictly increasing ultrametric preserving function \(g \colon \RR^{+} \to \RR^{+}\) such that
\begin{equation}\label{t5.34:e0}
g|_{D(X, d)} = \psi,
\end{equation}
where \(\psi\) is the scaling function of the weak similarity \(\Phi\).
\end{enumerate}
\end{theorem}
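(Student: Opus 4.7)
My plan is to prove both implications via analysis of the components of $\RR^{+} \setminus D(X)$, each of which is an interval by Lemma~\ref{l5.36}.

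For $\ref{t5.34:s1} \Rightarrow \ref{t5.34:s2}$, I take a weak similarity $\Phi \colon Z \to X$ with scaling function $\psi \colon D(X) \to D(Z)$ --- a strictly increasing bijection with $\psi(0) = 0$ (by Remark~\ref{r5.29} and strict monotonicity) --- and build an extension $g$ component by component. On $D(X)$ I set $g = \psi$. By \ref{t5.34:s1}, every component of $\RR^{+} \setminus D(X)$ is either an open interval or a singleton. On an open bounded component $(a, b)$, whose endpoints automatically lie in $D(X) \cup \{0\}$, I define $g$ on $(a, b)$ as any strictly increasing bijection onto $(\psi(a), \psi(b))$, for instance an affine interpolation; the unbounded open component (if any), of the form $(a, \infty)$ with $a \in D(X)$, is extended by $g(t) = \psi(a) + (t - a)$. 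On a singleton component $\{a\}$, the point $a$ is a bilateral limit of $D(X)$, and I set
\[
g(a) = \sup\{\psi(s) \colon s \in D(X),\ s < a\}.
\]
The bilateral-accumulation property of $a$ ensures that neither this supremum nor the corresponding infimum from above is attained by $\psi$, so $\psi(s) < g(a) < \psi(t)$ for every $s < a < t$ in $D(X)$. Global strict monotonicity of $g$ on $\RR^{+}$ then follows from the observation that any two points $s < t$ in $\RR^{+}$ lying in distinct components of $\RR^{+} \setminus D(X)$ are separated by some $r \in D(X)$, reducing the verification to the mixed cases already treated. Since $g(0) = 0$ and $g(t) > 0$ for $t > 0$, Theorem~\ref{t5.21} gives that $g$ is ultrametric preserving.

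For $\ref{t5.34:s2} \Rightarrow \ref{t5.34:s1}$, I argue by contraposition. If \ref{t5.34:s1} fails, the offending component has one of the forms $[a, \infty)$ with $a > 0$, or $[a, b)$, $(a, b]$, $[a, b]$ with $0 \leqslant a < b < \infty$. In each case I construct a strictly increasing $\psi \colon D(X) \to \RR^{+}$ with $\psi(0) = 0$ and $\psi(t) > 0$ for $t > 0$ admitting no strictly increasing ultrametric preserving extension to $\RR^{+}$. Lemma~\ref{l5.22} then promotes $\psi$ to an ultrametric $\delta = \psi \circ d$ on $X$, so that $\operatorname{Id} \colon (X, \delta) \to (X, d)$ is a weak similarity with scaling function $\psi$, forcing \ref{t5.34:s2} to fail. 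For $[a, \infty)$, take $\psi(t) = t/(a - t)$: then $\psi(t) \to +\infty$ as $t \to a^{-}$ along $D(X)$, so any increasing extension $g$ must have $g(a) = +\infty$, which is impossible. For $[a, b)$ with $b \in D(X)$, set $\psi(t) = t$ on $D(X) \cap [0, a)$ and $\psi(t) = t - (b - a)$ on $D(X) \cap [b, \infty)$; then $\sup_{t < a,\ t \in D(X)} \psi(t) = a = \psi(b)$, so any increasing extension $g$ must satisfy $a \leqslant g(a) < g(b) = a$, a contradiction. The cases $(a, b]$ and $[a, b]$ are handled by analogous shifts forcing $\psi$ to have a conflicting limit at $b$ (respectively at both $a$ and $b$).

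The main technical obstacle lies in the forward direction --- ensuring global strict monotonicity of the piecewise $g$. The bilateral-limit property of singleton components is exactly what permits $g(a)$ at such a point to be inserted strictly between the $\psi$-values from both sides; without condition \ref{t5.34:s1} there are ``half-open'' boundaries at which one-sided limits of $\psi$ are attained, and the reverse direction exploits precisely these to engineer non-extendability.
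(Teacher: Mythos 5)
Your proposal is correct and follows essentially the same route as the paper: the forward direction extends \(\psi\) piecewise (affinely over open components, by inserting a value in the gap at singleton components, with global strict monotonicity checked via a separating point of \(D(X)\)), and the reverse direction uses contraposition with the same shift constructions \(t \mapsto t - (b-a)\) and the blow-up \(t \mapsto t/(a-t)\) to defeat extendability. One small point in your favour: your case list for the failure of \ref{t5.34:s1} explicitly includes closed components \([a,b]\) with \(a<b\), a case not literally covered by the dichotomy \((s_1)\)/\((s_2)\) stated in the paper's proof, and your ``conflicting limits at both \(a\) and \(b\)'' argument handles it correctly.
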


\begin{proof}
This theorem is obviously true for \(|X| = 1\). Furthermore, if the equality \(D(X) = \RR^{+}\) holds, then we have \(\RR^{+} \setminus D(X) = \varnothing\). Hence, in this case condition~\ref{t5.34:s1} is valid by Remark~\ref{r5.42}. Moreover, for the case \(D(X) = \RR^{+}\), the domain of the scaling function \(\psi\) of any weak similarity \(\Phi \colon Z \to X\) coincides with \(\RR^{+}\). By Definition~\ref{d2.34}, the function \(\psi\) is strictly increasing and satisfies \(\psi(0) = 0\). Hence, \(\psi\) is strictly increasing and ultrametric preserving by Theorem~\ref{t5.21}. So, in what follows, without loss of generality, we can assume that \(|X| \geqslant 2\) and \(\RR^{+} \setminus D(X) \neq \varnothing\).

\(\ref{t5.34:s1} \Rightarrow \ref{t5.34:s2}\). Suppose that condition~\ref{t5.34:s1} holds. Let \((Z, \delta)\) be a nonempty ultrametric space and let \(\Phi \colon Z \to X\) be a weak similarity with a scaling function \(\psi \colon D(X) \to D(Z)\),
\[
\ctdiagram{
\ctv 0,50:{Z \times Z}
\ctv 100,50:{X \times X}
\ctv 0,0:{D(Z, \delta)}
\ctv 100,0:{D(X, d)}
\ctet 0,50,100,50:{\Phi \otimes \Phi}
\ctel 0,50,0,0:{\delta}
\cter 100,50,100,0:{d}
\ctet 100,0,0,0:{\psi}
}.
\]
We want to continue the scaling function \(\psi \colon D(X) \to D(Z)\) to a strictly increasing ultrametric preserving function \(g \colon \RR^{+} \to \RR^{+}\).

Let \(t \in \RR^{+}\).

Consider first the case when \(t \in D(X)\) or \(\{t\}\) is a component of \(\RR^{+} \setminus D(X)\). Then we have
\[
D(X) \cap [0, t] \neq \varnothing \neq D(X) \cap [t, \infty).
\]
Consequently, there is a point \(t^{*} \in \RR^{+}\) such that
\begin{equation}\label{t5.34:e2}
\sup\{\psi(s) \colon s \in D(X) \cap [0, t]\} \leqslant t^{*} \leqslant \inf\{\psi(s) \colon s \in D(X) \cap [t, \infty)\}.
\end{equation}
Let us define \(g(t)\) as
\begin{equation}\label{t5.34:e5}
g(t) = t^{*}.
\end{equation}

If there is an interval \((a, b) \ni t\) such that \((a, b)\) is a component of \(\RR^{+} \setminus D(X)\), then the points \(a\) and \(b\) belong to \(D(X)\) and we can define \(g(t)\) as
\begin{equation}\label{t5.34:e3}
g(t) = \psi(a) + \frac{\psi(b) - \psi(a)}{b - a} (t - a).
\end{equation}

Similarly, if \(t \in (a, \infty)\), where \((a, \infty)\) is a component of \(\RR^{+} \setminus D(X)\), then \(a \in D(X)\). From \(|X| \geqslant 2\) and Lemma~\ref{l5.29} it follows that the inequalities \(a > 0\) and \(\psi(a)  > 0\) hold. In this case we write
\begin{equation}\label{t5.34:e4}
g(t) = \frac{\psi(a)}{a} t.
\end{equation}

Since every point of \(\RR^{+} \setminus D(X)\) belongs to the component of this point, condition~\ref{t5.34:s1} implies that \(g(t)\) is defined now for all \(t \in \RR^{+} \setminus D(X)\). It should be noted here that the components of two distinct points of any topological space either coincide or are disjoint, so that this definition is correct.

If \(t \in D(X)\), then we have
\[
\psi(t) = \sup\{\psi(s) \colon s \in D(X) \cap [0, t]\}
\]
and
\[
\psi(t) = \inf\{\psi(s) \colon s \in D(X) \cap [t, \infty)\}
\]
because \(\psi\) is increasing. Consequently, the equality \(\psi(t) = g(t)\) follows from~\eqref{t5.34:e2} and \eqref{t5.34:e5} for every \(t \in D(X)\), that implies~\eqref{t5.34:e0} and the equality \(g(0) = 0\).

Let us prove the validity of the implication
\begin{equation}\label{t5.34:e9}
(t_1 < t_2) \Rightarrow (g(t_1) < g(t_2))
\end{equation}
for all \(t_1\), \(t_2 \in \RR^{+}\).

First of all, it should be noted that \eqref{t5.34:e0} implies \eqref{t5.34:e9} for all \(t_1\), \(t_2 \in D(X)\) since the scaling function \(\psi\) is strictly increasing. Let \(t_1 \in \RR^{+} \setminus D(X)\), \(t_2 \in D(X)\) and \(t_1 < t_2\). If \(\{t_1\}\) is a component of \(\RR^{+} \setminus D(X)\), then there is a point \(t_3 \in D(X) \cap [t_1, \infty)\) such that \(\psi(t_3) < \psi(t_2)\). From \eqref{t5.34:e2} and \eqref{t5.34:e5} it follows
\[
g(t_1) \leqslant g(t_3) = \psi(t_3),
\]
which together with \(\psi(t_3) < \psi(t_2) = g(t_2)\) implies \(g(t_1) < g(t_2)\). The case when \(t_1 \in D(X)\), \(t_2 \in \RR^{+} \setminus D(X)\) and \(t_1 < t_2\) can be considered similarly.

Let \(t_1\) and \(t_2\) be points of \(\RR^{+} \setminus D(X)\) such that \(t_1 < t_2\). If the components of these points are the same, then \eqref{t5.34:e9} follows from \eqref{t5.34:e3} and \eqref{t5.34:e4}. Suppose now that the points \(t_1\) and \(t_2\) belong to the different components of \(\RR^{+} \setminus D(X)\). Then there is a point \(t_3 \in D(X)\) such that
\[
t_1 < t_3 < t_2.
\]
It was shown above that in this case we have
\[
g(t_1) < g(t_3) < g(t_2).
\]
Thus, \eqref{t5.34:e9} is valid for all \(t_1\), \(t_2 \in \RR^{+}\).

For every strictly increasing function \(f \colon \RR^{+} \to \RR^{+}\) satisfying \(f(0) = 0\), we have \(f^{-1}(0) = \{0\}\). Thus, the function \(g\) defined above is strictly increasing and ultrametric preserving by Pongsriiam---Termwuttipong theorem.

\(\ref{t5.34:s2} \Rightarrow \ref{t5.34:s1}\). Suppose condition \ref{t5.34:s1} is false. We must show that \ref{t5.34:s2} is also false.

The classification of intervals of \(\RR\) given in Remark~\ref{r5.35} and Lemma~\ref{l5.36} imply that at least one from the following statements are valid:
\begin{enumerate}
\item [\((s_1)\)] There is \(a \in (0, \infty)\) such that \((0, a]\) or \([a, \infty)\) is a component of \(\RR^{+} \setminus D(X)\).
\item [\((s_2)\)] There are some points \(a\), \(b\) such that \(0 < a < b < \infty\) and \([a, b)\) or \((a, b]\) is a component of \(\RR^{+} \setminus D(X)\).
\end{enumerate}

If \((s_1)\) holds, then \ref{t5.34:s2} is false by Theorem~\ref{t5.51}.

Let us consider the case, when \((s_2)\) holds and \([a, b)\) is a component of the set \(\RR^{+} \setminus D(X)\). Let us consider the function \(\psi\) defined as
\begin{equation}\label{t5.34:e6}
\psi(t) = \begin{cases}
t &\text{if } t \in D(X) \cap [0, a),\\
t - (b-a) &\text{if } t \in D(X) \cap [b, \infty).
\end{cases}
\end{equation}
From \([a, b) \cap D(X) = \varnothing\) it follows that \(\psi\) is defined for all \(t \in D(X)\). It is clear that \(\psi\) is strictly increasing on \(D(X)\) and \(\psi(t) = 0\) holds if and only if \(t = 0\). By Lemma~\ref{l5.22}, the mapping
\[
X \times X \ni \<x, y> \mapsto \psi(d(x, y))
\]
is an ultrametric on \(X\).

Write \(\delta = \psi \circ d\). Then the ultrametric spaces \((X, d)\) and \((X, \delta)\) are weakly similar, the identical mapping \(\operatorname{Id} \colon X \to X\) is a weak similarity of \((X, \delta)\) and \((X, d)\), and the function \(\psi\),
\[
D(X, d) \ni t \mapsto \psi(t) \in D(X, \delta),
\]
is the scaling function of this weak similarity.

We claim that there is no a strictly increasing ultrametric preserving function \(g \colon \RR^{+} \to \RR^{+}\) for which the equality
\begin{equation}\label{t5.34:e10}
g|_{D(X, d)} = \psi
\end{equation}
holds. To prove it, we may consider a point \(b_1 \in [a, b)\) such that
\begin{equation}\label{t5.34:e7}
a < b_1 < b.
\end{equation}
If \(g \colon \RR^{+} \to \RR^{+}\) is an arbitrary strictly increasing ultrametric preserving function, then \eqref{t5.34:e7} implies
\begin{equation}\label{t5.34:e8}
g(a) < g(b_1) < g(b).
\end{equation}
Since \([a, b)\) is a component of \(\RR^{+} \setminus D(X, d)\), the membership \(b \in D(X, d)\) and the equality \(a > 0\) hold. Hence, there is a strictly increasing sequence \((a_n)_{n \in \mathbb{N}} \subseteq D(X, d) \cap [0, a)\) such that
\[
\lim_{n \to \infty} a_n = a.
\]
The last equality, \eqref{t5.34:e10} and \eqref{t5.34:e6} imply that
\[
\lim_{n \to \infty} g(a_n) = \lim_{n \to \infty} \psi(a_n) = a.
\]
Moreover, \eqref{t5.34:e10}, \eqref{t5.34:e6} and \(b \in D(X, d)\) imply that \(\psi(b) = g(b) = b - (b-a) = a\). Now using \eqref{t5.34:e8} we have the contradiction
\[
a = \lim_{n \to \infty} g(a_n) \leqslant g(a) < g(b_1) < g(b) = a.
\]

If \((s_2)\) holds for a component \((a, b]\) of \(\RR^{+} \setminus D(X, d)\), then using the function
\[
\varphi(t) = \begin{cases}
t & \text{if } t \in D(X) \cap [0, a],\\
t - (b-a) & \text{if } t \in D(X) \cap (b, \infty)
\end{cases}
\]
instead of the function \(\psi\) defined by \eqref{t5.34:e6} and arguing as above we can see that the equality
\[
g|_{D(X, d)} = \varphi
\]
is impossible for any strictly increasing ultrametric preserving function \(g\).
\end{proof}

\begin{corollary}\label{c5.38}
Let \(X\) and \(Z\) be nonempty weakly similar ultrametric spaces and let the distance set \(D(X)\) be a closed subset of \(\RR^{+}\). Then, for every weak similarity \(\Phi \colon Z \to X\), the scaling function of \(\Phi\) admits a continuation to a strictly increasing ultrametric preserving function.
\end{corollary}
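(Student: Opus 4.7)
The plan is to reduce Corollary~\ref{c5.38} directly to the equivalence $\ref{t5.34:s1} \Leftrightarrow \ref{t5.34:s2}$ in Theorem~\ref{t5.34} by verifying that closedness of $D(X)$ in $\RR^{+}$ forces every component of $\RR^{+} \setminus D(X)$ to be open in $\RR$, so that the single-point alternative in condition~\ref{t5.34:s1} does not even need to be invoked.

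First I would note that, for every nonempty ultrametric space $(X, d)$, the value $0$ lies in $D(X)$, so
\[
\RR^{+} \setminus D(X) \subseteq (0, \infty).
\]
Next I would upgrade the closedness assumption from $\RR^{+}$ to $\RR$: since $\RR^{+} = [0, \infty)$ is itself closed in $\RR$, any subset of $\RR^{+}$ which is closed in $\RR^{+}$ (as a subspace of $\RR$) is closed in $\RR$ as well. Hence $D(X)$ is closed in $\RR$ and so $\RR \setminus D(X)$ is open in $\RR$.

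Combining these observations with $0 \in D(X)$, I get
\[
\RR^{+} \setminus D(X) = (0, \infty) \setminus D(X) = \bigl((0, \infty)\bigr) \cap \bigl(\RR \setminus D(X)\bigr),
\]
which is an intersection of two open subsets of $\RR$ and therefore open in $\RR$. By Lemma~\ref{l5.36} (combined with the classical fact that a connected open subset of $\RR$ is an open interval), every component of $\RR^{+} \setminus D(X)$ is an open interval, hence an open subset of $\RR$. Thus condition~\ref{t5.34:s1} of Theorem~\ref{t5.34} holds in the stronger form in which \emph{every} component is open in $\RR$, with no single-point components needed.

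Finally I would invoke the implication $\ref{t5.34:s1} \Rightarrow \ref{t5.34:s2}$ of Theorem~\ref{t5.34}: for any weakly similar ultrametric space $(Z, \delta)$ and any weak similarity $\Phi \colon Z \to X$, the scaling function $\psi \colon D(X) \to D(Z)$ extends to a strictly increasing ultrametric preserving function $g \colon \RR^{+} \to \RR^{+}$. This is exactly the conclusion of the corollary. There is no real obstacle here; the only subtle point worth mentioning explicitly in the written proof is the passage from ``closed in $\RR^{+}$'' to ``open components in $\RR$'', which relies on $\RR^{+}$ being closed in $\RR$ and on $0 \in D(X)$.
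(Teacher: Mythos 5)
Your proposal is correct and follows essentially the same route as the paper: the paper's proof likewise deduces the corollary from Theorem~\ref{t5.34} by observing that \(\RR^{+} \setminus D(X)\) is an open subset of \(\RR\) whose components are therefore open intervals. You merely spell out in more detail the (correct) passage from ``\(D(X)\) closed in \(\RR^{+}\)'' to ``\(\RR^{+} \setminus D(X)\) open in \(\RR\)'', using that \(\RR^{+}\) is closed in \(\RR\) and that \(0 \in D(X)\).
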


\begin{proof}
It follows from Theorem~\ref{t5.34} because the set \(\RR^{+} \setminus D(X)\) is an open subset of \(\RR\) and every component of any nonempty open subset of \(\RR\) is an open interval.
\end{proof}

\begin{corollary}\label{c5.35}
Let \(X\) be a nonempty totally bounded ultrametric space. Then, for every weak similarity \(\Phi \colon Z \to X\) of an ultrametric space \(Z\) and the ultrametric space \(X\), there is a strictly increasing ultrametric preserving function \(g \colon \RR^{+} \to \RR^{+}\) such that
\[
g|_{D(X)} = \psi,
\]
where \(\psi \colon D(X) \to D(Z)\) is the scaling function of \(\Phi\).
\end{corollary}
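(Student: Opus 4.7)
The plan is to reduce this corollary directly to Corollary~\ref{c5.38}, whose hypothesis requires that $D(X)$ be a closed subset of $\RR^{+}$. Thus the only thing to verify is that the distance set of a nonempty totally bounded ultrametric space is closed in $\RR^{+}$.

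First, I would pass to the completion. Let $(\widetilde{X}, \widetilde{d})$ be the completion of $(X, d)$. By Lemma~\ref{l2.20}, $(\widetilde{X}, \widetilde{d})$ is an ultrametric space, and by Proposition~\ref{p2.13}, it is compact since $(X, d)$ is totally bounded. Proposition~\ref{p4.4} gives the equality $D(X) = D(\widetilde{X})$, so it suffices to show that $D(\widetilde{X})$ is closed in $\RR^{+}$. This, however, is exactly Lemma~\ref{l6.4}: $D(\widetilde{X})$ is a compact subset of $\RR^{+}$, hence closed.

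With this established, the proof concludes in one line: by Corollary~\ref{c5.38} (applied to the weak similarity $\Phi \colon Z \to X$ and noting that $D(X)$ is closed in $\RR^{+}$), the scaling function $\psi \colon D(X) \to D(Z)$ admits a continuation to a strictly increasing ultrametric preserving function $g \colon \RR^{+} \to \RR^{+}$, which is the desired statement.

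There is no genuine obstacle here; the entire content of the corollary is the closedness of $D(X)$, and this follows by stringing together three previously established results (Lemma~\ref{l2.20}, Proposition~\ref{p2.13}, Proposition~\ref{p4.4}, Lemma~\ref{l6.4}). The only minor care needed is to make sure one does not use Theorem~\ref{t5.10}, which concerns only \emph{infinite} totally bounded spaces; the argument via the completion works uniformly for any nonempty totally bounded $X$, since a finite distance set is trivially closed and the compactness argument covers the infinite case.
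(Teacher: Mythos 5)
Your proof is correct and follows essentially the same route as the paper: the paper's proof is simply ``It follows from Corollary~\ref{c5.38} and Corollary~\ref{c6.9}'', i.e.\ it also reduces everything to Corollary~\ref{c5.38} once $D(X)$ is known to be closed. The only (immaterial) difference is that the paper gets closedness of $D(X)$ from the explicit description in Corollary~\ref{c6.9}, whereas you obtain it by passing to the compact completion and invoking Lemma~\ref{l6.4}; both verifications are valid, and your remark about handling the finite case separately is a sensible precaution.
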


\begin{proof}
It follows from Corollary~\ref{c5.38} and Corollary~\ref{c6.9}.
\end{proof}

\begin{example}\label{ex5.30}
Let \(X\) be a subset of \([0, 1]\) such that
\[
X = \left\{\frac{1}{n} \colon n \in \mathbb{N}\right\}
\]
and let \(d \colon X \times X \to \RR^{+}\) and \(\delta \colon X \times X \to \RR^{+}\) be ultrametrics on \(X\) defined for all \(x\), \(y \in X\) as
\[
d(x, y) = \begin{cases}
0 & \text{if } x = y,\\
\max\{x^2, y^2\} & \text{if } x \neq y
\end{cases}
\]
and, respectively,
\begin{equation}\label{ex5.30:e1}
\delta(x, y) = \begin{cases}
0 & \text{if } x = y,\\
1 + \max\{x, y\} & \text{if } x \neq y.
\end{cases}
\end{equation}
Then \((X, d)\) and \((X, \delta)\) are weakly similar, the identical mapping \(\operatorname{Id} \colon X \to X\) is a (unique) weak similarity of \((X, d)\) and \((X, \delta)\), and the equality \(d(x, y) = f(\delta(x, y))\) holds for all \(x\), \(y \in X\) with the scaling function \(f \colon D(X, \delta) \to D(X, d)\) such that
\begin{equation}\label{ex5.30:e2}
D(X, d) = \{0\} \cup \left\{\frac{1}{n^2} \colon n \in \mathbb{N}\right\}, \quad D(X, \delta) = \{0\} \cup \left\{1 + \frac{1}{n} \colon n \in \mathbb{N}\right\}
\end{equation}
and
\begin{equation}\label{ex5.30:e3}
f(t) = \begin{cases}
0 & \text{if } t = 0,\\
(t-1)^2 & \text{if } t \in D(X, \delta) \setminus \{0\}.
\end{cases}
\end{equation}
It is interesting to note that the ultrametric space \((X, \delta)\) is not totally bounded because \(|X| = \aleph_0\) and \(\delta(x, y) \geqslant 1\) whenever \(x \neq y\). In addition, there is no ultrametric preserving function \(\varphi \colon \RR^{+} \to \RR^{+}\) such that
\begin{equation}\label{ex5.30:e4}
\varphi|_{D(X, \delta)} = f.
\end{equation}
Indeed, if \(\varphi \colon \RR^{+} \to \RR^{+}\) is increasing and \eqref{ex5.30:e4} holds, then using \eqref{ex5.30:e2} and \eqref{ex5.30:e3} we obtain the equality \(\varphi(t) = 0\) for all \(t \in [0, 1]\). Thus, \(\varphi\) is not ultrametric preserving by Pongsriiam---Termwuttipong theorem.

By Lemma~\ref{l5.29}, there is the inverse function \(\psi \colon D(X, d) \to D(X, \delta)\) for the scaling function \(f\). It is easy to show that
\begin{equation}\label{ex5.30:e5}
\psi(s) = \begin{cases}
0 & \text{if } s = 0,\\
1 + \sqrt{s} & \text{if } s \in D(X, d) \setminus \{0\}.
\end{cases}
\end{equation}
In the proof of Theorem~\ref{t5.34} it was used the piecewise linear expansion of \(\psi\) to a strictly increasing ultrametric preserving function \(g\) depicted in Figure~\ref{fig14} below.
\end{example}

\begin{figure}[ht]
\begin{center}
\begin{tikzpicture}
\def\dx{4cm}
\def\dy{1.7cm}
\def\de{3pt}
\draw [->] (-1, 0) -- (1.5*\dx, 0);
\draw [->] (0, -1) -- (0, 3.1*\dy);
\draw (\dx, -\de) node [label = below:{\(1\)}] {} -- (\dx, \de);
\draw (-\de, 2*\dy) node [label = left:{\(2\)}] {} -- (\de, 2*\dy);
\draw (\dx, 2*\dy) -- (1.5*\dx, 3*\dy);

\draw (\dx/4, -\de) node [label = below:{\(\frac{1}{4}\)}] {} -- (\dx/4, \de);
\draw (\dx/4, 3*\dy/2) -- (\dx, 2*\dy);
\draw (-\de, 3*\dy/2) node [label = left:{\(3/2\)}] {} -- (\de, 3*\dy/2);

\draw (\dx/9, -\de) node [label = below:{\(\frac{1}{9}\)}] {} -- (\dx/9, \de);
\draw (\dx/9, 4*\dy/3) -- (\dx/4, 3*\dy/2);
\draw (-\de, 4*\dy/3) -- (\de, 4*\dy/3);
\draw (-\de, 3.9*\dy/3) node [label = left:{\(4/3\)}] {};

\draw [dashed] (0, \dy) -- (\dx/9, 4*\dy/3);
\draw (-\de, \dy) node [label = left:{\(1\)}] {} -- (\de, \dy);
\end{tikzpicture}
\end{center}
\caption{The graph of strictly increasing ultrametric preserving function \(g \colon \RR^{+} \to \RR^{+}\) corresponding to the scaling function \(\psi\) defined by rule \eqref{ex5.30:e5}.}
\label{fig14}
\end{figure}
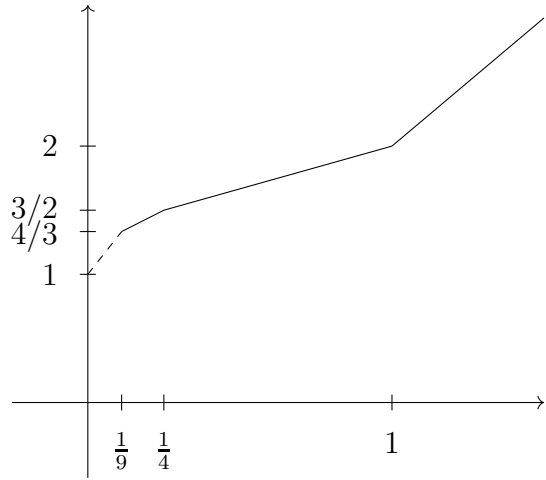

Theorems~\ref{t5.46}--\ref{t5.34} can be considered as reformulations of some results guaranteeing, in the language of ultrametric spaces, the existence of isotonic extension of given isotonic mappings. For example, Theorem~\ref{t5.46} is a simple corollary of the following.

\begin{theorem}\label{t5.57}
Let \(D\) be a subset of \(\RR^{+}\) such that \(0 \in D\). Then the following conditions are equivalent:
\begin{enumerate}
\item \label{t5.57:s1} For every strictly increasing function \(\psi \colon D \to \RR^{+}\) with \(\psi(0) = 0\) there is an increasing function \(g \colon \RR^{+} \to \RR^{+}\) such that \(g|_{D} = \psi\).
\item \label{t5.57:s2} The poset \((D, {\leqslant})\) has a largest element or \(D\) is a confinal subset of \((\RR^{+}, {\leqslant})\).
\end{enumerate}
\end{theorem}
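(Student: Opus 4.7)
The plan is to prove both implications by explicit construction of the extension. For \(\ref{t5.57:s2} \Rightarrow \ref{t5.57:s1}\), I would try a single uniform formula that works for both subcases of~\ref{t5.57:s2}: given a strictly increasing \(\psi \colon D \to \RR^{+}\) with \(\psi(0) = 0\), define
\[
g(t) = \sup\{\psi(s) \colon s \in D,\ s \leqslant t\}, \quad t \in \RR^{+}.
\]
Since \(0 \in D\), the set on the right always contains \(\psi(0) = 0\), so it is nonempty. The function \(g\) is increasing because enlarging \(t\) only enlarges the set being supremized, and for every \(t \in D\) the strict monotonicity of \(\psi\) on \(D\) forces \(g(t) = \psi(t)\), giving the required restriction property. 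The only remaining thing to verify is that \(g(t) \in \RR^{+}\), i.e.\ that the supremum is finite: if \(D\) has a largest element \(M\) then \(g(t) \leqslant \psi(M) < \infty\), and if \(D\) is cofinal in \((\RR^{+}, {\leqslant})\) then, given \(t\), any \(s^{*} \in D\) with \(s^{*} \geqslant t\) gives \(g(t) \leqslant \psi(s^{*}) < \infty\).

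For \(\ref{t5.57:s1} \Rightarrow \ref{t5.57:s2}\) I would argue by contraposition. Assume \(D\) has no largest element and \(D\) is not cofinal in \(\RR^{+}\); then \(M = \sup D\) satisfies \(0 < M < \infty\), \(M \notin D\), and by the definition of supremum there exists a strictly increasing sequence \((d_{n})_{n \in \mathbb{N}} \subseteq D\) with \(d_{n} \to M\). The test function I would use is the blow-up
\[
\psi(t) = \frac{t}{M - t}, \quad t \in D.
\]
Since \(D \subseteq [0, M)\), \(\psi\) is well defined, takes values in \(\RR^{+}\), satisfies \(\psi(0) = 0\), and is strictly increasing on \(D\). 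However, any increasing extension \(g \colon \RR^{+} \to \RR^{+}\) must satisfy
\[
g(M) \geqslant g(d_{n}) = \psi(d_{n}) = \frac{d_{n}}{M - d_{n}} \xrightarrow[n \to \infty]{} +\infty,
\]
which is impossible since \(g(M) \in \RR^{+}\). Thus \(\psi\) admits no increasing extension to \(\RR^{+}\) and \ref{t5.57:s1} fails.

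The step I expect to require the most care is \(\ref{t5.57:s1} \Rightarrow \ref{t5.57:s2}\): one must exhibit, with a single \(\psi\), the simultaneous failure of the two alternatives in \ref{t5.57:s2}, and it is important that the test function remain finite on every point of \(D\) (which is ensured here by \(M \notin D\)) while having unbounded range on \(D\) (which is ensured by \(M = \sup D\) and the existence of the sequence \((d_{n})\)). Everything else is routine monotonicity bookkeeping.
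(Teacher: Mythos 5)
Your proof is correct. The paper actually states Theorem~\ref{t5.57} without proof (it is presented as the order-theoretic fact underlying Theorem~\ref{t5.46}, with pointers to the literature), but your two constructions are exactly the ones the paper deploys in the proofs of Theorems~\ref{t5.46} and~\ref{t5.51}: the extension \(g(t) = \sup\{\psi(s) \colon s \in D,\ s \leqslant t\}\) is formula~\eqref{t5.46:e2}, and the obstruction in the contrapositive direction — a strictly increasing \(\psi\) that is unbounded on a bounded \(D\) without largest element, which no increasing \(g \colon \RR^{+} \to \RR^{+}\) can dominate at \(t = \sup D\) — is the same boundedness argument used there. All the details (nonemptiness of the sup via \(0 \in D\), finiteness of \(g\) in each subcase of \ref{t5.57:s2}, and \(0 < \sup D < \infty\) with \(\sup D \notin D\) under the negation of \ref{t5.57:s2}) check out.
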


The above theorem admits also future generalizations for partially ordered sets and, in particular, for different types of semilattices and lattices \cite{GLPFM1970, SorMSN1952, DovJMSUS2020, BGAU2012, BNSPRSESA2003, FofMN1969}. We note only that lattices are a special kind of posets which play a fundamental role in the theory of ultrametric spaces \cite{Lem2003AU}.

The next our goal is to find a refinement of the Pongsriiam---Termwuttipong theorem for totally bounded ultrametric spaces.

\begin{definition}\label{d5.16}
Let \((X, d)\) and \((X, \rho)\) be metric spaces. The metrics \(d\) and \(\rho\) are (\emph{topologically}) \emph{equivalent} if these metrics generate one and the same topology,
\begin{equation*}
(A \text{ is open in } (X, d)) \Leftrightarrow (A \text{ is open in } (X, \rho))
\end{equation*}
for every \(A \subseteq X\).
\end{definition}

The following result can be considered as a special case of Theorem~29 from \cite{Dov2024SUPF}.

\begin{theorem}\label{t5.15}
The following conditions are equivalent for every \(\psi \colon \RR^{+} \to \RR^{+}\):
\begin{enumerate}
\item \label{t5.15:s1} \(\psi\) is ultrametric preserving and continuous at \(0\).
\item \label{t5.15:s2} For every nonempty totally bounded ultrametric space \((X, d)\) the mapping \(\rho = \psi \circ d\) is an ultrametric on \(X\) and \((X, \rho)\) is totally bounded.
\item \label{t5.15:s3} For every ultrametric space \((X, d)\), the mapping \(\rho = \psi \circ d\) is an ultrametric on \(X\) such that \(\rho\) and \(d\) are topologically equivalent.
\end{enumerate}
\end{theorem}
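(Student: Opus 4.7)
The plan is to establish the equivalences via the two-way implications \((\ref{t5.15:s1}) \Leftrightarrow (\ref{t5.15:s2})\) and \((\ref{t5.15:s1}) \Leftrightarrow (\ref{t5.15:s3})\). The backbone is the Pongsriiam–Termwuttipong theorem (Theorem~\ref{t5.21}), which reduces ``ultrametric preserving'' to ``increasing with \(\psi^{-1}(0) = \{0\}\)'', plus the Delhommé–Laflamme–Pouzet–Sauer construction from Example~\ref{ex2.6} together with the characterization of distance sets in Theorem~\ref{t5.10}, which provides a totally bounded ultrametric space whose distance set realizes any prescribed sequence converging to \(0\).

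For \((\ref{t5.15:s1}) \Rightarrow (\ref{t5.15:s2})\) I would proceed as follows. Assuming \(\psi\) is ultrametric preserving, \(\rho = \psi \circ d\) is an ultrametric. To show \((X, \rho)\) is totally bounded, fix \(\varepsilon > 0\); by continuity of \(\psi\) at \(0\) together with \(\psi(0) = 0\) (Theorem~\ref{t5.21}), choose \(\delta > 0\) with \(\psi(t) < \varepsilon\) for all \(t \in [0, \delta)\). Using total boundedness of \((X, d)\), pick a finite cover by \(d\)-balls \(B_\delta(x_1), \ldots, B_\delta(x_n)\). Since \(\psi\) is increasing, each \(d\)-ball \(B_\delta(x_i)\) sits inside the \(\rho\)-ball of radius \(\varepsilon\) about \(x_i\), giving a finite \(\rho\)-cover. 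Similarly, for \((\ref{t5.15:s1}) \Rightarrow (\ref{t5.15:s3})\), given any ultrametric \(d\), \(\rho = \psi \circ d\) is an ultrametric; topological equivalence follows because continuity of \(\psi\) at \(0\) yields \(B_\delta^d(x) \subseteq B_\varepsilon^\rho(x)\) for some \(\delta > 0\), while monotonicity of \(\psi\) together with \(\psi^{-1}(0) = \{0\}\) shows that \(B_{\psi(\varepsilon)}^\rho(x) \subseteq B_\varepsilon^d(x)\) (indeed \(d(x,y) \geqslant \varepsilon\) forces \(\rho(x,y) \geqslant \psi(\varepsilon) > 0\)).

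For the converse directions \((\ref{t5.15:s2}) \Rightarrow (\ref{t5.15:s1})\) and \((\ref{t5.15:s3}) \Rightarrow (\ref{t5.15:s1})\), the first step is to deduce that \(\psi\) is ultrametric preserving by applying the hypothesis to finite (hence totally bounded) ultrametric spaces: a two-point space of diameter \(t > 0\) forces \(\psi(t) > 0\), while a three-point space \(\{x_1, x_2, x_3\}\) with \(d(x_1, x_2) = a\) and \(d(x_1, x_3) = d(x_2, x_3) = b\), for arbitrary \(0 < a < b\), forces \(\psi(a) \leqslant \psi(b)\) via the strong triangle inequality for \(\rho\). Theorem~\ref{t5.21} then gives that \(\psi\) is ultrametric preserving. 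The remaining task is continuity at \(0\); assume for contradiction there exist \(\varepsilon_0 > 0\) and a strictly decreasing sequence \(t_n \to 0\) with \(\psi(t_n) \geqslant \varepsilon_0\). By Theorem~\ref{t5.10}, the set \(A = \{0\} \cup \{t_n : n \in \mathbb{N}\}\) is the distance set of a totally bounded (infinite) ultrametric space, which I realize concretely via Example~\ref{ex2.6}: set \(X = A\) and \(d(x, y) = \max\{x, y\}\) for \(x \neq y\). Then every pair of distinct points \(x, y \in X\) satisfies \(d(x, y) = t_k\) for some \(k\), so \(\rho(x, y) = \psi(t_k) \geqslant \varepsilon_0\). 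Under hypothesis \((\ref{t5.15:s2})\), this contradicts total boundedness of \((X, \rho)\) (an infinite uniformly discrete space is not totally bounded). Under hypothesis \((\ref{t5.15:s3})\), topological equivalence of \(d\) and \(\rho\) would force \(t_n \to 0\) in \(\rho\), i.e.\ \(\psi(t_n) = \rho(0, t_n) \to 0\), again a contradiction.

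The main obstacle is the converse half: extracting continuity of \(\psi\) at \(0\) from a purely ``spatial'' hypothesis. The key observation that unlocks it is that the Delhommé–Laflamme–Pouzet–Sauer construction lets one upgrade \emph{any} null sequence with bad \(\psi\)-behaviour near \(0\) into a concrete totally bounded ultrametric space on which both \((\ref{t5.15:s2})\) and \((\ref{t5.15:s3})\) fail. Once this space is in hand, the contradictions in both converse directions are immediate. The equivalences with Theorem~\ref{t5.21} handle the ``algebraic'' side (monotonicity and \(\psi^{-1}(0) = \{0\}\)) without further work.
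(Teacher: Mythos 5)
Your proposal is correct and follows essentially the same route as the paper: the converse directions hinge on exactly the paper's two ideas (three-point spaces plus the Pongsriiam--Termwuttipong theorem to get ultrametric preservation, and the Delhomm\'{e}--Laflamme--Pouzet--Sauer space built on a bad null sequence to force continuity at \(0\)). The only difference is cosmetic: in the forward directions you use direct ball-inclusion/finite-cover arguments where the paper argues via Cauchy subsequences (Propositions~\ref{p2.11} and \ref{p4.5}) and sequential continuity, and both versions are sound.
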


\begin{proof}
\(\ref{t5.15:s1} \Rightarrow \ref{t5.15:s2}\). Let \ref{t5.15:s1} hold and let \((X, d)\) be a nonempty totally bounded ultrametric space. Then the mapping \(\rho = \psi \circ d\) is an ultrametric on \(X\). By Propositions~\ref{p2.11} and \ref{p4.5}, \((X, \rho)\) is totally bounded if and only if every infinite sequence \((x_n)_{n \in \mathbb{N}} \subseteq X\) contains a subsequence \((x_{n_k})_{k \in \mathbb{N}}\) such that
\begin{equation}\label{t5.15:e1}
\lim_{k \to \infty} \rho(x_{n_k}, x_{n_{k+1}}) = 0.
\end{equation}

Let \((x_n)_{n \in \mathbb{N}}\) be an arbitrary sequence of points of \(X\). Using Propositions~\ref{p2.11} and \ref{p4.5} again, we can find a subsequence \((x_{n_k})_{k \in \mathbb{N}}\) of \((x_n)_{n \in \mathbb{N}}\) such that
\begin{equation}\label{t5.15:e2}
\lim_{k \to \infty} d(x_{n_k}, x_{n_{k+1}}) = 0.
\end{equation}
Limit relation \eqref{t5.15:e1} can be written as
\begin{equation}\label{t5.15:e10}
\lim_{k \to \infty} \psi(d(x_{n_k}, x_{n_{k+1}})) = 0.
\end{equation}
Since \(\psi(0) = 0\) holds and \(\psi\) is continuous at \(0\), equality~\eqref{t5.15:e10} follows from \eqref{t5.15:e2}.

\(\ref{t5.15:s2} \Rightarrow \ref{t5.15:s1}\). Let \ref{t5.15:s2} hold. It is easy to prove directly (or can be obtained as a particular case of Corollary~5.7 \cite{VD2019a}) that \(\psi\) is ultrametric preserving if and only if
\[
X \times X \xrightarrow{d} \RR^{+} \xrightarrow{\psi} \RR^{+}
\]
is an ultrametric for every three-point ultrametric space \((X, d)\). Since every three-point ultrametric space is totally bounded, condition~\ref{t5.15:s2} implies that \(\psi\) is ultrametric preserving.

To complete the proof it suffices to show that
\[
\lim_{t \to 0} \psi(t) = 0
\]
holds.

Suppose contrary that there is \((t_n)_{n \in \mathbb{N}} \subseteq \RR^{+}\) such that
\begin{equation}\label{t5.15:e3}
\lim_{n \to \infty} t_n = 0 \quad \text{and} \quad \limsup_{n \to \infty} \psi(t_n) > 0.
\end{equation}
Then we can find a subsequence \((t_{n_k})_{k \in \mathbb{N}}\) of \((t_n)_{n \in \mathbb{N}}\) and a point \(\varepsilon^{*} \in (0, \infty)\) such that \(t_{n_{k_1}} \neq t_{n_{k_2}}\) whenever \(k_1 \neq k_2\) and, in addition,
\begin{equation}\label{t5.15:e4}
\psi(t_{n_k}) > \varepsilon^{*}
\end{equation}
holds for all \(k \in \mathbb{N}\). Let \(A \subseteq \RR^{+}\) and \(d \colon A \times A \to \RR^{+}\) be defined as
\begin{equation}\label{t5.15:e5}
(t \in A) \Leftrightarrow (t = 0 \text{ or } \exists k \in \mathbb{N} \colon t = t_{n_k}),
\end{equation}
and
\begin{equation}\label{t5.15:e6}
d(s, t) = \begin{cases}
0 & \text{if } s = t,\\
\max\{s ,t\} & \text{if } s \neq t.
\end{cases}
\end{equation}
Then \((A, d)\) is an infinite compact ultrametric space (see Example~\ref{ex6.8}). Let us consider the ultrametric space \((A, \rho)\) with \(\rho = \psi \circ d\). Using~\eqref{t5.15:e4} we obtain
\[
\rho(s, t) > \varepsilon^{*} > 0
\]
whenever \(s\) and \(t\) are different points of \(A\). The last statement and the equality \(|A| = \infty\) imply that the ultrametric space \((A, \rho)\) is not totally bonded, contrary to~\ref{t5.15:s2}.

\(\ref{t5.15:s1} \Rightarrow \ref{t5.15:s3}\). Let \ref{t5.15:s1} hold, let \((X, d)\) be an ultrametric space, and let \(\rho = \psi \circ d\). Then \(\rho\) and \(d\) are topologically equivalent if and only if
\[
\left(\lim_{n \to \infty} d(x_n, a) = 0\right) \Leftrightarrow \left(\lim_{n \to \infty} \rho(x_n, a) = 0\right)
\]
is valid for every \((x_n)_{n \in \mathbb{N}} \subseteq X\) and every \(a \in X\). Since \(\psi\) is continuous at \(0\), the limit relation \(\lim_{n \to \infty} d(x_n, a) = 0\) implies the equality
\begin{equation}\label{t5.15:e7}
\lim_{n \to \infty} \rho(x_n, a) = 0.
\end{equation}
Conversely, suppose that there are \((x_n)_{n \in \mathbb{N}} \subseteq X\) and \(a \in X\) satisfying \eqref{t5.15:e7} such that
\[
\limsup_{n \to \infty} d(x_n, a) > \varepsilon
\]
for some \(\varepsilon > 0\). Then there is an infinite subsequence \((x_{n_k})_{k \in \mathbb{N}}\) of \((x_n)_{n \in \mathbb{N}}\) for which we have
\begin{equation}\label{t5.15:e8}
d(x_{n_k}, a) > \frac{1}{2} \varepsilon
\end{equation}
for every \(k \in \mathbb{N}\) and
\begin{equation}\label{t5.15:e9}
\lim_{k \to \infty} \psi (d(x_{n_k}, a)) = 0.
\end{equation}
By Theorem~\ref{t5.21}, the mapping \(\psi\) is increasing. Hence, from~\eqref{t5.15:e8} and \eqref{t5.15:e9} it follows that
\[
0 \leqslant \psi\left(\frac{1}{2} \varepsilon\right) \leqslant \lim_{k \to \infty} \psi (d(x_{n_k}, a)) = 0.
\]
Thus, \(\psi\left(\frac{1}{2} \varepsilon\right) = 0\) holds, contrary to Theorem~\ref{t5.21}.

\(\ref{t5.15:s3} \Rightarrow \ref{t5.15:s1}\). Let \(d\) and \(\psi \circ d\) be topologically equivalent ultrametrics for every ultrametric space \((X, d)\). Then, in particular, \(\psi\) is ultrametric preserving. If \(\psi\) is discontinuous at \(0\), then we can find an infinite compact ultrametric space \((A, d)\) (see \eqref{t5.15:e5} and \eqref{t5.15:e6}) such that every point of the ultrametric space \((A, \psi \circ d)\) is isolated. Hence, \(d\) and \(\psi \circ d\) is not topologically equivalent, contrary to condition~\ref{t5.15:s3}.
\end{proof}

\begin{example}\label{ex5.31}
A valuation of the field \(\mathbb{Q}\) is a mapping \(|\cdot| \colon \mathbb{Q} \to \RR^{+}\) satisfying the following conditions:
\begin{enumerate}
\item \(|x| = 0\) holds if and only if \(x = 0\);
\item \(|x + y| \leqslant |x| + |y|\);
\item \(|xy| = |x| \cdot |y|\)
\end{enumerate}
for all \(x\), \(y \in \mathbb{Q}\). A valuation \(|\cdot|\) is trivial if we have \(|x| = 1\) whenever \(x \neq 0\). Every valuation \(|\cdot| \colon \mathbb{Q} \to \RR^{+}\) generates a metric \(d \colon \mathbb{Q} \times \mathbb{Q} \to \RR^{+}\) by the rule
\[
d (x, y) = |x - y|.
\]
If metrics \(d \colon \mathbb{Q} \times \mathbb{Q} \to \RR^{+}\) and \(\rho \colon \mathbb{Q} \times \mathbb{Q} \to \RR^{+}\) are topologically equivalent and generated by some valuations on \(\mathbb{Q}\), then there is \(\alpha \in (0, \infty)\) such that
\[
\rho(x, y) = d^{\alpha}(x, y)
\]
for all \(x\), \(y \in \mathbb{Q}\) (for the proof see, for example, Theorem~9.2 in \cite{Sch1985}).

The famous Ostrovski Theorem claims that every metric \(d \colon \mathbb{Q} \times \mathbb{Q} \to \RR^{+}\) generated by nontrivial valuation \(|\cdot| \colon \mathbb{Q} \to \RR^{+}\) is topologically equivalent either to standard Euclidean metric on \(\mathbb{Q}\) or to some \(p\)-adic metric \(d_p \colon \mathbb{Q} \times \mathbb{Q} \to \RR^{+}\).
\end{example}

\begin{remark}\label{r5.17}
Theorem~\ref{t5.15} can be considered as a modification of corresponding results for general metric spaces. In this connection see, in particular, Theorem~3.3 in \cite{BDMS1981}, Theorem~2.2 in \cite{HMCM1991} and Theorem~2.8 in \cite{DM2013}.
\end{remark}

Applying suitable ultrametric preserving functions for constructing diametrical graphs, we can obtain the following refinement of Theorem~\ref{t5.18}.

\begin{theorem}\label{t5.9}
Let \((X, d)\) be a metric space with \(|X| \geqslant 2\). Then \((X, d)\) is totally bounded and ultrametric if and only if \(G_{X, d}^{r}\) is complete \(k\)-partite with an integer \(k = k(r)\) for every \(r \in (0, \diam X]\).
\end{theorem}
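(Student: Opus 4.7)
My plan is to reduce the forward direction to Proposition~\ref{p2.36} via the truncation trick already used in the proof of Theorem~\ref{t5.18}, and to prove the converse directly, extracting ultrametricity from Theorem~\ref{t5.18} and total boundedness from a clique-in-multipartite-graph argument.

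For the direction \((\Rightarrow)\), fix \(r \in (0, \diam X]\) and set \(\Psi_r(t) = \min\{r, t\}\) and \(\rho_r = \Psi_r \circ d\). The function \(\Psi_r\) is increasing with \(\Psi_r^{-1}(0) = \{0\}\), so by Theorem~\ref{t5.21} it is ultrametric preserving, and the equality \(G_{X, \rho_r} = G_{X, d}^{r}\) recorded in~\eqref{t5.18:e4} holds together with \(\diam(X, \rho_r) = r\). Since \(\Psi_r\) is continuous on \(\RR^{+}\) (in particular at \(0\)) and \((X, d)\) is totally bounded, Theorem~\ref{t5.15} yields that \((X, \rho_r)\) is a totally bounded ultrametric space; as \(|X|\geqslant 2\), Proposition~\ref{p2.36} then produces an integer \(k \geqslant 2\) such that \(G_{X, \rho_r}\) is complete \(k\)-partite. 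Transferring via \eqref{t5.18:e4} gives the required complete \(k\)-partiteness of \(G_{X, d}^{r}\).

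For the direction \((\Leftarrow)\), assume each \(G_{X, d}^{r}\) is complete \(k(r)\)-partite with integer \(k(r)\). Then in particular \(G_{X, d}^{r}\) is either empty or complete multipartite for every \(r \in (0, \diam X]\), so Theorem~\ref{t5.18} yields that \((X, d)\) is ultrametric. To obtain total boundedness, argue by contradiction: if \((X, d)\) is not totally bounded, the usual greedy construction furnishes some \(\varepsilon > 0\) and an infinite sequence \((x_n)_{n \in \mathbb{N}} \subseteq X\) with \(d(x_i, x_j) \geqslant \varepsilon\) whenever \(i \neq j\); note that \(\varepsilon \leqslant \diam X\), so \(\varepsilon\) lies in the permitted range. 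By the defining equivalence~\eqref{e5.21}, the vertices \(x_1, x_2, \ldots\) are pairwise adjacent in \(G_{X, d}^{\varepsilon}\), i.e.\ form an infinite clique. In any complete \(k\)-partite graph with \(k\) finite, a clique contains at most one vertex from each part and therefore has size at most \(k\), so \(k(\varepsilon)\) cannot be an integer. This contradiction completes the proof.

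The only mildly delicate step is the preservation of total boundedness under the truncation \(\Psi_r\); this is precisely the content of the implication \(\ref{t5.15:s1}\Rightarrow\ref{t5.15:s2}\) of Theorem~\ref{t5.15} and so is available for free here. Everything else reduces to invocations of Theorems~\ref{t5.18} and \ref{t5.21}, Proposition~\ref{p2.36}, and the elementary clique-bound for complete multipartite graphs.
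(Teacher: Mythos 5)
Your proof is correct. The forward direction coincides with the paper's argument: truncate via \(\Psi_r(t)=\min\{r,t\}\), invoke Theorems~\ref{t5.21} and \ref{t5.15} to get that \((X,\rho_r)\) is a totally bounded ultrametric space, apply Proposition~\ref{p2.36}, and transfer through the identity \(G_{X,\rho_r}=G_{X,d}^{r}\). The converse differs only in how total boundedness is extracted after Theorem~\ref{t5.18} has delivered ultrametricity. The paper argues constructively: by the second part of Theorem~\ref{t2.24} the \(k(r)\) parts of \(G_{X,\rho_r}\) are open \(\rho_r\)-balls of radius \(r^{*}=\diam(X,\rho_r)\) covering \(X\), and each such ball is contained in the \(d\)-ball \(B_r(x_i)\), so Definition~\ref{d2.10} is verified directly with at most \(k(r)\) balls. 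You instead argue by contradiction: failure of total boundedness yields an infinite \(\varepsilon\)-separated set (with \(\varepsilon\leqslant\diam X\)), which by \eqref{e5.21} is an infinite clique in \(G_{X,d}^{\varepsilon}\), impossible in a complete \(k\)-partite graph with integer \(k\) since a clique meets each part at most once. Both routes are sound; yours is shorter and purely combinatorial, while the paper's has the minor bonus of exhibiting an explicit \(r\)-cover of cardinality \(k(r)\).
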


\begin{proof}
Suppose that \((X, d)\) is totally bounded and ultrametric. Let \(r \in (0, \diam X]\) and let \(\psi_r \colon \RR^{+} \to \RR^{+}\) be defined as
\begin{equation*}
\psi_r(t) = \min\{r, t\}, \quad t \in \RR^{+}.
\end{equation*}
Then using Theorems \ref{t5.21} and \ref{t5.15} we obtain  that \((X, \rho_r)\) is a totally bounded ultrametric space for
\begin{equation}\label{t5.9:e5}
\rho_r = \psi_r \circ d.
\end{equation}
By Proposition~\ref{p2.36}, the diametrical graph \(G_{X, \rho_r}\) is complete \(k\)-partite for an integer \(k = k(r)\). As in the proof of Theorem~\ref{t5.18}, we obtain the equality
\begin{equation}\label{t5.9:e9}
G_{X, \rho_r} = G_{X, d}^{r}.
\end{equation}
Hence, \(G_{X, d}^{r}\) is also \(k\)-partite with the same \(k\).

Suppose now that, for every \(r \in (0, \diam(X, d)]\), \(G_{X, d}^{r}\) is complete \(k\)-partite with an integer \(k = k(r)\). Using Theorem~\ref{t5.18} we obtain that \((X, d)\) is ultrametric.

Let \(r \in (0, \diam(X, d)]\) be given. By Pongsriiam---Termwuttipong theorem, the space \((X, \rho_r)\) is also ultrametric. Now equality~\eqref{t5.9:e9} and the second part of Theorem~\ref{t2.24} imply that there are points \(x_1\), \(\ldots\), \(x_{k(r)} \in X\) such that
\begin{equation}\label{t5.9:e6}
X \subseteq \bigcup_{i=1}^{k(r)} B_{r^{*}}^{\rho}(x_i)
\end{equation}
where
\begin{equation}\label{t5.9:e7}
r^{*} = \diam (X, \rho_r) \quad \text{and} \quad B_{r^{*}}^{\rho}(x_i) = \{x \in X \colon \rho_r(x, x_i) < r^{*}\}.
\end{equation}
From \eqref{t5.9:e5} and the first equality in \eqref{t5.9:e7} it follows that
\begin{equation}\label{t5.9:e8}
B_{r^{*}}^{\rho}(x_i) \subseteq B_{r}(x_i) = \{x \in X \colon d(x, x_i) < r\}
\end{equation}
for every \(i \in \{1, \ldots, k(r)\}\). Since \(r\) is an arbitrary point of \((0, \diam (X, d)]\), Definition~\ref{d2.10} and formulas \eqref{t5.9:e6}, \eqref{t5.9:e8} imply the total boundedness of \((X, d)\).
\end{proof}

Let us now look at another interesting application of ultrametric preserving functions. Recall that a metric space \((X, d)\) is metrically discrete if there is \(r \in (0, \infty)\) such that \(d(x, y) > r\) holds for all distinct \(x\), \(y \in X\).

\begin{proposition}\label{p6.14}
Let an ultrametric space \((X, d)\) be not metrically discrete. Then there is an ultrametric \(\rho \colon X \times X \to \RR^{+}\) such that \(d\) and \(\rho\) are topologically equivalent, the (ordered) distance set
\[
D(X, \rho) = \{\rho(x, y) \colon x, y \in X\}
\]
has the order type \(\mathbf{1} + \bm{\omega}^{*}\) and \(0\) is an unique accumulation point of \(D(X, \rho)\).
\end{proposition}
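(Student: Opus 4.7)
The plan is to construct \(\rho\) as the composition \(\psi \circ d\) for a suitably chosen ultrametric preserving function \(\psi \colon \RR^{+} \to \RR^{+}\) which is continuous at \(0\); Theorem~\ref{t5.21} will then yield that \(\rho\) is an ultrametric on \(X\), while Theorem~\ref{t5.15} will supply the topological equivalence of \(d\) and \(\rho\). The task therefore reduces to choosing \(\psi\) so that \(\psi(D(X, d))\) has order type \(\mathbf{1} + \bm{\omega}^{*}\) with \(0\) as its unique accumulation point.

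Since \((X, d)\) is not metrically discrete, for every \(r > 0\) there exist distinct \(x\), \(y \in X\) with \(0 < d(x, y) \leqslant r\), so \(0\) is an accumulation point of \(D(X, d) \setminus \{0\}\) in the Euclidean topology on \(\RR\). I would then pick a strictly decreasing sequence \((t_n)_{n \in \mathbb{N}} \subseteq D(X, d) \setminus \{0\}\) with \(\lim_{n \to \infty} t_n = 0\), and define the step function \(\psi \colon \RR^{+} \to \RR^{+}\) by
\[
\psi(t) = \begin{cases} 0 & \text{if } t = 0, \\ 1/n & \text{if } t_{n+1} < t \leqslant t_n \text{ for some } n \in \mathbb{N}, \\ 1 & \text{if } t > t_1. \end{cases}
\]
A direct verification shows that \(\psi\) is increasing, satisfies \(\psi^{-1}(0) = \{0\}\), and obeys \(\lim_{t \to 0^{+}} \psi(t) = 0\). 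By Theorem~\ref{t5.21} the function \(\psi\) is ultrametric preserving, so \(\rho = \psi \circ d\) is an ultrametric on \(X\), and by Theorem~\ref{t5.15} this \(\rho\) is topologically equivalent to \(d\).

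Finally, since \(t_n \in D(X, d)\) with \(\psi(t_n) = 1/n\) for every \(n \in \mathbb{N}\) and every strictly positive element of \(D(X, d)\) lies either in some interval \((t_{n+1}, t_n]\) or in \((t_1, \infty)\) (because the \(t_n\) decrease to \(0\)), one obtains
\[
D(X, \rho) = \psi(D(X, d)) = \{0\} \cup \{1/n \colon n \in \mathbb{N}\}.
\]
By Definition~\ref{d6.3} this poset has order type \(\mathbf{1} + \bm{\omega}^{*}\), and \(0\) is plainly its unique accumulation point in \(\RR\). The construction is entirely explicit, so no serious obstacle is expected; the only point requiring care is that the sequence \((t_n)\) must be drawn from \(D(X, d)\) itself in order that each value \(1/n\) is actually attained by \(\psi \circ d\), which is precisely what the failure of metric discreteness guarantees.
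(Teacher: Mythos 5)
Your proposal is correct and follows essentially the same route as the paper: both construct $\rho = \psi\circ d$ for a step function $\psi$ that collapses the positive distances onto a strictly decreasing null sequence, then invoke the Pongsriiam---Termwuttipong theorem for ultrametricity and check continuity at $0$ for topological equivalence (the paper verifies the equivalence of convergence directly rather than citing Theorem~\ref{t5.15}, but this is immaterial). Your refinement of drawing the sequence $(t_n)$ from $D(X,d)$ itself, so that the image is exactly $\{0\}\cup\{1/n\colon n\in\mathbb{N}\}$, is a harmless sharpening of the paper's argument.
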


\begin{proof}
Let \((\varepsilon_{n})_{n \in \mathbb{N}} \subseteq \RR^{+}\) be a strictly decreasing sequence with
\[
\lim_{n \to \infty} \varepsilon_{n} = 0
\]
and let \(f \colon \RR^{+} \to \RR^{+}\) be defined as
\begin{equation}\label{p6.14:e1}
f(t) = \begin{cases}
0 & \text{if } t = 0,\\
\varepsilon_{n+1} & \text{if } \varepsilon_{n+1} \leqslant t < \varepsilon_{n},\\
\varepsilon_{1} & \text{if } t \geqslant \varepsilon_{1}.\\
\end{cases}
\end{equation}
It is clear that \(f\) is increasing and \(f(t) = 0\) holds if and only if \(t = 0\). By Theorem~\ref{t5.21}, the function \(\rho = f \circ d\) is an ultrametric on~\(X\). Moreover, since \(0\) is an accumulation point of \(D(X, d)\) (because \((X, d)\) is not metrically discrete) and \((\varepsilon_{n})_{n \in \mathbb{N}} \subseteq \RR^{+}\) is strictly decreasing, formula~\eqref{p6.14:e1} implies that \(D(X, \rho)\) has the order type \(\mathbf{1} + \bm{\omega}^{*}\).

If \((x_{n})_{n \in \mathbb{N}} \subseteq X\) and \(a \in X\), then using \eqref{p6.14:e1} we also obtain
\[
\left(\lim_{n \to \infty} d(x_n, a) = 0\right) \Leftrightarrow \left(\lim_{n \to \infty} \rho(x_n, a) = 0\right).
\]
Thus, \(d\) and \(\rho\) are topologically equivalent and \(0\) is an unique accumulation point of \(D(X, \rho)\).
\end{proof}

%

\begin{remark}\label{r8.18}
Proposition~\ref{p6.14} is closely connected with the so-called rationalization problem for ultrametric spaces (\(=\) approximation of ultrametric space \((X, d)\) by ultrametric space \((X, \rho)\) such that \(D(X, \rho) \subseteq \mathbb{Q}\)) which was considered by Alex J.~Lemin and Vladimir Lemin \cite{LL1997TP}. Indeed, if, for every \(n \in \mathbb{N}\), we write \(\varepsilon_{n} = \left(\frac{1}{2}\right)^{n}\) in formula~\eqref{p6.14:e1}, then all values of ultrametric \(\rho = f \circ d\) are binary rational numbers.
\end{remark}

Theorem~3 from \cite{LL1997TP} and Proposition~\ref{p6.14}, which was proved above, can be partially reinforce as follows.

\begin{proposition}\label{p8.19}
Let \((X, d)\) be a bounded, non-discrete, ultrametric space and let \(\varepsilon > 0\), \(K > 1\) be given. Then there is an ultrametric \(\rho \colon X \times X \to \RR^{+}\) such that:
\begin{enumerate}
\item\label{p8.19:s1} The identity mapping \(\operatorname{id}\colon (X, d) \to (X, \rho)\) is non-expanding and \(K\)-Lipschitz, i.e., we have
\[
\rho(x, y) \leqslant d(x, y) \leqslant K \rho(x, y)
\]
for all \(x\), \(y \in X\).
\item\label{p8.19:s2} The inequality
\[
|d(x, y) - \rho(x, y)| \leqslant \varepsilon
\]
holds for all \(x\), \(y \in X\).
\item\label{p8.19:s3} All values of \(\rho\) are binary rational numbers.
\item\label{p8.19:s4} The distance set \(D(X, \rho)\) has the order type \(\mathbf{1} + \bm{\omega}^{*}\) and \(0\) is an unique accumulation point of \(D(X, \rho)\).
\end{enumerate}
\end{proposition}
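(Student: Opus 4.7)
The plan is to mimic and refine the construction used in Proposition~\ref{p6.14}: I would build a suitable step function $f\colon \RR^{+} \to \RR^{+}$ whose values lie in the binary rationals, verify that it is ultrametric preserving, and set $\rho := f \circ d$. All four required properties will then follow by inspection of $f$ on the strips $[\varepsilon_{n},\varepsilon_{n-1})$.

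Concretely, using density of dyadic rationals in $\RR$ (and the fact that $0 < \diam X < \infty$, which follows from boundedness and non-discreteness), I would choose a strictly decreasing sequence $(\varepsilon_n)_{n=0}^{\infty}$ of binary rational numbers converging to $0$ such that
\[
\max\{\diam X / K,\; \diam X - \varepsilon\} \leqslant \varepsilon_0 \leqslant \diam X,
\]
and $\varepsilon_{n-1} \leqslant K\varepsilon_n$, $\varepsilon_{n-1} - \varepsilon_n \leqslant \varepsilon$ for every $n \geqslant 1$. Since $K>1$ and $\varepsilon>0$, the interval for $\varepsilon_0$ is non-degenerate, and at each later step the half-open interval $[\max\{\varepsilon_{n-1}/K,\,\varepsilon_{n-1}-\varepsilon\},\,\varepsilon_{n-1})$ is non-empty and contains a dyadic rational. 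Picking $\varepsilon_n$ near its lower end, once $\varepsilon_{n-1}$ drops below $K\varepsilon/(K-1)$ the sequence decays at least geometrically with ratio $1/K$, which guarantees $\varepsilon_n\to 0$.

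Next I would define
\[
f(t) = \begin{cases} 0 & \text{if } t = 0,\\ \varepsilon_n & \text{if } \varepsilon_n \leqslant t < \varepsilon_{n-1},\ n \geqslant 1,\\ \varepsilon_0 & \text{if } t \geqslant \varepsilon_0. \end{cases}
\]
This $f$ is increasing with $f^{-1}(0)=\{0\}$, hence ultrametric preserving by Theorem~\ref{t5.21}, so $\rho = f\circ d$ is an ultrametric on $X$. Statement~\ref{p8.19:s1} follows because on each strip the inequality $f(t) = \varepsilon_n \leqslant t < \varepsilon_{n-1} \leqslant K\varepsilon_n = Kf(t)$ holds, and the same chain handles $t \in [\varepsilon_0, \diam X]$ using the lower bound on $\varepsilon_0$. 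Statement~\ref{p8.19:s2} follows from the fact that the strip widths $\varepsilon_{n-1} - \varepsilon_n$ and the top gap $\diam X - \varepsilon_0$ are both at most $\varepsilon$. Statement~\ref{p8.19:s3} is immediate, as $\rho(x,y) \in \{0, \varepsilon_0, \varepsilon_1, \ldots\}$. For Statement~\ref{p8.19:s4}, non-discreteness of $(X,d)$ forces $0$ to be an accumulation point of $D(X,d)$, so $D(X,\rho) \setminus \{0\}$ is an infinite subset of the reverse-well-ordered set $\{\varepsilon_n \colon n \geqslant 0\}$ and therefore has order type $\bm{\omega}^{*}$; adding the isolated minimum $0$ gives $\mathbf{1} + \bm{\omega}^{*}$, and since each $\varepsilon_n$ is isolated in $\{0,\varepsilon_0,\varepsilon_1,\ldots\}$, the point $0$ is the unique accumulation point of $D(X,\rho)$.

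The main delicate point is the simultaneous satisfaction of the Lipschitz constraint $\varepsilon_{n-1} \leqslant K\varepsilon_n$ and the uniform $\varepsilon$-approximation constraint $\varepsilon_{n-1} - \varepsilon_n \leqslant \varepsilon$ while keeping every $\varepsilon_n$ binary rational and driving the sequence to $0$; the small case split (large versus small $\varepsilon_{n-1}$) described above resolves it. Everything else is a routine verification based on the properties of ultrametric preserving functions already collected in the paper.
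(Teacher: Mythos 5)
Your construction is correct, and it is precisely the intended argument: the paper states Proposition~\ref{p8.19} without proof as a reinforcement of Proposition~\ref{p6.14} and Remark~\ref{r8.18}, whose step-function $f$ with dyadic levels $\varepsilon_n$ you have refined so that the additional constraints $\varepsilon_{n-1}\leqslant K\varepsilon_n$ and $\varepsilon_{n-1}-\varepsilon_n\leqslant\varepsilon$ also hold. Your handling of the one delicate point --- choosing the dyadic $\varepsilon_n$ close enough to the lower endpoint $\max\{\varepsilon_{n-1}/K,\,\varepsilon_{n-1}-\varepsilon\}$ (say, in the lower half of the admissible interval) so that the decrements stay bounded away from zero until the sequence reaches $0$ --- is adequate, and the remaining verifications via Theorem~\ref{t5.21} are routine.
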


\begin{remark}\label{r5.67}
One of the central concepts of the present section, the concept of weak similarity, was recently introduced in~\cite{DP2013AMH}, but this is closely related to the ordinal scaling, multidimensional scaling and ranking that appear naturally in many applied researches \cite{Agarwal2007, Borg2005, Jamieson2011, Kleindessner2014, KruP1964, QYJ2004, Rosales2006, SheP1962, SheJoMP1966, Wauthier2013}. The presence of these relationships and, thus, the existence of potential applications, makes the study of weak similarities more important and promising.
\end{remark}

\section{The first look on ultrametric balleans}
\label{sec6}

In this section we prove the inclusion \(\BB_{X} \subseteq \overline{\BB}_{X}\) for all totally bounded ultrametric spaces \((X, d)\) (see Corollary~\ref{c2.41}) and find the conditions under which \(\BB_{X} = \overline{\BB}_{X}\) holds (see Corollary~\ref{c6.7}). Moreover, Proposition~\ref{p3.10} describes some interrelations between the ballean \(\BB_{X}\) of ultrametric space \((X, d)\) and the ballean \(\BB_{Y}\) of dense \(Y \subseteq X\), that implies the order isomorphism of balleans of ultrametric spaces which have isometric completions, Theorem~\ref{t7.12}.

Recall that if \(A\) is a nonempty, bounded subset in an ultrametric space \((X, d)\), then there is the smallest ball \(B^{*} \in \overline{\BB}_{X}\) containing \(A\) (see Proposition~\ref{p2.12}).

\begin{proposition}\label{p2.40}
Let \((X, d)\) be a totally bounded ultrametric space, let \(B \in \mathbf{B}_X\) and let \(B^{*} \in \overline{\mathbf{B}}_X\) be the smallest ball containing \(B\). Then the equality
\begin{equation}\label{p2.40:e1}
B = B^{*}
\end{equation}
holds.
\end{proposition}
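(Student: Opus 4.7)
The plan is to exploit the strict diameter inequality that total boundedness forces on open balls, and then to apply the explicit formula for the smallest containing ball from Proposition~\ref{p2.12}.

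First I would write $B = B_r(c)$ for some $c \in X$ and $r \in (0, \infty)$. The inclusion $B \subseteq B^{*}$ holds by definition of \(B^{*}\), so the content of \eqref{p2.40:e1} lies in the reverse inclusion $B^{*} \subseteq B$.

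Next I would set $r_1 = \diam B$. Proposition~\ref{p2.12} (applied with the center $c \in B$ as the chosen point of \(A = B\)) gives the explicit identification
\[
B^{*} = \overline{B}_{r_1}(c).
\]
The crucial step is then to invoke Corollary~\ref{c5.15}, which uses the total boundedness of $(X, d)$ in an essential way, to conclude
\[
r_1 = \diam B < r.
\]
From this strict inequality, every $x \in \overline{B}_{r_1}(c)$ satisfies $d(c, x) \leqslant r_1 < r$, hence $x \in B_r(c) = B$. Thus $B^{*} = \overline{B}_{r_1}(c) \subseteq B$, completing the proof.

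The only real obstacle is ensuring that the strict inequality $\diam B < r$ is available; this is exactly what distinguishes the totally bounded case from the general ultrametric case (compare Remark~\ref{r5.16} and Example~\ref{ex2.6}, where $\diam B = r$ may occur and \eqref{p2.40:e1} fails). Once Corollary~\ref{c5.15} is in hand, the argument is a one-line comparison of radii.
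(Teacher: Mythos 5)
Your proof is correct and takes essentially the same route as the paper: both arguments reduce the claim to the identification \(B^{*} = \overline{B}_{\diam B}(c)\) from Proposition~\ref{p2.12} together with the strict inequality \(\diam B < r\), which the paper derives inline from Corollary~\ref{c2.39} (the diametrical graph of \(B\) is complete multipartite) and which you obtain by citing Corollary~\ref{c5.15} — itself proved from the same Corollary~\ref{c2.39}. Since Corollary~\ref{c5.15} appears earlier in the paper than Proposition~\ref{p2.40}, your citation is legitimate and even streamlines the case \(|B|=1\), which the paper treats separately.
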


\begin{proof}
By Definition~\ref{d2.9}, we have the inclusion \(B^{*} \supseteq B\). Thus, \eqref{p2.40:e1} holds if
\begin{equation}\label{p2.40:e2}
B^{*} \subseteq B.
\end{equation}
Inclusion \eqref{p2.40:e2} is trivially valid if \(|B| = 1\). Let \(B = B_r(c)\) and let \(|B| \geqslant 2\) hold. By Corollary~\ref{c2.39}, the diametrical graph \(G_{B, d}\) is complete multipartite. Hence, \(G_{B, d}\) is nonempty and there is \(b \in B\) such that
\[
\diam B = d(b, c).
\]
Write \(r^{*} = \diam B\). Proposition~\ref{p2.12} implies the equality \(B^{*} = \overline{B}_{r^{*}}(c)\). Since \(b \in B\), we have the strict inequality \(r^{*} < r\). Inclusion~\eqref{p2.40:e2} follows.
\end{proof}

\begin{corollary}\label{c2.41}
Let \((X, d)\) be a totally bounded ultrametric space. Then the inclusion
\begin{equation}\label{c2.41:e1}
\mathbf{B}_X \subseteq \overline{\mathbf{B}}_X
\end{equation}
holds.
\end{corollary}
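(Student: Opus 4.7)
The plan is to observe that Corollary~\ref{c2.41} is an essentially immediate consequence of the preceding Proposition~\ref{p2.40}, together with the definitional fact that the smallest ball \(B^{*}\) containing a nonempty bounded set is, by Definition~\ref{d2.9}, an element of \(\overline{\mathbf{B}}_X\). So the whole argument reduces to unwinding one line.

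First I would take an arbitrary \(B \in \mathbf{B}_X\). Since \((X, d)\) is totally bounded, it is in particular bounded, so \(B\) is a nonempty bounded subset of \(X\). By Proposition~\ref{p2.12} the smallest closed ball \(B^{*} = B^{*}(B)\) containing \(B\) exists; it belongs to \(\overline{\mathbf{B}}_X\) by its very definition. Now Proposition~\ref{p2.40} supplies the key equality \(B = B^{*}\). Combining these two facts yields \(B \in \overline{\mathbf{B}}_X\), and since \(B\) was arbitrary this gives inclusion~\eqref{c2.41:e1}.

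There is really no obstacle here, since all the work has been done in Proposition~\ref{p2.40}; the only thing to keep in mind is that even the degenerate case \(|B| = 1\) is covered (a one-point set \(\{c\}\) is the closed ball \(\overline{B}_{0}(c) \in \overline{\mathbf{B}}_X\) by convention~\eqref{e2.4}). Consequently the proof will consist of a single sentence citing Proposition~\ref{p2.40} and noting that \(B^{*} \in \overline{\mathbf{B}}_X\) by the definition of the smallest ball.
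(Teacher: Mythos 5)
Your proposal is correct and matches the paper's own proof essentially verbatim: both take an arbitrary \(B \in \mathbf{B}_X\), invoke Proposition~\ref{p2.12} to obtain the smallest ball \(B^{*} \in \overline{\mathbf{B}}_X\) containing \(B\), and then apply Proposition~\ref{p2.40} to conclude \(B = B^{*} \in \overline{\mathbf{B}}_X\). Your added remark about the one-point case is harmless but unnecessary, since Proposition~\ref{p2.40} already covers it.
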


\begin{proof}
Let \(B \in \BB_{X}\). By Proposition~\ref{p2.12}, there is the smallest ball \(B^{*} \in \overline{\BB}_X\) containing \(B\). Proposition~\ref{p2.40} implies the equality \(B = B^{*}\). Hence, we have \(B \in \overline{\BB}_X\). Inclusion~\eqref{c2.41:e1} follows.
\end{proof}

The inclusion \(\BB_{X} \subseteq \overline{\BB}_{X}\) can be false if ultrametric space \((X, d)\) is not totally bounded.

\begin{example}\label{ex7.3}
As in Example~\ref{ex2.6}, let \(X = [0, 1]\) and let \(d\) be the ultrametric defined by formula \eqref{ex2.6:e1}. Write
\[
B_1(0) = \{x \in X \colon d(x, 0) < 1\}.
\]
Then \(B_1(0) = [0, 1)\) and, by definition of \(\BB_{X}\), \(B_1(0) \in \BB_{X}\). For every closed ball \(\overline{B}_r(a) \supseteq B_1(0)\) we have \(r \geqslant 1\), that implies \(1 \in \overline{B}_r(a)\). Thus, \(B_1(0) \notin \overline{\BB}_{X}\) holds.
\end{example}

\begin{lemma}\label{c6.6}
Let \(A\) be a subset of a totally bounded ultrametric space \((X, d)\) and let \(\diam A > 0\). Then the equivalence
\[
(A \in \mathbf{B}_X) \Leftrightarrow (A \in \overline{\mathbf{B}}_X)
\]
is valid.
\end{lemma}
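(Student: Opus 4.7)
The forward implication is immediate from Corollary~\ref{c2.41}, which gives $\mathbf{B}_X \subseteq \overline{\mathbf{B}}_X$ whenever $(X,d)$ is totally bounded and ultrametric. So the content of the lemma lies entirely in the reverse implication: given a closed ball $A = \overline{B}_r(c)$ with $\diam A > 0$, I must exhibit a radius $r' > 0$ such that $A = B_{r'}(c)$.

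The plan is to reduce to the case $r = \diam A$ via Lemma~\ref{l2.6}, then find a ``gap'' in the distance set $D(X)$ immediately to the right of $r$. First, since the set $A$ is itself totally bounded and ultrametric by Corollary~\ref{c2.17} and has $|A| \geqslant 2$ (because $\diam A > 0$), Corollary~\ref{c2.39} tells us that the diametrical graph $G_{A, d|_{A \times A}}$ is complete $k$-partite for some integer $k \geqslant 2$. In particular, it is nonempty, so there exist $x_1, x_2 \in A$ with $d(x_1, x_2) = \diam A = r$, which places $r$ in $D(X) \setminus \{0\}$.

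To produce the gap, I would pass to the completion $(\widetilde{X}, \widetilde{d})$, which is compact by Proposition~\ref{p2.13} and ultrametric by Lemma~\ref{l2.20}, and invoke the distance-set equality $D(X) = D(\widetilde{X})$ from Proposition~\ref{p4.4}. Then Lemma~\ref{l6.5} applied to $(\widetilde{X}, \widetilde{d})$ shows that $r$ is an isolated point of $D(X)$, so there is $\varepsilon > 0$ with $(r, r + \varepsilon) \cap D(X) = \varnothing$. Choosing any $r' \in (r, r+\varepsilon)$, the inclusion $\overline{B}_r(c) \subseteq B_{r'}(c)$ is trivial, and conversely any $x \in B_{r'}(c)$ satisfies $d(c,x) \in D(X)$ with $d(c,x) < r'$, forcing $d(c,x) \leqslant r$ since the interval $(r, r')$ meets $D(X)$ in the empty set. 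Thus $A = B_{r'}(c) \in \mathbf{B}_X$.

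The only subtlety I anticipate is verifying that the argument also covers the extremal case $A = X$, where $X \setminus A$ is empty and one might worry whether an appropriate radius $r'$ still produces $X$ rather than overshooting. But the argument above works uniformly: for $A = X$ with $r = \diam X$, any $r' \in (r, r+\varepsilon)$ yields $B_{r'}(c) = X = A$ because every point of $X$ sits within distance $r$ of $c$. The genuine technical heart of the proof is therefore the isolation step, and it is precisely here that total boundedness is essential, as Example~\ref{ex7.3} illustrates that without it the desired gap in $D(X)$ above $r$ may fail to exist.
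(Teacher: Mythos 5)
Your proof is correct and follows essentially the same route as the paper's: reduce to the radius $r = \diam A$ via Lemma~\ref{l2.6}, observe that $\diam A \in D(X)\setminus\{0\}$, and enlarge the radius into a gap of the distance set immediately above $r$. The only difference is that the paper obtains this gap by citing Theorem~\ref{t5.10}, whereas you derive it from Lemma~\ref{l6.5} applied to the completion together with Proposition~\ref{p4.4}; since Theorem~\ref{t5.10} is itself proved from Lemma~\ref{l6.5}, this is the same mechanism one layer down, and it has the minor advantage of covering the finite case uniformly (Theorem~\ref{t5.10} is formally stated only for infinite spaces).
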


\begin{proof}
The validity
\[
(A \in \mathbf{B}_X) \Rightarrow (A \in \overline{\mathbf{B}}_X)
\]
follows from Corollary~\ref{c2.41}. Suppose we have \(A \in \overline{\mathbf{B}}_X\). Let \(c^{*} \in A\). Write \(r^{*} = \diam A\). Then we have \(r^{*} \in D(X)\) and \(r^{*} > 0\). By Lemma~\ref{l2.6}, the condition \(A \in \overline{\mathbf{B}}_X\) implies
\begin{equation}\label{c6.6:e1}
A = \overline{B}_{r^*}(c^{*}) = \{x \in X \colon d(x, c^{*}) \leqslant r^{*}\}.
\end{equation}
Using Theorem~\ref{t5.10}, we can find \(\varepsilon > 0\) such that the annulus
\[
\{x \in X \colon r^{*} < d(x, c^{*}) < r^{*} + \varepsilon\}
\]
is empty. The last statement and \eqref{c6.6:e1} imply
\[
A = \{x \in X \colon d(x, c^{*}) < r^{*} + \varepsilon\} = B_{r^{*}+\varepsilon}(c^{*}).
\]
Thus, the membership relation \(A \in {\mathbf{B}}_X\) is valid.
\end{proof}

\begin{corollary}\label{c6.8}
Let \((X, d)\) be a nonempty totally bounded ultrametric space, \(B_1 \in \BB_{X}\) and \(B_1 \neq X\). Then there is \(B_2 \in \BB_{X}\) such that \(B_1\) is a part of the diametrical graph \(G_{B_2, d|_{B_2 \times B_2}}\).
\end{corollary}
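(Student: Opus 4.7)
The plan is to construct $B_2$ explicitly as the closed ball of a carefully chosen radius around any point of $B_1$, and then read off the partite structure from Theorem~\ref{t2.24}. Fix $c\in B_1$ and set $r_1=\diam B_1$. By Corollary~\ref{c2.41} and Lemma~\ref{l2.6} we have $B_1=\overline{B}_{r_1}(c)$, i.e.\ $B_1=\{y\in X\colon d(c,y)\leqslant r_1\}$. Since $B_1\in\BB_X$, we may also write $B_1=B_r(c)$ for some $r>0$, so every $y\in X\setminus B_1$ satisfies $d(c,y)\geqslant r$.

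The key quantity is
\[
s_0:=\min\{d(c,y)\colon y\in X\setminus B_1\}.
\]
This set is a nonempty subset of $D(X)\cap[r,\infty)$, and the latter is \emph{finite}: when $X$ is finite this is trivial, and when $X$ is infinite it follows from Theorem~\ref{t5.10}, which forces $D(X)$ to have order type $\mathbf{1}+\bm{\omega}^{*}$. Hence the minimum $s_0$ exists and satisfies $s_0>r_1$. By the minimality of $s_0$ no point $y\in X$ realizes a value $d(c,y)\in(r_1,s_0)$, so
\begin{equation*}
\{y\in X\colon d(c,y)<s_0\}\;=\;\{y\in X\colon d(c,y)\leqslant r_1\}\;=\;B_1.
\end{equation*}

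Now define $B_2:=\overline{B}_{s_0}(c)$. A point realizing $d(c,y)=s_0$ lies in $B_2\setminus B_1$, so $B_1\subsetneq B_2$; by equality~\eqref{e2.3} with $a=c$, together with the fact that $s_0$ is attained, we get $\diam B_2=s_0>0$. Since $B_2\in\overline{\BB}_X$ has positive diameter, Lemma~\ref{c6.6} yields $B_2\in\BB_X$.

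Finally, apply the second half of Theorem~\ref{t2.24} to the totally bounded ultrametric space $(B_2,d|_{B_2\times B_2})$: every open ball in $B_2$ of radius $\diam B_2=s_0$ centered at a point of $B_2$ is a part of $G_{B_2,d|_{B_2\times B_2}}$. Taking this ball around $c$,
\[
\{y\in B_2\colon d(c,y)<s_0\}\;=\;\{y\in X\colon d(c,y)<s_0\}\cap B_2\;=\;B_1\cap B_2\;=\;B_1,
\]
which exhibits $B_1$ as a part of the diametrical graph of $B_2$. I do not anticipate a genuine obstacle here; the argument is essentially assembly of earlier results. The only mildly delicate ingredient is the existence of $s_0$, which relies on the order-type description of $D(X)$ from Theorem~\ref{t5.10}, and the conversion between open and closed balls provided by Lemma~\ref{c6.6}.
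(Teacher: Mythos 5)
Your proof is correct and follows essentially the same route as the paper's: both choose a center in $B_1$, use Theorem~\ref{t5.10} to extract the smallest distance from that center to a point outside $B_1$, form the closed ball of that radius, invoke Lemma~\ref{c6.6} to see it is an open ball, and read off the partite structure from Theorem~\ref{t2.24}. Your handling of nonemptiness (directly from $X\setminus B_1\neq\varnothing$) is marginally more streamlined than the paper's detour through $\diam X\in D_{b_1}$, but the argument is the same in substance.
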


\begin{proof}
Let \(b_1\) be a point of \(B_1\) and let
\begin{equation}\label{c6.8:e4}
D_{b_1} = \{d(x, b_1) \colon x \in X\}.
\end{equation}
It is clear that \(D_{b_1}\) is a subset of the distance set \(D(X)\). By Proposition~\ref{p2.4}, there is \(r_1 > 0\) such that \(B_1 = B_{r_1}(b_1)\). Condition \(B_1 \neq X\) implies \(\diam X \geqslant r_1\). Using equality~\eqref{e2.3} with \(A = X\) and \(a = b_1\) we obtain \(\sup D_{b_1} = \diam X\). By Proposition~\ref{p2.36}, there are some points \(a_1\), \(c_1 \in X\) for which
\begin{equation}\label{c6.8:e3}
d(a_1, c_1) = \diam X
\end{equation}
holds. Now from the strong triangle inequality
\[
\max\{d(a_1, b_1), d(b_1, c_1)\} \geqslant d(a_1, c_1)
\]
and \eqref{c6.8:e3} it follows that
\[
\diam X \in D_{b_1}.
\]
Consequently, the set \([r_1, \diam X] \cap D_{b_1}\) is nonempty. Theorem~\ref{t5.10} and the inequality \(r_1 > 0\) imply that the subposet \([r_1, \diam X] \cap D_{b_1}\) of \((\RR, {\leqslant})\) contains a smallest element. Let us consider the closed ball
\begin{equation}\label{c6.8:e1}
B_2 = \overline{B}_{r_2}(b_1)
\end{equation}
where \(r_2\) is the smallest element of \([r_1, \diam X] \cap D_{b_1}\). Since \(r_1 > 0\) and \(r_2 \geqslant r_1\), we have \(r_2 > 0\). From \(r_2 \in D_{b_1}\) and equalities \eqref{c6.8:e4}, \eqref{c6.8:e1} it follows that \(\diam B_2 = r_2  > 0\). Hence, \(B_2 \in \BB_{X}\) by Lemma~\ref{c6.6}. From the definition of \(r_2\) it follows that
\begin{equation}\label{c6.8:e2}
B_1 = B_{r_1}(b_1) = B_{r_2}(b_1).
\end{equation}
By Theorem~\ref{t2.24} (with \(X = B_1\)), equalities \eqref{c6.8:e1} and \eqref{c6.8:e2} imply that \(B_1\) is a part of \(G_{B_2, d|_{B_2 \times B_2}}\).
\end{proof}

\begin{remark}\label{r6.8}
It was shown in the proof of Corollary~\ref{c6.8} that the inequality
\begin{equation}\label{r6.8:e1}
\diam B_2 > \diam B_1
\end{equation}
holds but, in general, we can have \(\diam B_1 = \diam B_2\) for \(B_1\), \(B_2 \in \BB_{X}\), where \(B_1\) is a part of \(G_{B_2, d|_{B_2 \times B_2}}\).
\end{remark}

\begin{example}\label{ex6.9}
Let \((X, d)\) be the separable ultrametric space from Example~\ref{ex2.6}. Then the diametrical graph \(G_{X, d}\) is complete bipartite (more specifically \(G_{X, d}\) is a star having a countable many leaves) with the parts
\[
X_1 = [0, 1) \cap \mathbb{Q} \quad \text{and} \quad X_2 = \{1\}.
\]
Hence, the equality \(\diam X = \diam X_1\) holds.
\end{example}

Using Lemma~\ref{c6.6} we also can simply describe the totally bounded ultrametric spaces \((X, d)\) for which the inclusion \(\BB_{X} \subseteq \overline{\BB}_{X}\) turns into equality.

\begin{corollary}\label{c6.7}
Let \((X, d)\) be a totally bounded ultrametric space. Then the equality
\begin{equation}\label{c6.7:e1}
{\mathbf{B}}_X = \overline{\mathbf{B}}_X
\end{equation}
holds if and only if all points of \(X\) are isolated.
\end{corollary}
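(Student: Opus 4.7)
The plan is to combine Corollary~\ref{c2.41} (which already gives $\mathbf{B}_X \subseteq \overline{\mathbf{B}}_X$) with Lemma~\ref{c6.6}, so the entire task reduces to deciding when singletons $\{c\} = \overline{B}_0(c)$ are open balls. The whole argument turns on this singleton case.

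For the implication \emph{equality $\Rightarrow$ all points isolated}, I would pick an arbitrary $c \in X$ and note that by definition $\{c\} = \overline{B}_0(c) \in \overline{\mathbf{B}}_X$. Using the assumed equality $\mathbf{B}_X = \overline{\mathbf{B}}_X$, we obtain $\{c\} \in \mathbf{B}_X$, so $\{c\} = B_r(c')$ for some $r > 0$ and $c' \in X$. Since $c \in B_r(c')$, Proposition~\ref{p2.4} yields $B_r(c') = B_r(c)$, i.e.\ $\{c\} = B_r(c)$. This forces $d(c,x) \geqslant r > 0$ for every $x \in X \setminus \{c\}$, so $c$ is isolated.

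For the converse, \emph{all points isolated $\Rightarrow$ equality}, it suffices in view of Corollary~\ref{c2.41} to prove $\overline{\mathbf{B}}_X \subseteq \mathbf{B}_X$. Take $A \in \overline{\mathbf{B}}_X$ and split on $\diam A$. If $\diam A > 0$, then Lemma~\ref{c6.6} gives immediately $A \in \mathbf{B}_X$. If $\diam A = 0$, then $A = \{c\}$ for some $c \in X$, and by the isolation hypothesis there exists $\varepsilon > 0$ with $d(c, x) > \varepsilon$ for all $x \in X \setminus \{c\}$; this gives $\{c\} = B_\varepsilon(c) \in \mathbf{B}_X$, finishing the argument.

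There is essentially no hard step here: all of the heavy lifting (total boundedness, the ``thin annulus'' phenomenon from Theorem~\ref{t5.10} encoded in Lemma~\ref{c6.6}, and Proposition~\ref{p2.4}) has already been done. The only point that needs a small bit of care is separating the $\diam A = 0$ case from the $\diam A > 0$ case, since Lemma~\ref{c6.6} is only stated for subsets with positive diameter; this is why the isolation hypothesis is necessary precisely on singletons.
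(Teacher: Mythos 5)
Your proof is correct and follows essentially the same route as the paper: both reduce everything to the singleton case via Corollary~\ref{c2.41} and Lemma~\ref{c6.6}, the only cosmetic difference being that the paper proves the forward implication by contrapositive (an accumulation point gives \(\{x^{*}\} \in \overline{\mathbf{B}}_X \setminus \mathbf{B}_X\)) while you argue it directly, using Proposition~\ref{p2.4} to recenter the open ball at \(c\).
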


\begin{proof}
If all points of \(X\) are isolated, then we evidently have
\[
\{x\} \in \mathbf{B}_X \quad \text{and} \quad \{x\} \in \overline{\mathbf{B}}_X
\]
for every \(x \in X\). This statement and Lemma~\ref{c6.6} yield equality~\eqref{c6.7:e1}.

Conversely, suppose \(X\) contains an accumulation point \(x^{*}\). Then the closed ball \(\{x^{*}\} = \overline{B}_0(x^{*})\) belongs to \(\overline{\mathbf{B}}_X\) but \(\{x^{*}\} \notin \mathbf{B}_X\). Thus, \(\mathbf{B}_X \neq \overline{\mathbf{B}}_X\) holds.
\end{proof}

\begin{definition}\label{d6.9}
A topological space \((X, \tau)\) is said to be \emph{discrete} if every subset of \(X\) is open. We say that a metric space \((Y, \rho)\) is \emph{discrete} if the topology generated by metric \(\rho\) is discrete.
\end{definition}

It is easy to see that a metric space is discrete if and only if every point of this space is isolated. Thus, by Corollary~\ref{c6.7}, the equality \(\mathbf{B}_X = \overline{\mathbf{B}}_X\) holds for totally bounded ultrametric space \(X\) if and only if this is a discrete space.

\begin{example}\label{ex6.10}
Let \(X = [0,1] \subseteq \RR\) and let \(d \colon X \times X \to \RR^{+}\) be defined as in~\eqref{ex2.6:e1}. Then the ultrametric space \((X, d)\) is bounded and complete but not totally bounded. Using Theorem~\ref{t5.10} we can simply characterize all compact subspaces of \((X, d)\). Every finite subset of \(X\) is compact. An infinite \(Y \subseteq X\) is compact if and only if \(0 \in Y\), the equality \(\inf(Y \setminus \{0\}) = 0\) holds, and the order type of \((Y, {\leqslant}_Y)\) is \(\mathbf{1} + \bm{\omega}^{*}\), where \({\leqslant}_Y = {\leqslant} \cap Y^{2}\). Consequently, every infinite and compact \(Y \subseteq X\) contains the discrete dense set \(Y \setminus \{0\}\).
\end{example}

In the rest of the section, for arbitrary ultrametric space \(X\) and dense \(Y \subseteq X\), we introduce into consideration a ``natural'' bijection \(\BB_{X} \to \BB_{Y}\) and describe some properties of this bijection.

\begin{lemma}\label{l3.9}
Let \((X, d)\) be a metric space, \(Y\) be a dense subset of \(X\), and \(Z \subseteq X\) be nonempty, bonded and open in \(X\). Then the equality
\begin{equation}\label{l3.9:e1}
\diam Z = \diam (Z \cap Y)
\end{equation}
holds.
\end{lemma}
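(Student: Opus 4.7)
The plan is to prove the two inequalities $\diam(Z \cap Y) \leqslant \diam Z$ and $\diam Z \leqslant \diam(Z \cap Y)$ separately. The first inequality is immediate from formula~\eqref{e2.1}: since $Z \cap Y \subseteq Z$, the supremum defining $\diam(Z \cap Y)$ is taken over a subset of the pairs used for $\diam Z$.

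For the nontrivial direction, I would fix an arbitrary $\varepsilon > 0$ and arbitrary points $x_1, x_2 \in Z$, and then approximate each $x_i$ by a suitable point of $Z \cap Y$. Since $Z$ is open in $X$, for each $i \in \{1,2\}$ there is some $r_i > 0$ with $B_{r_i}(x_i) \subseteq Z$. Since $Y$ is dense in $X$, we may choose $y_i \in Y$ such that
\[
d(x_i, y_i) < \min\left\{r_i, \tfrac{\varepsilon}{2}\right\},
\]
which forces $y_i \in B_{r_i}(x_i) \cap Y \subseteq Z \cap Y$. The triangle inequality then yields
\[
d(x_1, x_2) \leqslant d(x_1, y_1) + d(y_1, y_2) + d(y_2, x_2) < d(y_1, y_2) + \varepsilon,
\]
so that $d(x_1, x_2) - \varepsilon < d(y_1, y_2) \leqslant \diam(Z \cap Y)$. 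Taking the supremum over $x_1, x_2 \in Z$ gives $\diam Z - \varepsilon \leqslant \diam(Z \cap Y)$, and letting $\varepsilon \to 0$ produces the required inequality.

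I do not anticipate any real obstacle: the argument is a standard density-plus-openness approximation and uses neither the ultrametric hypothesis nor total boundedness (consistent with the generality of the statement). The only minor point to keep in mind is that boundedness of $Z$ is used implicitly to ensure that the diameters involved are finite, so that the inequalities between real numbers above are meaningful; nonemptyness of $Z$ (and hence of $Z \cap Y$ by density plus openness) guarantees that the suprema are taken over nonempty sets.
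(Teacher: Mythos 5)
Your proof is correct and follows essentially the same route as the paper's: approximate points of $Z$ by points of $Z \cap Y$ using density of $Y$ together with openness of $Z$, then control the error via the triangle inequality and let $\varepsilon \to 0$. The only cosmetic difference is that you approximate arbitrary $x_1, x_2 \in Z$ and then take the supremum, whereas the paper first picks a near-diametrical pair; you are in fact slightly more explicit than the paper about why the approximating points land inside $Z$ (via the radii $r_i$), which is a welcome clarification rather than a deviation.
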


\begin{proof}
It is clear that
\begin{equation}\label{l3.9:e2}
\diam Z \geqslant \diam (Z \cap Y).
\end{equation}
Let \(\varepsilon > 0\). Then there exist \(a\), \(b \in Z\) and \(a^{*}\), \(b^{*} \in Z \cap Y\) such that
\[
\diam Z \leqslant d(a, b) + \varepsilon
\]
and
\[
d(a, a^{*}) \leqslant \varepsilon, \quad d(b, b^{*}) \leqslant \varepsilon.
\]
Using the triangle inequality we obtain
\[
|d(a, b) - d(a^{*}, b^{*})| \leqslant d(a, a^{*}) + d(b, b^{*}) \leqslant 2\varepsilon.
\]
Hence, the inequality
\[
|\diam Z - d(a^{*}, b^{*})| \leqslant 3\varepsilon
\]
holds. Consequently,
\begin{equation}\label{l3.9:e3}
\diam Z \leqslant d(a^{*}, b^{*}) + 3\varepsilon \leqslant \diam (Z \cap Y) + 3\varepsilon.
\end{equation}
Since \(\varepsilon\) is an arbitrary positive number, \eqref{l3.9:e3} implies
\[
\diam Z \leqslant \diam (Z \cap Y).
\]
Now \eqref{l3.9:e1} follows from the last inequality and \eqref{l3.9:e2}.
\end{proof}

The ballean \(\BB_{X}\) of metric space \((X, d)\) can be considered as a poset with the partial order \(\preccurlyeq_{X}\) generated by set inclusion
\begin{equation}\label{e7.18}
(B_1 \preccurlyeq_{X} B_2) \Leftrightarrow (B_1 \subseteq B_2), \quad B_1, B_2 \in \BB_{X}.
\end{equation}

\begin{proposition}\label{p3.10}
Let \((X, d)\) be a nonempty, ultrametric space and let \(Y\) be a dense subset of \(X\). Then the following statements hold:
\begin{enumerate}
\item \label{p3.10:s1} For every \(B \in \BB_{X}\), we have \(B \cap Y \in \BB_Y\).
\item \label{p3.10:s2} The mapping \(\Phi \colon \BB_{X} \to \BB_Y\) defined by
\begin{equation}\label{p3.10:e1}
\Phi(B) = B \cap Y
\end{equation}
is an order isomorphism of \((\BB_{X}, {\preccurlyeq}_X)\) and \((\BB_{Y}, {\preccurlyeq}_Y)\).
\item \label{p3.10:s3} The equality
\begin{equation}\label{p3.10:e2}
\diam \Phi(B) = \diam B
\end{equation}
holds for every \(B \in \BB_{X}\).
\end{enumerate}
\end{proposition}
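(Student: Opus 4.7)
The plan is to verify statements \ref{p3.10:s1}, \ref{p3.10:s3}, \ref{p3.10:s2} in that order, the main tools being Propositions~\ref{p2.2}, \ref{p2.4}, \ref{p2.5} and Lemma~\ref{l3.9}. For \ref{p3.10:s1}, fix $B = B_r(c) \in \BB_X$. Since $B$ is a nonempty open subset of $X$ (Proposition~\ref{p2.2}) and $Y$ is dense, I would choose a point $y_0 \in B \cap Y$. Proposition~\ref{p2.4} then gives $B = B_r(y_0)$, so $B \cap Y = \{y \in Y \colon d(y, y_0) < r\}$ is precisely the open ball in $(Y, d|_{Y \times Y})$ of radius $r$ centered at $y_0 \in Y$. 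This establishes \ref{p3.10:s1} and, as a byproduct, shows that every $\Phi(B)$ is nonempty.

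For \ref{p3.10:s3}, note that $B$ is open in $X$ and bounded: $\diam B \leqslant r < \infty$ by~\eqref{e2.3} with $A = B$ and $a = c$. Lemma~\ref{l3.9} applied to $Z = B$ then yields \eqref{p3.10:e2}. For \ref{p3.10:s2}, isotonicity of $\Phi$ is immediate from the definition~\eqref{p3.10:e1}, and surjectivity follows by observing that each $B' = \{y \in Y \colon d(y, y_0) < r\} \in \BB_Y$ is the image of $B_r(y_0) \in \BB_X$ under $\Phi$. The remaining content is the implication
\[
(B_1 \nsubseteq B_2) \Rightarrow (\Phi(B_1) \nsubseteq \Phi(B_2))
\]
for all $B_1$, $B_2 \in \BB_X$, which simultaneously yields injectivity and the isotonicity of $\Phi^{-1}$, and hence the order isomorphism.

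To establish this implication I would invoke Proposition~\ref{p2.5}, which reduces to the alternative that either $B_1 \cap B_2 = \varnothing$ or $B_2 \subsetneq B_1$. In the first case, $\Phi(B_1)$ and $\Phi(B_2)$ are disjoint and both are nonempty by \ref{p3.10:s1}, so $\Phi(B_1) \nsubseteq \Phi(B_2)$. In the second case, $B_1 \setminus B_2$ is a nonempty open subset of $X$, since $B_2$ is clopen by Proposition~\ref{p2.2}; density of $Y$ then produces a point of $Y \cap (B_1 \setminus B_2)$, which lies in $\Phi(B_1) \setminus \Phi(B_2)$. The step I expect to be the main obstacle is exactly this case analysis: one has to wire together the ultrametric dichotomy, the clopen nature of ultrametric balls, and the density of $Y$ in the correct order. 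Once this is in place, all three claims fall out uniformly from the cited lemmas.
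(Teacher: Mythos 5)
Your proposal is correct and follows essentially the same route as the paper: part \ref{p3.10:s1} via Proposition~\ref{p2.4} and density, part \ref{p3.10:s3} via Lemma~\ref{l3.9}, and part \ref{p3.10:s2} via the dichotomy of Proposition~\ref{p2.5} together with the fact that \(B_1 \setminus B_2\) is open (Proposition~\ref{p2.2}) and therefore meets \(Y\). The only cosmetic difference is that you conclude the order isomorphism by directly verifying the contrapositive implication \((\Phi(B_1) \subseteq \Phi(B_2)) \Rightarrow (B_1 \subseteq B_2)\), whereas the paper packages the same facts through Lemma~\ref{l6.2} (a strictly isotone surjection preserving incomparability is an order isomorphism); both are valid.
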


\begin{proof}
\ref{p3.10:s1} Let \(B \in \BB_{X}\) and let \(r\) be  the radius of \(B\). Since \(B\) is a nonempty and open subset of \(X\), the intersection \(B \cap Y\) is nonempty. Let \(y^{*} \in B \cap Y\). By Proposition~\ref{p2.4}, we have
\begin{equation}\label{p3.10:e3}
B = \{x \in X \colon d(x, y^*) < r\}.
\end{equation}
The inclusion \(Y \subseteq X\) and equality~\eqref{p3.10:e3} imply that
\[
B \cap Y = \{y \in Y \colon d(y, y^*) < r\}.
\]
Thus, \(B \cap Y\) is an open ball in \((Y, d|_{Y \times Y})\) and \(\Phi\), defined by~\eqref{p3.10:e1}, is a mapping from \(\BB_{X}\) to \(\BB_Y\).

\ref{p3.10:s2} By definition, for every \(B^{*} \in \BB_Y\) there are \(c^{*} \in Y\) and \(r^{*} \in (0, \infty)\) such that
\[
B^{*} = \{y \in Y \colon d(y, c^{*}) < r^{*}\}.
\]
Consequently, \(B^{*} = Y \cap \{x \in X \colon d(x, c^{*}) < r^{*}\}\) holds. Hence, \(\Phi\) is surjective.

Now let \(B_1\) and \(B_2\) be different elements of \(\BB_{X}\). Then we have
\[
B_1 \setminus B_2 \neq \varnothing \quad \text{or} \quad B_2 \setminus B_1 \neq \varnothing,
\]
where \(B_i \setminus B_j\) is the relative complement of \(B_j\) in \(B_i\). Without loss of generality we assume
\begin{equation}\label{p3.10:e4}
B_1 \setminus B_2 \neq \varnothing.
\end{equation}
Proposition~\ref{p2.2} implies that \(B_1\) is open and \(B_2\) is closed (in \(X\)). Consequently, \(B_1 \setminus B_2\) is an open subset of \(X\). Since \(Y\) is dense, the existence of a point \(y_0 \in (B_1 \setminus B_2) \cap Y\) follows from~\eqref{p3.10:e4}. Consequently, we have
\[
y_0 \in B_1 \cap Y = \Phi(B_1) \quad \text{and} \quad y_0 \notin B_2 \cap Y = \Phi(B_2),
\]
i.e., \(\Phi(B_1) \neq \Phi(B_2)\). Hence, \(\Phi \colon \BB_{X} \to \BB_Y\) is injective. Thus, \(\Phi\) is a bijective mapping.

It is clear that the mapping \(\Phi\) is isotone. Since \(\Phi\) is isotone and bijective, this mapping is also strictly isotone and surjective. Moreover, it is clear that
\begin{equation}\label{p3.10:e5}
(B_1 \cap B_2 = \varnothing) \Rightarrow (\Phi(B_1) \cap \Phi(B_2) = \varnothing)
\end{equation}
is valid for all \(B_1\), \(B_2 \in \BB_{X}\). By Proposition~\ref{p2.5}, implication \eqref{p3.10:e5} can be rewriting in the form
\[
(B_1 \parallel_X B_2) \Rightarrow (\Phi(B_1) \parallel_Y \Phi(B_2)).
\]
Now using Lemma~\ref{l6.2} we see that \(\Phi\) is an order isomorphism.

\ref{p3.10:s3} Equality~\eqref{p3.10:e2} is a special case of equality~\eqref{l3.9:e1} because every \(B \in \BB_{X}\) is an open subset of \(X\).
\end{proof}

If \(Z\) and \(W\) are isometric ultrametric spaces with an isometry \(\Psi \colon Z \to W\), then the mapping
\[
\BB_{Z} \ni B \mapsto \Psi(B) \in \BB_{W}
\]
is an order isomorphism of \((\BB_{Z}, {\preccurlyeq_{Z}})\) and \((\BB_{W}, {\preccurlyeq_{W}})\). Consequently, Proposition~\ref{p3.10} implies the following.

\begin{theorem}\label{t7.12}
Let \((X, d)\) and \((Y, \rho)\) be nonempty, ultrametric spaces with completions \((\widetilde{X}, \widetilde{d})\) and \((\widetilde{Y}, \widetilde{\rho})\), respectively. If \((\widetilde{X}, \widetilde{d})\) and \((\widetilde{Y}, \widetilde{\rho})\) are isometric, then \((\BB_{X}, {\preccurlyeq_{X}})\) and \((\BB_{Y}, {\preccurlyeq_{Y}})\) are order isomorphic.
\end{theorem}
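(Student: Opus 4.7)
The plan is to chain together three order isomorphisms, using Proposition~\ref{p3.10} and Lemma~\ref{l2.20} as the main ingredients. Let \(\Psi \colon \widetilde{X} \to \widetilde{Y}\) be an isometry of the completions, whose existence is the hypothesis. First I would observe that, by Lemma~\ref{l2.20}, both \((\widetilde{X}, \widetilde{d})\) and \((\widetilde{Y}, \widetilde{\rho})\) are themselves ultrametric, so that Proposition~\ref{p3.10} applies to each pair \((\widetilde{X}, X)\) and \((\widetilde{Y}, Y)\): by the very definition of completion recalled in Section~\ref{sec3}, \(X\) (identified with its isometric image) is a dense subset of \(\widetilde{X}\), and likewise \(Y\) is dense in \(\widetilde{Y}\).

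Next, applying Proposition~\ref{p3.10}\ref{p3.10:s2} to \(\widetilde{X}\) with dense subspace \(X\), the map
\[
\Phi_1 \colon \BB_{\widetilde{X}} \to \BB_{X}, \qquad \Phi_1(B) = B \cap X,
\]
is an order isomorphism of \((\BB_{\widetilde{X}}, {\preccurlyeq}_{\widetilde{X}})\) and \((\BB_{X}, {\preccurlyeq}_{X})\). Similarly,
\[
\Phi_2 \colon \BB_{\widetilde{Y}} \to \BB_{Y}, \qquad \Phi_2(B) = B \cap Y,
\]
is an order isomorphism of \((\BB_{\widetilde{Y}}, {\preccurlyeq}_{\widetilde{Y}})\) and \((\BB_{Y}, {\preccurlyeq}_{Y})\). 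Inverting \(\Phi_1\) is routine because order isomorphisms of posets are invertible as order isomorphisms.

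Finally, the isometry \(\Psi\) transports open balls to open balls bijectively, and it is inclusion-preserving in both directions, so
\[
\Psi_* \colon \BB_{\widetilde{X}} \to \BB_{\widetilde{Y}}, \qquad \Psi_*(B) = \Psi(B),
\]
is an order isomorphism of \((\BB_{\widetilde{X}}, {\preccurlyeq}_{\widetilde{X}})\) and \((\BB_{\widetilde{Y}}, {\preccurlyeq}_{\widetilde{Y}})\); this is the elementary remark made just before the statement of the theorem. Composing yields the desired order isomorphism
\[
\Phi_2 \circ \Psi_* \circ \Phi_1^{-1} \colon (\BB_{X}, {\preccurlyeq}_{X}) \to (\BB_{Y}, {\preccurlyeq}_{Y}),
\]
completing the proof.

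There is essentially no obstacle here: the heavy lifting has already been done in Proposition~\ref{p3.10} (that passing to a dense subset induces an order isomorphism of balleans) and in Lemma~\ref{l2.20} (that the completion of an ultrametric space is ultrametric, so that Proposition~\ref{p3.10} is applicable). The only minor subtlety is the implicit identification of \(X\) with its dense isometric copy in \(\widetilde{X}\), which is the standard convention used throughout Section~\ref{sec3} and which I would state explicitly at the start of the proof to avoid ambiguity.
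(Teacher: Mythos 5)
Your proof is correct and follows essentially the same route as the paper: the paper's argument is precisely the remark preceding the theorem (that an isometry of ultrametric spaces induces an order isomorphism of balleans) combined with Proposition~\ref{p3.10} applied to the dense copies of \(X\) and \(Y\) in their completions, which is exactly your composition \(\Phi_2 \circ \Psi_* \circ \Phi_1^{-1}\). Your explicit invocation of Lemma~\ref{l2.20} to justify that Proposition~\ref{p3.10} applies to the completions is a point the paper leaves implicit, and is a reasonable addition.
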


It will be shown in the proof of Proposition~\ref{p7.10} from the next section of the paper that the mapping \(\Phi \colon \BB_{X} \to \BB_{Y}\), defined by \eqref{p3.10:e1}, is an isometry of \(\BB_{X}\) and \(\BB_{Y}\) if \((X, d)\) is a totally bounded ultrametric space and if we consider \(\BB_{X}\) and \(\BB_{Y}\) as metric spaces with corresponding Hausdorff distances.

\section{Metric properties of ultrametric balleans}
\label{sec8}

In this section, we consider the sets of balls of completely bounded, ultrametric spaces \((X, d)\) together with the Hausdorff distance \(d_H\) on them. It is shown for every compact, ultrametric space \((X, d)\) that the space \((\overline{\BB}_X, d_H)\) is also a compact, ultrametric space, and that the ballean \(\BB_{X}\) of \((X, d)\) is dense (in \(\overline{\BB}_X\)) and coincides with the set of all isolated points of \((\overline{\BB}_X, d_H)\), Theorem~\ref{t7.8}. This result implies that every totally bounded ultrametric space admits an isometric embedding in a compact, ultrametric space with the dense set of isolated points, Theorem~\ref{p6.11}. Moreover, using the topological universality of Cantor set for the class of separable, ultrametric spaces, we show that every closed ball from the field of \(p\)-adic numbers \(\mathbb{Q}_p\) with \(p=2\) is also topologically universal for this class, Theorem~\ref{t8.15}.

Let \(A\) and \(B\) be two nonempty, bounded subsets of a metric space \((X, d)\). The \emph{Hausdorff distance} \(d_H(A, B)\) between \(A\) and \(B\) is defined by
\begin{equation}\label{e6.11}
d_H(A, B) := \max\left\{\sup_{a \in A} \inf_{b \in B} d(a, b), \sup_{b \in B} \inf_{a \in A} d(a, b)\right\}.
\end{equation}
This definition and some properties of the Hausdorff distance can be found in~\cite{BBI2001}. We note only that \eqref{e6.11} implies
\begin{equation}\label{e6.12}
d_H(\{a\}, \{b\}) = d(a, b)
\end{equation}
for all \(a\), \(b \in X\).

\begin{remark}\label{r8.21}
The Hausdorff distance was introduced by Felix Hausdorff in~1914 \cite{Hau1914}. Later, Edwards \cite{Edw1975} and, independently, Gromov~\cite{Gro1981PMIHS} expanded Hausdorff construction to the class of all compact metric spaces. A modification of Gromov---Hausdorff distance and Hausdorff distance on ultrametrics were considered by Derong Qiu in \cite{Qiu2009pNUAA} and \cite{Qiu2014pNUAA}, respectively.
\end{remark}

It is well-known that the Hausdorff distance \(d_H\) is a metric on the space of all bounded, closed, nonempty subsets of arbitrary metric space \((X, d)\) (see, for example, \cite[Proposition~7.3.3]{BBI2001}).

Using formula~\eqref{e6.12}, we easily get the following.

\begin{lemma}\label{l6.11}
For every nonempty metric space \((X, d)\) the mapping
\[
F \colon X \to \overline{\mathbf{B}}_X, \quad F(x) = \{x\},
\]
is an isometric embedding of \((X, d)\) in \((\overline{\mathbf{B}}_X, d_H)\).
\end{lemma}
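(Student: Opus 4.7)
The plan is essentially to observe that this is a direct consequence of formula~\eqref{e6.12}, which computes the Hausdorff distance between singletons. First I would verify that the mapping $F$ is well-defined, i.e., that $\{x\} \in \overline{\mathbf{B}}_X$ for every $x \in X$. This follows from the convention adopted in Remark~\ref{r2.5} and formula~\eqref{e2.4}, since $\{x\} = \overline{B}_0(x)$ is a closed ball of radius $0$ centered at $x$.

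Next I would verify the isometry property. For arbitrary $x$, $y \in X$, the definition of $F$ gives $F(x) = \{x\}$ and $F(y) = \{y\}$, and hence by \eqref{e6.12}
\[
d_H(F(x), F(y)) = d_H(\{x\}, \{y\}) = d(x, y).
\]
This is exactly the defining condition of an isometric embedding given in Definition~\ref{d2.2}.

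There is really no main obstacle in this proof; everything has been prepared in advance. The only subtle point is the inclusion of the degenerate radius $r = 0$ in the definition of closed balls, which was explicitly built into \eqref{e2.4}. Without allowing $r = 0$, the mapping $F$ would not land in $\overline{\mathbf{B}}_X$, and the lemma would fail. This is one of the technical justifications mentioned in Remark~\ref{r2.5}\ref{r2.5:s2} for breaking the tradition of requiring strictly positive radii for closed balls. Accordingly, the whole proof reduces to citing \eqref{e6.12} together with the observation $\{x\} = \overline{B}_0(x) \in \overline{\mathbf{B}}_X$.
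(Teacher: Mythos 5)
Your proof is correct and is essentially the same argument the paper intends: the lemma is stated as an immediate consequence of formula~\eqref{e6.12}, and your observation that \(\{x\} = \overline{B}_0(x) \in \overline{\mathbf{B}}_X\) (thanks to the convention of Remark~\ref{r2.5} allowing radius \(r = 0\)) correctly supplies the only point needing verification.
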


It follows from Proposition~\ref{p2.12} and the next lemma that the Hausdorff distance between any two distinct balls \(B_1\) and \(B_2\) of a compact, ultrametric space coincides with the diameter of the smallest ball containing \(B_1 \cup B_2\).

\begin{lemma}\label{l6.12}
Let \((X, d)\) be a nonempty, ultrametric space. Then the equality
\begin{equation}\label{l6.12:e1}
d_H(B_1, B_2) = \diam(B_1 \cup B_2)
\end{equation}
holds for all distinct balls \(B_1\), \(B_2 \in \overline{\mathbf{B}}_X\).
\end{lemma}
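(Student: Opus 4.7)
The plan is to prove the equality by splitting on whether the balls intersect, invoking Proposition~\ref{p2.5} in both cases. Throughout the argument I will use equality~\eqref{e2.3} and the fact that, by Lemma~\ref{l2.6} and Proposition~\ref{p2.4}, every closed ball $B\in\overline{\mathbf{B}}_X$ equals $\overline{B}_{\diam B}(a)$ for any $a\in B$.

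First I would handle the disjoint case $B_1\cap B_2=\varnothing$. Proposition~\ref{p2.5} gives the very strong conclusion that $d(x_1,x_2)=\diam(B_1\cup B_2)$ for every $x_1\in B_1$ and every $x_2\in B_2$. Substituting this constant value of $d$ on $B_1\times B_2$ into the defining formula~\eqref{e6.11} for $d_H$, each inner infimum and outer supremum collapses to $\diam(B_1\cup B_2)$, giving~\eqref{l6.12:e1} in this case.

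Next I would treat the intersecting case $B_1\cap B_2\neq\varnothing$. Proposition~\ref{p2.5} forces one ball to be a proper subset of the other, and by symmetry of $d_H$ I may assume $B_1\subsetneq B_2$; then $B_1\cup B_2=B_2$, so $\diam(B_1\cup B_2)=\diam B_2$. The first of the two suprema in~\eqref{e6.11} vanishes because $B_1\subseteq B_2$, so it suffices to evaluate $S:=\sup_{b\in B_2}\inf_{a\in B_1}d(a,b)$. Pick any $a_0\in B_1$. Using Lemma~\ref{l2.6} with center $a_0$, the inclusion $B_1\subsetneq B_2$ yields $\diam B_1<\diam B_2$, and for $b\in B_2\setminus B_1$ one has $d(a_0,b)>\diam B_1\geqslant d(a_0,a)$ for all $a\in B_1$; the strong triangle inequality applied to the triple $\{a_0,a,b\}$ then forces $d(a,b)=d(a_0,b)$. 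Hence $\inf_{a\in B_1}d(a,b)=d(a_0,b)$ on $B_2\setminus B_1$, while it equals $0$ on $B_1$. Therefore $S=\sup_{b\in B_2\setminus B_1}d(a_0,b)$, and by~\eqref{e2.3} this supremum coincides with $\sup_{b\in B_2}d(a_0,b)=\diam B_2$ (the points in $B_1$ contribute at most $\diam B_1<\diam B_2$, so they cannot affect the sup).

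The main obstacle—minor but worth care—is the intersecting subcase, because one must rule out the possibility that removing $B_1$ from $B_2$ drops the diameter; this is handled by the strict inequality $\diam B_1<\diam B_2$ derived above together with equality~\eqref{e2.3}. Everything else is an application of the cited propositions.
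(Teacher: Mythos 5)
Your proof is correct, and the disjoint case is handled exactly as in the paper: Proposition~\ref{p2.5} makes $d$ constant on $B_1\times B_2$, so \eqref{e6.11} collapses. Where you diverge is the nested case. The paper splits it into the singleton subcase $|B_1|=1$ (computed directly from \eqref{e6.11} and \eqref{e2.3}) and the subcase $\diam B_1>0$, for which it imports Lemma~2.1 of Qiu's paper to get $d_H(B_1,B_2)=\dist(B_1,B_2)$ and then appeals to Remark~\ref{r2.10}. You instead give a single uniform computation from the definition of $d_H$: the strict inequality $\diam B_1<\diam B_2$ (which is Proposition~\ref{p2.7} in disguise, correctly derived from Lemma~\ref{l2.6} and Proposition~\ref{p2.4}), the isosceles property $d(a,b)=d(a_0,b)$ for $a\in B_1$ and $b\in B_2\setminus B_1$, and \eqref{e2.3} to identify $\sup_{b\in B_2\setminus B_1}d(a_0,b)$ with $\diam B_2$. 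This buys self-containedness — no external citation is needed, and the singleton subcase is absorbed into the general argument since $B_1=\overline{B}_{\diam B_1}(a_0)$ holds even when $\diam B_1=0$ — at the cost of a slightly longer computation. All the individual steps (the strictness of $\diam B_1<\diam B_2$, the evaluation of the inner infima, and the observation that points of $B_1$ cannot affect the outer supremum) check out.
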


\begin{proof}
Let \(B_1\) and \(B_2\) be distinct balls in \((X, d)\). If \(B_1 \cap B_2 = \varnothing\) holds, then equality \eqref{l6.12:e1} follows from \eqref{p2.5:e1} and \eqref{e6.11}. Suppose \(B_1 \cap B_2 \neq \varnothing\). Then, by Proposition~\ref{p2.5}, we have either \(B_1 \subset B_2\) or \(B_2 \subset B_1\). Without loss of generality we assume
\begin{equation}\label{l6.12:e2}
B_1 \subset B_2.
\end{equation}
If \(|B_1| = 1\), then there is \(p_1 \in X\) such that
\begin{equation}\label{l6.12:e3}
B_1  = \{p_1\}.
\end{equation}
Inclusion~\eqref{l6.12:e2} implies that \(p_1 \in B_2\). Using \eqref{e2.3}, \eqref{e6.11} and \eqref{l6.12:e3} we obtain
\begin{align*}
d_H(B_1, B_2) & = \max\left\{\inf_{p_2 \in B_2} d(p_1, p_2), \sup_{p_2 \in B_2} d(p_1, p_2)\right\} \\
& = \sup\{d(p_1, p_2) \colon p_2 \in B_2\} = \diam B_2.
\end{align*}
From \eqref{l6.12:e2} it follows that \(\diam B_2 = \diam (B_1 \cup B_2)\). Thus, \eqref{l6.12:e1} holds.

To complete the proof we must consider the case when
\begin{equation}\label{l6.12:e4}
B_1 \subset B_2
\end{equation}
and \(\diam B_1 > 0\). If \eqref{l6.12:e4} is true, then, in accordance with Lemma~2.1 \cite{Qiu2014pNUAA},
\begin{equation}\label{l6.12:e5}
d_H(B_1, B_2) = \dist (B_1, B_2)
\end{equation}
holds. Now \eqref{l6.12:e1} follows from \eqref{l6.12:e5} and \eqref{r2.10:e2} (see Remark \ref{r2.10}).
\end{proof}

The following lemma can be easily derived from Lemma~2.4 of \cite{Qiu2014pNUAA}.

\begin{lemma}\label{l6.13}
Let \((X, d)\) be a nonempty ultrametric space and let \(\overline{\BB}_X\) be the set of all closed balls of \((X, d)\). Then the metric space \((\overline{\BB}_X, d_H)\) is ultrametric.
\end{lemma}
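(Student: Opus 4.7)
The plan is to verify the strong triangle inequality for the Hausdorff distance $d_H$ restricted to $\overline{\BB}_X$; the remaining metric axioms follow from the general fact (cited in the paper) that $d_H$ is a metric on the space of nonempty, bounded, closed subsets of any metric space, combined with the observations that every closed ball in $(X,d)$ is nonempty, is bounded (indeed $\diam \overline{B}_r(c) \leqslant r$ by equality~\eqref{e2.3} applied with $a = c$), and is a closed subset of $X$ (Proposition~\ref{p2.2}\ref{p2.2:s1}).

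Fix three balls $B_1, B_2, B_3 \in \overline{\BB}_X$. The goal is
\[
d_H(B_1, B_3) \leqslant \max\{d_H(B_1, B_2), d_H(B_2, B_3)\}.
\]
If at least two of these balls coincide then the inequality is immediate, so I would assume that $B_1$, $B_2$, $B_3$ are pairwise distinct. Lemma~\ref{l6.12} then converts every Hausdorff distance into a diameter of a pairwise union, and the claim reduces to
\[
\diam (B_1 \cup B_3) \leqslant \max\{\diam (B_1 \cup B_2),\ \diam (B_2 \cup B_3)\}.
\]

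To establish this, I would pick arbitrary $x \in B_1$ and $y \in B_3$ together with any reference point $z \in B_2$ (which exists because $B_2$ is nonempty by definition of a closed ball). The strong triangle inequality in $(X,d)$ yields
\[
d(x, y) \leqslant \max\{d(x, z),\ d(z, y)\}.
\]
Since $x, z \in B_1 \cup B_2$ and $z, y \in B_2 \cup B_3$, we have $d(x, z) \leqslant \diam (B_1 \cup B_2)$ and $d(z, y) \leqslant \diam (B_2 \cup B_3)$, so
\[
d(x, y) \leqslant \max\{\diam (B_1 \cup B_2),\ \diam (B_2 \cup B_3)\}.
\]
Taking the supremum over $x \in B_1$ and $y \in B_3$ and invoking the ultrametric version~\eqref{e2.3} of the diameter (so that $\diam(B_1 \cup B_3)$ equals such a supremum of distances), we obtain the required inequality on the three diameters, and hence on the three Hausdorff distances via Lemma~\ref{l6.12}.

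There is no significant obstacle: Lemma~\ref{l6.12} has already packaged the geometric content, reducing the strong triangle inequality for $d_H$ to a one-line application of the strong triangle inequality in $(X, d)$ through a mediating point in $B_2$. The only care needed is the bookkeeping for the degenerate cases where two of the three balls coincide, which collapses the inequality to a trivial one.
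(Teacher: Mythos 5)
Your proof is correct, but it takes a genuinely different route from the paper: the paper does not actually prove Lemma~\ref{l6.13}, stating only that it ``can be easily derived from Lemma~2.4 of \cite{Qiu2014pNUAA}'', whereas you give a self-contained verification built on the paper's own Lemma~\ref{l6.12}. Your reduction of the strong triangle inequality for \(d_H\) to the inequality \(\diam(B_1\cup B_3)\leqslant\max\{\diam(B_1\cup B_2),\diam(B_2\cup B_3)\}\), mediated through an arbitrary point \(z\in B_2\), is exactly the right mechanism and has the advantage of removing the dependence on the external reference. The one step that needs a slightly more careful justification is the final identification \(\diam(B_1\cup B_3)=\sup\{d(x,y)\colon x\in B_1,\ y\in B_3\}=\Delta(B_1,B_3)\) (notation of~\eqref{p2.5:e6}): equality~\eqref{e2.3} by itself expresses the diameter as a supremum of distances to a \emph{fixed} point of \(B_1\cup B_3\), which is not literally the cross-supremum you invoke. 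The identity is nevertheless true for any two nonempty subsets of an ultrametric space: by~\eqref{p2.5:e2} one has \(\diam(B_1\cup B_3)=\max\bigl\{\diam B_1,\diam B_3,\Delta(B_1,B_3)\bigr\}\), and fixing any \(b\in B_3\) the strong triangle inequality gives \(d(x,y)\leqslant\max\{d(x,b),d(b,y)\}\leqslant\Delta(B_1,B_3)\) for all \(x,y\in B_1\), whence \(\diam B_1\leqslant\Delta(B_1,B_3)\) and symmetrically \(\diam B_3\leqslant\Delta(B_1,B_3)\). With that one line added your argument is complete; alternatively, you could bound the three terms \(\diam B_1\), \(\diam B_3\) and \(\Delta(B_1,B_3)\) separately by \(\max\{\diam(B_1\cup B_2),\diam(B_2\cup B_3)\}\) and avoid the identity altogether.
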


The following lemma is a reformulation of the classical Blaschke theorem.
\begin{lemma}\label{l6.14}
Let \((X, d)\) be a compact, nonempty metric space and let \(\mathfrak{M}_X\) denote the set of all nonempty, closed subsets of \(X\). Then the metric space \((\mathfrak{M}_X, d_H)\) is also compact.
\end{lemma}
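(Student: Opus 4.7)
The plan is to invoke the standard characterization of compactness for metric spaces (compact if and only if complete and totally bounded), and to verify each of these two conditions for $(\mathfrak{M}_X, d_H)$.

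First I would establish total boundedness. Given $\varepsilon > 0$, use the compactness (hence total boundedness) of $X$ to fix a finite $\varepsilon$-net $N = \{x_1, \ldots, x_n\}$ in $X$. The collection $\mathcal{F}$ of all nonempty subsets of $N$ is a finite subset of $\mathfrak{M}_X$. For an arbitrary $A \in \mathfrak{M}_X$, set $S_A = \{x_i \in N : \dist(x_i, A) \leqslant \varepsilon\}$; this is nonempty since every point of $A$ lies within $\varepsilon$ of some $x_i$. A direct check of \eqref{e6.11} shows $d_H(A, S_A) \leqslant \varepsilon$: every $a \in A$ is within $\varepsilon$ of some $x_i \in S_A$, and every $x_i \in S_A$ is within $\varepsilon$ of some point of $A$. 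Hence $\mathcal{F}$ is a finite $\varepsilon$-net in $(\mathfrak{M}_X, d_H)$.

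Next I would establish completeness. For a Cauchy sequence $(A_n)_{n \in \mathbb{N}} \subseteq \mathfrak{M}_X$, define
\[
A = \bigcap_{n \in \mathbb{N}} \overline{\bigcup_{k \geqslant n} A_k}.
\]
This is a decreasing intersection of nonempty closed subsets of the compact space $X$, hence $A \in \mathfrak{M}_X$ by the finite intersection property (Borel--Lebesgue). It remains to prove that $d_H(A_n, A) \to 0$. Fix $\varepsilon > 0$ and, using the Cauchy condition, choose indices $n_0 < n_1 < n_2 < \cdots$ with $d_H(A_{n_j}, A_{n_{j+1}}) < \varepsilon/2^{j+1}$. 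For the first direction, given $x \in A_{n_0}$, recursively pick $x_j \in A_{n_j}$ with $d(x_{j-1}, x_j) < \varepsilon/2^j$; the sequence $(x_j)$ is Cauchy, converges (by compactness of $X$) to some $x_\infty$ which lies in $A$ because $x_\infty \in \overline{\bigcup_{k \geqslant m} A_k}$ for every $m$, and $d(x, x_\infty) \leqslant \varepsilon$. For the opposite direction, given $a \in A$, the membership $a \in \overline{\bigcup_{k \geqslant n} A_k}$ together with the Cauchy property of $(A_n)$ yields a point of $A_n$ within a controlled distance of $a$. Combining these bounds gives $d_H(A_n, A) \to 0$.

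The main obstacle is the completeness step, and specifically proving that points of $A_n$ are close to $A$. The natural candidate for the limit set is $A$ as defined above, but showing that every point of $A_n$ can be approximated by an element of $A$ requires constructing a convergent ``trajectory'' $x_0 \in A_{n_0}, x_1 \in A_{n_1}, \ldots$ inside $X$, which works precisely because the ambient space $X$ is compact (hence complete). The total boundedness step and the nonemptiness of $A$ are comparatively routine once the right formula for $A$ is chosen.
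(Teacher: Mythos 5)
Your proof is correct: it is the standard proof of the Blaschke selection theorem via the characterization ``compact $\Leftrightarrow$ complete and totally bounded,'' with the limit candidate $A = \bigcap_{n} \overline{\bigcup_{k \geqslant n} A_k}$ and the Cauchy trajectory argument for one of the two Hausdorff-distance bounds. The paper does not prove this lemma itself but cites \cite[Theorem~7.3.8]{BBI2001}, where essentially this same argument is carried out, so your approach matches the intended one.
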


For the proof see, for example, \cite[Theorem~7.3.8]{BBI2001}.

\begin{lemma}\label{l6.15}
Let \((X, d)\) be a compact, nonempty ultrametric space. Then the metric space \((\overline{\BB}_X, d_H)\) is compact.
\end{lemma}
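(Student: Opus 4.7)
The plan is to establish sequential compactness of $(\overline{\BB}_X, d_H)$, which by Proposition~\ref{p2.9} is equivalent to compactness. Given an arbitrary sequence $(B_n)_{n \in \mathbb{N}} \subseteq \overline{\BB}_X$, I would invoke Lemma~\ref{l2.6} to rewrite each ball canonically as $B_n = \overline{B}_{r_n}(c_n)$ with $r_n = \diam B_n \in D(X)$. Since $X$ is compact by hypothesis and $D(X)$ is compact by Lemma~\ref{l6.4}, two successive passages to subsequences produce points $c^* \in X$ and $r^* \in D(X)$ with $c_n \to c^*$ and $r_n \to r^*$. The candidate limit is $B^{*} := \overline{B}_{r^*}(c^*)$, which already lies in $\overline{\BB}_X$, and the heart of the argument is to show $d_H(B_n, B^{*}) \to 0$.

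I would split this into the two cases $r^* = 0$ and $r^* > 0$. For $r^* = 0$ we have $B^{*} = \{c^*\}$; for every $x \in B_n$ the strong triangle inequality gives
\[
d(x, c^*) \leq \max\{d(x, c_n), d(c_n, c^*)\} \leq \max\{r_n, d(c_n, c^*)\},
\]
so $\sup_{x \in B_n} d(x, c^*) \to 0$, while $\inf_{x \in B_n} d(c^*, x) \leq d(c^*, c_n) \to 0$. Together, via definition~\eqref{e6.11}, these force $d_H(B_n, B^{*}) \to 0$. For $r^* > 0$ I would apply Lemma~\ref{l6.5} to conclude that $r^*$ is an isolated point of $D(X)$, so $r_n = r^*$ for all sufficiently large $n$; at the same time $d(c_n, c^*) \to 0$ implies $d(c_n, c^*) \leq r^*$ eventually, so $c_n \in \overline{B}_{r^*}(c^*)$, and Proposition~\ref{p2.4} yields $B_n = \overline{B}_{r^*}(c_n) = \overline{B}_{r^*}(c^*) = B^{*}$ eventually. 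In both cases $d_H(B_n, B^{*}) \to 0$, as desired.

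The main obstacle is the case $r^* = 0$: there the balls $B_n$ can have many points at distances approaching $r_n$ from $c_n$, and one must confirm that all of them are pulled toward $c^*$ uniformly. In a general metric space this would fail without extra hypotheses, but the strong triangle inequality replaces a sum by a maximum and supplies the required uniform control. An alternative route would be to invoke Blaschke's theorem (Lemma~\ref{l6.14}) for the ambient space $\mathfrak{M}_X$ and then establish that $\overline{\BB}_X$ is a closed subset of $(\mathfrak{M}_X, d_H)$, but the verification of closedness reduces to the same case analysis and the same appeal to the isolated-point structure of $D(X)$.
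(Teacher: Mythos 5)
Your proof is correct, but it takes a genuinely different route from the paper's. The paper first gets compactness of the full hyperspace \((\mathfrak{M}_X, d_H)\) from Blaschke's theorem (Lemma~\ref{l6.14}), extracts a Cauchy subsequence of balls via Proposition~\ref{p2.11}, and then uses the formula \(d_H(B_1,B_2)=\diam(B_1\cup B_2)\) from Lemma~\ref{l6.12} to derive a dichotomy: a Cauchy sequence of closed balls is either eventually constant or has diameters tending to \(0\), and in the latter case its limit in \(\mathfrak{M}_X\) is a singleton, hence again a closed ball. You instead parametrize each ball canonically by its center and its diameter (Lemma~\ref{l2.6}), extract convergent subsequences of centers and radii using compactness of \(X\) and of \(D(X)\) (Lemma~\ref{l6.4}), and exhibit the limit ball \(\overline{B}_{r^*}(c^*)\) explicitly; your case split \(r^*>0\) versus \(r^*=0\) mirrors the paper's dichotomy, with Lemma~\ref{l6.5} (isolated nonzero distances) doing the work that Lemma~\ref{l6.12} does for the paper. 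Your argument is more self-contained — it avoids Blaschke's theorem entirely and produces the limit constructively — at the cost of leaning harder on the specifically ultrametric structure of the distance set; the paper's version is shorter given the surrounding lemmas because total boundedness of \(\overline{\BB}_X\) comes for free from the compactness of \(\mathfrak{M}_X\). Both are complete proofs.
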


\begin{proof}
It is clear that \(\overline{\BB}_X \subseteq \mathfrak{M}_X\). By Lemma~\ref{l6.14}, the metric space \((\mathfrak{M}_X, d_H)\) is compact. Consequently, \(\overline{\BB}_X\) is totally bounded by Corollary~\ref{c2.17}. The Bolzano---Weierstrass property implies that \((\overline{\BB}_X, d_H)\) is compact if every sequence \((B_n)_{n \in \mathbb{N}} \subseteq \overline{\BB}_X\) contains a convergent subsequence. From Proposition~\ref{p2.11} it follows that every \((B_n)_{n \in \mathbb{N}} \subseteq \overline{\BB}_X\) contains a Cauchy subsequence \((B_{n_k})_{k \in \mathbb{N}}\). Using Lemma~\ref{l6.12} we see that if \((B_{n_k})_{k \in \mathbb{N}}\) is a Cauchy sequence, then either there are \(B \in \overline{\BB}_X\) and \(k_0 \in \mathbb{N}\) such that
\begin{equation}\label{l6.15:e1}
B_{n_k} = B
\end{equation}
for all \(k \geqslant k_0\), or the limit relation
\begin{equation}\label{l6.15:e2}
\lim_{n \to \infty} \diam B_{n_k} = 0
\end{equation}
holds. If we have \eqref{l6.15:e1} for all sufficiently large \(k\), then \((B_{n_k})_{k \in \mathbb{N}}\) is convergent and
\[
\lim_{k \to \infty} B_{n_k} = B.
\]
Suppose \eqref{l6.15:e2} holds. Since \((\mathfrak{M}_X, d_H)\) is compact and \(\overline{\BB}_X \subseteq \mathfrak{M}_X\), the Cauchy sequence \((B_{n_k})_{k \in \mathbb{N}}\) is convergent to a set \(A \in \mathfrak{M}_X\),
\begin{equation}\label{l6.15:e3}
\lim_{k \to \infty} d_H(A, B_{n_k}) = 0.
\end{equation}
From \eqref{l6.15:e3} and \eqref{e6.11} it follows that
\begin{equation}\label{l6.15:e4}
\lim_{k \to \infty} \sup_{x \in A} \inf_{y \in B_{n_k}} d(x, y)= 0.
\end{equation}
Using limit relations \eqref{l6.15:e2} and \eqref{l6.15:e4} it is easy to prove that there is \(a \in X\) for which \(\{a\} = A\) holds. To complete the proof it suffices to note that \(\{a\} \in \overline{\BB}_X\).
\end{proof}

\begin{lemma}\label{l7.6}
Let \((X, d)\) be a compact, nonempty ultrametric space. Then the ballean \(\BB_{X}\) of \((X, d)\) coincides with the set of all isolated points of the ultrametric space \((\overline{\BB}_X, d_H)\).
\end{lemma}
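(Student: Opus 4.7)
The plan is to prove two inclusions. First, observe that by Corollary~\ref{c2.41} the inclusion \(\BB_X \subseteq \overline{\BB}_X\) holds in any compact (hence totally bounded) ultrametric space, so every open ball is a legitimate point of \((\overline{\BB}_X, d_H)\) and its possible isolatedness can be discussed. The main computational tool throughout will be Lemma~\ref{l6.12}, which identifies \(d_H(B_1, B_2)\) with \(\diam(B_1 \cup B_2)\) for distinct \(B_1, B_2 \in \overline{\BB}_X\).

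\emph{Open balls are isolated points of \((\overline{\BB}_X, d_H)\).} Fix \(B = B_r(c) \in \BB_X\) and take any \(B' \in \overline{\BB}_X \setminus \{B\}\). Proposition~\ref{p2.5} yields three mutually exclusive cases: \(B' \subsetneq B\), \(B' \supsetneq B\), or \(B' \cap B = \varnothing\). In the last two cases, any point \(x \in B' \setminus B\) (respectively \(x \in B'\)) satisfies \(d(c, x) \geqslant r\), so equation~\eqref{e2.3} combined with Lemma~\ref{l6.12} gives \(d_H(B, B') \geqslant r\). In the first case, Lemma~\ref{l6.12} gives \(d_H(B, B') = \diam B\). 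Hence \(B\) is isolated with separation at least \(\min\{r, \diam B\}\) whenever \(\diam B > 0\). The only remaining subcase is \(B = \{c\}\): here \(B' \subsetneq B\) is vacuous, and since \(\{c\} \in \BB_X\) forces \(c\) to be an isolated point of \(X\) (by Proposition~\ref{p2.2}), one may set \(r_0 := \inf\{d(c, x) \colon x \in X \setminus \{c\}\} > 0\) and the previous argument gives \(d_H(\{c\}, B') \geqslant r_0\) for every \(B' \neq \{c\}\).

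\emph{Isolated points of \((\overline{\BB}_X, d_H)\) are open balls.} I argue by contrapositive. Suppose \(B^* \in \overline{\BB}_X \setminus \BB_X\). By Lemma~\ref{c6.6} this forces \(\diam B^* = 0\), i.e., \(B^* = \{a\}\) for some \(a \in X\); moreover, Proposition~\ref{p2.2} then implies that \(a\) is not isolated in \(X\). Choose a sequence \((a_n)_{n \in \mathbb{N}} \subseteq X \setminus \{a\}\) with \(d(a, a_n) \to 0\) and set \(B_n := \overline{B}_{d(a, a_n)}(a) \in \overline{\BB}_X\). Each \(B_n\) strictly contains \(\{a\}\), so by Lemma~\ref{l6.12} and Lemma~\ref{l2.6},
\[
d_H(\{a\}, B_n) = \diam B_n \leqslant d(a, a_n) \longrightarrow 0,
\]
showing that \(B^*\) is an accumulation point of \((\overline{\BB}_X, d_H)\) and therefore not isolated.

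The main technical obstacle is the degeneracy in the first inclusion: when the open ball \(B\) collapses to a single isolated point, the interior estimate \(\diam B\) vanishes and one must replace it by the separation constant \(r_0\) of \(c\) from the rest of \(X\). Everything else is a direct substitution into the identities supplied by Lemma~\ref{l6.12} and equation~\eqref{e2.3}; compactness of \(X\) enters only indirectly, through Corollaries~\ref{c2.41} and Lemma~\ref{c6.6}, both of which rely on total boundedness.
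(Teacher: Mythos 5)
Your proof is correct and follows essentially the same route as the paper: both arguments reduce everything to the identity $d_H(B_1,B_2)=\diam(B_1\cup B_2)$ from Lemma~\ref{l6.12}, split the isolatedness of an open ball into the cases $\diam B>0$ and $B=\{c\}$ with $c$ isolated in $X$, and show that the remaining elements of $\overline{\BB}_X$ are singletons at accumulation points of $X$, approximated by shrinking balls. The only cosmetic differences are your three-way case analysis where the paper simply uses $\diam(B^*\cup B)\geqslant\diam B^*$, and your use of closed balls $\overline{B}_{d(a,a_n)}(a)$ where the paper uses open balls $B_\varepsilon(c^*)$.
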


\begin{proof}
By Corollary~\ref{c2.41}, we have the inclusion
\begin{equation}\label{l7.6:e1}
\BB_{X} \subseteq \overline{\BB}_X.
\end{equation}
We first show that every \(B^* \in \BB_{X}\) is an isolated point of \((\overline{\BB}_X, d_H)\). If \(B^* \in \BB_{X}\) and \(\diam B^* > 0\), then, by Lemma~\ref{l6.12}, we obtain
\[
d_H(B^*, B) = \diam (B^* \cup B) \geqslant \diam B^* > 0
\]
for every \(B \in \overline{\BB}_X\). Thus, \(B^*\) is an isolated point in \((\overline{\BB}_X, d_H)\).

Let us consider now the case when \(B^* \in \BB_{X}\) and \(\diam B^* = 0\). In this case there is \(c^{*} \in X\) such that \(B^* = \{c^{*}\}\). Since \(B^{*}\) is a clopen subset of \((X, d)\), the point \(c^{*}\) is an isolated point of \((X, d)\) (by Proposition~\ref{p2.2}). Consequently, there is \(\varepsilon^{*} > 0\) such that
\begin{equation}\label{l7.6:e2}
d(c^*, x) > \varepsilon^{*} > 0
\end{equation}
for every \(x \in X \setminus \{c^{*}\}\). Double inequality~\eqref{l7.6:e2} and definition \eqref{e6.11} imply
\[
d_H(B^*, B) > \varepsilon^{*} > 0
\]
whenever \(B \in \BB_{X}\) and \(B \neq B^{*}\).

To complete the proof it suffices to show that every ball
\begin{equation}\label{l7.6:e3}
B^* \in \overline{\BB}_X \setminus \BB_{X}
\end{equation}
is an accumulation point of \((\overline{\BB}_X, d_H)\).

Suppose \eqref{l7.6:e3} holds. Then, by Lemma~\ref{c6.6}, we obtain
\[
B^{*} = \{c^{*}\}
\]
for a point \(c^{*} \in X\) and, in addition, the point \(c^{*}\) is an accumulation point of \(X\), by Proposition~\ref{p2.2}. Hence, for every \(\varepsilon > 0\), the ball \(B_{\varepsilon} (c^{*}) \in \overline{\BB}_X\) satisfies the inequality
\[
d_H(B^{*}, B_{\varepsilon} (c^{*})) \leqslant \varepsilon
\]
and the condition \(B^{*} \neq B_{\varepsilon} (c^{*})\). Thus, \(B^{*}\) is an accumulation point in \((\overline{\BB}_X, d_H)\).
\end{proof}

Analyzing the proof of Lemma~\ref{l7.6} we also obtain the following.

\begin{lemma}\label{l7.7}
Let \((X, d)\) be a compact, nonempty ultrametric space. Then the set of all isolated points of \((\overline{\BB}_X, d_H)\) is a dense subset of \(\overline{\BB}_X\).
\end{lemma}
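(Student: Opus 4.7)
The plan is to combine Lemma~\ref{l7.6} with the description of \(\overline{\BB}_X \setminus \BB_X\) that follows from Lemma~\ref{c6.6}. By Lemma~\ref{l7.6} the set of isolated points of \((\overline{\BB}_X, d_H)\) coincides with \(\BB_X\), so the statement reduces to the density of \(\BB_X\) in \(\overline{\BB}_X\) with respect to the Hausdorff metric. Thus what I actually have to prove is: for every \(B^{*} \in \overline{\BB}_X\) and every \(\varepsilon > 0\), there exists \(B \in \BB_X\) with \(d_H(B^{*}, B) \leqslant \varepsilon\).

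First I would split into cases according to \(\diam B^{*}\). If \(\diam B^{*} > 0\), then by Lemma~\ref{c6.6} the inclusion \(B^{*} \in \overline{\BB}_X\) already implies \(B^{*} \in \BB_X\), so one can simply take \(B = B^{*}\) and the approximation is trivial. The only genuine case is \(\diam B^{*} = 0\), in which \(B^{*} = \{c^{*}\}\) for a unique \(c^{*} \in X\). If in addition \(c^{*}\) is isolated in \(X\), then by Proposition~\ref{p2.2}\,\ref{p2.2:s2} the singleton \(\{c^{*}\}\) is clopen and hence belongs to \(\BB_X\), so again nothing needs to be proved.

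The remaining (and main) case is when \(c^{*}\) is an accumulation point of \(X\). For any \(\varepsilon > 0\) I would consider the open ball
\[
B_\varepsilon(c^{*}) = \{x \in X \colon d(c^{*}, x) < \varepsilon\} \in \BB_X.
\]
Since \(c^{*} \in B_\varepsilon(c^{*})\), the union \(\{c^{*}\} \cup B_\varepsilon(c^{*})\) coincides with \(B_\varepsilon(c^{*})\), and every point of \(B_\varepsilon(c^{*})\) is within distance \(\varepsilon\) of \(c^{*}\). Applying either Lemma~\ref{l6.12} together with equality~\eqref{e2.3}, or directly the definition \eqref{e6.11}, one obtains
\[
d_H(B^{*}, B_\varepsilon(c^{*})) = \sup_{x \in B_\varepsilon(c^{*})} d(c^{*}, x) \leqslant \varepsilon.
\]
Letting \(\varepsilon \to 0\) produces a sequence in \(\BB_X\) converging to \(B^{*}\) in \((\overline{\BB}_X, d_H)\), which finishes the proof.

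There is no real obstacle here; the argument is essentially already contained in the last paragraph of the proof of Lemma~\ref{l7.6}, where an explicit approximating family \(B_\varepsilon(c^{*})\) is exhibited. The only point deserving attention is the tidy case analysis based on \(\diam B^{*}\) and on whether the centre is isolated, which ensures that in every case the approximating balls can be chosen from \(\BB_X\) rather than merely from \(\overline{\BB}_X\).
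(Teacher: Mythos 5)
Your proof is correct and follows essentially the same route as the paper, which simply observes that the density statement is extracted from the final paragraph of the proof of Lemma~\ref{l7.6}: the balls \(B^{*}\in\overline{\BB}_X\setminus\BB_X\) are singletons \(\{c^{*}\}\) at accumulation points, and the open balls \(B_{\varepsilon}(c^{*})\in\BB_X\) approximate them in \(d_H\). Your explicit case analysis (via Lemma~\ref{c6.6} and Proposition~\ref{p2.2}) and the estimate \(d_H(\{c^{*}\},B_{\varepsilon}(c^{*}))\leqslant\varepsilon\) are exactly the intended argument.
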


Putting together all lemmas proved above, we obtain the following result.

\begin{theorem}\label{t7.8}
Let \((X, d)\) be a compact, nonempty ultrametric space and let \((\overline{\BB}_X, d_H)\) be the space of all closed balls of \((X, d)\) with the Hausdorff metric \(d_H\). Then \((\overline{\BB}_X, d_H)\) is also a compact, ultrametric space for which the ballean \(\BB_{X} \subseteq \overline{\BB}_X\) coincides with the set of all isolated points of \((\overline{\BB}_X, d_H)\) and this set is dense in \(\overline{\BB}_X\).
\end{theorem}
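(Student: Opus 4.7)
The plan is to observe that Theorem~\ref{t7.8} is a direct aggregation of the four lemmas immediately preceding it, and to simply assemble them. No new argument is needed beyond checking that each ingredient applies.

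First I would invoke Lemma~\ref{l6.13} to obtain that \((\overline{\BB}_X, d_H)\) is ultrametric (this uses only that \((X,d)\) is ultrametric, not compactness). Next I would apply Lemma~\ref{l6.15} to conclude that \((\overline{\BB}_X, d_H)\) is compact, using the hypothesis that \((X,d)\) is compact and nonempty. Then the inclusion \(\BB_X \subseteq \overline{\BB}_X\) follows from Corollary~\ref{c2.41}, since every compact metric space is totally bounded. Lemma~\ref{l7.6} then identifies \(\BB_X\) precisely with the set of isolated points of \((\overline{\BB}_X, d_H)\). Finally, Lemma~\ref{l7.7} yields density of this set of isolated points in \(\overline{\BB}_X\).

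Since every assertion in the theorem is literally the statement of one of these preceding results, there is no real obstacle. The only thing worth being careful about is verifying the hypotheses: compactness implies total boundedness (so Corollary~\ref{c2.41} applies), and nonemptiness is preserved when passing to \(\overline{\BB}_X\) (since \(\{x\} \in \overline{\BB}_X\) for any \(x \in X\)). The proof itself should therefore consist of one short paragraph citing Lemma~\ref{l6.13}, Lemma~\ref{l6.15}, Lemma~\ref{l7.6}, and Lemma~\ref{l7.7} in sequence.
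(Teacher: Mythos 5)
Your proposal is correct and matches the paper exactly: the paper's own proof is the single sentence ``Putting together all lemmas proved above, we obtain the following result,'' i.e.\ it assembles Lemma~\ref{l6.13}, Lemma~\ref{l6.15}, Lemma~\ref{l7.6} and Lemma~\ref{l7.7} just as you do. Your added care about the hypotheses (compactness giving total boundedness for Corollary~\ref{c2.41}) is harmless and consistent with the paper.
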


\begin{remark}\label{r8.10}
Using Proposition~\ref{p2.5} (see, in particular, formulas \eqref{p2.5:e1} and \eqref{r2.10:e2}) one can easily proves that, for ultrametric space \((X, d)\), a sequence \((\overline{B}_i)_{i \in \mathbb{N}} \subseteq \overline{\BB}_X\) is convergent in \((\overline{\BB}_X, d_H)\) if and only if \((\overline{B}_i)_{i \in \mathbb{N}}\) is convergent in the so-called Wijsman topology, which was firstly studied by Lechicki and Levi~\cite{LL1987BUMI}. It is interesting to note that, for every metric space \((Y, \rho)\), a sequence of subsets of \(Y\) is convergent with respect to the Vietoris topology generated by \(\rho\) if and only if this sequence is Wijsman convergent for all metrics which are topologically equivalent to \(\rho\) \cite{BNLL1992AMPAIS}. See also \cite{BDD2017BBMSSS} for characterization of Wijsman convergence via Wijsman statistical convergence by moduli.
\end{remark}

Let \((X, d)\) and \((Y, \rho)\) be metric spaces. An injective mapping \(\Phi \colon X \to Y\) is called an \emph{isometric embedding} of \((X, d)\) in \((Y, \rho)\) if the equality
\[
d(x, y) = \rho(\Phi(x), \Phi(y))
\]
holds for all \(x\), \(y \in X\). In this case we write \((X, d) \hookrightarrow (Y, \rho)\) and say that \((X, d)\) \emph{admits an isometric embedding} in \((Y, \rho)\).

\begin{theorem}\label{p6.11}
Let \((X, d)\) be a totally bounded ultrametric space. Then there is a compact ultrametric space \((Y, \rho)\) such that the set of isolated points of \((Y, \rho)\) is a dense subset of \(Y\) and \((X, d)\) admits an isometric embedding \((X, d) \hookrightarrow (Y, \rho)\).
\end{theorem}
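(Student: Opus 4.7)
The plan is to realize the target space $Y$ as the ballean of the completion of $(X,d)$, equipped with the Hausdorff metric, and all the heavy lifting has already been done in the preceding lemmas.

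First I would form the completion $(\widetilde{X}, \widetilde{d})$ of $(X,d)$. By Proposition~\ref{p2.13}, total boundedness of $(X,d)$ makes $(\widetilde{X}, \widetilde{d})$ compact, and by Lemma~\ref{l2.20} it is again ultrametric. By the very definition of completion, there is a canonical isometric embedding
\[
\iota \colon (X, d) \hookrightarrow (\widetilde{X}, \widetilde{d}).
\]

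Next I would set $Y := \overline{\BB}_{\widetilde{X}}$ and let $\rho := \widetilde{d}_H$ be the Hausdorff metric on $Y$ induced by $\widetilde{d}$. Theorem~\ref{t7.8}, applied to the compact ultrametric space $(\widetilde{X},\widetilde{d})$, tells us that $(Y,\rho)$ is itself a compact ultrametric space, that the set of its isolated points coincides with $\BB_{\widetilde{X}}$, and that $\BB_{\widetilde{X}}$ is dense in $Y$. Thus $(Y,\rho)$ has exactly the two structural properties demanded in the statement.

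Finally I would produce the embedding. By Lemma~\ref{l6.11}, the map
\[
F \colon \widetilde{X} \to Y, \qquad F(z) = \{z\},
\]
is an isometric embedding of $(\widetilde{X},\widetilde{d})$ into $(Y,\rho)$, since $\rho(\{z_1\},\{z_2\}) = \widetilde{d}(z_1,z_2)$ by formula~\eqref{e6.12}. The composition $F \circ \iota \colon X \to Y$ is then the desired isometric embedding of $(X,d)$ into $(Y,\rho)$, completing the proof.

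There is essentially no obstacle: every nontrivial fact needed (compactness and ultrametricity of $\widetilde{X}$, compactness and ultrametricity of $\overline{\BB}_{\widetilde{X}}$ under $\widetilde{d}_H$, density of $\BB_{\widetilde{X}}$ among all closed balls, and the isometric singleton embedding) has been established. The only conceptual point worth highlighting is that we must apply Theorem~\ref{t7.8} to the completion rather than to $(X,d)$ itself, because that theorem requires compactness; this is precisely why the passage through $(\widetilde{X},\widetilde{d})$ is needed, and it costs nothing because the embedding $X \hookrightarrow \widetilde{X}$ is isometric.
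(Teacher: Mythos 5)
Your proposal is correct and follows essentially the same route as the paper's own proof: pass to the completion $(\widetilde{X},\widetilde{d})$ via Propositions~\ref{p2.13} and Lemma~\ref{l2.20}, take $Y=\overline{\BB}_{\widetilde{X}}$ with the Hausdorff metric, invoke Theorem~\ref{t7.8} for compactness, ultrametricity and the dense set of isolated points, and embed via the singleton map of Lemma~\ref{l6.11}. Your writeup is in fact slightly cleaner than the paper's, which writes $\overline{\BB}_{X}$ where $\overline{\BB}_{\widetilde{X}}$ is meant.
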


\begin{proof}
Let \((\widetilde{X}, \widetilde{d})\) be the completion of \((X, d)\). Then \((\widetilde{X}, \widetilde{d})\) is compact (by Proposition~\ref{p2.13}) and ultrametric (by Lemma~\ref{l2.20}). It follows directly from the definition of \((\widetilde{X}, \widetilde{d})\) that there is an isometric embedding
\[
(X, d) \xrightarrow{\Phi} (\widetilde{X}, \widetilde{d}).
\]
By Lemma~\ref{l6.11}, the mapping
\[
F \colon \widetilde{X} \to \overline{\BB}_X, \quad F(x) = \{x\}
\]
is also an isometric embedding. Consequently,
\[
X \xrightarrow{\Phi} \widetilde{X} \xrightarrow{F} \overline{\BB}_X
\]
is an isometric embedding of \((X, d)\) in \((\overline{\BB}_X, d_H)\). The metric space \((\overline{\BB}_X, d_H)\) is compact and ultrametric with the dense set of isolated points (by Theorem~\ref{t7.8}).
\end{proof}

Theorem~\ref{p6.11} has some extensions for subspaces \((X, d)\) of finite dimensional Euclidean space \(E^{n}\) and corresponding subspaces \((Y, \rho)\) of \(E^{n+1}\). In this case the isometric embedding \((X, d) \hookrightarrow (Y, \rho)\) admits a clear geometric interpretation (see Figure~\ref{fig1}).

\begin{figure}[htb]
\begin{center}
\begin{tikzpicture}[x=1cm, y=1cm,
level 1/.style={level distance=1.5cm,sibling distance=25mm},
level 2/.style={level distance=1.5cm,sibling distance=12mm},
level 3/.style={level distance=1.5cm,sibling distance=6mm},
solid node/.style={circle,draw,inner sep=1.5,fill=black},
hollow node/.style={circle,draw,inner sep=1.5}]
\def\vRule(#1,#2){\draw (#1,#2-4pt)--(#1,#2+4pt)}

\vRule(0,0); \vRule(5,0);
\draw (0,0) -- node [below] {\((X, d)\)} (5cm,0);

\vRule(7,0); \vRule(12,0);
\draw (7,0) -- node [below] {\((Y, \rho)\)} (12,0);
\def\xx{2.5}; \def\dx{2.4cm};

\foreach \x in {1,3}{
	\draw (7+\x*\xx/2, 3*\xx/2) circle (\dx/2);
	\draw [fill, black] (7+\x*\xx/2, 3*\xx/2) circle (1pt);
}
\foreach \x in {1,3,5,7}{
	\draw (7+\x*\xx/4, 3*\xx/4) circle (\dx/4);
	\draw [fill, black] (7+\x*\xx/4, 3*\xx/4) circle (1pt);
}
\foreach \x in {1,3,...,15}{
	\draw (7+\x*\xx/8, 3*\xx/8) circle (\dx/8);
	\draw [fill, black] (7+\x*\xx/8, 3*\xx/8) circle (1pt);
}
\foreach \x in {1,3,...,31}{
	\draw (7+\x*\xx/16, 3*\xx/16) circle (\dx/16);
	\draw [fill, black] (7+\x*\xx/16, 3*\xx/16) circle (1pt);
}
\end{tikzpicture}
\end{center}
\caption{\((X, d)\) is an interval \(I\) in \(E^{1} = \RR\) and \((Y, \rho)\) is a subspace of the Euclidean plane \(E^{2}\) such that \(Y \supset I\), all points \(y \in Y \setminus I\) are isolated and the distances between them decrease as they approach to~\(I\).}
\label{fig1}
\end{figure}

\begin{proposition}\label{p7.10}
Let \((X, d)\) be a totally bounded ultrametric space and let \((\widetilde{X}, \widetilde{d})\) be the completion of \((X, d)\). Then the metric spaces \((\BB_{\widetilde{X}}, \widetilde{d}_H)\) and \((\BB_{X}, d_H)\) are isometric.
\end{proposition}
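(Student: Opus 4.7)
The plan is to show that the bijection $\Phi \colon \BB_{\widetilde{X}} \to \BB_{X}$ defined by $\Phi(\tilde B) = \tilde B \cap X$, already constructed in Proposition~\ref{p3.10}, is in fact an isometry between $(\BB_{\widetilde{X}}, \widetilde{d}_H)$ and $(\BB_{X}, d_H)$. First I would note that the completion $(\widetilde{X}, \widetilde{d})$ is an ultrametric space by Lemma~\ref{l2.20}, so Proposition~\ref{p3.10} applies with $(\widetilde{X}, \widetilde{d})$ in the role of the ambient space and $X$ as its dense subset; this gives that $\Phi$ is a bijection from $\BB_{\widetilde{X}}$ onto $\BB_{X}$.

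Next I would check the distance-preserving property. Fix $\tilde B_1$, $\tilde B_2 \in \BB_{\widetilde{X}}$. If $\tilde B_1 = \tilde B_2$, then $\Phi(\tilde B_1) = \Phi(\tilde B_2)$ and both Hausdorff distances equal zero. If $\tilde B_1 \neq \tilde B_2$, then $\Phi(\tilde B_1) \neq \Phi(\tilde B_2)$ by the injectivity of $\Phi$, and by Corollary~\ref{c2.41} (together with Proposition~\ref{p2.13}, which ensures $\widetilde{X}$ is compact and hence totally bounded) both pairs of balls lie in the respective sets of closed balls. Lemma~\ref{l6.12} then yields
\[
\widetilde{d}_H(\tilde B_1, \tilde B_2) = \diam(\tilde B_1 \cup \tilde B_2) \quad \text{and} \quad d_H(\Phi(\tilde B_1), \Phi(\tilde B_2)) = \diam\bigl((\tilde B_1 \cup \tilde B_2) \cap X\bigr),
\]
where the first diameter is computed in $(\widetilde{X}, \widetilde{d})$ and the second in $(X, d)$, but this distinction is harmless since $\widetilde{d}|_{X \times X} = d$.

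The remaining task is to equate these two diameters, and this is precisely what Lemma~\ref{l3.9} delivers: the set $Z := \tilde B_1 \cup \tilde B_2$ is nonempty, open in $\widetilde{X}$ as a union of open balls, and bounded (it is contained in a single closed ball because $\widetilde{X}$ is bounded), while $X$ is dense in $\widetilde{X}$. Applying the lemma gives $\diam Z = \diam(Z \cap X)$, which completes the identification.

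The proof contains no real obstacle; the only point that needs care is verifying that the hypotheses of Lemma~\ref{l3.9} are met for $Z = \tilde B_1 \cup \tilde B_2$ and that Proposition~\ref{p3.10} is invoked with the ambient space being the completion rather than the original space, so that the bijection $\Phi$ goes from $\BB_{\widetilde{X}}$ to $\BB_{X}$. Once these are set up correctly, the combination of Lemmas~\ref{l6.12} and \ref{l3.9} immediately reduces the isometry statement to an identity of diameters, and the result follows.
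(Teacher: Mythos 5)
Your proof is correct and follows essentially the same route as the paper's: the bijection $\Phi(\widetilde{B}) = \widetilde{B} \cap X$ from Proposition~\ref{p3.10}, combined with Lemma~\ref{l6.12} to rewrite both Hausdorff distances as diameters of unions and Lemma~\ref{l3.9} to identify $\diam(\widetilde{B}_1 \cup \widetilde{B}_2)$ with $\diam\bigl((\widetilde{B}_1 \cup \widetilde{B}_2) \cap X\bigr)$. Your explicit appeal to Corollary~\ref{c2.41} to place the open balls inside the sets of closed balls (so that Lemma~\ref{l6.12} applies) is a small point of care that the paper leaves implicit.
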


\begin{proof}
It is clear that, if \((X_1, d_1)\) and \((X_2, d_2)\) are isometric ultrametric spaces, then \((\BB_{X_1}, d_{1, H})\) and \((\BB_{X_2}, d_{2, H})\) are also isometric. Consequently, without loss of generality, we suppose that \(X\) is a dense subset of \(\widetilde{X}\) and \(d = \widetilde{d}|_{X \times X}\).

By Proposition~\ref{p3.10}, the mapping
\[
\Phi \colon \BB_{\widetilde{X}} \to \BB_{X}, \quad \Phi(B) = B \cap X,
\]
is bijective.

Let \(B_1\) and \(B_2\) be different elements of \(\BB_{\widetilde{X}}\). Using Lemma~\ref{l3.9} and
\[
(B_1 \cup B_2) \cap X = (B_1 \cap X) \cup (B_2 \cap X)
\]
we obtain
\begin{equation*}
\diam (B_1 \cup B_2) = \diam ((B_1 \cup B_2) \cap X) = \diam (\Phi(B_1) \cup \Phi(B_2)).
\end{equation*}
By Lemma~\ref{l6.12} it implies
\[
\widetilde{d}_H (B_1, B_2) = d_H (\Phi(B_1), \Phi(B_2)).
\]
Hence, \(\Phi\) is an isometry as required.
\end{proof}

\begin{corollary}\label{c7.11}
Let \((X, d)\) and \((Y, \rho)\) be totally bounded ultrametric spaces. If the completion \((\widetilde{X}, \widetilde{d})\) and \((\widetilde{Y}, \widetilde{\rho})\) are isometric, then the ultrametric spaces \((\BB_{X}, d_H)\) and \((\BB_Y, \rho_H)\) are isometric.
\end{corollary}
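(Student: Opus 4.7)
The plan is to obtain the desired isometry as a composition of three intermediate isometries, using Proposition~\ref{p7.10} twice together with the transport of balls under the given isometry of completions.

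First I would apply Proposition~\ref{p7.10} to both spaces to get isometries
\[
\Phi_X \colon (\BB_{\widetilde{X}}, \widetilde{d}_H) \to (\BB_X, d_H), \qquad \Phi_Y \colon (\BB_{\widetilde{Y}}, \widetilde{\rho}_H) \to (\BB_Y, \rho_H),
\]
each given by intersection with the dense subspace, as in the proof of Proposition~\ref{p7.10}. Next, let $\Psi \colon \widetilde{X} \to \widetilde{Y}$ be an isometry between the two completions (which exists by hypothesis). I would verify that the induced map
\[
\widetilde{\Psi} \colon \BB_{\widetilde{X}} \to \BB_{\widetilde{Y}}, \qquad \widetilde{\Psi}(B) = \Psi(B),
\]
is a well-defined bijection between balleans and satisfies $\widetilde{\rho}_H(\Psi(B_1), \Psi(B_2)) = \widetilde{d}_H(B_1, B_2)$. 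This is immediate from the definition of an open ball, the fact that $\Psi$ preserves distances to any fixed center, and the definition \eqref{e6.11} of Hausdorff distance. Finally, the composition
\[
(\BB_X, d_H) \xrightarrow{\Phi_X^{-1}} (\BB_{\widetilde{X}}, \widetilde{d}_H) \xrightarrow{\widetilde{\Psi}} (\BB_{\widetilde{Y}}, \widetilde{\rho}_H) \xrightarrow{\Phi_Y} (\BB_Y, \rho_H)
\]
yields the required isometry.

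There is no real obstacle here, as the statement is essentially a diagram chase once Proposition~\ref{p7.10} is available. The only point that requires a moment's attention is the well-definedness of $\widetilde{\Psi}$, i.e.\ checking that the image of an open ball under an isometry between ultrametric spaces is again an open ball of the same radius; this follows from Proposition~\ref{p2.4}, since choosing any $c \in B$ and $r$ equal to the radius of $B$ gives $B = B_r(c)$ and hence $\Psi(B) = B_r(\Psi(c)) \in \BB_{\widetilde{Y}}$.
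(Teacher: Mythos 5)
Your proof is correct and is essentially the paper's argument: the paper derives Corollary~\ref{c7.11} directly from Proposition~\ref{p7.10} together with the observation (made explicitly at the start of the proof of Proposition~\ref{p7.10}) that isometric ultrametric spaces have isometric balleans, which is exactly your composition $\Phi_Y \circ \widetilde{\Psi} \circ \Phi_X^{-1}$. Your check that $\widetilde{\Psi}$ carries open balls to open balls of the same radius is the right (and only) point needing verification, and it holds since a surjective isometry sends $B_r(c)$ to $B_r(\Psi(c))$.
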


Let \(\mathcal{F}\) be a family of metric spaces. A metric space \((Y, \rho)\) is \emph{metrically} (\emph{topologically}) \emph{universal} for \(\mathcal{F}\) if every \((X, d) \in \mathcal{F}\) admits an isometric (topological) embedding \((X, d) \hookrightarrow (Y, \rho)\).

Theorem~\ref{p6.11} gives rise the following interesting question.

\begin{problem}\label{pr6.1}
Let \(\mathcal{F}\) be a family of totally bounded ultrametric spaces. Under what condition there exists a totally bounded ultrametric space which is metrically universal for \(\mathcal{F}\)?
\end{problem}

The following proposition and Theorem~\ref{t6.13} below show that the sets of isolated points of totally bounded ultrametric spaces and, consequently, of open balls in such spaces are small from set-theoretic point of view.

\begin{proposition}\label{p6.12}
Let \((X, d)\) be a totally bounded metric space and let \(I = I(X)\) be the set of all isolated points of \((X, d)\). Then \(I\) is at most countable.
\end{proposition}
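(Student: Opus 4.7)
The plan is to stratify the isolated points by the size of their ``isolation radius'' and then use the total boundedness to cap the cardinality of each stratum. For every $p \in I$ there is some $\varepsilon_p > 0$ with $d(p,x) > \varepsilon_p$ whenever $x \in X \setminus \{p\}$. For each $n \in \mathbb{N}$, I would set
\[
I_n = \{p \in I \colon d(p,x) \geqslant 1/n \text{ for every } x \in X \setminus \{p\}\},
\]
so that $I = \bigcup_{n \in \mathbb{N}} I_n$ (if $\varepsilon_p > 0$ works for $p$, then choose any $n$ with $1/n \leqslant \varepsilon_p$).

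The key observation is that any two distinct points $p, q \in I_n$ satisfy $d(p,q) \geqslant 1/n$, by the defining property of $I_n$. Now the total boundedness of $(X,d)$ (Definition~\ref{d2.10}) provides, for the radius $r = 1/(2n)$, a finite family $\{B_{1/(2n)}(x_1), \ldots, B_{1/(2n)}(x_k)\} \subseteq \mathbf{B}_X$ that covers $X$. By the triangle inequality, any two points lying in the same ball $B_{1/(2n)}(x_i)$ are at distance strictly less than $1/n$, so each such ball can contain at most one element of $I_n$. Consequently $|I_n| \leqslant k < \infty$.

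Finally, $I = \bigcup_{n \in \mathbb{N}} I_n$ is a countable union of finite sets, hence $|I| \leqslant \aleph_0$, which is exactly the claim.

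I expect no serious obstacle: the only subtle point is choosing the covering radius $1/(2n)$ rather than $1/n$, so that the triangle inequality really forces distinct points of $I_n$ into distinct balls of the finite cover. Note that no ultrametric assumption or separability input is used; the argument is purely metric and relies only on Definition~\ref{d2.10}.
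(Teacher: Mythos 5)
Your proof is correct and follows essentially the same route as the paper: stratify the isolated points by isolation radius into sets $I_n$, observe that distinct points of $I_n$ are $1/n$-separated, and use a finite cover by balls of radius $1/(2n)$ to conclude each $I_n$ is finite. The paper phrases the last step as ``$I_n$ is totally bounded, now apply Definition~\ref{d2.10}''; your version just spells out that same argument.
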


\begin{proof}
Let \((\varepsilon_n)_{n \in \mathbb{N}} \subseteq \RR^{+}\) be a strictly increasing sequence with
\begin{equation}\label{p6.12:e1}
\lim_{n \to \infty} \varepsilon_n = 0.
\end{equation}
If we define a subset \(I_n\) of \(I\) as
\[
I_n = \{x \in I \colon B_{\varepsilon_n}(x) \cap X = \{x\}\},
\]
then limit relation~\eqref{p6.12:e1} implies the equality
\[
I = \bigcup_{n \in \mathbb{N}} I_n.
\]
Consequently, it suffices to show that every \(I_n\) is finite, \(|I_n| < \infty\). To prove the last inequality, we note that \(I_n\) is totally bounded (by Corollary~\ref{c2.17}) and use Definition~\ref{d2.10}.
\end{proof}

\begin{theorem}\label{t6.13}
Let \((X, d)\) be a totally bounded ultrametric space with the distance set \(D = D(X)\). Then the ballean \(\mathbf{B}_X\) is at most countable and, for every strictly positive \(r \in D\), the set
\begin{equation*}
\mathbf{B}_X(r) = \{B \in \mathbf{B}_X \colon \diam B = r\}
\end{equation*}
is finite and nonempty, \(1 \leqslant |\mathbf{B}_X(r)| < \infty\).
\end{theorem}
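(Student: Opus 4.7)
The plan is to partition $\mathbf{B}_X$ by diameter and control each piece separately. First I would verify that every $B \in \mathbf{B}_X$ has $\diam B \in D$: if $|B| \geqslant 2$, Corollary~\ref{c2.39} makes $G_{B, d|_{B \times B}}$ a nonempty complete multipartite graph, so $\diam B$ is realized as a distance between two points of $B$ and lies in $D$; if $|B| = 1$ then $\diam B = 0 \in D$. Hence $\mathbf{B}_X = \bigcup_{r \in D} \mathbf{B}_X(r)$. Since $D$ is at most countable (Lemma~\ref{l6.3} when $X$ is infinite, trivial otherwise), countability of $\mathbf{B}_X$ reduces to showing that each summand is at most countable. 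For $r = 0$ this is immediate: $\mathbf{B}_X(0)$ consists precisely of the singletons $\{x\}$ with $x$ an isolated point of $X$, and such points form an at most countable set by Proposition~\ref{p6.12}.

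For $r > 0$ in $D$, nonemptiness of $\mathbf{B}_X(r)$ would follow from the discreteness of the positive values of $D$. Applying Proposition~\ref{p4.4} and Lemma~\ref{l6.5} to the compact completion of $(X, d)$, the point $r$ is an isolated point of $D \subseteq \RR^{+}$, so there exists $r' > r$ with $(r, r') \cap D = \varnothing$. Choosing $x$, $y \in X$ with $d(x,y) = r$, we then have
\[
B_{r'}(x) = \{z \in X \colon d(x,z) < r'\} = \{z \in X \colon d(x,z) \leqslant r\} = \overline{B}_{r}(x),
\]
which contains $y$; combined with the upper bound $\diam \overline{B}_{r}(x) \leqslant r$ coming from~\eqref{e2.3} applied with $a = x$, this yields $\diam B_{r'}(x) = r$, so $B_{r'}(x) \in \mathbf{B}_X(r)$.

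For the finiteness of $\mathbf{B}_X(r)$ with $r > 0$, I would use the following rigidity: every $B \in \mathbf{B}_X(r)$ coincides with $\overline{B}_{r}(c)$ for any $c \in B$, since by Proposition~\ref{p2.40} $B$ is its own smallest containing closed ball and Proposition~\ref{p2.12} identifies that ball as $\overline{B}_{r}(c)$. Consequently, two distinct members $B_1$, $B_2 \in \mathbf{B}_X(r)$ must be disjoint (otherwise Proposition~\ref{p2.5} would force containment, and any $c$ in the smaller one would give $B_1 = \overline{B}_{r}(c) = B_2$), and for $c_i \in B_i$ the disjointness forces $c_2 \notin \overline{B}_{r}(c_1) = B_1$, so $d(c_1, c_2) > r$. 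Now invoking total boundedness (Definition~\ref{d2.10}), cover $X$ by finitely many open balls $B_{r}(a_1), \ldots, B_{r}(a_n)$; by Corollary~\ref{c5.15} each $B_{r}(a_j)$ has diameter strictly less than $r$, so any two points inside $B_{r}(a_j)$ are at distance $< r$, preventing $B_{r}(a_j)$ from containing two of the chosen centers. Thus $|\mathbf{B}_X(r)| \leqslant n$, which together with the first paragraph proves both assertions of the theorem.

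The main delicate step is the nonemptiness claim, where the order type $\mathbf{1} + \bm{\omega}^{*}$ of $D$ (Theorem~\ref{t5.10}, or equivalently Lemma~\ref{l6.5} on the compact completion) is what manufactures a radius $r' > r$ with $(r, r') \cap D = \varnothing$ and promotes a closed ball to an open one of the desired diameter. The finiteness argument then hinges on Corollary~\ref{c5.15}; Remark~\ref{r5.16} shows that this strict bound genuinely requires total boundedness, so the finiteness of $\mathbf{B}_X(r)$ would fail without that hypothesis.
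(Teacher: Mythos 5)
Your proof is correct, and it reaches the conclusion by a noticeably different route from the paper, at least for the countability claim. The paper deduces countability of \(\BB_{X}\) from the Hausdorff-metric machinery of Section~\ref{sec8}: by Proposition~\ref{p7.10} the space \((\BB_{X}, d_H)\) is isometric to \((\BB_{\widetilde{X}}, \widetilde{d}_H)\), by Theorem~\ref{t7.8} the latter is the set of isolated points of the compact space \((\overline{\BB}_{\widetilde{X}}, \widetilde{d}_H)\), and Proposition~\ref{p6.12} is then applied to that compact space of balls. You instead decompose \(\BB_{X} = \bigcup_{r \in D}\mathbf{B}_X(r)\), apply Proposition~\ref{p6.12} directly to \(X\) to handle \(\mathbf{B}_X(0)\), and let the finiteness of each \(\mathbf{B}_X(r)\) with \(r>0\) do the rest; this avoids the Hausdorff-distance results entirely and is more elementary. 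For the finiteness of \(\mathbf{B}_X(r)\) the two arguments are close in spirit --- both exploit that distinct balls of the same positive diameter are disjoint and mutually far apart, and then invoke a finite cover --- though the paper packages this via a disjoint refinement \(\mathcal{F}_r^{*}\) of the cover \(\{B_{r^{*}}(x) \colon x \in X\}\) together with the Borel--Lebesgue property, while you work with one representative center per ball and the cover from Definition~\ref{d2.10} directly. You also spell out the nonemptiness of \(\mathbf{B}_X(r)\) for \(0 < r < \diam X\), which the paper's proof leaves implicit (it only treats \(r = \diam X\) explicitly); your argument via the isolation of \(r\) in \(D\) and the identity \(B_{r'}(x) = \overline{B}_r(x)\) supplies exactly that detail. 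One small quibble with your closing commentary: the only fact you need about \(B_r(a_j)\) is that any two of its points are at distance \(< r\), which already follows from the strong triangle inequality without Corollary~\ref{c5.15}; the place where total boundedness genuinely enters the finiteness argument is the existence of the finite \(r\)-cover itself, so Remark~\ref{r5.16} is not the right witness for indispensability of the hypothesis there.
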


\begin{proof}
Let \((\widetilde{X}, \widetilde{d})\) be the completion of \((X, d)\). Then \((\BB_{\widetilde{X}}, \widetilde{d}_H)\) and \((\BB_{X}, d_H)\) are isometric by Proposition~\ref{p7.10}. The set \(\BB_{\widetilde{X}}\) coincides with the set of all isolated points of the compact ultrametric space \((\BB_{\widetilde{X}}, \widetilde{d}_H)\) by Theorem~\ref{t7.8}. Consequently, \(\BB_{\widetilde{X}}\) is at most countable by Proposition~\ref{p6.12}. Hence, \(\BB_{X}\) is also at most countable.

Let \(r \in D\) be strictly positive. If \(r = \diam X\), then the set \(\mathbf{B}_X(r)\) contains the unique ball \(B = X\). Suppose \(0 < r < \diam X\) holds. By Theorem~\ref{t5.10}, we can find \(r^{*} \in \RR^{+}\) such that \(r < r^{*}\) and
\[
(r, r^{*}) \cap D = \varnothing.
\]
Let us consider the family
\[
\mathcal{F}_r = \{B_{r^{*}}(x) \colon x \in X\}.
\]
It follows from Proposition~\ref{p2.5} that there is \(X^{*} \subseteq X\) such that, for all distinct \(y^{*}\), \(z^{*} \in X^{*}\),
\begin{equation}\label{t6.13:e2}
B_{r^*}(y^{*}) \cap B_{r^*}(z^{*}) = \varnothing
\end{equation}
and
\[
\bigcup_{x \in X} B_{r^*}(x) = \bigcup_{x^{*} \in X^{*}} B_{r^*}(x^{*}).
\]

Let \(\mathcal{F}_r^{*} = \{B_{r^*}(x^{*}) \colon x^{*} \in X^{*}\}\). Then the inclusion
\begin{equation}\label{t6.13:e3}
\mathbf{B}_X(r) \subseteq \mathcal{F}_r^{*}
\end{equation}
holds. Using Definition~\ref{d2.3} with \(A = X\) and \(\mathcal{F} = \mathcal{F}_r^{*}\) we see that~\eqref{t6.13:e2} implies \(|\mathcal{F}_r^{*}| < \infty\). The inequality \(|\mathbf{B}_X(r)| < \infty\) follows from \(|\mathcal{F}_r^{*}| < \infty\) and \eqref{t6.13:e3}.
\end{proof}

In the rest of the section we will discuss some questions related to topologically universal, compact, ultrametric spaces and metrically universal ones.

\begin{remark}\label{r8.15}
The universal spaces for the class $\mathfrak{S}$ of all separable metric spaces are studied in some details. The investigations in this direction were initiated by P.~S.~Urysohn for topologically \(\mathfrak{S}\)-universal spaces in~\cite{Ury1925MA} and in~\cite{Ury1925CRASP, Ury1927BSM} for metrically \(\mathfrak{S}\)-universal ones. The space \(C([0,1])\) of all continuous, real-valued functions with the sup-norm is separable and metrically \(\mathfrak{S}\)-universal by famous Banach theorem~\cite{Ban1932}. Metrically universal, separable, metric spaces are constructed S. Iliadis~\cite{Ili2005} for classes of separable metric spaces having some restrictions on dimension of these spaces. (See also \cite{Ili2012CSP, Ili2013TA, Pol1986TAMS, Nag1959FM, Noe1930MA, Smi1962AMSTS2, Wen1973FM, IN2013TA}.) The class \(\mathfrak{SU}\) of all ultrametric, separable spaces is an important subclass of \(\mathfrak{S}\). It was proved by A.~F.~Timan and I.~A.~Vestfrid \cite{TV1983FAA} that the space $l_2$ of real sequences $(x_n)_{n\in\mathbb N}$ with the norm $\Bigl(\sum_{n\in\mathbb N}x_{n}^{2}\Bigr)^{1/2}$ is metrically \(\mathfrak{SU}\)-universal and it is clear that \(l_2\) is not metrically \(\mathfrak{S}\)-universal. The first example of metrically \(\mathfrak{SU}\)-universal ultrametric space was constructed by I.~A.~Vestfrid \cite{Ves1994UMJ}. As was proved by A.~Lemin and V.~Lemin in \cite{LL2000TaiA}, if an ultrametric space \((Y, \rho)\) is metrically universal for all two-points spaces, then the weight of \((Y, \rho)\) is not less than the continuum $\mathfrak{c}$. Consequently, no $\mathfrak{SU}$-universal, ultrametric space can be separable. In this connection it should be pointed out that, under some set-theoretic assumptions, for every cardinal $\tau> \mathfrak{c}$ there is an ultrametric space \((X, d)\) of the weight \(\tau\) such that every ultrametric space with the weight \(\tau\) can be isometrically embedded into \((X, d)\). The last statement was proved by J.~Vaughan~\cite{Vau1999TP}. Some results related to ``minimal'', metrically universal, ultrametric spaces can be found in~\cite{BDKP2017AASFM}.
\end{remark}

The following theorem implies that every nonempty open set in the field \(\mathbb{Q}_2\) is topologically universal for the class \(\mathfrak{SU}\) of all separable, ultrametric spaces.

\begin{theorem}\label{t8.15}
Let \((\mathbb{Q}_2, d_2)\) be the ultrametric space of \(2\)-adic numbers, \(d_2 (x, y) = |x-y|_2\). Then every \(B \in \BB_{\mathbb{Q}_2}\) is topologically universal for the class \(\mathfrak{SU}\) of all separable, ultrametric spaces.
\end{theorem}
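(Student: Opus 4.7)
The plan is to reduce the statement to two independent facts: every open ball in $\mathbb{Q}_2$ is homeomorphic to the Cantor space $D^{\aleph_0}$, and every separable ultrametric space embeds topologically into $D^{\aleph_0}$. Composing an embedding of the latter type with a homeomorphism of the former type then yields a topological embedding into any prescribed $B \in \BB_{\mathbb{Q}_2}$.

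For the first fact I would exploit the discreteness of the distance set $D(\mathbb{Q}_2) = \{2^n \colon n \in \mathbb{Z}\} \cup \{0\}$ recalled in Example~\ref{ex5.20}. Given any $B = B_r(c) \in \BB_{\mathbb{Q}_2}$ with $r > 0$, let $n$ be the largest integer with $2^n < r$; then $B$ coincides with the closed ball $\overline{B}_{2^n}(c)$, since no distance of $\mathbb{Q}_2$ lies in the interval $(2^n, r)$. The $\mathbb{Q}_2$-affine map $\varphi(x) = 2^n(x - c)$ carries $\overline{B}_{2^n}(c)$ bijectively onto $\overline{B}_1(0)$ with continuous inverse $y \mapsto 2^{-n} y + c$, and is therefore a homeomorphism. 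Composing with the homeomorphism $\overline{B}_1(0) \cong D^{\aleph_0}$ provided by Example~\ref{ex3.11} gives $B \cong D^{\aleph_0}$.

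For the second fact, let $(X, d) \in \mathfrak{SU}$. By Proposition~\ref{p2.2}\ref{p2.2:s1}, every open ball of $(X, d)$ is clopen; combined with separability this yields a countable base $\{U_n\}_{n \in \mathbb{N}}$ of $X$ consisting of clopen sets. Define
\[
\Phi \colon X \to D^{\aleph_0}, \qquad \Phi(x) = (\chi_{U_n}(x))_{n \in \mathbb{N}},
\]
where $\chi_{U_n}$ denotes the characteristic function of $U_n$. Each coordinate map is continuous because $U_n$ is clopen, so $\Phi$ is continuous. Injectivity is immediate from the Hausdorff property of $X$ together with the fact that $\{U_n\}$ is a base. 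For openness of $\Phi$ viewed as a map $X \to \Phi(X)$, given any open $V \ni x$ and any $n$ with $x \in U_n \subseteq V$, the cylinder $W_n = \{y \in D^{\aleph_0} \colon y_n = 1\}$ is open in $D^{\aleph_0}$ and satisfies $W_n \cap \Phi(X) = \Phi(U_n) \subseteq \Phi(V)$. Hence $\Phi$ is a topological embedding.

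The only point that requires real care is the identification, in the first step, of an arbitrary open ball of $\mathbb{Q}_2$ with a closed ball of the form $\overline{B}_{2^n}(c)$; this depends on the explicit form of $D(\mathbb{Q}_2)$ rather than on a general ultrametric principle. Once this identification is in hand, the similarity $x \mapsto 2^n(x - c)$ and the classical characteristic-function embedding $\Phi$ are routine, and their composition produces the required embedding $X \hookrightarrow B$ for every $B \in \BB_{\mathbb{Q}_2}$.
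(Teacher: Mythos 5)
Your proof is correct, and its skeleton is the same as the paper's: identify the given $B \in \BB_{\mathbb{Q}_2}$ with $\overline{B}_1(0)$ via a $\mathbb{Q}_2$-affine similarity, invoke the homeomorphism $\overline{B}_1(0) \cong D^{\aleph_0}$ from Example~\ref{ex3.11}, and use the topological universality of $D^{\aleph_0}$ for $\mathfrak{SU}$. The one substantive difference is that the paper simply cites Engelking (Theorem~6.2.16) for that universality, whereas you prove it from scratch: a countable clopen base exists by Proposition~\ref{p2.2} plus separability, and the characteristic-function map $\Phi(x)=(\chi_{U_n}(x))_{n}$ is continuous, injective, and open onto its image. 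That argument is standard and correct, and it makes your write-up self-contained where the paper outsources. Your normalization also runs in the opposite direction (you map $B$ onto $\overline{B}_1(0)$ rather than $\overline{B}_1(0)$ onto $B$), and your explicit identification $B_r(c)=\overline{B}_{2^n}(c)$ via the discreteness of $D(\mathbb{Q}_2)$ is a clean way to pin down the correct scaling factor $2^n$ (note $|2^n|_2 = 2^{-n}$, so your check that $\varphi$ lands in $\overline{B}_1(0)$ is exactly right); the paper instead reaches the same identification through its total-boundedness machinery (Lemma~\ref{c6.6}). No gaps.
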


\begin{proof}
Theorem~6.2.16 \cite{Eng1989} implies that the Cantor set \(D^{\aleph_0}\) is a topologically universal space for \(\mathfrak{SU}\). It was shown in Example~\ref{ex3.11} that \(D^{\aleph_0}\) is homeomorphic the closed unit ball \(\overline{B}_1(0)\) of ultrametric space \((\mathbb{Q}_2, d_2)\). From Theorem~51 \cite{Sch1985} it follows  that every bounded subset of \((\mathbb{Q}_p, d_p)\) is totally bounded. Hence, by Lemma~\ref{c6.6}, we obtain \(\overline{B}_1(0) \in \BB_{\mathbb{Q}_2}\). For every nonzero \(a \in \mathbb{Q}_2\) and every \(b \in \mathbb{Q}_2\) the function \(F \colon \mathbb{Q}_2 \to \mathbb{Q}_2\) defined as
\begin{equation}\label{t8.15:e1}
F(x) = ax + b
\end{equation}
is simultaneously a linear bijection on the field \(\mathbb{Q}_2\) and a self-homeomorphism of the ultrametric space \((\mathbb{Q}_2, d_2)\). The distance set \(D(\mathbb{Q}_2, d_2)\) of the ultrametric space \((\mathbb{Q}_2, d_2)\) is
\[
\{2^{j} \colon j \in \mathbb{Z}\} \cup \{0\}.
\]
Hence, the inclusion
\begin{equation}\label{t8.15:e2}
D(\mathbb{Q}_2, d_2) \subseteq \mathbb{Q}_2
\end{equation}
holds. Now let \(B \in \BB_{\mathbb{Q}_2}\) be given and let \(b \in B\). Write \(a = \diam B\) and consider \(F\) defined by \eqref{t8.15:e1}. From \(b \in B \subseteq \mathbb{Q}_2\) and \eqref{t8.15:e2} it follows that \(F\) is correctly defined and \(F(\overline{B}_1(0)) = B\) holds. The restriction \(F|_{\overline{B}_1(0)}\) is a homeomorphism of \(\overline{B}_1(0)\) and \(B\). Thus, \(B\) is topologically universal for \(\mathfrak{SU}\) as required.
\end{proof}

\begin{remark}\label{r7.19}
Isometrical embeddings of ultrametric spaces in non-Archimedean valued field were constructed by W. H. Schikhof \cite{SchIM1984}.
\end{remark}

It can be proved that a metric space \((X, d)\) is separable if and only if there is a metric \(\rho\) such that \(d\) and \(\rho\) are topologically equivalent and \((X, \rho)\) is totally bounded. (It is a reformulation of Theorem~4.3.5 \cite{Eng1989}.) Theorem~\ref{t8.15} gives us the following refinement of this result for ultrametric spaces.

\begin{corollary}\label{c8.16}
Let \((X, d)\) be an ultrametric space. Then \((X, d)\) is separable if and only if there is an ultrametric \(\rho \colon X \times X \to \RR^{+}\) such that \((X, \rho)\) is totally bounded and \(\rho\) is topologically equivalent to \(d\).
\end{corollary}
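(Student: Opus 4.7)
The plan is to establish the two implications separately, with Theorem~\ref{t8.15} doing essentially all the work in the nontrivial direction.

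First I would dispatch the easy direction. Assume there exists an ultrametric $\rho$ on $X$ such that $(X, \rho)$ is totally bounded and $\rho$ is topologically equivalent to $d$. Every totally bounded metric space is separable (as noted right before Definition~\ref{d2.18}), so $(X, \rho)$ is separable; since separability is a purely topological property and $d$, $\rho$ generate the same topology, $(X, d)$ is separable as well.

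For the nontrivial direction, suppose $(X, d)$ is separable. Fix any open ball $B \in \mathbf{B}_{\mathbb{Q}_2}$ (for concreteness, take $B = \overline{B}_1(0)$, which lies in $\mathbf{B}_{\mathbb{Q}_2}$ by Lemma~\ref{c6.6} and the fact, cited in the proof of Theorem~\ref{t8.15}, that bounded subsets of $(\mathbb{Q}_p, d_p)$ are totally bounded). By Theorem~\ref{t8.15}, $B$ is topologically universal for the class $\mathfrak{SU}$ of separable ultrametric spaces, so there exists a topological embedding $\Phi \colon (X, d) \to (B, d_2|_{B \times B})$, i.e., a homeomorphism onto $\Phi(X)$. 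Define the pullback ultrametric
\[
\rho \colon X \times X \to \RR^{+}, \qquad \rho(x, y) := d_2(\Phi(x), \Phi(y)).
\]
The injectivity of $\Phi$ and the fact that $d_2$ is an ultrametric imply that $\rho$ is an ultrametric on $X$, and $\Phi \colon (X, \rho) \to (\Phi(X), d_2|_{\Phi(X) \times \Phi(X)})$ is an isometry by construction.

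It remains to verify the two desired properties of $\rho$. Since $B$ is totally bounded and $\Phi(X) \subseteq B$, Corollary~\ref{c2.17} yields that $(\Phi(X), d_2)$ is totally bounded; the isometry then transfers this property to $(X, \rho)$. For topological equivalence, observe that the isometry $\Phi \colon (X, \rho) \to (\Phi(X), d_2)$ is in particular a homeomorphism, while by hypothesis $\Phi \colon (X, d) \to (\Phi(X), d_2)$ is also a homeomorphism onto its image; composing one with the inverse of the other shows that the identity map $\operatorname{Id} \colon (X, \rho) \to (X, d)$ is a homeomorphism, so $\rho$ and $d$ generate the same topology. There is no genuine obstacle in this proof: once Theorem~\ref{t8.15} is available, the only thing to monitor is that "topologically universal" is interpreted as embedding by a homeomorphism onto the image, which is what guarantees the topological equivalence of the pulled-back ultrametric with the original one.
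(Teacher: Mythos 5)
Your proof is correct and follows exactly the route the paper intends: the paper gives no explicit proof of Corollary~\ref{c8.16} but states that it is a refinement of the classical result obtained via Theorem~\ref{t8.15}, and your argument — pulling back the $2$-adic ultrametric along a topological embedding of $(X,d)$ into the totally bounded ball $\overline{B}_1(0) \subseteq \mathbb{Q}_2$ and invoking Corollary~\ref{c2.17} — is precisely that refinement. Both directions are handled correctly, including the observation that the pulled-back $\rho$ is an ultrametric by injectivity of the embedding and that topological equivalence follows from composing the two homeomorphisms onto $\Phi(X)$.
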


Corollary~\ref{c8.16} and Theorem~\ref{t5.10} imply that, for every infinite separable ultrametric space \((X, d)\), there is an ultrametric \(\rho \colon X \times X \to \RR^{+}\) such that \(d\) and \(\rho\) are topologically equivalent, \(0\) is an unique accumulation point of \(D(X, \rho)\), and the (ordered) distance set
\[
D = \{\rho(x, y) \colon x, y \in X\}
\]
has the order type \(\mathbf{1} + \bm{\omega}^{*}\).

\section{Free representing trees}\label{sec9}

The main goal of the section is to introduce into consideration a (free) representing tree \(T_X\) for every nonempty totally bounded ultrametric space \((X, d)\) and prove some properties of \(T_X\). Using concept of diametrical graph, we inductively construct \(T_X\) for every nonempty totally bounded ultrametric space \((X, d)\) (see Definition~\ref{d9.1} below). In Proposition~\ref{p8.4} it is proved that \(T_X\) is a locally finite tree for which the equality \(V(T_X) = \BB_{X}\) holds. The structure of representing trees \(T_X\) corresponding to ultrametric compacts which are homeomorphic to the Cantor set is described in Proposition~\ref{p9.10}.

\begin{definition}\label{d9.1}
Let \((X, d)\) be a nonempty totally bounded ultrametric space $(X, d)$. Then we associate with \((X, d)\) a graph \(T_X\) by the following rule:

\begin{enumerate}
\renewcommand{\theenumi}{\ensuremath{(i_{\arabic{enumi}})}}
\item\label{d9.1:s1} \(V(T_X)\) is a subset of \(2^{X} = \{A \colon A \subseteq X\}\).
\item\label{d9.1:s2} The set \(X\) is a vertex of \(T_X\).
\item\label{d9.1:s3} A set \(A \subset X\) belongs to \(V(T_X)\) if and only if there exists a finite sequence \(X_1\), \(\ldots\), \(X_n\) of subsets of \(X\) such that \(X_1 = X\), \(X_n = A\) and, for every \(i \in \{1, \ldots, n-1\}\), \(X_{i+1}\) is a part of \(G_{X_i, d|_{X_i \times X_i}}\).
\item\label{d9.1:s4} The vertices \(A\), \(B \in V(T_X)\) are adjacent if and only if \(A\) is a part of \(G_{B, d|_{B \times B}}\) or \(B\) is a part of \(G_{A, d|_{A \times A}}\).
\end{enumerate}
\end{definition}

\begin{remark}\label{r8.2}
Let \(A\) be a subset of totally bounded ultrametric space \((X, d)\). Using Example~\ref{ex2.24}, Theorem~\ref{t2.24} and Corollary~\ref{c2.39} it is easy to prove that the diametrical graph \(G_{A, d|_{A \times A}}\) is complete multipartite if and only if \(|A| \geqslant 2\) holds. Now in the correspondence with condition~\ref{d9.1:s3} of Definition~\ref{d9.1}, we see that if \(B \in V(T_X)\) and \(B \neq X\), then there is \(A \in V(T_X)\) such that \(B\) is a part of \(G_{A, d|_{A \times A}}\). Consequently, every \(B \in V(T_X)\) is a nonempty subset of \(X\). Moreover, condition~\ref{d9.1:s4} implies that the inequality
\[
\max \{|A|, |B|\} \geqslant 2
\]
holds whenever \(\{A, B\} \in E(T_X)\).
\end{remark}

To study the properties of \(T_X\), it is convenient to rank the set \(V(T_X)\) by labeling
\[
V(T_X) \ni A \mapsto \lev(A) \in \mathbb{N} \cup \{0\}
\]
defined as \(\lev(X) = 0\) and if \(A \in V(T_X)\) and \(A \neq X\), then
\begin{equation}\label{e8.1}
\lev(A) = n-1,
\end{equation}
where \(n\) is the smallest positive integer satisfying condition \ref{d9.1:s3} of Definition~\ref{d9.1}. We will say that \(A\) is a vertex on the \emph{level} \(n-1\) (or \(A\) has the level \(n-1\)) if~\eqref{e8.1} holds.

\begin{example}\label{ex8.3}
Let \(X = \{x_1, x_2, x_3, x_4\}\) and let \(d \colon X \times X \to \RR^{+}\) be defined as
\[
d(x, y) = \begin{cases}
0 & \text{if } x = y,\\
2 & \text{if } x_1 \in \{x, y\} \text{ and } x \neq y,\\
1 & \text{othetwise}.
\end{cases}
\]
Then \((X, d)\) is a finite ultrametric space. For the graph \(T_X\) (see Figure~\ref{fig2}) we have
\begin{gather*}
V(T_X) = \bigl\{\{x_1, x_2, x_3, x_4\}, \{x_2, x_3, x_4\}, \{x_1\}, \{x_2\}, \{x_3\}, \{x_4\}\bigr\}, \\
\lev(\{x_1, x_2, x_3, x_4\}) = 0, \quad \lev(\{x_1\}) = \lev(\{x_2, x_3, x_4\}) = 1, \\
\lev(\{x_2\}) = \lev(\{x_3\}) = \lev(\{x_4\}) = 2,\\
\begin{aligned}
E(T_X) & = \Bigl\{\bigl\{\{x_1, x_2, x_3, x_4\}, \{x_2, x_3, x_4\}\bigr\}, \bigl\{\{x_1, x_2, x_3, x_4\}, \{x_1\}\bigr\}, \\
& \bigl\{\{x_2, x_3, x_4\}, \{x_2\}\bigr\}, \bigl\{\{x_2, x_3, x_4\}, \{x_3\}\bigr\}, \bigl\{\{x_2, x_3, x_4\}, \{x_4\}\bigr\}\Bigr\}.
\end{aligned}
\end{gather*}
\end{example}

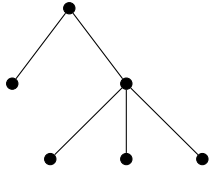
\begin{figure}[ht]
\begin{center}
\begin{tikzpicture}[
level 1/.style={level distance=1cm,sibling distance=15mm},
level 2/.style={level distance=1cm,sibling distance=10mm},
level 3/.style={level distance=1cm,sibling distance=10mm},
solid node/.style={circle,draw,inner sep=1.5,fill=black},
hollow node/.style={circle,draw,inner sep=1.5}]

\node [solid node] at (0,0) {}
child{node [solid node]{}}
child{node [solid node]{}
	child{node [solid node]{}}
	child{node [solid node]{}}
	child{node [solid node]{}}
};
\end{tikzpicture}
\end{center}
\caption{The graph \(T_X\) corresponding to \((X, d)\) is a tree.}
\label{fig2}
\end{figure}

\begin{lemma}\label{l8.2}
Let \((X, d)\) be a nonempty totally bounded ultrametric space. Then the equality
\[
V(T_X) = \BB_{X}
\]
holds.
\end{lemma}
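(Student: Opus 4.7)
The plan is to establish the two inclusions $V(T_X) \subseteq \BB_X$ and $\BB_X \subseteq V(T_X)$ separately, in both cases identifying the tree-building operation of Definition~\ref{d9.1} with the decomposition of an ultrametric ball into the parts of its diametrical graph.

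For $V(T_X) \subseteq \BB_X$, I would argue by induction on the level function defined in~\eqref{e8.1}. For the base case the only level-zero vertex is $X$, and since $(X, d)$ is totally bounded the diameter $\diam X$ is finite; for any $c \in X$ and any $r > \diam X$ one has $B_r(c) = X$, so $X \in \BB_X$. For the inductive step, let $B \in V(T_X)$ with $\lev(B) \geqslant 1$. A shortest chain witnessing this level selects an $A \in V(T_X)$ with $\lev(A) = \lev(B) - 1$ such that $B$ is a part of the diametrical graph $G_{A, d|_{A \times A}}$. By the inductive hypothesis, $A \in \BB_X$; hence $A$ is totally bounded (Corollary~\ref{c2.17}) and necessarily $|A| \geqslant 2$ (otherwise no part of $G_{A, d|_{A \times A}}$ could exist, cf.\ Remark~\ref{r8.2}). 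The last assertion of Theorem~\ref{t2.24}, applied to the ultrametric subspace $(A, d|_{A \times A})$, then shows that $B$ is an open ball in $(A, d|_{A \times A})$, and Corollary~\ref{c2.4} gives $\BB_A \subseteq \BB_X$, so $B \in \BB_X$.

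For the reverse inclusion $\BB_X \subseteq V(T_X)$, start with an arbitrary $B \in \BB_X$; the case $B = X$ is immediate, so assume $B \neq X$. I would construct a finite sequence of open balls $B = B^{(1)}, B^{(2)}, \ldots, B^{(n)} = X$ in which each $B^{(i)}$ is a part of $G_{B^{(i+1)}, d|_{B^{(i+1)} \times B^{(i+1)}}}$, by iterating Corollary~\ref{c6.8}. Inspection of the proof of that corollary, together with Corollary~\ref{c5.15}, shows that each step strictly increases the diameter: one has $\diam B^{(i)} < r_i \leqslant r_{i+1} = \diam B^{(i+1)}$, where $r_i$ denotes the radius of $B^{(i)}$. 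Consequently the diameters $\diam B^{(2)}, \diam B^{(3)}, \ldots$ form a strictly increasing sequence inside $D(X) \cap (0, \diam X]$, and by Theorem~\ref{t5.10} this set is finite in the infinite case (and trivially so when $X$ is finite). The iteration of Corollary~\ref{c6.8} can therefore only halt at $B^{(n)} = X$, and reversing the resulting sequence yields the chain $X_1 = X, X_2 = B^{(n-1)}, \ldots, X_n = B$ demanded by condition~\ref{d9.1:s3} of Definition~\ref{d9.1}, whence $B \in V(T_X)$.

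The main obstacle is precisely the termination of this upward construction. Parts of the diametrical graphs of different balls need not have strictly increasing diameters in general (Remark~\ref{r6.8}), so one really must exploit the specific chain produced by Corollary~\ref{c6.8}, whose construction yields the strict inequality $\diam B^{(i)} < \diam B^{(i+1)}$, and then combine this with the order-theoretic description of $D(X)$ furnished by Theorem~\ref{t5.10} in order to force the finiteness of the ascending sequence.
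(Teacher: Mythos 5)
Your proof is correct and follows essentially the same route as the paper: a level induction for $V(T_X) \subseteq \BB_{X}$ via Theorem~\ref{t2.24}, Corollary~\ref{c2.39} and Corollary~\ref{c2.4}, and an upward iteration of Corollary~\ref{c6.8} with strictly increasing diameters terminated by Theorem~\ref{t5.10} for the reverse inclusion (the paper phrases this second part contrapositively, producing an infinite strictly increasing sequence in $D(X)$ as a contradiction, but the argument is the same). One small imprecision: $D(X) \cap (0, \diam X]$ need not be finite; what Theorem~\ref{t5.10} gives is that $D(X) \cap [\varepsilon, \diam X]$ is finite for each $\varepsilon > 0$, which, applied with $\varepsilon = \diam B^{(2)} > 0$, is exactly what your termination argument requires.
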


\begin{proof}
This lemma is trivial if \(|X| = 1\). Suppose that \(|X| \geqslant 2\). At first we prove the inclusion \(V(T_X) \subseteq \BB_{X}\).

It is clear that \(X \in \BB_{X}\). Let \(G_{X, d}\) be the diametrical graph of \((X ,d)\). Then, by Theorem~\ref{t2.24} and Proposition~\ref{p2.36}, there is an integer \(k \geqslant 2\) such that \(G_{X, d}\) is complete \(k\)-partite with some parts \(X_1\), \(\ldots\), \(X_k \in \BB_{X}\). Since \(X\) is the unique subset of \(X\) on the zero level, a set \(A \subseteq X\) belongs to \(V(T_X)\) and has the level \(1\), \(\lev(A) = 1\), if and only if \(A = X_i\) for some \(i \in \{1, \ldots, k\}\).

Let us define a subset \(\mathcal{F}_1\) of \(\{X_1, \ldots, X_n\}\) as
\[
\mathcal{F}_1 := \{X_i \colon |X_i| \geqslant 2 \text{ and } 1 \leqslant i \leqslant k\}.
\]
If \(\mathcal{F}_1 = \varnothing\), then \(V(T_X) = \{X\} \cup \{X_1, \ldots, X_n\}\) and the inclusion \(V(T_X) \subseteq \BB_{X}\) follows from Theorem~\ref{t2.24}. Let \(\mathcal{F}_1 \neq \varnothing\) and \(X^{*} \in \mathcal{F}_1\). Then \((X^{*}, d|_{X^{*} \times X^{*}})\) is a totally bounded ultrametric space by Proposition~\ref{p2.11}. Consequently, by Proposition~\ref{p2.36}, the diametrical graph \(G_{X^{*}, d|_{X^{*} \times X^{*}}}\) is complete multipartite and, moreover, by Theorem~\ref{t2.24}, every part of \(G_{X^{*}, d|_{X^{*} \times X^{*}}}\) is an open ball in the space \((X^{*}, d|_{X^{*} \times X^{*}})\). By Corollary~\ref{c2.4}, we have the inclusion
\[
\BB_{X^{*}} \subseteq \BB_{X}.
\]
Consequently, if \(A \in V(T_X)\) and \(\lev(A) = 2\), then the membership
\begin{equation}\label{l8.2:e1}
A \in \BB_{X}
\end{equation}
is valid. Repeating the above reasons for \(X^{*} \in V(T_X)\) with \(\diam X^{*} > 0\) and \(\lev(X^{*}) = n\), \(n = 2\), \(3\), \(\ldots\) we obtain \eqref{l8.2:e1} for all \(A \in V(T_X)\).

Now the desirable inclusion
\begin{equation}\label{l8.2:e2}
V(T_X) \subseteq \BB_{X}
\end{equation}
follows from Definition~\ref{d9.1}.

Let us prove the converse inclusion
\begin{equation}\label{l8.2:e3}
\BB_{X} \subseteq V(T_X).
\end{equation}
Suppose \(\BB_{X} \setminus V(T_X) \neq \varnothing\) and let \(B_1\) belong to \(\BB_{X} \setminus V(T_X)\). By Corollary~\ref{c6.8}, there is \(B_2 \in \BB_{X}\) such that \(B_1\) is a part of \(G_{B_2, d|_{B_2 \times B_2}}\) and \(\diam B_2 > 0\) (see Remark~\ref{r6.8}). If \(B_2 \in V(T_X)\), then we also have \(B_1 \in V(T_X)\) by Definition~\ref{d9.1}. Thus, \(B_2 \in \BB_{X} \setminus V(T_X)\) holds. Consequently, there is \(B_3 \in \BB_{X}\) such that \(B_2\) is a part of \(G_{B_3, d|_{B_3 \times B_3}}\). Since \(B_1 \subseteq B_2 \subseteq B_3\), the double inequality
\begin{equation}\label{l8.2:e4}
\diam B_1 \leqslant \diam B_2 \leqslant \diam B_3
\end{equation}
holds. By Corollary~\ref{c2.41}, we have \(\BB_{X} \subseteq \overline{\BB}_{X}\). Using \eqref{l8.2:e4}, Lemma~\ref{l2.6} and the relation \(B_1 \neq B_2 \neq B_3\), we obtain
\[
\diam B_1 < \diam B_2 < \diam B_3.
\]
Repeating this construction, we can find a sequence
\[
(B_i)_{i \in \mathbb{N}} \subseteq \BB_{X}
\]
such that \(\diam B_i < \diam B_{i+1}\) holds for every \(i \in \mathbb{N}\). Consequently, the distance set \(D\) contains a strictly increasing, infinite sequence, contrary to Theorem~\ref{t5.10}.
\end{proof}

\begin{lemma}\label{l8.3}
Let \((X, d)\) be a nonempty totally bounded ultrametric space with the distance set \(D = D(X)\) and let \(B_1\), \(B_2 \in \BB_{X}\) be different. Then the following statements are equivalent:
\begin{enumerate}
\item \label{l8.3:s1} \(\{B_1, B_2\} \in E(T_X)\) is valid.
\item \label{l8.3:s2} There are \(r \in D \setminus \{0\}\) and \(c \in B_1 \cap B_2\) such that
\[
\diam B_1 = r \quad \text{and} \quad B_2 = \{x \in X \colon d(x, c) < r\}
\]
or
\[
\diam B_2 = r \quad \text{and} \quad B_1 = \{x \in X \colon d(x, c) < r\}.
\]
\item \label{l8.3:s3} For every \(B \in \BB_{X}\) we have either
\[
(B_1 \subset B_2) \text{ and } ((B_1 \subseteq B \subseteq B_2) \Rightarrow (B_1 = B \text{ or } B_2 = B))
\]
or
\[
(B_2 \subset B_1) \text{ and } ((B_2 \subseteq B \subseteq B_1) \Rightarrow (B_1 = B \text{ or } B_2 = B)).
\]
\end{enumerate}
\end{lemma}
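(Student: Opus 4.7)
The plan is to prove the three equivalences by going through \(\ref{l8.3:s1} \Leftrightarrow \ref{l8.3:s2}\) directly, then \(\ref{l8.3:s1} \Rightarrow \ref{l8.3:s3}\), and finally \(\ref{l8.3:s3} \Rightarrow \ref{l8.3:s1}\). The basic dictionary will be provided by Theorem~\ref{t2.24}: the parts of \(G_{B_i, d|_{B_i \times B_i}}\) are precisely the open balls (in the subspace \(B_i\), hence in \(X\)) whose radius equals \(\diam B_i\). Throughout, I will use the fact (Corollary~\ref{c2.41}) that every \(B \in \BB_X\) is also a closed ball, Lemma~\ref{l2.6} to read off \(B = \overline{B}_{\diam B}(c)\) for any \(c \in B\), and Corollary~\ref{c5.15} which supplies the strict inequality \(\diam B < r\) for an open ball \(B = B_r(c)\) in a totally bounded ultrametric space.

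For \(\ref{l8.3:s1} \Leftrightarrow \ref{l8.3:s2}\), by Definition~\ref{d9.1}\ref{d9.1:s4} the edge condition means, up to swapping \(B_1\) and \(B_2\), that \(B_1\) is a part of \(G_{B_2, d|_{B_2 \times B_2}}\). By Theorem~\ref{t2.24} this is equivalent to \(B_1 \subseteq B_2\) and \(B_1 = \{x \in B_2 \colon d(x,c) < \diam B_2\}\) for some (hence any) \(c \in B_1\). Because \(c \in B_2 = \overline{B}_{\diam B_2}(c)\) (Lemma~\ref{l2.6}), the set \(\{x \in X \colon d(x,c) < \diam B_2\}\) is automatically contained in \(B_2\), so the restriction to \(B_2\) is harmless. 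Setting \(r = \diam B_2\) and noting \(r > 0\) (since \(|B_2| \geq 2\), as Remark~\ref{r8.2} makes clear), we land in the second alternative of \ref{l8.3:s2}. The converse is immediate from the same observation.

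For \(\ref{l8.3:s1} \Rightarrow \ref{l8.3:s3}\), assume \(B_1\) is a part of \(G_{B_2, d|_{B_2 \times B_2}}\), so \(B_1 \subsetneq B_2\) and \(B_1 = \{x \in X \colon d(x, c) < \diam B_2\}\) for every \(c \in B_1\). Suppose \(B \in \BB_X\) satisfies \(B_1 \subseteq B \subseteq B_2\). Pick \(c \in B_1\) and write \(B = B_\rho(c)\) with \(\rho > 0\) (using Proposition~\ref{p2.4}). Two cases: if \(\rho \leqslant \diam B_2\) then \(B \subseteq \{x \colon d(x,c) < \diam B_2\} = B_1\), forcing \(B = B_1\); if \(\rho > \diam B_2\) then, since \(c \in B_2 = \overline{B}_{\diam B_2}(c)\), every \(x \in B_2\) satisfies \(d(x,c) \leqslant \diam B_2 < \rho\), so \(B_2 \subseteq B\) and thus \(B = B_2\). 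Together with the dual case these give \ref{l8.3:s3}.

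The main obstacle lies in the direction \(\ref{l8.3:s3} \Rightarrow \ref{l8.3:s1}\), which requires manufacturing a ``candidate maximal proper sub-ball'' and ruling out the degenerate option. Suppose (say) \(B_1 \subsetneq B_2\) and no \(B \in \BB_X\) lies strictly between them. Since \(B_1\) and \(B_2\) are closed balls and \(B_1 \subsetneq B_2\), Lemma~\ref{l2.6} forces \(\diam B_1 < \diam B_2\). Choose \(c \in B_1\) and set
\[
B' = \{x \in X \colon d(x, c) < \diam B_2\}.
\]
Then \(B' \in \BB_X\), and \(B' \subseteq B_2\) as above; also \(B_1 \subseteq B'\) since \(B_1 = \overline{B}_{\diam B_1}(c)\) and \(\diam B_1 < \diam B_2\). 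By hypothesis \ref{l8.3:s3} we must have \(B' = B_1\) or \(B' = B_2\). The equality \(B' = B_2\) is excluded by Corollary~\ref{c5.15}, which gives \(\diam B' < \diam B_2\). Hence \(B' = B_1\), and this exhibits \(B_1\) as the part of \(G_{B_2, d|_{B_2 \times B_2}}\) containing \(c\) (Theorem~\ref{t2.24}), so \(\{B_1, B_2\} \in E(T_X)\). The delicate point here is precisely the invocation of Corollary~\ref{c5.15}, which is where total boundedness of \((X, d)\) is used essentially; without it, Example~\ref{ex2.6} shows the argument collapses.
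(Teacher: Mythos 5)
Your proposal is correct and follows essentially the same route as the paper: Theorem~\ref{t2.24} serves as the dictionary between parts of diametrical graphs and open balls of radius equal to the diameter, the sandwich condition in \ref{l8.3:s3} is applied to exactly that part, and the degenerate alternative is excluded by the strict inequality \(\diam B_r(c) < r\) (the paper phrases this as ``a part cannot equal the whole ball,'' you invoke Corollary~\ref{c5.15} — same fact). The only cosmetic point worth recording is that passing from ``\(B_1\) is a part of \(G_{B_2, d|_{B_2\times B_2}}\)'' to ``\(\{B_1,B_2\}\in E(T_X)\)'' uses \(V(T_X)=\BB_X\), i.e.\ Lemma~\ref{l8.2}, which you use implicitly.
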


\begin{proof}
Let us prove the validity of \(\ref{l8.3:s1} \Leftrightarrow \ref{l8.3:s2}\). First of all we note that \(V(T_X) = \BB_{X}\) holds by Lemma~\ref{l8.2}. Using condition~\ref{d9.1:s4} from the definition of \(T_X\) we see that the equivalence \(\ref{l8.3:s1} \Leftrightarrow \ref{l8.3:s2}\) is valid by Theorem~\ref{t2.24} if \(B_1 = X\) or \(B_2 = X\). If \(B_1\) and \(B_2\) are arbitrary open balls in \((X, d)\), then to prove the validity of \(\ref{l8.3:s1} \Leftrightarrow \ref{l8.3:s2}\) we can use Theorem~\ref{t2.24} together with Corollary~\ref{c2.4}.

\(\ref{l8.3:s2} \Rightarrow \ref{l8.3:s3}\). Suppose \ref{l8.3:s2} holds.

Let \(B_1 \subset B_2\), \(B \in \BB_{X}\) and
\begin{equation}\label{l8.3:e1}
B_1 \subseteq B \subseteq B_2.
\end{equation}
We must show that \(B_1 = B\) or \(B_2 = B\). Suppose \(B_1 \neq B\). Using~\ref{l8.3:s2} we obtain
\[
B_1 = \{x \in X \colon d(x, c) < r\} \text{ and } \diam B_2 = r.
\]
Consequently, from \(B_1 \neq B\) and \(B_1 \subseteq B\) it follows that \(d(c, p_1) \geqslant r\). Hence
\begin{equation}\label{l8.3:e2}
\diam B \geqslant r = \diam B_2.
\end{equation}
Now using Proposition~\ref{p2.7} (recall that \(\BB_{X} \subseteq \overline{\BB}_X\) holds by Corollary~\ref{c2.41}), the inclusion \(B \subseteq B_2\) and \eqref{l8.3:e2} we obtain the equality \(B_2 = B\).

The case \(B_2 \subset B_1\) is similar. Thus, the implication \(\ref{l8.3:s2} \Rightarrow \ref{l8.3:s3}\) is true.

\(\ref{l8.3:s3} \Rightarrow \ref{l8.3:s1}\). Let \ref{l8.3:s3} hold and let \(B_2 \subset B_1\). It suffices to show that \(B_2\) is a part of the complete multipartite graph \(G_{B_1, d|_{B_1 \times B_1}}\). Let \(\{X_1, \ldots, X_n\}\) be the set of all parts of this graph. If there are different \(n_1\), \(n_2 \in \{1, \ldots, n\}\) such that
\[
B_2 \cap X_{n_1} \neq \varnothing \neq B_2 \cap X_{n_2},
\]
then we have \(\diam B_2 = \diam B_1\) and, consequently, \(B_1 = B_2\) by Proposition~\ref{p2.7}, contrary to \(B_2 \subset B_1\). Hence, there is \(n^{*} \in \{1, \ldots, n\}\) such that \(B_2 \subseteq X_{n^{*}}\). If \(B_2 \neq X_{n^{*}}\), then using \ref{l8.3:s3} with \(B = X_{n^{*}}\) we obtain the equality \(X_{n^{*}} = B_1\), that contradicts the definition of \(G_{B_1, d|_{B_1 \times B_1}}\). Thus, \(B_2 = X_{n^{*}}\) holds, as required.
\end{proof}

\begin{lemma}\label{l8.5}
Let \((X, d)\) be a nonempty totally bounded ultrametric space and let \(B_1\), \(B_2\), \(B_3 \in \BB_{X}\). If \(B_1\) is a part of \(G_{B_i, d|_{B_i \times B_i}}\) for \(i = 2\), \(3\), then \(B_2 = B_3\) holds.
\end{lemma}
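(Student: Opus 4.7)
My plan is to derive the conclusion by sandwiching $B_2$ between $B_1$ and $B_3$ (after ordering $B_2$ and $B_3$ by inclusion) and then invoking the ``no intermediate ball'' characterization proved in Lemma~\ref{l8.3}.

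First I would observe that if $B_1$ is a part of $G_{B_i, d|_{B_i \times B_i}}$, then this diametrical graph is nonempty, hence by Proposition~\ref{p2.36} (applied to the totally bounded ultrametric subspace $B_i$; cf.\ Corollary~\ref{c2.17}) it is complete $k$-partite with an integer $k \geqslant 2$. Consequently the part $B_1$ is a proper nonempty subset of $B_i$, so
\[
B_1 \subsetneq B_2 \quad \text{and} \quad B_1 \subsetneq B_3.
\]
In particular $B_2 \cap B_3 \supseteq B_1 \neq \varnothing$, and since $B_1, B_2, B_3 \in \BB_X \subseteq \overline{\BB}_X$ by Corollary~\ref{c2.41}, Proposition~\ref{p2.5} forces $B_2$ and $B_3$ to be comparable. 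Without loss of generality assume
\[
B_1 \subseteq B_2 \subseteq B_3.
\]

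Next I would apply Lemma~\ref{l8.3}. Because $B_1$ is a part of $G_{B_3, d|_{B_3 \times B_3}}$, the pair $\{B_1, B_3\}$ satisfies condition~\ref{l8.3:s2} of that lemma (with $r = \diam B_3$ and any $c \in B_1$, using Proposition~\ref{p2.4}). By the implication $\ref{l8.3:s2} \Rightarrow \ref{l8.3:s3}$ of Lemma~\ref{l8.3}, the only balls $B \in \BB_X$ with $B_1 \subseteq B \subseteq B_3$ are $B = B_1$ and $B = B_3$. Taking $B = B_2$, and recalling that $B_1 \subsetneq B_2$ was established above, I conclude $B_2 = B_3$.

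I do not anticipate any substantive obstacle: the whole argument reduces to combining the strict inclusion forced by the multipartite structure (Proposition~\ref{p2.36} plus the definition of a part) with the nesting dichotomy of ultrametric balls (Proposition~\ref{p2.5}) and the already-proved Lemma~\ref{l8.3}. The only point that requires a little care is verifying that Lemma~\ref{l8.3} is applicable, i.e.\ that condition~\ref{l8.3:s2} genuinely holds for the pair $\{B_1, B_3\}$; this follows because being a part of the diametrical graph of $B_3$ means precisely that $B_1 = \{x \in B_3 : d(x, c) < \diam B_3\}$ for any $c \in B_1$, which (by Proposition~\ref{p2.4} applied inside $X$) coincides with $\{x \in X : d(x, c) < \diam B_3\}$, and $\diam B_3 \in D(X) \setminus \{0\}$ since $|B_3| \geqslant |B_1| + 1 \geqslant 2$.
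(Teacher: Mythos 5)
Your proof is correct and follows essentially the same route as the paper: order $B_2$ and $B_3$ by inclusion via Proposition~\ref{p2.5} (using $B_1\subseteq B_2\cap B_3$ and $B_1\neq\varnothing$) and then invoke the ``no intermediate ball'' condition of Lemma~\ref{l8.3} applied to the pair $\{B_1,B_3\}$. The only cosmetic differences are that you verify condition~\ref{l8.3:s2} of Lemma~\ref{l8.3} directly instead of passing through $\{B_1,B_3\}\in E(T_X)$, and that you make explicit the strict inclusion $B_1\subsetneq B_2$ needed to exclude the alternative $B_2=B_1$ in the final step, a point the paper leaves implicit.
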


\begin{proof}
Let \(B_1\) be a part of \(G_{B_2, d|_{B_2 \times B_2}}\) and, simultaneously, a part of \(G_{B_3, d|_{B_3 \times B_3}}\). Since \(B_1 \subseteq B_2\), and \(B_1 \subseteq B_3\), and \(B_1 \neq \varnothing\), we have \(B_2 \subseteq B_3\) or \(B_3 \subseteq B_2\) by Proposition~\ref{p2.5}. Without loss of generality we assume \(B_2 \subseteq B_3\). Then we have \(\{B_1, B_3\} \in E(T_X)\) and \(B_1 \subseteq B_2 \subseteq B_3\). Now \(B_2 = B_3\) follows from Lemma~\ref{l8.3}.
\end{proof}

Let \(v\) be a vertex of a graph \(G\) and let \(k\) be a cardinal number. The vertex \(v \in V(G)\) has the \emph{degree} \(k\) if
\[
k = \card \{u \in V(G) \colon \{u, v\} \in E(G)\}.
\]
In this case we write \(\delta_{G}(v) = k\) or simply \(\delta(v) = k\).

The next lemma follows directly from the definition of \(T_X\).

\begin{lemma}\label{l8.7}
Let \((X, d)\) be a nonempty totally bounded ultrametric space. Then the following equality
\begin{equation}\label{l8.7:e1}
\delta(B) =
\begin{cases}
0 & \text{if } B = X \text{ and } \diam X = 0,\\
k & \text{if } B = X \text{ and } \diam X > 0,\\
1 + k & \text{if } \diam B > 0 \text{ and } B \neq X,\\
1 & \text{if } \diam B = 0 \text{ and } B \neq X,\\
\end{cases}
\end{equation}
holds for every \(B \in V(T_X)\), where \(k\) is the number of parts of complete multipartite graph \(G_{B, d|_{B \times B}}\).
\end{lemma}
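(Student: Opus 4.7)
The plan is to analyze, for each $B \in V(T_X)$, its two types of neighbors in $T_X$ separately: the \emph{descendant} neighbors (those $A$ such that $A$ is a part of $G_{B, d|_{B \times B}}$) and the \emph{ancestor} neighbors (those $A$ such that $B$ is a part of $G_{A, d|_{A \times A}}$). By condition~\ref{d9.1:s4} of Definition~\ref{d9.1}, these two classes together exhaust the neighbors of $B$, and they are manifestly disjoint since $A$ would otherwise have to satisfy both $A \subseteq B$ and $B \subseteq A$, forcing $A = B$, which contradicts adjacency in a simple graph.

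First, I would count the descendant neighbors. By Lemma~\ref{l8.2}, $V(T_X) = \BB_X$, so any part $A$ of $G_{B, d|_{B \times B}}$ is automatically in $V(T_X)$ by condition~\ref{d9.1:s3} of Definition~\ref{d9.1} (prepend the defining chain for $B$). When $\diam B = 0$, the set $B$ is a singleton and $G_{B, d|_{B \times B}}$ is empty (no parts exist), so the descendant count is $0$. When $\diam B > 0$, Proposition~\ref{p2.36} (together with Corollary~\ref{c2.17}, since $B \in \BB_X$ is totally bounded) ensures $G_{B, d|_{B \times B}}$ is complete $k$-partite for some integer $k \geqslant 2$, so the descendant count is exactly $k$.

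Next, I would count the ancestor neighbors. Lemma~\ref{l8.5} tells us that there is \emph{at most one} $A \in \BB_X$ with $B$ a part of $G_{A, d|_{A \times A}}$. If $B = X$, then any such $A$ would satisfy $X = B \subseteq A \subseteq X$, forcing $A = X$, yet a part of $G_{X, d}$ is a proper subset of $X$; hence no ancestor exists and the ancestor count is $0$. If $B \neq X$, then Definition~\ref{d9.1}\ref{d9.1:s3} provides a finite sequence $X_1 = X, X_2, \ldots, X_n = B$ in $V(T_X)$ with $X_{i+1}$ a part of $G_{X_i, d|_{X_i \times X_i}}$; the vertex $A := X_{n-1}$ is then an ancestor neighbor, so the count is exactly $1$.

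Summing the two counts in each of the four cases produces formula~\eqref{l8.7:e1} directly. I do not anticipate a real obstacle; the only subtle point is the uniqueness of the ancestor, which is handed to us by Lemma~\ref{l8.5}, and the verification that the case $B = X$ with $\diam X = 0$ (so $|X| = 1$ and $V(T_X) = \{X\}$) yields $\delta(X) = 0$, consistent with both counts vanishing.
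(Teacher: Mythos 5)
Your proof is correct, and it is essentially the argument the paper has in mind: the paper offers no proof at all, stating only that the lemma ``follows directly from the definition of \(T_X\)'', and your decomposition of the neighbours of \(B\) into the at-most-one ancestor (unique by Lemma~\ref{l8.5}, existent exactly when \(B \neq X\) via condition~\ref{d9.1:s3}) and the \(k\) parts of \(G_{B, d|_{B \times B}}\) (present exactly when \(\diam B > 0\), by Corollary~\ref{c2.39}) is precisely the bookkeeping that justification requires. All four cases check out, so your write-up is a correct and complete elaboration of what the paper leaves implicit.
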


Recall that a graph \(C\) is a \emph{subgraph} of a graph \(G\), \(C \subseteq G\), if
\[
V(C) \subseteq V(G) \quad \text{and} \quad E(C) \subseteq E(G).
\]
A \emph{path} is a finite, nonempty graph $P$ whose vertices can be numbered so that
\[
V(P) = \{x_0,x_1,...,x_k\},\ k \geqslant 1, \quad \text{and} \quad E(P) = \{\{x_0,x_1\},...,\{x_{k-1},x_k\}\}.
\]
In this case we say that \(P\) is a path joining \(x_0\) and \(x_k\).

A graph \(G\) is \emph{connected} if for every two distinct \(u\), \(v \in V(G)\) there is a path \(P \subseteq G\) joining \(u\) and \(v\).

\begin{definition}\label{d9.8}
A connected graph without cycles is called a \emph{tree}. A tree \(T\) is \emph{locally finite} if the inequality \(\delta(v) < \infty\) holds for every \(v \in V(T)\).
\end{definition}

We will say that a vertex \(v\) of a tree \(T\) is a \emph{leaf} of \(T\) if the inequality \(\delta(v) \leqslant 1\) holds.

\begin{proposition}\label{p8.4}
Let \((X, d)\) be a nonempty totally bounded ultrametric space. Then \(T_X\) is a locally finite tree such that \(V(T_X) = \BB_{X}\) and the equation
\begin{equation}\label{p8.4:e0}
\delta(v) = 2
\end{equation}
has at most one solution \(v \in V(T_X)\).
\end{proposition}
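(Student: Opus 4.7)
The plan is to establish the four claims---(a) $V(T_X)=\BB_X$, (b) $T_X$ is connected, (c) $T_X$ is acyclic, and (d) $T_X$ is locally finite with at most one vertex of degree $2$---in order, relying on Lemmas~\ref{l8.2}, \ref{l8.3}, \ref{l8.5}, \ref{l8.7} together with Propositions~\ref{p2.7}, \ref{p2.36} and Corollaries~\ref{c2.17}, \ref{c2.41}. Claim (a) is Lemma~\ref{l8.2}. For (b), I would invoke Definition~\ref{d9.1}\ref{d9.1:s3}: every $B\in V(T_X)$ admits a finite sequence $X_1,\dots,X_n$ with $X_1=X$, $X_n=B$, and $X_{i+1}$ a part of $G_{X_i,d|_{X_i\times X_i}}$. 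By Definition~\ref{d9.1}\ref{d9.1:s4} consecutive terms are adjacent in $T_X$, so the sequence is a walk in $T_X$ from $X$ to $B$; extracting a path shows $T_X$ is connected.

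For (c), I would proceed by contradiction. Assume $T_X$ contains a cycle with vertices $B_1,\dots,B_n$ enumerated as in the definition of a cycle ($n\geq 3$, indices modulo $n$). By Lemma~\ref{l8.3}, each edge $\{B_i,B_{i+1}\}$ corresponds to strict inclusion of one endpoint in the other, and by Proposition~\ref{p2.7} combined with Corollary~\ref{c2.41} such strict inclusion is equivalent to strict inequality of diameters. Let $B_m$ have the smallest diameter among $B_1,\dots,B_n$. Its two cycle-neighbors $B_{m-1}$ and $B_{m+1}$ are distinct vertices of the cycle (for $n\geq 3$ the indices differ by $2$ modulo $n$ and hence are distinct), both are strictly comparable with $B_m$, and by minimality of $\diam B_m$ they strictly contain $B_m$. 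Consequently $B_m$ is a part of both $G_{B_{m-1},\,d|_{B_{m-1}\times B_{m-1}}}$ and $G_{B_{m+1},\,d|_{B_{m+1}\times B_{m+1}}}$, so Lemma~\ref{l8.5} forces $B_{m-1}=B_{m+1}$, a contradiction.

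For (d), I would read off Lemma~\ref{l8.7}: the degree of $B\in V(T_X)$ is one of $0$, $1$, $k$, or $1+k$, where $k\geq 2$ is the number of parts of the diametrical graph $G_{B,d|_{B\times B}}$ of the ultrametric subspace $B$. Since $B$ is totally bounded by Corollary~\ref{c2.17}, Proposition~\ref{p2.36} ensures $k$ is a finite integer, so $\delta(B)<\infty$ and $T_X$ is locally finite. Inspecting the four cases of Lemma~\ref{l8.7}, the value $\delta(B)=2$ is attainable only in the case ``$B=X$ with $\diam X>0$ and $k=2$'', since the other cases yield $\delta(B)\in\{0,1\}$ or $\delta(B)\geq 3$. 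Because $X$ is the unique vertex equal to the whole space, $\delta(v)=2$ has at most one solution. The main obstacle is step (c), where one must carefully identify $B_m$ as a ball of minimal diameter in the cycle and then invoke the uniqueness supplied by Lemma~\ref{l8.5}.
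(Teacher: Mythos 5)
Your proposal is correct and follows the same overall decomposition as the paper (vertex set via Lemma~\ref{l8.2}, connectedness via condition~\ref{d9.1:s3} of Definition~\ref{d9.1}, acyclicity by contradiction, then Lemma~\ref{l8.7} for local finiteness and the degree count). The one genuine divergence is in the acyclicity step: the paper selects a vertex $B_1$ of \emph{maximal} diameter in the putative cycle and propagates the ``is a part of'' relation around the entire cycle by induction, arriving at $\diam B_n > \diam B_1$; you instead select a vertex $B_m$ of \emph{minimal} diameter, observe that both of its cycle-neighbours must strictly contain it (equality of diameters together with comparability would collapse two distinct cycle vertices, by Proposition~\ref{p2.7} and Corollary~\ref{c2.41}), hence $B_m$ is a part of both $G_{B_{m-1},d|_{B_{m-1}\times B_{m-1}}}$ and $G_{B_{m+1},d|_{B_{m+1}\times B_{m+1}}}$, and a single application of Lemma~\ref{l8.5} forces $B_{m-1}=B_{m+1}$. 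Your version is shorter and avoids the induction around the cycle; the only point worth making explicit is that adjacency plus the strict inclusion $B_m\subset B_{m\pm 1}$ pins down \emph{which} of the two alternatives in condition~\ref{d9.1:s4} holds, namely that $B_m$ is the part (the other alternative would reverse the inclusion). Everything else, including the finiteness of the number of parts via Corollary~\ref{c2.17} and Proposition~\ref{p2.36} and the case inspection of formula~\eqref{l8.7:e1} showing $\delta(v)=2$ forces $v=X$, matches the paper.
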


\begin{proof}
First of all, \(V(T_X) = \BB_{X}\) holds by Lemma~\ref{l8.2}. We claim that \(T_X\) is a tree. Suppose contrary that there is a cycle \(C \subseteq T_X\). Let
\[
V(C) = \{B_1, \ldots, B_n\} \text{ and } E(C) = \{\{B_1, B_2\}, \ldots, \{B_{n-1}, B_{n}\}, \{B_{n}, B_{1}\}\}.
\]
After renumbering, if necessary, we can assume
\[
\diam B_1 \geqslant \diam B
\]
for every \(B \in V(C)\). If we have \(\diam B_1 = \diam B_2\), then the equality
\begin{equation}\label{p8.4:e1}
B_1 = B_2
\end{equation}
holds. Indeed, we have \(V(T_X) = \BB_{X}\) and, by Corollary~\ref{c2.41}, the inclusion \(\BB_{X} \subseteq \overline{\BB}_{X}\) holds. Consequently, the conditions
\[
B_1, \ B_2 \in \overline{\BB}_{X}, \quad \diam B_1 = \diam B_2 \quad \text{and} \quad B_1 \cap B_2 \neq \varnothing
\]
are satisfied. Now using Proposition~\ref{p2.7} we obtain \eqref{p8.4:e1}. Thus, \(\diam B_1 > \diam B_2\) holds. Similarly, we obtain the inequality
\begin{equation}\label{p8.4:e2}
\diam B_1 > \diam B_n.
\end{equation}
Inequality \(\diam B_1 > \diam B_2\) and \(\{B_1, B_2\} \in E(T_X)\) imply that \(B_2\) is a part of \(G_{B_1, d|_{B_1 \times B_1}}\). From \(\{B_2, B_3\} \in E(T_X)\) and the definition of \(E(T_X)\) it follows that
\begin{equation}\label{p8.4:e3}
B_2 \text{ is a part of } G_{B_3, d|_{B_3 \times B_3}}
\end{equation}
or
\begin{equation}\label{p8.4:e4}
B_3 \text{ is a part of } G_{B_2, d|_{B_2 \times B_2}}.
\end{equation}
If \eqref{p8.4:e3} holds, then, by Lemma~\ref{l8.5}, we have \(B_3 = B_1\), that contradicts the definition of graphs. Similarly, we can show that \(B_{i+1}\) is a part of \(G_{B_{i}, d|_{B_{i} \times B_{i}}}\) for \(i = 3\), \(\ldots\), \(n\) and, in addition,
\begin{equation}\label{p8.4:e5}
B_1 \text{ is a part of } G_{B_{n}, d|_{B_{n} \times B_{n}}}.
\end{equation}
From~\eqref{p8.4:e5} it follows that \(\diam B_n > \diam B_1\), contrary to \eqref{p8.4:e2}. Thus, \(T_X\) does not contain any cycle.

Now we claim that \(T_X\) is a connected graph. Indeed, let \(B\) belong to \(V(T_X)\) and \(\lev(B) = n\), \(n \in \mathbb{N}\). Then, by condition~\ref{d9.1:s3} of Definition~\ref{d9.1}, there is \(B_1 \in V(T_X)\) such that \(\lev(B_1) = n - 1\) and \(B\) is a part of \(G_{B_1, d|_{B_1 \times B_1}}\). Hence, \(\{B, B_1\} \in E(T_X)\) by condition~\ref{d9.1:s4}. If \(n - 1 \neq 0\), then completely similarly we can find \(B_2 \in V(T_X)\), \(\lev(B_2) = n - 2\) and \(\{B_1, B_2\} \in E(T_X)\) and so on. Then the graph \(P \subseteq T_X\) with
\[
V(P) = \{B, B_1, \ldots, B_n\} \quad \text{and} \quad E(P) = \{\{B, B_1\}, \{B_1, B_2\}, \ldots, \{B_{n-1}, B_{n}\}\}
\]
is a path joining \(B\) and \(B_n\). It is clear that \(P \subseteq T_X\). Moreover, we have the equalities
\[
\lev(B) = n, \lev(B_1) = n-1, \lev(B_j) = n-j, \ldots, \lev(B_n) = n - n = 0.
\]
Consequently, by Definition~\ref{d9.1}, the equality \(B_n = X\) holds. Thus, for every \(B \in V(T_X)\), there is a path joining \(B\) and \(X\) in \(T_X\). Hence, \(T_X\) is connected and acyclic. Thus, it is a tree.

Using Definition~\ref{d9.1}, Proposition~\ref{p2.3} and Lemma~\ref{l8.5} we see that \(T_X\) is locally finite.

To complete the proof it suffices to note that if \eqref{p8.4:e0} holds for \(v \in V(T_X)\), then \(v = X\) by Lemma~\ref{l8.7}.
\end{proof}

In what follows we will say that \(T_X\) is a \emph{free representing tree} of \((X, d)\).

As an application of Proposition~\ref{p8.4}, we consider a characterization of the Cantor set \(D^{\aleph_0}\) (see Example~\ref{ex3.11}) on the language of free representing trees. The following proposition is, in fact, a partial reformulation of the classical Brouwer's result \cite{Bro1910PAA}.

\begin{proposition}\label{p9.10}
Let \((X, d)\) be a compact ultrametric space. Then the following statements are equivalent:
\begin{enumerate}
\item \label{p9.10:s1} \((X, d)\) is homeomorphic to the Cantor set \(D^{\aleph_0}\).
\item \label{p9.10:s2} The representing tree \(T_X\) contains no leaves.
\end{enumerate}
\end{proposition}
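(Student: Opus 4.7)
The plan is to reduce the proposition to Brouwer's classical topological characterization of the Cantor set via a careful reading of Lemma~\ref{l8.7}. Without loss of generality we may assume \((X, d)\) is nonempty, since the Cantor set is nonempty and Definition~\ref{d9.1} requires \(X \neq \varnothing\).

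First I would identify the leaves of \(T_X\). By Lemma~\ref{l8.2}, every vertex of \(T_X\) is an open ball \(B \in \BB_{X}\); and by Theorem~\ref{t2.24} together with Proposition~\ref{p2.36}, the diametrical graph \(G_{B, d|_{B \times B}}\) has at least two parts whenever \(\diam B > 0\). Hence Lemma~\ref{l8.7} yields \(\delta(X) \geqslant 2\) as soon as \(|X| \geqslant 2\) and \(\delta(B) \geqslant 3\) for every \(B \in \BB_{X} \setminus \{X\}\) with \(\diam B > 0\). Therefore a vertex \(B\) satisfies \(\delta(B) \leqslant 1\) if and only if either \(B = X\) with \(|X| = 1\), or \(B = \{x\}\) for some \(x \in X\); and by Proposition~\ref{p2.2} the second case is equivalent to \(x\) being an isolated point of \((X, d)\). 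Consequently, \(T_X\) has no leaves if and only if \(|X| \geqslant 2\) and \((X, d)\) has no isolated points, i.e., \((X, d)\) is a nonempty \emph{perfect} space.

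Next I would invoke Brouwer's classical theorem: a topological space is homeomorphic to \(D^{\aleph_0}\) if and only if it is a nonempty, compact, metrizable, totally disconnected space without isolated points. For a compact ultrametric \((X, d)\), metrizability and compactness are built in, while total disconnectedness follows from Proposition~\ref{p2.2}: for any distinct \(x\), \(y \in X\), the clopen ball \(B_{d(x,y)}(x)\) separates \(x\) from \(y\), so each connected component of \(X\) is a singleton. Thus, for compact ultrametric \(X\), being homeomorphic to \(D^{\aleph_0}\) is equivalent to being nonempty and perfect, which by the previous paragraph is equivalent to \(T_X\) having no leaves.

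The only delicate point is bookkeeping the degenerate case \(|X| = 1\): then \(X\) is not homeomorphic to the Cantor set and, in parallel, the unique vertex \(X \in V(T_X)\) has \(\delta(X) = 0\) and is therefore a leaf, so the desired equivalence is preserved. Beyond this the argument is essentially a dictionary between ``no leaves in \(T_X\)'', ``no isolated points in \(X\)'', and Brouwer's topological criterion; no genuine obstacle arises, the main value of the proof being the transparency with which the combinatorial property of \(T_X\) encodes the topological nature of \(X\).
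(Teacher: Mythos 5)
Your proposal is correct and follows essentially the same route as the paper: both directions reduce to Brouwer's characterization of the Cantor set, with Lemma~\ref{l8.7} (via \(V(T_X) = \BB_X\)) supplying the dictionary between leaves of \(T_X\) and isolated points of \(X\), and Proposition~\ref{p2.2} providing zero-dimensionality/total disconnectedness. Your single unified "no leaves \(\Leftrightarrow\) nonempty and perfect" computation, including the explicit \(|X|=1\) bookkeeping, is just a slightly more streamlined packaging of the paper's two implications.
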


\begin{proof}
\(\ref{p9.10:s1} \Rightarrow \ref{p9.10:s2}\). Let \((X, d)\) and \(D^{\aleph_0}\) be homeomorphic. By definition, a vertex \(v \in V(T_X)\) is a leaf of \(T_X\) if and only if the inequality \(\delta(v) \leqslant 1\) holds. Since \((X, d)\) is homeomorphic to \(D^{\aleph_0}\), we have the inequality \(\diam X > 0\). The last inequality, the equality \(\BB_{X} = V(T_X)\), and Lemma~\ref{l8.7} imply that \(\delta(v) \geqslant 1\) holds for every \(v \in V(T_X)\). Consequently, if \(v_0\) is a leaf of \(T_X\), then the equality
\begin{equation}\label{p9.10:e1}
\delta(v_0) = 1
\end{equation}
holds. Using Lemma~\ref{l8.7} again we see \eqref{p9.10:e1} implies the existence of an open ball \(B_0\) with \(\diam B_0 = 0\), i.e., we have \(B_0 = \{p_0\}\) for a point \(p_0 \in X\). Hence, \(p_0\) is an isolated point of \(X\). Since \((X, d)\) and \(D^{\aleph_0}\) are homeomorphic, the existence of isolated \(p_0 \in X\) implies the existence of an isolated point in \(D^{\aleph_0}\), contrary to the definition of \(D^{\aleph_0}\).

\(\ref{p9.10:s2} \Rightarrow \ref{p9.10:s1}\). Suppose that \(T_X\) contains no leaves. By Brouwer theorem, every zero-dimensional compact metric space without isolated points is homeomorphic to the Cantor set \cite{Bro1910PAA}, \cite[p.~370, Exercise~6.2.A]{Eng1989}. A nonempty metric space is called zero-dimensional if \(Y\) has a base consisting of clopen sets. Since the ballean of a metric space is a base for this space, every nonempty ultrametric space is zero-dimensional by Proposition~\ref{p2.2}. Consequently, it suffices to show that \((X, d)\) has no isolated points. To prove the last statement we note that if \(p_0\) is an isolated point of \((X, d)\), then \(B_0 = \{p_0\} \in \BB_{X}\) holds and, consequently, \(\delta(B_0) \leqslant 1\) holds by Lemma~\ref{l8.7}. The last inequality and definition of the leaves imply that \(B_0\) is a leaf of \(T_X\), contrary to supposition.
\end{proof}

\section{Rooted trees and related orderings}
\label{sec10}

The present section is organized as follows. The first part of section deals with partial order generated by choice of a root on trees. The orderings of such type are characterized in Theorem~\ref{t9.20}. In the second part, we consider the rooted representing trees of totally bounded ultrametric spaces and describe them in Proposition~\ref{p10.17}.

\subsection{Generating order by choosing root of the tree}

Before introducing into consideration the concept of isomorphism of rooted trees, it is useful to recall the definition of isomorphism for free trees.

\begin{definition}\label{d8.7}
Let $T_1$ and $T_2$ be trees. A bijection $f\colon V(T_1)\to V(T_2)$ is an \emph{isomorphism} of $T_1$ and $T_2$ if
\begin{equation}\label{e2.2}
(\{u,v\} \in E(T_1)) \Leftrightarrow (\{f(u),f(v)\} \in E(T_2))
\end{equation}
is valid for all $u$, $v \in V(T_1)$. Two trees are \emph{isomorphic} if there exists an isomorphism of these trees.
\end{definition}

A rooted tree is a pair \((T, r)\), where \(T\) is a tree and \(r\) is a distinguished vertex of \(T\). In this case we say that \(r\) is the \emph{root} of \((T, r)\) and \(T\) is a \emph{free} tree corresponding to \((T, r)\). In what follows, we will write \(T = T(r)\) instead of \((T, r)\).

\begin{definition}\label{d8.10}
Let $T_1 = T_1(r_1)$ and $T_2 = T_2(r_2)$ be rooted trees. A bijection $f\colon V(T_1) \to V(T_2)$ is an \emph{isomorphism} of \(T_1(r_1)\) and \(T_2(r_2)\) if it is an isomorphism of free trees \(T_1\) and \(T_1\) and the equality $f(r_1) = r_2$ holds. Two rooted trees are \emph{isomorphic} if there exists an isomorphism of these trees.
\end{definition}

\begin{remark}\label{r10.3}
In the case of finite trees, Definitions~\ref{d8.7} and \ref{d8.10} introduced above as well as Definition~\ref{d9.27} from the next section of the paper can be considered as some specifications of Definition~1 from~\cite{HHH2006}.
\end{remark}

The following statement is well-known for finite trees (see, for example, Proposition~4.1 \cite{BM2008}).

\begin{lemma}\label{l9.14}
In each tree, every two different vertices are connected by exactly one path.
\end{lemma}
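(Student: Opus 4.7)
The plan is to establish the claim in two steps: existence of a connecting path, which is immediate from the definition of a tree, and uniqueness, which will follow by extracting a cycle from any hypothetical pair of distinct paths and invoking the acyclicity of trees.

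For existence, I would simply observe that by Definition~\ref{d9.8} a tree is connected, so for every pair of distinct $u$, $v \in V(T)$ the definition of connectedness (given in the paragraph preceding Definition~\ref{d9.8}) directly supplies a path $P \subseteq T$ joining $u$ and $v$.

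For uniqueness, I would argue by contradiction: suppose $P_1$ and $P_2$ are two distinct paths joining $u$ and $v$, with vertex enumerations $u = x_0, x_1, \ldots, x_k = v$ and $u = y_0, y_1, \ldots, y_m = v$ respectively. Let $i$ be the smallest index such that $x_{i+1} \neq y_{i+1}$ (well-defined since the paths both start at $u$ but eventually differ); then $x_i = y_i$. Let $j > i$ be the smallest index such that $x_j \in \{y_{i+1}, \ldots, y_m\}$; such $j$ exists because $x_k = v = y_m$. Writing $x_j = y_l$ for the corresponding index $l > i$, I would form a finite graph $C$ whose vertex set is $\{x_i, x_{i+1}, \ldots, x_j\} \cup \{y_{i+1}, \ldots, y_{l-1}\}$ and whose edge set consists of the edges $\{x_s, x_{s+1}\}$ for $i \leqslant s < j$ together with $\{y_s, y_{s+1}\}$ for $i \leqslant s < l$. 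By construction these vertices are pairwise distinct, the sequence $x_i, x_{i+1}, \ldots, x_j = y_l, y_{l-1}, \ldots, y_{i+1}, y_i = x_i$ visits them in cyclic order, and $|V(C)| \geqslant 3$ since $x_{i+1} \neq y_{i+1}$ forces at least three vertices on $C$. Hence $C$ is a cycle contained in $T$, contradicting the acyclicity clause of Definition~\ref{d9.8}.

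The only delicate step is the verification that the constructed $C$ really is a cycle in the sense of the definition given in Section~\ref{sec4.1} — specifically, checking that the vertices of $C$ are pairwise distinct (so that the enumeration above is valid) and that the edge set of $C$ matches the required adjacency pattern. The choice of $i$ as the first index of divergence and $j$ as the first return of $P_1$ to the later portion of $P_2$ is what guarantees this, and it is the one place where care is needed rather than routine verification.
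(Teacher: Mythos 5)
Your proof is correct, but it takes a genuinely different route from the paper. The paper notes that the statement is classical for finite trees (citing Proposition~4.1 of \cite{BM2008}) and handles the infinite case by a reduction: given two paths $P_1$, $P_2$ joining the same pair of vertices, it forms the subgraph $G$ with $V(G) = V(P_1) \cup V(P_2)$ and $E(G) = E(P_1) \cup E(P_2)$, observes that $G$ is a finite, connected, acyclic subgraph of $T$ — hence a finite tree — and concludes $P_1 = P_2$ from the finite case. You instead give a direct, self-contained uniqueness argument that extracts an explicit cycle from the first point of divergence of the two paths and the first return of one path to the tail of the other, and this works uniformly for finite and infinite trees without appealing to any external result. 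Your construction is sound: the minimality of $i$ and of $j$ does guarantee that the listed vertices are pairwise distinct, the edge count matches the vertex count, and $x_{i+1} \neq y_{i+1}$ rules out the degenerate two-vertex case, so $C$ really is a cycle in the sense of Section~\ref{sec4}. The one point you pass over a little quickly is the well-definedness of $i$: you should note that if the two enumerations agreed at every common index, the endpoint $v$ of the shorter sequence would occur as an interior vertex of the longer one, contradicting the distinctness of vertices along a path — but this is routine. What the paper's approach buys is brevity at the cost of an external citation; what yours buys is a fully elementary argument that also makes Corollary~\ref{c9.15} essentially transparent.
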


\begin{proof}
If \(T\) is an infinite tree and \(v_1\), \(v_2\) are two different vertices of \(T\) connected by some paths \(P_1 \subseteq T\) and \(P_2 \subseteq T\), then the graph \(G\),
\[
V(G) = V(P_1) \cup V(P_2) \quad \text{and} \quad E(G) = E(P_1) \cup E(P_2),
\]
is a finite, connected subgraph of \(T\). Since \(T\) does not have cycles, \(G\) is also acyclic. Hence, \(G\) is a finite tree and \(P_1\), \(P_2\) are paths connected \(v_1\) and \(v_2\) in \(G\). Thus, \(P_1 = P_2\) holds.
\end{proof}

\begin{corollary}\label{c9.15}
Let \(T\) be a tree and let \(P_1\) and \(P_2\) be some paths in \(T\). Then the equality \(P_1 = P_2\) holds if and only if \(V(P_1) = V(P_2)\).
\end{corollary}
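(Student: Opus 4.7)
The forward implication is immediate from the definition of equality of graphs, so the entire content of the statement lies in showing that $V(P_1)=V(P_2)$ forces $E(P_1)=E(P_2)$. My plan is to establish the auxiliary observation that, for any path $P\subseteq T$, the edge set $E(P)$ is determined by $V(P)$ via
\[
E(P)=\{e\in E(T)\colon e\subseteq V(P)\}.
\]
Once this is proved, applying it to both $P_1$ and $P_2$ and using the hypothesis $V(P_1)=V(P_2)$ yields $E(P_1)=E(P_2)$, hence $P_1=P_2$.

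To verify the observation, I will fix an enumeration $V(P)=\{x_0,x_1,\ldots,x_k\}$ with $E(P)=\{\{x_{i-1},x_i\}\colon 1\leqslant i\leqslant k\}$, as in the definition of a path. The inclusion $E(P)\subseteq\{e\in E(T)\colon e\subseteq V(P)\}$ is trivial since $P\subseteq T$. For the reverse inclusion, suppose $\{x_i,x_j\}\in E(T)$ with $i<j$. If $j=i+1$, then this edge is already in $E(P)$. If $j\geqslant i+2$, then the edge $\{x_i,x_j\}$ together with the subpath $x_i,x_{i+1},\ldots,x_j$ of $P$ constitutes a cycle in $T$, contradicting the acyclicity of $T$ guaranteed by Definition~\ref{d9.8}.

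The argument is completely elementary and uses no result beyond the acyclicity of trees; Lemma~\ref{l9.14} is not needed for this corollary (the corollary is rather a natural companion to it). There is no real obstacle: the only subtlety is to handle the degenerate case $|V(P)|=1$, where $E(P)=\varnothing$ and the equality $E(P)=\{e\in E(T)\colon e\subseteq V(P)\}$ still holds vacuously, so that the same argument covers it without modification.
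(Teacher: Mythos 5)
Your proof is correct, but it takes a different route from the paper's. The paper states this as an immediate consequence of Lemma~\ref{l9.14}: if \(V(P_1)=V(P_2)\), take the endpoints \(u\), \(v\) of \(P_1\); the subpath of \(P_2\) joining \(u\) and \(v\) must coincide with \(P_1\) by uniqueness of paths, and since its vertex set is then all of \(V(P_2)\) it must be all of \(P_2\). You instead prove the stronger structural fact that every path in a tree is an induced subgraph of \(T\), i.e.\ \(E(P)=\{e\in E(T)\colon e\subseteq V(P)\}\), by showing that a chord \(\{x_i,x_j\}\) with \(j\geqslant i+2\) would close a cycle on the vertices \(x_i,\ldots,x_j\) (note \(|V(C)|=j-i+1\geqslant 3\), so this really is a cycle in the sense of the paper's definition), contradicting acyclicity. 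This makes the corollary independent of Lemma~\ref{l9.14} and rests only on the definition of a tree, which is a perfectly good trade; the paper's version is shorter given that the lemma has just been proved. One cosmetic remark: the degenerate case \(|V(P)|=1\) you mention does not actually occur here, since the paper's definition of a path requires \(k\geqslant 1\), hence \(|V(P)|\geqslant 2\); including it does no harm but is unnecessary.
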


Let introduce a partial order generated by choice of a root in a given tree.

Let \(T(r)\) be a rooted tree and let \(\mathcal{P} \colon V(T(r)) \to 2^{V(T(r))}\) be a mapping such that
\begin{equation}\label{e9.17}
\mathcal{P}(v) = \begin{cases}
V(P_{v}) & \text{if } v \neq r, \\
\{r\} & \text{if } v = r,
\end{cases}
\end{equation}
where \(P_{v}\) is the unique path connected \(v\) and \(r\). Then we define a binary relation \(\preccurlyeq_{T(r)}\) on \(V(T(r))\) by the rule
\begin{equation}\label{e9.18}
(v_1 \preccurlyeq_{T(r)} v_2) \Leftrightarrow (\mathcal{P}(v_2) \subseteq \mathcal{P}(v_1)).
\end{equation}

\begin{proposition}\label{p9.16}
The binary relation \(\preccurlyeq_{T(r)}\), defined by \eqref{e9.17}, \eqref{e9.18}, is a partial order on \(V(T(r))\) for every rooted tree \(T(r)\).
\end{proposition}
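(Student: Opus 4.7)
The plan is to verify directly that $\preccurlyeq_{T(r)}$ satisfies the three defining properties of a partial order: reflexivity, transitivity, and antisymmetry. Reflexivity is immediate from \eqref{e9.18}, since $\mathcal{P}(v)\subseteq\mathcal{P}(v)$ holds for every $v\in V(T(r))$. Transitivity likewise follows by unwinding the definition: if $v_1\preccurlyeq_{T(r)}v_2$ and $v_2\preccurlyeq_{T(r)}v_3$, then $\mathcal{P}(v_3)\subseteq\mathcal{P}(v_2)\subseteq\mathcal{P}(v_1)$, so transitivity of set inclusion gives $v_1\preccurlyeq_{T(r)}v_3$.

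The only nontrivial axiom is antisymmetry, so that is where I would concentrate the argument. Suppose $v_1\preccurlyeq_{T(r)}v_2$ and $v_2\preccurlyeq_{T(r)}v_1$; the task is to show $v_1=v_2$. Combining the two inclusions gives $\mathcal{P}(v_1)=\mathcal{P}(v_2)$. The case $v_1=r$ is disposed of quickly by cardinality: if $v_2\neq r$, then $\mathcal{P}(v_2)=V(P_{v_2})$ contains the two distinct vertices $r$ and $v_2$, whereas $\mathcal{P}(v_1)=\{r\}$ is a singleton, contradicting $\mathcal{P}(v_1)=\mathcal{P}(v_2)$. By symmetry, the case $v_2=r$ is analogous, so it remains to handle $v_1\neq r\neq v_2$.

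In that remaining case $\mathcal{P}(v_1)=V(P_{v_1})$ and $\mathcal{P}(v_2)=V(P_{v_2})$, so $V(P_{v_1})=V(P_{v_2})$. I would then invoke Corollary~\ref{c9.15} to conclude that $P_{v_1}=P_{v_2}$ as graphs. Once both paths coincide, the multiset of endpoints (which can be intrinsically recovered from the graph of the path, e.g.\ as its two vertices of degree one when the path has more than one edge, or as its two vertices when it has a single edge) satisfies $\{v_1,r\}=\{v_2,r\}$. Since $v_1\neq r\neq v_2$, this forces $v_1=v_2$, as required. The main technical obstacle is ensuring that the endpoint identification is really intrinsic to the graph $P_{v_i}$; this is where Corollary~\ref{c9.15}, together with the explicit form of paths in Definition from the section on cycles, does the essential work, and completes the verification that $\preccurlyeq_{T(r)}$ is a partial order.
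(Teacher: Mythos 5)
Your proof is correct and follows essentially the same route as the paper's: reflexivity and transitivity from the definition, and antisymmetry by reducing $\mathcal{P}(v_1)=\mathcal{P}(v_2)$ to the case $v_1\neq r\neq v_2$, invoking Corollary~\ref{c9.15} to get $P_{v_1}=P_{v_2}$, and then identifying the endpoints. Your extra remark that the endpoints are intrinsically recoverable from the path graph just makes explicit what the paper compresses into ``follows from the definition of paths.''
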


\begin{proof}
Let \(T = T(r)\) be a rooted tree. It is clear from \eqref{e9.17} and \eqref{e9.18} that \(\preccurlyeq_{T(r)}\) is reflexive and transitive. To justify the antisymmetric property suppose that we have
\begin{equation}\label{p9.16:e1}
v_1 \preccurlyeq_{T(r)} v_2 \quad \text{and} \quad v_2 \preccurlyeq_{T(r)} v_1.
\end{equation}
for some \(v_1\), \(v_2 \in V(T)\) and prove the equality \(v_1 = v_2\).

From \eqref{e9.17} and \eqref{p9.16:e1} it follows that \(\mathcal{P}(v_1) = \mathcal{P}(v_2)\). If \(|\mathcal{P}(v_1)| = |\mathcal{P}(v_2)| = 1\), then \(v_1 = v_2 = r\) holds by \eqref{e9.17}. If \(\mathcal{P}(v_1)\) and \(\mathcal{P}(v_2)\) contain more than one point, then, by \eqref{e9.17}, we have \(v_1 \neq r \neq v_2\) and, consequently, there is a path \(P_{v_i} \subseteq T\) connected \(r\) with \(v_i\), \(i = 1\), \(2\). By Corollary~\ref{c9.15}, it follows that \(P_{v_1} = P_{v_2}\). Now the equality \(v_1 = v_2\) follows from the definition of paths.
\end{proof}

We want to describe the posets \((V(T(r)), \preccurlyeq_{T(r)})\) up to (order) isomorphism. For this description the concepts of covering relation and transitive closure of binary relation shall be used.

Let us define the covering relation on a given partially ordered set.

\begin{definition}[{\cite[p.~6]{Sch2016}}]\label{d8.11}
Let \((P, \preccurlyeq_P)\) be a poset and let \(x\), \(y \in P\). Then \(x\) is called a \emph{lower cover} of \(y\) (and \(y\) is called an \emph{upper cover} of \(x\)) if \(x \prec_P y\) and
\begin{equation}\label{d8.11:e1}
(x \preccurlyeq_P z \preccurlyeq_P y) \Rightarrow (z = x \text{ or } z = y)
\end{equation}
is valid for every \(z \in P\). If \(x\) is a lower cover of \(y\), we write \(x \sqsubset_P y\) (\(y \sqsupset_P x\)). Moreover, we use the symbol \(x \sqsubseteq_P y\) (\(y \sqsupseteq_P x\)) if \(x \sqsubset_P y\) or \(x = y\).
\end{definition}

Let \(X\) be a set and let \(R_1\) and \(R_2\) be binary relations on \(X\), i.e., \(R_1\) and \(R_2\) are some subsets of the Cartesian square \(X^{2} = \{\<x, y> \colon x, y \in X\}\). Recall that a \emph{composition of binary relations} \(R_1\) and \(R_2\) is a binary relation \(R_1 \circ R_2 \subseteq X^{2}\) for which \(\<x, y> \in R_1 \circ R_2\) holds if and only if there is \(z \in X\) such that \(\<x, z> \in R_1\) and \(\<z, y> \in R_2\).

Using the composition of binary relations we can define the transitive closure of a relation as follows.

Let \(\gamma\) be a binary relation on \(X\). We will write \(\gamma^{1} = \gamma\) and \(\gamma^{n+1} = \gamma^{n} \circ \gamma\) for every integer \(n \geqslant 1\). The \emph{transitive closure} \(\gamma^{t}\) of \(\gamma\) is the relation
\begin{equation}\label{e9.20}
\gamma^{t} = \bigcup_{n=1}^{\infty} \gamma^{n}.
\end{equation}
For every \(\beta \subseteq X^{2}\), the transitive closure \(\beta^{t}\) is a transitive binary relation on \(X\) and the inclusion \(\beta \subseteq \beta^{t}\) holds. Moreover, if \(\tau \subseteq X^{2}\) is an arbitrary transitive binary relation for which \(\beta \subseteq \tau\), then we also have \(\beta^{t} \subseteq \tau\), i.e., \(\beta^{t}\) is the smallest transitive binary relation containing~\(\beta\).

It is easy to see that, for every poset \((P, {\preccurlyeq}_{P})\), the relation \({\sqsubseteq}_{P}^{t}\) is a partial order on \(P\) satisfying the inclusion
\[
{\sqsubseteq}_{P}^{t} \subseteq {\preccurlyeq}_{P},
\]
where \(\sqsubseteq_{P}^{t}\) is the transitive closure of \(\sqsubseteq_{P}\).

For the finite posets, it is well-known that the upper covering relation uniquely determines the partial order: If \((P, {\preccurlyeq_{P}})\) is a finite poset, then the equality
\begin{equation}\label{e9.22}
{\preccurlyeq_P} = {\sqsubseteq_{P}^{t}}
\end{equation}
holds.

It should be noted here that \eqref{e9.22} can be fails in general.

\begin{example}\label{ex9.18}
Let \((\RR, \leqslant_{\RR})\) be the totally ordered set of all real numbers with the standard order \({\leqslant_{\RR}}\). Then the transitive closure \(\sqsubseteq_{\RR}^{t}\) coincides with the diagonal relation on \(\RR\),
\[
{\sqsubseteq_{\RR}^{t}} = \{\<x, x> \colon x \in \RR\}.
\]
\end{example}

\begin{theorem}\label{t9.20}
Let \((S, \preccurlyeq_{S})\) be a poset. Then following conditions \ref{t9.20:s1} and \ref{t9.20:s2} are equivalent.
\begin{enumerate}
\item \label{t9.20:s1} There is a rooted tree \(T(r)\) such that \((V(T(r)), \preccurlyeq_{T(r)})\) and \((S, \preccurlyeq_{S})\) are order isomorphic.
\item \label{t9.20:s2} The poset \((S, \preccurlyeq_{S})\) has the following properties:
\begin{enumerate}
\item\label{t9.20:s2:1} \(S\) contains the largest element \(l\).
\item\label{t9.20:s2:2} If \(p \in S\) is not the largest element of \(S\), then \(p\) admits a unique upper cover \(q\), i.e., there is the unique \(q \in S\) such that \(p \sqsubset_S q\).
\item\label{t9.20:s2:3} The equality \({\preccurlyeq}_S = {\sqsubseteq_{S}^{t}}\) holds.
\end{enumerate}
\end{enumerate}
\end{theorem}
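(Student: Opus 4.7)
The plan is to handle the two implications separately, with the forward direction being essentially bookkeeping about paths in rooted trees and the reverse direction requiring a genuine construction.

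For \(\ref{t9.20:s1} \Rightarrow \ref{t9.20:s2}\), I would first observe that since every vertex \(v\) satisfies \(r \in \mathcal{P}(v)\), the root \(r\) satisfies \(\mathcal{P}(r) = \{r\} \subseteq \mathcal{P}(v)\) for every \(v\), so \(v \preccurlyeq_{T(r)} r\), giving \ref{t9.20:s2:1}. For \ref{t9.20:s2:2}, given \(v \neq r\), the unique path \(P_v\) has a unique vertex \(w\) adjacent to \(v\) (Lemma~\ref{l9.14}); using that \(\mathcal{P}(w) = \mathcal{P}(v) \setminus \{v\}\), one checks that \(v \sqsubset_{T(r)} w\) and that no other element strictly between \(v\) and a putative upper cover is admissible because any other candidate would produce a second path from \(v\) to \(r\). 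Finally, for \ref{t9.20:s2:3}, if \(v_1 \prec_{T(r)} v_2\) then \(\mathcal{P}(v_2) \subsetneq \mathcal{P}(v_1)\), and one walks along \(P_{v_1}\) step by step, each step being an upper cover as just described, exhibiting the required cover chain from \(v_1\) to \(v_2\).

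For \(\ref{t9.20:s2} \Rightarrow \ref{t9.20:s1}\), the natural candidate is the graph \(T\) defined by \(V(T) = S\) and \(E(T) = \bigl\{\{p,q\} \colon p \sqsubset_S q\bigr\}\), with root \(l\). Connectedness is immediate from \ref{t9.20:s2:3}: for any \(p \in S\) we have \(p \preccurlyeq_S l\), so there is a chain \(p = p_0 \sqsubset p_1 \sqsubset \cdots \sqsubset p_n = l\), and this chain is a path in \(T\) joining \(p\) to \(l\), which also shows every vertex is connected to \(l\) and hence to every other vertex. The main obstacle is proving that \(T\) is acyclic; this is where the uniqueness condition \ref{t9.20:s2:2} must do real work. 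Suppose for contradiction that \(C \subseteq T\) is a cycle with consecutive vertices \(v_0, v_1, \ldots, v_{n-1}, v_n = v_0\), \(n \geqslant 3\). Choose \(v_k\) maximal among \(\{v_0, \ldots, v_{n-1}\}\) with respect to \(\preccurlyeq_S\); then \(v_{k+1} \sqsubset_S v_k\) because the only alternative \(v_k \sqsubset_S v_{k+1}\) contradicts maximality of \(v_k\). I would then argue by induction along the cycle that \(v_{k+i} \sqsubset_S v_{k+i-1}\) for every \(i = 1, \ldots, n-1\): indeed, the edge \(\{v_{k+i-1}, v_{k+i}\}\) yields one of the two cover relations, and \(v_{k+i-1} \sqsubset_S v_{k+i}\) would force \(v_{k+i}\) to be the unique upper cover of \(v_{k+i-1}\) by \ref{t9.20:s2:2}, hence \(v_{k+i} = v_{k+i-2}\), contradicting distinctness of cycle vertices (for \(n \geqslant 3\)). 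Applied at \(i = n-1\), the edge \(\{v_{k+n-1}, v_k\}\) yields the same uniqueness clash, completing the contradiction.

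Once \(T\) is shown to be a tree, it remains to verify that the identity map on \(S\) is an order isomorphism \((S, {\preccurlyeq}_S) \to (V(T(l)), {\preccurlyeq}_{T(l)})\). The key lemma here is that, for every \(v \in S\), the unique \(T\)-path \(P_v\) from \(v\) to \(l\) is strictly ascending in \((S, \preccurlyeq_S)\); this is proved by the same down-chain-cannot-reach-\(l\) argument since \(l\) is the largest element, together with uniqueness of upper covers. Combined with \ref{t9.20:s2:3}, this gives the equivalences \(v_1 \preccurlyeq_S v_2\) iff there is a cover chain \(v_1 = u_0 \sqsubset \cdots \sqsubset u_m = v_2\) iff (by uniqueness of upper cover, forcing each \(u_i\) to be the next vertex on \(P_{v_1}\)) \(v_2 \in \mathcal{P}(v_1)\) iff \(\mathcal{P}(v_2) \subseteq \mathcal{P}(v_1)\) iff \(v_1 \preccurlyeq_{T(l)} v_2\), which closes the proof.
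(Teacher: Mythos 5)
Your proposal is correct and follows essentially the same route as the paper: the same path-based verification of \ref{t9.20:s2:1}--\ref{t9.20:s2:3} in the forward direction, the same construction of \(T\) from the cover relation with root \(l\), the same maximal-element-on-a-cycle argument (exploiting uniqueness of upper covers) for acyclicity, and the same identification of the two orders via their coinciding cover relations and condition \ref{t9.20:s2:3}.
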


\begin{proof}
\(\ref{t9.20:s1} \Rightarrow \ref{t9.20:s2}\). To justify this implication it suffices to show that, for every rooted tree \(T(r)\), the poset \((V(T(r)), \preccurlyeq_{T(r)})\) has properties \ref{t9.20:s2:1}, \ref{t9.20:s2:2} and \ref{t9.20:s2:3} with \(S = V(T(r))\) and \({\preccurlyeq}_S = {\preccurlyeq}_{T(r)}\).

\ref{t9.20:s2:1}. Let \(T = T(r)\) be a rooted tree and let \({\preccurlyeq_{T(r)}}\) be defined by~\eqref{e9.18}. Then, for every \(v \in V(T) \setminus \{r\}\), it is clear that the inclusion \(\{r\} \subseteq V(P_{v})\) holds, where \(P_{v} \subseteq T\) is a path joining \(v\) with the root \(r\). Hence, \(r\) is the largest element of \((V(T(r)), \preccurlyeq_{T(r)})\).

\ref{t9.20:s2:2}. Let \(v_0 \in V(T) \setminus \{r\}\). Then there is a unique path \(P_{v_0} \subseteq T\) such that
\begin{equation}\label{t9.20:e1}
V(P_{v_0}) = \{v_0, v_1, \ldots, v_k\}, \quad k \geqslant 1, \quad v_k = r
\end{equation}
and
\begin{equation}\label{t9.20:e2}
E(P_{v_0}) = \bigl\{\{v_0, v_1\}, \ldots, \{v_{k-1}, v_k\}\bigr\}.
\end{equation}
It is clear from \eqref{t9.20:e1}, \eqref{t9.20:e2} that
\begin{equation}\label{t9.20:e3}
V(P_{v_1}) = \{v_1, \ldots, v_k\},
\end{equation}
where \(P_{v_1} \subseteq T\) is the path joining \(v_1\) with \(r\). Equalities \eqref{t9.20:e1}, \eqref{t9.20:e3} and the definition of \(\preccurlyeq_{T(r)}\) imply \(v_0 \prec_{T(r)} v_1\).

To prove the validity of
\begin{equation}\label{t9.20:e4}
v_0 \sqsubset_{T(r)} v_1
\end{equation}
it suffices to show that if \(u \in V(T)\) and
\begin{equation}\label{t9.20:e5}
v_0 \preccurlyeq_{T(r)} u \preccurlyeq_{T(r)} v_1
\end{equation}
holds, then we have
\begin{equation}\label{t9.20:e6}
u = v_0 \quad \text{or} \quad  u = v_1
\end{equation}
(see Definition~\ref{d8.11}). By \eqref{e9.17}, double inequality \eqref{t9.20:e5} can be written as
\[
\{v_1, \ldots, v_k\} \subseteq V(P_u) \subseteq \{v_0, v_1, \ldots, v_k\},
\]
that implies
\begin{equation}\label{t9.20:e7}
V(P_u) = \{v_1, \ldots, v_k\} \quad \text{or} \quad V(P_u) = \{v_0, v_1, \ldots, v_k\},
\end{equation}
where \(V(P_u)\) is the path joining \(u\) with \(r\) in \(T\). Now \eqref{t9.20:e6} follows from \eqref{t9.20:e1}, \eqref{t9.20:e3}, \eqref{t9.20:e7}.

\ref{t9.20:s2:3}. Let us prove the equality
\begin{equation}\label{t9.20:e8}
{\preccurlyeq_{T(r)}} = {\sqsubseteq_{T(r)}^{t}}.
\end{equation}
First of all we recall that, for every binary relation \(\beta\), the transitive closure \(\beta^{t}\) is the smallest transitive binary relation containing \(\beta\). Hence, we have \({\sqsubseteq_{T(r)}^{t}} \subseteq {\preccurlyeq_{T(r)}}\), because \({\preccurlyeq_{T(r)}}\) is transitive and \({\sqsubseteq_{T(r)}} \subseteq {\preccurlyeq_{T(r)}}\) holds. To prove the converse inclusion
\begin{equation}\label{t9.20:e9}
{\preccurlyeq_{T(r)}} \subseteq {\sqsubseteq_{T(r)}^{t}},
\end{equation}
we consider an arbitrary \(\<v_0, v> \in V(T) \times V(T)\) satisfying \(v_0 \preccurlyeq_{T(r)} v\). The inequality \(v_0 \preccurlyeq_{T(r)} v\) means that \(V(P_{v_0}) \supseteq V(P_{v})\), where \(P_{v_0}\) and \(P_{v}\) are the paths joining \(v_0\) with \(r\) and, respectively, \(v\) with \(r\). Let \(P_{v_0}\) satisfy \eqref{t9.20:e1}, \eqref{t9.20:e2} and let \(v_0 \neq v\). The membership relation \(v \in V(P_{v})\) implies that there is \(n \in \{1, \ldots, k\}\) for which \(v_n = v\). It was shown above that
\begin{equation}\label{t9.20:e10}
v_0 \sqsubset_{T(r)} v_1.
\end{equation}
Similarly we obtain
\begin{equation}\label{t9.20:e11}
v_1 \sqsubset_{T(r)} v_2, \quad v_2 \sqsubset_{T(r)} v_3, \quad \ldots, \quad v_{n-1} \sqsubset_{T(r)} v_{n}.
\end{equation}
From~\eqref{t9.20:e10} and \eqref{t9.20:e11} it follows that
\[
\<v_0, v> = \<v_0, v_n> \in {\sqsubset}_{T(r)}^{n} \subseteq {\sqsubseteq}_{T(r)}^t.
\]
Inclusion \eqref{t9.20:e9} holds.

\(\ref{t9.20:s2} \Rightarrow \ref{t9.20:s1}\). Let a partially ordered set \((S, {\preccurlyeq_{S}})\) have properties \ref{t9.20:s2:1}, \ref{t9.20:s2:2} and \ref{t9.20:s2:3}. Let us define a rooted graph \(T = T(r)\) such that \(V(T) = S\), \(r = l\), where \(l\) is the largest element of \(S\), and
\begin{equation}\label{t9.20:e13}
(\{u, v\} \in E(T)) \Leftrightarrow (u \sqsubset_{S} v \text{ or } v \sqsubset_{S} u)
\end{equation}
for all \(u\), \(v \in S\). We claim that \(T(r)\) is a rooted tree for which the equality
\begin{equation}\label{t9.20:e12}
{\preccurlyeq_{T(r)}} = {\preccurlyeq_{S}}
\end{equation}
holds, where \({\preccurlyeq_{T(r)}}\) is the partial order on \(V(T)\) defined by \eqref{e9.18}. First of all, we note that if \eqref{t9.20:e12} holds, then the identical mapping \(\operatorname{id} \colon S \to S\), \(\operatorname{id}(s) = s\) for all \(s \in S\), is an order isomorphism of \((V(T(r)), {\preccurlyeq_{T(r)}})\) and \((S, {\preccurlyeq_{S}})\).

Let \(v\) be an element of \(S\) such that \(v \neq l\). By Definition~\ref{d9.19}, we have the inequality \(v \preccurlyeq_{S} l\). The last inequality, equality~\eqref{e9.20} and \({\preccurlyeq_{S}} = {\sqsubseteq_S^t}\) imply the existence of finite sequence \(s_0\), \(\ldots\), \(s_n\), \(n \geqslant 1\), such that
\begin{equation}\label{t9.20:e14}
v = s_0 \sqsubset_S s_1 \sqsubset_S \ldots \sqsubset_S s_n = l.
\end{equation}
Using \eqref{t9.20:e13} and \eqref{t9.20:e14} we see that \((s_0, \ldots, s_n)\) is a path in \(T\) connected \(v = s_0\) with \(l = s_n\). Thus, \(T\) is a connected graph. Suppose \(T\) contains a cycle \(C\). Let \(V(C) = \{v_1, \ldots, v_n\}\), \(n \geqslant 3\), and
\[
(\{v_i, v_j\}\in E(C)) \Leftrightarrow (|i-j|=1 \quad \text{or} \quad |i-j|=n-1).
\]
Then \((V(C), \preccurlyeq_{V(C)})\) is a finite poset, where
\begin{equation}\label{t9.20:e17}
{\preccurlyeq_{V(C)}} = {\preccurlyeq}_S \cap (V(C) \times V(C)),
\end{equation}
Consequently, this poset contains a maximal element \(m\). Without loss of generality, we assume \(v_1 = m\). The membership \(\{v_1, v_2\} \in E(C)\), the inclusion \(E(C) \subseteq E(T)\) and \eqref{t9.20:e13} imply \(v_1 \sqsubset_S v_2\) or \(v_2 \sqsubset_S v_1\). If \(v_1 \sqsubset_S v_2\) holds, then \(v_1 \preccurlyeq_S v_2\) also holds and, consequently, \(v_1 \preccurlyeq_{V(C)} v_2\) is valid, where \({\preccurlyeq_{V(C)}}\) is defined by \eqref{t9.20:e17}. Since \(v_1\) is a maximal element of \(V(C)\), the inequality \(v_1 \preccurlyeq_{V(C)} v_2\) implies \(v_1 = v_2\), contrary to the definition of (simple) graph. Hence, we have
\begin{equation}\label{t9.20:e15}
v_2 \sqsubset_S v_1.
\end{equation}
As above, we can see that \(\{v_2, v_3\} \in E(C) \subseteq E(T)\) implies
\begin{equation}\label{t9.20:e16}
v_2 \sqsubset_S v_3 \quad \text{or} \quad v_3 \sqsubset_S v_2.
\end{equation}
If the first relation in \eqref{t9.20:e16} is valid, then from \eqref{t9.20:e15} it follows that \(v_1\) and \(v_3\) are two different upper covers of \(v_2\), contrary to \ref{t9.20:s2:2}. Consequently, \(v_3 \sqsubset_S v_2\) holds. Now, using induction, we obtain the contradiction
\[
v_1 \sqsupset_S v_2 \sqsupset_S \ldots \sqsupset_S v_{n-1} \sqsupset_S v_{n} \sqsupset_S v_{1}.
\]
Hence, \(T\) contains no cycles. Thus, \(T = T(r)\) is a rooted tree.

To complete the proof we must show that \eqref{t9.20:e12} holds. By condition \ref{t9.20:s2:3}, we have \({\preccurlyeq_S} = {\sqsubseteq_S^t}\). Moreover, as was shown in the first part of the present proof, we also have \({\preccurlyeq_{T(r)}} = {\sqsubseteq_{T(r)}^t}\) (see \eqref{t9.20:e8}). Consequently, \eqref{t9.20:e12} holds if and only if
\begin{equation}\label{t9.20:e18}
{\sqsubset_S} \subseteq {\preccurlyeq_{T(r)}}
\end{equation}
and
\begin{equation}\label{t9.20:e19}
{\sqsubset_{T(r)}} \subseteq {\preccurlyeq_{S}}.
\end{equation}
Let us prove \eqref{t9.20:e18}. Assume \(s_0\), \(s_1 \in S\) and \(s_0 \sqsubset_S s_1\) holds. The inequality \(s_0 \preccurlyeq_{T(r)} s_1\) evidently holds if \(s_1 = l\). Suppose we have \(s_1 \prec_S l\). Then \(s_1 \prec_S l\) and \(s_0 \sqsubset_S s_1\) imply \(s_0 \prec_S l\). By \ref{t9.20:s2:3}, there are \(s_1^{*}\), \(\ldots\), \(s_n^{*}\) such that
\begin{equation}\label{t9.20:e20}
s_0 \sqsubset_S s_1^{*} \sqsubset_S \ldots \sqsubset_S s_n^{*} = l.
\end{equation}
By \ref{t9.20:s2:2}, we have the equality \(s_1 = s_1^{*}\). It follows from \eqref{t9.20:e20} and \eqref{t9.20:e13} that \(P_{s_0} = (s_0, s_1^{*}, \ldots, s_n^{*})\) is the path joining \(s_0\) with the root in \(T\). Similarly, \(P_{s_1^*} = (s_1^*, \ldots, s_n^{*})\) is the path joining \(s_1^*\) with \(r\) in \(T\). The inclusion
\[
\{s_0, s_1^*, \ldots, s_n^{*}\} \supseteq \{s_1^*, \ldots, s_n^{*}\},
\]
the equalities \(s_1 = s_1^{*}\), \(s_n^{*} = l = r\), \eqref{t9.20:e17} and \eqref{t9.20:e18} imply \(s_0 \preccurlyeq_{T(r)} s_1\). Inclusion \eqref{t9.20:e18} follows.

Suppose now we have
\begin{equation}\label{t9.20:e21}
v_0 \sqsubset_{T(r)} v_1
\end{equation}
for \(v_0\), \(v_1 \in V(T)\). Let \(P_{v_0} = (v_1^{*}, v_2^{*}, \ldots, v_n^{*})\) be the path in \(T\) joining \(v_1^{*} = v_0\) with \(v_{n}^{*} = r\). Then we have \(v_1^{*} \sqsubset_{T(r)} v_2^{*}\) (see \eqref{t9.20:e4}). Since the upper cover of \(v_0\) (with respect the order \(\preccurlyeq_{T(r)}\)) is unique, \(v_1^{*} = v_0\) implies \(v_2^{*} = v_1\). Consequently, \(v_0\) and \(v_1\) are adjacent in \(T\). From \(\{v_0, v_1\} \in E(T)\) it follows that
\[
v_0 \sqsubset_S v_1 \text{ or } v_1 \sqsubset_S v_0
\]
(see \eqref{t9.20:e13}). Using \eqref{t9.20:e18} we see that \(v_1 \sqsubset_S v_0\) implies \(v_1 \preccurlyeq_{T(r)} v_0\). The last inequality contradicts \eqref{t9.20:e21}. Hence, \(v_0 \sqsubset_S v_1\) holds. Thus, the implication
\[
(v_0 \sqsubset_{T(r)} v_1) \Rightarrow (v_0 \sqsubset_S v_1)
\]
is valid. Inclusion \eqref{t9.20:e19} follows.
\end{proof}

The following example shows that characteristic properties of \(\preccurlyeq_{T(r)}\) presented in condition~\ref{t9.20:s2} of Theorem~\ref{t9.20} are independent of one another.

\begin{example}\label{ex9.21}
\((e_1)\) Let \((S, {\preccurlyeq_S})\) be a subposet of \((\RR, \leqslant_{\RR})\) defined such that
\[
(s \in S) \Leftrightarrow \left(\exists n \in \mathbb{N}\colon s = \frac{1}{n} \text{ or } s = 1 + \frac{1}{n}\right).
\]
Then \((S, {\preccurlyeq_S})\) has properties \ref{t9.20:s2:2} and \ref{t9.20:s2:3} but the poset \((S, {\sqsubseteq}_S^t)\) is not totally ordered. Hence, we have \({\preccurlyeq}_S \neq {\sqsubseteq}_S^t\). Thus, \(\ref{t9.20:s2:1} \mathbin{\&} \ref{t9.20:s2:2} \mathbin{\&} \neg\ref{t9.20:s2:3}\) is valid for \((S, {\preccurlyeq_S})\).

\((e_2)\) The subposet \((S, {\preccurlyeq_S})\) of \((\RR, \leqslant_{\RR})\) with \(S = \mathbb{Z}\) does not have the largest element but satisfies \ref{t9.20:s2:2} and \ref{t9.20:s2:3}.

\((e_3)\) A diagram of a finite poset \((S, {\preccurlyeq_S})\) satisfying \(\ref{t9.20:s2:1} \mathbin{\&} \neg\ref{t9.20:s2:2} \mathbin{\&} \ref{t9.20:s2:3}\) is given by Figure~\ref{fig3} below.
\end{example}

\begin{figure}[ht]
\begin{center}
\begin{tikzpicture}[
level 1/.style={level distance=1cm,sibling distance=25mm},
level 2/.style={level distance=1cm,sibling distance=12mm},
level 3/.style={level distance=1cm,sibling distance=12mm},
solid node/.style={circle,draw,inner sep=1.5,fill=black},
hollow node/.style={circle,draw,inner sep=1.5}]

\node at (1,4) [label={right:{\((S, {\preccurlyeq_S})\)}}] {};
\node at (0,4) [solid node, label={right:{\(l\)}}] (A) {};
\node at (1,3)  [solid node] (B1) {};
\node at (-1,3) [solid node] (B2) {};
\node at (0,2) [solid node, label={right:{\(s_0\)}}] (C) {};
\path (A) edge[-Stealth] (B1);
\path (A) edge[-Stealth] (B2);
\path (B1) edge[-Stealth] (C);
\path (B2) edge[-Stealth] (C);
\end{tikzpicture}
\end{center}
\caption{The upper cover of \(s_0\) is not unique in \((S, {\preccurlyeq_S})\).}
\label{fig3}
\end{figure}
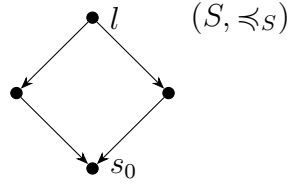

\begin{lemma}\label{l10.10}
Let \(T = T(r)\) be a rooted tree. Let us denote by \(\sqsubset_{T(r)}\) the upper covering relation corresponding to the partial order \(\preccurlyeq_{T(r)}\). Then the equivalence
\begin{equation}\label{l10.10:e1}
(v \sqsubset_{T(r)} u \text{ or } u \sqsubset_{T(r)} v) \Leftrightarrow (\{u, v\} \in E(T))
\end{equation}
is valid for all \(u\), \(v \in V(T)\).
\end{lemma}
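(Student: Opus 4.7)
The plan is to reuse the structural fact proved within Theorem~\ref{t9.20} rather than re-derive it. Specifically, in the course of verifying property~\ref{t9.20:s2:2} for $(V(T(r)), {\preccurlyeq_{T(r)}})$, it was shown that for every non-root vertex $v \in V(T) \setminus \{r\}$, if $P_v = (v_0, v_1, \ldots, v_k)$ with $v_0 = v$ and $v_k = r$ denotes the unique $v$-to-$r$ path in $T$ (unique by Lemma~\ref{l9.14}), then $v_0 \sqsubset_{T(r)} v_1$, and moreover this $v_1$ is the \emph{only} upper cover of $v$ under $\preccurlyeq_{T(r)}$. Both directions of \eqref{l10.10:e1} reduce essentially to this observation.

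For the forward implication ($\Rightarrow$), assume without loss of generality that $v \sqsubset_{T(r)} u$. Since $r$ is the largest element of $(V(T(r)), {\preccurlyeq_{T(r)}})$, we have $v \neq r$, so the path $P_v$ above exists and $v_1$ is the unique upper cover of $v$. Consequently $u = v_1$, which gives $\{u, v\} = \{v_0, v_1\} \in E(P_v) \subseteq E(T)$.

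For the reverse implication ($\Leftarrow$), suppose $\{u, v\} \in E(T)$. Deleting this edge disconnects $T$ into exactly two components: if $T$ minus $\{u,v\}$ were still connected, a path joining $u$ and $v$ in the deleted graph together with the edge $\{u,v\}$ would form a cycle in $T$, contradicting Definition~\ref{d9.8}. The root $r$ belongs to exactly one of these components; say, without loss of generality, it lies in the component of $u$. Then the unique $v$-to-$r$ path cannot avoid the edge $\{u,v\}$ (there is no $v$-to-$r$ walk in the component of $v$ alone), so $P_v$ must begin with $(v, u, \ldots, r)$; that is, $u$ plays the role of $v_1$ for $v$. By the observation recalled above, $v \sqsubset_{T(r)} u$.

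I expect no substantial obstacle: the proof is a straightforward bookkeeping exercise built on top of material already unpacked for Theorem~\ref{t9.20}. The only point requiring a moment's care is the edge-deletion argument in the reverse direction, which rests on acyclicity together with Lemma~\ref{l9.14}; once it is noted, both directions fit in a few lines.
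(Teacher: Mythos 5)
Your proof is correct and follows essentially the same route as the paper's: both directions come down to identifying the unique upper cover of a non-root vertex $v$ with the second vertex on the unique $v$-to-$r$ path, a fact already extracted in the proof of Theorem~\ref{t9.20}. The only cosmetic difference is in the reverse direction, where you force $u$ to be that second vertex via an edge-deletion/two-component argument, while the paper instead splits on whether $u$ lies on $P_v$, appealing to acyclicity in one case and prepending $u$ to the path in the other; both variants are sound.
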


\begin{proof}
Equivalence \eqref{l10.10:e1} is evidently valid if \(|V(T)| = 1\). Suppose \(|V(T)| \geqslant 2\). Let \(u\) and \(v\) be vertices of \(T\) such that \(v \sqsubset_{T(r)} u\). The root \(r\) is the largest element of the poset \((V(T), {\preccurlyeq_{T(r)}})\). Hence, \(v \sqsubset_{T(r)} u\) implies \(v \neq r\). By Lemma~\ref{l9.14}, there is the unique path \(P_{v} = (v_1, \ldots, v_n) \subseteq T\) with \(v_1 = v\) and \(v_n = r\). Let \(w\) be an arbitrary vertex of \(T\). By definition of \({\preccurlyeq_{T(r)}}\), the inequality \(v \preccurlyeq_{T(r)} w\) holds if and only if there is \(i \in \{1, \ldots, n\}\) for which we have \(w = v_i\). Consequently, there is \(i_0 \in \{1, \ldots, n\}\) such that \(v_{i_0} = u\). It is clear that \(i_0 \geqslant 2\). If the equality \(i_0 = 2\) holds, then
\[
\{v, u\} = \{v_1, v_{i_0}\} \in E(P_v).
\]
Suppose that \(i_0 \geqslant 3\). Then \(v_2 \neq u\) and we get
\[
v \preccurlyeq_{T(r)} v_2 \preccurlyeq_{T(r)} v_{i_0} = u.
\]
It means that \(v \sqsubset_{T(r)} v_2\) and, consequently, \(u\) is not the upper cover of \(v\), contrary to \(v \sqsubset_{T(r)} u\). The implication
\[
(v \sqsubset_{T(r)} u) \Rightarrow (\{u, v\} \in E(T))
\]
follows. Since \(T\) is a simple graph, we have
\[
(\{u, v\} \in E(T)) \Leftrightarrow (\{v, u\} \in E(T)).
\]
Thus, the implication
\[
(v \sqsubset_{T(r)} u \text{ or } u \sqsubset_{T(r)} v) \Rightarrow (\{u, v\} \in E(T))
\]
is true for all \(u\), \(v \in V(T)\).

Suppose now that \(\{u, v\} \in E(T)\). It is clear that \(u \neq r\) or \(v \neq r\). Without loss of generality, we assume \(v \neq r\). Let, as above, \(P_{v} = (v_1, \ldots, v_n) \subseteq T\) be a path in \(T\) with \(v_1 = v\) and \(v_n = r\). Suppose that \(u \in V(P_v)\). Since \(T\) has no cycles, conditions \(u \in V(P_v)\) and \(\{u, v\} \in E(P_v)\) imply that \(u = v_2\). As in the first part of the proof, we obtain \(v \sqsubset_{T(r)} v_2\). Thus,
\[
v \sqsubset_{T(r)} u
\]
holds. If \(u \notin V(P_v)\) holds, then using \(\{u, v\} \in E(T)\) we can add the edge \(\{u, v\}\) to the path \(P_v\) and obtain a path \(P_{u} = (u, v_1, \ldots, v_n)\) which connects \(u\) and \(v_n = r\) in \(T\). It is clear that \(v \in P_{u}\) and, reasoning as before, we get \(u \sqsubset_{T(r)} v\). It implies
\[
(\{u, v\} \in E(T)) \Rightarrow (v \sqsubset_{T(r)} u \text{ or } u \sqsubset_{T(r)} v).
\]
Equivalence~\eqref{l10.10:e1} follows.
\end{proof}

\begin{remark}\label{r10.11}
Let \(T = T(r)\) be a rooted tree. In the theory of trees it is often said that a vertex \(u \in V(T)\) is a \emph{direct successor} of a vertex \(v \in V(T)\) (= \(v\) is a \emph{direct predecessor} of \(u\))  if \(u \sqsubset_{T(r)} v\) holds. By Theorem~\ref{t9.20}, if \(u \in V(T)\) is not the root of \(T\), then \(u\) admits the unique direct predecessor. Moreover, Lemma~\ref{l10.10} claims that two vertices of \(T\) are adjacent if and only if one of them is a direct successor of another.
\end{remark}

\begin{proposition}\label{p9.23}
Let \(T_1 = T_1(r_1)\) and \(T_2 = T_2(r_2)\) be rooted trees and let \(f \colon V(T_1) \to V(T_2)\) be a bijective mapping. Then the following statements are equivalent:
\begin{enumerate}
\item \label{p9.23:s1} \(f\) is an isomorphism of rooted trees \(T_1(r_1)\) and \(T_2(r_2)\).
\item \label{p9.23:s2} \(f\) is an order isomorphism of posets \((V(T_1), \preccurlyeq_{T_1(r_1)})\) and \((V(T_2), \preccurlyeq_{T_2(r_2)})\).
\end{enumerate}
\end{proposition}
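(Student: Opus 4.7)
The plan is to establish both implications by exploiting Theorem~\ref{t9.20} (the root is the largest element and the order is generated by the cover relation) together with Lemma~\ref{l10.10} (adjacency in a rooted tree is exactly the symmetrisation of the cover relation). The underlying point is that for a rooted tree all three structures (rooted graph, partial order, cover relation plus root) encode one another.

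For \(\ref{p9.23:s1} \Rightarrow \ref{p9.23:s2}\), I would first recall that, by Lemma~\ref{l9.14}, for each \(v \in V(T_1)\setminus\{r_1\}\) there is a unique path \(P_v \subseteq T_1\) connecting \(v\) with \(r_1\). If \(f\) is an isomorphism of rooted trees, then applying \(f\) to the sequence of vertices of \(P_v\) yields a path in \(T_2\) whose endpoints are \(f(v)\) and \(f(r_1) = r_2\); by uniqueness this path is precisely \(P_{f(v)}\), so
\[
f(V(P_v)) = V(P_{f(v)}).
\]
Using bijectivity of \(f\), the chain of equivalences
\[
u \preccurlyeq_{T_1(r_1)} v \;\Leftrightarrow\; V(P_v) \subseteq V(P_u) \;\Leftrightarrow\; V(P_{f(v)}) \subseteq V(P_{f(u)}) \;\Leftrightarrow\; f(u) \preccurlyeq_{T_2(r_2)} f(v)
\]
gives the claim. (The case when \(v = r_1\) or \(u = r_1\) is handled directly from the convention \(\mathcal{P}(r) = \{r\}\) in \eqref{e9.17}, using \(f(r_1)=r_2\).)

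For \(\ref{p9.23:s2} \Rightarrow \ref{p9.23:s1}\), suppose \(f\) is an order isomorphism. Since, by Theorem~\ref{t9.20}\ref{t9.20:s2:1}, the root \(r_i\) is the unique largest element of the poset \((V(T_i), \preccurlyeq_{T_i(r_i)})\) for \(i=1,2\), and since every order isomorphism sends the largest element to the largest element, we get \(f(r_1) = r_2\). Furthermore, any order isomorphism preserves the cover relation in both directions, so
\[
v \sqsubset_{T_1(r_1)} u \;\Leftrightarrow\; f(v) \sqsubset_{T_2(r_2)} f(u)
\]
for all \(u,v \in V(T_1)\). Invoking Lemma~\ref{l10.10} in \(T_1\) and in \(T_2\), this translates into
\[
\{u,v\} \in E(T_1) \;\Leftrightarrow\; \{f(u), f(v)\} \in E(T_2),
\]
so \(f\) is an isomorphism of the underlying free trees which, together with \(f(r_1) = r_2\), yields an isomorphism of the rooted trees in the sense of Definition~\ref{d8.10}.

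I do not expect any serious obstacle: once Theorem~\ref{t9.20} and Lemma~\ref{l10.10} are in hand, the proof is a bookkeeping exercise. The only mild care needed is in the \(\Rightarrow\) direction, where one must verify that \(f\) really carries the path \(P_v\) to the path \(P_{f(v)}\); this is where the uniqueness of paths from Lemma~\ref{l9.14} together with the preservation of adjacency under a graph isomorphism are used simultaneously.
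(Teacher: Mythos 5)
Your proof is correct and follows essentially the same route as the paper: both directions rest on Theorem~\ref{t9.20} (the root is the largest element), Lemma~\ref{l9.14} (uniqueness of paths), and Lemma~\ref{l10.10} (edges are exactly symmetrised covers). The only cosmetic difference is that you obtain the biconditional in the forward direction via the equality \(f(V(P_v)) = V(P_{f(v)})\) and injectivity, whereas the paper proves one implication and then applies the same argument to \(f^{-1}\); the content is identical.
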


\begin{proof}
\(\ref{p9.23:s1} \Rightarrow \ref{p9.23:s2}\). Let \(f\) be an isomorphism of \(T_1(r_1)\) and \(T_2(r_2)\) and let \(u\), \(v \in V(T_1)\) satisfy the inequality \(u \preccurlyeq_{T_1(r_1)} v\). We must show that
\begin{equation}\label{p9.23:e1}
f(u) \preccurlyeq_{T_2(r_2)} f(v)
\end{equation}
holds. By Definition~\ref{d8.10}, we have the equality
\begin{equation}\label{p9.23:e2}
f(r_1) = r_2.
\end{equation}
Since \(r_2\) is the largest element of \((V(T_2), \preccurlyeq_{T_2(r_2)})\), equality \eqref{p9.23:e2} implies \eqref{p9.23:e1} if \(v = r_1\). Thus, it suffices to prove \eqref{p9.23:e1} if \(v \neq r_1\). From~\eqref{e9.17}, \eqref{e9.18} and \(u \preccurlyeq_{T_1(r_1)} v\) it follows that \(u \neq r_1\) holds if \(v \neq r_1\). Let
\[
P_u = (u, u_1, \ldots, u_n), \ u_n = r_1 \text{ and } P_v = (v, v_1, \ldots, v_k), \ v_k = r_1,
\]
be the paths (in \(T_1\)) joining \(u\) with \(r_1\) and, respectively, \(v\) with \(r_1\). Then
\[
(f(u), f(u_1), \ldots, f(u_n)), \ f(u_n) = r_2 \text{ and } (f(v), f(v_1), \ldots, f(v_k)), \ f(v_k) = r_2,
\]
are the paths (in \(T_2\)) joining \(f(u)\) with \(r_2\) and, respectively, \(f(v)\) with \(r_2\). From \(u \preccurlyeq_{T_1(r_1)} v\) and \eqref{e9.18} we obtain the inclusion \(V(P_{v}) \subseteq V(P_{u})\), that implies
\begin{equation}\label{p9.23:e3}
f(V(P_{v})) \subseteq f(V(P_{u})).
\end{equation}
Now inequality \eqref{p9.23:e1} follows from \eqref{p9.23:e3} and the definition \(\preccurlyeq_{T(r_2)}\). Thus, the implication
\begin{equation}\label{p9.23:e4}
(u \preccurlyeq_{T_1(r_1)} v) \Rightarrow (f(u) \preccurlyeq_{T_2(r_2)} f(v))
\end{equation}
is valid for all \(u\), \(v \in V(T_1)\). Since the bijection \(f^{-1} \colon V(T_2) \to V(T_1)\) is an isomorphism of the rooted trees \(T_2(r_2)\) and \(T_1(r_1)\), the converse implication for implication \eqref{p9.23:e4} is also valid. Thus, \(f \colon V(T_1) \to V(T_2)\) is an order isomorphism of \((V(T_1), \preccurlyeq_{T_1(r_1)})\) and \((V(T_2), \preccurlyeq_{T_2(r_2)})\).

\(\ref{p9.23:s2} \Rightarrow \ref{p9.23:s1}\). Let \(f \colon V(T_1) \to V(T_2)\) be an order isomorphism of \((V(T_1), \preccurlyeq_{T_1(r_1)})\) and \((V(T_2), \preccurlyeq_{T_2(r_2)})\). To prove that \(f\) is an isomorphism of rooted trees \(T_1(r_1)\) and \(T_2(r_2)\), we must show that
\begin{equation}\label{p9.23:e5}
f(r_1) = r_2
\end{equation}
holds and
\begin{equation}\label{p9.23:e6}
(\{u, v\} \in E(T_1)) \Leftrightarrow (\{f(u), f(v)\} \in E(T_2))
\end{equation}
is valid for all \(u\), \(v \in V(T_1)\). Since \(r_i\) is the largest element of \((V(T_i), \preccurlyeq_{T_i(r_i)})\) and \(f\) is an order isomorphism, equality \eqref{p9.23:e5} follows from Definitions \ref{d2.8} and \ref{d9.19}. By Lemma~\ref{l10.10}, we have
\[
(\{u, v\} \in E(T_1)) \Leftrightarrow (v \sqsubset_{T_1(r_1)} u \text{ or } u \sqsubset_{T_1(r_1)} v)
\]
and, since \(f\) is an order isomorphism, the equivalence
\[
(v \sqsubset_{T_1(r_1)} u \text{ or } u \sqsubset_{T_1(r_1)} v) \Leftrightarrow (f(v) \sqsubset_{T_2(r_2)} f(u) \text{ or } f(u) \sqsubset_{T_2(r_2)} f(v))
\]
is valid. Now using Lemma~\ref{l10.10} again, we obtain
\[
(f(v) \sqsubset_{T_2(r_2)} f(u) \text{ or } f(u) \sqsubset_{T_2(r_2)} f(v)) \Leftrightarrow (\{f(u), f(v)\} \in E(T_2)).
\]
Equivalence \eqref{p9.23:e6} follows.
\end{proof}

\subsection{Rooted representing trees of totally bounded ultrametric spaces}

Let us consider now in more details the case when our tree is the representing tree of a totally bounded ultrametric space.

\begin{definition}\label{d10.12}
Let \((X, d)\) be a nonempty totally bounded ultrametric space. The representing tree \(T_X\) together with the root \(r_X = X\) is said to be the \emph{rooted representing tree} of \((X, d)\) and denoted as \(T_X = T_X(r_X)\).
\end{definition}

Let \((X, d)\) be a nonempty totally bounded ultrametric space and let \(\BB_{X}\) be the ballean of \((X, d)\). As in Theorem~\ref{t7.12}, we can consider \(\BB_{X}\) as a partially ordered set with order \(\preccurlyeq_X\) defined by~\eqref{e7.18}.

\begin{lemma}\label{l10.14}
Let \((X, d)\) be a nonempty totally bounded ultrametric space. Then the posets \((\BB_{X}, \preccurlyeq_{X})\) and \((V(T_X), \preccurlyeq_{T_X(r_X)})\) are the same, i.e., the equalities
\[
\BB_{X} = V(T_X) \quad \text{and} \quad {\preccurlyeq_{X}} = {\preccurlyeq_{T_X(r_X)}}
\]
hold.
\end{lemma}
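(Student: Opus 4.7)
The plan is to establish the two equalities separately, the set-equality $\BB_X = V(T_X)$ being immediate from Lemma~\ref{l8.2}, and the order equality requiring an identification of the ``root-path'' $P_B$ of a ball $B\in V(T_X)$ with the linearly ordered chain of balls containing $B$.

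\textbf{Step 1.} For each $B\in V(T_X)=\BB_X$, let $P_B=(B_0,B_1,\ldots,B_k)\subseteq T_X$ denote the unique path joining $B_0=B$ with $B_k=r_X=X$ (unique by Lemma~\ref{l9.14}). Using Definition~\ref{d9.1} and the interpretation of edges given by Lemma~\ref{l8.3}, I would observe that along this path the balls form a strictly ascending chain $B_0\subsetneq B_1\subsetneq\cdots\subsetneq B_k=X$, and that moreover no ball of $\BB_X$ lies strictly between two consecutive $B_i$ and $B_{i+1}$.

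\textbf{Step 2 (the key lemma).} I would prove that $V(P_B)$ coincides with the set of all $B'\in\BB_X$ satisfying $B\subseteq B'$. The inclusion ``$\subseteq$'' is clear from Step~1. For the reverse inclusion, given any $B'\supseteq B$, Proposition~\ref{p2.5} (applied to $B'$ and each $B_i$, which all have nonempty intersection with $B'$ since they all contain the nonempty set $B$) yields that $B'$ is comparable with every $B_i$. Let $j=\max\{i\colon B_i\subseteq B'\}$; this maximum exists because $0$ belongs to the set and the set is contained in $\{0,\ldots,k\}$. If $j=k$, then $B'\supseteq X$, so $B'=X=B_k\in V(P_B)$. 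Otherwise $B_j\subseteq B'\subsetneq B_{j+1}$ (the strict inclusion coming from the maximality of $j$ together with comparability), and condition \ref{l8.3:s3} of Lemma~\ref{l8.3} applied to the edge $\{B_j,B_{j+1}\}\in E(T_X)$ forces $B'\in\{B_j,B_{j+1}\}$, hence $B'=B_j\in V(P_B)$. This is the main obstacle because one must carefully exploit both Proposition~\ref{p2.5} (to get comparability) and Lemma~\ref{l8.3} (to rule out intermediate balls between consecutive vertices on the path).

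\textbf{Step 3.} The order equality then follows from the characterization in Step~2 through a straightforward chain of equivalences, valid for all $B_1,B_2\in\BB_X$:
\begin{equation*}
B_1\preccurlyeq_X B_2 \;\Longleftrightarrow\; B_1\subseteq B_2 \;\Longleftrightarrow\; B_2\in V(P_{B_1}) \;\Longleftrightarrow\; V(P_{B_2})\subseteq V(P_{B_1}) \;\Longleftrightarrow\; B_1\preccurlyeq_{T_X(r_X)} B_2,
\end{equation*}
where the third equivalence uses Step~2 twice (every ball containing $B_2$ also contains $B_1$ iff $B_1\subseteq B_2$), and the last equivalence is the definition \eqref{e9.18} of $\preccurlyeq_{T_X(r_X)}$, which reads correctly for the root as well since $\mathcal{P}(r_X)=\{r_X\}=V(P_{r_X})$. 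This completes the proof.
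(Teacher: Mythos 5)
Your proof is correct. The set equality is, as you say, just Lemma~\ref{l8.2}, and your Step~2 characterization \(V(P_B)=\{B'\in\BB_{X}\colon B\subseteq B'\}\) is sound: comparability of \(B'\) with each \(B_i\) follows from Proposition~\ref{p2.5} because all these balls contain the nonempty set \(B\), and condition \ref{l8.3:s3} of Lemma~\ref{l8.3} correctly excludes any ball strictly between consecutive path vertices. The underlying idea is the same as the paper's -- identifying the root-path of \(B\) with the chain of balls containing \(B\) -- but your decomposition is different: the paper proves the two inclusions \({\preccurlyeq_X}\subseteq{\preccurlyeq_{T_X(r_X)}}\) and \({\preccurlyeq_{T_X(r_X)}}\subseteq{\preccurlyeq_X}\) separately, the first by tracking the two descending sequences of parts from Definition~\ref{d9.1} and using disjointness of distinct parts of a complete multipartite graph to show the sequences agree, the second by unwinding the path structure to see that \(B_1\) sits inside a part of \(G_{B_2,d|_{B_2\times B_2}}\). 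Your version isolates a single reusable lemma from which both inclusions drop out of one chain of equivalences, at the price of leaning on Lemma~\ref{l8.3}\ref{l8.3:s3} (which the paper has already established), whereas the paper's argument is more self-contained but repeats some of the combinatorics of multipartite parts. Either route is acceptable; yours is arguably the cleaner write-up.
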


\begin{proof}
The equality \(\BB_{X} = V(T_X)\) was proved in Lemma~\ref{l8.2}. To prove the equality \({\preccurlyeq_{X}} = {\preccurlyeq_{T_X(r_X)}}\) it suffices to show that the inclusions
\begin{equation}\label{l10.14:e2}
{\preccurlyeq_{X}} \subseteq {\preccurlyeq_{T_X(r_X)}}
\end{equation}
and
\begin{equation}\label{l10.14:e3}
{\preccurlyeq_{T_X(r_X)}} \subseteq {\preccurlyeq_{X}}
\end{equation}
are valid.

Let us prove the validity of~\eqref{l10.14:e2}. Let \(A_1\) and \(A_2\) belong to \(\BB_{X}\) and \(A_1 \preccurlyeq_{X} A_2\) hold. We want to show that
\begin{equation}\label{l10.14:e4}
A_1 \preccurlyeq_{T_X(r_X)} A_2.
\end{equation}
Inequality~\eqref{l10.14:e4} is evidently holds if \(A_1 = X\), or \(A_1 = A_2\), or \(A_2 = X\) (see \eqref{e9.17} and \eqref{e9.18}).

Let \(A_1 \neq X \neq A_2\) and \(A_1 \neq A_2\). Then, by condition~\ref{d9.1:s3} of Definition~\ref{d9.1}, there are subsets \(X_1^2\), \(\ldots\), \(X_n^2\) of \(X\) such that \(X_1^2 = X\), \(X_n^2 = A_2\) and \(X_{i+1}^2\) is a part of \(G_{X_i^2, d|_{X_i^2 \times X_i^2}}\) for every \(i \in \{1, \ldots, n-1\}\). Similarly, there are subsets \(X_1^1\), \(\ldots\), \(X_m^1\) of \(X\) such that \(X_1^1 = X\), \(X_m^1 = A_1\) and \(X_{j+1}^1\) is a part of \(G_{X_j^1, d|_{X_j^1 \times X_j^1}}\) for every \(j \in \{1, \ldots, m-1\}\). Using condition~\ref{d9.1:s4} of Definition~\ref{d9.1} we see that \((X_1^1, \ldots, X_m^1)\) is a path in \(T_X\) joining \(X\) and \(A_1\) and \((X_1^2, \ldots, X_n^2)\) is a path in \(T_X\) joining \(X\) and \(A_2\). By \eqref{e9.17}, \eqref{e9.18}, to prove inequality \eqref{l10.14:e4} we must show that, for every \(i \in \{1, \ldots, n\}\) there is \(j \in \{1, \ldots, m\}\) such that \(X_i^2 = X_j^1\). We claim that every \(i \in \{1, \ldots, n\}\) belongs also to \(\{1, \ldots, m\}\) (i.e., \(n \leqslant m\) holds) and the equality
\begin{equation}\label{l10.14:e5}
X_i^1 = X_i^2
\end{equation}
holds. The last equality is trivially valid for \(j = 1\) and \(i = 1\).

The condition \(A_1 \neq X \neq A_2\) implies that \(m \geqslant 2\) and \(n \geqslant 2\). Consequently, there exist the sets \(X_2^2\) and \(X_2^1\). If \(X_2^2 \neq X_2^1\) holds, then \(X_2^2\) and \(X_2^1\) are disjoint as different parts of complete multipartite graph \(G_{X, d}\),
\begin{equation}\label{l10.14:e6}
X_2^2 \cap X_2^1 = \varnothing.
\end{equation}
Since \(A_2 \subseteq X_2^{2}\) and \(A_1 \subseteq X_2^{1}\), equality \eqref{l10.14:e6} implies
\begin{equation}\label{l10.14:e7}
A_2 \cap A_1 = \varnothing.
\end{equation}
The sets \(A_1\) and \(A_2\) are nonempty (see Remark~\ref{r8.2}). Hence, \eqref{l10.14:e6} contradicts \(A_1 \preccurlyeq_X A_2\). Thus, \(X_2^1 = X_2^2\) holds. Similarly, we see that \(X_i^1 = X_i^2\) for every \(i \leqslant \min\{m, n\}\). If \(n = \min\{m, n\}\) holds, then the last statement and \eqref{e9.17}, \eqref{e9.18} imply \eqref{l10.14:e4}.

To complete the proof of inequality \eqref{l10.14:e4} it suffices to note that if \(m < n\) holds, then \(A_2\) is a subset of a part of \(G_{A_1, d|_{A_1 \times A_1}}\). Consequently, we have the inclusion \(A_2 \subseteq A_1\). Moreover, the inequality \(A_1 \preccurlyeq_X A_2\) means the inclusion \(A_1 \subseteq A_2\) (see \eqref{e7.18}). Thus, the equality \(A_1 = A_2\) holds, contrary to \(A_1 \neq A_2\).

To prove inclusion \eqref{l10.14:e3} suppose that \(B_1\) and \(B_2\) belong to \(V(T_X)\) and
\begin{equation}\label{l10.14:e8}
B_1 \preccurlyeq_{T_X(r_X)} B_2.
\end{equation}
It suffices to prove the inclusion \(B_1 \subseteq B_2\). Without loss of generality we assume \(B_1 \neq B_2\).

Let \(P_i\) be a path connecting \(r_X = X\) with \(B_i\) in \(T_X\), \(i = 1\), \(2\). Using Lemma~\ref{l9.14} (the uniqueness of path), relations \eqref{e9.17}, \eqref{e9.18} (the definition of the order \(\preccurlyeq_{T(r)}\)) and Definition~\ref{d9.1} we see that there is a sequence \(X_1\), \(\ldots\), \(X_n\) of subsets of \(X\) such that \(X = X_1\), \(B_1 = X_n\) and \(B_2 = X_{i_0}\) for some \(i_0 \in \{1, \ldots, n-1\}\), and \(X_{i+1}\) is a part of \(G_{X_i, d_{X_i \times X_i}}\) for every \(i \in \{1, \ldots, n-1\}\). Consequently, \(B_1\) is a subset of a part of the complete multipartite graph \(G_{B_2, d|_{B_2 \times B_2}}\). It implies the inclusion \(B_1 \subseteq B_2\), that is equivalent to \eqref{l10.14:e8}.
\end{proof}

\begin{theorem}\label{t8.7}
Let \((X, d)\) and \((Y, \rho)\) be nonempty totally bounded and ultrametric. Then the posets \((\BB_{X}, \preccurlyeq_X)\) and \((\BB_{Y}, \preccurlyeq_Y)\) are order isomorphic if and only if the rooted trees \(T_X = T_X(r_X)\) and \(T_Y = T_Y(r_Y)\) are isomorphic. Moreover, a bijection \(\Phi \colon \BB_{X} \to \BB_{Y}\) is an isomorphism of \(T_X(r_X)\) and \(T_Y(r_Y)\) if and only if this bijection is an order isomorphism of \((\BB_{X}, \preccurlyeq_X)\) and \((\BB_{Y}, \preccurlyeq_Y)\).
\end{theorem}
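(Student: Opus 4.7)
The proof is a straightforward synthesis of two earlier results, so I would keep the exposition brief and point to the key inputs.

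The plan is first to invoke Lemma~\ref{l10.14}, which identifies the posets \((\BB_{X}, {\preccurlyeq}_X)\) and \((V(T_X), {\preccurlyeq}_{T_X(r_X)})\) as literally the same partially ordered set (with \(\BB_{X} = V(T_X)\) by Lemma~\ref{l8.2}), and similarly for \(Y\). This reduces the question about balleans to a question about posets of vertices of rooted trees together with the orderings induced by the choice of roots.

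Next I would apply Proposition~\ref{p9.23}: for rooted trees \(T_X(r_X)\) and \(T_Y(r_Y)\), a bijection \(\Phi \colon V(T_X) \to V(T_Y)\) is an isomorphism of rooted trees if and only if \(\Phi\) is an order isomorphism of \((V(T_X), {\preccurlyeq}_{T_X(r_X)})\) and \((V(T_Y), {\preccurlyeq}_{T_Y(r_Y)})\). Combining these two statements immediately gives the second (``moreover'') part of the theorem: a bijection \(\Phi \colon \BB_{X} \to \BB_{Y}\) is an isomorphism of the rooted representing trees precisely when it is an order isomorphism of the ball posets. The first part of the theorem then follows by observing that existence of an order isomorphism is equivalent to existence of a rooted-tree isomorphism, since the same bijections realize both.

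There is essentially no obstacle to overcome here since all the heavy lifting has already been done: Lemma~\ref{l10.14} absorbs the nontrivial observation that the inclusion order on balls coincides with the root-based order on vertices of \(T_X\) (which in turn relies on Definition~\ref{d9.1} of the representing tree together with Proposition~\ref{p2.5} on the tree-like behavior of ultrametric balls), and Proposition~\ref{p9.23} absorbs the general correspondence between rooted-tree isomorphisms and order isomorphisms of the associated posets. The proof is therefore just two lines citing these results; no additional calculation is needed.
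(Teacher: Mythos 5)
Your proof is correct and follows exactly the paper's own route: the paper proves Theorem~\ref{t8.7} by citing precisely Lemma~\ref{l10.14} and Proposition~\ref{p9.23}, which is the combination you use. Nothing is missing.
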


\begin{proof}
It follows from Lemma~\ref{l10.14} and Proposition~\ref{p9.23}.
\end{proof}

Let \(T = T(r)\) be a rooted tree and let $v$ be a vertex of \(T\). Denote by \(\delta^+(v) = \delta_{T(r)}^+(v)\) the \emph{out-degree} of $v$,
\begin{equation}\label{e10.43}
\delta^+(v) = \begin{cases}
\delta(v) & \text{if } v = r,\\
\delta(v)-1 & \text{if } v \neq r.
\end{cases}
\end{equation}
The root $r$ is a \emph{leaf} of \(T\) if and only if $\delta^+(r) \leqslant 1$. Moreover, for a vertex $v$ different from the root $r$, the equality $\delta^+(v) = 0$ holds if and only if $v$ is a leaf of \(T\).

\begin{remark}\label{r10.16}
Thus, the root \(r\) of \(T = T(r)\) is a leaf of \(T\) if and only if \(T\) is empty, \(E(T) = \varnothing\).
\end{remark}

\begin{proposition}\label{p10.17}
Let \((X, d)\) be a nonempty totally bounded ultrametric space with the rooted representing tree \(T_X(r_X)\). Then \(T_X(r_X)\) is locally finite and
\begin{equation}\label{p10.17:e1}
\delta_{T_X(r_X)}^{+}(B) \neq 1
\end{equation}
holds for every \(B \in V(T_X(r_X))\).
\end{proposition}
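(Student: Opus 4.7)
The plan is to reduce the proposition to two already-proved ingredients: Proposition~\ref{p8.4}, which asserts that the free tree $T_X$ is locally finite (so the rooted tree $T_X(r_X)$ is also locally finite, since local finiteness only depends on the underlying free tree), and Proposition~\ref{p2.36}, which guarantees that for any totally bounded ultrametric space with at least two points the diametrical graph is complete $k$-partite with $k \geq 2$. With these in hand, the nontrivial claim $\delta^{+}_{T_X(r_X)}(B) \neq 1$ becomes a straightforward case analysis based on Lemma~\ref{l8.7} combined with formula~\eqref{e10.43}.

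The case analysis splits into four cases according to whether $B = X$ and whether $\diam B = 0$. If $B = X$ and $\diam X = 0$, then $|X| = 1$, and Lemma~\ref{l8.7} gives $\delta(B) = 0$, hence $\delta^{+}(B) = 0$. If $B = X$ and $\diam X > 0$, then $\delta^{+}(B) = \delta(B) = k$, where $k$ is the number of parts of $G_{X,d}$; since $|X| \geq 2$, Proposition~\ref{p2.36} yields $k \geq 2$. If $B \neq X$ and $\diam B > 0$, then $|B| \geq 2$ and $B$ inherits total boundedness from $X$ by Corollary~\ref{c2.17}, so Proposition~\ref{p2.36} applies to $(B, d|_{B \times B})$: its diametrical graph has at least two parts, giving $\delta^{+}(B) = \delta(B) - 1 = k \geq 2$. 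Finally, if $B \neq X$ and $\diam B = 0$, Lemma~\ref{l8.7} gives $\delta(B) = 1$, hence $\delta^{+}(B) = 0$. In every case the out-degree equals either $0$ or some integer at least $2$, never $1$.

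The only subtle point to be careful about is the bookkeeping between the free-tree degree $\delta$ used in Lemma~\ref{l8.7} and the out-degree $\delta^{+}$ of \eqref{e10.43}: the subtraction of $1$ at non-root vertices exactly cancels the $1$ coming from the edge to the (unique) direct predecessor in Lemma~\ref{l8.7}, leaving precisely the number of parts of the diametrical graph $G_{B, d|_{B \times B}}$ (or zero when $\diam B = 0$). There is no real obstacle here; the entire content of the proposition is that ``the diametrical graph of a totally bounded ultrametric space with at least two points cannot have only one part,'' which is already encapsulated in Proposition~\ref{p2.36}.
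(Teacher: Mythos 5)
Your proposal is correct and follows essentially the same route as the paper: local finiteness from Proposition~\ref{p8.4}, then the out-degree computation via Lemma~\ref{l8.7} and formula~\eqref{e10.43}, concluding that \(\delta^{+}(B)\) is either \(0\) or the number \(k\geqslant 2\) of parts of the complete multipartite graph \(G_{B, d|_{B\times B}}\). The paper compresses your four cases into a single two-case formula and justifies \(k\geqslant 2\) directly from Definition~\ref{d2.6}, but the substance is identical.
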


\begin{proof}
By Proposition~\ref{p8.4}, the free representing tree \(T_X\) is locally finite. Thus, it suffices to show that \eqref{p10.17:e1} holds for every \(B \in V(T_X(r_X))\). Lemma~\eqref{l8.2} implies the equality \(V(T_X(r_X)) = \BB_{X}\). Using \eqref{e10.43} and Lemma~\ref{l8.7} we obtain the equality
\begin{equation}\label{p10.17:e2}
\delta_{T_X(r_X)}^{+}(B) =
\begin{cases}
k & \text{if } \diam B > 0,\\
0 & \text{if } \diam B = 0\\
\end{cases}
\end{equation}
for every \(B \in \BB_{X}\), where \(k\) is the number of parts of complete multipartite graph \(G_{B, d|_{B \times B}}\). By Definition~\ref{d2.6}, the inequality \(k \geqslant 2\) holds for every complete \(k\)-partite graph. Hence, \eqref{p10.17:e2} implies \eqref{p10.17:e1}.
\end{proof}

It will be shown in Theorem~\ref{t10.16} of the paper that the local finiteness and the validity of \eqref{p10.17:e1} for all vertices of \(T_X(r_X)\) completely characterized the rooted representing trees of totally bounded ultrametric spaces.

\section{Labelings on trees}
\label{sec11}

In this section we consider the trees whose vertices are labeled by nonnegative real numbers and, in particular, introduce the labeling on vertex sets of representing trees of totally bounded ultrametric spaces.

The first part of the section deals with the ultrametric spaces \((V(T), d_l)\) generated by given labelings \(l\) on the trees \(T\). The structure of labeled trees \(T = T(l)\), for which the ultrametric space \((V(T), d_l)\) are totally bounded, is described in Theorem~\ref{t11.8}. Theorem~\ref{t11.16} gives us a characterization of labeled trees having totally bounded discrete \((V(T), d_l)\).

For general totally bounded ultrametric spaces, the concept of labeled representing trees is introduced in Definition~\ref{d9.27} of the second part of the section. It is proved in Theorem~\ref{t11.12} that the labeled representing trees of totally bounded ultrametric spaces are isomorphic if and only if the completions of these spaces are isometric.

The ultrametric space \(\mathbf{MC}_{T_X}\) of all maximal chains of balls of a totally bounded ultrametric space \((X, d)\) is investigated in the third part of the section. It is proved in Proposition~\ref{p11.25} that \(\mathbf{MC}_{T_X}\) is isometric to the completion of \((X, d)\).

Labeled representing trees of \(p\)-adic balls are considered in the final part of the section. It is shown that such trees coincide up to isomorphism with irregular Bethe Lattices of degree \(p+1\) labeled by numbers \(p^{k}\), \(p^{k-1}\), \(p^{k-2}\), \(\ldots\) with \(k \in \mathbb{Z}\).

\subsection{General labeled trees}

Let us introduce the concept of labeled trees.

\begin{definition}\label{d2.4}
A labeled tree is a pair \((T, l)\), where \(T\) is a tree and \(l\) is a mapping defined on the set \(V(T)\).

As in the case of the rooted trees, we will say that \(T\) is a \emph{free tree} corresponding to \((T, l)\) and write \(T = T(l)\) instead of \((T, l)\). Moreover, in what follows, we will consider only the nonnegative real-valued labelings \(l\colon V(T)\to \RR^{+}\).
\end{definition}

For the case of labeled trees Definitions~\ref{d8.7} and \ref{d8.10} must be modified as follows.

\begin{definition}\label{d2.5}
Let \(T_i=T_i(l_i)\) be a labeled tree, \(i = 1\), \(2\). A mapping $f\colon V(T_1) \to V(T_2)$ is an isomorphism of \(T_1(l_1)\) and \(T_2(l_2)\) if it is an isomorphism of the free trees $T_1$ and $T_2$ and the equality
\begin{equation}\label{d2.5:e1}
l_2(f(v))=l_1(v)
\end{equation}
holds for every $v \in V(T_1)$.
\end{definition}

Following~\cite{DovBBMSSS2020}, for arbitrary labeled tree \(T = T(l)\), we define a mapping \(d_l \colon V(T) \times V(T) \to \RR^{+}\) as
\begin{equation}\label{e11.3}
d_l(u, v) = \begin{cases}
0 & \text{if } u = v,\\
\max\limits_{v^{*} \in V(P)} l(v^{*}) & \text{if } u \neq v,
\end{cases}
\end{equation}
where \(P\) is a path joining \(u\) and \(v\) in \(T\).

\begin{remark}\label{r11.10}
The correctness of the definition of \(d_l\) follows from Lemma~\ref{l9.14}.
\end{remark}

To formulate the first theorem of this section we recall the concept of \emph{pseudoultrametric space}. Let \(X\) be a set. A symmetric mapping \(d \colon X \times X \to \RR^{+}\) is a \emph{pseudoultrametric} on \(X\) if
\[
d(x, x) = 0 \quad \text{and} \quad d(x, y) \leqslant \max\{d(x, z), d(z, y)\}
\]
hold for all \(x\), \(y\), \(z \in X\). Every ultrametric is a pseudoultrametric, but a pseudoultrametric \(d\) on a set \(X\) is an ultrametric if and only if \(d(x, y) > 0\) holds for all distinct \(x\), \(y \in X\).

\begin{theorem}\label{t11.9}
Let \(T = T(l)\) be a labeled tree. Then \((V(T), d_l)\) is a pseudoultrametric space. The function \(d_l\) is an ultrametric on \(V(T)\) if and only if the inequality
\begin{equation}\label{t11.9:e1}
\max\{l(u_1), l(v_1)\} > 0
\end{equation}
holds for every \(\{u_1, v_1\} \in E(T)\).
\end{theorem}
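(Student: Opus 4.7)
The mapping $d_l$ is symmetric and satisfies $d_l(x,x)=0$ directly from definition~\eqref{e11.3}, since reversing a path $P$ joining $u$ and $v$ gives a path joining $v$ and $u$ with the same vertex set, and this path is unique by Lemma~\ref{l9.14}. Thus the first task is to verify the strong triangle inequality, i.e.
\[
d_l(x,y) \leqslant \max\{d_l(x,z), d_l(z,y)\}
\]
for arbitrary $x$, $y$, $z \in V(T)$. The cases $x = y$, $x = z$ or $z = y$ are immediate from the definition of $d_l$, so we may assume that $x$, $y$, $z$ are pairwise distinct.

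The core step will be the following tree-theoretic inclusion:
\begin{equation}\label{plan:e1}
V(P_{xy}) \subseteq V(P_{xz}) \cup V(P_{zy}),
\end{equation}
where $P_{xy}$, $P_{xz}$ and $P_{zy}$ denote the unique paths (guaranteed by Lemma~\ref{l9.14}) between the corresponding endpoints. To justify \eqref{plan:e1} I would argue that if a vertex $w \in V(P_{xy})$ belongs neither to $V(P_{xz})$ nor to $V(P_{zy})$, then concatenating the $x$-to-$w$ portion of $P_{xy}$ with $P_{xz}$ (and similarly on the $y$-side) and then invoking uniqueness of paths leads to a cycle in $T$, contradicting the tree structure. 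Once \eqref{plan:e1} is established, definition~\eqref{e11.3} yields
\[
d_l(x,y) = \max_{v^{*} \in V(P_{xy})} l(v^{*}) \leqslant \max\left\{\max_{v^{*} \in V(P_{xz})} l(v^{*}),\, \max_{v^{*} \in V(P_{zy})} l(v^{*})\right\} = \max\{d_l(x,z), d_l(z,y)\},
\]
which is the desired strong triangle inequality and completes the proof that $(V(T), d_l)$ is a pseudoultrametric space.

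It remains to characterize when $d_l$ is an ultrametric. As noted just before the theorem, a pseudoultrametric $d_l$ is an ultrametric if and only if $d_l(u,v) > 0$ for all distinct $u$, $v \in V(T)$. If the condition $\max\{l(u_1), l(v_1)\} > 0$ holds for every edge $\{u_1, v_1\} \in E(T)$, then for any distinct $u$, $v \in V(T)$ the path $P$ joining them has at least one edge, hence contains some vertex with strictly positive label, so $d_l(u,v) > 0$ by \eqref{e11.3}. Conversely, if some edge $\{u_1, v_1\} \in E(T)$ satisfies $l(u_1) = l(v_1) = 0$, then the unique path joining $u_1$ and $v_1$ is exactly this edge, which gives $d_l(u_1, v_1) = 0$ with $u_1 \neq v_1$, so $d_l$ is not an ultrametric. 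The main obstacle in the whole argument is the rigorous proof of the inclusion \eqref{plan:e1}, for which one should exploit the uniqueness of paths between two vertices in a tree and the acyclicity of $T$; everything else reduces to direct unpacking of definition~\eqref{e11.3}.
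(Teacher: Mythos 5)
Your proposal is correct. The paper itself does not prove Theorem~\ref{t11.9}; it only remarks that a proof can be obtained by modifying Proposition~3.2 of the cited reference, so there is no in-text argument to compare against, but your route is exactly the natural one: reduce the strong triangle inequality to the tree-theoretic inclusion $V(P_{xy}) \subseteq V(P_{xz}) \cup V(P_{zy})$ and then read off the ultrametric criterion from the fact that the unique path between two adjacent vertices is the single edge joining them. Both halves of the equivalence are handled correctly.

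The only place I would tighten things is the justification of the inclusion, where "concatenating \dots and invoking uniqueness of paths leads to a cycle" is a little vague about which cycle arises. A cleaner way: for each edge $e$ of $P_{xy}$, deleting $e$ from $T$ separates $x$ from $y$, and $z$ lies in exactly one of the two resulting components; if $z$ is on the $y$-side then every $x$--$z$ path must use $e$, so $e \in E(P_{xz})$, and symmetrically otherwise $e \in E(P_{zy})$. Since every vertex of $P_{xy}$ is an endpoint of some edge of $P_{xy}$ (the case $x=y$ being trivial), the inclusion follows, and with it
\[
d_l(x,y) = \max_{w \in V(P_{xy})} l(w) \leqslant \max\bigl\{d_l(x,z),\, d_l(z,y)\bigr\}.
\]
This is a cosmetic improvement only; the proof as proposed is sound.
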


A proof of Theorem~\ref{t11.9} can be obtained by simple modification of the proof of Proposition~3.2 from \cite{Dov2020TaAoG}.

\begin{proposition}\label{l11.12}
Let \(T_1 = T_1(l_1)\) and \(T_2 = T_2(l_2)\) be labeled trees and let \(f \colon V(T_1) \to V(T_2)\) be an isomorphism of these trees. Then the equality
\[
d_{l_1}(u, v) = d_{l_2}(f(u), f(v))
\]
holds for all \(u\), \(v \in V(T_1)\).
\end{proposition}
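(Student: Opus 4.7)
The plan is to exploit directly the definition of $d_l$ in formula \eqref{e11.3} together with the two defining properties of the isomorphism $f$ (preservation of adjacency and preservation of labels). First I would dispense with the trivial case $u = v$: then $f(u) = f(v)$ since $f$ is a bijection, so both sides of the asserted equality equal $0$.

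Assume therefore $u \neq v$, and let $P$ be the unique path in $T_1$ joining $u$ and $v$ (which exists and is unique by Lemma~\ref{l9.14}). Write $V(P) = \{u = v_0, v_1, \ldots, v_k = v\}$ with $E(P) = \{\{v_{i-1}, v_i\} \colon 1 \leqslant i \leqslant k\}$. The next step is to observe that the image sequence $(f(v_0), f(v_1), \ldots, f(v_k))$ forms a path $P'$ in $T_2$ joining $f(u)$ and $f(v)$: the vertices are pairwise distinct because $f$ is injective, and each pair $\{f(v_{i-1}), f(v_i)\}$ is an edge of $T_2$ by the adjacency-preservation property \eqref{e2.2} in Definition~\ref{d8.7}. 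Applying Lemma~\ref{l9.14} once more in $T_2$, this $P'$ is the unique path joining $f(u)$ and $f(v)$.

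Finally, using the label-preservation identity \eqref{d2.5:e1}, i.e. $l_2(f(v_i)) = l_1(v_i)$ for every $i$, together with the definition \eqref{e11.3} of $d_l$, I compute
\[
d_{l_2}(f(u), f(v)) = \max_{w \in V(P')} l_2(w) = \max_{0 \leqslant i \leqslant k} l_2(f(v_i)) = \max_{0 \leqslant i \leqslant k} l_1(v_i) = \max_{v^{*} \in V(P)} l_1(v^{*}) = d_{l_1}(u, v).
\]
There is no real obstacle here; the statement is a direct bookkeeping consequence of Definitions~\ref{d8.7} and \ref{d2.5} combined with the uniqueness of paths in trees. The only point that needs to be mentioned explicitly is the appeal to Lemma~\ref{l9.14} to guarantee that the image of ``the'' path in $T_1$ is indeed ``the'' path in $T_2$, so that the maxima in \eqref{e11.3} are taken over corresponding sets of vertices.
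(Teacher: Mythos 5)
Your proof is correct and follows essentially the same route as the paper's: map the unique path in $T_1$ to a path in $T_2$ via the isomorphism, invoke uniqueness of paths (Lemma~\ref{l9.14}) to identify the image as the path used in \eqref{e11.3}, and conclude via the label-preservation identity \eqref{d2.5:e1}. The paper's proof is just a more compressed version of the same bookkeeping.
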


\begin{proof}
It directly follows from Definition~\ref{d2.5} and formula~\eqref{e11.3}, because if \((v_1, \ldots, v_n)\) is a path joining some distinct \(v = v_1\) and \(u = v_n\) in \(T_1\), then \(f(u) \neq f(v)\) holds and \((f(v_1), \ldots, f(v_n))\) is a path joining \(f(v)\) and \(f(u)\) in \(T_2\) and we have the equality
\[
\{l_1(v_1), \ldots, l_1(v_n)\} = \{l_2(f(v_1)), \ldots, l_2(f(v_n))\}. \qedhere
\]
\end{proof}

\begin{corollary}\label{c11.6}
Let \(T_1 = T_1(l_1)\) and \(T_2 = T_2(l_2)\) be isomorphic labeled trees. Then ultrametric (pseudoultrametric) spaces \((V(T_1), d_{l_1})\) and \((V(T_2), d_{l_2})\) are isometric.
\end{corollary}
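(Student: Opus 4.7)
The plan is very short, since Corollary~\ref{c11.6} is essentially an immediate packaging of Proposition~\ref{l11.12} together with the definition of isomorphism of labeled trees. First I would unpack the hypothesis: by assumption there exists an isomorphism $f \colon V(T_1) \to V(T_2)$ of the labeled trees $T_1(l_1)$ and $T_2(l_2)$, and by Definition~\ref{d2.5} such an $f$ is in particular a bijection between the vertex sets.

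Next I would invoke Proposition~\ref{l11.12} directly: it guarantees the equality
\[
d_{l_1}(u, v) = d_{l_2}(f(u), f(v))
\]
for all $u$, $v \in V(T_1)$. Combined with the bijectivity of $f$ noted above, this is exactly the statement that $f$ is an isometry from $(V(T_1), d_{l_1})$ onto $(V(T_2), d_{l_2})$ in the sense of Definition~\ref{d2.2}. One should of course remark that Theorem~\ref{t11.9} ensures that $d_{l_1}$ and $d_{l_2}$ are genuine pseudoultrametrics (or ultrametrics under the additional condition on labels of adjacent vertices), so the conclusion ``isometric ultrametric (pseudoultrametric) spaces'' is well-posed regardless of which of the two cases is being considered; the argument does not depend on which case it is.

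There is no real obstacle here: the proof is a one-line deduction, and the only thing to be careful about is to state explicitly that bijectivity of $f$ as a vertex-set map (built into Definition~\ref{d2.5} via the underlying free-tree isomorphism, Definition~\ref{d8.7}) together with the distance-preserving identity from Proposition~\ref{l11.12} matches exactly the definition of isometry given in Definition~\ref{d2.2}.
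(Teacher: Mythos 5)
Your proposal is correct and matches the paper exactly: the corollary is stated immediately after Proposition~\ref{l11.12} with no separate proof precisely because, as you observe, the bijectivity of the labeled-tree isomorphism together with the distance identity $d_{l_1}(u,v)=d_{l_2}(f(u),f(v))$ from that proposition is the definition of an isometry. Your additional remark invoking Theorem~\ref{t11.9} to justify that the spaces in question are genuinely (pseudo)ultrametric is a sensible bit of bookkeeping, though not strictly required.
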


The converse statement is not valid in general.

\begin{example}\label{ex11.13}
Let \(V = \{v_0, v_1, v_2, v_3, v_4\}\) be a five-point set, and let \(S = S(l_S)\) and \(P = P(l_P)\) be a labeled star with the center \(v_0\) and, respectively, a labeled path such that \(V(S) = V(P) = V\), \(l_S(v_0) = l_P(v_0) = 1\) and
\[
l_S(v_i) + 1 = l_P(v_i) = 1
\]
for \(i = 1\), \(\ldots\), \(4\) (see Figure~\ref{fig11}). Then, for all distinct \(u\), \(v \in V\), we have
\[
d_{l_P}(u, v) = d_{l_S}(u, v) = 1.
\]
Thus, the ultrametric spaces \((V, d_{l_P})\) and \((V, d_{l_S})\) coincide, but \(P(l_P)\) and \(S(l_S)\) are not isomorphic as labeled trees or even as free trees.

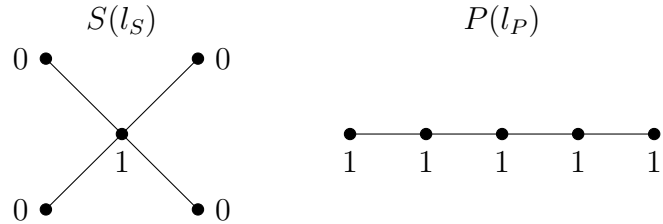
\begin{figure}[ht]
\begin{center}
\begin{tikzpicture}[
level 1/.style={level distance=\levdist,sibling distance=24mm},
level 2/.style={level distance=\levdist,sibling distance=12mm},
level 3/.style={level distance=\levdist,sibling distance=6mm},
solid node/.style={circle,draw,inner sep=1.5,fill=black},
micro node/.style={circle,draw,inner sep=0.5,fill=black}]

\def\dx{1cm}
\node at (0, \dx) [label= above:\(S(l_S)\)] {};
\node [solid node] at (0, 0) [label= below:\(1\)] {};
\node [solid node] at (\dx, \dx) [label= right:\(0\)] {};
\node [solid node] at (\dx, -\dx) [label= right:\(0\)] {};
\node [solid node] at (-\dx, \dx) [label= left:\(0\)] {};
\node [solid node] at (-\dx, -\dx) [label= left:\(0\)] {};
\draw (0, 0) -- (\dx, \dx);
\draw (0, 0) -- (\dx, -\dx);
\draw (0, 0) -- (-\dx, \dx);
\draw (0, 0) -- (-\dx, -\dx);

\node at (5*\dx, \dx) [label= above:\(P(l_P)\)] {};
\node [solid node] at (3*\dx, 0) [label= below:\(1\)] {};
\node [solid node] at (4*\dx, 0) [label= below:\(1\)] {};
\node [solid node] at (5*\dx, 0) [label= below:\(1\)] {};
\node [solid node] at (6*\dx, 0) [label= below:\(1\)] {};
\node [solid node] at (7*\dx, 0) [label= below:\(1\)] {};
\draw (3*\dx, 0) -- (7*\dx, 0);
\end{tikzpicture}
\end{center}
\caption{The star \(S\) and the path \(P\) are not isomorphic as free trees, but the labelings \(l_S\) and \(l_P\) generate the same ultrametric on \(V\).}
\label{fig11}
\end{figure}
\end{example}

\begin{remark}\label{r11.8}
In the next section of the paper, after introduction of the concept of monotone labeling, it will be shown that for every discrete ultrametric space of the form \((V(T), d_l)\) there is a unique, up to isomorphism, rooted tree \(T^{*}(r^{*})\) with monotone labeling \(l^{*} \colon V(T^{*}) \to \RR^{+}\) such that the ultrametric spaces \((V(T), d_l)\) and \((V(T^{*}), d_{l^{*}})\) are isometric.
\end{remark}

Recall that a metric space \((X, d)\) is \emph{discrete} if every point of \(X\) is isolated.

\begin{theorem}\label{t11.16}
Let \(T = T(l)\) be a labeled tree such that the inequality
\[
\max\{l(u), l(v)\} > 0
\]
holds for every \(\{u, v\} \in E(T)\). Then the following statements are equivalent:
\begin{enumerate}
\item \label{t11.16:s1} The ultrametric space \((V(T), d_{l})\) is discrete and totally bounded.
\item \label{t11.16:s2} The tree \(T\) is locally finite and the set
\begin{equation*}
V_{\varepsilon} = \bigl\{v \in V(T) \colon l(v) \geqslant \varepsilon\bigr\}
\end{equation*}
is finite for every \(\varepsilon > 0\).
\end{enumerate}
\end{theorem}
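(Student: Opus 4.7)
The plan is to prove the equivalence by systematically exploiting the formula \(d_{l}(u, v) = \max_{w \in V(P)} l(w)\), where \(P \subseteq T\) is the unique path joining \(u\) and \(v\) (Lemma~\ref{l9.14}). Two consequences will be used repeatedly: first, any vertex \(u\) with \(l(u) \geqslant \varepsilon\) satisfies \(d_l(u, v) \geqslant \varepsilon\) for every \(v \neq u\), because \(u\) itself lies on every path emanating from \(u\); second, if \(v\) is adjacent to both \(u_1\) and \(u_2\) in \(T\), then the unique path from \(u_1\) to \(u_2\) is \((u_1, v, u_2)\) and so \(d_l(u_1, u_2) = \max\{l(u_1), l(v), l(u_2)\}\).

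For \(\ref{t11.16:s1} \Rightarrow \ref{t11.16:s2}\), the finiteness of \(V_\varepsilon\) is almost immediate: any two distinct points of \(V_\varepsilon\) are at mutual distance at least \(\varepsilon\) by the first observation, so an infinite \(V_\varepsilon\) would violate Definition~\ref{d2.10} applied with radius \(\varepsilon/2\) (each member of \(V_\varepsilon\) would occupy its own ball). For local finiteness, I would suppose that some vertex \(v\) has an infinite family of neighbours \((u_i)_{i \in \mathbb{N}}\) and split into cases. If \(l(v) > 0\), the second observation gives \(d_l(u_i, u_j) \geqslant l(v) > 0\) whenever \(i \neq j\), contradicting total boundedness. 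If \(l(v) = 0\), the standing hypothesis on \(T(l)\) forces \(l(u_i) > 0\) for every \(i\); either some positive \(\delta\) satisfies \(l(u_i) \geqslant \delta\) for infinitely many \(i\) (again contradicting total boundedness via the second observation), or \(l(u_{i_k}) \to 0\) along a subsequence, which contradicts the isolation of \(v\) since \(d_l(v, u_{i_k}) = l(u_{i_k}) \to 0\).

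For \(\ref{t11.16:s2} \Rightarrow \ref{t11.16:s1}\), discreteness is straightforward: assuming \(|V(T)| \geqslant 2\), the neighbour set \(N(v)\) of any vertex \(v\) is finite by local finiteness, and the hypothesis on \(T(l)\) guarantees \(\max\{l(v), l(w)\} > 0\) for each \(w \in N(v)\), so
\[
\varepsilon_v = \min\bigl\{\max\{l(v), l(w)\} \colon w \in N(v)\bigr\} > 0.
\]
Any \(u \neq v\) has its path from \(v\) beginning with an edge \(\{v, w\}\) for some \(w \in N(v)\), so \(d_l(u, v) \geqslant \max\{l(v), l(w)\} \geqslant \varepsilon_v\). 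For total boundedness at radius \(r > 0\), I would describe the open \(r\)-balls explicitly: vertices \(u\) with \(l(u) \geqslant r\) give singleton balls \(B_r(u) = \{u\}\) by the first observation, while for \(u\) with \(l(u) < r\) the ball \(B_r(u)\) equals the connected component of \(u\) in the forest obtained from \(T\) by deleting the (finitely many) vertices of \(V_r\) together with their incident edges. This produces a partition of \(V(T)\) into open balls of radius \(r\), which is finite provided the forest has finitely many components.

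The principal obstacle is to verify that this forest has only finitely many components. When \(V_r = \varnothing\) this is trivial, since \(T\) itself is connected and supplies the single component. When \(V_r \neq \varnothing\), I would argue that every component \(C\) of the forest contains at least one vertex adjacent in \(T\) to some vertex of \(V_r\): given \(c \in C\) and any \(v \in V_r\), the first vertex outside \(C\) along the unique path from \(c\) to \(v\) in \(T\) must lie in \(V_r\), because any vertex in \(V(T) \setminus V_r\) adjacent to a vertex of \(C\) would itself belong to \(C\) by definition of connected component. Since distinct components give rise to distinct boundary edges (an edge joining \(V_r\) with \(V(T) \setminus V_r\) has exactly one endpoint outside \(V_r\)), the number of components is bounded by \(\sum_{v \in V_r} \delta_T(v)\), which is finite by local finiteness of \(T\) and \(|V_r| < \infty\).
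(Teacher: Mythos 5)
Your proof is correct. The implication \(\ref{t11.16:s1} \Rightarrow \ref{t11.16:s2}\) and the discreteness half of \(\ref{t11.16:s2} \Rightarrow \ref{t11.16:s1}\) follow essentially the same lines as the paper: pairwise distances \(\geqslant \varepsilon\) inside \(V_\varepsilon\), the case split on \(l(v)\) at a vertex of infinite degree, and the quantity \(\min\{\max\{l(v), l(w)\} \colon w \in N(v)\} > 0\) for isolation (the paper splits this last step into the cases \(l(v)>0\) and \(l(v)=0\), but the content is identical).

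Where you genuinely diverge is the proof of total boundedness. The paper verifies the sequential criterion: it invokes Proposition~\ref{p2.11} together with the ultrametric Cauchy criterion of Proposition~\ref{p4.5}, and runs a nested extraction — repeatedly deleting \(V_{\varepsilon_i}\), selecting an infinite component \(T^i\) containing infinitely many terms of the given sequence, and diagonalizing to produce a Cauchy subsequence. You instead verify Definition~\ref{d2.10} directly by exhibiting, for each \(r>0\), the complete list of open \(r\)-balls: singletons \(\{u\}\) for \(u \in V_r\), and the components of the forest \(T - V_r\) for \(u \notin V_r\) (your identification of \(B_r(u)\) with the component of \(u\) is correct, since \(d_l(u,v)<r\) exactly when the unique \(u\)--\(v\) path avoids \(V_r\)). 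Your bound on the number of components via boundary edges incident to \(V_r\) is sound, and this combinatorial kernel — a finite vertex deletion from a locally finite tree leaves finitely many components — is the same fact the paper uses inside its extraction argument. Your route is shorter, avoids the subsequence machinery entirely, and yields an explicit description of the ballean of \((V(T), d_l)\) as a by-product; the paper's route has the advantage of adapting with little change to prove the non-discrete analogue, Theorem~\ref{t11.8}, where vertices of infinite degree with zero label are permitted and the ball description becomes more delicate.
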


\begin{proof}
\(\ref{t11.16:s1} \Rightarrow \ref{t11.16:s2}\). Let \ref{t11.16:s1} hold. Suppose that the set \(V_{\varepsilon}\) is infinite for some \(\varepsilon > 0\). Using the definition of \(d_{l}\) (see~\eqref{e11.3}) it is easy to prove the inequality
\begin{equation}\label{t11.16:e2}
d_{l}(v_1, v_2) \geqslant \varepsilon
\end{equation}
for all distinct \(v_1\), \(v_2 \in V_{\varepsilon}\). Hence, the subspace \((V_{\varepsilon}, d_{l}|_{V_{\varepsilon} \times V_{\varepsilon}})\) of totally bounded ultrametric space \((V(T), d_{l})\) is not totally bounded, contrary to Corollary~\ref{c2.17}. Thus, \(V_{\varepsilon}\) is finite for every \(\varepsilon > 0\).

Assume now that \(T\) contains a vertex \(v^*\) of infinite degree, \(\delta_{T}(v^*) = \infty\). Let us consider first the case when \(l(v^{*}) > 0\) holds.

Write
\begin{equation}\label{t11.16:e3}
U_{v^{*}} = \bigl\{u \in V(T) \colon \{u, v^*\} \in E(T)\bigr\}.
\end{equation}
The equality \(\delta_{T}(v^*) = \infty\) implies that \(U_{v^{*}}\) has an infinite cardinality, \(|U_{v^{*}}| = \infty\). For all distinct \(u_1\), \(u_2 \in U_{v^{*}}\) the unique path \(P\) joining \(u_1\) and \(u_2\) in \(T\) has the form \((u_1, v^{*}, u_2)\). Hence,
\[
d_l(u_1, u_2) \geqslant l(v^{*}) > 0
\]
holds by \eqref{e11.3}. It implies that the ultrametric space \((U_{v^{*}}, d_l|_{U_{v^{*}} \times U_{v^{*}}})\) is not totally bounded, contrary to \ref{t11.16:s1}.

Thus, if \(\delta_{T}(v^*) = \infty\) holds, then we have the equality
\begin{equation}\label{t11.16:e4}
l(v^{*}) = 0.
\end{equation}
It is clear that the inclusion
\begin{equation}\label{t11.16:e6}
\{u \in U_{v^{*}} \colon l(u) > \varepsilon\} \subseteq V_{\varepsilon}
\end{equation}
is true for every \(\varepsilon > 0\). Since \(V_{\varepsilon}\) is finite for \(\varepsilon > 0\), inclusion~\eqref{t11.16:e6} and the equality \(|U_{v^{*}}| = \infty\) guarantee the existence of a sequence \((u_n)_{n \in \mathbb{N}} \subseteq U_{v^{*}}\) such that
\[
\lim_{n \to \infty} l(u_n) = 0.
\]
From the last limit relation, \eqref{e11.3}, \eqref{t11.16:e3} and \eqref{t11.16:e4}, we obtain
\[
d_l(v^{*}, u_n) = l(u_n)
\]
for every \(n \in \mathbb{N}\). Consequently, \(\lim_{n \to \infty} d_l(v^{*}, u_n) = 0\) holds. Thus, \(v^{*}\) is an accumulation point of the ultrametric space \((V(T), d_{l})\), contrary to statement~\ref{t11.16:s1}.

\(\ref{t11.16:s2} \Rightarrow \ref{t11.16:s1}\). Let \ref{t11.16:s2} hold. Statement \ref{t11.16:s1} is trivially valid if \(T\) is a finite tree. Let \(|V(T)| = \infty\) hold. At first we prove that the ultrametric space \((V(T), d_l)\) is discrete.

Suppose that \(v_0\) is a vertex of \(T\) such that \(l(v_0) > 0\). Then \eqref{e11.3} implies
\[
d_l(u, v_0) \geqslant l(v_0)
\]
for every \(u \in V(T) \setminus \{v_0\}\). Hence, \(v_0\) is an isolated point of \((V(T), d_{l})\).

Let us consider now the case when \(v_0 \in V(T)\) has the zero label, \(l(v_0) = 0\). As in \eqref{t11.16:e3}, write
\[
U_{v_0} = \{u \in V(T) \colon \{u, v_0\} \in E(T)\}.
\]
Since the inequality \(\max\{l(u), l(v_0)\} > 0\) holds for every \(\{u, v_0\} \in E(T)\), from \(0 < \delta_{T}(v_0) < \infty\) it follows the inequality \(\min\{l(u) \colon u \in U_{v_0}\} > 0\). Let us consider now an arbitrary vertex \(u_0 \in V(T) \setminus \{v_0\}\). If \(P\) is a path joining \(v_0\) and \(u_0\) in \(T\), then there is a vertex \(u^{*}\) of \(T\) such that
\begin{equation}\label{t11.16:e5}
u^{*} \in V(P) \cap U_{v_0}.
\end{equation}
Now using \eqref{e11.3} and \eqref{t11.16:e5} we obtain
\[
d_l(v_0, u_0) = \max_{u \in V(P)} l(u) \geqslant l(v_0, u^{*}) \geqslant \min\{l(u) \colon u \in U_{v_0}\} > 0.
\]
Hence, \(v_0\) is an isolated point of \((V(T), d_{l})\). Thus, the ultrametric space \((V(T), d_{l})\) is discrete.

To complete the proof, it suffices to show that \((V(T), d_{l})\) is totally bounded.

By Proposition~\ref{p2.11}, the ultrametric space \((V(T), d_{l})\) is totally bounded if every sequence of vertices of \(T\) contains a Cauchy subsequence. Let us consider a sequence \((u_j^0)_{j \in \mathbb{N}}\) of pairwise distinct points of \(V(T)\). Let \((\varepsilon_i)_{i \in \mathbb{N}}\) be a decreasing sequence of strictly positive real numbers such that \(\lim_{i \to \infty} \varepsilon_i = 0\). Statement~\ref{t11.16:s2} implies that the set \(V_{\varepsilon_{1}}\) is finite. Write \(G^{1}\) for the subgraph of \(T\) induced by \(V(T) \setminus V_{\varepsilon_{1}}\), i.e., \(V(G^{1}) = V(T) \setminus V_{\varepsilon_{1}}\) and
\[
\bigl(\{u, v\} \in E(G^{1})\bigr) \Leftrightarrow \bigl(u, v \in V(G^{1}) \text{ and } \{u, v\} \in E(T)\bigr).
\]
Every connected component of \(G^{1}\) is a tree. Since \(V_{\varepsilon_{1}}\) is a finite set and \(T\) is a locally finite, infinite tree, the number of these trees are finite. Hence, there is an infinite tree \(T^{1} \subseteq T\), \(|V(T^{1})| = \infty\), with \(l(v^{1}) < \varepsilon_{1}\) for all \(v^{1} \in V(T^{1})\) and such that \((u_{j}^{1})_{j \in \mathbb{N}} \subseteq V(T^{1})\) holds for an infinite subsequence \((u_{j}^{1})_{j \in \mathbb{N}}\) of the sequence \((u_{j}^{0})_{j \in \mathbb{N}} \subseteq V(T)\). Write \(u^{1} = u_1^1\).

Let us consider the subgraph \(G^{2}\) of \(T^{1}\) induced by \(V(T^{1}) \setminus V_{\varepsilon_{2}}\). As above, the inequality \(|V_{\varepsilon_{2}}| < \infty\) implies the existence of an infinite tree \(T^{2} \subseteq T^{1}\) and a subsequence \((u_{j}^{2})_{j \in \mathbb{N}}\) of the sequence \((u_{j}^{1})_{j \in \mathbb{N}} \subseteq T^{1}\) for which \((u_{j}^{2})_{j \in \mathbb{N}} \subseteq V(T^{2})\) holds. Let us write \(u^{2} = u_1^2\).

By induction, for every \(i \geqslant 2\), we find an infinite connected component \(T^{i+1}\) of the subgraph \(G^{i+1}\) of \(T^{i}\) induced by \(V(T^{i}) \setminus V_{\varepsilon_{i+1}}\) and a subsequence \((u_{j}^{i+1})_{j \in \mathbb{N}}\) of the sequence \((u_{j}^{i})_{j \in \mathbb{N}}\) such that
\begin{equation}\label{t11.16:e8}
(u_{j}^{i+1})_{j \in \mathbb{N}} \subseteq V(T^{i+1}).
\end{equation}
Write \(u^{i+1}\) for the first element \(u_{1}^{i+1}\) of this sequence.

Let us consider now the sequence \((u^{i})_{i \in \mathbb{N}}\). It is clear that \((u^{i})_{i \in \mathbb{N}}\) is a subsequence of the original sequence \((u_j^0)_{j \in \mathbb{N}}\). From~\eqref{t11.16:e8} it follows that
\[
l(u^{i+1}) < \varepsilon_{i+1}
\]
holds for every \(i \in \mathbb{N}\). The last inequality and the limit relation \(\lim_{i \to \infty} \varepsilon_i = 0\) imply
\begin{equation}\label{t11.16:e9}
\lim_{i \to \infty} l(u^{i}) = 0.
\end{equation}
Moreover, since for every \(i \geqslant 2\) the points \(u^{i}\) and \(u^{i+1}\) are vertices of the tree \(T^{i}\) and the inequality \(l(v) \leqslant \varepsilon_{i}\) holds for every \(v \in V(T^{i})\), formula \eqref{e11.3} implies the inequality
\[
d_{l}(u^{i}, u^{i+1}) \leqslant l(u^{i-1})
\]
for every \(i \geqslant 2\). Now using Proposition~\ref{p4.5} and limit relation~\eqref{t11.16:e9} we obtain that \((u^{i})_{i \in \mathbb{N}}\) is a Cauchy sequence.

The same reasons show that \((u_j^0)_{j \in \mathbb{N}}\) contains a Cauchy subsequence whenever \((u_j^0)_{j \in \mathbb{N}}\) contains an infinite subsequence of pairwise distinct members.

To complete the proof, we note only that if all subsequences of pairwise distinct members of a sequence are finite, then there is an infinite constant subsequence of that sequence.
\end{proof}

The following result gives us the necessary and sufficient conditions under which the ultrametric space \((V(T), d_l)\) is totally bounded.

\begin{theorem}\label{t11.8}
Let \(T = T(l)\) be a labeled tree such that the inequality
\[
\max\{l(u), l(v)\} > 0
\]
holds for every \(\{u, v\} \in E(T)\). Then the following statements are equivalent:
\begin{enumerate}
\item \label{t11.8:s1} The ultrametric space \((V(T), d_{l})\) is totally bounded.
\item \label{t11.8:s2} The set \(V_{\varepsilon} = \bigl\{v \in V(T) \colon l(v) \geqslant \varepsilon\bigr\}\) is finite for every \(\varepsilon > 0\) and the inequality \(\delta_{T}(v) < \infty\) holds whenever \(l(v) > 0\).
\end{enumerate}
\end{theorem}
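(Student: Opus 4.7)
\medskip

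\noindent\textbf{Proof plan.} Theorem~\ref{t11.8} is a direct relaxation of Theorem~\ref{t11.16}: total boundedness is retained but discreteness is dropped, and correspondingly the local finiteness hypothesis on \(T\) is weakened to local finiteness at vertices with \emph{positive} label only. My plan is therefore to reuse the proof of Theorem~\ref{t11.16} with minor adjustments at exactly two places, and to be careful about why the weaker hypothesis still suffices.

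For \(\ref{t11.8:s1} \Rightarrow \ref{t11.8:s2}\), I will repeat verbatim the two contradiction arguments from the analogous direction of Theorem~\ref{t11.16}. First, if \(V_\varepsilon\) were infinite for some \(\varepsilon>0\), then formula~\eqref{e11.3} yields \(d_l(v_1,v_2)\geqslant \varepsilon\) for all distinct \(v_1,v_2\in V_\varepsilon\), so the subspace \(V_\varepsilon\) fails to be totally bounded, contradicting Corollary~\ref{c2.17}. Second, if some \(v^{*}\) with \(l(v^{*})>0\) has infinite degree, then the neighbour set \(U_{v^{*}}\) is infinite and every path between two distinct \(u_1,u_2\in U_{v^{*}}\) passes through \(v^{*}\), so \(d_l(u_1,u_2)\geqslant l(v^{*})>0\), again contradicting total boundedness. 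Note that, unlike in Theorem~\ref{t11.16}, I do not need — and indeed cannot derive — that vertices with \(l(v)=0\) have finite degree: the label-0 star example shows this really fails in general.

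For \(\ref{t11.8:s2} \Rightarrow \ref{t11.8:s1}\), the single new observation that drives the adaptation is this: although \(T\) may fail to be locally finite, every vertex of \(V_\varepsilon\) has \(l(v)\geqslant\varepsilon>0\) and hence, by \ref{t11.8:s2}, has finite degree. Consequently, removing the finite set \(V_\varepsilon\) from the tree \(T\) produces a subgraph with only finitely many connected components (the total number being bounded by \(\sum_{v\in V_\varepsilon}\delta_T(v)<\infty\)). With this replacement in hand, I will run the induction of Theorem~\ref{t11.16} unchanged: for a given sequence \((u_j^{0})_{j\in\mathbb{N}}\) of pairwise distinct vertices and a decreasing \(\varepsilon_i\to 0\), pigeonhole yields an infinite component \(T^{1}\) of \(T\setminus V_{\varepsilon_1}\) containing an infinite subsequence \((u_j^{1})\); iterating inside \(T^{i}\) with \(\varepsilon_{i+1}\) produces nested \(T^{i+1}\subseteq T^{i}\) with \(l(v)<\varepsilon_{i+1}\) for every \(v\in V(T^{i+1})\) and subsequences \((u_j^{i+1})\subseteq V(T^{i+1})\). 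Taking \(u^{i}=u_1^{i}\) along the diagonal, the estimate \(d_l(u^{i},u^{i+1})\leqslant l(u^{i-1})\) together with \(\lim_{i\to\infty}l(u^{i})=0\) makes \((u^{i})_{i\in\mathbb{N}}\) a Cauchy subsequence by Proposition~\ref{p4.5}. The case where \((u_j^{0})\) admits no infinite subsequence of pairwise distinct vertices is handled as in Theorem~\ref{t11.16} by extracting a constant subsequence, and Proposition~\ref{p2.11} then concludes total boundedness.

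The only obstacle is conceptual, not technical: one must recognise that the local finiteness hypothesis in Theorem~\ref{t11.16} was used solely to guarantee finitely many components after excising the finite filtration sets \(V_{\varepsilon_i}\), and that the weaker condition in \ref{t11.8:s2} supplies precisely this property on every such excision, since the excised vertices automatically have positive label. Once this observation is in place, no further calculation is required beyond transcribing the induction.
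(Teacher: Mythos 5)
Your proposal is correct and follows exactly the route the paper intends: the paper omits the proof of Theorem~\ref{t11.8} with the remark that it is proved similarly to Theorem~\ref{t11.16}, and your adaptation supplies precisely the missing details. In particular, you correctly identify the one point where the weakened hypothesis needs a new justification --- that excising the finite set \(V_{\varepsilon}\) still leaves only finitely many components, because every excised vertex has positive label and hence finite degree, bounding the number of components by \(\sum_{v\in V_{\varepsilon}}\delta_{T}(v)<\infty\).
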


This theorem can be proved similarly to Theorem~\ref{t11.16} and we omit it here.

The following example shows that \((V(T), d_{l})\) can be totally bounded even if the tree \(T\) contains infinitely many vertices of infinite degree.

\begin{figure}[ht]
\begin{center}
\begin{tikzpicture}[
level 1/.style={level distance=\levdist,sibling distance=24mm},
level 2/.style={level distance=\levdist,sibling distance=12mm},
level 3/.style={level distance=\levdist,sibling distance=6mm},
solid node/.style={circle,draw,inner sep=1.5,fill=black},
micro node/.style={circle,draw,inner sep=0.5,fill=black}]

\def\dx{2cm}
\draw (0, 0) -- (6*\dx, 0);
\node [solid node] at (0, 0) [label= below:\(1\)] {};
\node [solid node] at (\dx, 0) [label= below:\(0\)] {};
\node at (0, \dx) [label= above:\(T(l)\)] {};
\draw (\dx, 0) -- +(150:0.9*\dx) node [solid node, label= above:\(\frac{1}{2}\)] {};
\draw (\dx, 0) -- +(110:0.8*\dx) node [solid node, label= above:\(\frac{1}{4}\)] {};
\draw (\dx, 0) -- +(80:0.7*\dx) node [solid node, label= above:\(\frac{1}{6}\)] {};
\draw (\dx, 0) +(70:0.67*\dx) node [micro node] {};
\draw (\dx, 0) +(60:0.65*\dx) node [micro node] {};
\draw (\dx, 0) +(50:0.63*\dx) node [micro node] {};
\draw (\dx, 0) +(40:0.6*\dx) node [solid node, label= above right:\(\frac{1}{2n}\)] {};
\draw (\dx, 0) +(30:0.5*\dx) node [micro node] {};
\draw (\dx, 0) +(20:0.45*\dx) node [micro node] {};
\draw (\dx, 0) +(10:0.4*\dx) node [micro node] {};
\node at (\dx, \dx) [label= above:\(S_1(l)\)] {};

\node [solid node] at (2*\dx, 0) [label= below:\(\frac{1}{3}\)] {};

\node [solid node] at (3*\dx, 0) [label= below:\(0\)] {};
\draw (3*\dx, 0) -- +(150:0.9*\dx) node [solid node, label= above:\(\frac{1}{4}\)] {};
\draw (3*\dx, 0) -- +(110:0.8*\dx) node [solid node, label= above:\(\frac{1}{8}\)] {};
\draw (3*\dx, 0) -- +(80:0.7*\dx) node [solid node, label= above:\(\frac{1}{12}\)] {};
\draw (3*\dx, 0) +(70:0.67*\dx) node [micro node] {};
\draw (3*\dx, 0) +(60:0.65*\dx) node [micro node] {};
\draw (3*\dx, 0) +(50:0.63*\dx) node [micro node] {};
\draw (3*\dx, 0) +(40:0.6*\dx) node [solid node, label= above right:\(\frac{1}{4n}\)] {};
\draw (3*\dx, 0) +(30:0.5*\dx) node [micro node] {};
\draw (3*\dx, 0) +(20:0.45*\dx) node [micro node] {};
\draw (3*\dx, 0) +(10:0.4*\dx) node [micro node] {};
\node at (3*\dx, \dx) [label= above:\(S_2(l)\)] {};

\node [micro node] at (3.3*\dx, 0) {};
\node [micro node] at (3.5*\dx, 0) {};
\node [micro node] at (3.7*\dx, 0) {};

\node [solid node] at (4*\dx, 0) [label= below:\(\frac{1}{2m-1}\)] {};

\node [solid node] at (5*\dx, 0) [label= below:\(0\)] {};
\draw (5*\dx, 0) -- +(150:0.9*\dx) node [solid node, label= above:\(\frac{1}{2m}\)] {};
\draw (5*\dx, 0) -- +(110:0.8*\dx) node [solid node, label= above:\(\frac{1}{4m}\)] {};
\draw (5*\dx, 0) -- +(80:0.7*\dx) node [solid node, label= above:\(\frac{1}{6m}\)] {};
\draw (5*\dx, 0) +(70:0.67*\dx) node [micro node] {};
\draw (5*\dx, 0) +(60:0.65*\dx) node [micro node] {};
\draw (5*\dx, 0) +(50:0.63*\dx) node [micro node] {};
\draw (5*\dx, 0) +(40:0.6*\dx) node [solid node, label= above right:\(\frac{1}{2mn}\)] {};
\draw (5*\dx, 0) +(30:0.5*\dx) node [micro node] {};
\draw (5*\dx, 0) +(20:0.45*\dx) node [micro node] {};
\draw (5*\dx, 0) +(10:0.4*\dx) node [micro node] {};
\node at (5*\dx, \dx) [label= above:\(S_m(l)\)] {};

\node [micro node] at (5.3*\dx, 0) {};
\node [micro node] at (5.5*\dx, 0) {};
\node [micro node] at (5.7*\dx, 0) {};
\end{tikzpicture}
\end{center}
\caption{The tree \(T = T(l)\) is not locally finite but the ultrametric space \((V(T), d_l)\) is totally bounded.}
\label{fig10}
\end{figure}
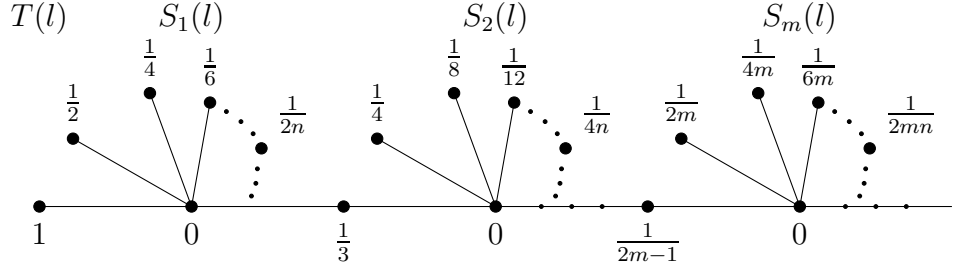

\begin{example}\label{ex11.10}
Let \(T_1(l_1)\) be the labeled tree such that \(V(T_1) = \mathbb{N}\) and
\[
\bigl(\{m, n\} \in E(T_1)\bigr) \Leftrightarrow \bigl(|m-n|=1\bigr)
\]
for all \(m\), \(n \in \mathbb{N}\) and, for each \(m \in V(T_1)\),
\[
l_1(m) = \begin{cases}
\frac{1}{m} & \text{if \(m\) is odd},\\
0 & \text{if \(m\) is even}.
\end{cases}
\]
For every \(m \in \mathbb{N}\) we define a labeled star \(S_m(l_m)\) with a center \(c_m\) such that \(|V(S_m)|\) is countably infinite, \(l_m(c_m) = 0\),
\[
V(S_{m_1}) \cap V(S_{m_2}) = V(T) \cap V(S_{m}) = \varnothing
\]
for all distinct \(m_1\), \(m_1 \in \mathbb{N}\), and the restriction \(l_m|_{V(S_m) \setminus \{c_m\}}\) of the labeling \(l_m\) on the set \(V(S_m) \setminus \{c_m\}\) is a bijection on the set \(\{mn \colon n \in \mathbb{N}\}\). The labeled tree \(T = T(l)\) is obtained from \(T_1(l_1)\) by gluing the center \(c_m\) of labeled star \(S_m(l_m)\) to vertex \(2m \in V(T_1)\) (see Figure~\ref{fig10}). The ultrametric space \((V(T), d_l)\) is totally bounded by Theorem~\ref{t11.8}.
\end{example}

The following simple example shows that the class of totally bounded ultrametric spaces which are representable in the form \((V(T), d_l)\) is a proper subclass of all totally bounded ultrametric spaces.

\begin{example}\label{ex11.11}
Let \(V = \{v_1, v_2, v_3, v_4\}\) be a four-point set and let an ultrametric \(d \colon V \times V \to \RR^{+}\) satisfy the equalities
\begin{gather}\label{ex11.11:e1}
d(v_1, v_2) = d(v_3, v_4) = 1\\
\intertext{and}
\label{ex11.11:e2}
d(v_1, v_3) = d(v_1, v_4) = d(v_2, v_3) = d(v_2, v_4) = 2.
\end{gather}
Let \(T = T(l)\) be a labeled tree such that \(V(T) = V\). We claim that the ultrametric spaces \((V, d)\) and \((V(T), d_l)\) are not isometric. Indeed, from~\eqref{e11.3} and \eqref{ex11.11:e1} it follows that
\[
\max_{1 \leqslant i \leqslant 4} l(v_i) \leqslant 1.
\]
The last inequality and \eqref{e11.3} imply that \(d_l(v, u) \leqslant 1\) holds for all \(u\), \(v \in V\), contrary to \eqref{ex11.11:e2}.
\end{example}

It seems to be interesting to get a purely metric characterization of totally bounded ultrametric spaces generated by labelings on the trees. For discrete totally bounded ultrametric spaces, such characterization, apparently, can be given in the language of diametrical graphs.

\begin{remark}
Theorems~\ref{t11.16} and \ref{t11.8} were proved in paper~\cite{DK2022LTGCCaDUS}.
\end{remark}

\begin{conjecture}\label{con11.1}
Let \((X, d)\) be a discrete nonempty totally bounded ultrametric space. Then the following statements are equivalent:
\begin{enumerate}
\item\label{con11.1:s1} There is a labeled tree \(T = T(l)\) such that \((V(T), d_l)\) and \((X, d)\) are isometric.
\item\label{con11.1:s2} At least one of the parts of the diametrical graph \(G_{B, d|_{B \times B}}\) is a single-point set for every ball \(B \in \BB_{X}\) with \(\diam B > 0\).
\end{enumerate}
\end{conjecture}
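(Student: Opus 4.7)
The plan is to establish the two directions by working with the free representing tree $T_X$ from Section~\ref{sec9} and the chosen-centre points supplied by condition~(ii).

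For (i) $\Rightarrow$ (ii), I fix an isometry $\Phi\colon(V(T),d_l)\to(X,d)$, a ball $B\in\BB_{X}$ with $\diam B=r>0$, and write $\tilde B=\Phi^{-1}(B)$; by Corollary~\ref{c5.15} there is $\rho>r$ with $\tilde B=\{w\in V(T)\colon d_l(\tilde c,w)<\rho\}$ for some centre $\tilde c$. I pick $u,v\in\tilde B$ realising $d_l(u,v)=r$ and let $v^{*}$ be a vertex on the $T$-path from $u$ to $v$ with $l(v^{*})=r$ (guaranteed by \eqref{e11.3}). In the tree $T$, removing $v^{*}$ separates $u$ from $v$, so at least one of the paths from $\tilde c$ to $u$ or from $\tilde c$ to $v$ must pass through $v^{*}$; consequently $d_l(\tilde c,v^{*})\le\max\{d_l(\tilde c,u),d_l(\tilde c,v)\}<\rho$, placing $v^{*}$ inside $\tilde B$. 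Then for any $w\in\tilde B\setminus\{v^{*}\}$ one has $r=l(v^{*})\le d_l(w,v^{*})\le\diam\tilde B=r$, so $\{v^{*}\}$ is a singleton part of the diametrical graph of $\tilde B$, and therefore $\{\Phi(v^{*})\}$ is a singleton part of $G_{B}$.

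For (ii) $\Rightarrow$ (i), I use (ii) to select, for each $B\in\BB_{X}$ with $\diam B>0$, a point $c_B\in B$ such that $\{c_B\}$ is a singleton part of $G_B$. The assignment $B\mapsto c_B$ is injective: if $c=c_{B_1}=c_{B_2}$ with $B_1\subsetneq B_2$, then $\diam B_1<\diam B_2$ by Proposition~\ref{p2.7} and Lemma~\ref{l2.6}, so any $y\in B_1\setminus\{c\}$ would simultaneously satisfy $d(c,y)\le\diam B_1$ and $d(c,y)=\diam B_2$, a contradiction; the same reasoning shows that no ball lies strictly between $\{c_B\}$ and $B$ in $T_X$, so $\{c_B\}$ is a direct child of $B$. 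I then form $T$ from $T_X$ by contracting each edge $\{B,\{c_B\}\}$; the resulting tree has vertex set naturally identified with $X$ via $B\sim\{c_B\}\mapsto c_B$ for $\diam B>0$ and $\{x\}\mapsto x$ otherwise, and I equip it with the labelling $l(x)=\diam B$ when $x=c_B$ and $l(x)=0$ elsewhere.

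To verify $d_l=d$, I take distinct $x,y\in X$ and let $B^{*}=B^{*}(\{x,y\})$ be the smallest ball containing them (Proposition~\ref{p2.12}). By minimality of $B^{*}$ together with Theorem~\ref{t2.24}, the points $x$ and $y$ lie in distinct parts of $G_{B^{*}}$, so the unique path in $T$ from $x$ to $y$ passes through the vertex associated with $c_{B^{*}}$, whose label is $\diam B^{*}=d(x,y)$ (Corollary~\ref{c2.18}); every other vertex on this path represents a proper sub-ball of $B^{*}$ with strictly smaller diameter (Lemma~\ref{l2.6}), so the maximum label along the $xy$-path equals $d(x,y)$. The main technical obstacle I expect is the bookkeeping around the contraction when $X$ is infinite: one must check that every $x\in X$ is represented by exactly one vertex of $T$ and that the descending $T_X$-chain $X\supsetneq B_1\supsetneq B_2\supsetneq\cdots$ containing a given $x$ terminates at $\{x\}$. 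Both facts follow from $T_X$ being an honest locally finite tree (Proposition~\ref{p8.4}), so all paths are finite and the parent of every singleton $\{x\}$ is uniquely determined.
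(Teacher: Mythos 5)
This statement is stated in the paper as an open \emph{conjecture}: the paper offers no proof, only the remark that the finite case was settled by Petrov and the locally finite case in \cite{DK2024DLPSSAG}. So there is no in-paper argument to compare yours against; I can only assess your proposal on its merits. Having checked it against the paper's supporting results, I could not find a fatal gap: both directions appear sound, and your test computation survives examples (such as the ballean of the Cantor set with the Hausdorff metric) that are discrete and totally bounded but not locally finite. The place where the hypotheses genuinely enter is worth making explicit: discreteness is what puts every singleton $\{x\}$ into $\BB_{X}=V(T_X)$ so that the contracted edges $\{B,\{c_B\}\}$ are actual edges of $T_X$ (by Definition~\ref{d9.1}\ref{d9.1:s4}, since $\{c_B\}$ is a part of $G_B$), and total boundedness is what makes $T_X$ a tree with finite paths and guarantees via Corollary~\ref{c2.39} and Theorem~\ref{t2.24} that $\diam B$ is attained — the latter is silently used when you ``pick $u,v$ realising $d_l(u,v)=r$'' and must be cited.

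Three steps are compressed enough that they should be written out before claiming the conjecture is resolved. First, contracting the (infinite) matching $\{\{B,\{c_B\}\}:\diam B>0\}$ in the infinite tree $T_X$ yields a tree, and the image of the $T_X$-path from $\{x\}$ to $\{y\}$ is the $T$-path between their classes; this works because two matched vertices both lying on a path in a tree must be consecutive on it, but it deserves a sentence. Second, your phrase ``every other vertex on this path represents a proper sub-ball of $B^{*}$'' must be checked at the endpoints: if $x=c_{B_x}$ for some ball $B_x$, then $B_x$ is forced to be the parent of $\{x\}$ in $T_X$ (the unique neighbour of the leaf $\{x\}$), hence $B_x\subseteq B^{*}$ and $l(x)=\diam B_x\leqslant\diam B^{*}=d(x,y)$; without this observation the maximum of the labels along the path could a priori exceed $d(x,y)$. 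Third, in the direction \ref{con11.1:s1}$\Rightarrow$\ref{con11.1:s2}, the case $v^{*}\in\{u,v\}$ should be noted separately (``removing $v^{*}$ separates $u$ from $v$'' degenerates there), though the conclusion $v^{*}\in\widetilde B$ is then immediate. With these points filled in, your argument would settle the conjecture in the stated generality, which goes beyond the locally finite case the paper cites as known; given that, I would re-examine the contraction step and the endpoint-label bookkeeping with particular suspicion before publicizing the claim.
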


Statement~\ref{con11.1:s2} of the above conjecture can also be formulated as follows.

\begin{enumerate}
\item[ ] For every \(B \in \BB_{X}\), there are \(c \in X\) and \(r \geqslant 0\) such that
\[
B = \{x \in X \colon d(x, c) = r\} \cup \{c\}
\]
i.e., the ball \(B\) is the sphere \(S(c, r) = \{x \in X \colon d(x, c) = r\}\) with the added center \(c\).
\end{enumerate}

\begin{remark}
Conjecture~\ref{con11.1} was formulated in~\cite{DK2022LTGCCaDUS}. E.~Petrov proved in~\cite{Pet2022HPoFUS} the validity of Conjecture~\ref{con11.1} for finite ultrametric spaces using some other terms. In~\cite{DK2024DLPSSAG} it is shown that the equivalence \((i) \Leftrightarrow (ii)\) is valid for all locally finite ultrametric spaces \((X, d)\).
\end{remark}

\subsection{Isomorphism of labeled representing trees}

Let us define the labeling for representing trees of totally bounded ultrametric spaces.

\begin{definition}\label{d9.27}
Let \((X, d)\) be a nonempty totally bounded ultrametric space. The representing tree \(T_X\) together with the labeling
\[
l_X \colon \BB_{X} \to \RR^{+}, \quad l_X(B) = \diam B,
\]
is said to be the \emph{labeled representing tree} of \((X, d)\) and denoted as \(T_X = T_X(l_X)\).
\end{definition}

\begin{remark}\label{r11.4}
Using Lemma~\ref{l2.6} and Corollary~\ref{c2.41}, we see that, for every nonempty totally bounded ultrametric space \((X, d)\), the ball \(B = X\) is an unique element of \(\BB_{X}\) satisfying the equality
\[
l_X(B) = \diam X.
\]
Hence, for every totally bounded ultrametric space \((X, d)\), the labeled representing tree \(T_X(l_X)\) has a ``natural'' root \(r_X = X\). Consequently, for any two nonempty totally bounded ultrametric spaces \((Y, \rho)\) and \((Z, \delta)\), each isomorphism of the labeled representing trees \(T_Y(l_Y)\) and \(T_Z(l_Z)\) is also an isomorphism of the rooted representing trees \(T_Y(r_Y)\) and \(T_Z(r_Z)\).
\end{remark}

\begin{remark}\label{r11.5}
It follows directly from the definition of leaves of trees and Lemma~\ref{l8.7} that \(v \in V(T_X)\) is a leaf of \(T_X\) of and only if \(l_X(v) = 0\) holds.
\end{remark}

Let us consider the pseudoultrametric spaces \((V(T_X), d_{l_X})\) generated by the labeled representing trees \(T_X(l_X)\) of totally bounded ultrametric spaces \((X, d)\). To begin with, it should be noted that \(d_{l_X}\) is an ultrametric on \(V(T_X)\) for every nonempty totally bounded ultrametric space \((X, d)\). Indeed, if \(\{B_1, B_2\} \in E(T_X)\) holds, then using Lemma~\ref{l8.3} and the definition of \(l_X\) we obtain
\[
\max\{l_X(B_1), l_X(B_2)\} > 0.
\]
Hence, \(d_{l_X}\) is an ultrametric by Theorem~\ref{t11.9}.

\begin{proposition}\label{p11.11}
Let \((X, d)\) be a nonempty totally bounded ultrametric space with the labeled representing tree \(T_X = T_X(l_X)\). Then the equality
\begin{equation}\label{p11.11:e1}
d_{l_X}(B_1, B_2) = d_H(B_1, B_2)
\end{equation}
holds for all \(B_1\), \(B_2 \in \BB_{X}\), where \(d_H\) is the restriction of Hausdorff distance on the ballean \(\BB_{X}\).
\end{proposition}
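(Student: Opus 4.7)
Since both sides of \eqref{p11.11:e1} vanish when $B_1 = B_2$, I fix distinct $B_1, B_2 \in \BB_X$. By Corollary~\ref{c2.41} the balls $B_1, B_2$ are closed, so Lemma~\ref{l6.12} gives $d_H(B_1, B_2) = \diam(B_1 \cup B_2)$. Set $C := B^{*}(B_1 \cup B_2)$, the smallest closed ball containing $B_1 \cup B_2$; by Proposition~\ref{p2.12} one has $\diam C = \diam(B_1 \cup B_2) > 0$, the strict inequality holding because $B_1 \neq B_2$ forces $B_1 \cup B_2$ to contain two different points. Lemma~\ref{c6.6} then places $C$ in $\BB_X = V(T_X)$.

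It remains to show that along the unique path $P \subseteq T_X$ connecting $B_1$ and $B_2$ one has $\max_{v \in V(P)} l_X(v) = \diam C$. For this I will use Lemma~\ref{l10.14} to identify $\preccurlyeq_{T_X(r_X)}$ with set-theoretic inclusion on $\BB_X$, so that the ``ancestors'' of a vertex $B$ in the rooted tree $T_X(r_X)$ are precisely the balls in $\BB_X$ containing $B$. Since $C \supseteq B_1 \cup B_2$, the ball $C$ is a common ancestor of $B_1$ and $B_2$, and the minimality of $B^*$ in $\overline{\BB}_X$ (Definition~\ref{d2.9}) shows that $C$ is in fact the smallest such ancestor. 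A standard rooted-tree argument---the path from any $u$ to any $v$ in a rooted tree is the concatenation of the ascending chain from $u$ to their lowest common ancestor with the descending chain from that ancestor to $v$, obtained by combining the unique root-paths of $u$ and $v$ (Lemma~\ref{l9.14})---then shows that $P$ is the union of the $\preccurlyeq_X$-chain from $B_1$ up to $C$ with the $\preccurlyeq_X$-chain from $C$ down to $B_2$.

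Finally, along any ascending chain $A_0 \subsetneq A_1 \subsetneq \cdots \subsetneq A_k$ of closed balls containing one another, Lemma~\ref{l2.6} together with Proposition~\ref{p2.4} (take any point of $A_0$ as common center, so equal diameters would force equal balls) yields $\diam A_0 < \diam A_1 < \cdots < \diam A_k$. Consequently $C$ is the unique vertex of $P$ of maximum label, so by formula~\eqref{e11.3},
\[
d_{l_X}(B_1, B_2) \;=\; l_X(C) \;=\; \diam C \;=\; \diam(B_1 \cup B_2) \;=\; d_H(B_1, B_2),
\]
as required. The principal technical point to pin down cleanly is the description of the $T_X$-path via the lowest common ancestor---matching the combinatorial LCA in $T_X(r_X)$ with the minimal containing ball $C = B^{*}(B_1 \cup B_2)$; once this identification is established, the equality of diameters follows mechanically from the results of Section~\ref{sec2}.
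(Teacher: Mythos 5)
Your proof is correct and follows essentially the same route as the paper: reduce to $\diam(B_1\cup B_2)$ via Lemma~\ref{l6.12}, identify the smallest containing ball $B^*(B_1\cup B_2)$ as an element of $\BB_X$ using Proposition~\ref{p2.12} and Lemma~\ref{c6.6}, and read off the maximum label along the tree path through it using Lemma~\ref{l10.14} and the strict growth of diameters along inclusion chains. The only difference is cosmetic: the paper treats the disjoint and nested cases separately, whereas you handle both at once via the lowest-common-ancestor description of the path.
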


\begin{proof}
By Corollary~\ref{c2.41}, we have the inclusion \(\BB_{X} \subseteq \overline{\BB}_{X}\) and, by Lemma~\ref{l6.12}, the equality
\[
d_H(B_1, B_2) = \diam (B_1 \cup B_2)
\]
holds for all distinct \(B_1\), \(B_2 \in \overline{\BB}_{X}\). Since \eqref{p11.11:e1} is evidently valid whenever \(B_1 = B_2\), it suffices to to show that
\begin{equation}\label{p11.11:e2}
d_{l_X}(B_1, B_2) = \diam (B_1 \cup B_2)
\end{equation}
holds for all distinct \(B_1\), \(B_2 \in \BB_{X}\).

Suppose first that \(B_1\) and \(B_2\) are disjoint open ball, \(B_1 \cap B_2 = \varnothing\). Write \(B^{*}\) for the smallest closed ball containing \(B_1 \cup B_2\). From Proposition~\ref{p2.12} it follows that
\begin{equation}\label{p11.11:e3}
\diam B^{*} = \diam (B_1 \cup B_2).
\end{equation}
Since every \(B \in \BB_{X}\) is nonempty and \(B_1 \cap B_2 = \varnothing\) holds, we have \(\diam (B_1 \cup B_2) > 0\). The last inequality, equality~\eqref{p11.11:e3} and Lemma~\ref{c6.6} imply that \(B^{*} \in \BB_{X}\). Using the (proper) inclusions \(B_1 \subset B^{*}\), \(B_2 \subset B^{*}\), Theorem~\ref{t9.20} and Lemma~\ref{l10.14} we can find \(B_1^{1}\), \(\ldots\), \(B_1^{n} \in \BB_{X}\) and \(B_2^{1}\), \(\ldots\), \(B_2^{m} \in \BB_{X}\) such that \(B_1 = B_1^{1}\), \(B_2 = B_2^{1}\),
\begin{equation}\label{p11.11:e4}
B_1^{n} = B^{*} = B_2^{m}
\end{equation}
and
\begin{equation}\label{p11.11:e5}
B_1^{i} \sqsubset_{T_X(r_X)} B_1^{i+1}, \quad B_2^{j} \sqsubset_{T_X(r_X)} B_2^{j+1}
\end{equation}
whenever \(1 \leqslant i \leqslant n-1\) and \(1 \leqslant j \leqslant m-1\). From the uniqueness of the smallest ball \(B^{*}\) and Lemma~\ref{l10.10} it follows that
\[
P_{B_1, B_2}^{*} = (B_1^{1}, \ldots, B_1^{n-1}, B^{*}, B_2^{m-1}, \ldots, B_2^{1})
\]
is a path in \(T_X(r_X)\) joining the balls \(B_1\) and \(B_2\). Relations \eqref{p11.11:e4} and \eqref{p11.11:e5} imply \(B^{*} \supseteq B_1^{i}\) and \(B^{*} \supseteq B_2^{j}\) for every \(i \in \{1, \ldots, n\}\) and \(j \in \{1, \ldots, m\}\). Hence, the equalities
\begin{equation}\label{p11.11:e6}
\diam B^{*} = \max \{\diam B \colon B \in V(P_{B_1, B_2}^{*})\} = d_{l_X}(B_1, B_2)
\end{equation}
hold.

If \(B_1 \cap B_2 \neq \varnothing\), then, by Proposition~\ref{p2.5}, we have either \(B_1 \subset B_2\) or \(B_2 \subset B_1\). Without loss of generality, we may assume that \(B_1 \subset B_2\). Then, arguing as above, we can find \(B_1^{1}\), \(\ldots\), \(B_1^{n} \in \BB_{X}\) such that \(B_1 = B_1^{1}\) and \(B_2 = B_1^{n}\) and
\[
B_1^{i} \sqsubset_{T_X(r_X)} B_1^{i+1}
\]
holds whenever \(1 \leqslant i \leqslant n-1\). As in \eqref{p11.11:e6} we obtain
\[
\diam (B_1 \cup B_2) = \diam B_2 = \max \{\diam B \colon B \in V(P_{B_1, B_2})\} = d_{l_X}(B_1, B_2),
\]
where \(P_{B_1, B_2} = (B_1^{1}, \ldots, B_1^{n})\) is the path joining \(B_1\) and \(B_2\) in \(T_X\).
\end{proof}

Proposition~\ref{p11.11} and Theorem~\ref{t7.8} give us.

\begin{corollary}\label{c11.12}
Let \((X, d)\) be a nonempty totally bounded ultrametric space with labeled representing tree \(T_X = T_X(l_X)\). Then the ultrametric space \((V(T_X), d_{l_X})\) is isometric to the subspace of all isolated points of the ultrametric space \((\overline{\BB}_X, d_H)\).
\end{corollary}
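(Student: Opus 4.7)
The plan is to exhibit the desired isometry as the natural inclusion $\BB_X \hookrightarrow \overline{\BB}_X$, and then verify that the image of this inclusion is precisely the set of isolated points of $(\overline{\BB}_X, d_H)$. Most of the work has already been done in earlier results of the section.

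First I would record that Lemma~\ref{l8.2} identifies the underlying set of $V(T_X)$ with $\BB_X$, and Corollary~\ref{c2.41} gives the inclusion $\BB_X \subseteq \overline{\BB}_X$. Proposition~\ref{p11.11} then says that the identity mapping on $\BB_X$ is an isometry from $(V(T_X), d_{l_X}) = (\BB_X, d_{l_X})$ onto the subspace $(\BB_X, d_H|_{\BB_X \times \BB_X})$ of $(\overline{\BB}_X, d_H)$. Thus everything reduces to showing that the isolated points of $(\overline{\BB}_X, d_H)$ are exactly the elements of $\BB_X$.

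For the inclusion $\BB_X \subseteq \{\text{isolated points}\}$, I would take $B \in \BB_X$ and any $B' \in \overline{\BB}_X$ with $B' \neq B$, and use Lemma~\ref{l6.12} to write $d_H(B, B') = \diam(B \cup B')$. If $\diam B > 0$ this is at least $\diam B > 0$; if $\diam B = 0$ then $B = \{c\}$ with $c$ isolated in $X$ (since $\{c\} \in \BB_X$ means $B_r(c) = \{c\}$ for some $r > 0$), and picking $\varepsilon > 0$ with $d(c,x) > \varepsilon$ for every $x \neq c$ one checks directly from \eqref{e6.11} that $d_H(\{c\}, B') \geqslant \varepsilon$ in both the cases $c \in B'$ and $c \notin B'$. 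For the reverse inclusion, I would take $A \in \overline{\BB}_X \setminus \BB_X$; Lemma~\ref{c6.6} forces $\diam A = 0$, whence $A = \{c\}$, and since $\{c\} \notin \BB_X$, Proposition~\ref{p2.2}\ref{p2.2:s2} shows that $c$ is an accumulation point of $X$. Choosing $(x_n)_{n \in \mathbb{N}} \subseteq X \setminus \{c\}$ with $d(c, x_n) \to 0$ and setting $r_n = d(c, x_n)$, the closed balls $\overline{B}_{r_n}(c) \in \overline{\BB}_X$ differ from $\{c\}$ (they contain $x_n$) and satisfy $d_H(\{c\}, \overline{B}_{r_n}(c)) = \diam \overline{B}_{r_n}(c) \leqslant r_n \to 0$ by Lemma~\ref{l6.12}, so $\{c\}$ is an accumulation point of $(\overline{\BB}_X, d_H)$.

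No real obstacle arises: once Proposition~\ref{p11.11} is invoked, the proof is a direct translation of the isolated/accumulation dichotomy for balls established in Lemma~\ref{c6.6} and Proposition~\ref{p2.2}. The only step requiring minor care is the degenerate case of singleton balls $\{c\}$, where Lemma~\ref{l6.12} must be combined with Proposition~\ref{p2.2}\ref{p2.2:s2} to distinguish isolated from accumulation centers. A slightly slicker alternative would be to transfer the statement to the compact completion via Proposition~\ref{p7.10} and then apply Theorem~\ref{t7.8} directly; however, this route still requires matching isolated points of $\overline{\BB}_X$ with those of $\overline{\BB}_{\widetilde{X}}$, which is essentially the same argument.
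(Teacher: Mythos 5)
Your proof is correct and follows essentially the same route as the paper, which obtains the corollary from Proposition~\ref{p11.11} together with the identification of \(\BB_{X}\) with the set of isolated points of \((\overline{\BB}_X, d_H)\) (Lemma~\ref{l7.6}/Theorem~\ref{t7.8}). If anything, your direct verification of that identification for merely totally bounded \(X\) is slightly more careful than the paper's bare citation of Theorem~\ref{t7.8}, which is stated only for compact spaces.
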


Proposition~\ref{p11.11} also implies the following.

\begin{lemma}\label{l11.15}
Let \((X, d)\) be a nonempty totally bounded ultrametric space with labeled representing tree \(T_X = T_X(l_X)\), let \(B_1\), \(B_2\) be disjoint open balls. Then the equality
\begin{equation}\label{l11.15:e1}
d(x_1, x_2) = d_{l_X}(B_1, B_2)
\end{equation}
holds for every \(x_1 \in B_1\) and every \(x_2 \in B_2\).
\end{lemma}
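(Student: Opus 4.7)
The plan is to chain together three results already established in the paper. By Proposition~\ref{p2.5}, for any two disjoint balls $B_1$, $B_2$ in an ultrametric space, the value $d(x_1, x_2)$ is independent of the choice of $x_1 \in B_1$ and $x_2 \in B_2$, and it equals $\diam(B_1 \cup B_2)$. Thus the left-hand side of \eqref{l11.15:e1} equals $\diam(B_1 \cup B_2)$.

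Next I would rewrite the right-hand side in two steps. First, apply Proposition~\ref{p11.11} (which I may invoke since $B_1, B_2 \in \BB_X$) to obtain $d_{l_X}(B_1, B_2) = d_H(B_1, B_2)$. Second, since $B_1$ and $B_2$ are disjoint, they are in particular distinct, and Corollary~\ref{c2.41} places them in $\overline{\BB}_X$, so Lemma~\ref{l6.12} applies and gives $d_H(B_1, B_2) = \diam(B_1 \cup B_2)$.

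Combining these equalities yields
\[
d(x_1, x_2) = \diam(B_1 \cup B_2) = d_H(B_1, B_2) = d_{l_X}(B_1, B_2),
\]
which is \eqref{l11.15:e1}.

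There is no real obstacle here; the lemma is essentially a bookkeeping corollary assembling Proposition~\ref{p2.5}, Lemma~\ref{l6.12}, and Proposition~\ref{p11.11}. The only thing to be careful about is that each cited result requires either disjointness of the balls or merely distinctness: disjointness is given, and distinctness follows since both balls are nonempty (see Remark~\ref{r8.2}), so every hypothesis needed is met.
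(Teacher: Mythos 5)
Your proof is correct and follows essentially the same route as the paper: the paper's own proof is the one-line chain \(d(x_1,x_2)=\diam(B_1\cup B_2)=d_{l_X}(B_1,B_2)\) justified by Proposition~\ref{p2.5} and Proposition~\ref{p11.11}, which is exactly your argument with the intermediate passage through \(d_H\) made explicit. Your extra care about distinctness versus disjointness and the appeal to Corollary~\ref{c2.41} and Lemma~\ref{l6.12} just unpacks what is already inside the proof of Proposition~\ref{p11.11}.
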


\begin{proof}
Let \(x_1 \in B_1\) and \(x_2 \in B_2\). Then, by Proposition~\ref{p2.5} and Proposition~\ref{p11.11}, we have
\begin{equation*}
d(x_1, x_2) = \diam (B_1 \cup B_2) = d_{l_X}(B_1, B_2). \qedhere
\end{equation*}
\end{proof}

In what follows we write \(T_1(l_1) \simeq T_2(l_2)\) if the labeled trees \(T_1(l_1)\) and \(T_2(l_2)\) are isomorphic.

\begin{lemma}\label{l11.10}
Let \((X, d)\) and \((Y, \rho)\) be nonempty totally bounded ultrametric spaces. If \((X, d)\) and \((Y, \rho)\) are isometric, then \(T_X(l_X) \simeq T_Y(l_Y)\) holds.
\end{lemma}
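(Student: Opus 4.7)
Let $\Phi \colon X \to Y$ be the given isometry. My plan is to use $\Phi$ to transport balls from $X$ to $Y$ and show that the induced map on vertex sets is an isomorphism of labeled trees in the sense of Definition~\ref{d2.5}. Specifically, I would define
\[
F \colon V(T_X) \to V(T_Y), \qquad F(B) = \Phi(B),
\]
and verify in turn that (a) $F$ is a well-defined bijection of vertex sets, (b) $l_Y \circ F = l_X$, and (c) $\{B_1, B_2\} \in E(T_X)$ if and only if $\{F(B_1), F(B_2)\} \in E(T_Y)$.

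For (a), the key observation is that isometries map open balls to open balls bijectively: if $B = B_r(c) \in \mathbf{B}_X$, then $\Phi(B) = B_r(\Phi(c)) \in \mathbf{B}_Y$, and applying the same reasoning to $\Phi^{-1}$ (also an isometry) shows that $F$ is a bijection from $\mathbf{B}_X$ onto $\mathbf{B}_Y$. By Lemma~\ref{l8.2} the equalities $V(T_X) = \mathbf{B}_X$ and $V(T_Y) = \mathbf{B}_Y$ hold, so $F$ is a well-defined bijection on vertex sets. For (b), since $\Phi$ preserves distances, $\diam \Phi(B) = \diam B$ for every $B \in \mathbf{B}_X$, and this reads precisely as $l_Y(F(B)) = l_X(B)$ by Definition~\ref{d9.27}.

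For (c), rather than unpacking the inductive condition~$(i_3)$ of Definition~\ref{d9.1}, I would invoke the clean characterization of edges given by Lemma~\ref{l8.3} (statement \ref{l8.3:s2}): a pair of distinct balls $\{B_1, B_2\}$ belongs to $E(T_X)$ if and only if there exist $r \in D(X) \setminus \{0\}$ and $c \in B_1 \cap B_2$ such that one of the balls has diameter $r$ while the other equals $\{x \in X \colon d(x, c) < r\}$. Every ingredient of this description — the intersection $B_1 \cap B_2$, the diameters of $B_i$, and the defining inequality of an open ball with radius $r$ and center $c$ — is carried faithfully from $X$ to $Y$ by $\Phi$ (and back by $\Phi^{-1}$). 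Hence the edge relation is preserved in both directions.

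There is no real obstacle here: the argument reduces to the fact that isometries respect balls, diameters and radii. The only delicate point is to avoid arguing through the level-by-level inductive definition of $V(T_X)$ — Lemma~\ref{l8.2} and Lemma~\ref{l8.3} allow us to treat $V(T_X)$ as $\mathbf{B}_X$ and the edges purely metrically, which is what makes the verification straightforward.
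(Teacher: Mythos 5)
Your proposal is correct and follows essentially the same route as the paper: the paper also transports balls via $\Phi^{*}(B) = \{\Phi(x) \colon x \in B\}$, identifies $V(T_X)$ with $\mathbf{B}_X$ (via Proposition~\ref{p8.4}/Lemma~\ref{l8.2}), derives $l_Y \circ \Phi^{*} = l_X$ from preservation of diameters, and settles the edge relation by appealing to Lemma~\ref{l8.3}. No gaps; the verification is complete as written.
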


\begin{proof}
Let \((X, d)\) and \((Y, \rho)\) be isometric with an isometry \(\Phi \colon X \to Y\). Then the equivalence
\[
(A \in \BB_{X}) \Leftrightarrow (\{\Phi(x) \colon x \in A\} \in \BB_{Y})
\]
is valid for every \(A \subseteq X\). Consequently, the mapping \(\Phi^* \colon \BB_{X} \to \BB_{Y}\) with
\begin{equation}\label{l11.10:e2}
\Phi^{*}(B) = \{\Phi(x) \colon x \in B\}, \quad B \in \BB_{X},
\end{equation}
is bijective. Proposition~\ref{p8.4} implies the equalities \(V(T_X) = \BB_{X}\) and \(V(T_Y) = \BB_{Y}\). Hence, \(\Phi^*\) is a bijective mapping from \(V(T_X)\) to \(V(T_Y)\). We claim that \(\Phi^*\) is an isomorphism of the labeled rooted trees \(T_X(r_X, l_X)\) and \(T_Y(r_Y, l_Y)\).

Since \(\Phi\) is bijective, we have
\[
\Phi^*(r_X) = \Phi^*(X) = Y = r_Y.
\]
Furthermore, since \(\Phi\) is an isometry, Definition~\ref{d9.27} and \eqref{l11.10:e2} imply
\[
l_Y(\Phi(B)) = l_X(B)
\]
for every \(B \in \BB_{X} = V(T_X)\). Now using Definition~\ref{d2.5} we see that \(\Phi^*\) is an isomorphism of the labeled rooted trees \(T_X(r_X, l_X)\) and \(T_Y(r_Y, l_Y)\) if and only if it is an isomorphism of free representing trees \(T_X\) and \(T_Y\) that follows from Lemma~\ref{l8.3} and \eqref{l11.10:e2}, because \(\Phi\) is bijective.
\end{proof}

\begin{proposition}\label{p11.10}
Let \((X, d)\) and \((Y, \rho)\) be nonempty compact ultrametric spaces. Then \((X, d)\) and \((Y, \rho)\) are isometric if and only if
\begin{equation}\label{p11.10:e1}
T_X(l_X) \simeq T_Y(l_Y).
\end{equation}
\end{proposition}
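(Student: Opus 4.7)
My plan: The implication $(X,d) \text{ isometric to } (Y,\rho) \Rightarrow T_X(l_X) \simeq T_Y(l_Y)$ is immediate from Lemma~\ref{l11.10}, since every compact ultrametric space is totally bounded. The converse requires the work.

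Suppose $f\colon V(T_X) \to V(T_Y)$ is an isomorphism of labeled trees. Since $f$ preserves labels and the root $r_X = X$ is the unique vertex carrying the maximal label $\diam X$ (Remark~\ref{r11.4}), we must have $\diam X = \diam Y$ and $f(X) = Y$; thus $f$ is actually an isomorphism of labeled \emph{rooted} trees. By Proposition~\ref{p9.23} together with Lemma~\ref{l10.14}, $f$ is an order isomorphism $(\BB_X, \preccurlyeq_X) \to (\BB_Y, \preccurlyeq_Y)$ with the additional property $\diam f(B) = \diam B$ for every $B \in \BB_X$.

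The next step establishes a canonical bijection between $X$ and the set of maximal chains of $(\BB_X, \preccurlyeq_X)$. For $x \in X$, set
\[
\mathcal{C}_x := \{B \in \BB_X : x \in B\}.
\]
Using Proposition~\ref{p2.5}, $\mathcal{C}_x$ is a chain. I then check maximality: any $B \in \BB_X$ comparable with every member of $\mathcal{C}_x$ must contain $x$ (the only alternative forces $B \subseteq \{x\}$, i.e.\ $B = \{x\}$, requiring $x$ isolated, in which case $B \in \mathcal{C}_x$ anyway). Conversely, given a maximal chain $\mathbf{A} \subseteq \BB_X$, the intersection $\bigcap_{B \in \mathbf{A}} B$ is nonempty by compactness of $X$ combined with the finite intersection property (each $B \in \mathbf{A}$ is clopen by Proposition~\ref{p2.2}), and contains at most one point: if it contained distinct $x, x'$, then Corollary~\ref{c5.15} would supply balls $B_r(x) \in \BB_X$ of arbitrarily small diameter around $x$, some of which exclude $x'$ but are comparable with every member of $\mathbf{A}$, contradicting maximality. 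Hence $x \mapsto \mathcal{C}_x$ is a bijection.

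Now define $\Phi\colon X \to Y$ by letting $\Phi(x)$ be the unique point $y \in Y$ with $\mathcal{C}_y^Y = f(\mathcal{C}_x)$, which is well defined since the order isomorphism $f$ carries maximal chains to maximal chains. The composition of three bijections shows $\Phi$ is itself bijective. To verify that $\Phi$ is an isometry, fix distinct $x_1, x_2 \in X$. By Proposition~\ref{p2.12} together with Theorem~\ref{t5.10} and Lemma~\ref{c6.6}, the smallest closed ball $B^*$ containing $\{x_1, x_2\}$ actually belongs to $\BB_X$, and $\diam B^* = d(x_1, x_2)$. Moreover $B^*$ is the minimum of $\mathcal{C}_{x_1} \cap \mathcal{C}_{x_2}$ in $(\BB_X, \preccurlyeq_X)$. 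Applying the order isomorphism $f$, the set
\[
f(\mathcal{C}_{x_1} \cap \mathcal{C}_{x_2}) = \mathcal{C}_{\Phi(x_1)}^Y \cap \mathcal{C}_{\Phi(x_2)}^Y
\]
has minimum $f(B^*)$, which is therefore the smallest element of $\BB_Y$ containing $\{\Phi(x_1), \Phi(x_2)\}$. The same argument applied in $Y$ gives $\diam f(B^*) = \rho(\Phi(x_1), \Phi(x_2))$, and then the label-preserving property of $f$ yields $\rho(\Phi(x_1), \Phi(x_2)) = \diam f(B^*) = \diam B^* = d(x_1, x_2)$.

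The principal obstacle is cleanly establishing the bijection $x \mapsto \mathcal{C}_x$ between points of the (possibly perfect) compact ultrametric space and maximal chains of \emph{open} balls. For accumulation points the associated chain has no minimum element, so nonemptiness of $\bigcap \mathcal{C}_x$ genuinely requires compactness (via the FIP for the clopen balls), while uniqueness of the intersection point relies on Corollary~\ref{c5.15} to produce balls of arbitrarily small diameter around each accumulation point.
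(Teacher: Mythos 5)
Your proof is correct, and it follows the same core construction as the paper --- both send a point \(p \in X\) to the unique point of \(Y\) lying in every ball of the image family \(\{f(B) \colon B \in \BB_{X},\ B \ni p\}\) --- but your route to the two delicate points differs in a way worth noting. For \emph{bijectivity}, the paper only proves that the induced map is an isometric \emph{embedding} of \(X\) into \(Y\), then repeats the construction in the other direction and invokes Proposition~\ref{p3.9} (mutually isometrically embeddable compact spaces are isometric) to obtain surjectivity; you instead establish the bijection \(x \mapsto \mathcal{C}_x\) between points and maximal chains of \((\BB_{X}, \preccurlyeq_X)\) (essentially Lemma~\ref{l11.19} / Proposition~\ref{p5.7}, which you reprove via clopenness and the finite intersection property), so that \(\Phi\) is bijective by composition and Proposition~\ref{p3.9} is never needed --- a cleaner and more self-contained argument. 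For the \emph{distance computation}, the paper routes through Lemma~\ref{l11.15} and the maximum label along the tree path joining two disjoint balls, whereas you identify \(d(x_1,x_2)\) with the diameter of the smallest ball containing \(\{x_1,x_2\}\) (Proposition~\ref{p2.12}, Corollary~\ref{c2.18}, Lemma~\ref{c6.6}) and observe that this ball is the minimum of \(\mathcal{C}_{x_1} \cap \mathcal{C}_{x_2}\), hence preserved by the order isomorphism; the two computations are equivalent (that smallest ball is exactly the vertex of maximal label on the path), but yours stays entirely in the order-theoretic picture. Your verification that \(\mathcal{C}_x\) is maximal and that \(\bigcap_{B \in \mathbf{A}} B\) is a singleton is sound; the only cosmetic quibble is that the citation of Corollary~\ref{c5.15} there is doing no real work --- what you actually need is just that \(B_r(x) \in \BB_{X}\) for every \(r>0\) and that these balls shrink to \(\{x\}\), which is immediate from the definitions.
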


\begin{proof}
Let \((X, d)\) and \((Y, \rho)\) be isometric. Then Lemma~\ref{l11.10} implies \eqref{p11.10:e1}.

Conversely, suppose \eqref{p11.10:e1} holds and let \(\Psi^{*} \colon V(T_X) \to V(T_Y)\) be an isomorphism of the labeled trees \(T_X(l_X)\) and \(T_Y(l_Y)\). We must show that \((X, d)\) and \((Y, \rho)\) are isometric.

Let \(p\) be a point of \(X\). Write
\[
\BB_{X}^{(p)} = \{B \in \BB_{X} \colon p \in B\}.
\]
Then \(\bigcap_{B \in \BB_{X}^{(p)}} B\) is a single-point set and \(p\) is the unique point of this set,
\[
\{p\} = \bigcap_{B \in \BB_{X}^{(p)}} B.
\]
We claim that there is an unique \(q \in Y\) for which
\begin{equation}\label{p11.10:e3}
\{q\} = \bigcap_{B \in \BB_{X}^{(p)}} \Psi^{*}(B)
\end{equation}
holds and that the mapping
\begin{equation}\label{p11.10:e4}
\Psi \colon X \to Y, \quad \Psi(p) = q, \quad p \in X,
\end{equation}
is an isometry of \((X, d)\) and \((Y, \rho)\).

Let us prove the existence and uniqueness of \(q \in Y\) satisfying~\eqref{p11.10:e3}.

Suppose first that \(p\) is an isolated point of \(X\). Then \(\{p\}\) is an open ball in \(X\), \(\{p\} \in \BB_{X}\), and
\[
\diam \{p\} = l_X(\{p\}) = 0.
\]
By Remark~\ref{r11.5}, the ball \(\{p\}\) is a leaf of \(T_X\). Since \(\Psi^{*}\) is an isomorphism of \(T_X\) and \(T_Y\), the ball \(\Psi^{*}(\{p\}) \in \BB_{Y}\) is a leaf of \(T_Y\). Now using Remark~\ref{r11.5} again, we see that there is an unique \(q \in Y\) such that \(\{q\} = \Psi^{*}(\{p\})\) holds. Since \(\Psi^{*}\) is an isomorphism of the labeled trees \(T_X(l_X)\) and \(T_Y(l_Y)\), it is also an isomorphism of the rooted trees \(T_X(r_X)\) and \(T_Y(r_Y)\) (see Remark~\ref{r11.4}). Hence, by Theorem~\ref{t8.7}, the mapping \(\Psi^{*}\) is an isomorphism of posets \((\BB_{X}, {\preccurlyeq}_X)\) and \((\BB_{Y}, {\preccurlyeq}_Y)\). Now \eqref{p11.10:e3} follows from the validity of the inclusion \(\{p\} \subseteq B\) for all \(B \in \BB_{X}^{(p)}\).

Let \(p\) be an accumulation point of \((X, d)\). By Proposition~\ref{p2.5}, for all distinct \(B_1\), \(B_2 \in \BB_{X}^{(p)}\) we have either \(B_1 \subset B_2\) or \(B_2 \subset B_1\). Hence, \(\BB_{X}^{(p)}\) is a totally ordered subposet of \((\BB_{X}, {\preccurlyeq}_X)\). Using Theorem~\ref{t8.7} we see that
\[
\{\Psi^{*}(B) \colon B \in \BB_{X}^{(p)}\}
\]
is a totally ordered subposet of \((\BB_{Y}, {\preccurlyeq}_Y)\). Write
\[
D_X^{(p)} = \{\diam B \colon B \in \BB_{X}^{(p)}\}.
\]
From Proposition~\ref{p2.3} and Corollary~\ref{c2.41} it follows that (see Definition~\ref{d2.8}) the mapping
\begin{equation}\label{p11.10:e5}
\BB_{X}^{(p)} \ni B \mapsto \diam B \in D_X^{p}
\end{equation}
is an isomorphism of totally ordered spaces \((\BB_{X}^{(p)}, {\preccurlyeq}_X^{(p)})\) and \((D_X^{(p)}, {\leqslant}^{(p)})\), where
\[
{\preccurlyeq}_X^{(p)} = (\BB_{X}^{(p)} \times \BB_{X}^{(p)}) \cap ({\preccurlyeq}_X) \quad \text{and} \quad {\leqslant}^{(p)} = (D_X^{(p)} \times D_X^{(p)}) \cap {\leqslant}.
\]
Hence, by Corollary~\ref{c6.9}, there is an enumeration \(B_1\), \(\ldots\), \(B_n\), \(\ldots\) of elements of \(\BB_{X}^{(p)}\) such that
\begin{equation}\label{p11.10:e6}
B_1 \supset B_2 \supset \ldots \supset B_n \supset B_{n+1} \supset \ldots
\end{equation}
and
\begin{equation}\label{p11.10:e7}
\lim_{n \to \infty} \diam B_n = 0.
\end{equation}
Since \(\Psi^{*}\) is a graph isomorphism of \(T_X(l_X)\), \(T_Y(l_Y)\) and, simultaneously, an ordered isomorphism of \((\BB_{X}, {\preccurlyeq}_X)\), \((\BB_{Y}, {\preccurlyeq}_Y)\), from \eqref{p11.10:e6} and \eqref{p11.10:e7} it follows that
\begin{equation}\label{p11.10:e8}
\Psi^{*}(B_1) \supset \Psi^{*}(B_2) \supset \ldots \supset \Psi^{*}(B_n) \supset \Psi^{*}(B_{n+1}) \supset \ldots
\end{equation}
and
\begin{equation}\label{p11.10:e9}
\lim_{n \to \infty} \diam \Psi^{*}(B_n) = 0.
\end{equation}
The open balls \(\Psi^{*}(B)\), \(B \in \BB_{X}\), are closed subsets of \((Y, \rho)\) by Proposition~\ref{p2.2}. The Cantor theorem claims that a metric space \((Z, \delta)\) is complete if and only if for every decreasing sequence \(Z_1 \supset Z_2 \supset \ldots\) of nonempty closed subsets of \(Z\) which satisfy
\[
\lim_{n \to \infty} \diam(Z_n) = 0,
\]
the intersection \(\bigcap_{i=1}^{\infty} Z_i\) is nonempty \cite[Theorem~4.3.9]{Eng1989}. Consequently, \eqref{p11.10:e8} and \eqref{p11.10:e9} imply \(\bigcap_{i=1}^{\infty} \Psi^{*}(B_i) \neq \varnothing\). Moreover, using \eqref{p11.10:e9} we see that
\[
\left|\bigcap_{i=1}^{\infty} \Psi^{*}(B_i)\right| = 1.
\]
Thus, the mapping \(\Psi \colon X \to Y\) is correctly defined by~\eqref{p11.10:e4} in each point \(p \in X\).

It is clear that \(\Psi \colon X \to Y\) is an isometry if \(|X| = 1\) holds. Suppose \(|X| \geqslant 2\). Let \(p_1\) and \(p_2\) be distinct points of \(X\) and let \(B_1\), \(B_2 \in \BB_{X}\) such that \(p_1 \in B_1\), \(p_2 \in B_2\) and
\begin{equation}\label{p11.10:e10}
B_1 \cap B_2 = \varnothing.
\end{equation}
Then the balls \(B_1^{*} = \Psi^{*}(B_1)\) and \(B_2^{*} = \Psi^{*}(B_2)\) are also disjoint. Indeed, suppose that \(B_1^{*} \cap B_2^{*} \neq \varnothing\) holds, then, by Proposition~\ref{p2.5}, we have
\begin{equation}\label{p11.10:e11}
B_1^{*} \subseteq B_2^{*} \quad \text{or} \quad B_2^{*} \subseteq B_1^{*}.
\end{equation}
The inverse mapping \({\Psi^{*}}^{-1} \colon \BB_Y \to \BB_{X}\) is an order isomorphism of \((\BB_{Y}, {\preccurlyeq}_Y)\) and \((\BB_{X}, {\preccurlyeq}_X)\). Hence, \eqref{p11.10:e11} implies
\[
B_1 \subseteq B_2 \quad \text{or} \quad B_2 \subseteq B_1,
\]
contrary to \eqref{p11.10:e10}.

By Lemma~\ref{l11.15}, we have the equalities
\begin{gather}\label{p11.10:e12}
d(p_1, p_2) = \max \{l_X(B) \colon B \in V(P_{B_1, B_2})\}\\
\intertext{and}
\label{p11.10:e13}
\rho(q_1, q_2) = \max \{l_Y(B^{*}) \colon B^{*} \in V(P_{B_1^{*}, B_2^{*}})\},
\end{gather}
where \(q_1 = \Psi(p_1)\), \(q_2 = \Psi(p_2)\) and \(P_{B_1, B_2}\) (\(P_{B_1^{*}, B_2^{*}}\)) is the path joining the vertices \(B_1\) and \(B_2\) (\(B_1^{*}\) and \(B_2^{*}\)) in \(T_X\) (in \(T_Y\)). It follows from Definition~\ref{d2.5} that
\begin{equation}\label{p11.10:e14}
l_X(B) = l_Y(\Psi^{*}(B))
\end{equation}
holds for every \(B \in \BB_{X}\) and
\begin{equation}\label{p11.10:e15}
V(P_{B_1^{*}, B_2^{*}}) = \{\Psi^{*}(B) \colon B \in V(P_{B_1, B_2})\}.
\end{equation}
Equalities~\eqref{p11.10:e12}--\eqref{p11.10:e15} imply the equality
\[
d(p_1, p_2) = \rho(\Psi(p_1), \Psi(p_2))
\]
for all distinct \(p_1\), \(p_2 \in X\). Thus, \(\Psi \colon X \to Y\) is an isometric embedding of \((X, d)\) in \((Y, \rho)\).

The existence of an isometric embedding of \((Y, \rho)\) in \((X, d)\) can be proved similarly. Hence, by Proposition~\ref{p3.9}, the mapping \(\Psi \colon X \to Y\) is an isometry as required.
\end{proof}

\begin{proposition}\label{p8.13}
Let \((X, d)\) be a nonempty totally bounded ultrametric space. If \(X_1\) and \(X_2\) are dense subsets of \(X\), then \(T_{X_1}(l_{X_1})\) and \(T_{X_2}(l_{X_2})\) are isomorphic with an isomorphism \(\Psi \colon \BB_{X_1} \to \BB_{X_2}\), where \(\Psi\) is a bijective mapping for which the diagram
\[
\ctdiagram{
\ctv -60,0:{\BB_{X_1}}
\ctv 60,0:{\BB_{X_2}}
\ctv 0,-60:{\BB_{X}}
\ctet -60,0,60,0:{\Psi}
\ctelg 0,-60,-60,0;-30:{\Phi_1}
\cterg 0,-60,60,0;-30:{\Phi_2}
}
\]
is commutative and \(\Phi_i \colon \BB_{X} \to \BB_{X_i}\) is defined by \(\Phi_i(B) = B \cap X_i\), \(i = 1\), \(2\).
\end{proposition}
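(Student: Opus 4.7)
The plan is to set $\Psi := \Phi_2 \circ \Phi_1^{-1}$ and then verify, one coordinate at a time, that $\Psi$ inherits from $\Phi_1$ and $\Phi_2$ all the structure needed to be an isomorphism of the labeled representing trees. Before anything else, note that $X_1$ and $X_2$ are totally bounded by Corollary~\ref{c2.17}, so the labeled representing trees $T_{X_1}(l_{X_1})$ and $T_{X_2}(l_{X_2})$ are defined in the sense of Definition~\ref{d9.27}, and both $X_1$ and $X_2$ are nonempty since $X \neq \varnothing$ and $X_i$ is dense in $X$.

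First, I would apply Proposition~\ref{p3.10} to each dense subset $X_i \subseteq X$. This gives that $\Phi_i \colon \BB_{X} \to \BB_{X_i}$ is a well-defined bijection (statements \ref{p3.10:s1} and \ref{p3.10:s2}) and in fact an order isomorphism between $(\BB_{X}, {\preccurlyeq}_{X})$ and $(\BB_{X_i}, {\preccurlyeq}_{X_i})$, which also preserves diameters:
\[
\diam \Phi_i(B) = \diam B \quad \text{for every } B \in \BB_{X},
\]
by equality~\eqref{p3.10:e2}. Hence $\Psi = \Phi_2 \circ \Phi_1^{-1} \colon \BB_{X_1} \to \BB_{X_2}$ is a bijection making the stated triangle commute.

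Second, I would observe that, as a composition of two order isomorphisms, $\Psi$ is itself an order isomorphism between $(\BB_{X_1}, {\preccurlyeq}_{X_1})$ and $(\BB_{X_2}, {\preccurlyeq}_{X_2})$. By Lemma~\ref{l10.14} these posets coincide with $(V(T_{X_i}(r_{X_i})), {\preccurlyeq}_{T_{X_i}(r_{X_i})})$, so Theorem~\ref{t8.7} (equivalently, Proposition~\ref{p9.23}) immediately upgrades $\Psi$ to an isomorphism of the rooted representing trees $T_{X_1}(r_{X_1})$ and $T_{X_2}(r_{X_2})$.

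Third, I would check that $\Psi$ respects the labelings. For any $B \in V(T_{X_1}) = \BB_{X_1}$, write $A = \Phi_1^{-1}(B) \in \BB_{X}$ so that $\Psi(B) = \Phi_2(A)$. Then, using Definition~\ref{d9.27} and equality~\eqref{p3.10:e2} applied to both indices,
\[
l_{X_2}(\Psi(B)) = \diam \Phi_2(A) = \diam A = \diam \Phi_1(A) = l_{X_1}(B).
\]
By Definition~\ref{d2.5} this says precisely that $\Psi$ is an isomorphism of the labeled trees $T_{X_1}(l_{X_1})$ and $T_{X_2}(l_{X_2})$, which is what was to be proved. No step here poses a real obstacle — the content of the proposition is essentially that the diameter-preservation clause of Proposition~\ref{p3.10} is exactly the ingredient needed to promote the order isomorphism $\Psi$ to an isomorphism of the labelings as well.
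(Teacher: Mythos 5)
Your proof is correct and follows essentially the same route as the paper: apply Proposition~\ref{p3.10} to each dense subset to get that $\Psi = \Phi_2 \circ \Phi_1^{-1}$ is a diameter-preserving order isomorphism, invoke Lemma~\ref{l10.14} and Proposition~\ref{p9.23} to upgrade it to an isomorphism of rooted (hence free) trees, and read off the label compatibility from equality~\eqref{p3.10:e2}. Your explicit remark that $X_1$ and $X_2$ are totally bounded (so their representing trees exist) is a small point the paper leaves implicit, but otherwise the arguments coincide.
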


\begin{proof}
Let \(X_1\) and \(X_2\) be dense subsets of \(X\). Using Proposition~\ref{p3.10} we see that \(\Psi \colon \BB_{X_1} \to \BB_{X_2}\) is an order isomorphism of the posets \((\BB_{X_1}, {\preccurlyeq}_{X_1})\) and \((\BB_{X_2}, {\preccurlyeq}_{X_2})\) and the equality
\begin{equation}\label{p8.13:e1}
\diam \Psi(B) = \diam B
\end{equation}
holds for every \(B \in \BB_{X_1}\). By Lemma~\ref{l10.14}, the equality
\[
(\BB_{X_i}, {\preccurlyeq}_{X_i}) = (V(T_{X_i}), {\preccurlyeq}_{T_{X_i}(r_{X_i})})
\]
holds for \(i = 1\), \(2\). Hence, \(\Psi\) is also an order isomorphism of the posets
\[
(V(T_{X_1}), {\preccurlyeq}_{T_{X_1}(r_{X_1})}) \quad \text{and} \quad (V(T_{X_2}), {\preccurlyeq}_{T_{X_2}(r_{X_2})}).
\]
Now by Proposition~\ref{p9.23}, the mapping \(\Psi\) is an isomorphism of the rooted representing trees \(T_{X_1}(r_{X_1})\) and \(T_{X_2}(r_{X_2})\), and, consequently, it is an isomorphism of free representing trees \(T_{X_1}\) and \(T_{X_2}\). Using Definition~\ref{d9.27} we can rewrite \eqref{p8.13:e1} as \(l_{X_2}(\Psi(B)) = l_{X_1}(B)\). Hence, by Definition~\ref{d2.5}, \(\Psi\) is an isomorphism of \(T_{X_1}(l_{X_1})\) and \(T_{X_2}(l_{X_2})\).
\end{proof}

\begin{corollary}\label{c9.13}
Let \((X_i, d_i)\) be a nonempty totally bounded ultrametric space with completion \((\widetilde{X}_i, \widetilde{d}_i)\), \(i = 1\), \(2\). Then \(T_{\widetilde{X}_1}(l_{\widetilde{X}_1})\) and \(T_{\widetilde{X}_2}(l_{\widetilde{X}_2})\) are isomorphic if and only if the trees \(T_{X_1}(l_{X_1})\) and \(T_{X_2}(l_{X_2})\) are isomorphic.
\end{corollary}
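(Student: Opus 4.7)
The plan is to deduce the corollary almost immediately from Proposition~\ref{p8.13} together with Lemma~\ref{l11.10}, using the fact that isomorphism of labeled trees is an equivalence relation. First I would observe that for each $i \in \{1, 2\}$, by definition of the completion, the space $(\widetilde{X}_i, \widetilde{d}_i)$ contains a dense subset $X'_i$ isometric to $(X_i, d_i)$. By Lemma~\ref{l2.20}, $(\widetilde{X}_i, \widetilde{d}_i)$ is ultrametric and, by Proposition~\ref{p2.13}, it is compact (hence totally bounded), so the labeled representing tree $T_{\widetilde{X}_i}(l_{\widetilde{X}_i})$ is well defined.

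Next I would apply Proposition~\ref{p8.13} inside the totally bounded ultrametric space $(\widetilde{X}_i, \widetilde{d}_i)$, taking as dense subsets $X'_i$ and $\widetilde{X}_i$ itself (the ambient space is trivially dense in itself). This yields the isomorphism
\[
T_{X'_i}(l_{X'_i}) \simeq T_{\widetilde{X}_i}(l_{\widetilde{X}_i}).
\]
Combining this with Lemma~\ref{l11.10} applied to the isometry $X_i \to X'_i$, which gives $T_{X_i}(l_{X_i}) \simeq T_{X'_i}(l_{X'_i})$, I obtain
\[
T_{X_i}(l_{X_i}) \simeq T_{\widetilde{X}_i}(l_{\widetilde{X}_i}), \quad i = 1, 2.
\]

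To finish, I would note that isomorphism of labeled trees, as introduced in Definition~\ref{d2.5}, is reflexive (via the identity map on vertices), symmetric (the inverse of a graph bijection preserving adjacency and labels shares those properties) and transitive (the composition of two such bijections preserves them as well). Therefore, the chain
\[
T_{X_1}(l_{X_1}) \simeq T_{\widetilde{X}_1}(l_{\widetilde{X}_1}), \qquad T_{X_2}(l_{X_2}) \simeq T_{\widetilde{X}_2}(l_{\widetilde{X}_2})
\]
immediately yields the equivalence
\[
\bigl(T_{X_1}(l_{X_1}) \simeq T_{X_2}(l_{X_2})\bigr) \Leftrightarrow \bigl(T_{\widetilde{X}_1}(l_{\widetilde{X}_1}) \simeq T_{\widetilde{X}_2}(l_{\widetilde{X}_2})\bigr),
\]
as required.

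There is no substantial obstacle here; the only point requiring mild care is the passage from $X_i$ to its isometric copy $X'_i \subseteq \widetilde{X}_i$, which is needed because Proposition~\ref{p8.13} demands that both dense subsets sit inside a common ambient ultrametric space. Once Lemma~\ref{l11.10} is invoked to handle this identification, the corollary reduces to a routine application of transitivity.
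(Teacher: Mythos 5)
Your proof is correct and follows essentially the same route as the paper: both pass through the dense isometric copy $X'_i \subseteq \widetilde{X}_i$, apply Proposition~\ref{p8.13} to identify $T_{X'_i}(l_{X'_i})$ with $T_{\widetilde{X}_i}(l_{\widetilde{X}_i})$, invoke Lemma~\ref{l11.10} for the isometry $X_i \to X'_i$, and conclude by transitivity of labeled-tree isomorphism. Your explicit remarks that the completion is totally bounded and ultrametric (so that Proposition~\ref{p8.13} applies) are correct and, if anything, slightly more careful than the paper's own wording.
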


\begin{proof}
Let
\begin{equation}\label{c9.13:e1}
T_{X_1}(l_{X_1}) \simeq T_{X_2}(l_{X_2})
\end{equation}
be valid. The metric space \((\widetilde{X}_i, \widetilde{d}_i)\) contains a dense isometric to \((X_i, d_i)\) subspace \((\widetilde{X}_i^{*}, \widetilde{d}_i)\), \(i =1\), \(2\). By Proposition~\ref{p8.13}, we obtain
\begin{equation}\label{c9.13:e2}
T_{\widetilde{X}_i}(l_{\widetilde{X}_i}) \simeq T_{\widetilde{X}_i^{*}}(l_{\widetilde{X}_i^{*}}),
\end{equation}
\(i =1\), \(2\). Lemma~\ref{l11.10} implies the validity of
\begin{equation}\label{c9.13:e3}
T_{\widetilde{X}_i^{*}}(l_{\widetilde{X}_i^{*}}) \simeq T_{X_i}(l_{X_i}).
\end{equation}
Now
\begin{equation}\label{c9.13:e4}
T_{\widetilde{X}_1}(l_{\widetilde{X}_1}) \simeq T_{\widetilde{X}_2}(l_{\widetilde{X}_2})
\end{equation}
follows from \eqref{c9.13:e1}, \eqref{c9.13:e2} and \eqref{c9.13:e3}. Similarly, \eqref{c9.13:e4}, \eqref{c9.13:e3} and \eqref{c9.13:e2} give us \eqref{c9.13:e1}.
\end{proof}

Using Proposition~\ref{p11.10} and Corollary~\ref{c9.13} we obtain.

\begin{theorem}\label{t11.12}
Let \((X, d)\) and \((Y, \rho)\) be nonempty totally bounded ultrametric spaces. Then the completions \((\widetilde{X}, \widetilde{d})\) and \((\widetilde{Y}, \widetilde{\rho})\) are isometric if and only if
\begin{equation}\label{t11.12:e1}
T_X(l_X) \simeq T_Y(l_Y).
\end{equation}
\end{theorem}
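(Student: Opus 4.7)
The plan is to deduce Theorem~\ref{t11.12} by combining the two immediately preceding results, namely Proposition~\ref{p11.10} (which handles the compact case) and Corollary~\ref{c9.13} (which transfers the isomorphism statement between a totally bounded space and its completion). The argument is short: first I pass from $(X,d)$ and $(Y,\rho)$ to their completions, which gives me compact ultrametric spaces by Proposition~\ref{p2.13} and Lemma~\ref{l2.20}, and then I run Proposition~\ref{p11.10} on these completions.

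More concretely, in the \emph{only if} direction, I assume that $(\widetilde{X},\widetilde{d})$ and $(\widetilde{Y},\widetilde{\rho})$ are isometric. Since these are compact and ultrametric, Proposition~\ref{p11.10} yields
\[
T_{\widetilde{X}}(l_{\widetilde{X}}) \simeq T_{\widetilde{Y}}(l_{\widetilde{Y}}).
\]
Then Corollary~\ref{c9.13}, applied with $X_1 = X$ and $X_2 = Y$, gives $T_X(l_X) \simeq T_Y(l_Y)$. Conversely, assuming $T_X(l_X) \simeq T_Y(l_Y)$, Corollary~\ref{c9.13} upgrades this to $T_{\widetilde{X}}(l_{\widetilde{X}}) \simeq T_{\widetilde{Y}}(l_{\widetilde{Y}})$, and then the compact case of Proposition~\ref{p11.10} produces an isometry between $(\widetilde{X},\widetilde{d})$ and $(\widetilde{Y},\widetilde{\rho})$.

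There is essentially no obstacle left for the proof of Theorem~\ref{t11.12} itself; all the genuine work has been absorbed into Proposition~\ref{p11.10} (whose hard step was showing that a labeled-tree isomorphism $\Psi^{*}\colon V(T_X)\to V(T_Y)$ in the compact case induces a bona fide isometry $\Psi\colon X\to Y$, using Cantor's nested-ball theorem together with Proposition~\ref{p3.9} to promote the resulting mutual isometric embeddings to an isometry) and into Corollary~\ref{c9.13} (which rested on Proposition~\ref{p8.13}, identifying labeled representing trees of dense subspaces via the bijection $B\mapsto B\cap Y$ of Proposition~\ref{p3.10} together with the diameter equality of Lemma~\ref{l3.9}). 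Thus the only remaining task is to state the two-line chaining of these results; no further combinatorial or metric obstacle arises.
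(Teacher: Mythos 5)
Your proof is correct and is essentially the same as the paper's: the paper also derives Theorem~\ref{t11.12} by chaining Proposition~\ref{p11.10} (applied to the completions, which are compact and ultrametric) with Corollary~\ref{c9.13}. No gaps; the two-line combination you describe is exactly the argument given in the text.
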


\begin{proof}
By Proposition~\ref{p11.10}, the spaces \((\widetilde{X}, \widetilde{d})\) and \((\widetilde{Y}, \widetilde{\rho})\) are isometric if and only if
\begin{equation*}
T_{\widetilde{X}}(l_{\widetilde{X}}) \simeq T_{\widetilde{Y}}(l_{\widetilde{Y}}).
\end{equation*}
In addition, by Corollary~\ref{c9.13}, the last equivalence is valid if and only if
\begin{equation*}
T_X(l_X) \simeq T_Y(l_Y).
\end{equation*}
Thus, \eqref{t11.12:e1} holds if and only if \((\widetilde{X}, \widetilde{d})\) and \((\widetilde{Y}, \widetilde{\rho})\) are isometric.
\end{proof}

\subsection{Maximal chains of balls and completion of space}

We start by characterizing the maximal chains in the posets \((V(T_X), {\preccurlyeq}_{T_X(r_X)})\) generated by labeled representing tree \(T_X = T_X(l_X)\) (see Remark~\ref{r11.4} and Proposition~\ref{p9.16}). Recall that the presence of maximal chains is guaranteed by Proposition~\ref{l11.18} in any poset.

\begin{lemma}\label{l11.19}
Let \((X, d)\) be a nonempty compact ultrametric space with the rooted representing tree \(T_X = T_X(r_X)\). Then the following statements are equivalent for every \(C \subseteq V(T_X)\):
\begin{enumerate}
\item \label{l11.19:s1} \(C\) is a maximal chain in \((V(T_X), {\preccurlyeq}_{T_X(r_X)})\).
\item \label{l11.19:s2} There is a point \(c \in X\) such that
\begin{equation}\label{l11.19:e1}
C = \{B \in \BB_{X} \colon c \in B\}.
\end{equation}
\end{enumerate}
\end{lemma}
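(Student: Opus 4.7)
The plan is to reduce the statement to an elementary fact about nestedness of ultrametric balls, using the identifications already established. By Lemma~\ref{l8.2} we have $V(T_X) = \BB_{X}$, and by Lemma~\ref{l10.14} the partial order ${\preccurlyeq}_{T_X(r_X)}$ coincides with ${\preccurlyeq}_{X}$ defined by set inclusion. Hence the task is to characterize the maximal chains of the poset $(\BB_{X}, {\preccurlyeq}_{X})$. The key auxiliary fact is that, by Proposition~\ref{p2.5}, two open balls in an ultrametric space are comparable with respect to inclusion if and only if they intersect. Thus $C \subseteq \BB_{X}$ is a chain exactly when every pair of balls from $C$ has nonempty intersection.

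For the direction $\ref{l11.19:s2} \Rightarrow \ref{l11.19:s1}$, I would fix $c \in X$, set $C^* = \{B \in \BB_{X} \colon c \in B\}$, and first note that $C^*$ is a chain because any two of its elements share the point $c$. To verify maximality, suppose $B^* \in \BB_{X} \setminus C^*$, so $c \notin B^*$. Choose any $c^* \in B^*$, set $r = d(c, c^*) > 0$, and consider the open ball $B_r(c) \in C^*$. Then $c \in B_r(c) \setminus B^*$ rules out $B_r(c) \subseteq B^*$, while $c^* \in B^* \setminus B_r(c)$ rules out $B^* \subseteq B_r(c)$. Hence $B^*$ is incomparable with an element of $C^*$, and by Proposition~\ref{p2.5} the balls $B^*$ and $B_r(c)$ are disjoint, so $C^* \cup \{B^*\}$ fails to be a chain.

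For the direction $\ref{l11.19:s1} \Rightarrow \ref{l11.19:s2}$, let $C \subseteq \BB_{X}$ be a maximal chain. Every $B \in C$ is a nonempty clopen subset of $X$ by Proposition~\ref{p2.2}, and compactness of $X$ yields, via the finite intersection property (or equivalently through Proposition~\ref{p5.7}, since a compact ultrametric space is spherically complete), that
\[
\bigcap_{B \in C} B \neq \varnothing.
\]
Pick $c$ in this intersection and set $C^* = \{B \in \BB_{X} \colon c \in B\}$. Then $C \subseteq C^*$, and by the argument of the previous paragraph $C^*$ is itself a chain. Maximality of $C$ forces $C = C^*$, which is the desired equality~\eqref{l11.19:e1}.

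No real obstacle arises; the main subtlety is simply remembering that open balls in ultrametric spaces are also closed, which is what allows compactness to deliver a nonempty intersection over the (possibly uncountable) chain $C$.
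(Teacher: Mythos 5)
Your proof is correct and follows essentially the same route as the paper: the identifications via Lemmas~\ref{l8.2} and \ref{l10.14}, compactness together with the finite intersection property to produce a common point of a maximal chain, and Proposition~\ref{p2.5} to control comparability of balls. The only (harmless) variation is in the maximality step of $\ref{l11.19:s2} \Rightarrow \ref{l11.19:s1}$: you exhibit, for each ball $B^{*}$ with $c \notin B^{*}$, a specific ball $B_{r}(c)$ with $r = d(c, c^{*})$ that is incomparable with (indeed disjoint from) $B^{*}$, whereas the paper argues that any ball comparable with every $B_{\varepsilon}(c)$ must contain $c$ as an accumulation point --- both arguments hinge on the same comparison with balls centred at $c$.
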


\begin{proof}
\(\ref{l11.19:s1} \Rightarrow \ref{l11.19:s2}\). Suppose \(\ref{l11.19:s1}\) holds. By Lemma~\ref{l10.14}, we have the equality
\[
(V(T_X), {\preccurlyeq}_{T_X(r_X)}) = (\BB_{X}, {\preccurlyeq}_X).
\]
In particular, from \ref{l11.19:s1} it follows that \(C\) is a maximal chain in \((\BB_{X}, {\preccurlyeq}_{X})\). We must prove that there is \(c \in X\) such that
\begin{equation}\label{l11.19:e2}
C = \{B \in \BB_{X} \colon B \ni c\}.
\end{equation}
First of all, we claim that \(\bigcap_{B \in C} B\) is nonempty,
\begin{equation}\label{l11.19:e3}
\bigcap_{B \in C} B \neq \varnothing.
\end{equation}

To prove that the set \(\bigcap_{B \in C} B\) is not empty, we will use the so-called \emph{finite intersection property}. Recall that a nonempty family \(\mathcal{F} = \{F_s\}_{s \in S}\) of sets has the finite intersection property if
\[
F_{s_1} \cap F_{s_2} \cap \ldots \cap F_{s_k} \neq \varnothing
\]
is valid for every finite \(\{s_1, s_2, \ldots, s_k\} \subseteq S\). It is known that a Hausdorff topological space \(Y\) is compact if and only if every family of closed subsets of \(Y\), which has this property, has nonempty intersection (\cite[Theorem~3.1.1]{Eng1989}). Using Proposition~\ref{p2.4} it is easy to see that
\[
\bigcap_{i=1}^{k} B_i \in \BB_{X}
\]
holds for every \(\{B_1, \ldots, B_k\} \subseteq C\) and \(k \in \mathbb{N}\). The elements of \(\BB_{X}\) are nonempty. Thus, \(C\) has the finite intersection property that implies \eqref{l11.19:e3}.

Let \(c\) be a point of \(\bigcap_{B \in C} B\). By Proposition~\ref{p2.5}, the set
\[
\{B \in \BB_{X} \colon B \ni c\}
\]
is a chain in \((\BB_{X}, {\preccurlyeq}_X)\). From \(c \in \bigcap_{B \in C} B\) follows the inclusion
\begin{equation}\label{l11.19:e4}
C \subseteq \{B \in \BB_{X} \colon B \ni c\}.
\end{equation}
Since \(C\) is a maximal chain in \((\BB_{X}, {\preccurlyeq}_X)\), inclusion~\eqref{l11.19:e4} yields~\eqref{l11.19:e2}.

\(\ref{l11.19:s2} \Rightarrow \ref{l11.19:s1}\). Let \(c\) be a point of \(X\) for which equality~\eqref{l11.19:e2} holds. Proposition~\ref{p2.5} and equality~\eqref{l11.19:e2} imply that \(C\) is a chain in \((\BB_{X}, {\preccurlyeq}_X)\).

Let \(C^{1} \in \BB_{X}\) be an arbitrary chain in \((\BB_{X}, {\preccurlyeq}_X)\) such that \(C^{1} \supseteq C\). To prove the maximality of \(C\) it suffices to show that
\begin{equation}\label{l11.19:e5}
c \in B^{1}
\end{equation}
holds for every \(B^{1} \in C^{1}\). Let \(B^{1}\) be an arbitrary element of \(C^{1}\). By Proposition~\ref{p2.2}, the ball \(B^{1}\) is a closed subset of \(X\). Consequently, \eqref{l11.19:e5} holds if \(c\) is an accumulation point of \(B^{1}\). Let \(\varepsilon\) be an arbitrary strictly positive real number. The ball \(B_{\varepsilon}(c) = \{x \in X \colon d(x, c) < \varepsilon\}\) belongs to \(C\). Since \(C^{1}\) is a chain and
\[
B^{1} \in C^{1} \supseteq C,
\]
we have
\[
B^{1} \preccurlyeq_X B_{\varepsilon}(c) \quad \text{or} \quad B_{\varepsilon}(c) \preccurlyeq_X B^{1}.
\]
Consequently, \(B^{1} \cap B_{\varepsilon}(c) \neq \varnothing\) holds for every \(\varepsilon > 0\). Thus, \(c\) is an accumulation point of \(B^{1}\).
\end{proof}

\begin{corollary}\label{c11.24}
Let \((X, d)\) be a nonempty totally bounded ultrametric space with the rooted representing tree \(T_X = T_X(r_X)\) and let \((\widetilde{X}, \widetilde{d})\) be a completion of \((X, d)\) such that \(X \subseteq \widetilde{X}\). Then the following statements are equivalent for every \(C \subseteq V(T_X)\):
\begin{enumerate}
\item \label{c11.24:s1} \(C\) is a maximal chain in \((V(T_X), {\preccurlyeq}_{T_X(r_X)})\).
\item \label{c11.24:s2} There is a point \(c \in \widetilde{X}\) such that \(C = \{\widetilde{B} \cap X \colon \widetilde{B} \in \BB_{\widetilde{X}} \text{ and } c \in \widetilde{B}\}\).
\end{enumerate}
\end{corollary}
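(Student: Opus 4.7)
The plan is to deduce Corollary~\ref{c11.24} from Lemma~\ref{l11.19} by transferring the description of maximal chains from the compact completion to the totally bounded space via the order isomorphism of balleans established in Proposition~\ref{p3.10}. First I would note that the completion $(\widetilde{X}, \widetilde{d})$ is a compact ultrametric space, by Proposition~\ref{p2.13} together with Lemma~\ref{l2.20}. Hence Lemma~\ref{l11.19} applies to $(\widetilde{X}, \widetilde{d})$ and, in view of Lemma~\ref{l10.14}, characterizes the maximal chains of $(\BB_{\widetilde{X}}, \preccurlyeq_{\widetilde{X}})$ as exactly the families
\[
\widetilde{C}_{c} = \{\widetilde{B} \in \BB_{\widetilde{X}} \colon c \in \widetilde{B}\}, \quad c \in \widetilde{X}.
\]

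Next I would invoke Proposition~\ref{p3.10} with $Y = X$ (which is dense in $\widetilde{X}$ by construction of the completion): the mapping
\[
\Phi \colon \BB_{\widetilde{X}} \to \BB_{X}, \quad \Phi(\widetilde{B}) = \widetilde{B} \cap X,
\]
is an order isomorphism of $(\BB_{\widetilde{X}}, \preccurlyeq_{\widetilde{X}})$ and $(\BB_{X}, \preccurlyeq_{X})$. Every order isomorphism carries maximal chains to maximal chains, so the maximal chains of $(\BB_{X}, \preccurlyeq_{X})$ are precisely the sets $\Phi(\widetilde{C}_{c})$ with $c \in \widetilde{X}$, and the right-hand side of statement~\ref{c11.24:s2} is, by definition, exactly $\Phi(\widetilde{C}_{c})$. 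Finally, another application of Lemma~\ref{l10.14} identifies $(\BB_{X}, \preccurlyeq_{X})$ with $(V(T_X), \preccurlyeq_{T_X(r_X)})$, yielding the claimed equivalence $\ref{c11.24:s1} \Leftrightarrow \ref{c11.24:s2}$.

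The proof is essentially a bookkeeping exercise once the two ingredients (Lemma~\ref{l11.19} applied to the compact completion, and the order isomorphism $\Phi$ of Proposition~\ref{p3.10}) are lined up, so I do not anticipate any genuine obstacle. The only point that needs mild care is the observation that the image of a maximal chain under an order isomorphism between two posets is again a maximal chain, which is immediate from Definition~\ref{d2.8} and the fact that order isomorphisms reflect inclusions in both directions.
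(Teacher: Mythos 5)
Your proposal is correct and follows essentially the same route as the paper's proof: identify \((V(T_X), {\preccurlyeq}_{T_X(r_X)})\) with \((\BB_{X}, {\preccurlyeq}_X)\) via Lemma~\ref{l10.14}, use the order isomorphism \(\Phi(\widetilde{B}) = \widetilde{B} \cap X\) from Proposition~\ref{p3.10} to transfer maximal chains between \((\BB_{\widetilde{X}}, {\preccurlyeq}_{\widetilde{X}})\) and \((\BB_{X}, {\preccurlyeq}_X)\), and then apply Lemma~\ref{l11.19} to the compact completion. The only additional remark you make explicitly --- that order isomorphisms preserve maximal chains --- is implicit in the paper's argument and is indeed immediate.
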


\begin{proof}
By Lemma~\ref{l10.14}, we have \((V(T_X), {\preccurlyeq}_{T_X(r_X)}) = (\BB_{X}, {\preccurlyeq}_X)\). Proposition~\ref{p3.10} implies that the mapping
\[
\Phi \colon \BB_{\widetilde{X}} \to \BB_X, \quad \Phi(\widetilde{B}) = \widetilde{B} \cap X,
\]
is a correctly defined order isomorphism of \((\BB_{\widetilde{X}}, {\preccurlyeq}_{\widetilde{X}})\) and \((\BB_{X}, {\preccurlyeq}_X)\). In particular, a set \(C \subseteq V(T_X)\) is a maximal chain in \((V(T_X), {\preccurlyeq}_{T_X(r_X)})\) if and only if
\[
C = \{\Phi(\widetilde{B}) \colon \widetilde{B} \in \widetilde{C}\}
\]
holds for a maximal chain \(\widetilde{C}\) from \((\BB_{\widetilde{X}}, {\preccurlyeq}_{\widetilde{X}})\). Thus, the corollary under consideration is easily deduced from Lemma~\ref{l11.19}.
\end{proof}

Let \((X, d)\) be a nonempty totally bounded ultrametric space. Write \(\mathbf{MC}_{T_X}\) for the set of all maximal chains in \((V(T_X), {\preccurlyeq}_{T_X(r_X)})\). Let us define a function
\[
d_{\mathbf{MC}_{T_X}} \colon \mathbf{MC}_{T_X} \times \mathbf{MC}_{T_X} \to \RR^{+}
\]
by the rule
\begin{equation}\label{e11.34}
d_{\mathbf{MC}_{T_X}}(C_1, C_2) = \begin{cases}
0 & \text{if } C_1 = C_2,\\
l_X(\wedge (C_1 \cap C_2)) & \text{if } C_1 \neq C_2,
\end{cases}
\end{equation}
where \(\wedge (C_1 \cap C_2)\) is the meet of the set \(C_1 \cap C_2\) in the poset \((V(T_X), {\preccurlyeq}_{T_X(r_X)})\) and \(l_X \colon V(T_X) \to \RR^{+}\) is the labeling introduced by Definition~\ref{d9.27}.

\begin{remark}\label{r11.24}
For every pair \(C_1\), \(C_2\) of distinct maximal chains, the set \(C_1 \cap C_2\) is finite, that follows from Theorem~\ref{t9.20}, and nonempty, because \(X \in C_1 \cap C_2\). Hence, \(C_1 \cap C_2\) is a finite nonempty chain in \((V(T_X), {\preccurlyeq}_{T_X(r_X)})\) and, consequently, there is \(B^{*} \in C_1 \cap C_2\) such that \(B^{*} \preccurlyeq_{T_X(r_X)} B\) holds for every \(B \in C_1 \cap C_2\). It is clear that \(B^{*} = \wedge (C_1 \cap C_2)\). Thus, \(d_{\mathbf{MC}_{T_X}}\) is correctly defined.
\end{remark}

\begin{proposition}\label{p11.25}
Let \((X, d)\) be a nonempty totally bounded ultrametric space with a completion \((\widetilde{X}, \widetilde{d})\), \(\widetilde{X} \supseteq X\), let \(\mathbf{MC}_{T_X}\) be the set of all maximal chains in the poset \((V(T_X), {\preccurlyeq}_{T_X(r_X)})\) and let \(F \colon \mathbf{MC}_{T_X} \to \widetilde{X}\) be a mapping such that the equality \(F(C)  = c\) holds for every \(C \in \mathbf{MC}_{T_X}\) with \(c\) satisfying \eqref{l11.19:e1}. Then
\[
d_{\mathbf{MC}_{T_X}} \colon \mathbf{MC}_{T_X} \times \mathbf{MC}_{T_X} \to \RR^{+}
\]
is an ultrametric on \(\mathbf{MC}_{T_X}\) and \(F\) is an isometry of \((\mathbf{MC}_{T_X}, d_{\mathbf{MC}_{T_X}})\) and \((\widetilde{X}, \widetilde{d})\).
\end{proposition}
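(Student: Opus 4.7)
The plan is to show that $F$ is a distance-preserving bijection between $\mathbf{MC}_{T_X}$ and $\widetilde{X}$; once this is established, the ultrametricity of $d_{\mathbf{MC}_{T_X}}$ will transport automatically from $\widetilde{d}$, and $F$ will itself be the claimed isometry.

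First I would verify that $F$ is well-defined and bijective. The existence of $c \in \widetilde{X}$ describing a given maximal chain $C \in \mathbf{MC}_{T_X}$ via the formula $C = \{\widetilde{B} \cap X \colon \widetilde{B} \in \BB_{\widetilde{X}},\ c \in \widetilde{B}\}$, and the fact that every $c \in \widetilde{X}$ produces such a chain, are both supplied by Corollary~\ref{c11.24}; this settles existence and surjectivity simultaneously. For the uniqueness of $c$, and hence for injectivity of $F$, suppose two distinct points $c_1, c_2 \in \widetilde{X}$ corresponded to the same chain. Choosing any $r \in (0, \widetilde{d}(c_1, c_2))$, the ball $B_r(c_1) \in \BB_{\widetilde{X}}$ contains $c_1$ but not $c_2$, so the two chain descriptions actually differ, giving the required contradiction.

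The core of the argument is the distance computation. Fix distinct $C_1, C_2 \in \mathbf{MC}_{T_X}$ and set $c_i = F(C_i)$, $r = \widetilde{d}(c_1, c_2) > 0$. By Proposition~\ref{p3.10}, the map $\Phi \colon \BB_{\widetilde{X}} \to \BB_X$, $\widetilde{B} \mapsto \widetilde{B} \cap X$, is an order isomorphism, and it preserves diameters by Lemma~\ref{l3.9}; moreover it transports $C_1 \cap C_2$ bijectively onto the set $\mathcal{K} = \{\widetilde{B} \in \BB_{\widetilde{X}} \colon c_1, c_2 \in \widetilde{B}\}$. Hence $\wedge(C_1 \cap C_2) = \Phi(\wedge \mathcal{K})$ and both have the same diameter. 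I would then identify $\wedge \mathcal{K}$ with the closed ball $\overline{B}_r(c_1)$ in $\widetilde{X}$: by Lemma~\ref{l2.6} combined with the presence of $c_2$ in this ball, its diameter equals $r$; by Lemma~\ref{c6.6} applied to the totally bounded space $\widetilde{X}$, it belongs to $\BB_{\widetilde{X}}$; and by Proposition~\ref{p2.7}, every other member of $\mathcal{K}$ either equals it (if its diameter is $r$) or strictly contains it (if its diameter exceeds $r$). Thus
\[
l_X(\wedge(C_1 \cap C_2)) = \diam \overline{B}_r(c_1) = r = \widetilde{d}(c_1, c_2),
\]
so $F$ is distance-preserving.

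Combining bijectivity with distance preservation, $F$ forces $d_{\mathbf{MC}_{T_X}}$ to be an ultrametric on $\mathbf{MC}_{T_X}$ (the strong triangle inequality is pulled back along $F^{-1}$ from $\widetilde{d}$), and $F$ is the required isometry. I expect the main technical obstacle to be the identification $\wedge \mathcal{K} = \overline{B}_r(c_1)$; the key ingredients there are the dichotomy of ultrametric balls from Proposition~\ref{p2.7} and the collapse of open and closed balls of positive diameter in totally bounded ultrametric spaces (Lemma~\ref{c6.6}), which together rest on the discreteness of $D(\widetilde{X}) \setminus \{0\}$ provided by Theorem~\ref{t5.10}.
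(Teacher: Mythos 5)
Your proposal is correct and follows essentially the same route as the paper: bijectivity of \(F\) via Corollary~\ref{c11.24}, transport of the chains to \(\BB_{\widetilde{X}}\) through the diameter-preserving order isomorphism \(\Phi\) of Proposition~\ref{p3.10}, and identification of \(\wedge(\widetilde{C}_1\cap\widetilde{C}_2)\) with the smallest ball containing \(\{c_1,c_2\}\), shown to lie in \(\BB_{\widetilde{X}}\) by Lemma~\ref{c6.6}. The only cosmetic difference is that you name this ball \(\overline{B}_r(c_1)\) and compute its diameter via Lemma~\ref{l2.6} and Proposition~\ref{p2.7}, whereas the paper writes it as \(B^*(\{c_1,c_2\})\) and invokes Corollary~\ref{c2.18}; by Proposition~\ref{p2.12} these are the same object.
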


\begin{proof}
By Corollary~\ref{c11.24}, the mapping \(F\) is bijective. Hence, it suffices to prove the equality
\begin{equation}\label{p11.25:e1}
d_{\mathbf{MC}_{T_X}}(C_1, C_2) = \widetilde{d}(F(C_1), F(C_2))
\end{equation}
for all distinct \(C_1\), \(C_2 \in \mathbf{MC}_{T_X}\).

Let \(C_1\) and \(C_2\) be distinct maximal chains in \((V(T_X), {\preccurlyeq}_{T_X(r_X)})\) and let \(c_i\) be a point of \(\widetilde{X}\) such that \(c_i = F(C_i)\), \(i = 1\), \(2\). By Lemma~\ref{l10.14}, the equality \((\BB_{X}, {\preccurlyeq}_X) = (V(T_X), {\preccurlyeq}_{T_X(r_X)})\) holds. Hence, \(C_1\) and \(C_2\) are maximal chains in \((\BB_{X}, {\preccurlyeq}_X)\). From Proposition~\ref{p3.10} it follows that the mapping \(\Phi \colon \BB_{\widetilde{X}} \to \BB_{X}\) defined by
\[
\Phi (\widetilde{B}) = \widetilde{B} \cap X, \quad \widetilde{B} \in \BB_{\widetilde{X}},
\]
is an order isomorphism of \((\BB_{\widetilde{X}}, {\preccurlyeq}_{\widetilde{X}})\) and \((\BB_{X}, {\preccurlyeq}_X)\). Consequently, the sets \(\widetilde{C}_1 \subseteq \BB_{\widetilde{X}}\) and \(\widetilde{C}_2 \subseteq \BB_{\widetilde{X}}\), which satisfy
\[
(\widetilde{B} \in \widetilde{C}_i) \Leftrightarrow (\Phi(\widetilde{B}) \in \widetilde{C}_i), \quad i = 1, 2,
\]
for every \(\widetilde{B} \in \BB_{\widetilde{X}}\), are maximal chains in \((\BB_{\widetilde{X}}, {\preccurlyeq}_{\widetilde{X}})\). Now Proposition~\ref{p3.10} and Definition~\ref{d9.27} imply the equalities
\begin{equation}\label{p11.25:e2}
l_X(\wedge (C_1 \cap C_2)) = \diam (\wedge (C_1 \cap C_2)) = \diam (\wedge (\widetilde{C}_1 \cap \widetilde{C}_2)).
\end{equation}
Using~\eqref{e11.34} and \eqref{p11.25:e2} we see that \eqref{p11.25:e1} holds if and only if
\begin{equation}\label{p11.25:e3}
\widetilde{d}(c_1, c_2) = \diam (\wedge (\widetilde{C}_1 \cap \widetilde{C}_2)).
\end{equation}
By Corollary~\ref{c11.24}, we have
\[
\widetilde{C}_1 = \{\widetilde{B} \in \BB_{\widetilde{X}} \colon c_1 \in \widetilde{B}\} \quad \text{and} \quad \widetilde{C}_2 = \{\widetilde{B} \in \BB_{\widetilde{X}}\colon c_2 \in \widetilde{B}\}.
\]
Let \(B^{*} = B^{*}(\{c_1, c_2\})\) be the smallest ball containing the set \(\{c_1, c_2\}\). From Definition~\ref{d2.9} it follows that
\begin{equation}\label{p11.25:e4}
B^{*} \in \overline{\BB}_{\widetilde{X}}.
\end{equation}
Since \(F \colon \mathbf{MC}_{T_X} \to \widetilde{X}\) is bijective and \(C_1 \neq C_2\), the inequality \(\widetilde{d}(c_1, c_2) > 0\) holds. Lemma~\ref{c6.6}, the inequality \(\widetilde{d}(c_1, c_2) > 0\) and \eqref{p11.25:e4} imply that \(B^{*} \in \BB_{\widetilde{X}}\). Hence, we have \(B^{*} \in C_1 \cap C_2\) and, consequently, the inequality
\[
\wedge (\widetilde{C}_1 \cap \widetilde{C}_2) \preccurlyeq_{\widetilde{X}} \widetilde{B}^{*}
\]
is valid. The converse inequality follows from the inclusion \(\overline{\BB}_{\widetilde{X}} \supseteq \BB_{\widetilde{X}}\) (see Corollary~\ref{c2.41}) and Definition~\ref{d2.9}. Thus, we have the equality
\[
\wedge (\widetilde{C}_1 \cap \widetilde{C}_2) = \widetilde{B}^{*}(\{c_1, c_2\}).
\]
The last equality and Corollary~\ref{c2.18} imply \eqref{p11.25:e3}.
\end{proof}

Theorem~\ref{t11.12} and Proposition~\ref{p11.25} give us the following.

\begin{corollary}\label{c11.26}
The labeled representing trees \(T_X(l_X)\) and \(T_{\mathbf{MC}_{T_X}}(l_{\mathbf{MC}_{T_X}})\) are isomorphic for every nonempty totally bounded ultrametric space \((X, d)\).
\end{corollary}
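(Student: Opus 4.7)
The plan is to chain together Proposition~\ref{p11.25} and Theorem~\ref{t11.12} in a straightforward way. First I would record the consequences of Proposition~\ref{p11.25}: the mapping $F \colon \mathbf{MC}_{T_X} \to \widetilde{X}$ constructed there is an isometry of $(\mathbf{MC}_{T_X}, d_{\mathbf{MC}_{T_X}})$ onto the completion $(\widetilde{X}, \widetilde{d})$ of $(X, d)$. In particular, $(\mathbf{MC}_{T_X}, d_{\mathbf{MC}_{T_X}})$ is an ultrametric space isometric to $(\widetilde{X}, \widetilde{d})$, which is compact by Proposition~\ref{p2.13} and Lemma~\ref{l2.20}. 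Consequently $(\mathbf{MC}_{T_X}, d_{\mathbf{MC}_{T_X}})$ is a (compact, and hence) totally bounded ultrametric space, so the labeled representing tree $T_{\mathbf{MC}_{T_X}}(l_{\mathbf{MC}_{T_X}})$ is well defined via Definition~\ref{d9.27}.

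Next I would identify the completion of $(\mathbf{MC}_{T_X}, d_{\mathbf{MC}_{T_X}})$. Since this space is already complete (being isometric to the complete space $(\widetilde{X}, \widetilde{d})$), its completion coincides with itself, up to isometry, so the completion of $(\mathbf{MC}_{T_X}, d_{\mathbf{MC}_{T_X}})$ is isometric to $(\widetilde{X}, \widetilde{d})$. Thus the completions of the two totally bounded ultrametric spaces
\[
(X, d) \quad \text{and} \quad (\mathbf{MC}_{T_X}, d_{\mathbf{MC}_{T_X}})
\]
are both isometric to $(\widetilde{X}, \widetilde{d})$, hence isometric to one another.

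Finally I would apply Theorem~\ref{t11.12} with $(Y, \rho) = (\mathbf{MC}_{T_X}, d_{\mathbf{MC}_{T_X}})$ to conclude
\[
T_X(l_X) \simeq T_{\mathbf{MC}_{T_X}}(l_{\mathbf{MC}_{T_X}}),
\]
which is exactly the desired conclusion. There is no real obstacle here, since all the substantive work has been done: Proposition~\ref{p11.25} provides the isometric identification of $\mathbf{MC}_{T_X}$ with $\widetilde{X}$, and Theorem~\ref{t11.12} converts isometry of completions into isomorphism of labeled representing trees. The only point requiring a brief mention is the verification that $(\mathbf{MC}_{T_X}, d_{\mathbf{MC}_{T_X}})$ satisfies the hypotheses of Theorem~\ref{t11.12} (nonempty, totally bounded, ultrametric), but this is immediate from the isometry with the compact ultrametric space $(\widetilde{X}, \widetilde{d})$.
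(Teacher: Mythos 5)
Your proof is correct and follows exactly the route the paper intends: the paper derives Corollary~\ref{c11.26} directly from Proposition~\ref{p11.25} and Theorem~\ref{t11.12}, and you have simply filled in the (routine) verification that \((\mathbf{MC}_{T_X}, d_{\mathbf{MC}_{T_X}})\) is a nonempty totally bounded ultrametric space whose completion is isometric to \((\widetilde{X}, \widetilde{d})\). Nothing is missing.
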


\subsection{Labeled representing trees of \(p\)-adic balls and spheres}

In this part of the section we will consider several important examples of labeled representing trees. The given below results and pictures are partially motivated by papers \cite{HolAMM2001} and \cite{CuoAMM1991}.

Let \(p \geqslant 2\) be a prime number and let \(\mathbb{Q}_p\) be the corresponding field of all \(p\)-adic numbers. The open ball
\[
B_{r}(a) = \{x \in \mathbb{Q}_p \colon d_p(x, a) < r\}, \quad a \in \mathbb{Q}_p, \quad r > 0,
\]
is a compact subspace of the ultrametric space \((\mathbb{Q}_p, d_p)\), where \(d_p \colon \mathbb{Q}_p \times \mathbb{Q}_p \to \RR^{+}\) is the \(p\)-adic metric defined as an extension of the ultrametric
\[
\<x, y> \mapsto |x-y|_p, \quad x, y \in \mathbb{Q}
\]
from the field \(\mathbb{Q}\) of rational numbers to its completion \(\mathbb{Q}_p\) (we keep the symbol \(d_p\) for \(p\)-adic metric, when passing from \(\mathbb{Q}\) to \(\mathbb{Q}_p\), see~\eqref{e3.2} and \eqref{e3.3}). Consequently, we can consider the representing tree \(T_{B_{r}(a)}\) of the space \((B_{r}(a), d_p|_{B_{r}(a) \times B_{r}(a)})\) as a special example of representing trees introduced by Definition~\ref{d9.1} for all totally bounded ultrametric spaces.

From Lemma~\ref{l8.2} and Lemma~\ref{l8.3} it follows that
\[
V(T_{B_{r}(a)}) = \{B \in \mathbf{B}_{\mathbb{Q}_p} \colon B \subseteq {B}_{r}(a)\}
\]
and, moreover, that the membership \(\{B_1, B_2\} \in E(B_{r}(a))\) is valid if and only if we have either
\[
(B_1 \subset B_2) \text{ and } ((B_1 \subseteq B \subseteq B_2) \Rightarrow (B_1 = B \text{ or } B_2 = B))
\]
or
\[
(B_2 \subset B_1) \text{ and } ((B_2 \subseteq B \subseteq B_1) \Rightarrow (B_1 = B \text{ or } B_2 = B))
\]
for every \(B \in V(T_{B_{r}(a)})\). By Definition~\ref{d9.27}, the labeling
\[
l_{B_{r}(a)} \colon V(T_{B_{r}(a)}) \times V(T_{B_{r}(a)}) \to \RR^{+}
\]
satisfies
\[
l_{B_{r}(a)}(B) = \diam B = \sup\{d_p(x, y) \colon x, y \in B\}
\]
for every \(B \in V(T_{B_{r}(a)})\).

\begin{proposition}\label{p11.28}
Let \(p \geqslant 2\) be a prime number, \(a \in \mathbb{Q}_p\) and let \(r\) be a strictly positive real number. Then every \(B \in V(T_{B_r(a)})\) has exactly \(p\) direct successors \(B_1\), \(\ldots\), \(B_p\) which are labeled such that
\begin{equation}\label{p11.28:e0}
l_{{B}_{r}(a)}(B_1) = \ldots = l_{{B}_{r}(a)}(B_p) = p^{- 1}l_{{B}_{r}(a)}(B).
\end{equation}
\end{proposition}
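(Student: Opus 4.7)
My plan is to put every vertex of $T_{B_r(a)}$ into the standard coset form $c + p^{k}\mathbb{Z}_p$ coming from the additive structure of $\mathbb{Q}_p$, and then reduce the counting of its direct successors to the decomposition $\mathbb{Z}_p = B_{1}(0) \cup B_{1}(1) \cup \cdots \cup B_{1}(p-1)$ already recorded in Example~\ref{ex5.15}.

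First I would observe that every $B \in V(T_{B_r(a)})$ is an open ball in $(\mathbb{Q}_p, d_p)$: by Lemma~\ref{l8.2} we have $V(T_{B_r(a)}) = \mathbf{B}_{B_r(a)}$, and Corollary~\ref{c2.4} gives $\mathbf{B}_{B_r(a)} \subseteq \mathbf{B}_{\mathbb{Q}_p}$. Since $D(\mathbb{Q}_p) = \{0\} \cup \{p^{j} : j \in \mathbb{Z}\}$ by Example~\ref{ex5.20}, a strict inequality $d_p(x, c) < s$ is equivalent to $d_p(x, c) \leqslant p^{-k}$, where $p^{-k}$ is the largest element of $D(\mathbb{Q}_p)$ strictly less than $s$. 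Hence there exist $c \in B$ and $k \in \mathbb{Z}$ such that
\[
B = \overline{B}_{p^{-k}}(c) = c + p^{k}\mathbb{Z}_p,
\]
and, using Lemma~\ref{l2.6} together with the attainment of $d_p(c + p^{k}, c) = p^{-k}$, one has $l_{B_r(a)}(B) = \diam B = p^{-k}$.

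Next I would identify the direct successors of $B$ with the parts of the diametrical graph $G_{B, d_p|_{B \times B}}$: combining condition~$(i_4)$ of Definition~\ref{d9.1} with Lemma~\ref{l10.10} and Lemma~\ref{l10.14}, $B' \in V(T_{B_r(a)})$ is a direct successor of $B$ if and only if $B'$ is a part of $G_{B, d_p|_{B \times B}}$. By Theorem~\ref{t2.24}, every such part has the form $B_{p^{-k}}(c') = \overline{B}_{p^{-k-1}}(c') = c' + p^{k+1}\mathbb{Z}_p$ for some $c' \in B$, which already yields the label equalities
\[
l_{B_r(a)}(B') = p^{-k-1} = p^{-1}\, l_{B_r(a)}(B).
\]
To count the parts, I would note that $B = c + p^{k}\mathbb{Z}_p$ is a disjoint union of cosets of $p^{k+1}\mathbb{Z}_p$, and the number of such cosets equals the index $[p^{k}\mathbb{Z}_p : p^{k+1}\mathbb{Z}_p] = p$. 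Equivalently, the affine map $x \mapsto c + p^{k} x$ is a similarity of $(\mathbb{Z}_p, d_p)$ onto $(B, d_p|_{B \times B})$ of ratio $p^{-k}$ (in the sense of Example~\ref{ex5.29}), so it transports the decomposition $\mathbb{Z}_p = B_{1}(0) \cup \cdots \cup B_{1}(p-1)$ from Example~\ref{ex5.15} into a decomposition of $B$ into $p$ pairwise disjoint open balls of diameter $p^{-k-1}$; these must coincide with the parts of $G_{B, d_p|_{B \times B}}$ by Theorem~\ref{t2.24}.

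The only nontrivial ingredient is the index computation $[p^{k}\mathbb{Z}_p : p^{k+1}\mathbb{Z}_p] = p$ (equivalently, the cardinality of the set of parts in the decomposition of Example~\ref{ex5.15}). This is a standard fact about $p$-adic integers and is already implicit in Example~\ref{ex5.15}; every other step is a direct application of results established earlier in the paper.
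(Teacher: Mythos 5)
Your proposal is correct and follows essentially the same route as the paper: identify the direct successors of a vertex $B$ with the parts of the complete $p$-partite diametrical graph $G_{B, d_p|_{B\times B}}$ (via Lemma~\ref{l8.2}, Corollary~\ref{c2.4}, Theorem~\ref{t2.24} and Lemma~\ref{l10.10}) and then exhibit $B$ as a disjoint union of $p$ sub-balls of diameter $p^{-1}\diam B$. The only difference is cosmetic: the paper translates to center $0$ and writes the $p$ representatives explicitly via the $p$-adic digit expansion, whereas you count the parts as cosets of $p^{k+1}\mathbb{Z}_p$ in $p^{k}\mathbb{Z}_p$, transporting the decomposition of Example~\ref{ex5.15} by an affine similarity.
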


\begin{proof}
Let us consider first the case when \(B \in V(T_{B_r(a)})\) coincides with \(B_r(a)\). The function
\[
\mathbb{Q}_p \ni x \mapsto x - a \in \mathbb{Q}_p
\]
is an isometry of \((\mathbb{Q}_p, d_p)\). (This statement is well-known in the theory of \(p\)-adic numbers or can be derived directly from formula~\eqref{e3.3}.) Consequently, the open ball
\[
B_r(0) = \{x - a \colon x \in B_r(a)\}
\]
is isometric to \(B_r(a)\). By Proposition~\ref{p11.10}, the isometricity of \(B_r(a)\) and \(B_r(0)\) implies
\[
T_{B_{r}(a)}(l_{B_{r}(a)}) \simeq T_{B_{r}(0)}(l_{B_{r}(0)}).
\]
Thus, without loss of generality, we can suppose \(a=0\).

Every \(x \in \mathbb{Q}_p\) can be uniquely presented as a convergent (in \((\mathbb{Q}_p, d_p)\)) series,
\begin{equation}\label{p11.28:e4}
x = \sum_{i \in \mathbb{Z}} d_i(x) p^i,
\end{equation}
where, for all \(i \in \mathbb{Z}\), we have \(d_{i}(x) \in \{0, 1, \ldots, p-1\}\) and \(d_i = 0\) holds whenever \(i \leqslant i(x)\) with some \(i(x) \in \mathbb{Z}\). The distance set \(D(\mathbb{Q}_p)\) of the ultrametric space \((\mathbb{Q}_p, d_p)\) has the form
\begin{equation}\label{p11.28:e1}
D(\mathbb{Q}_p) = \{p^{j} \colon j \in \mathbb{Z}\} \cup \{0\}.
\end{equation}
Since \((B_{r}(0), d_p|_{B_{r}(0) \times B_{r}(0)})\) is a compact metric space and \(|B_r(0)| \geqslant 2\) holds, by Proposition~\ref{p2.36}, there is \(x_1 \in B_{r}(0)\) such that
\begin{equation}\label{p11.28:e2}
\diam B_{r}(0) = d_p(x_1, 0) > 0.
\end{equation}
Using~\eqref{p11.28:e1} we can find \(j_0 = j_0(x)\) such that
\begin{equation}\label{p11.28:e3}
d_p(x_1, 0) = p^{j_0}.
\end{equation}
From \eqref{p11.28:e2}, \eqref{p11.28:e3} and Proposition~\ref{p2.36} it follows that
\[
B_r(0) = \overline{B}_{p^{j_0}}(0),
\]
where \(\overline{B}_{p^{j_0}}(0)\) is the closed ball with the center \(0\) and radius \(p^{j_0}\),
\begin{equation}\label{p11.28:e5}
\overline{B}_{p^{j_0}}(0) = \{x \in \mathbb{Q}_p \colon d_p(x, 0) \leqslant p^{j_0}\}.
\end{equation}

Let \(x \in \mathbb{Q}_p \setminus \{0\}\) be given by series \eqref{p11.28:e4} and let \(i_0 = i_0(x)\) be defined as
\[
i_0 = \inf\{i \in \mathbb{Z} \colon d_i(x) \neq 0\}.
\]
Then we have the equality
\begin{equation}\label{p11.28:e6}
d_p(x, 0) = p^{-i_0(x)}
\end{equation}
(see, for example, Theorem~2.1 from Chapter~II in \cite{Bachman1964}). Hence, \(x \in \mathbb{Q}_p\) is a point of \(\overline{B}_{p^{j_0}}(0)\) if and only if \(x\) has the expansion
\begin{equation}\label{p11.28:e7}
x = \sum_{i \geqslant -j_0} d_i(x) p^i.
\end{equation}
In particular, the points \(x_0 = 0 p^{-j_0}\), \(x_1 = 1 p^{-j_0}\), \(\ldots\), \(x_{p-1} = (p-1) p^{-j_0}\) belong to \(\overline{B}_{p^{j_0}}(0)\) and the equalities
\begin{equation}\label{p11.28:e8}
p^{j_0} = d_p(x_{k_1}, x_{k_2}) = \diam \overline{B}_{p^{j_0}}(0) = \diam B_r(0) = \diam B_r(a)
\end{equation}
hold for every pair of distinct \(k_1\), \(k_2 \in \{0, 1, \ldots, p-1\}\). Using~\eqref{p11.28:e6}--\eqref{p11.28:e8} we obtain
\[
\overline{B}_{p^{j_0}}(0) = \bigcup_{k=0}^{p-1} \overline{B}_{p^{j_0-1}}(x_k)
\]
and
\[
\overline{B}_{p^{j_0-1}}(x_{k_1}) \cap \overline{B}_{p^{j_0-1}}(x_{k_2}) = \varnothing
\]
for every pair of distinct \(k_1\), \(k_2 \in \{0, 1, \ldots, p-1\}\). From \eqref{p11.28:e1} it follows that the equalities
\begin{equation}\label{p11.28:e9}
\diam \overline{B}_{p^{j_0-1}}(x_{k}) = p^{j_0-1}, \quad \overline{B}_{p^{j_0-1}}(x_{k}) = B_r(x_k)
\end{equation}
hold for every \(r \in (p^{j_0-1}, p^{j_0}]\) and \(k \in \{0, 1, \ldots, p-1\}\). Hence, by Theorem~\ref{t2.24}, the balls \(\overline{B}_{p^{j_0-1}}(x_{0})\), \(\ldots\), \(\overline{B}_{p^{j_0-1}}(x_{p-1})\) are the parts of the complete \(p\)-partite diametrical graph \(G_{B_r(0), d_p|_{B_r(0) \times B_r(0)}}\).

It should be also noted that the balls \(\overline{B}_{p^{j_0-1}}(x_{0})\), \(\ldots\), \(\overline{B}_{p^{j_0-1}}(x_{p-1})\) are the direct successors of \(B_r(0)\) (see Lemma~\ref{l10.10} and Remark~\ref{r10.11}). Now, using \eqref{p11.28:e2}, \eqref{p11.28:e3}, \eqref{p11.28:e9} and Definition~\ref{d9.27}, we obtain \eqref{p11.28:e0} with \(B = B_r(0)\) and \(B_k = \overline{B}_{p^{j_0-1}}(x_{k})\), \(k = 0\), \(1\), \(\ldots\), \(p-1\).

Let us consider now an arbitrary \(B \in V(T_{B_r(a)})\). By Proposition~\ref{p8.4}, the equality
\[
V(T_{B_r(a)}) = \BB_{B_r(a)}
\]
holds. By Corollary~\ref{c2.4}, we have the inclusion
\[
\BB_{B^*} \subseteq \BB_{\mathbb{Q}_p}
\]
for every open ball \(B^* \in \BB_{\mathbb{Q}_p}\). Hence, every \(B \in V(T_{B_r(a)})\) is an open ball in \((\mathbb{Q}_p, d_p)\). Therefore, arguing as above, we can verify that every \(B \in V(T_{B_r(a)})\) has exactly \(p\) direct successors \(B_1\), \(\ldots\), \(B_p\) which satisfy \eqref{p11.28:e0}.
\end{proof}

\begin{example}\label{ex11.4}
If \(r \in (1,2]\), \(a = 0\) and \(p = 2\), then \(T_{B_r(a)} = T_{B_r(a)}(l_{{B}_{r}(a)})\) is the labeled tree depicted in Figure~\ref{fig5}.
\end{example}

\begin{figure}[ht]
\begin{center}
\begin{tikzpicture}[
level 1/.style={level distance=\levdist,sibling distance=24mm},
level 2/.style={level distance=\levdist,sibling distance=12mm},
level 3/.style={level distance=\levdist,sibling distance=6mm},
solid node/.style={circle,draw,inner sep=1.5,fill=black},
hollow node/.style={circle,draw,inner sep=1.5}]

\node [hollow node] at (0,0) [label={[label distance=1cm] right:\(T_{B_r(0)}\), \(p=2\), \(r \in (1,2]\)}] {\(1\)}
child{node [hollow node]{\(\frac{1}{2}\)}
	child{node [hollow node]{\(\frac{1}{4}\)}
		child{node [label=below:{\(\ldots\)}]{}}
		child{node [label=below:{\(\ldots\)}]{}}
	}
	child{node [hollow node]{\(\frac{1}{4}\)}
		child{node [label=below:{\(\ldots\)}]{}}
		child{node [label=below:{\(\ldots\)}]{}}
	}
}
child{node [hollow node]{\(\frac{1}{2}\)}
	child{node [hollow node]{\(\frac{1}{4}\)}
		child{node [label=below:{\(\ldots\)}]{}}
		child{node [label=below:{\(\ldots\)}]{}}
	}
	child{node [hollow node]{\(\frac{1}{4}\)}
		child{node [label=below:{\(\ldots\)}]{}}
		child{node [label=below:{\(\ldots\)}]{}}
	}
};
\end{tikzpicture}
\end{center}
\caption{}
\label{fig5}
\end{figure}

\begin{example}\label{ex11.5}
If \(r \in (1,3]\), \(a = 0\) and \(p = 3\), then \(T_{{B}_{r}(a)}\) is depicted in Figure~\ref{fig6}.
\end{example}

\begin{figure}[ht]
\begin{center}
\begin{tikzpicture}[
level 1/.style={level distance=\levdist,sibling distance=2.7cm},
level 2/.style={level distance=\levdist,sibling distance=0.9cm},
level 3/.style={level distance=\levdist,sibling distance=0.3cm},
solid node/.style={circle,draw,inner sep=1.5,fill=black},
hollow node/.style={circle,draw,inner sep=1.5}]

\node [hollow node] at (0,0)
[label={[label distance=1cm] right:\(T_{B_r(0)}\), \(p=3\), \(r \in (1,3]\)}] {\(1\)}
child{node [hollow node]{\(\frac{1}{3}\)}
	child{node [hollow node]{\(\frac{1}{9}\)}
		child{node [label=below:{\(.\)}]{}}
		child{node [label=below:{\(.\)}]{}}
		child{node [label=below:{\(.\)}]{}}
	}
	child{node [hollow node]{\(\frac{1}{9}\)}
		child{node [label=below:{\(.\)}]{}}
		child{node [label=below:{\(.\)}]{}}
		child{node [label=below:{\(.\)}]{}}
	}
	child{node [hollow node]{\(\frac{1}{9}\)}
		child{node [label=below:{\(.\)}]{}}
		child{node [label=below:{\(.\)}]{}}
		child{node [label=below:{\(.\)}]{}}
	}
}
child{node [hollow node]{\(\frac{1}{3}\)}
	child{node [hollow node]{\(\frac{1}{9}\)}
		child{node [label=below:{\(.\)}]{}}
		child{node [label=below:{\(.\)}]{}}
		child{node [label=below:{\(.\)}]{}}
	}
	child{node [hollow node]{\(\frac{1}{9}\)}
		child{node [label=below:{\(.\)}]{}}
		child{node [label=below:{\(.\)}]{}}
		child{node [label=below:{\(.\)}]{}}
	}
	child{node [hollow node]{\(\frac{1}{9}\)}
		child{node [label=below:{\(.\)}]{}}
		child{node [label=below:{\(.\)}]{}}
		child{node [label=below:{\(.\)}]{}}
	}
}
child{node [hollow node]{\(\frac{1}{3}\)}
	child{node [hollow node]{\(\frac{1}{9}\)}
		child{node [label=below:{\(.\)}]{}}
		child{node [label=below:{\(.\)}]{}}
		child{node [label=below:{\(.\)}]{}}
	}
	child{node [hollow node]{\(\frac{1}{9}\)}
		child{node [label=below:{\(.\)}]{}}
		child{node [label=below:{\(.\)}]{}}
		child{node [label=below:{\(.\)}]{}}
	}
	child{node [hollow node]{\(\frac{1}{9}\)}
		child{node [label=below:{\(.\)}]{}}
		child{node [label=below:{\(.\)}]{}}
		child{node [label=below:{\(.\)}]{}}
	}
};
\end{tikzpicture}
\end{center}
\caption{}
\label{fig6}
\end{figure}

Following M.~Ostill \cite{OstPASMaiA2012}, we will say that a rooted tree \(T\) is a \emph{Bethe Lattice} if there is \(q \geqslant 2\) such that
\begin{equation}\label{r11.30:e1}
\delta_{T}(v) = q
\end{equation}
for every \(v \in V(T)\). As in \cite{OstPASMaiA2012}, the term \emph{irregular Bethe Lattice of degree} \(q\) is used for the infinite rooted trees \(T = T(r)\) satisfying equality~\eqref{r11.30:e1} for every \(v \in V(T) \setminus \{r\}\) with the root \(r\) having degree \(q-1\). The concept of Bethe Lattice was firstly considered by Hans A. Bethe in~\cite{BetPRSLA1935} and it is an important mathematical tool for statistical mechanical models \cite{Sinai1982}.

Using the concept of Bethe Lattice, we can derive from Proposition~\ref{p11.28} the following.

\begin{corollary}\label{c11.31}
For any prime \(p \geqslant 2\), and any \(r > 0\), and each \(a \in \mathbb{Q}_p\), the rooted representing tree \(T_{B_r(a)}(r_{B_r(a)})\) is an irregular Bethe Lattice of degree \(p+1\).
\end{corollary}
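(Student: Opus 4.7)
The plan is to deduce Corollary~\ref{c11.31} from Proposition~\ref{p11.28} together with the interpretation of adjacency in a rooted tree in terms of the direct successor/predecessor relation.

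First I would recall, via Lemma~\ref{l10.10} and Remark~\ref{r10.11}, that for the rooted representing tree $T_{B_r(a)}(r_{B_r(a)})$ two vertices are adjacent if and only if one is the direct successor of the other. Consequently, for any $B \in V(T_{B_r(a)})$, the degree $\delta_{T_{B_r(a)}}(B)$ equals the number of direct successors of $B$ plus $1$ if $B$ admits a direct predecessor, and equals the number of direct successors of $B$ otherwise. Since $r_{B_r(a)} = B_r(a)$ is the root, it has no direct predecessor; every other vertex admits a unique direct predecessor by condition~\ref{t9.20:s2:2} of Theorem~\ref{t9.20}.

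Next I would apply Proposition~\ref{p11.28}, which asserts that every $B \in V(T_{B_r(a)})$ has exactly $p$ direct successors. Combining this with the preceding observation yields
\[
\delta_{T_{B_r(a)}}(r_{B_r(a)}) = p \quad \text{and} \quad \delta_{T_{B_r(a)}}(B) = p+1 \text{ for every } B \in V(T_{B_r(a)}) \setminus \{r_{B_r(a)}\}.
\]
Setting $q = p+1$, this is precisely the degree condition defining an irregular Bethe lattice of degree~$q$: the root has degree $q-1$ and every non-root vertex has degree~$q$.

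Finally, I would verify infiniteness of $T_{B_r(a)}$. Starting from the root and iteratively passing to any one of the $p \geqslant 2$ direct successors supplied by Proposition~\ref{p11.28}, one constructs (by induction on the level, in the sense of \eqref{e8.1}) vertices of arbitrarily high level in $T_{B_r(a)}$; equivalently, formula \eqref{p11.28:e0} shows that the labels $l_{B_r(a)}(B)$ achieve values $p^{-k}\diam B_r(a)$ for every $k \in \mathbb{N}$, so $|V(T_{B_r(a)})| = \infty$. I do not expect any genuine obstacle: the whole argument is a bookkeeping exercise translating Proposition~\ref{p11.28} into the degree language of Definition-style sentences around \eqref{r11.30:e1}; the only point requiring minor care is that the direct predecessor and the $p$ direct successors of a non-root vertex are pairwise distinct, which is automatic because they occupy strictly different positions in the partial order $\preccurlyeq_{T_{B_r(a)}(r_{B_r(a)})}$.
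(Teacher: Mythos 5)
Your proposal is correct and follows exactly the route the paper intends: the corollary is stated as a direct consequence of Proposition~\ref{p11.28}, and your bookkeeping (each vertex has $p$ direct successors, every non-root vertex has a unique direct predecessor by Theorem~\ref{t9.20}, adjacency is the covering relation by Lemma~\ref{l10.10}, hence the root has degree $p$ and all other vertices degree $p+1$, with infiniteness following since no vertex is a leaf) is precisely the omitted verification. No gaps.
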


The central result of the present part of Section~\ref{sec11} shows that arbitrary Bethe Lattice of degree \(n \geqslant 3\) is isomorphic to the rooted representing tree of open ball from a finite ultrametric product \(\mathbb{Q}_{p_1} \times \ldots \times \mathbb{Q}_{p_n}\) with some suitable prime numbers \(p_1\), \(\ldots\), \(p_n\).

The sphere
\[
S_{r}(a) = \{x \in \mathbb{Q}_p \colon |x - a|_p = r\},
\]
where \(r = p^{\gamma}\), \(\gamma \in \mathbb{Z}\), \(a \in \mathbb{Q}_p\), is also a compact ultrametric subspace of \((\mathbb{Q}_p, d_p)\).

If \(p \geqslant 3\), then the tree \(T_{S_{r}(a)}\) with \(r = p^{\gamma}\) can be obtained from the tree \(T_{B_{r_1}(a)}\) with \(r_1 \in (p^{\gamma}, p^{\gamma+1}]\) by deleting the direct successor of \(B_{r_1}(a)\) which contains the point \(a\).

\begin{example}\label{ex11.6}
Let \(p = 2\), \(a = 0\), \(r = 2^{0} = 1\) and \(r_1 \in (1/2, 1]\). Then the labeled representing tree \(T_{S_{r}(a)}\) is isomorphic to the labeled representing tree \(T_{{B}_{r_1}(a)}\) and, in addition, the sphere \(S_{r}(a)\) is isometric to the ball \({B}_{r_1}(a)\) (see Figure~\ref{fig7}).
\end{example}

\begin{figure}[ht]
\begin{center}
\begin{tikzpicture}[
level 1/.style={level distance=\levdist,sibling distance=4cm},
level 2/.style={level distance=\levdist,sibling distance=2cm},
level 3/.style={level distance=\levdist,sibling distance=1cm},
solid node/.style={circle,draw,inner sep=1.5,fill=black},
hollow node/.style={circle,draw,inner sep=1.5}]

\node [hollow node, label={[label distance=1cm] right:\(T_{S_{1}(0)}\), \(p=2\)}]
at (0,0) {\(\frac{1}{2}\)}
child{node [hollow node]{\(\frac{1}{4}\)}
	child{node [hollow node]{\(\frac{1}{8}\)}
		child{node [label=below:{\(\ldots\)}]{}}
		child{node [label=below:{\(\ldots\)}]{}}
	}
	child{node [hollow node]{\(\frac{1}{8}\)}
		child{node [label=below:{\(\ldots\)}]{}}
		child{node [label=below:{\(\ldots\)}]{}}
	}
}
child{node [hollow node]{\(\frac{1}{4}\)}
	child{node [hollow node]{\(\frac{1}{8}\)}
		child{node [label=below:{\(\ldots\)}]{}}
		child{node [label=below:{\(\ldots\)}]{}}
	}
	child{node [hollow node]{\(\frac{1}{8}\)}
		child{node [label=below:{\(\ldots\)}]{}}
		child{node [label=below:{\(\ldots\)}]{}}
	}
};
\end{tikzpicture}
\end{center}
\caption{}
\label{fig7}
\end{figure}

\begin{example}\label{ex11.7}
Let \(p = 3\). The representing tree of the sphere \(S_{1}(0) \subseteq (\mathbb{Q}_3, d_3)\) is depicted by Figure~\ref{fig8}.
\end{example}

\begin{figure}[ht]
\begin{center}
\begin{tikzpicture}[
level 1/.style={level distance=\levdist,sibling distance=5.4cm},
level 2/.style={level distance=\levdist,sibling distance=1.8cm},
level 3/.style={level distance=\levdist,sibling distance=6mm},
solid node/.style={circle,draw,inner sep=1.5,fill=black},
hollow node/.style={circle,draw,inner sep=1.5}]

\node [hollow node, label={[label distance=10pt] right:{\(T_{S_{1}(0)}\), \(p=3\)}}] at (0,0) {\(1\)}
child{node [hollow node]{\(\frac{1}{3}\)}
	child{node [hollow node]{\(\frac{1}{9}\)}
		child{node [label=below:{\(\ldots\)}]{}}
		child{node [label=below:{\(\ldots\)}]{}}
		child{node [label=below:{\(\ldots\)}]{}}
	}
	child{node [hollow node]{\(\frac{1}{9}\)}
		child{node [label=below:{\(\ldots\)}]{}}
		child{node [label=below:{\(\ldots\)}]{}}
		child{node [label=below:{\(\ldots\)}]{}}
	}
	child{node [hollow node]{\(\frac{1}{9}\)}
		child{node [label=below:{\(\ldots\)}]{}}
		child{node [label=below:{\(\ldots\)}]{}}
		child{node [label=below:{\(\ldots\)}]{}}
	}
}
child{node [hollow node]{\(\frac{1}{3}\)}
	child{node [hollow node]{\(\frac{1}{9}\)}
		child{node [label=below:{\(\ldots\)}]{}}
		child{node [label=below:{\(\ldots\)}]{}}
		child{node [label=below:{\(\ldots\)}]{}}
	}
	child{node [hollow node]{\(\frac{1}{9}\)}
		child{node [label=below:{\(\ldots\)}]{}}
		child{node [label=below:{\(\ldots\)}]{}}
		child{node [label=below:{\(\ldots\)}]{}}
	}
	child{node [hollow node]{\(\frac{1}{9}\)}
		child{node [label=below:{\(\ldots\)}]{}}
		child{node [label=below:{\(\ldots\)}]{}}
		child{node [label=below:{\(\ldots\)}]{}}
	}
};
\end{tikzpicture}
\end{center}
\caption{}
\label{fig8}
\end{figure}

In the space \(\mathbb{Q}_p^{n} = \mathbb{Q}_p \times \ldots \times \mathbb{Q}_p\) consisting points \(\bfx = (x_1, \ldots, x_n)\), \(x_k \in \mathbb{Q}_p\), \(k = 1\), \(\ldots\), \(n\), with metric
\[
d_p^n(\bfx, \bfy) = \max_{1 \leqslant k \leqslant n} |x_k - y_k|_p,
\]
for every ball
\[
B_{r}^{n}(\bfa) = \{\bfx \in \mathbb{Q}_p^{n} \colon d_p^n(\bfx, \bfa) < r\},
\]
every node of the representing tree \(T_{B_{r}^{n}(\bfa)}\) has exactly \(p^{n}\) direct successors with the same labels.

\begin{example}\label{ex8.7}
Let \(p = 2\), \(n = 2\), \(\mathbf{a} = (0, 0)\) and \(r \in (1,2)\). For this case the representing tree of \(B_{r}^{2}(\mathbf{a})\) is depicted by Figure~\ref{fig12}.
\end{example}

\begin{figure}[ht]
\begin{center}
\begin{tikzpicture}[
level 1/.style={level distance=1.5cm,sibling distance=3.6cm},
level 2/.style={level distance=1.5cm,sibling distance=0.9cm},
level 3/.style={level distance=1.5cm,sibling distance=0.2cm},
solid node/.style={circle,draw,inner sep=1.5,fill=black},
hollow node/.style={circle,draw,inner sep=1.5}]

\node [hollow node,
] at (0,0) {\(1\)}
child{node [hollow node]{\(\frac{1}{2}\)}
	child{node [hollow node]{\(\frac{1}{4}\)}
		child{node [label=below:{\(.\)}]{}}
		child{node [label=below:{\(.\)}]{}}
		child{node [label=below:{\(.\)}]{}}
		child{node [label=below:{\(.\)}]{}}
	}
	child{node [hollow node]{\(\frac{1}{4}\)}
		child{node [label=below:{\(.\)}]{}}
		child{node [label=below:{\(.\)}]{}}
		child{node [label=below:{\(.\)}]{}}
		child{node [label=below:{\(.\)}]{}}
	}
	child{node [hollow node]{\(\frac{1}{4}\)}
		child{node [label=below:{\(.\)}]{}}
		child{node [label=below:{\(.\)}]{}}
		child{node [label=below:{\(.\)}]{}}
		child{node [label=below:{\(.\)}]{}}
	}
	child{node [hollow node]{\(\frac{1}{4}\)}
		child{node [label=below:{\(.\)}]{}}
		child{node [label=below:{\(.\)}]{}}
		child{node [label=below:{\(.\)}]{}}
		child{node [label=below:{\(.\)}]{}}
	}
}
child{node [hollow node]{\(\frac{1}{2}\)}
	child{node [hollow node]{\(\frac{1}{4}\)}
		child{node [label=below:{\(.\)}]{}}
		child{node [label=below:{\(.\)}]{}}
		child{node [label=below:{\(.\)}]{}}
		child{node [label=below:{\(.\)}]{}}
	}
	child{node [hollow node]{\(\frac{1}{4}\)}
		child{node [label=below:{\(.\)}]{}}
		child{node [label=below:{\(.\)}]{}}
		child{node [label=below:{\(.\)}]{}}
		child{node [label=below:{\(.\)}]{}}
	}
	child{node [hollow node]{\(\frac{1}{4}\)}
		child{node [label=below:{\(.\)}]{}}
		child{node [label=below:{\(.\)}]{}}
		child{node [label=below:{\(.\)}]{}}
		child{node [label=below:{\(.\)}]{}}
	}
	child{node [hollow node]{\(\frac{1}{4}\)}
		child{node [label=below:{\(.\)}]{}}
		child{node [label=below:{\(.\)}]{}}
		child{node [label=below:{\(.\)}]{}}
		child{node [label=below:{\(.\)}]{}}
	}
}
child{node [hollow node]{\(\frac{1}{2}\)}
	child{node [hollow node]{\(\frac{1}{4}\)}
		child{node [label=below:{\(.\)}]{}}
		child{node [label=below:{\(.\)}]{}}
		child{node [label=below:{\(.\)}]{}}
		child{node [label=below:{\(.\)}]{}}
	}
	child{node [hollow node]{\(\frac{1}{4}\)}
		child{node [label=below:{\(.\)}]{}}
		child{node [label=below:{\(.\)}]{}}
		child{node [label=below:{\(.\)}]{}}
		child{node [label=below:{\(.\)}]{}}
	}
	child{node [hollow node]{\(\frac{1}{4}\)}
		child{node [label=below:{\(.\)}]{}}
		child{node [label=below:{\(.\)}]{}}
		child{node [label=below:{\(.\)}]{}}
		child{node [label=below:{\(.\)}]{}}
	}
	child{node [hollow node]{\(\frac{1}{4}\)}
		child{node [label=below:{\(.\)}]{}}
		child{node [label=below:{\(.\)}]{}}
		child{node [label=below:{\(.\)}]{}}
		child{node [label=below:{\(.\)}]{}}
	}
}
child{node [hollow node]{\(\frac{1}{2}\)}
	child{node [hollow node]{\(\frac{1}{4}\)}
		child{node [label=below:{\(.\)}]{}}
		child{node [label=below:{\(.\)}]{}}
		child{node [label=below:{\(.\)}]{}}
		child{node [label=below:{\(.\)}]{}}
	}
	child{node [hollow node]{\(\frac{1}{4}\)}
		child{node [label=below:{\(.\)}]{}}
		child{node [label=below:{\(.\)}]{}}
		child{node [label=below:{\(.\)}]{}}
		child{node [label=below:{\(.\)}]{}}
	}
	child{node [hollow node]{\(\frac{1}{4}\)}
		child{node [label=below:{\(.\)}]{}}
		child{node [label=below:{\(.\)}]{}}
		child{node [label=below:{\(.\)}]{}}
		child{node [label=below:{\(.\)}]{}}
	}
	child{node [hollow node]{\(\frac{1}{4}\)}
		child{node [label=below:{\(.\)}]{}}
		child{node [label=below:{\(.\)}]{}}
		child{node [label=below:{\(.\)}]{}}
		child{node [label=below:{\(.\)}]{}}
	}
};
\end{tikzpicture}
\end{center}
\caption{}
\label{fig12}
\end{figure}

Similarly we obtain the representing tree \(T_{S_{r}^{n}(\bfa)}\) with \(r = p^{\gamma}\), \(\gamma \in \mathbb{Z}\),
\[
S_{r}^{n}(\bfa) = \{\bfx \in \mathbb{Q}_p^{n} \colon d_p^n(\bfx, \bfa) = p^{\gamma}\}.
\]
To construct this tree it suffices to delete one direct successor of the root of \(T_{B_{r^*}^{n}(\bfa)}\) for \(r^* \in (p^{\gamma}, p^{\gamma+1})\) (see Figure~\ref{fig13}).

\begin{figure}[ht]
\begin{center}
\begin{tikzpicture}[
level 1/.style={level distance=1.5cm,sibling distance=4cm},
level 2/.style={level distance=1.5cm,sibling distance=1cm},
level 3/.style={level distance=1.5cm,sibling distance=0.2cm},
solid node/.style={circle,draw,inner sep=1.5,fill=black},
hollow node/.style={circle,draw,inner sep=1.5}]

\node [hollow node, label={[label distance=10pt] right:{\(S_{r}^{2}(\mathbf{0})\), \(p =2\), \(n = 2\), \(r = 1\)}}] at (0,0) {\(1\)}
child{node [hollow node]{\(\frac{1}{2}\)}
	child{node [hollow node]{\(\frac{1}{4}\)}
		child{node [label=below:{\(.\)}]{}}
		child{node [label=below:{\(.\)}]{}}
		child{node [label=below:{\(.\)}]{}}
		child{node [label=below:{\(.\)}]{}}
	}
	child{node [hollow node]{\(\frac{1}{4}\)}
		child{node [label=below:{\(.\)}]{}}
		child{node [label=below:{\(.\)}]{}}
		child{node [label=below:{\(.\)}]{}}
		child{node [label=below:{\(.\)}]{}}
	}
	child{node [hollow node]{\(\frac{1}{4}\)}
		child{node [label=below:{\(.\)}]{}}
		child{node [label=below:{\(.\)}]{}}
		child{node [label=below:{\(.\)}]{}}
		child{node [label=below:{\(.\)}]{}}
	}
	child{node [hollow node]{\(\frac{1}{4}\)}
		child{node [label=below:{\(.\)}]{}}
		child{node [label=below:{\(.\)}]{}}
		child{node [label=below:{\(.\)}]{}}
		child{node [label=below:{\(.\)}]{}}
	}
}
child{node [hollow node]{\(\frac{1}{2}\)}
	child{node [hollow node]{\(\frac{1}{4}\)}
		child{node [label=below:{\(.\)}]{}}
		child{node [label=below:{\(.\)}]{}}
		child{node [label=below:{\(.\)}]{}}
		child{node [label=below:{\(.\)}]{}}
	}
	child{node [hollow node]{\(\frac{1}{4}\)}
		child{node [label=below:{\(.\)}]{}}
		child{node [label=below:{\(.\)}]{}}
		child{node [label=below:{\(.\)}]{}}
		child{node [label=below:{\(.\)}]{}}
	}
	child{node [hollow node]{\(\frac{1}{4}\)}
		child{node [label=below:{\(.\)}]{}}
		child{node [label=below:{\(.\)}]{}}
		child{node [label=below:{\(.\)}]{}}
		child{node [label=below:{\(.\)}]{}}
	}
	child{node [hollow node]{\(\frac{1}{4}\)}
		child{node [label=below:{\(.\)}]{}}
		child{node [label=below:{\(.\)}]{}}
		child{node [label=below:{\(.\)}]{}}
		child{node [label=below:{\(.\)}]{}}
	}
}
child{node [hollow node]{\(\frac{1}{2}\)}
	child{node [hollow node]{\(\frac{1}{4}\)}
		child{node [label=below:{\(.\)}]{}}
		child{node [label=below:{\(.\)}]{}}
		child{node [label=below:{\(.\)}]{}}
		child{node [label=below:{\(.\)}]{}}
	}
	child{node [hollow node]{\(\frac{1}{4}\)}
		child{node [label=below:{\(.\)}]{}}
		child{node [label=below:{\(.\)}]{}}
		child{node [label=below:{\(.\)}]{}}
		child{node [label=below:{\(.\)}]{}}
	}
	child{node [hollow node]{\(\frac{1}{4}\)}
		child{node [label=below:{\(.\)}]{}}
		child{node [label=below:{\(.\)}]{}}
		child{node [label=below:{\(.\)}]{}}
		child{node [label=below:{\(.\)}]{}}
	}
	child{node [hollow node]{\(\frac{1}{4}\)}
		child{node [label=below:{\(.\)}]{}}
		child{node [label=below:{\(.\)}]{}}
		child{node [label=below:{\(.\)}]{}}
		child{node [label=below:{\(.\)}]{}}
	}
};
\end{tikzpicture}
\end{center}
\caption{}
\label{fig13}
\end{figure}

\section{From monotone labelings to ultrametric spaces and back}

The main result of this section, Theorem~\ref{t7.3}, gives a characterization of the labeled rooted trees, which are isomorphic to labeled representing trees of totally bounded ultrametric spaces. This result allows us to completely describe the corresponding free trees and rooted ones in Theorems~\ref{t9.10} and \ref{t10.16}, respectively. The characteristic properties of posets, which are order isomorphic to balleans of totally bounded ultrametric spaces, are presented in Theorem~\ref{t10.17}.

The next concept is a generalization of the notion of monotone rooted trees which was proposed by V.~Gurvich and M.~Vyalyi in~\cite{GV2012DAM}.

\begin{definition}\label{d11.27}
Let \(T = T(r)\) be a rooted tree and let \(l \colon V(T) \to \RR^{+}\) be a labeling satisfying the conditions:
\begin{enumerate}
\item \label{d11.27:s1} The inequality
\begin{equation}\label{d11.27:e1}
l(v) < l(u)
\end{equation}
holds whenever \(v\) is a direct successor of \(u\);
\item \label{d11.27:s2} The equality
\begin{equation}\label{d11.27:e2}
\inf\{l(v) \colon v \in V(C)\} = 0
\end{equation}
holds for every maximal chain \(C\) in \((V(T), {\preccurlyeq}_{T(r)})\).
\end{enumerate}
Then \(l\) is said to be a \emph{monotone labeling} on \(T(r)\).
\end{definition}

\begin{remark}\label{r11.28}
Condition~\ref{d11.27:s1} from the above definition means that every monotone labeling \(l \colon V(T) \to \RR^{+}\) is a strictly isotone mapping of the posets \((V(T), {\preccurlyeq}_{T(r)})\) and \((\RR^{+}, {\leqslant})\) (see formula~\eqref{d2.8:e1}).
\end{remark}

\begin{example}\label{ex11.29}
Let \((X, d)\) be a nonempty totally bonded ultrametric space and let \(T_X = T_X(r_X)\) be the rooted representing tree of \((X, d)\), \(r_X = X\), \(V(T_X) = \BB_{X}\). Then the labeling \(l_X \colon \BB_{X} \to \RR^{+}\),
\[
l_X(B) = \diam B, \quad B \in \BB_{X},
\]
is monotone on \(T_X(r_X)\).

Let \(B_1\), \(B_2 \in V(T_X)\) and let \(B_2\) is a direct successor of \(B_1\). Then
\[
B_2 \sqsubset_{T_X(r_X)} B_1
\]
holds (see Remark~\ref{r10.11}). In particular, we have the proper inclusion
\begin{equation}\label{ex11.29:e1}
B_2 \subset B_1,
\end{equation}
that implies
\[
\diam B_2 \leqslant l_X(B_1).
\]
If \(\diam B_1 = \diam B_2\) holds, then from Proposition~\ref{p2.7} and \(\BB_{X} \subseteq \overline{\BB}_{X}\) (see Corollary~\ref{c2.41}) it follows that \(B_1 = B_2\), contrary~\eqref{ex11.29:e1}. Thus, we have
\[
l_X(B_2) = \diam B_2 < \diam B_1 = l_X(B_1).
\]
Condition~\ref{d11.27:s1} of Definition~\ref{d11.27} is satisfied.

Let \(C\) be a maximal chain in \((V(T_X), {\preccurlyeq}_{T_X(r_X)})\) and let \((\widetilde{X}, \widetilde{d})\) be a completion of \((X, d)\) such that \(X \subseteq \widetilde{X}\). By Corollary~\ref{c11.24}, there is \(c \in \widetilde{X}\) satisfying the equality
\begin{equation}\label{ex11.29:e2}
C = \{\widetilde{B} \cap X \colon \widetilde{B} \in \BB_{\widetilde{X}} \text{ and } c \in \widetilde{B}\}.
\end{equation}
Hence, for every \(\varepsilon > 0\) there is \(B \in C\) such that \(B = \widetilde{B}_{\varepsilon}(c) \cap X\), where
\begin{equation}\label{ex11.29:e3}
\widetilde{B}_{\varepsilon}(c) = \{\widetilde{x} \in \widetilde{X} \colon \widetilde{d}(\widetilde{x}, c) < \varepsilon\}.
\end{equation}
Now condition~\ref{d11.27:s2} of Definition~\ref{d11.27} follows from \eqref{ex11.29:e2} and \eqref{ex11.29:e3}.
\end{example}

\begin{proposition}\label{p11.29}
Let \(T_i = T_i(r_i)\) be a rooted tree with a monotone labeling \(l_i\), \(i=1\), \(2\). Then every isomorphism \(f \colon V(T_1) \to V(T_2)\) of the labeled trees \(T_1(l_1)\) and \(T_2(l_2)\) is also an isomorphism of the rooted trees \(T_1(r_1)\) and \(T_2(r_2)\).
\end{proposition}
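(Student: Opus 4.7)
The strategy is to show that in any rooted tree carrying a monotone labeling, the root is characterized by the labeling alone: it is the unique vertex on which the label attains its supremum. Once this is established, the fact that $f$ preserves labels will force $f(r_1) = r_2$, which, combined with the hypothesis that $f$ is a graph isomorphism of the free trees, gives the conclusion by Definition~\ref{d8.10}.

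First I would prove the following claim: if $T = T(r)$ is a rooted tree with a monotone labeling $l$, then $l(v) < l(r)$ for every $v \in V(T) \setminus \{r\}$. Fix such a $v$. By Lemma~\ref{l9.14} there is a unique path $(v_0, v_1, \ldots, v_n)$ in $T$ with $v_0 = v$ and $v_n = r$. Arguing exactly as in the derivation of \eqref{t9.20:e10}--\eqref{t9.20:e11} in the proof of Theorem~\ref{t9.20}, each $v_k$ is a direct successor of $v_{k+1}$ with respect to $\preccurlyeq_{T(r)}$. Condition~\ref{d11.27:s1} of Definition~\ref{d11.27} applied at each step of the path yields
\begin{equation*}
l(v) = l(v_0) < l(v_1) < \cdots < l(v_n) = l(r),
\end{equation*}
which proves the claim.

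Applying the claim to both labeled rooted trees $T_1(r_1)$ and $T_2(r_2)$, I see that $r_i$ is the unique vertex of $T_i$ at which $l_i$ attains its supremum. Since $f$ is bijective and, by Definition~\ref{d2.5}, $l_2 \circ f = l_1$, it follows that
\begin{equation*}
l_2(f(r_1)) = l_1(r_1) = \sup_{v \in V(T_1)} l_1(v) = \sup_{w \in V(T_2)} l_2(w) = l_2(r_2).
\end{equation*}
By the uniqueness of the maximizing vertex in $T_2$, this forces $f(r_1) = r_2$. Since $f$ is already a graph isomorphism of the free trees $T_1$ and $T_2$, Definition~\ref{d8.10} yields that $f$ is an isomorphism of the rooted trees $T_1(r_1)$ and $T_2(r_2)$.

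There is really no serious obstacle here; the argument uses only condition~\ref{d11.27:s1} of Definition~\ref{d11.27}, while condition~\ref{d11.27:s2} (the infimum-zero property along maximal chains) is not needed, as one would expect — identifying the root is a purely local monotonicity question, independent of the asymptotic behavior of the labels along infinite branches.
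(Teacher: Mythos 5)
Your proof is correct and follows essentially the same route as the paper: the paper likewise observes that monotonicity forces $l_i(r_i) > l_i(v)$ for all $v \neq r_i$, so that the label-preserving bijection $f$ must send $r_1$ to $r_2$, and then invokes Definition~\ref{d8.10}. Your path-by-path justification of the inequality $l(v) < l(r)$ and your remark that condition~\ref{d11.27:s2} is not needed are both accurate elaborations of what the paper leaves implicit.
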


\begin{proof}
It follows directly from Definition~\ref{d11.27} that the inequality
\begin{equation}\label{p11.29:e1}
l_i(r_i) > l_i(v_i)
\end{equation}
holds for every \(v_i \in V(T_i) \setminus \{r_i\}\), \(i=1\), \(2\). Using Definition~\ref{d2.5} and \eqref{p11.29:e1} we see that if \(f \colon V(T_1) \to V(T_2)\) is an isomorphism of \(T_1(l_1)\) and \(T_2(l_2)\), then the equality
\begin{equation}\label{p11.29:e2}
f(r_1) = f(r_2)
\end{equation}
holds. Since every isomorphism of labeled trees are also an isomorphism of free trees, Definition~\ref{d8.10} and \eqref{p11.29:e2} imply that \(f\) is an isomorphism of the rooted trees \(T_1(r_1)\) and \(T_2(r_2)\).
\end{proof}

The next our goal is to show that analogies of Proposition~\ref{p11.25} and Corollary~\ref{c11.26} remain valid for all locally finite rooted trees \(T = T(r)\) which satisfy \(\delta^{+}(u) \neq 1\) for every \(u \in V(T)\) and have monotone labelings.

We start with a lemma giving a ``constructive'' description of maximal chains in \((V(T(r)), {\preccurlyeq}_{T(r)})\) for arbitrary rooted tree \(T = T(r)\). Below, as in Definition~\ref{d8.11}, the symbol \(x \sqsubset_{T(r)} y\) means that \(y\) is an upper cover of \(x\) in the poset \((V(T(r)), {\preccurlyeq}_{T(r)})\), where \({\preccurlyeq}_{T(r)}\) is defined by~\eqref{e9.18}.

\begin{lemma}\label{l11.27}
Let \(T = T(r)\) be a rooted tree, \(E(T) \neq \varnothing\), and let \(C\) be a subset of \(V(T)\). Then the following statements are equivalent:
\begin{enumerate}
\item \label{l11.27:s1} \(C\) is a maximal chain in \((V(T(r)), {\preccurlyeq}_{T(r)})\).
\item \label{l11.27:s2} Either the elements of \(C\) can be numbered in a finite sequence \(c_1\), \(\ldots\), \(c_n\) such that
\begin{equation}\label{l11.27:e1}
c_1 \sqsupset_{T(r)} \ldots \sqsupset_{T(r)} c_n,
\end{equation}
where \(c_1 = r\) and \(c_n\) is a leaf of \(T(r)\) or the elements of \(C\) can be numbered in an infinite sequence \((c_n)_{n \in \mathbb{N}} \subseteq V(T)\) such that
\begin{equation}\label{l11.27:e2}
c_1 \sqsupset_{T(r)} \ldots \sqsupset_{T(r)} c_n \sqsupset_{T(r)} c_{n+1} \sqsupset_{T(r)} \ldots
\end{equation}
and \(c_1 = r\).
\end{enumerate}
\end{lemma}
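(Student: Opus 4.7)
My plan is to work with the \emph{depth function} $h \colon V(T) \to \mathbb{N} \cup \{0\}$, where $h(v)$ is the length of the unique path (guaranteed by Lemma~\ref{l9.14}) joining $v$ and the root $r$; equivalently, $h(v) = |V(P_v)| - 1$ for $P_v$ as in \eqref{e9.17}. By Theorem~\ref{t9.20}\,\ref{t9.20:s2:3} we have ${\preccurlyeq_{T(r)}} = {\sqsubseteq_{T(r)}^{t}}$, so any strict inequality $u \prec_{T(r)} v$ unfolds into a finite cover sequence $u = x_0 \sqsubset x_1 \sqsubset \ldots \sqsubset x_k = v$; since each cover step decreases depth by exactly one, $h$ is strictly decreasing, hence injective, on every chain in $(V(T), \preccurlyeq_{T(r)})$. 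Moreover, by Theorem~\ref{t9.20}\,\ref{t9.20:s2:1}, the root $r$ (with $h(r) = 0$) is the largest element of the poset, so $r$ belongs to every maximal chain.

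For \ref{l11.27:s1}$\Rightarrow$\ref{l11.27:s2}, I will enumerate a maximal chain $C$ in order of increasing depth as $c_1 = r, c_2, c_3, \ldots$, and then prove that $c_{i+1} \sqsubset_{T(r)} c_i$ for each consecutive pair. If this fails, the cover sequence from Theorem~\ref{t9.20}\,\ref{t9.20:s2:3} supplies an intermediate vertex $u$ with $c_{i+1} \sqsubset u \sqsubset c_i$; such a $u$ is comparable with every element of $C$ (via $c_j \succcurlyeq c_i \succ u$ for $j \leq i$ and $c_j \preccurlyeq c_{i+1} \prec u$ for $j > i$), so $C \cup \{u\}$ would strictly extend $C$, contradicting maximality. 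Once covering is established, $h(c_i) = i-1$, so the enumeration visits every depth in turn and any $c \in C$ of depth $k$ equals $c_{k+1}$, exhausting $C$. If the enumeration terminates at $c_n$, maximality again forces $c_n$ to have no direct successor, since any lower cover $c \sqsubset c_n$ would yield the extension $C \cup \{c\}$; hence $c_n$ is a leaf of $T(r)$.

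For the reverse implication \ref{l11.27:s2}$\Rightarrow$\ref{l11.27:s1}, either descending sequence is totally ordered by construction. To prove maximality, suppose $C' \supsetneq C$ is a chain and pick $c^* \in C' \setminus C$. Comparability with $c_1 = r$ gives $c^* \preccurlyeq c_1$, and I will let $i^*$ be the largest index with $c^* \preccurlyeq c_{i^*}$. In the infinite case the existence of such a largest index follows from finiteness of the set $\{u \in V(T) \colon c^* \preccurlyeq u\}$, which is the vertex set of the path from $c^*$ to $r$. If the chain extends beyond index $i^*$, comparability yields $c_{i^*+1} \prec c^* \preccurlyeq c_{i^*}$, contradicting the covering relation $c_{i^*+1} \sqsubset c_{i^*}$. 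In the finite case with $i^* = n$, we would obtain $c^* \prec c_n$; but since $c_n$ has no direct successor, it has no strict descendants, again a contradiction.

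The principal technical point is the forward direction: the depth-ordered enumeration must simultaneously exhaust $C$ \emph{and} advance by single cover steps. Both facts hinge on Theorem~\ref{t9.20}\,\ref{t9.20:s2:3}, which converts the abstract order into concrete chains of covers, together with Lemma~\ref{l9.14} to guarantee finiteness of ancestor sets when ruling out a rogue $c^*$ that sits above every $c_i$.
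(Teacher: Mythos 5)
Your proof is correct and follows essentially the same route as the paper: both arguments rest on Theorem~\ref{t9.20} (the root is the largest element, upper covers are unique, and \({\preccurlyeq_{T(r)}} = {\sqsubseteq_{T(r)}^{t}}\)) together with the finiteness of the ancestor set \(V(P_v)\), with maximality used to insert an intermediate vertex between consecutive chain elements that fail to be covers and to supply a lower cover when the last element is not a leaf. Your depth function \(h\) is just a repackaging of the paper's finite up-sets \(x^{\Delta}_{V(T)} = V(P_x)\); it lets you handle the finite and infinite cases uniformly, which is a mild streamlining rather than a different argument.
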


\begin{proof}
Consider first the case when \(C = \{c_1, \ldots, c_n\}\) is a finite maximal chain in \((V(T(r)), {\preccurlyeq}_{T(r)})\). Condition \(E(T) \neq \varnothing\) implies the inequality \(n \geqslant 2\). Since \(C\) is a chain, we can assume that
\[
c_1 \succcurlyeq_{T(r)} \ldots \succcurlyeq_{T(r)} c_n
\]
is satisfied after some renumbering, if necessary. It was shown in the proof of Theorem~\ref{t9.20} that \(r\) is the largest element of \((V(T(r)), {\preccurlyeq}_{T(r)})\) (see part \ref{t9.20:s2} of that proof). Hence, we have
\begin{equation}\label{l11.27:e3}
r \succcurlyeq_{T(r)} c_1 \succcurlyeq_{T(r)} c_2 \succcurlyeq_{T(r)} \ldots \succcurlyeq_{T(r)} c_n.
\end{equation}
Since \(C\) is maximal, \eqref{l11.27:e3} implies the equality \(r = c_1\). From maximality of \(C\) and \({\preccurlyeq}_{T(r)} = {\sqsubseteq}_{T(r)}^{t}\) (see \eqref{t9.20:e8}) it follows that
\[
c_i \sqsupset_{T(r)} c_{i+1}
\]
for every \(i \in \{1, \ldots, n-1\}\). If \(c_n\) is not a leaf of \(T\), then the inequality \begin{equation}\label{l11.27:e4}
\delta(c_n) \geqslant 2
\end{equation}
holds, where \(\delta(c_n)\) is the degree of \(c_n\) in \(T\). By Lemma~\ref{l10.10}, the equivalence
\begin{equation}\label{l11.27:e5}
(u \sqsupset_{T(r)} c_n \text{ or } c_n \sqsupset_{T(r)} u) \Leftrightarrow (\{c_n, u\} \in E(T))
\end{equation}
is valid for every \(u\in V(T)\). Moreover, using Theorem~\ref{t9.20} we obtain
\[
(u \sqsupset_{T(r)} c_n) \Rightarrow (u = c_{n-1})
\]
for every \(u \in V(T)\). Hence, from~\eqref{l11.27:e4} and \eqref{l11.27:e5} it follows that there is a vertex \(c_{n+1} \in V(T)\) such that \(c_n \sqsupset_{T(r)} c_{n+1}\). Thus, we have the chain \(\{c_1, \ldots, c_n, c_{n+1}\}\),
\[
c_1 \sqsupset_{T(r)} c_2 \sqsupset_{T(r)} \ldots \sqsupset_{T(r)} c_n \sqsupset_{T(r)} c_{n+1},
\]
contrary to the maximality of the chain \(C\).

Conversely, let \(C = \{c_1, \ldots, c_n\} \subseteq V(T)\), let \eqref{l11.27:e1} hold with \(c_1 = r\) and let \(c_n\) be a leaf of \(T\). Condition~\eqref{l11.27:e1} implies that \(C\) is a chain in \((V(T(r)), {\preccurlyeq}_{T(r)})\). We must show that \(C\) is maximal. By Lemma~\ref{l10.10}, \((c_1, \ldots, c_n)\) is a path joining \(c_n\) with the root \(r = c_1\) in the tree \(T(r)\). From the definition of the relation \({\preccurlyeq}_{T(r)}\) (see \eqref{e9.17}, \eqref{e9.18}) it follows that the inequality \(c_n \preccurlyeq_{T(r)} u\) holds if and only if \(u \in C\). Consequently, \(C\) is a maximal chain if \(c_n\) is a minimal element of \((V(T(r)), {\preccurlyeq}_{T(r)})\) (\(v \preccurlyeq_{T(r)} c_n\) implies \(v = c_n\) for every \(v \in V(T)\)). Since \(c_n\) is a leaf of \(T\), the inequality
\begin{equation}\label{l11.27:e6}
\delta(c_n) \leqslant 1
\end{equation}
holds. Suppose that there is \(v \in V(T)\) such that \(v \prec_{T(r)} c_n\). Then, by Theorem~\ref{t9.20}, we can find \(v^{*} \in V(T)\) such that
\begin{equation}\label{l11.27:e7}
v^{*} \sqsubset_{T(r)} c_n \sqsubset_{T(r)} c_{n-1} \sqsubset_{T(r)} \ldots.
\end{equation}
From Lemma~\ref{l10.10}, \eqref{l11.27:e7} and the definition of the degree of vertices of graph, we obtain the inequality \(\delta(c_n) \geqslant 2\), contrary to \eqref{l11.27:e6}. Hence, \(c_n\) is a minimal element of \((V(T(r)), {\preccurlyeq}_{T(r)})\). The maximality of the chain \(C\) follows.

Let us consider now the case when \(C\) is infinite.

In what follows, for every \(x \in V(T)\), we will denote by \(x_{V(T)}^{\Delta}\) and \(x_C^{\Delta}\) the set \(\{v \in V(T) \colon v \succcurlyeq_{T(r)} x\}\) and, respectively, the set \(\{c \in C \colon c \succcurlyeq_{T(r)} x\}\). It is clear that
\begin{equation}\label{l11.27:e8}
x_{V(T)}^{\Delta} \supseteq x_C^{\Delta}
\end{equation}
holds for every \(x \in V(T)\). By Theorem~\ref{t9.20}, the set \(x_{V(T)}^{\Delta}\) is a finite chain whenever \(x \in V(T)\). If \(x \in C\) and \(C\) is a maximal chain, then \eqref{l11.27:e8} yields the equality
\begin{equation}\label{l11.27:e9}
x_{V(T)}^{\Delta} = x_C^{\Delta}.
\end{equation}

Let \(C\) be a maximal chain in \((V(T(r)), {\preccurlyeq}_{T(r)})\). As in the case of finite \(C\), we obtain \(r \in C\). Write \(c_1 = r\). Since \(C\) is infinite, there is \(c^{(1)} \in C\) such that \(c_1 \succ_{T(r)} c^{(1)}\).

Using Theorem~\ref{t9.20} and equality~\eqref{l11.27:e9} with \(x = c^{(1)}\), we can find an integer \(n \geqslant 2\) and \(\{c_1, \ldots, c_n\} \subset C\) such that
\begin{equation}\label{l11.27:e10}
c_1 \sqsupset_{T(r)} \ldots \sqsupset_{T(r)} c_n
\end{equation}
and \(c_n = c^{(1)}\). Since \(C\) is infinite, from \eqref{l11.27:e10} follows the existence of an element \(c^{(2)} \in C\) for which \(c^{(1)} \succ_{T(r)} c^{(2)}\) holds. Using equality~\eqref{l11.27:e9} with \(x = c^{(2)}\) we find \(\{c_1, \ldots, c_{n}, \ldots, c_{n+m}\} \subset C\) such that \(c_n = c^{(1)}\), \(c_{n+m} = c^{(2)}\) and
\[
c_1 \sqsupset_{T(r)} \ldots \sqsupset_{T(r)} c_n \sqsupset_{T(r)} \ldots \sqsupset_{T(r)} c_{n+m}.
\]
Repeating this procedure we obtain a sequence \((c_n)_{n \in \mathbb{N}} \subseteq C\) such that \(c_i \sqsupset_{T(r)} c_{i+1}\) holds for every \(i \in \mathbb{N}\). We claim that the equality
\begin{equation}\label{l11.27:e11}
C = \{c_n \colon n \in \mathbb{N}\}
\end{equation}
holds. Since \((c_n)_{n \in \mathbb{N}}\) is a sequence of elements of \(C\), equality \eqref{l11.27:e11} holds if we have
\begin{equation}\label{l11.27:e13}
c \in \{c_n \colon n \in \mathbb{N}\}
\end{equation}
for every \(c \in C\).

Let \(c\) be an arbitrary element of \(C\). Suppose there is \(p \in \{c_n \colon n \in \mathbb{N}\}\) such that \(p \preccurlyeq_{T(r)} c\). Then we obtain
\[
c \in p_{V(T)}^{\Delta} \subseteq \{c_n \colon n \in \mathbb{N}\},
\]
i.e., \eqref{l11.27:e13} holds. Since \(C\) is a chain, we have \(c \preccurlyeq_{T(r)} c_n\) or \(c_n \preccurlyeq_{T(r)} c\) for all \(n \in \mathbb{N}\). Consequently, if \eqref{l11.27:e13} does not hold, then \(c \preccurlyeq_{T(r)} c_n\) is valid for every \(n \in \mathbb{N}\). It implies the inclusion
\begin{equation}\label{l11.27:e14}
\{c_n \colon n \in \mathbb{N}\} \subseteq c_{V(T)}^{\Delta}.
\end{equation}
The set \(c_{V(T)}^{\Delta}\) is finite by Theorem~\ref{t9.20}. Hence, from \eqref{l11.27:e14} and Theorem~\ref{t9.20} it follows that \(\{c_n \colon n \in \mathbb{N}\}\) is also finite, contrary to the construction.

A minor modification of the last part of the above proof shows that any chain \(C\) admitting a numbering such that \eqref{l11.27:e2} holds with \(c_1 = r\) is maximal in \((V(T(r)), {\preccurlyeq}_{T(r)})\). The proof is completed.
\end{proof}

\begin{corollary}\label{c11.30}
Let \(T = T(r)\) be a rooted tree with a monotone labeling \(l \colon V(T) \to \RR^{+}\) and let \(v \in V(T)\). Then \(l(v) = 0\) holds if and only if \(v\) is a leaf of \(T\).
\end{corollary}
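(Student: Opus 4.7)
The plan is to prove the two implications separately, with the forward direction coming straight from condition (i) of Definition~\ref{d11.27} and the reverse direction coming from Proposition~\ref{l11.18} together with Lemma~\ref{l11.27} and condition (ii).

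For the forward implication, I would argue by contraposition. Suppose $v$ is not a leaf of $T(r)$; in the rooted-tree sense this means $\delta^{+}(v) \geqslant 1$, so $v$ admits at least one direct successor $u$, i.e.\ $u \sqsubset_{T(r)} v$. Condition~\ref{d11.27:s1} of Definition~\ref{d11.27} then yields $l(u) < l(v)$. Since $l$ takes values in $\RR^{+}$, necessarily $l(v) > l(u) \geqslant 0$, so $l(v) \neq 0$. This already settles one direction.

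For the converse, assume that $v$ is a leaf, so $v$ has no direct successor. I would apply Proposition~\ref{l11.18} to the singleton chain $\{v\}$ to obtain a maximal chain $M$ in the poset $(V(T), {\preccurlyeq}_{T(r)})$ that contains $v$. The absence of direct successors of $v$ shows that $v$ is minimal in $(V(T), {\preccurlyeq}_{T(r)})$, so Lemma~\ref{l11.27} (either in its finite or degenerate form, depending on whether $E(T)=\varnothing$) forces $M$ to be the finite chain $M = \{c_1, \ldots, c_n\}$ with $c_1 = r$, $c_n = v$, and $c_i \sqsupset_{T(r)} c_{i+1}$ for every $i \in \{1, \ldots, n-1\}$. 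Iterating condition~\ref{d11.27:s1} along this covering sequence gives
\[
l(v) = l(c_n) < l(c_{n-1}) < \ldots < l(c_1),
\]
hence $l(v) = \min_{u \in M} l(u) = \inf\{l(u) \colon u \in V(M)\}$. Condition~\ref{d11.27:s2} of Definition~\ref{d11.27} then yields $l(v) = 0$. The edge case $n = 1$ (that is, $T$ consisting solely of $r$) is handled identically, since the unique maximal chain is $\{r\}$ and (ii) forces $l(r) = 0$.

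The only real care-point, rather than a genuine obstacle, is matching the conventions: one must interpret ``leaf'' in the rooted-tree sense, so that leaf of $T(r)$ means ``no direct successor''; this is precisely what is needed for $v$ to be a minimum of the chain and for Lemma~\ref{l11.27} to produce a finite maximal chain terminating at $v$. Everything else reduces to a direct reading-off of the two clauses of Definition~\ref{d11.27}.
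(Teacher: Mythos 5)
Your proof is correct and follows essentially the same route as the paper, whose own argument is just the one-line observation that the claim ``follows directly from Definition~\ref{d11.27} and the one-to-one correspondence between the leaves and finite maximal chains described in Lemma~\ref{l11.27}'' (plus the same remark about the root of an empty tree). Your write-up simply makes both implications explicit, including the correct reading of ``leaf'' as ``no direct successor,'' which matches Remark~\ref{r10.16} and the convention used in Lemma~\ref{l11.27}.
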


\begin{proof}
It follows directly from Definition~\ref{d11.27} and the one-to-one correspondence between the leaves and finite maximal chains described in Lemma~\ref{l11.27}. We recall only that the root \(r\) is a leaf of \(T\) when \(T\) is an empty tree (see Remark~\ref{r10.16}).
\end{proof}

\begin{lemma}\label{l11.30}
Let \(T = T(r)\) be a rooted tree, let \(C_1\), \(C_2\) be different maximal chains in \((V(T), {\preccurlyeq}_{T(r)})\). Then we have
\begin{equation}\label{l11.30:e1}
c_1 \parallel_{T(r)} c_2
\end{equation}
for all \(c_1 \in C_1 \setminus (C_1 \cap C_2)\) and \(c_2 \in C_2 \setminus (C_1 \cap C_2)\).
\end{lemma}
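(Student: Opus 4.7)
The plan is to argue by contradiction, exploiting the very rigid structure of the posets $(V(T), \preccurlyeq_{T(r)})$ already isolated in Theorem~\ref{t9.20}. Suppose that $c_1$ and $c_2$ are comparable; without loss of generality, assume $c_1 \preccurlyeq_{T(r)} c_2$. I then want to deduce the stronger statement $c_2 \in C_1$, since then $c_2 \in C_1 \cap C_2$, contradicting $c_2 \in C_2 \setminus (C_1 \cap C_2)$.

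The bridge from ``$c_2$ lies above $c_1 \in C_1$'' to ``$c_2 \in C_1$'' is the following local fact, which I would isolate as the key step: for every $v \in V(T)$, the upper set
\[
v^{\Delta} := \{u \in V(T) \colon v \preccurlyeq_{T(r)} u\}
\]
is a finite chain in $(V(T), \preccurlyeq_{T(r)})$. This is immediate from Theorem~\ref{t9.20}: every non-root vertex has a unique upper cover, so iterating produces a finite strictly ascending sequence $v \sqsubset_{T(r)} v^{(1)} \sqsubset_{T(r)} \cdots \sqsubset_{T(r)} v^{(n)} = r$, and since $\preccurlyeq_{T(r)} \,=\, \sqsubseteq_{T(r)}^{\,t}$, every element of $v^{\Delta}$ appears in this sequence. (This is the content of equality \eqref{l11.27:e9} from the proof of Lemma~\ref{l11.27}.)

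Next, for any maximal chain $C$ in $(V(T), \preccurlyeq_{T(r)})$ and any $v \in C$, I claim $v^{\Delta} \subseteq C$. Indeed, any $u \in v^{\Delta}$ is comparable with every element of $C$: if $c \in C$, then $c$ is comparable with $v$, hence $c \preccurlyeq_{T(r)} v \preccurlyeq_{T(r)} u$ or $v \preccurlyeq_{T(r)} c$, in which case $u$ and $c$ both lie in the chain $v^{\Delta}$ and are therefore comparable. Hence $C \cup v^{\Delta}$ is a chain containing $C$, and maximality of $C$ forces $C \cup v^{\Delta} = C$.

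Applying this to $v = c_1$ and $C = C_1$ gives $c_2 \in c_1^{\Delta} \subseteq C_1$, which together with $c_2 \in C_2$ yields $c_2 \in C_1 \cap C_2$, contradicting the hypothesis $c_2 \in C_2 \setminus (C_1 \cap C_2)$. The case $c_2 \preccurlyeq_{T(r)} c_1$ is symmetric (swap the roles of $c_1, C_1$ and $c_2, C_2$). The only real content is the observation about $v^{\Delta}$; once that is in place, the contradiction is immediate, so I do not anticipate any serious obstacle.
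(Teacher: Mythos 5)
Your proof is correct and follows essentially the same route as the paper: both arguments reduce to showing that everything lying above $c_1$ in $\preccurlyeq_{T(r)}$ belongs to $C_1$, the paper by identifying this up-set with the vertex set of the unique path $P_{c_1}$ from $c_1$ to $r$ and invoking the enumeration of maximal chains from Lemma~\ref{l11.27}, you by observing that the up-set $c_1^{\Delta}$ is itself a chain (the content of \eqref{l11.27:e9}) and deducing $c_1^{\Delta}\subseteq C_1$ directly from maximality of $C_1$. The difference is only in how this one inclusion is justified, and your maximality argument for it is sound.
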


\begin{proof}
Suppose that there exist \(c_1 \in C_1 \setminus (C_1 \cap C_2)\) and \(c_2 \in C_2 \setminus (C_1 \cap C_2)\) for which \eqref{l11.30:e1} does not hold. Then, without loss of generality, we can assume \(c_1 \prec_{T(r)} c_2\). From Theorem~\ref{t9.20} and the definition of the partial order \({\preccurlyeq}_{T(r)}\) it follows that \(c_2\) is an vertex of the path \(P_{c_1} = (v_1, \ldots, v_n)\) joining \(c_1\) with \(r\) in \(T(r)\) and the relations
\begin{equation}\label{l11.30:e2}
c_1 = v_1 \sqsubset_{T(r)} v_2 \sqsubset_{T(r)} \ldots \sqsubset_{T(r)} v_n = r
\end{equation}
hold. Lemma~\ref{l9.14} (the uniqueness of a path connected \(c_1\) and \(r\) in \(T(r)\)) and Lemma~\ref{l11.27} imply \(V(P_{c_1}) \subseteq C_1\). Consequently, \(c_2 \in C_1\) holds, that yields \(c_2 \in C_1 \cap C_2\), contrary to \(c_2 \in C_2 \setminus (C_1 \cap C_2)\).
\end{proof}

\begin{figure}[ht]
\begin{center}
\begin{tikzpicture}[->, sibling angle=30, level distance=0.8cm,
solid node/.style={circle,draw,inner sep=1.5,fill=black},
hollow node/.style={circle,draw,inner sep=1.5}]

\node [solid node, label={right:\(v_1\)}, label={[label distance=1cm] right:\(T(r)\), \(r=v_1\)}]
at (0,0) {}
child{node [solid node, label={right:\(v_2\)}] {}
	child{node [solid node, label={right:\(v_3\)}] {}
		child{node [solid node, label={right:\(v_4\)}] {}
			child{node [solid node, label={left:\(u_1\)}] {}
				child{node [solid node, label={left:\(u_2\)}] {}
					child{node [solid node, label={left:\(u_3\)}] {}}
				}
			}
			child{node [solid node, label={right:\(w_1\)}] {}
				child{node [solid node, label={right:\(w_2\)}] {}
					child{node [solid node, label={right:\(w_3\)}] {}}
				}
			}
		}
	}
};
\end{tikzpicture}
\end{center}
\caption{\(\mathbf{MC}_{T(r)}\) contains exactly two maximal chains \(C_1 = \{v_1, v_2, v_3, v_4\} \cup \{u_1, u_2, u_3\}\) and \(C_2 = \{v_1, v_2, v_3, v_4\} \cup \{w_1, w_2, w_3\}\). We have \(C_1 \cap C_2 = \{v_1, v_2, v_3, v_4\}\) and \(\wedge(C_1 \cap C_2) = v_4\).}
\label{fig9}
\end{figure}
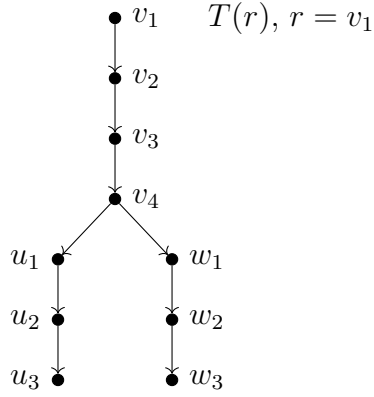

Let \(T = T(r)\) be a rooted tree with a monotone labeling \(l \colon V(T) \to \RR^{+}\). Write \(\mathbf{MC}_{T(r)}\) for the set of all maximal chains in \((V(T), {\preccurlyeq}_{T(r)})\). Let us define a function
\[
d_{\mathbf{MC}_{T(r)}} \colon \mathbf{MC}_{T(r)} \times \mathbf{MC}_{T(r)} \to \RR^{+}
\]
by the rule
\begin{equation}\label{e11.53}
d_{\mathbf{MC}_{T(r)}}(C_1, C_2) = \begin{cases}
0 & \text{if } C_1 = C_2,\\
l(\wedge (C_1 \cap C_2)) & \text{if } C_1 \neq C_2,
\end{cases}
\end{equation}
where \(\wedge (C_1 \cap C_2)\) is the meet of the set \(C_1 \cap C_2\) in the poset \((V(T), {\preccurlyeq}_{T(r)})\).

The existence of the meet \(\wedge (C_1 \cap C_2)\) can be proved as in Remark~\ref{r11.24}. Thus, \(d_{\mathbf{MC}_{T(r)}}\) is correctly defined for every rooted tree \(T(r)\) with a monotone labeling \(l\). For an example of the set of all maximal chains corresponding to a rooted tree see Figure~\ref{fig9} above.

\begin{proposition}\label{p11.30}
For every rooted tree \(T = T(r)\) with a monotone labeling \(l \colon V(T) \to \RR^{+}\) the mapping
\[
d_{\mathbf{MC}_{T(r)}} \colon \mathbf{MC}_{T(r)} \times \mathbf{MC}_{T(r)} \to \RR^{+}
\]
is an ultrametric on \(\mathbf{MC}_{T(r)}\). In addition, if \(T\) is locally finite, then the ultrametric space \((\mathbf{MC}_{T(r)}, d_{\mathbf{MC}_{T(r)}})\) is compact.
\end{proposition}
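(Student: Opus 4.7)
\textbf{Proof plan for Proposition~\ref{p11.30}.}

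The plan is to verify that \(d_{\mathbf{MC}_{T(r)}}\) satisfies the ultrametric axioms by exploiting the ``every element has a unique upper cover'' feature of \((V(T), {\preccurlyeq}_{T(r)})\) established in Theorem~\ref{t9.20}, and then to deduce compactness in the locally finite case by a K\"onig-style pigeon-hole argument, using condition~\ref{d11.27:s2} of Definition~\ref{d11.27} to force the diameter of the selected subsequence to zero. First I would show that \(d_{\mathbf{MC}_{T(r)}}\) is well-defined. For distinct maximal chains \(C_1\), \(C_2\), Lemma~\ref{l11.27} writes each \(C_i\) as a sequence \(r = c_1^{(i)} \sqsupset c_2^{(i)} \sqsupset \ldots\), and by Theorem~\ref{t9.20} the element \(c_j^{(i)}\) has a unique upper cover, so comparing the two enumerations one obtains an index \(j\) with \(c_1^{(1)} = c_1^{(2)}, \ldots, c_j^{(1)} = c_j^{(2)}\) and \(c_{j+1}^{(1)} \neq c_{j+1}^{(2)}\). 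Hence \(C_1 \cap C_2 = \{c_1^{(1)}, \ldots, c_j^{(1)}\}\), which is finite and admits the meet \(v_{12} := c_j^{(1)}\).

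For positivity, note that \(v_{12}\) has two distinct lower covers in \(T\) (namely \(c_{j+1}^{(1)}\) and \(c_{j+1}^{(2)}\)), so \(\delta^{+}(v_{12}) \geqslant 2\); hence \(v_{12}\) is not a leaf, and Corollary~\ref{c11.30} gives \(l(v_{12}) > 0\). Symmetry of \(d_{\mathbf{MC}_{T(r)}}\) is obvious. For the strong triangle inequality, let \(C_1, C_2, C_3\) be pairwise distinct maximal chains and set \(v_{ij} = \wedge(C_i \cap C_j)\). Since \(v_{12}, v_{13} \in C_1\) they are comparable; WLOG \(v_{12} \preccurlyeq v_{13}\). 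The unique path from \(v_{12}\) to \(r\) in \(T\) lies in \(C_2\) (because \(v_{12} \in C_2\) and \(C_2\) contains all ancestors of each of its elements by Lemma~\ref{l11.27} and Theorem~\ref{t9.20}), so \(v_{13} \in C_2\); similarly \(v_{13} \in C_3\), hence \(v_{23} \preccurlyeq v_{13}\). The key claim is that actually \(v_{23} = v_{13}\) whenever \(v_{12} \prec v_{13}\): if \(v_{23} \prec v_{13}\), the unique lower cover \(a\) of \(v_{13}\) on the path down to \(v_{23}\) would belong to \(C_2 \cap C_3\); but by uniqueness of paths this \(a\) also lies on the path from \(v_{12}\) up to \(v_{13}\), hence \(a \in C_1\), contradicting \(v_{13} = \wedge(C_1 \cap C_3)\). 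In the remaining case \(v_{12} = v_{13}\), one checks \(v_{23} \preccurlyeq v_{12}\) directly. Combined with the fact that \(l\) is strictly isotone (Remark~\ref{r11.28}), this yields the ``isosceles'' conclusion: among \(v_{12}, v_{13}, v_{23}\) two are equal and are \({\preccurlyeq}\) the third, which gives \(d_{\mathbf{MC}_{T(r)}}(C_1,C_2) \leqslant \max\{d_{\mathbf{MC}_{T(r)}}(C_1,C_3), d_{\mathbf{MC}_{T(r)}}(C_3,C_2)\}\).

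For compactness when \(T\) is locally finite, I would show every sequence \((C_n)_{n \in \mathbb{N}} \subseteq \mathbf{MC}_{T(r)}\) has a convergent subsequence. Since \(\delta(r) < \infty\), by Lemma~\ref{l11.27} each \(C_n\) has its second element among finitely many direct successors of \(r\), so pigeon-hole produces an infinite subsequence all sharing the same second element \(v_2\). Iterating and diagonalising produces a subsequence \((C_{n_k})_{k \in \mathbb{N}}\) and a maximal chain \(C^{*} = \{r = v_1, v_2, \ldots\}\) (finite if one \(v_k\) is a leaf, infinite otherwise, in either case maximal by Lemma~\ref{l11.27}) such that \(\{v_1, \ldots, v_k\} \subseteq C_{n_k} \cap C^{*}\); hence \(\wedge(C_{n_k} \cap C^{*}) \preccurlyeq v_k\) and, by monotonicity of \(l\),
\[
d_{\mathbf{MC}_{T(r)}}(C_{n_k}, C^{*}) \leqslant l(v_k).
\]
In the finite branch the subsequence is eventually constant; in the infinite branch the sequence \((l(v_k))_{k \in \mathbb{N}}\) is strictly decreasing with infimum \(0\) by condition~\ref{d11.27:s2} of Definition~\ref{d11.27}, so \(l(v_k) \to 0\) and \(C_{n_k} \to C^{*}\).

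The main obstacle I expect is the verification of the strong triangle inequality, because it requires patiently translating ``meet of maximal chains'' into the combinatorial statement that the tree path from \(v_{12}\) to \(v_{13}\) lies inside \(C_2\), combined with the uniqueness of lower covers supplied by Theorem~\ref{t9.20}; every other step is essentially a pigeon-hole/diagonalisation or a direct application of the monotone labeling.
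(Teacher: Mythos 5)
Your proposal is correct and follows essentially the same route as the paper: positivity of \(d_{\mathbf{MC}_{T(r)}}\) via the finiteness of \(C_1 \cap C_2\) and Corollary~\ref{c11.30}, and compactness via exactly the same pigeonhole/diagonalisation on the finitely many direct successors at each level, with condition~\ref{d11.27:s2} of Definition~\ref{d11.27} forcing \(l(v_k)\to 0\) (you cite the right condition here; strict decrease alone would not suffice). The only organisational difference is in the strong triangle inequality: the paper first shows that \(\{\wedge(C_1\cap C_2), \wedge(C_2\cap C_3), \wedge(C_3\cap C_1)\}\) is a chain and then rules out the bad case by Lemma~\ref{l11.30}, whereas you prove the full isosceles configuration directly by a lower-cover argument (using that a chain \(C_2\) can contain at most one lower cover of \(v_{13}\)); both work, and your WLOG \(v_{12}\preccurlyeq v_{13}\) is legitimate because the isosceles statement is symmetric under swapping \(C_2\) and \(C_3\). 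One slip to fix in the write-up: your stated conclusion ``two are equal and are \(\preccurlyeq\) the third'' has the inequality reversed --- what your case analysis actually establishes, and what you need, is that two of the meets coincide and the \emph{third} is \(\preccurlyeq\) the two equal ones (so the two largest distances are equal).
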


\begin{proof}
Let \(T = T(r)\) be a rooted tree and let \(l \colon V(T) \to \RR^{+}\) be a monotone labeling.

First of all, we want to prove that \(d_{\mathbf{MC}_{T(r)}}\) is an ultrametric.

It is easy to prove that the inequality \(d_{\mathbf{MC}_{T(r)}}(C_1, C_2) > 0\) holds for all distinct \(C_1\), \(C_2 \in \mathbf{MC}_{T(r)}\). Indeed, if \(C_1 \neq C_2\), then, as in Remark~\ref{r11.24}, we obtain that there is \(v^{*} \in C_1 \cap C_2\) such that \(v^{*} = \wedge (C_1 \cap C_2)\). From \(C_1 \neq C_2\), Lemma~\ref{l11.27} and Corollary~\ref{c11.30} it follows that \(v^{*}\) is not a leaf of \(T\). Consequently, \(d_{\mathbf{MC}_{T(r)}}\) is an ultrametric if \(|\mathbf{MC}_{T(r)}| \leqslant 2\).

Let \(C_1\), \(C_2\), \(C_3\) be pairwise different maximal chains in \((V(T), {\preccurlyeq}_{T(r)})\). To prove the strong triangle inequality
\begin{equation}\label{p11.30:e1}
d_{\mathbf{MC}_{T(r)}}(C_1, C_2) \leqslant \max\{d_{\mathbf{MC}_{T(r)}}(C_1, C_3), d_{\mathbf{MC}_{T(r)}}(C_2, C_3)\},
\end{equation}
we note that the set \(\{\wedge (C_1 \cap C_2), \wedge (C_2 \cap C_3), \wedge (C_3 \cap C_1)\}\) is a chain in \((V(T), {\preccurlyeq}_{T(r)})\). Really, if \(i\), \(j\), \(k \in \{1, 2, 3\}\) and \(i \neq j \neq k\), then
\begin{equation}\label{p11.30:e2}
\wedge (C_i \cap C_j) \in C_j \quad \text{and} \quad \wedge (C_j \cap C_k) \in C_j
\end{equation}
hold. Since \(C_j\) is a chain in \((V(T), {\preccurlyeq}_{T(r)})\), from~\eqref{p11.30:e2} it follows that
\[
\wedge (C_i \cap C_j) \preccurlyeq_{T(r)} \wedge (C_j \cap C_k) \quad \text{or} \quad  \wedge (C_j \cap C_k) \preccurlyeq_{T(r)} \wedge (C_i \cap C_j).
\]

It is easy to see that inequality \eqref{p11.30:e1} is valid if
\begin{equation}\label{p11.30:e3}
\wedge (C_1 \cap C_3) \in C_1 \cap C_2 \quad \text{or} \quad \wedge (C_3 \cap C_2) \in C_1 \cap C_2.
\end{equation}
If \eqref{p11.30:e3} does not hold, then we obtain
\begin{equation}\label{p11.30:e4}
\wedge (C_1 \cap C_3) \in C_1 \setminus (C_1 \cap C_2) \quad \text{and} \quad \wedge (C_3 \cap C_2) \in C_2 \setminus (C_1 \cap C_2).
\end{equation}
By Lemma~\ref{l11.30}, from~\eqref{p11.30:e4} follows
\[
\wedge (C_1 \cap C_3) \parallel_{T(r)} \wedge (C_3 \cap C_2).
\]
Hence, \(\{\wedge (C_1 \cap C_2), \wedge (C_2 \cap C_3), \wedge (C_3 \cap C_1)\}\) is not a chain in \((V(T), {\preccurlyeq}_{T(r)})\), contrary to the statement which was obtained in the first part of the proof.

Thus, \((\mathbf{MC}_{T(r)}, d_{\mathbf{MC}_{T(r)}})\) is an ultrametric space.

Let \(T\) be locally finite. By Bolzano---Weierstrass property (see Proposition~\ref{p2.9}), the metric space \((\mathbf{MC}_{T(r)}, d_{\mathbf{MC}_{T(r)}})\) is compact if and only if, for every sequence \((C^{n})_{n \in \mathbb{N}} \subseteq \mathbf{MC}_{T(r)}\), there are \(C \in \mathbf{MC}_{T(r)}\) and a subsequence \((C^{n_j})_{j \in \mathbb{N}}\) of the sequence \((C^{n})_{n \in \mathbb{N}}\) such that
\begin{equation}\label{p11.30:e5}
\lim_{j \to \infty} d_{\mathbf{MC}_{T(r)}}(C^{n_j}, C) = 0.
\end{equation}
Let \((C^{n})_{n \in \mathbb{N}} \subseteq \mathbf{MC}_{T(r)}\). We will use an induction to construct the desirable \((C^{n_j})_{j \in \mathbb{N}}\) and \(C\). It suffices to consider the case when all elements of \((C^{n})_{n \in \mathbb{N}}\) are pairwise different,
\begin{equation}\label{p11.30:e6}
(C^{n} \neq C^{m}) \Leftrightarrow (n \neq m) \quad \forall n, m \in \mathbb{N}.
\end{equation}
Write \(\mathbf{A}_1 = \{C^{n} \colon n \in \mathbb{N}\}\), \(n_1 = 1\), and \(c_1 = r\). From \eqref{p11.30:e6} it follows \(|V(T)| = \infty\). Consequently, the set of all direct successors of \(r\) is nonempty. In addition, this set is finite, because \(T\) is locally finite. Using Lemma~\ref{l11.27} and \eqref{p11.30:e6}, we can find \(c_2 \in V(T)\) such that \(c_1 \sqsupset_{T(r)} c_2\) holds and the set
\[
\mathbf{A}_2 = \{C^{n} \in \mathbf{A}_1 \colon \{c_1, c_2\} \subseteq C^{n}, n \in \mathbb{N}\}
\]
is infinite. The set of all direct successors of \(c_2\) is also finite and nonempty. Hence, there is \(c_3 \in V(T)\) such that \(c_1 \sqsupset_{T(r)} c_2 \sqsupset_{T(r)} c_3\) and the set
\[
\mathbf{A}_3 = \{C^{n} \in \mathbf{A}_2 \colon \{c_1, c_2, c_3\} \subseteq C^{n}, n \in \mathbb{N}\}
\]
is infinite. Repeating this procedure, we can find a sequence \((c_j)_{j \in \mathbb{N}} \subseteq V(T)\) and a sequence \((\mathbf{A}_j)_{j \in \mathbb{N}} \subseteq 2^{V(T)}\) such that
\begin{equation}\label{p11.30:e7}
c_1 \sqsupset_{T(r)} \ldots \sqsupset_{T(r)} c_j \sqsupset_{T(r)} c_{j+1} \sqsupset_{T(r)} \ldots,
\end{equation}
\(c_1 = r\) and
\begin{equation}\label{p11.30:e8}
\{c_1, \ldots, c_j\} \subseteq \mathbf{A}_j \subseteq \{C^{n} \colon n \in \mathbb{N}\}
\end{equation}
for every \(j \in \mathbb{N}\). Write \(C = \{c_j \colon j \in \mathbb{N}\}\). By Lemma~\ref{l11.27}, \eqref{p11.30:e7} implies that \(C\) is a maximal chain in \((V(T), {\preccurlyeq}_{T(r)})\). Moreover, from \eqref{p11.30:e8} and \eqref{e11.53} it follows that
\begin{equation}\label{p11.30:e9}
d_{\mathbf{MC}_{T(r)}} (C, A) \leqslant l(c_j)
\end{equation}
holds for every \(j \in \mathbb{N}\) and every \(A \in \mathbf{A}_j\). The sequence \((A^{j})_{j \in \mathbb{N}} \subseteq 2^{V(T)}\), satisfying \(A^{j} \in \mathbf{A}_j\) for every \(j \in \mathbb{N}\), is a subsequence of the sequence \((C^{n})_{n \in \mathbb{N}} \subseteq \mathbf{MC}_{T(r)}\). Now condition~\ref{d11.27:s1} of Definition~\ref{d11.27} and inequality~\eqref{p11.30:e9} give us limit relation \eqref{p11.30:e5} with \(C^{n_j} = A^j\).
\end{proof}

The next lemma describes the structure of open balls in the space \((\mathbf{MC}_{T(r)}, d_{\mathbf{MC}_{T(r)}})\).

\begin{lemma}\label{l11.33}
Let \(T = T(r)\) be a locally finite rooted tree such that \(\delta_{T(r)}^{+}(u) \neq 1\) for every \(u \in V(T)\), let \(l \colon V(T) \to \RR^{+}\) be a monotone labeling, let \((\mathbf{MC}_{T(r)}, d_{\mathbf{MC}_{T(r)}})\) be the corresponding ultrametric space and let \(\BB_{\mathbf{MC}_{T(r)}}\) be the ballean of \((\mathbf{MC}_{T(r)}, d_{\mathbf{MC}_{T(r)}})\). Then the following conditions are equivalent for every \(B \subseteq \mathbf{MC}_{T(r)}\):
\begin{enumerate}
\item \label{l11.33:s1} The set \(B\) is an open ball in \((\mathbf{MC}_{T(r)}, d_{\mathbf{MC}_{T(r)}})\), \(B \in \BB_{\mathbf{MC}_{T(r)}}\).
\item \label{l11.33:s2} There is a vertex \(v \in V(T)\) such that
\begin{equation}\label{l11.33:e2}
B = \{C \in \mathbf{MC}_{T(r)} \colon v \in C\}.
\end{equation}
\end{enumerate}
\end{lemma}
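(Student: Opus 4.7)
\medskip
\noindent\textbf{Proof plan.}

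My plan is to prove the two implications separately, exploiting the ``chain picture'' of maximal chains given by Lemma~\ref{l11.27}: every $C\in\mathbf{MC}_{T(r)}$ can be enumerated as $C=\{c_1,c_2,\ldots\}$ with $c_1=r$ and $c_i\sqsupset_{T(r)}c_{i+1}$. Together with Remark~\ref{r11.28} and the monotonicity condition \ref{d11.27:s1}, this gives the key identity: for $C\in\mathbf{MC}_{T(r)}$ and $w\in C$, the set $\{x\in C : x\succcurlyeq_{T(r)}w\}$ equals the full chain of ancestors of $w$ in $T(r)$.

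For $\ref{l11.33:s2}\Rightarrow\ref{l11.33:s1}$, I fix $v\in V(T)$, pick any $C_0\in B_v$ (such a $C_0$ exists: extend the chain of ancestors of $v$ to a maximal chain by Proposition~\ref{l11.18}), and claim $B_v=B_\rho(C_0)$, where $\rho=l(v^+)$ if $v\neq r$ (with $v^+$ the unique upper cover from Theorem~\ref{t9.20}), and $\rho$ is any number strictly greater than $l(r)$ if $v=r$. If $C\in B_v$, then $v\in C\cap C_0$ yields $\wedge(C\cap C_0)\preccurlyeq_{T(r)} v$, so $d_{\mathbf{MC}_{T(r)}}(C,C_0)\leq l(v)<\rho$. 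Conversely, if $d_{\mathbf{MC}_{T(r)}}(C,C_0)<\rho$, set $w:=\wedge(C\cap C_0)\in C_0$; since $C_0$ is a chain through $v$, either $w\succcurlyeq_{T(r)} v$ (in which case $w$ lies on the ancestor chain of $v$ and $l(w)<\rho=l(v^+)$ together with monotonicity force $w=v$, whence $v\in C$) or $w\prec_{T(r)}v$ (in which case $v$ is an ancestor of $w\in C$, and the chain-of-ancestors identity above gives $v\in C$).

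For $\ref{l11.33:s1}\Rightarrow\ref{l11.33:s2}$, let $B=B_\rho(C_0)\in\BB_{\mathbf{MC}_{T(r)}}$ and enumerate $C_0=\{c_1,c_2,\ldots\}$ with $c_1=r$. By Definition~\ref{d11.27}\ref{d11.27:s2}, together with strict monotonicity of $l$ along descending edges, the values $l(c_i)$ strictly decrease with infimum $0$ (or the chain is finite and terminates at a leaf of label $0$ by Corollary~\ref{c11.30}). Hence there is a unique smallest index $i$ with $l(c_i)<\rho$; set $v:=c_i$. Then the very argument of the previous paragraph, applied with this $\rho$ and the upper cover of $v$ in $C_0$ (or $v=r$ if $i=1$), shows $B_\rho(C_0)=B_v$.

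The only genuinely delicate step is the verification $d_{\mathbf{MC}_{T(r)}}(C,C_0)<\rho\Rightarrow v\in C$ in the forward direction; this is where the structure of maximal chains from Lemma~\ref{l11.27} must be invoked to conclude that $C$ contains every ancestor of any of its elements. Local finiteness of $T$ and the hypothesis $\delta^+_{T(r)}\neq 1$ play no explicit role in the two arguments above, but they are implicit in the background through Proposition~\ref{p11.30}, which guarantees $d_{\mathbf{MC}_{T(r)}}$ is genuinely an ultrametric on $\mathbf{MC}_{T(r)}$ so that Proposition~\ref{p2.4} (every point of a ball is a center) is applicable when choosing $C_0\in B$.
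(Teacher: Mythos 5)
Your proof is correct, but it takes a genuinely different and noticeably shorter route than the paper's. For \ref{l11.33:s2} \(\Rightarrow\) \ref{l11.33:s1} the paper first computes \(\diam B_v = l(v)\) — and this is exactly where it uses the hypothesis \(\delta_{T(r)}^{+}(v) \neq 1\), to produce two lower covers of \(v\) and hence two chains through \(v\) at distance \(l(v)\) — and then passes from the closed ball \(\overline{B}_{l(v)}(C_1)\) to an open ball via Lemma~\ref{c6.6}, treating leaves separately. You instead exhibit \(B_v\) directly as the open ball of radius \(l(v^{+})\) about any chain through \(v\), which needs neither \(\delta^{+}\neq 1\) nor the leaf/non-leaf case split. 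For \ref{l11.33:s1} \(\Rightarrow\) \ref{l11.33:s2} the paper invokes compactness of \((\mathbf{MC}_{T(r)}, d_{\mathbf{MC}_{T(r)}})\) (hence local finiteness, via Proposition~\ref{p11.30} and Corollary~\ref{c2.39}) to find a diameter-realizing chain and identifies \(v\) by \(l(v)=\diam B\); you identify \(v\) as the first vertex along the center chain whose label drops below the radius, which requires only the strict decrease of labels along a maximal chain and condition~\ref{d11.27:s2} of Definition~\ref{d11.27}. Your observation that the equivalence itself survives without local finiteness and without \(\delta^{+}\neq 1\) is accurate. The one thing your argument does not deliver, and the paper's does, is the byproduct \(\diam B_v = l(v)\) for non-leaf \(v\): the paper explicitly harvests this from its proof of the present lemma when proving Proposition~\ref{p11.33} (equation~\eqref{p11.33:e1}), and establishing it genuinely requires \(\delta_{T(r)}^{+}(v)\neq 1\). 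So if your proof were substituted into the paper, that equality (and the injectivity of \(v \mapsto B_v\)) would have to be proved separately where it is later needed; as a proof of the stated equivalence, however, yours is complete.
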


\begin{proof}
\(\ref{l11.33:s1} \Rightarrow \ref{l11.33:s2}\). Let \(B \in \BB_{\mathbf{MC}_{T(r)}}\) and let \(C^{*}\) be a point of \(B\). Then the ball \(B\) has the form
\begin{equation}\label{l11.33:e3}
B = B_{r^{*}}(C^{*}) = \{C \in \mathbf{MC}_{T(r)} \colon d_{\mathbf{MC}_{T(r)}}(C, C^{*}) < r^{*}\},
\end{equation}
where \(r^{*} > 0\) is the radius of \(B\) (see Proposition~\ref{p2.4}). Write
\[
\diam B = \sup\{d_{\mathbf{MC}_{T(r)}}(C_1, C_2) \colon C_1, C_2 \in B\}.
\]
Suppose first that \(\diam B\) is a strictly positive real number. Since \(T\) is locally finite, the ultrametric space \((\mathbf{MC}_{T(r)}, d_{\mathbf{MC}_{T(r)}})\) is compact by Proposition~\ref{p11.30}. Hence, by Corollary~\ref{c2.39}, we can find \(C_{*} \in B\) such that
\begin{equation}\label{l11.33:e4}
\diam B = d_{\mathbf{MC}_{T(r)}}(C^{*}, C_{*}).
\end{equation}
From \(\diam B > 0\), the definition of \(d_{\mathbf{MC}_{T(r)}}\) (see \eqref{e11.53}) and \eqref{l11.33:e4} we obtain
\[
\diam B = l(\wedge (C^{*} \cap C_{*})).
\]
If \(|C^{*}| = \infty\), then, by Lemma~\ref{l11.27}, the elements of \(C^{*}\) can be numbered in an infinite sequence \((c_n^{*})_{n \in \mathbb{N}} \subseteq V(T)\) such that
\begin{equation}\label{l11.33:e5}
c_1^{*} \sqsupset_{T(r)} \ldots \sqsupset_{T(r)} c_n^{*} \sqsupset_{T(r)} c_{n+1}^{*} \sqsupset_{T(r)} \ldots
\end{equation}
with \(c_1^{*} = r\). Since \(\wedge (C^{*} \cap C_{*})\) belongs to \(C^{*}\), we can find \(c_{n_0}^{*} \in C^{*}\) such that
\[
\diam B = l(c_{n_0}^{*}).
\]
Now we claim that \eqref{l11.33:e2} holds with \(v = c_{n_0}^{*}\). Indeed, if \(C\) is an arbitrary point of \(B\), then
\begin{equation}\label{l11.33:e6}
l(\wedge (C^{*} \cap C)) = d_{\mathbf{MC}_{T(r)}}(C^{*}, C) \leqslant \diam B = l(c_{n_0}^{*})
\end{equation}
holds. Write \(c = \wedge (C^{*} \cap C)\). The vertex \(c \in V(T)\) belongs to the chains \(C^{*}\) and \(C\),
\begin{equation}\label{l11.33:e7}
c \in C
\end{equation}
and
\begin{equation}\label{l11.33:e8}
c \in C^{*}
\end{equation}
and, moreover, the equality
\begin{equation}\label{l11.33:e9}
l(c) = l(\wedge (C^{*} \cap C))
\end{equation}
holds. Using \eqref{l11.33:e8} we can find an element \(c_{n_1}^{*}\) of the sequence \((c_n^{*})_{n \in \mathbb{N}}\) such that
\begin{equation}\label{l11.33:e10}
c_{n_1}^{*} = c.
\end{equation}
Since the labeling \(l\) is monotone, \eqref{l11.33:e6}, \eqref{l11.33:e9} and \eqref{l11.33:e10} imply the inequality \(n_0 \leqslant n_1\). If the equality \(n_0 = n_1\) holds, then we evidently obtain \(c_{n_0}^{*} \in C\). In the case of the strict inequality \(n_0 < n_1\), from~\eqref{l11.33:e5} it follows that
\begin{equation}\label{l11.33:e11}
c_1^{*} \sqsupset_{T(r)} \ldots \sqsupset_{T(r)} c_{n_0}^{*} \sqsupset_{T(r)} \ldots \sqsupset_{T(r)} c_{n_1}^{*}.
\end{equation}
Now Lemma~\ref{l11.27}, equality~\eqref{l11.33:e10} and relations~\eqref{l11.33:e7}, \eqref{l11.33:e11} imply
\[
\{c_1^{*}, \ldots, c_{n_0}^{*}, \ldots, c_{n_1}^{*}\} \subseteq C.
\]
The last inclusion yields the membership \(c_{n_0}^{*} \in C\). Thus, the inclusion
\begin{equation}\label{l11.33:e12}
B \subseteq \{C \in \mathbf{MC}_{T(r)} \colon C \ni c_{n_0}^{*}\}
\end{equation}
is true for the case when \(C^{*}\) is infinite and \(\diam B > 0\). Using \eqref{l11.33:e12} we see that the equality
\[
B = \{C \in \mathbf{MC}_{T(r)} \colon C \ni c_{n_0}^{*}\}
\]
holds if and only if
\begin{equation}\label{l11.33:e15}
B \supseteq \{C \in \mathbf{MC}_{T(r)} \colon C \ni c_{n_0}^{*}\}.
\end{equation}
From~\eqref{l11.33:e3}, Corollary~\ref{c5.15} and the equality \(\diam B = l(c_{n_0}^{*})\) it follows that \eqref{l11.33:e15} holds if
\begin{equation}\label{l11.33:e16}
d_{\mathbf{MC}_{T(r)}}(C, C^{*}) \leqslant l(c_{n_0}^{*})
\end{equation}
holds whenever \(C \ni c_{n_0}^{*}\). To prove the last inequality, we note that Theorem~\ref{t9.20}, Lemma~\ref{l11.27} and formula~\eqref{l11.33:e5} imply the inclusion
\begin{equation}\label{l11.33:e17}
\{c_1^*, \ldots, c_{n_0}^{*}\} \subseteq C
\end{equation}
for every \(C\) containing \(c_{n_0}^{*}\). Now~\eqref{l11.33:e16} follows from \eqref{l11.33:e17} and \eqref{e11.53}.

The case when \(|C^{*}| < \infty\) and \(\diam B > 0\) can be considered similarly if we use \eqref{l11.27:e1} instead of \eqref{l11.33:e5}.

We now turn to the case \(\diam B = 0\). First we will notice that the equality \(\diam B = 0\) implies the inequality \(|C^{*}| < \infty\). Indeed, if \(C^{*}\) is infinite, then, using the monotonicity of \(l\), we can find \(n(r^{*}) \in \mathbb{N}\) such that
\begin{equation}\label{l11.33:e13}
l(c_{n(r^{*})}^{*}) < r^{*},
\end{equation}
where \(r^{*} > 0\) is the radius of \(B\) (see~\eqref{l11.33:e3}). By \eqref{l11.33:e5}, we have
\[
c_{n(r^{*})}^{*} \sqsupset_{T(r)} c_{n(r^{*})+1}^{*}.
\]
Hence, by Remark~\ref{r10.11}, \(c_{n(r^{*})+1}^{*}\) is a direct successor of \(c_{n(r^{*})}^{*}\). According to the condition of the lemma, we have \(\delta_{T(r)}^{+}(c_{n(r^{*})}^{*}) \neq 1\). Consequently, there is \(v \in T(r)\) such that
\begin{equation}\label{l11.33:e18}
c_{n(r^{*})}^{*} \sqsupset_{T(r)} v \quad \text{and} \quad c \neq c_{n(r^{*})+1}^{*}.
\end{equation}
From~\eqref{l11.33:e5} and \eqref{l11.33:e18} it follows that the set
\[
\{c_1^{*}, \ldots, c_{n(r^{*})}^{*}, v\}
\]
is a chain of \((V(T), {\preccurlyeq}_{T(r)})\) with \(v \notin C^{*}\). Let us denote by \(C_{v}^{*}\) a maximal chain in \((V(T), {\preccurlyeq}_{T(r)})\) such that
\begin{equation}\label{l11.33:e19}
\{c_1^{*}, \ldots, c_{n(r^{*})}^{*}, v\} \subset C_{v}^{*}.
\end{equation}
From \(v \notin C^{*}\) it follows that \(C^{*} \neq C_{v}^{*}\), i.e.,
\begin{equation}\label{l11.33:e20}
d_{\mathbf{MC}_{T(r)}}(C^{*}, C_{v}^{*}) > 0
\end{equation}
holds. Now using \eqref{l11.33:e13} and \eqref{l11.33:e20} we obtain
\[
d_{\mathbf{MC}_{T(r)}}(C^{*}, C_{v}^{*}) = l(c_{n(r^{*})}^{*}) < r^{*}
\]
which together with \eqref{l11.33:e3} implies \(C_{v}^{*} \in B\). Thus, by \eqref{l11.33:e20}, the inequality \(\diam B > 0\) holds, contrary to \(\dim B = 0\).

Now let \(\dim B = 0\) hold. As was proved above, it implies \(|C^{*}| < \infty\). Hence, by Lemma~\ref{l11.27}, the elements of \(C^{*}\) can be numbered in a finite sequence \(c_{1}\), \(\ldots\), \(c_{n}\) such that
\begin{equation}\label{l11.33:e21}
c_{1} \sqsupset_{T(r)} \ldots \sqsupset_{T(r)} c_{n},
\end{equation}
where \(c_{1} = r\) and \(c_{n}\) is a leaf of \(T(r)\). Using Theorem~\ref{t9.20} and Lemma~\ref{l11.27} it is easy to prove that for every leaf \(v \in V(T)\) there is an unique \(C \in \mathbf{MC}_{T(r)}\) such that \(v \in C\). Thus, for the case \(\diam B = 0\), \(|C^{*}| < \infty\), we have
\[
B = \{C \in \mathbf{MC}_{T(r)} \colon c_n \in C\} = \{C^{*}\}.
\]

\(\ref{l11.33:s2} \Rightarrow \ref{l11.33:s1}\). Let \(v \in V(T)\) and let a set \(B_v\) be defined as
\[
B_v = \{C \in \mathbf{MC}_{T(r)} \colon C \ni v\}.
\]
We claim that \(B_v \in \BB_{\mathbf{MC}_{T(r)}}\) holds. The last statement is trivially valid if \(|V(T)| = \). Let \(|V(T)| \geqslant 2\). Suppose first that \(v\) is a leaf of \(T\). The one-point set \(\{v\}\) is a chain in \((V(T), {\preccurlyeq}_{T(r)})\). By Proposition~\ref{l11.18}, there is \(C_1 \in \mathbf{MC}_{T(r)}\) such that \(\{v\} \subseteq C_1\). Hence, \(C_1 \in B_v\) holds. From Lemma~\ref{l11.27} it follows that the elements of \(C\) can be numbered in a finite sequence \(c_1\), \(\ldots\), \(c_n\) such that
\[
c_1 \sqsupset_{T(r)} \ldots \sqsupset_{T(r)} c_n,
\]
\(c_1 = r\) and \(c_n = v\) (from \(|V(T)| \geqslant 2\) it follows that \(n \geqslant 2\)). Using Theorem~\ref{t9.20} we can prove that the implication
\[
(C \ni v) \Rightarrow (C_1 = C)
\]
is valid for every \(C \in \mathbf{MC}_{T(r)}\). Hence, the equality
\begin{equation}\label{l11.33:e22}
B_v = \{C_1\}
\end{equation}
holds. Write \(r^{*} = l(c_{n-1})\), where \(c_{n-1}\) is the unique upper cover of \(c_n = v\) in \((V(T), {\preccurlyeq}_{T(r)})\), \(c_{n-1} \sqsupset_{T(r)} c_n\). Then \(c_{n-1} \in C_1\) holds and we have the strict inequality \(r^{*} > 0\), because \(l\) is a monotone labeling on \(T(r)\) and \(l(c_n) = l(v) = 0\) (see Corollary~\ref{c11.30}).

Let us consider the open ball
\[
B_{r^{*}}(C_1) = \{C \in \mathbf{MC}_{T(r)} \colon d_{\mathbf{MC}_{T(r)}} (C_1, C) < r^{*}\}.
\]
We claim that the equality
\begin{equation}\label{l11.33:e23}
B_{r^{*}}(C_1) = B_v
\end{equation}
holds. Let \(C \in \mathbf{MC}_{T(r)}\) satisfy the inequality
\begin{equation}\label{l11.33:e24}
d_{\mathbf{MC}_{T(r)}} (C_1, C) < r^{*}.
\end{equation}
If \(C_1 \neq C\) holds, then using \eqref{e11.53} we can rewrite \eqref{l11.33:e24} as
\begin{equation}\label{l11.33:e25}
l(\wedge (C_1 \cap C)) < r^{*}.
\end{equation}
The vertex \(\wedge (C_1 \cap C)\) belongs to \(C_1\) and satisfy \eqref{l11.33:e25}. For every \(i \in \{1, \ldots, n\}\), the inequality \(l(c_i) < r^{*}\) implies the equality \(l(c_i) = 0\). Consequently,
\[
d_{\mathbf{MC}_{T(r)}} (C_1, C) = l(\wedge (C_1 \cap C)) = 0
\]
holds, contrary to \(C_1 \neq C\). Thus, \(B_{r^{*}}(C_1) = \{C_1\}\) holds. The last equality and \eqref{l11.33:e22} imply \eqref{l11.33:e23}.

Let us consider now the case when \(v\) is not a leaf of \(T\), \(l(v) > 0\). Write \(r_{*} = l(v)\). By Lemma~\ref{c6.6}, the inequality \(r_{*} > 0\) and the equalities
\begin{equation}\label{l11.33:e26}
\diam B_v = r_{*}
\end{equation}
and
\begin{equation}\label{l11.33:e27}
B_v = \overline{B}_{r_{*}}(C_1),
\end{equation}
where
\[
\overline{B}_{r_{*}}(C_1) = \{C \in \mathbf{MC}_{T(r)} \colon d_{\mathbf{MC}_{T(r)}} (C_1, C) \leqslant r_{*}\},
\]
imply that \(B_v \in \BB_{\mathbf{MC}_{T(r)}}\). Thus, it suffices to show that \eqref{l11.33:e26} and \eqref{l11.33:e27} hold.

Let \(C^2\), \(C^3\) be arbitrary points of \(B_v\). Then \(v \in C^2 \cap C^3\) holds and, consequently, we have
\begin{equation}\label{l11.33:e28}
v \succcurlyeq_{T(r)} \wedge (C^2 \cap C^3).
\end{equation}
The labeling \(l\) is monotone on \(T(r)\). Hence, inequality~\eqref{l11.33:e28} implies
\begin{equation}\label{l11.33:e29}
r_{*} = l(v) \geqslant d_{\mathbf{MC}_{T(r)}} (C^2, C^3).
\end{equation}
Since \(C^2\), \(C^3\) are arbitrary points of \(B_v\), from \eqref{l11.33:e29} it follows that
\begin{equation}\label{l11.33:e30}
r_{*} \geqslant \diam B_v.
\end{equation}

Let us prove the converse inequality. Since \(v\) is not a leaf of \(T\), from \(\delta_{T(r)}^{+}(v) \neq 1\) it follows that there are at least two different lower covers \(v_1\) and \(v_2\) of the vertex \(v\),
\begin{equation}\label{l11.33:e31}
v \sqsupset_{T(r)} v_1, \quad v \sqsupset_{T(r)} v_2.
\end{equation}
It follows directly from Definition~\ref{d8.11} (of lower covers) that
\begin{equation}\label{l11.33:e32}
v_1 \parallel_{T(r)} v_2.
\end{equation}
Let us denote by \(\widetilde{C}_i\) an element of \(\mathbf{MC}_{T(r)}\) such that \(\widetilde{C}_i \supseteq \{v, v_i\}\), \(i = 1\), \(2\). From~\eqref{l11.33:e32} it follows that \(\widetilde{C}_1 \neq \widetilde{C}_2\). Now using~\eqref{l11.33:e31}, Lemma~\ref{l11.27} and the definition of the ultrametric \(d_{\mathbf{MC}_{T(r)}}\) we obtain
\[
\wedge (\widetilde{C}_1 \cap \widetilde{C}_2) = v
\]
and
\begin{equation}\label{l11.33:e33}
d_{\mathbf{MC}_{T(r)}}(\widetilde{C}_1, \widetilde{C}_2) = l(\wedge (\widetilde{C}_1 \cap \widetilde{C}_2)) = l(v) = r_{*}.
\end{equation}
Since \(\widetilde{C}_1\), \(\widetilde{C}_2 \in B_v\), equalities \eqref{l11.33:e33} yield \(\diam B_v \geqslant r_{*}\). The last inequality and \eqref{l11.33:e30} give us equality \eqref{l11.33:e26}.

To prove equality \eqref{l11.33:e27}, we first note that \(\overline{B}_{r_{*}}(C_1)\) is the smallest ball containing \(B_v\) (see Proposition~\ref{p2.12}). Consequently, we have the inclusion \(B_v \subseteq \overline{B}_{r_{*}}(C_1)\). Thus, to complete the proof, it is enough to prove the inclusion
\begin{equation}\label{l11.33:e34}
\overline{B}_{r_{*}}(C_1) \subseteq B_v.
\end{equation}

Inclusion~\eqref{l11.33:e34} holds if and only if, for every \(C \in \overline{B}_{r_{*}}(C_1) \setminus \{C_1\}\), we have \(v \in C\). Let \(C\) be an arbitrary point of \(\overline{B}_{r_{*}}(C_1) \setminus \{C_1\}\). The point \(\wedge (C_1 \cap C)\) belongs to \(C_1 \cap C\). Using \(C \in \overline{B}_{r_{*}}(C_1) \setminus \{C_1\}\) we obtain
\begin{equation}\label{l11.33:e35}
l(\wedge (C_1 \cap C)) = d_{\mathbf{MC}_{T(r)}}(C_1, C) \leqslant r_{*} = l(v).
\end{equation}
The memberships
\[
v \in C_1 \quad \text{and} \quad \wedge (C_1 \cap C) \in C_1,
\]
the monotonicity of \(l\), and \eqref{l11.33:e35} imply the inequality
\[
\wedge (C_1 \cap C) \preccurlyeq_{T(r)} v.
\]
The last inequality and \(\wedge (C_1 \cap C) \in C\) yield \(v \in C\). Inclusion~\eqref{l11.33:e34} follows. The proof is completed.
\end{proof}

\begin{proposition}\label{p11.33}
Let \(T = T(r)\) be a locally finite rooted tree and let \(l \colon V(T) \to \RR^{+}\) be a monotone labeling on \(T(r)\). Then the following statements are equivalent:
\begin{enumerate}
\item \label{p11.33:s1} The labeled representing tree \(T_{\mathbf{MC}_{T(r)}}(l_{\mathbf{MC}_{T(r)}})\) of the compact ultrametric space \((\mathbf{MC}_{T(r)}, d_{\mathbf{MC}_{T(r)}})\) is isomorphic to the labeled tree \(T(l)\).
\item \label{p11.33:s2} For every \(u \in V(T)\) we have \(\delta_{T(r)}^{+}(u) \neq 1\).
\end{enumerate}
\end{proposition}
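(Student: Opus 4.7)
\medskip

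\noindent\textbf{Proof plan.}
My approach is to exhibit an explicit candidate isomorphism $\Phi\colon V(T)\to \BB_{\mathbf{MC}_{T(r)}}$ by sending
\[
\Phi(v)=B_v=\{C\in \mathbf{MC}_{T(r)}\colon v\in C\}
\]
and then show that $\Phi$ is an isomorphism of labeled (rooted) trees precisely when condition \ref{p11.33:s2} holds. Note that $\BB_{\mathbf{MC}_{T(r)}}=V(T_{\mathbf{MC}_{T(r)}})$ by Lemma~\ref{l8.2}, so $\Phi$ takes values in the correct set. Lemma~\ref{l11.33} makes $\Phi$ surjective, sends the root $r$ to $\mathbf{MC}_{T(r)}=r_{\mathbf{MC}_{T(r)}}$ (since every maximal chain contains $r$ by Lemma~\ref{l11.27}), and identifies the diameter of $B_v$ with $l(v)$ — for non-leaves this is equation \eqref{l11.33:e26} from the proof of Lemma~\ref{l11.33}, and for leaves both values equal $0$ by Corollary~\ref{c11.30} and the singleton description of $B_v$ in the $\diam B=0$ case of that same proof. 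Hence label preservation is automatic, and the whole question reduces to whether $\Phi$ is bijective and edge-preserving.

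The crux of the argument is the following equivalence, which I expect to be the main obstacle:
\[
B_{v_1}\subseteq B_{v_2}\ \Longleftrightarrow\ v_1\preccurlyeq_{T(r)} v_2.
\]
The direction ``$\Leftarrow$'' is immediate from Lemma~\ref{l11.27}: if $v_2$ lies on the path from $v_1$ to the root, then every maximal chain through $v_1$ contains $v_2$, regardless of whether \ref{p11.33:s2} holds. The converse is the step where condition \ref{p11.33:s2} is essential. Assuming $B_{v_1}\subseteq B_{v_2}$, I would argue by contradiction: if $v_1\not\preccurlyeq_{T(r)} v_2$, then either $v_1\parallel_{T(r)} v_2$ or $v_2\prec_{T(r)} v_1$. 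In the first case, the path from $v_1$ to $r$ avoids $v_2$, and one extends it downward through any maximal chain of descendants of $v_1$ (Proposition~\ref{l11.18}) to obtain a maximal chain containing $v_1$ but not $v_2$, a contradiction. In the second case $v_2$ is a strict descendant of $v_1$; here I will induct downward from $v_1$ and show that at some vertex $u$ on the descending branch toward $v_2$ the hypothesis $\delta_{T(r)}^+(u)\neq 1$ together with $u\neq$ leaf forces $\delta_{T(r)}^+(u)\geqslant 2$, allowing me to choose a direct successor avoiding $v_2$ and extend to a maximal chain through $v_1$ missing $v_2$.

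Once this equivalence is established, $\Phi$ is an order isomorphism of $(V(T),\preccurlyeq_{T(r)})$ onto $(\BB_{\mathbf{MC}_{T(r)}},\preccurlyeq_{\mathbf{MC}_{T(r)}})$: surjectivity comes from Lemma~\ref{l11.33}, and the equivalence gives both injectivity and order-preservation in both directions. By Lemma~\ref{l10.14} this is the same as an order isomorphism of $(V(T),\preccurlyeq_{T(r)})$ and $(V(T_{\mathbf{MC}_{T(r)}}),\preccurlyeq_{T_{\mathbf{MC}_{T(r)}}(r_{\mathbf{MC}_{T(r)}})})$. Proposition~\ref{p9.23} then upgrades $\Phi$ to an isomorphism of rooted trees, and combined with the label preservation noted above this yields the isomorphism of labeled trees required by \ref{p11.33:s1}.

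For the converse implication \ref{p11.33:s1}$\Rightarrow$\ref{p11.33:s2}, I would apply Proposition~\ref{p11.30} to see that $(\mathbf{MC}_{T(r)},d_{\mathbf{MC}_{T(r)}})$ is compact, hence totally bounded, so Proposition~\ref{p10.17} gives $\delta_{T_{\mathbf{MC}_{T(r)}}(r_{\mathbf{MC}_{T(r)}})}^+(B)\neq 1$ for every $B\in V(T_{\mathbf{MC}_{T(r)}})$. Both labelings in \ref{p11.33:s1} are monotone ($l$ by hypothesis, $l_{\mathbf{MC}_{T(r)}}$ by Example~\ref{ex11.29}), so Proposition~\ref{p11.29} promotes any labeled-tree isomorphism $T(l)\simeq T_{\mathbf{MC}_{T(r)}}(l_{\mathbf{MC}_{T(r)}})$ to a rooted-tree isomorphism; such isomorphisms carry direct successors to direct successors and thus preserve $\delta^+$, transporting the condition $\delta^+\neq 1$ from $T_{\mathbf{MC}_{T(r)}}(r_{\mathbf{MC}_{T(r)}})$ back to $T(r)$.
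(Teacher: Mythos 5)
Your proposal is correct and follows essentially the same route as the paper's proof: the same candidate map \(\Phi(v) = \{C \in \mathbf{MC}_{T(r)} \colon v \in C\}\), Lemma~\ref{l11.33} for surjectivity and the identification \(\diam \Phi(v) = l(v)\), Proposition~\ref{p9.23} to upgrade the order isomorphism to a rooted-tree isomorphism, and Propositions~\ref{p11.29} and \ref{p10.17} for the implication \ref{p11.33:s1}\(\Rightarrow\)\ref{p11.33:s2}. The only divergence is in the middle verification: you establish the biconditional \(B_{v_1} \subseteq B_{v_2} \Leftrightarrow v_1 \preccurlyeq_{T(r)} v_2\) directly by a case analysis on the tree (which yields injectivity and both directions of order-preservation at once), whereas the paper proves injectivity by a separate diameter argument and then invokes Lemma~\ref{l6.2}, checking that \(\Phi\) is strictly isotone and carries parallel pairs to parallel pairs; both verifications use the hypothesis \(\delta_{T(r)}^{+}(u) \neq 1\) in the same essential place.
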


\begin{proof}
\(\ref{p11.33:s1} \Rightarrow \ref{p11.33:s2}\). Let \ref{p11.33:s1} hold. Then there is an isomorphism
\[
\Phi \colon V(T_{\mathbf{MC}_{T(r)}}) \to V(T)
\]
of the labeled representing tree \(T_{\mathbf{MC}_{T(r)}}(l_{\mathbf{MC}_{T(r)}})\) and the labeled tree \(T(l)\). The labelings
\[
l \colon V(T) \to \RR^{+} \quad \text{and} \quad l_{\mathbf{MC}_{T(r)}} \colon V(T_{\mathbf{MC}_{T(r)}}) \to \RR^{+}
\]
are monotone (see Example~\ref{ex11.29}). Hence, by Proposition~\ref{p11.29}, \(\Phi\) is an isomorphism of the rooted trees \(T_{\mathbf{MC}_{T(r)}}(r_{\mathbf{MC}_{T(r)}})\) and \(T(r)\). Now statement~\ref{p11.33:s2} of the proposition being proved follows from Proposition~\ref{p10.17}.

\(\ref{p11.33:s2} \Rightarrow \ref{p11.33:s1}\). Let \ref{p11.33:s2} hold. Then, by Lemma~\ref{l11.33}, the mapping
\[
\Phi \colon V(T) \to \BB_{\mathbf{MC}_{T(r)}}
\]
with
\begin{equation}\label{p11.33:e2}
\Phi(v) = \{C \in \mathbf{MC}_{T(r)} \colon v \in C\}
\end{equation}
is correctly defined and surjective. By Proposition~\ref{p11.30}, the space \((\mathbf{MC}_{T(r)}, d_{\mathbf{MC}_{T(r)}})\) is ultrametric and compact and, consequently, by Lemma~\ref{l8.2}, the equality
\[
V(T_{\mathbf{MC}_{T(r)}}) = \BB_{\mathbf{MC}_{T(r)}}
\]
holds. Hence, it suffices to show that \(\Phi\) is an isomorphism of the labeled trees \(T(l)\) and \(T_{\mathbf{MC}_{T(r)}}(l_{\mathbf{MC}_{T(r)}})\).

Analyzing of the proof of Lemma~\ref{l11.33}, we obtain the equality
\begin{equation}\label{p11.33:e1}
\diam \Phi(v) = l(v)
\end{equation}
for every \(v \in V(T)\) (see, in particular, \eqref{l11.33:e22}, when \(v\) is a leaf of \(T\) and, respectively, \eqref{l11.33:e26}, when \(v\) is not a leaf of \(T\)). Hence, it suffices to show that \(\Phi\) is an isomorphism of \(T(r)\) and \(T_{\mathbf{MC}_{T(r)}}(r_{\mathbf{MC}_{T(r)}})\).

Let us show first that the mapping \(\Phi\) is injective. Suppose contrary that there exist \(v_1\), \(v_2 \in V(T)\) such that
\begin{equation}\label{p11.33:e3}
v_1 \neq v_2 \quad \text{and} \quad \Phi(v_1) = \Phi(v_2).
\end{equation}
Let \(C_1\) and \(C_2\) belong to \(\mathbf{MC}_{T(r)}\) such that \(v_1 \in C_1\) and \(v_2 \in C_2\). From~\eqref{p11.33:e1} and \eqref{p11.33:e3} it follows that \(l(v_1) = l(v_2)\) holds. The last equality, Lemma~\ref{l11.27}, Remark~\ref{r11.28} and \(v_1 \neq v_2\) imply \(v_1 \parallel_{T(r)} v_2\). In particular, we obtain \(C_1 \neq C_2\). Consequently, the equality
\[
d_{\mathbf{MC}_{T(r)}}(C_1, C_2) = l(\wedge (C_1 \cap C_2))
\]
holds (see~\eqref{e11.53}). From \(v_1 \parallel_{T(r)} v_2\) it follows that \(v_i \notin C_1 \cap C_2\), \(i = 1\), \(2\). Hence, we have
\[
v_i \prec_{T(r)} \wedge (C_1 \cap C_2), \quad i=1, 2,
\]
that implies the inequality
\begin{equation}\label{p11.33:e5}
l(v_i) < d_{\mathbf{MC}_{T(r)}}(C_1, C_2)
\end{equation}
(see Remark~\ref{r11.28}). Using \eqref{p11.33:e1} we see that \(l(v_1) (=l(v_2))\) is the diameter of the ball \(\Phi(v_1) (=\Phi(v_2))\) which contradicts~\eqref{p11.33:e5} because \(C_1\) and \(C_2\) belong to this ball. Thus, we proved that \(\Phi\) is an injective mapping.

It was noted above that \(\Phi\) is also surjective. Hence, \(\Phi \colon V(T) \to \BB_{\mathbf{MC}_{T(r)}}\) is a bijection. By Proposition~\ref{p9.23}, the bijection \(\Phi\) is an isomorphism of the rooted tree \(T(r)\) and \(T_{\mathbf{MC}_{T(r)}}(r_{\mathbf{MC}_{T(r)}})\) if nd only if it is an order isomorphism of posets \((V(T), {\preccurlyeq}_{T(r)})\) and \((V(T_{\mathbf{MC}_{T(r)}}), {\preccurlyeq}_{T_{\mathbf{MC}_{T(r)}}})\). Lemma~\ref{l10.14} implies the equality
\[
(V(T_{\mathbf{MC}_{T(r)}}), {\preccurlyeq}_{T_{\mathbf{MC}_{T(r)}}}) = (\BB_{\mathbf{MC}_{T(r)}}, {\preccurlyeq}_{\mathbf{MC}_{T(r)}}).
\]
Thus, it suffices to prove that \(\Phi\) is an order isomorphism of the posets
\[
(V(T), {\preccurlyeq}_{T(r)}) \quad \text{and} \quad (\BB_{\mathbf{MC}_{T(r)}}, {\preccurlyeq}_{\mathbf{MC}_{T(r)}}).
\]
Using Lemma~\ref{l11.27} and the uniqueness of upper cover in \((V(T), {\preccurlyeq}_{T(r)})\) we can simply prove the validity of the implication
\[
(u \sqsubset_{T(r)} v) \Rightarrow (\Phi(u) \preccurlyeq_{\mathbf{MC}_{T(r)}} \Phi(v))
\]
for all \(u\), \(v \in V(T)\) that, together with Theorem~\ref{t9.20}, proves the validity of
\[
(p \preccurlyeq_{T(r)} q) \Rightarrow (\Phi(p) \preccurlyeq_{\mathbf{MC}_{T(r)}} \Phi(q))
\]
for all \(p\), \(q \in V(T)\). Thus, \(\Phi\) is isotone as a mapping from \((V(T), {\preccurlyeq}_{T(r)})\) to \((\BB_{\mathbf{MC}_{T(r)}}, {\preccurlyeq}_{\mathbf{MC}_{T(r)}})\). Since \(\Phi\) is bijective, the last statement implies that \(\Phi\) is strictly isotone (see formula~\eqref{d2.8:e1}). Hence, by Lemma~\ref{l6.2}, it suffices to prove the validity of implication
\begin{equation}\label{p11.33:e6}
(v_1 \parallel_{T(r)} v_2) \Rightarrow (\Phi(v_1) \parallel_{\mathbf{MC}_{T(r)}} \Phi(v_2))
\end{equation}
for all \(v_1\), \(v_2 \in V(T)\).

Let \(v_1\), \(v_2 \in V(T)\) and \(v_1 \parallel_{T(r)} v_2\) hold. Suppose that \(\Phi(v_1) \parallel_{\mathbf{MC}_{T(r)}} \Phi(v_2)\) is false. Using Proposition~\ref{p2.5} we see that \(\Phi(v_1) \parallel_{\mathbf{MC}_{T(r)}} \Phi(v_2)\) is false if and only if \(\Phi(v_1) \cap \Phi(v_2) \neq \varnothing\). Without loss of generality, we may assume that \(l(v_2) \leqslant l(v_1)\). Then the equalities \(\diam \Phi(v_1) = l(v_1)\), \(\diam \Phi(v_2) = l(v_2)\), the inclusion
\[
\BB_{\mathbf{MC}_{T(r)}} \subseteq \overline{\BB}_{\mathbf{MC}_{T(r)}}
\]
(see Corollary~\ref{c2.41}) and Proposition~\ref{p2.7} imply
\begin{equation}\label{p11.33:e7}
\Phi(v_2) \subseteq \Phi(v_1).
\end{equation}
As in the proof of injectivity of \(\Phi\), we can show that
\begin{equation}\label{p11.33:e8}
\diam \Phi(v_1) = l(v_1) < d_{\mathbf{MC}_{T(r)}} (C_1, C_2)
\end{equation}
holds whenever \(C_i \in \mathbf{MC}_{T(r)}\) and \(v_i \in C_i\), \(i =1\), \(2\). By \eqref{p11.33:e2}, we have \(C_1 \in \Phi(v_1)\) and \(C_2 \in \Phi(v_2)\). Hence, \eqref{p11.33:e8} contradicts \eqref{p11.33:e7}. Implication~\eqref{p11.33:e6} is valid.
\end{proof}

The next theorem can be considered as a main result of the paper.

\begin{theorem}\label{t7.3}
Let \(T=T(l)\) be a labeled tree. Then following statements \ref{t7.3:s1}--\ref{t7.3:s3} are equivalent:
\begin{enumerate}
\item\label{t7.3:s1} There is a nonempty compact ultrametric space \((X,d)\) such that \(T_X(l_X) \simeq T(l)\).
\item\label{t7.3:s2} There is a nonempty totally bounded ultrametric space \((Y, \rho)\) such that \(T_Y(l_Y) \simeq T(l)\).
\item\label{t7.3:s3} \(T\) is locally finite and there is \(r \in V(T)\) such that \(l\) is a monotone labeling on \(T(r)\) and for every \(u \in V(T)\) we have
\begin{equation}\label{t7.3:e1}
\delta_{T(r)}^+(u)\neq 1.
\end{equation}
\end{enumerate}
\end{theorem}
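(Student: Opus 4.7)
The plan is to establish the equivalences via the circular chain (i)~$\Rightarrow$~(ii)~$\Rightarrow$~(iii)~$\Rightarrow$~(i), leveraging the machinery already developed in the paper. Nearly all the substantive work has been done in the preceding propositions, so the proof will primarily consist of stitching these results together.

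The implication (i)~$\Rightarrow$~(ii) is immediate, since every compact metric space is totally bounded, so the space $(X,d)$ from (i) itself witnesses (ii).

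For (ii)~$\Rightarrow$~(iii), I would start with a nonempty totally bounded ultrametric space $(Y,\rho)$ and an isomorphism $f\colon V(T_Y)\to V(T)$ of the labeled trees $T_Y(l_Y)$ and $T(l)$. By Proposition~\ref{p8.4}, the free representing tree $T_Y$ is locally finite, and since isomorphism of free trees preserves vertex degrees, $T$ is locally finite as well. Now set $r=f(r_Y)=f(Y)$; I would verify $l$ is monotone on $T(r)$ by transferring the monotonicity of $l_Y$ on $T_Y(r_Y)$ established in Example~\ref{ex11.29}. The isomorphism of labeled trees sends the labeling $l_Y$ to $l$ (via \eqref{d2.5:e1}), and, since the scaling function of the order $\preccurlyeq_{T(r)}$ is determined by the tree structure and the choice of root, $f$ is an order isomorphism of $(V(T_Y),\preccurlyeq_{T_Y(r_Y)})$ and $(V(T),\preccurlyeq_{T(r)})$ by Proposition~\ref{p9.23}. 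Hence Definition~\ref{d11.27}(i)--(ii) transfer directly from $l_Y$ to $l$. Finally, \eqref{t7.3:e1} for every $u\in V(T)$ follows from the corresponding property of $T_Y(r_Y)$ established in Proposition~\ref{p10.17}, using that $f$ is a graph isomorphism preserving the root and therefore preserving out-degrees.

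For (iii)~$\Rightarrow$~(i), I would fix the root $r$ for which condition~(iii) holds and consider the ultrametric space $(\mathbf{MC}_{T(r)},d_{\mathbf{MC}_{T(r)}})$ from~\eqref{e11.53}. By Proposition~\ref{p11.30}, this is a nonempty compact ultrametric space (nonemptiness follows from Proposition~\ref{l11.18}). Proposition~\ref{p11.33} then yields the labeled-tree isomorphism
\[
T_{\mathbf{MC}_{T(r)}}(l_{\mathbf{MC}_{T(r)}})\simeq T(l),
\]
so taking $X=\mathbf{MC}_{T(r)}$ and $d=d_{\mathbf{MC}_{T(r)}}$ witnesses statement (i).

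The main obstacle is essentially bookkeeping in (ii)~$\Rightarrow$~(iii): one must be careful that the choice of root $r=f(r_Y)$ indeed induces the partial order $\preccurlyeq_{T(r)}$ that makes $f$ simultaneously a rooted tree isomorphism and a poset isomorphism, so that the structural properties (local finiteness, monotonicity of $l$, and the out-degree condition) can be transported from the representing side. Everything else is a direct appeal to results already established (Propositions~\ref{p10.17}, \ref{p11.30}, and \ref{p11.33} together with Example~\ref{ex11.29}), which makes the remainder of the proof essentially a one-line invocation in each direction.
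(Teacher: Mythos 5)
Your proof is correct, and it organizes the equivalences differently from the paper. The paper first disposes of \(\ref{t7.3:s1} \Leftrightarrow \ref{t7.3:s2}\) by invoking Theorem~\ref{t11.12} (labeled representing trees are isomorphic iff the completions are isometric), and then proves \(\ref{t7.3:s1} \Rightarrow \ref{t7.3:s3}\) and \(\ref{t7.3:s3} \Rightarrow \ref{t7.3:s1}\). You instead close the cycle \(\ref{t7.3:s1} \Rightarrow \ref{t7.3:s2} \Rightarrow \ref{t7.3:s3} \Rightarrow \ref{t7.3:s1}\), taking the first arrow for free (compact implies totally bounded) and running the paper's forward argument from the weaker hypothesis of total boundedness. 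This works because every ingredient you use --- Proposition~\ref{p8.4} (local finiteness of \(T_Y\)), Example~\ref{ex11.29} (monotonicity of \(l_Y\)), Proposition~\ref{p10.17} (the out-degree condition), and Proposition~\ref{p9.23} (rooted-tree isomorphism equals order isomorphism, which lets you transport Definition~\ref{d11.27}) --- is stated for totally bounded spaces, not just compact ones. What your route buys is that it bypasses Theorem~\ref{t11.12} entirely, and with it the fairly heavy Proposition~\ref{p11.10} on isometry of compact spaces; what it gives up is the extra information the paper's route supplies, namely that the spaces witnessing \ref{t7.3:s1} and \ref{t7.3:s2} can be taken to have isometric completions. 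The implication \(\ref{t7.3:s3} \Rightarrow \ref{t7.3:s1}\) via \((\mathbf{MC}_{T(r)}, d_{\mathbf{MC}_{T(r)}})\) and Propositions~\ref{p11.30} and \ref{p11.33} is exactly the paper's argument. One small point: defining \(r = f(r_Y)\) makes \(f\) a rooted-tree isomorphism directly from Definition~\ref{d8.10}, so your appeal to Proposition~\ref{p9.23} to get the order isomorphism is legitimate and you do not need Proposition~\ref{p11.29} at all, which is a slight simplification over the paper's exposition of the forward direction.
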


\begin{proof}
By Theorem~\ref{t11.12}, the logical equivalence \(\ref{t7.3:s1} \Leftrightarrow \ref{t7.3:s2}\) is valid.

\(\ref{t7.3:s1} \Rightarrow \ref{t7.3:s3}\). Suppose that there is a nonempty compact ultrametric space \((X, d)\) such that \(T_X(l_X) \simeq T(l)\). Then \(T\) is locally finite by Proposition~\ref{p8.4}. Let
\[
\Phi \colon V(T_X) \to V(T)
\]
be an isomorphism of labeled trees \(T_X(l_X)\) and \(T(l)\). The labeling \(l_X \colon V(T_X) \to \RR^{+}\) is monotone on \(T_X(r_X)\) (see Example~\ref{ex11.29}). Consequently, \(l\) is a monotone labeling on the rooted tree \(T = T(r)\) with \(r = \Phi(X)\). Hence, by Proposition~\ref{p11.29}, \(\Phi\) is also an isomorphism of the tooted trees \(T_X(r_X)\) and \(T(r)\), \(r = \Phi(X)\). Now \eqref{t7.3:e1} follows from formula~\eqref{p10.17:e2}.

\(\ref{t7.3:s3} \Rightarrow \ref{t7.3:s1}\). Suppose \ref{t7.3:s3} holds. Let us denote by \(r\) the vertex of \(T\) for which \(l\) is monotone on \(T(r)\) and \eqref{t7.3:e1} holds for every \(u \in V(T)\). Since \(T\) is locally finite, the set \(\mathbf{MC}_{T(r)}\) endowed with metric \(d_{\mathbf{MC}_{T(r)}}\) (see~\eqref{e11.53}) is a compact ultrametric space by Proposition~\ref{p11.30}. In addition, since we have \eqref{t7.3:e1} for every \(u \in V(T)\), Proposition~\ref{p11.33} implies
\[
T(l) \simeq T_{\mathbf{MC}_{T(r)}}(l_{\mathbf{MC}_{T(r)}}).
\]
Statement~\ref{t7.3:s1} follows.
\end{proof}

Theorem~\ref{t7.3} guarantees, in particular, the existence of Gurvich---Vyalyi representation of finite ultrametric spaces by monotone trees.

\begin{corollary}\label{c7.6}
Let \(T=T(l)\) be a finite labeled tree. Then the following statements \ref{c7.6:s1} and \ref{c7.6:s2} are equivalent.
\begin{enumerate}
\item \label{c7.6:s1} There is a nonempty finite ultrametric space \((X, d)\) with \(T_X(l_X) \simeq T(l)\).
\item \label{c7.6:s2} There is \(r \in V(T)\) such that:
\begin{enumerate}
\item \label{c7.6:s2:1} \(\delta_{T(r)}^+(u)\neq 1\) for every \(u \in V(T)\);
\item \label{c7.6:s2:2} \(l(v) < l(u)\) whenever \(v\) is a direct successor of \(u\);
\item \label{c7.6:s2:3} \(l(u) = 0\) if and only if \(u\) is a leaf of \(T(r)\).
\end{enumerate}
\end{enumerate}
\end{corollary}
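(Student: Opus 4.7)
The plan is to deduce Corollary~\ref{c7.6} from Theorem~\ref{t7.3} by specializing from totally bounded (compact) ultrametric spaces to finite ones. The two boundary observations are that every finite tree is trivially locally finite and every finite ultrametric space is trivially compact, so the only substantive work is to show that the monotone-labeling hypothesis of Theorem~\ref{t7.3} is captured by conditions \ref{c7.6:s2:1}--\ref{c7.6:s2:3} in the finite setting, and to ensure that the ultrametric space produced by the theorem can be taken finite.

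For the implication \(\ref{c7.6:s1}\Rightarrow\ref{c7.6:s2}\), I would apply Theorem~\ref{t7.3} directly: the existence of a finite nonempty \((X,d)\) with \(T_X(l_X)\simeq T(l)\) yields some \(r\in V(T)\) such that \(l\) is a monotone labeling on \(T(r)\) and \(\delta^{+}_{T(r)}(u)\neq 1\) for every \(u\in V(T)\). Condition~\ref{c7.6:s2:1} is then immediate, while Definition~\ref{d11.27}\ref{d11.27:s1} gives \ref{c7.6:s2:2}. Condition~\ref{c7.6:s2:3} follows from Corollary~\ref{c11.30}, which characterizes zero-labeled vertices in a monotone labeling as leaves.

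For the converse \(\ref{c7.6:s2}\Rightarrow\ref{c7.6:s1}\), I would first upgrade \ref{c7.6:s2:2}--\ref{c7.6:s2:3} to the full monotone-labeling property. Clause~\ref{d11.27:s1} of Definition~\ref{d11.27} is exactly~\ref{c7.6:s2:2}. For clause~\ref{d11.27:s2}, observe that since \(T\) is finite, any maximal chain \(C\) in \((V(T),\preccurlyeq_{T(r)})\) is finite; by Lemma~\ref{l11.27}, it enumerates as \(c_1\sqsupset\cdots\sqsupset c_n\) with \(c_1=r\) and \(c_n\) a leaf of \(T(r)\). Condition~\ref{c7.6:s2:3} then gives \(l(c_n)=0\), and hence \(\inf\{l(v):v\in V(C)\}=0\). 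Combined with the automatic local finiteness of \(T\) and condition~\ref{c7.6:s2:1}, Theorem~\ref{t7.3} produces a nonempty totally bounded ultrametric space \((Y,\rho)\) with \(T_Y(l_Y)\simeq T(l)\).

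The last step is to see that \(Y\) may be chosen finite. The proof of Theorem~\ref{t7.3} realizes \((Y,\rho)\) as the maximal-chain space \((\mathbf{MC}_{T(r)},d_{\mathbf{MC}_{T(r)}})\); by Lemma~\ref{l11.27}, maximal chains in the finite poset \((V(T),\preccurlyeq_{T(r)})\) are finite and correspond bijectively to leaves of \(T(r)\), so \(|\mathbf{MC}_{T(r)}|\) equals the (finite) number of leaves. The principal point to keep in mind throughout is that \ref{c7.6:s2:3}, paired with the finiteness of maximal chains, is the finite-tree replacement for the infimum-zero clause of Definition~\ref{d11.27}; beyond this verification, the argument is a direct specialization of Theorem~\ref{t7.3}.
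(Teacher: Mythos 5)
Your proposal is correct and follows exactly the route the paper intends: the corollary is stated without a separate proof precisely because it is the specialization of Theorem~\ref{t7.3} to finite trees, with conditions \ref{c7.6:s2:2}--\ref{c7.6:s2:3} standing in for the monotone-labeling clauses of Definition~\ref{d11.27} (via Lemma~\ref{l11.27} and Corollary~\ref{c11.30}) and finiteness of the resulting space following from the finiteness of \(\mathbf{MC}_{T(r)}\) (equivalently, from \(|\BB_X| = |V(T)| < \infty\) forcing \(X\) finite). No gaps.
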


\begin{remark}\label{r11.36}
If \(T = T(l)\) is a finite labeled tree satisfying statement \ref{c7.6:s2:2} of Corollary \ref{c7.6}, then \(T(l) \simeq T_X(l_X)\) is valid for unique up to isometry finite ultrametric space \((X, d)\) (see Theorem~2.7 \cite{DP2019PNUAA} and Theorem~3.6 \cite{Dov2019pNUAA}).
\end{remark}

Using Theorem~\ref{t7.3} we can simply obtain a characterization of the rooted representing trees of totally bounded ultrametric spaces.

\begin{theorem}\label{t10.16}
The following statements are equivalent for every rooted tree \(T(r)\):
\begin{enumerate}
\item \label{t10.16:s1} There is a nonempty totally bounded ultrametric space \((X, d)\) such that the rooted representing tree \(T_X(r_X)\) and \(T(r)\) are isomorphic.
\item \label{t10.16:s2} \(T(r)\) is locally finite and \(\delta_{T(r)}^{+}(u) \neq 1\) holds for every \(u \in V(T(r))\).
\end{enumerate}
\end{theorem}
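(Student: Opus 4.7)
The plan is to deduce Theorem~\ref{t10.16} from the labeled version, Theorem~\ref{t7.3}, by supplying (or extracting) a monotone labeling on $T(r)$.

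The implication \ref{t10.16:s1}$\Rightarrow$\ref{t10.16:s2} is essentially read off from earlier results. If $\Phi \colon V(T_X) \to V(T)$ is an isomorphism of the rooted trees $T_X(r_X)$ and $T(r)$, then it is in particular a graph isomorphism, so $T$ is locally finite by Proposition~\ref{p8.4}, and the equality $\delta_{T(r)}^{+}(\Phi(B)) = \delta_{T_X(r_X)}^{+}(B)$ together with Proposition~\ref{p10.17} gives $\delta_{T(r)}^{+}(u) \neq 1$ for all $u \in V(T)$.

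For the nontrivial direction \ref{t10.16:s2}$\Rightarrow$\ref{t10.16:s1}, the plan is to equip $T(r)$ with a monotone labeling $l$ and then apply Theorem~\ref{t7.3}. For each $v \in V(T)$, let $\operatorname{lev}(v)$ denote the number of edges in the unique path from $r$ to $v$ (so $\operatorname{lev}(r) = 0$), and set
\[
l(v) = \begin{cases} 0 & \text{if } v \text{ is a leaf of } T,\\ 2^{-\operatorname{lev}(v)} & \text{otherwise.}\end{cases}
\]
I then need to verify that this $l$ is a monotone labeling on $T(r)$ in the sense of Definition~\ref{d11.27}. For condition~\ref{d11.27:s1}, if $v \sqsubset_{T(r)} u$ then $u$ is not a leaf (it has at least one direct successor, namely $v$), so $l(u) = 2^{-\operatorname{lev}(u)} > 0$; if $v$ is a leaf then $l(v) = 0 < l(u)$, and otherwise $l(v) = 2^{-(\operatorname{lev}(u)+1)} < l(u)$. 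For condition~\ref{d11.27:s2}, Lemma~\ref{l11.27} splits maximal chains into two cases: finite maximal chains end in a leaf, at which $l$ vanishes, while infinite maximal chains $c_1 \sqsupset_{T(r)} c_2 \sqsupset_{T(r)} \cdots$ satisfy $\operatorname{lev}(c_n) = n-1$, so $l(c_n) = 2^{-(n-1)} \to 0$.

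Having $l$ monotone, together with the hypothesis that $T$ is locally finite and $\delta_{T(r)}^{+}(u) \neq 1$ for all $u$, Theorem~\ref{t7.3} produces a nonempty totally bounded ultrametric space $(Y, \rho)$ with $T_Y(l_Y) \simeq T(l)$ as labeled trees. Since both $l_Y$ and $l$ are monotone labelings, Proposition~\ref{p11.29} forces any such isomorphism to map root to root, hence $T_Y(r_Y) \simeq T(r)$ as rooted trees, which is \ref{t10.16:s1}. The main (and really the only) subtlety is making sure condition~\ref{d11.27:s2} is met on \emph{every} maximal chain, and the case split via Lemma~\ref{l11.27} handles this cleanly; no delicate estimates or additional combinatorial constructions are needed.
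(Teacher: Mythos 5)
Your proof is correct and follows essentially the same route as the paper: deduce the forward direction from Propositions~\ref{p8.4} and \ref{p10.17}, and for the converse construct a monotone labeling depending only on the level of a vertex (the paper uses \(|V(P_v)|^{-1}\) where you use \(2^{-\lev(v)}\), an immaterial difference) and then invoke Theorem~\ref{t7.3} together with Proposition~\ref{p11.29}. Your explicit verification of Definition~\ref{d11.27} via Lemma~\ref{l11.27} is just a spelled-out version of what the paper leaves as "easy to prove".
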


\begin{proof}
The validity of implication \(\ref{t10.16:s1} \Rightarrow \ref{t10.16:s2}\) follows from Proposition~\ref{p10.17}.

\(\ref{t10.16:s2} \Rightarrow \ref{t10.16:s1}\). Let \(T = T(r)\) be a locally finite rooted tree and let \(\delta^{+}(u) \neq 1\) holds for every \(u \in V(T)\). We must show that \(T(r)\) is isomorphic to the rooted representing tree \(T_X(r_X)\) of some nonempty totally bounded ultrametric space \((X, d)\).

Suppose that there exists a monotone labeling \(l \colon V(T) \to \RR^{+}\) on \(T(r)\). Then, by Theorem~\ref{t7.3}, there is a nonempty totally bounded ultrametric space \((X, d)\) such that \(T_X(l_X) \simeq T(l)\). Let \(\Phi \colon V(T_X) \to V(T)\) be an isomorphism of \(T_X(l_X)\) and \(T(l)\). Proposition~\ref{p11.29} and Example~\ref{ex11.29} imply that \(\Phi\) is also an isomorphism of the rooted trees \(T_X(r_X)\) and \(T(r)\). Thus, it suffices to prove the existence of a monotone labeling \(l \colon V(T) \to \RR^{+}\) on \(T(r)\).

Let us define the labeling \(l \colon V(T) \to \RR^{+}\) as \(l(v) \equiv 0\) if \(T\) is empty and, in the opposite case, write
\[
l(v) = \begin{cases}
1 & \text{if } v = r,\\
0 & \text{if \(v\) is a leaf of \(T\)},\\
|V(P_v)|^{-1} & \text{othetwise},
\end{cases}
\]
where \(P_v\) is the unique path connected \(v\) and \(r\) in \(T\) and \(|V(P_v)|\) is the number of vertices of \(P_v\). Using Theorem~\ref{t9.20} it is easy to prove that any labeling so defined is monotone on \(T(r)\).
\end{proof}

Now we can completely describe the free representing trees of totally bounded ultrametric spaces. Let us start from the next example.

\begin{example}\label{ex8.8}
Let \(T\) be a tree with \(|V(T)| = 2\). Then there is no totally bounded ultrametric space \((X, d)\) such that the trees \(T\) and \(T_X\) are isomorphic. Indeed, if \((X, d)\) is a totally bounded ultrametric space for which \(|V(T_X)| = 2\), then we have \(|X| \geqslant 2\). By Proposition~\ref{p2.36}, the diametrical graph \(G_{X, d}\) is complete \(k\)-partite for an integer \(k \geqslant 2\). Thus, the inequality \(|V(T_X)| \geqslant 3\) holds whenever \(|X| \geqslant 2\).
\end{example}

\begin{theorem}\label{t9.10}
The following conditions are equivalent for every tree \(T\):
\begin{enumerate}
\item \label{t9.10:s1} There is a nonempty totally bounded ultrametric space \((X, d)\) such that the free representing tree \(T_X\) and \(T\) are isomorphic.
\item \label{t9.10:s2} \(T\) is a locally finite tree with \(|V(T)| \neq 2\) and the equation \(\delta(v) = 2\) has at most one solution \(v \in V(T)\).
\end{enumerate}
\end{theorem}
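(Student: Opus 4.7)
The plan is to reduce the theorem to Theorem~\ref{t10.16}, which characterizes the \emph{rooted} representing trees of totally bounded ultrametric spaces; the bridge is the fact that any isomorphism of rooted trees $T_X(r_X) \to T(r)$ is automatically an isomorphism of the underlying free trees $T_X$ and $T$.

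For the implication \ref{t9.10:s1} $\Rightarrow$ \ref{t9.10:s2} I would appeal directly to Proposition~\ref{p8.4}, which already yields that $T_X$ is locally finite and that the equation $\delta(v) = 2$ has at most one solution in $V(T_X)$. The remaining claim $|V(T_X)| \neq 2$ is essentially the observation in Example~\ref{ex8.8}: if $|X| = 1$ then $V(T_X) = \{X\}$ has a single element, while if $|X| \geq 2$ then Proposition~\ref{p2.36} makes $G_{X,d}$ complete $k$-partite for some integer $k \geq 2$, so the root $X$ together with its $k$ direct successors forces $|V(T_X)| \geq 1 + k \geq 3$.

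For \ref{t9.10:s2} $\Rightarrow$ \ref{t9.10:s1} the key step is to select a root $r \in V(T)$ so that the rooted tree $T(r)$ meets the hypotheses of Theorem~\ref{t10.16}. Recalling the relations $\delta_{T(r)}^{+}(r) = \delta_T(r)$ and $\delta_{T(r)}^{+}(u) = \delta_T(u) - 1$ for $u \neq r$, the requirement $\delta_{T(r)}^{+}(u) \neq 1$ for all $u \in V(T)$ translates into
\[
\delta_T(r) \neq 1 \quad \text{and} \quad \delta_T(u) \neq 2 \ \text{for every } u \in V(T) \setminus \{r\}.
\]
I would split into two cases. If $T$ has a vertex $v^{*}$ of degree $2$, then by hypothesis it is unique, so taking $r = v^{*}$ satisfies both requirements simultaneously. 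Otherwise $T$ has no vertex of degree $2$, and it suffices to exhibit a vertex of degree in $\{0\} \cup \{3, 4, \dots\}$. When $|V(T)| = 1$ the unique vertex has degree $0$ and works. When $|V(T)| \geq 3$ the handshake identity $\sum_{v \in V(T)} \delta_T(v) = 2(|V(T)| - 1)$ rules out the possibility that every vertex has degree $1$ (this would force $|V(T)| = 2$, excluded by hypothesis), so some vertex has degree $\neq 1$, hence, in the absence of degree-$2$ vertices, degree $\geq 3$.

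Once such a root $r$ is fixed, Theorem~\ref{t10.16} furnishes a nonempty totally bounded ultrametric space $(X, d)$ with $T_X(r_X) \simeq T(r)$, which in particular is an isomorphism of the free trees $T_X$ and $T$. The main obstacle is the excluded configuration $|V(T)| = 2$: in that case $T$ is a single edge with both vertices of degree $1$, no choice of root yields $\delta^{+}(u) \neq 1$ for all $u$, and Theorem~\ref{t10.16} cannot be applied. This explains why the hypothesis $|V(T)| \neq 2$ must be imposed in condition \ref{t9.10:s2}, and verifying that this is the only obstruction is the crux of the argument.
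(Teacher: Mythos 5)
Your proposal is correct and follows essentially the same route as the paper: the forward direction via Proposition~\ref{p8.4} and Example~\ref{ex8.8}, and the converse by selecting a root (the unique degree-$2$ vertex if one exists, otherwise a vertex of degree $0$ or $\geqslant 3$) so that Theorem~\ref{t10.16} applies, noting that a rooted-tree isomorphism is in particular a free-tree isomorphism. The only minor informality is invoking the handshake identity for $T$ itself, which may be infinite; the paper applies it to a finite subtree containing three vertices, but either way the needed conclusion (a tree with $|V(T)| \geqslant 3$ has a vertex of degree $\geqslant 2$) is immediate.
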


\begin{proof}
Proposition~\ref{p8.4} and Example~\ref{ex8.8} imply the validity of \(\ref{t9.10:s1} \Rightarrow \ref{t9.10:s2}\).

\(\ref{t9.10:s2} \Rightarrow \ref{t9.10:s1}\). Suppose \(T\) is a locally finite tree with \(|V(T)| \neq 2\) and the equation \(\delta_{T}(v) = 2\) has at most one solution \(v \in V(T)\). Since any two isomorphic rooted trees are also isomorphic as free trees, Theorem~\ref{t10.16} implies that statement~\ref{t9.10:s1} of Theorem~\ref{t9.10} holds if there is \(r \in V(T)\) such that
\begin{equation}\label{t9.10:e1}
\delta_{T(r)}^{+}(u) \neq 1
\end{equation}
for every \(u \in V(T(r))\).

Let us assume first that there is a unique \(v^{*} \in V(T)\) satisfying the equation \(\delta_{T}(v^{*}) = 2\). Then we claim that~\eqref{t9.10:e1} holds for every \(u \in V(T(r))\) with \(r = v^{*}\). Using~\eqref{e10.43} and \(\delta_{T}(v^{*}) = 2\) we obtain
\begin{equation}\label{t9.10:e2}
\delta_{T(r)}^{+}(v^{*}) = \delta_{T}(v^{*}) = 2 \neq 1
\end{equation}
for \(v = v^{*} = r\) and
\begin{equation}\label{t9.10:e3}
\delta_{T(r)}^{+}(v) = \delta_{T}(v)-1
\end{equation}
for \(v \neq v^{*}\) and \(v^{*} = r\). Since \(v^{*}\) is an unique solution of \(\delta_{T}(v) = 2\), equality \eqref{t9.10:e3} implies \(\delta_{T(r)}^{+}(v) \neq 1\). Thus, \eqref{t9.10:e1} is valid for every \(u \in V(T(r))\) with \(r = v^{*}\).

Let \(\delta_{T}(v) \neq 2\) hold for every \(v \in V(T)\). Condition \(|V(T)| \neq 2\) implies either \(|V(T)| = 1\) or \(|V(T)| \geqslant 3\). If \(|V(T)| = 1\), then statement~\ref{t9.10:s1} of the present theorem is evidently valid for \((X, d)\) with \(|X| = 1\). Consequently, it suffices to consider only the case \(|V(T)| \geqslant 3\).

Let \(v_1\), \(v_2\), \(v_3\) be distinct vertices of \(T\) and let \(G\) be a finite subtree of \(T\) such that \(v_i \in V(G)\), \(i = 1\), \(2\), \(3\). Using the equality
\[
\sum_{v \in V(G)} \delta_G(v) = 2(|V(G)| - 1)
\]
(see, for example, Exercise~4.1.7 in \cite{BM2008}) and the inequality \(|V(G)| \geqslant 3\) we can find a vertex \(v^{*} \in V(G)\) such that \(\delta_G(v^{*}) \geqslant 2\). Since \(V(G) \subseteq V(T)\) and \(\delta_G(v^{*}) \leqslant \delta_T(v^{*})\) hold, from \(\delta_G(v^{*}) \geqslant 2\) it follows that \(\delta_T(v^{*}) \geqslant 2\). The last inequality and \(\delta_{T}(v^{*}) \neq 2\) imply
\begin{equation}\label{t9.10:e4}
\delta_{T}(v^{*}) \geqslant 3.
\end{equation}
Inequality~\eqref{t9.10:e4}, formula~\eqref{e10.43} and the condition
\[
\delta_{T}(u) \neq 2, \quad \forall u \in V(T)
\]
yield~\eqref{t9.10:e1} for every \(u \in V(T(r))\) with \(r = v^{*}\).
\end{proof}

The next theorem is a characterization of ordered balleans \((\BB_{X}, {\preccurlyeq}_X)\) of totally bounded ultrametric spaces \((X, d)\) up to order isomorphisms.

\begin{theorem}\label{t10.17}
Let \((S, \preccurlyeq_{S})\) be a poset. Then conditions \ref{t10.17:s1} and \ref{t10.17:s2} are equivalent.
\begin{enumerate}
\item \label{t10.17:s1} There is a nonempty totally bounded ultrametric space \((X, d)\) such that the posets \((\BB_{X}, \preccurlyeq_{X})\) and \((S, \preccurlyeq_{S})\) are isomorphic.
\item \label{t10.17:s2} The poset \((S, \preccurlyeq_{S})\) has the following properties:
\begin{enumerate}
\item\label{t10.17:s2:1} \(S\) contains a largest element \(l\).
\item\label{t10.17:s2:2} If \(p \in S\) is not the largest element of \(S\), then there is a unique \(q \in S\) such that \(p \sqsubset_S q\).
\item\label{t10.17:s2:3} If \(b \in S\) is not a minimal element of \(S\), then the number of lower covers of \(b\) is finite and greater than or equal to two, i.e., there are distinct elements \(p_1\), \(\ldots\), \(p_n \in S\) such that \(n \geqslant 2\) and
\[
(p \sqsubset_S b) \Leftrightarrow (p \in \{p_1, \ldots, p_n\})
\]
is valid for every \(p \in S\).
\item\label{t10.17:s2:4} The equality \({\preccurlyeq_S} = {\sqsubseteq_{S}^{t}}\) holds.
\end{enumerate}
\end{enumerate}
\end{theorem}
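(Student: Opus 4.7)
The plan is to derive Theorem~\ref{t10.17} by combining Theorem~\ref{t9.20} (which characterizes posets of the form $(V(T(r)),\preccurlyeq_{T(r)})$ for an arbitrary rooted tree) with Theorem~\ref{t10.16} (which pins down which rooted trees arise as $T_X(r_X)$ for totally bounded ultrametric $(X,d)$), together with the identification $(\BB_{X},\preccurlyeq_X)=(V(T_X),\preccurlyeq_{T_X(r_X)})$ supplied by Lemma~\ref{l10.14}. The forward direction \ref{t10.17:s1}$\Rightarrow$\ref{t10.17:s2} will then be a translation of properties of the rooted representing tree into properties of its order, while the converse will proceed by first reconstructing a rooted tree from $(S,\preccurlyeq_S)$ via Theorem~\ref{t9.20} and then showing that this tree is of the type described in Theorem~\ref{t10.16}.

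For \ref{t10.17:s1}$\Rightarrow$\ref{t10.17:s2}, I would fix a totally bounded ultrametric space $(X,d)$ with $(\BB_X,\preccurlyeq_X)\cong(S,\preccurlyeq_S)$ and verify properties \ref{t10.17:s2:1}--\ref{t10.17:s2:4} first for $(V(T_X),\preccurlyeq_{T_X(r_X)})$: the root $r_X=X$ is a largest element; the uniqueness of upper covers and the equality $\preccurlyeq=\sqsubseteq^t$ both follow from the first implication of Theorem~\ref{t9.20} (items \ref{t9.20:s2:1}--\ref{t9.20:s2:3}); condition \ref{t10.17:s2:3} reduces, via Lemma~\ref{l10.10} and Remark~\ref{r10.11}, to a statement about direct successors, after observing that minimal elements of $(V(T_X),\preccurlyeq_{T_X(r_X)})$ are precisely the leaves of $T_X(r_X)$ (i.e.\ vertices with out-degree $0$). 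For a non-minimal $b$ one then has $\delta^{+}_{T_X(r_X)}(b)\geq 1$, and Proposition~\ref{p10.17} (local finiteness together with $\delta^+\neq 1$) gives the required finite cardinality $\geq 2$ of the set of lower covers. Transporting everything through the assumed order isomorphism $(\BB_X,\preccurlyeq_X)\cong(S,\preccurlyeq_S)$ yields all four conditions.

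For \ref{t10.17:s2}$\Rightarrow$\ref{t10.17:s1}, I would start from $(S,\preccurlyeq_S)$ satisfying \ref{t10.17:s2:1}--\ref{t10.17:s2:4} and apply the second implication of Theorem~\ref{t9.20} to obtain a rooted tree $T=T(r)$ with $(V(T),\preccurlyeq_{T(r)})\cong(S,\preccurlyeq_S)$. The task is then to check that $T(r)$ fulfills the hypotheses of Theorem~\ref{t10.16}. Using Lemma~\ref{l10.10} and Remark~\ref{r10.11} once more, the lower covers of $v\in V(T)$ in $(V(T),\preccurlyeq_{T(r)})$ are exactly the direct successors of $v$, so the out-degree $\delta^{+}_{T(r)}(v)$ equals the cardinality of the set of lower covers. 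Condition \ref{t10.17:s2:3} then gives $\delta^{+}_{T(r)}(v)\in\{0\}\cup\{2,3,\ldots,n_v\}$ with $n_v<\infty$ (the value $0$ occurring exactly for minimal vertices), which is precisely $\delta^{+}_{T(r)}(v)\neq 1$, and combined with the formula $\delta_{T(r)}(v)=\delta^{+}_{T(r)}(v)$ for $v=r$ and $\delta_{T(r)}(v)=\delta^{+}_{T(r)}(v)+1$ otherwise, it also yields local finiteness of $T$. Theorem~\ref{t10.16} then produces a totally bounded ultrametric space $(X,d)$ with $T_X(r_X)\simeq T(r)$, and Proposition~\ref{p9.23} plus Lemma~\ref{l10.14} promote this isomorphism to the required order isomorphism $(\BB_X,\preccurlyeq_X)\cong(S,\preccurlyeq_S)$.

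I do not anticipate a genuine obstacle, since everything is driven by already-established results; the only delicate point is the bookkeeping between minimal elements of $(V(T),\preccurlyeq_{T(r)})$, leaves of $T(r)$, and the value $\delta^{+}=0$, which must be handled carefully so that condition \ref{t10.17:s2:3} (which excludes minimal elements) translates cleanly into the condition $\delta^{+}\neq 1$ of Theorem~\ref{t10.16} (which is stated for \emph{all} vertices). Once this correspondence is pinned down explicitly using Remark~\ref{r10.11} and the equivalence $v$ minimal $\Leftrightarrow$ $v$ has no lower cover $\Leftrightarrow$ $\delta^{+}_{T(r)}(v)=0$, both directions of the theorem follow by routine assembly.
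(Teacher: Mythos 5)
Your proposal is correct and follows essentially the same route as the paper: both directions are assembled from Theorem~\ref{t9.20}, Theorem~\ref{t10.16}, Lemma~\ref{l10.14}, Lemma~\ref{l10.10} and Proposition~\ref{p10.17}, with the same translation between lower covers, direct successors and out-degrees, and the same careful treatment of minimal elements versus leaves (where $\delta^{+}=0$). The only cosmetic difference is that the paper derives the bound $2\leqslant\delta^{+}(B)<\infty$ for non-singleton balls directly from formula~\eqref{p10.17:e2}, whereas you obtain it by combining "non-minimal $\Rightarrow\delta^{+}\geqslant 1$" with "$\delta^{+}\neq 1$" from Proposition~\ref{p10.17}; both are valid.
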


\begin{proof}
\(\ref{t10.17:s1} \Rightarrow \ref{t10.17:s2}\). Let \((X, d)\) be a nonempty totally bounded ultrametric space for which \((\BB_{X}, {\preccurlyeq}_X)\) and \((S, {\preccurlyeq}_S)\) are isomorphic. We must show that \((S, {\preccurlyeq}_S)\) has properties \ref{t10.17:s2:1}--\ref{t10.17:s2:4}.

By Lemma~\ref{l10.14}, we have the equality
\begin{equation}\label{t10.17:e0}
(\BB_{X}, {\preccurlyeq}_X) = (V(T_X(r_X)), {\preccurlyeq}_{T_X(r_X)}).
\end{equation}
Consequently, the posets \((V(T_X(r_X)), {\preccurlyeq}_{T_X(r_X)})\) and \((S, {\preccurlyeq}_S)\) are isomorphic. By Theorem~\ref{t9.20} it implies that \((S, {\preccurlyeq}_S)\) has properties \ref{t10.17:s2:1}, \ref{t10.17:s2:2} and \ref{t10.17:s2:4}. Since \((\BB_{X}, {\preccurlyeq}_X)\) and \((S, {\preccurlyeq}_S)\) are isomorphic, \eqref{t10.17:e0} implies that \((S, {\preccurlyeq}_S)\) has property \ref{t10.17:s2:3} if and only if the following statement holds for every \(B^{*} \in \BB_{X}\) with \(|B^{*}| \geqslant 2\):
\begin{itemize}
\item[\((s_1)\)] There are an integer number \(n \geqslant 2\) and a set \(\{B_1, \ldots, B_n\} \subseteq \BB_{X}\) such that
\[
(B \sqsubset_{T_X(r_X)} B^*) \Leftrightarrow (B \in \{B_1, \ldots, B_n\}), \quad \forall B \in \BB_{X},
\]
where \({\sqsubset}_{T_X(r_X)}\) the upper covering relation corresponding to the partial order \({\preccurlyeq}_{T_X(r_X)}\).
\end{itemize}
From Lemma~\ref{l10.10} and the definition of out-degree of vertices of rooted trees it follows that statement \((s_1)\) is true if and only if
\begin{equation}\label{t10.17:e2}
+\infty > \delta^{+}(B^{*}) \geqslant 2
\end{equation}
holds for every \(B^{*} \in \BB_{X}\) with \(|B^{*}| \geqslant 2\). To complete the verification of \ref{t10.17:s2:3} it suffices to note that \eqref{t10.17:e2} follows from Proposition~\ref{p10.17} and equality~\eqref{p10.17:e2}.

\(\ref{t10.17:s2} \Rightarrow \ref{t10.17:s1}\). Let \((S, {\preccurlyeq}_S)\) have properties \ref{t10.17:s2:1}--\ref{t10.17:s2:4}. We want to prove the existence of nonempty totally bounded ultrametric space \((X, d)\) for which the posets \((\BB_{X}, {\preccurlyeq}_X)\) and \((S, {\preccurlyeq}_S)\) are isomorphic.

First of all, we note that condition \ref{t10.17:s2} of Theorem~\ref{t10.17} implies condition~\ref{t9.20:s2} of Theorem~\ref{t9.20}. Hence, there is a rooted tree \(T^{*}(r^{*})\) such that \((V(T^{*}(r^{*})), {\preccurlyeq}_{T^{*}(r^{*})})\) and \((S, {\preccurlyeq}_S)\) are order isomorphic. Suppose also that
\begin{itemize}
\item[\((s_2)\)] there is a nonempty totally bounded ultrametric space \((X, d)\) for which \(T^{*}(r^{*})\) and \(T_X(r_X)\) are isomorphic as rooted trees.
\end{itemize}
Let \(F \colon V(T^{*}(r^{*})) \to V(T_X(r_X))\) be an isomorphism of these rooted trees and let \(\Phi \colon S \to V(T^{*}(r^{*}))\) be an order isomorphism of \((S, {\preccurlyeq}_S)\) and \((V(T^{*}(r^{*})), {\preccurlyeq}_{T^{*}(r^{*})})\). By Proposition~\ref{p9.23}, the mapping \(F\) is also an order isomorphism of posets
\[
(V(T^{*}(r^{*})), {\preccurlyeq}_{T^{*}(r^{*})}) \quad \text{and} \quad (V(T_X(r_X)), {\preccurlyeq}_{T_X(r_X)}).
\]
Hence, the composition \(\Psi = F \circ \Phi\) is an order isomorphism of \((V(T_X(r_X)), {\preccurlyeq}_{T_X(r_X)})\) and \((S, {\preccurlyeq}_S)\). Using~\eqref{t10.17:e0} we also obtain that \(\Psi\) is an order isomorphism of \((S, {\preccurlyeq}_S)\) and \((\BB_{X}, {\preccurlyeq}_X)\). Consequently, to complete the proof it suffices to prove the validity of statement~\((s_2)\).

By Theorem~\ref{t10.16}, statement \((s_2)\) is valid if \(T^{*}(r^{*})\) is locally finite and
\begin{equation}\label{t10.17:e3}
\delta^{+}(u^{*}) \neq 1
\end{equation}
holds for every \(u^{*} \in V(T^{*}(r^{*}))\). Using Lemma~\ref{l10.10} we obtain
\begin{gather*}
\bigl(\{u^{*}, v^{*}\} \in E(T^{*}(r^{*}))\bigr) \Leftrightarrow \bigl(u^{*} \sqsubset_{T^{*}(r^{*})} v^{*} \text{ or } v^{*} \sqsubset_{T^{*}(r^{*})} u^{*}\bigr)\\
\Leftrightarrow \bigl(\Phi^{-1}(u^{*}) \sqsubset_{S} \Phi^{-1}(v^{*}) \text{ or } \Phi^{-1}(v^{*}) \sqsubset_{S} \Phi^{-1}(u^{*})\bigr).
\end{gather*}
This chain of equivalences and properties \ref{t10.17:s2:2}, \ref{t10.17:s2:3} show that \(T^{*}(r^{*})\) is locally finite. It remains to prove that \eqref{t10.17:e3} holds for every \(u^{*} \in V(T^{*}(r^{*}))\).

Relation \eqref{t10.17:e3} is trivially valid if \(|V(T^{*}(r^{*}))| = 1\). Let us consider the case when \(|V(T^{*}(r^{*}))| \geqslant 2\). Let \(u^{*} = r^{*}\). Then we have \(u^{*} = \Phi(l)\) is the largest element of \((V(T^{*}(r^{*})), {\preccurlyeq}_{T^{*}(r^{*})})\), and from Lemma~\ref{l10.10}, equality~\eqref{e10.43} and property \ref{t10.17:s2:3} it follows that
\begin{equation}\label{t10.17:e4}
\delta^{+}(u^{*}) \geqslant 2.
\end{equation}
Similarly, using \ref{t10.17:s2:2} and \ref{t10.17:s2:3} we obtain \eqref{t10.17:e4} if \(u^{*} \neq r^{*}\) and \(u^{*} = \Phi(s^{*})\), where \(s^{*}\) is not a minimal element of \((S, {\preccurlyeq}_S)\). It is clear that \eqref{t10.17:e4} implies \eqref{t10.17:e3}.

Now let \(u^{*} = \Phi(s^{*})\) hold for a minimal element \(s^{*}\) of \((S, {\preccurlyeq}_S)\). Then \(s^{*}\) has no lower covers in \((S, {\preccurlyeq}_S)\). Consequently, from \ref{t10.17:s2:2} and Lemma~\ref{l10.10} it follows the equality \(\delta(u^{*}) = 1\), that implies
\begin{equation}\label{t10.17:e5}
\delta^{+}(u^{*}) = 0
\end{equation}
because \(u^{*} \neq r^{*}\). Condition~\eqref{t10.17:e3} follows from \eqref{t10.17:e5}. Thus, \eqref{t10.17:e3} is valid for all \(u^{*} \in V(T^{*}(r^{*}))\).
\end{proof}

The following example shows that properties \ref{t10.17:s2:1}--\ref{t10.17:s2:4}, which characterized the ordering on balleans of totally bounded ultrametric spaces, are logically independent.

\begin{example}\label{ex10.16}
\((e_1)\) Let \((X, d) = (\mathbb{Q}_2, d_2)\) be the field of \(2\)-adic numbers with the metric \(d_2\) generated by norm \(|\cdot|_2\) (see Example~\ref{ex3.11}) and let \((S, {\preccurlyeq}_S) = (\BB_{X}, {\preccurlyeq}_X)\). Then \((S, {\preccurlyeq}_S)\) has the property \(\neg\ref{t10.17:s2:1} \mathbin{\&} \ref{t10.17:s2:2} \mathbin{\&} \ref{t10.17:s2:3} \mathbin{\&} \ref{t10.17:s2:4}\). Indeed, \((X, d)\) is unbounded and, consequently, \((\BB_{X}, {\preccurlyeq}_X)\) does not contain any largest element. The validity of \(\ref{t10.17:s2:2} \mathbin{\&} \ref{t10.17:s2:3} \mathbin{\&} \ref{t10.17:s2:4}\) follows from Theorem~\ref{t10.17} and the formula
\[
D(\mathbb{Q}_2) = \{2^{i} \colon i \in \mathbb{Z}\} \cup \{0\}
\]
that described the distance set of \((\mathbb{Q}_2, d_2)\) (see \eqref{e3.2}). (Note that every \(B \in \BB_{X}\) is a compact ultrametric space.)

\((e_2)\) For the poset \((S, {\preccurlyeq}_S)\) satisfying \(\ref{t10.17:s2:1} \mathbin{\&} \neg\ref{t10.17:s2:2} \mathbin{\&} \ref{t10.17:s2:3} \mathbin{\&} \ref{t10.17:s2:4}\) see Figure~\ref{fig4} below.

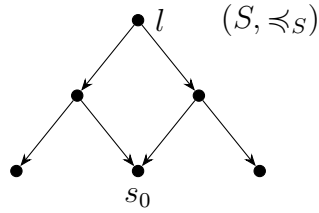
\begin{figure}[ht]
\begin{center}
\begin{tikzpicture}[
solid node/.style={circle,draw,inner sep=1.5,fill=black},
]
\def\dx{0.8cm}
\def\dy{1cm}
\node at (4*\dx,4*\dy) [label={right:{\((S, {\preccurlyeq_S})\)}}] {};
\node at (3*\dx,4*\dy) [solid node, label={right:{\(l\)}}] (A) {};
\node at (2*\dx,3*\dy)  [solid node] (B1) {};
\node at (4*\dx,3*\dy) [solid node] (B2) {};
\node at (3*\dx,2*\dy) [solid node, label={below:{\(s_0\)}}] (C2) {};
\node at (1*\dx,2*\dy)  [solid node] (C1) {};
\node at (5*\dx,2*\dy) [solid node] (C3) {};
\path (A) edge[-Stealth] (B1);
\path (A) edge[-Stealth] (B2);
\path (B1) edge[-Stealth] (C1);
\path (B1) edge[-Stealth] (C2);
\path (B2) edge[-Stealth] (C2);
\path (B2) edge[-Stealth] (C3);
\end{tikzpicture}
\end{center}
\caption{The upper cover of \(s_0\) is not unique in \((S, {\preccurlyeq}_S)\) (cf. Figure~\ref{fig3}).}
\label{fig4}
\end{figure}

\((e_3)\) Let \((S, {\preccurlyeq}_S)\) be a subposet of \((\RR, {\leqslant}_{\RR})\) defined such that
\[
(s \in S) \Leftrightarrow (\exists n \in \mathbb{N} \colon s = -n).
\]
Then \((S, {\preccurlyeq}_S)\) has properties \ref{t10.17:s2:1}, \ref{t10.17:s2:2} and \ref{t10.17:s2:4}, but the number of lower covers of every \(s \in S\) is one. Thus, \(\ref{t10.17:s2:1} \mathbin{\&} \ref{t10.17:s2:2} \mathbin{\&} \neg\ref{t10.17:s2:3} \mathbin{\&} \ref{t10.17:s2:4}\) is valid in this case.

\((e_4)\) Let \((X, d)\) and \((Y, \rho)\) be nonempty, disjoint, totally bounded ultrametric spaces without isolated points. Write \(S = \BB_{X} \cup \BB_{Y}\) and define \({\preccurlyeq}_S\) such that
\[
{\preccurlyeq}_X = {\preccurlyeq}_S \cap (\BB_{X} \times \BB_{X}), \quad {\preccurlyeq}_Y = {\preccurlyeq}_S \cap (\BB_{Y} \times \BB_{Y})
\]
and, moreover,
\[
B_1 \preccurlyeq_S B_2
\]
holds for all \(B_1 \in \BB_{X}\) and \(B_2 \in \BB_{Y}\). Then using Theorem~\ref{t10.17} we see that \((S, {\preccurlyeq}_S)\) has properties \ref{t10.17:s2:1}--\ref{t10.17:s2:3}, but the poset \((S, {\sqsubset}_S^t)\) is not totally ordered. Hence, we have \({\preccurlyeq}_S \neq {\sqsubset}_S^t\). Thus, \(\ref{t10.17:s2:1} \mathbin{\&} \ref{t10.17:s2:2} \mathbin{\&} \ref{t10.17:s2:3} \mathbin{\&} \neg\ref{t10.17:s2:4}\) is valid (cf. \((e_1)\) in Example~\ref{ex9.21}).
\end{example}

For finite posets, Theorem~\ref{t10.17} implies the following.

\begin{corollary}\label{c12.19}
Let \((S, \preccurlyeq_{S})\) be a nonempty finite poset. Then conditions \ref{c12.19:s1} and \ref{c12.19:s2} are equivalent.
\begin{enumerate}
\item \label{c12.19:s1} There is a nonempty finite ultrametric space \((X, d)\) such that the posets \((\BB_{X}, \preccurlyeq_{X})\) and \((S, \preccurlyeq_{S})\) are isomorphic.
\item \label{c12.19:s2} The poset \((S, \preccurlyeq_{S})\) has the following properties:
\begin{enumerate}
\item\label{c12.19:s2:1} If \(p \in S\) is not the largest element of \(S\), then there is a unique \(q \in S\) such that \(p \sqsubset_S q\).
\item\label{c12.19:s2:2} If \(b \in S\) is not a minimal element of \(S\), then the number of lower covers of \(b\) is greater than or equal to two.
\end{enumerate}
\end{enumerate}
\end{corollary}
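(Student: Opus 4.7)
The plan is to deduce Corollary~\ref{c12.19} as the finite-case specialization of Theorem~\ref{t10.17}, by verifying that under the hypothesis $|S| < \infty$ the four conditions \ref{t10.17:s2:1}--\ref{t10.17:s2:4} collapse to the two listed in \ref{c12.19:s2}, and then by upgrading ``totally bounded'' to ``finite'' in the conclusion. The reverse implication \ref{c12.19:s1} $\Rightarrow$ \ref{c12.19:s2} will follow directly from \ref{t10.17:s1} $\Rightarrow$ \ref{t10.17:s2} of Theorem~\ref{t10.17}, since \ref{c12.19:s2:1} is literally \ref{t10.17:s2:2} and \ref{c12.19:s2:2} is the lower-bound half of \ref{t10.17:s2:3}.

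For the forward direction \ref{c12.19:s2} $\Rightarrow$ \ref{c12.19:s1}, I first establish \ref{t10.17:s2:1}--\ref{t10.17:s2:4}. Condition \ref{t10.17:s2:4}, the identity ${\preccurlyeq}_S = {\sqsubseteq}_S^{t}$, holds in every finite poset by~\eqref{e9.22}. The cardinality-finiteness half of \ref{t10.17:s2:3} is automatic from $|S| < \infty$, while the inequality ``at least two'' is precisely \ref{c12.19:s2:2}. The only nontrivial step is producing the largest element required by \ref{t10.17:s2:1}: the nonempty finite poset $S$ has at least one maximal element $m$, and I would argue that $m$ must be the largest, since otherwise $m$ (being, in the absence of a largest element of $S$, not the largest) would, by \ref{c12.19:s2:1}, admit an upper cover, contradicting its maximality. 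Once the largest element is in hand, \ref{t10.17:s2:2} reduces to \ref{c12.19:s2:1}, and Theorem~\ref{t10.17} supplies a nonempty totally bounded ultrametric space $(X, d)$ with $(\BB_X, {\preccurlyeq}_X) \simeq (S, {\preccurlyeq}_S)$.

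It remains to upgrade ``totally bounded'' to ``finite''. Since $|\BB_X| = |S|$ is finite, Theorem~\ref{t6.13} --- which guarantees $\BB_X(r) \neq \varnothing$ for every $r \in D(X) \setminus \{0\}$ --- forces $|D(X)| \leqslant |\BB_X| + 1 < \infty$. If $X$ were infinite, Lemma~\ref{l6.3} would give $|D(X)| = \aleph_0$, contradicting this finiteness. Hence $X$ is finite, completing the proof.

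The whole argument is essentially bookkeeping and I anticipate no real obstacle. The only non-mechanical steps are the deduction of \ref{t10.17:s2:1} from \ref{c12.19:s2:1} in a finite nonempty poset (which rests on the reading of ``not the largest element'' when no largest element exists), and the finitisation of $X$ via Theorem~\ref{t6.13} and Lemma~\ref{l6.3}.
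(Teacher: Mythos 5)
Your proof is correct and follows essentially the same route as the paper: both directions are reduced to Theorem~\ref{t10.17}, with the largest element extracted from a maximal element of the finite nonempty poset via condition~\ref{c12.19:s2:1} and the equality \({\preccurlyeq}_S = {\sqsubseteq}_{S}^{t}\) supplied by finiteness. Your explicit finitisation of \(X\) via Theorem~\ref{t6.13} and Lemma~\ref{l6.3} is a welcome addition, since the paper simply asserts that the space produced by Theorem~\ref{t10.17} can be taken finite without spelling out why.
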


\begin{proof}
The validity of implication \(\ref{c12.19:s1} \Rightarrow \ref{c12.19:s2}\) directly follows from Theorem~\ref{t10.17}.

To prove the truth of implication \(\ref{c12.19:s2} \Rightarrow \ref{c12.19:s1}\) we note that the equality \({\preccurlyeq}_S = {\sqsubseteq}_S^t\) holds because \(S\) is finite. Moreover, the finiteness of \(S\) and condition~\ref{c12.19:s2:1} imply the existence of the largest element in \(S\). Indeed, since \(S\) is nonempty and finite, \(S\) contains a maximal element which is the largest element of \(S\) by property~\ref{c12.19:s2:1}. Consequently, condition~\ref{t10.17:s2} of Theorem~\ref{t10.17} follows from condition~\ref{c12.19:s2} of the present corollary if \(S\) is finite. Now using Theorem~\ref{t10.17} we obtain the existence of desirable finite \((X, d)\).
\end{proof}

\begin{remark}\label{r10.17}
It was noted in \cite{DPT2015} that, for every non-ultrametric, three-point metric space \((X, d)\), the poset \((\BB_{X}, {\preccurlyeq}_X)\) is order isomorphic to the poset \((S, {\preccurlyeq}_S)\) depicted by Figure~\ref{fig4}. If a tree-point metric space \((X, d)\) is ultrametric, then \((\BB_{X}, {\preccurlyeq}_X)\) is order isomorphic either to the poset \((S_1, {\preccurlyeq}_{S_1})\) or to the poset \((S_2, {\preccurlyeq}_{S_2})\) depicted in Figure~\ref{fig4.1}. Thus, Figures~\ref{fig4} and \ref{fig4.1} give us all posets which are order isomorphic to \((\BB_{X}, {\preccurlyeq}_X)\) with \(|X| = 3\).
\end{remark}

\begin{figure}[ht]
\begin{center}
\begin{tikzpicture}[
solid node/.style={circle,draw,inner sep=1.5,fill=black},
]
\def\xx{0cm}
\def\dx{0.8cm}
\def\dy{1cm}
\node at (\xx+4*\dx,4*\dy) [label={right:{\((S_1, {\preccurlyeq_{S_1}})\)}}] {};
\node at (\xx+3*\dx,4*\dy) [solid node, label={right:{\(l_1\)}}] (A) {};
\node at (\xx+2*\dx,3*\dy)  [solid node] (B1) {};
\node at (\xx+4*\dx,3*\dy) [solid node] (B2) {};
\node at (\xx+3*\dx,2*\dy) [solid node] (C2) {};
\node at (\xx+5*\dx,2*\dy) [solid node] (C3) {};
\path (A) edge[-Stealth] (B1);
\path (A) edge[-Stealth] (B2);
\path (B2) edge[-Stealth] (C2);
\path (B2) edge[-Stealth] (C3);

\def\xx{6cm}
\node at (\xx+4*\dx,4*\dy) [label={right:{\((S_2, {\preccurlyeq_{S_2}})\)}}] {};
\node at (\xx+3*\dx,4*\dy) [solid node, label={right:{\(l_2\)}}] (A) {};
\node at (\xx+2*\dx,3*\dy)  [solid node] (B1) {};
\node at (\xx+3*\dx,3*\dy) [solid node] (B2) {};
\node at (\xx+4*\dx,3*\dy) [solid node] (B3) {};
\path (A) edge[-Stealth] (B1);
\path (A) edge[-Stealth] (B2);
\path (A) edge[-Stealth] (B3);
\end{tikzpicture}
\end{center}
\caption{If \((X, d)\) is ultrametric and \(|X| = |D(X)| = 3\) holds, then \((\BB_{X}, {\preccurlyeq}_X)\) is isomorphic to \((S_1, {\preccurlyeq}_{S_1})\). For ultrametric space \((X, d)\) satisfying \(|X| = |D(X)| + 1 = 3\), the poset \((\BB_{X}, {\preccurlyeq}_X)\) is isomorphic to \((S_2, {\preccurlyeq}_{S_2})\).}
\label{fig4.1}
\end{figure}
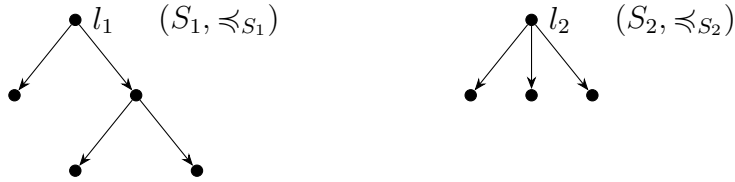

The above remark leads to the following.

\begin{problem}\label{pr10.18}
Using the theory of directed graphs describe up to isomorphism the ordered balleans of all finite metric spaces.
\end{problem}

For the case of finite, ultrametric spaces Problem~\ref{pr10.18} is closely connected with the problem of I.~M.~Gelfand~\cite{Lem2001} that is the starting point of the present paper. It should be noted here that Problem~\ref{pr10.18} is also related to the problem of combinatorial classification of finite metric spaces via their fundamental polytopes that was proposed by A.~M.~Vershik~\cite{Ver2015AMJ} (see also \cite{DH2020EJoC} for some results in this direction).

\begin{remark}\label{r9.12}
The locally finite, infinite trees have been intensively studied by many mathematicians of the last several decades. One of the most interesting areas of such research is connected with the so-called ``Reconstruction Conjecture''. This conjecture claims that two finite graphs \(G\) and \(H\) with \(|V(G)| = |V(H)| \geqslant 3\) are isomorphic if \(G\) and \(H\) are hypomorphic, i.e., there is a bijection \(g \colon V(G) \to V(H)\) such that the induced subgraphs \(G - v\) and \(H - g(v)\) are isomorphic for every \(v \in V(G)\). Kelly \cite{Kel1957PJM} proved the Reconstruction Conjecture for finite trees. An survey of results connected with this conjecture was given by Bondy and Hemminger \cite{BH1977JGT}. A variant of the Reconstruction Conjecture for locally finite trees was proposed by Harary, Schwenk and Scott \cite{HSS1972PIM} but Bowler, Erde, Heinig Lehner and Pitz \cite{BEH+2017BLMS} found hypomorphic, locally finite trees which are not isomorphic. The reconstruction of rooted trees was considered T.~Andreae \cite{And1994JCTSB}. It seams to be interesting to translate the theorem Bowler---Erde---Heinig---Lehner---Pitz and some results related to Reconstruction Conjecture (see, for example, \cite{And1981JGT, And1994JCTSB}) on the language of ultrametric spaces.
\end{remark}

\section*{Funding}

The author was supported by the Academy of Finland, Project 359772 ``Labeled trees and totally bounded ultrametric spaces''.

\end{document}